\documentclass[11pt]{amsart}
\usepackage{amsmath,amssymb,amsthm,mathrsfs,mathtools}
\usepackage[usenames,dvipsnames]{xcolor}
\usepackage[colorlinks,linkcolor=blue,anchorcolor=blue,citecolor=blue,urlcolor=blue]{hyperref}
\usepackage[top=1in,bottom=1in,left=1.25in,right=1.25in]{geometry}
\usepackage{adjustbox}

\usepackage[normalem]{ulem}
\usepackage{enumitem}
\usepackage[all]{xy}
\usepackage{tikz-cd}
\usepackage{bbm}

\newtheorem{theorem}{Theorem}[section]
\newtheorem{proposition}[theorem]{Proposition}
\newtheorem{corollary}[theorem]{Corollary}
\newtheorem{lemma}[theorem]{Lemma}
\theoremstyle{definition}
\newtheorem{definition}[theorem]{Definition}
\newtheorem{remark}[theorem]{Remark}
\newtheorem{example}[theorem]{Example}

\numberwithin{equation}{section}

\newcommand{\Z}{\mathbb{Z}}
\newcommand{\N}{\mathbb{N}}
\newcommand{\R}{\mathbb{R}}
\newcommand{\C}{\mathbb{C}}
\newcommand{\Qp}{\mathbb{Q}_p}
\newcommand{\Zp}{\mathbb{Z}_p}

\newcommand{\OK}{\mathcal{O}_K}
\newcommand{\OO}{\mathcal{O}}
\newcommand{\cS}{\mathcal{S}}
\newcommand{\cI}{\mathcal{I}}
\newcommand{\cC}{\mathcal{C}}
\newcommand{\cO}{\mathcal{O}}
\newcommand{\sM}{\mathscr{M}}
\newcommand{\bfD}{\mathbf{D}}
\newcommand{\Gal}{\mathrm{Gal}}
\newcommand{\Lie}{\mathrm{Lie}}
\newcommand{\Hom}{\mathrm{Hom}}
\newcommand{\cHom}{\mathcal{H}om}
\newcommand{\Ext}{\mathrm{Ext}}
\newcommand{\cExt}{\mathcal{E}xt}

\newcommand{\Biext}{\mathrm{Biext}}

\newcommand{\Tp}{T_p}

\newcommand{\dR}{\mathrm{dR}}
\newcommand{\cris}{\mathrm{cris}}
\newcommand{\st}{\mathrm{st}}
\newcommand{\et}{\mathrm{\acute{e}t}}
\newcommand{\Spec}{\operatorname{Spec}}
\newcommand{\Sp}{\operatorname{Sp}}
\newcommand{\Rep}{\operatorname{Rep}}
\newcommand{\BT}{\operatorname{BT}}

\newcommand{\Gm}{\mathbb{G}_{\mathrm{m}}}
\newcommand{\GmK}{\mathbb{G}_{\mathrm{m},K}}
\newcommand{\Gml}{\mathbb{G}_{\mathrm{m},\log}}
\newcommand{\Gmlb}{\overline{\mathbb{G}}_{\mathrm{m},\log}}
\newcommand{\Gmfml}{\widehat{\mathbb{G}}_{\mathrm{m}}}

\newcommand{\Ga}{\mathbb{G}_a}
\newcommand{\G}{\mathbb{G}}
\newcommand{\id}{\operatorname{Id}}
\newcommand{\HT}{\operatorname{HT}}
 
\newcommand{\coker}{\operatorname{coker}}
\newcommand{\Fil}{\operatorname{Fil}}

\newcommand{\cT}{\mathcal{T}}
\newcommand{\cB}{\mathcal{B}}
\newcommand{\cG}{\mathcal{G}}

\newcommand{\gr}{\operatorname{gr}}

\newcommand{\kfl}{\mathrm{kfl}}
\newcommand{\fs}{\mathrm{fs}}
\newcommand{\fl}{\mathrm{fl}}
\newcommand{\zar}{\mathrm{zar}}
\newcommand{\pr}{\operatorname{pr}}
\newcommand{\Spf}{\operatorname{Spf}}

\newcommand{\gp}{\mathrm{gp}}
\newcommand{\ev}{\mathrm{ev}}
\newcommand{\Pic}{\operatorname{Pic}}
\newcommand{\fin}{\mathrm{fin}}

\def\rig{\mathrm{rig}}
\def\Rig{\mathrm{Rig}}
\def\ra{\rightarrow}
\def\Q{\mathbb{Q}}

\def\ov{\overline}
\def\wh{\widehat}
\def\Spa{\mathrm{Spa}}
\def\Ol{\mathcal{O}}
\def\dual{\mathrm{dual}}
\def\HT{\mathrm{HT}}
\def\cris{\mathrm{cris}}
\def\dR{\mathrm{dR}}
\def\lra{\longrightarrow}

\title{Rigid analytic 1-motives and conjugate uniformization of abeloid varieties}
\author{Khai-Hoan Nguyen-Dang, Xu Shen, and Heer Zhao}
\date{\today}

\begin{document}

\begin{abstract}
Let $K$ be a $p$-adic field. We study the arithmetic theory of abeloid varieties over $K$. Our aims are twofold. First, we study the theory of rigid analytic 1-motives, which will be viewed as a tool to describe degeneration of abeloid varieties, similarly as in the classical algebraic setting. Our key new results are the equivalence between formal (resp. log formal) 1-motives over $\cO_K$ and rigid analytic 1-motives with good (resp. semi-stable) reduction over $K$, and the N\'eron-Ogg-Shafarevich criterion for the good (resp. semi-stable) reduction of rigid analytic 1-motives. In particular, we construct log formal 1-motives and log $p$-divisible groups over $\OO_K$ from semi-stable abeloid varieties over $K$. Next, we study the conjugate uniformization of an arbitrary abeloid variety $A$ over $K$. This is a type of $p$-adic uniformization initiated by Iovita--Morrow--Zaharescu in case of abelian varieties with good reduction. Our approach here is based on Fargues' theory of $p$-divisible rigid analytic groups. In fact, we view the theory of conjugate uniformization as a study of rational points of dualizable $p$-divisible rigid analytic groups in terms of their classification Hodge--Tate triples.  Along the way,  we construct $p$-divisible rigid analytic groups from rigid analytic 1-motives. 
\end{abstract}

\address{Morningside Center of Mathematics, Chinese Academy of Sciences, No. 55 Zhongguancun East Road, Beijing 100190, China}
\email{khaihoann@gmail.com}

\address{Morningside Center of Mathematics, Academy of Mathematics and Systems Science, Chinese Academy of Sciences, No. 55, Zhongguancun East Road, Beijing 100190, China}
\address{University of Chinese Academy of Sciences, Beijing 100049, China}

\email{shen@math.ac.cn}

\address{Institute for Advanced Study in Mathematics, Harbin Institute of Technology, NO.92 West Da Zhi Street, Harbin 150001, China}
\email{heer.zhao@gmail.com}



\maketitle

\setcounter{tocdepth}{2} 
\tableofcontents

\section{Introduction}
Let $p$ be a fixed prime, $K$ a complete discretely valued field\footnote{In sections \ref{section rigid analytic 1-motives} and \ref{section log formal 1-motives} of the paper we also consider more general complete discrete valuation fields.} extension of $\Q_p$ with perfect residue field. In this paper, we study $p$-adic uniformization of abeloid varieties over $K$. These are proper smooth connected rigid analytic groups over $K$, natural generalizations of the rigid analytic groups associated to abelian varieties over $K$. There are many more\footnote{One can characterize the algebraizability of abeloid varieties, see \ref{subsubsec:ext of abeloids by rigid tori}. This is similar to the complex setting, cf. \cite{Mum74} chapter I.} abeloid varieties than the rigid analytifications of abelian varieties, see Examples \ref{example non algebraic} and \ref{example:semistable abeloid}.

\subsection{Background and motivation}

The history of $p$-adic uniformization of abelian varieties is very rich. It started from the initial work of Tate on the structure of ellipic curves with multiplicative reduction over $K$, which led to the birth of rigid analytic geometry \cite{Tat71}. Later, Raynaud \cite{Ray71} and Bosch-L\"utkebohmert \cite{BL85, BL84} extended Tate’s result to arbitrary abelian varieties, inside the framework of rigid analytic and formal aglebraic geometry. A closely related extension for $p$-adic uniformization of curves of higher genus was established around the same time, see the books \cite{FvdP04} and \cite{Lut16} for more information\footnote{See \cite[section 7]{BL91}, \cite[section 3]{Lu09} and references therein for the related constructions of Mumord and Faltings-Chai.}. Built on the work of Scholze-Weinstein \cite{SW13} and the theory of diamonds \cite{Sch17}, there are recent developments on pro-\'etale uniformization of abelian varieties \cite{BGHS}, \cite{Heu21}, \cite{BCHH}. 

When $K$ is a finite extension of $\Q_p$, from a different perspective, Iovita--Morrow--Zaharescu \cite{IMZ22} revisited the theory of $p$-adic uniformization for abelian varieties with good reduction over $K$. Their approach was motivated by the work \cite{Fon03}, in which Fontaine proposed the theory of almost $C$-representations ($C=\widehat{\overline{K}}$) by some crucial observations on the groups $A(\overline{K})$ and $A(C)$ for an abelian variety $A$ over $K$. The perspective of \cite{IMZ22} is to construct a map \[I_A: A(\overline{K})\ra (\Lie(A)\otimes_K C(1))/s_A(T_p(A)),\] which by construction factors through a direct summand subgroup $A^{(p)}(\overline{K})$ (which was introduced in \cite{Fon03}) by the natural projection map. Here the map \[s_A: T_p(A)\ra \Lie(A)\otimes_K C(1)\] was constructed by Fontaine \cite{Fon82}, which is an explicit construction of the splitting of the Hodge-Tate exact sequence attached to $A$. Under a condition on the Tate module, Iovita--Morrow--Zaharescu showed that the induced integration map \[I_A: A^{(p)}(\overline{K})\ra (\Lie(A)\otimes_K C(1))/s_A(T_p(A))\] is injective, and determined its image. This brings a much closer comparison to the classical complex uniformization of abelian varieties.  

This perspective was revisited by \cite{HMW24} using the theory of $p$-divisible groups and formal 1-motives over $\Ol_K$. In particular, Howe--Morrow--Wear proposed two constructions, both different from \cite{IMZ22}, but they showed that all the three constructions produce the same $I_A$. They also noted that in the above consideration it suffices to consider $A(K)$ instead of $A(\overline{K})$. In another direction, in the thesis \cite{ND24} of the first author, Nguyen-Dang generalized the results and construction of \cite{IMZ22} to abelian varieties with semi-stable reduction over $K$, using the theory of log abelian varieties developed by Kajiwara--Kato--Nakayama \cite{KKN08}. 

Following \cite{HMW24}, we call the $p$-adic uniformization of Iovita--Morrow--Zaharescu the conjugate uniformization. In this paper, we study the conjugate uniformiztion in full generality, in the sense we work with more general abeloid varieties over $K$ (not necessarily algebraic) with arbitrary reduction type. In fact, even better, we construct the integration maps and study the conjugate uniformiztion for (dualizable) $p$-divisible rigid analytic groups over $K$ in the sense of Fargues \cite{Far19, Far23}, thus vastly generlizing the appoach of \cite{HMW24}. Along the way, we establish various foundamental results of independent interests in the theory of rigid analytic 1-motives, which is a natural framework to study degeneration of abeloid varieties, similarly as in the classical algebraic setting.

\subsection{Rigid analytic 1-motives}
In \cite{Lut95}, L\"utkebohmert proved that for any abeloid variety $A$ over $K$, after a possible finite field extension of $K$, it admits semi-stable reduction. This generalizes the classical semi-stable reduction theorem of Grothendieck for abelian varieties \cite{sga7-1}; see also \cite{Lu09, Lut16}. In particular, after a possible finite field extension of $K$, $A$ admits a Raynaud uniformization (cf. \cite{BL91}): we have $A=G/Y$, where 
\begin{itemize}
    \item 
$G$ is rigid analytic group sitting into an exact sequence $0\ra T^\rig\ra G\ra \mathcal{B}^\rig\ra 0$, with $T^\rig$ the rigid analytification of an algebraic split torus $T$ over $K$ and $\mathcal{B}^\rig$ the rigid analytic generic fiber of a formal abelian scheme $\mathcal{B}$ over $\Ol_K$. In particular, $G$ is a semi-abeloid variety, i.e. an extension of an abeloid variety by a rigid analytic torus over $K$,
\item $Y$ is a split lattice of rank $\dim\, T $ inside $G$.
\end{itemize}

As a generalization of the notion of abeloid varieties,
a rigid analytic 1-motive over\footnote{One can also propose a notion of rigid analytic 1-motives over more general base, see Remark \ref{rem: relative rigid 1-motives}. However, in this paper we will mainly work with rigid analytic 1-motives over $K$.} $K$ is a two-term complex of sheaves 
        \[M=[Y\xrightarrow{u} G]\] 
        on the site  $(\mathop{\mathrm{Rig}}/K)_{\fl}$ of rigid analytic spaces over $K$ with flat topology, where $Y$ is \'etale locally the constant sheaf associated to a finite rank free abelian group, and $G$ is a semi-abeloid variety over $K$ sitting in degree 0. A morphism of rigid 1-motives is just a morphism of complexes. We denote the category of rigid analytic  1-motives over $K$ by 
        $\mathscr{M}_{1,K}$.
This is the  rigid analytic analogue of Deligne's 1-motives (\cite{Del74}, \S 10) over $K$. In fact it had already been implicitly discussed by Raynaud in \cite[\S4.2]{Ray94} when studying usual algebraic 1-motives over $K$. In this paper, we initiate a systematic study of rigid analytic 1-motives. 

\subsubsection{Algebraizability and description via Poincar\'e biextension}
First, there is an obvious analytification functor from the category of (algebraic) 1-motives over $K$ to 
 $\mathscr{M}_{1,K}$. One can describe its essential image, i.e. determine when a rigid analytic 1-motive is algebraic, cf. Proposition \ref{prop:essential-image-analytification-1-motives}, which roughly says that exactly the rigid analytic 1-motives $[Y\ra G]$ with algebraic abeloid quotient $B$ of $G$ are algebraic themselves. Next,
like the algebraic theory \cite{Del74}, by studying biextensions of abeloid varieties by tori (see Proposition \ref{prop:key prop. for biextensions in both rigid and formal settings}), one can associate a tuple $(Y, X, v, v^\vee,B, s)$ to a rigid analytic 1-motive $M=[Y\xrightarrow{u} G]$, where 
\begin{itemize}
  \item $B$ is an abeloid variety over $K$,   
\item $Y$ is as above, $X=X^\ast(T)$ for an aglebraic torus such that $G$ is an extension of $B$ by $T^\rig$, 
\item $v: Y\ra B$ and $v^\vee: X\ra B^\vee$ are morphisms of sheaves (here $B^\vee$ is the dual abeloid variety of $B$), 
\item $s: Y\times X\ra P$ is a lift of $(v,v^\vee): Y\times X\ra B\times B^\vee$ to the Poincar\'e biextension $P\ra B\times B^\vee$ of $B\times B^\vee$ by $\GmK^\rig$. 
\end{itemize}
This construction can be reversed so that a rigid analytic 1-motive can be described as a commutative diagram 
\[\xymatrix{
    &P\ar[d] \\
    Y\times X\ar[r]_{(v,v^\vee)}\ar[ru]^s &B\times B^\vee.}\]
However, due to the fact that in general $\Hom(T^\rig,B)\neq0$, unlike the algebraic setting, this construction of ``displayed'' version of rigid analytic 1-motives is not functorial, cf. Remark \ref{rem:failure of functoriality for the symmetric description of rigid 1-mov}. This problem can be solved by focusing on strict rigid analytic 1-motives in the following sense.

\subsubsection{Strict rigid analytic 1-motives}
Inspired by \cite{Ray94}  we introduce the full subcategory \[\mathscr{M}_{1,K}^{\mathrm{str}}\subset \mathscr{M}_{1,K}\] of strict rigid analytic 1-motives, consisting of $M=[Y\xrightarrow{u} G]$ with $G$ having potentially good reduction, equivalently the abeloid part $B$ of $G$ having potentially good reduction. Then we prove that this subcategory $\mathscr{M}_{1,K}^{\mathrm{str}}$ is equivalent to the category $\mathop{\mathbf{LPPoin}}_K^{\mathrm{str}}$ of tuples $(Y, X, v, v^\vee, B, s)$ as above with moreover $B$ having potentially good reduction, cf. Proposition \ref{prop:equivalence strict rigid 1-motives}. Moreover, the inclusion functor $\mathscr{M}_{1,K}^{\mathrm{str}}\subset \mathscr{M}_{1,K}$  admits a right adjoint $\mathscr{M}_{1,K}\ra \mathscr{M}_{1,K}^{\mathrm{str}},\, M\mapsto M'$, such that there is a natural map $M'\ra M$ which is a quasi-isomorphism in the category of complexes of sheaves, see Propositions \ref{prop:functorial association of strict rigid 1-motives to rigid 1-motives} and \ref{prop:strictification-right-adjoint}. As an example, if we view an abeloid variety $A$ as a rigid analytic 1-motive in the trivial way (i.e. view it as $[0\ra A]$), then it is not strict unless $A$ has potentially good reduction. On the other hand, we can find the associated strict rigid analytic 1-motive $A'$ by the Raynaud uniformization of $A$.

Similar to the algebraic setting, one can define natural notions of good, semi-stable, and potentially good reductions\footnote{By L\"utkebohmert's theorem, rigid analytic 1-motives over $K$ always have potentially semi-stable reduction.} for rigid analytic 1-motives, cf. Definition \ref{def:good-semi-good}. One key property of the strictification $M\mapsto M'$ is that $M$ has semi-stable reduction if and only if $M'$ has semi-stable reduction, see Proposition \ref{prop:rigid 1-motive has sst reduction iff its associated rigid 1-motive has sst reduction}. In fact for dealing with rigid 1-motives, we can often reduce to the case of strict rigid analytic 1-motives.

\subsubsection{$\ell$-adic realizations}
In subsection \ref{subsection l-adic realizations} we construct $\ell$-adic realization $T_\ell(M)$ and $\ell$-divisible group $M[\ell^\infty]$ over $K$ of a rigid analytic 1-motive $M$, for any prime $\ell$. As in the algebraic setting, both $T_\ell(M)$ and $M[\ell^\infty]$ admit (weight) filtrations \[T_\ell(T)\subset T_\ell(G)\subset T_\ell(M) \quad \text{and}\quad T[\ell^\infty]\subset G[\ell^\infty]\subset M[\ell^\infty]\] respectively. If $M'$ is the strict rigid analytic 1-motive associated to $M$, we have then $T_{\ell}(M')\cong T_\ell(M)$ and $M'[\ell^\infty]\cong M[\ell^\infty]$. In Theorem \ref{thm:first description of l-adic realization of rigid 1-motive}, among others we show that the Galois representation $T_\ell(M)$ is potentially semi-stable with potentially unramified ($\ell\neq p$) or potentially crystalline ($\ell=p$) graded pieces for the above weight filtration coming from $T_{\ell}(M')$.

\subsubsection{Monodromy pairings}
For $M=[Y\xrightarrow{u} G]\in \mathscr{M}_{1,K}^{\mathrm{str}}$, in subsection \ref{subsec:Raynaud's geometric monodromy pairing} we construct a bilinear map 
\[\mu: Y\times X\ra \Q\] 
which factors through $\Z$ if both $Y$ and $G$ have good reduction, see \eqref{eq:the Q-valued monodromy pairing for rigid 1-motive}. We call it the Raynaud geometric monodromy pairing of $M$ since it is a natural generalization of the construction in \cite[\S4.3]{Ray94}.  Although our construction follows Raynaud's for algebraic 1-motives, it faces some extra difficulties for the following reason. In algebraic case, the objects over $K$ can be obtained from their integral models (if exist) by base change, and thus lots of constructions are tautological. In the purely rigid analytic case, the objects over $K$ and ``their'' formal integral models (if exist) are not related as directly as in the algebraic case, for example, the theory of formal N\'eron models \cite{BS95} already demonstrates additional subtleties and difficulties.
Thus we have to make quite some efforts in order to pass from the formal integral objects to the rigid analytic generic fibers for many (if not all) constructions. For example, we made a length Lemma \ref{lem:key lemma for constructing Raynaud's monodromy} before defining $\mu$, while the algebraic case is easier \footnote{However the construction in \cite[\S4.3]{Ray94} is very sketchy, and thus it is not clear that the algebraic case is much easier.}. Another example is the result that the integral models of the $\ell$-adic realizations of good (resp. semi-stable) reduction rigid analytic 1-motives are the $\ell$-adic realizations  of the corresponding formal (resp. log formal) 1-motives, see Proposition \ref{prop:formal completion, taking algebraic and rigid generic fibers, and rigid analytification}, Proposition \ref{prop:formal completion, taking algebraic and rigid generic fibers, and rigid analytification in log setting} and Proposition \ref{prop:taking "rigid generic fiber" of log formal 1-motives is compatible with Z/nZ-realization}.  

The following theorem collects some applications of the Raynaud monodromy pairing, and its part (3) is especially useful.

\begin{theorem}[Theorems \ref{thm:good reduction amounts to trivial goem. monodromy} and \ref{thm:decomposition of rigid 1-motives w.r.t. a chosen uniformizer}]\label{thm:introduction 1.1}
Fix a uniformizer $\pi$ of $\Ol_K$.
With the above notations, we have
\begin{enumerate}
    \item $M$ has potentially good reduction if and only if $\mu$ is the trivial pairing.
    \item Suppose that both $Y$ and $G$ have good reduction. Let $\mu_0: Y\times X\ra \Z$ be the induced pairing. Then $M$ has good reduction if and only if $\mu_0$ is the trivial pairing.
    \item Let $u_\pi^2: Y\ra T=\Hom(X,\G_{m,K}^\rig)$ be the map $y\mapsto (x\mapsto \pi^{\mu(y,x)})$ and $u_\pi^1:=u-u_{\pi}^2$. Then the rigid analytic 1-motive $M_{\pi}^1=[Y\xrightarrow{u_{\pi}^1}G]$ has potentially good reduction.
\item $M$ has semi-stable reduction if and only if $M_{\pi}^1$ has good reduction.
\end{enumerate}
\end{theorem}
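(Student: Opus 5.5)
The plan is to reduce all four statements to one description of $u$ in terms of the displayed data $(Y,X,v,v^\vee,B,s)$ of Proposition \ref{prop:equivalence strict rigid 1-motives}, combined with the valuation of the trivialization $s$ on the Poincar\'e biextension. Since $M$ is strict, $B$ has potentially good reduction. After a finite extension $K'/K$ we may therefore assume that $B=\mathcal{B}^\rig$ for a formal abelian scheme $\mathcal{B}$, and that $Y$ and $T$ are split. Then $v$ and $v^\vee$ extend to $\mathcal{B}$ and $\mathcal{B}^\vee$, because lattice points of $B(K)=\mathcal{B}(\OO_K)$ are automatically integral.

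The Poincar\'e biextension $P$ extends to a formal biextension $\mathcal{P}$ of $\mathcal{B}\times\mathcal{B}^\vee$ by $\widehat{\mathbb{G}}_{m}$. Hence over each $K$-point $(v(y),v^\vee(x))$ the fibre $P_{(v(y),v^\vee(x))}$ is a $\GmK^\rig$-torsor with a canonical integral structure. Comparing $s(y,x)$ with an integral point gives a valuation, and this is exactly the $\Q$-valued pairing $\mu(y,x)$ of \eqref{eq:the Q-valued monodromy pairing for rigid 1-motive}. This rests on Lemma \ref{lem:key lemma for constructing Raynaud's monodromy}, which ensures the integral structure is canonical and bilinear. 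Under this description, $M$ has good reduction exactly when $s$ lands in the integral model $\mathcal{P}(\OO_K)$ for all $(y,x)$, because a formal 1-motive over $\OO_K$ corresponds to a trivialization of the formal biextension pulled back along $(v,v^\vee)$. I will use the equivalence between formal 1-motives and good-reduction rigid 1-motives stated in the introduction.

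Parts (2) and (1) then follow.
\begin{itemize}
\item For (2): when $Y$ and $G$ already have good reduction over $K$, integrality of $s$ is equivalent to $\mu_0\equiv 0$. Multiplying $s$ by a unit does not change integrality, and $\mu_0$ is exactly the obstruction.
\item For (1): $\mu$ is invariant under finite extension, after normalizing by the ramification index so that it is $\Q$-valued. By (2), $\mu\equiv 0$ if and only if $M$ acquires good reduction over some $K'$.
\end{itemize}

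For (3), replacing $u$ by $u-u_\pi^2$ changes the displayed data only in $s$: we get $s^1(y,x)=s(y,x)\cdot\pi^{-\mu(y,x)}$, acting by the $\GmK^\rig$-torsor structure. The data $v$ and $v^\vee$ are unchanged, because $u_\pi^2$ lands in $T$. The monodromy pairing of $M_\pi^1$ is therefore $\mu-\mu=0$, and by (1) $M_\pi^1$ has potentially good reduction. One must check that $u_\pi^2$ is a homomorphism of sheaves when $\mu$ is only $\Q$-valued. Over $K$ the definition requires $\mu(y,x)\in\Z$ on the relevant Galois-stable lattices; otherwise one interprets $\pi^{\mu}$ after base change and descends via Galois-equivariance of $\mu$.

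For (4), use Proposition \ref{prop:rigid 1-motive has sst reduction iff its associated rigid 1-motive has sst reduction} and Definition \ref{def:good-semi-good}: semi-stable reduction means $Y$ and $T$ are unramified and $B$ has good reduction, i.e.\ $G$ has good reduction. These conditions are shared by $M$ and $M_\pi^1$, and they force $\mu$ to be $\Z$-valued. Given them, (2) applied to $M_\pi^1$, whose pairing vanishes, shows that $M_\pi^1$ has good reduction. Conversely, good reduction of $M_\pi^1$ gives good reduction of $Y$ and $G$, hence semi-stable reduction of $M$.

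The main obstacle is the first step. It requires proving that good reduction of $M$ is equivalent to integrality of $s$ in the formal Poincar\'e biextension, and that $\mu$ is well defined, bilinear and compatible with base change. This is where the rigid/formal passage of the key lemma is needed. I would try first to work on the formal N\'eron-type models of $G$ and its pushouts, reducing to the fibrewise computation on $\widehat{\mathbb{G}}_{m}$-torsors over $\OO_K$-points.
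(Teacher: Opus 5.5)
Your proposal follows the paper's proof. Good reduction of $M$ is equivalent to $s$ lifting to a section of the formal Poincar\'e biextension $\mathcal{P}$ along $(v_{\cS},v^\vee_{\cS})$. Lemma \ref{lem:key lemma for constructing Raynaud's monodromy} supplies an integral comparison section and independence of choices, and $\mu_0$ is exactly the obstruction to this lift. Part (1) reduces to (2) after a semi-stabilizing extension. Part (3) is the twist $s^1_\pi=s\cdot\pi^{-\mu}$, which kills the pairing. Part (4) is (2) applied to $M^1_\pi$, which has the same $Y$ and $G$. Your fibrewise valuation of $s(y,x)$ against integral points of $P_{(v(y),v^\vee(x))}$ is just the pointwise form of the paper's class of $s-\iota\circ s_{\cS}^{\rig}$ in $\Biext^0$ modulo integral sections.

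Two corrections. First, your fallback in (3) for non-integral $\mu$ (``interpret $\pi^{\mu}$ after base change and descend'') does not work. If $y,x$ are $\Gamma_K$-fixed and $\mu(y,x)\notin\Z$, then no element of $K^\times$ has that valuation, so no $K$-rational $u^2_\pi$ exists. Statement (3), like the paper's Theorem \ref{thm:decomposition of rigid 1-motives w.r.t. a chosen uniformizer}, must assume $\mu$ is $\Z$-valued. This holds automatically when $M$ is semi-stable, as you note for (4).

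Second, in (2) you cannot pass to an arbitrary finite extension. Any extension used to split $Y$ and $X$ must be unramified, and the integral section must then be descended. The paper does this via the Galois-cohomology argument in Lemma \ref{lem:key lemma for constructing Raynaud's monodromy}.
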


By using the monodromy decomposition from Theorem \ref{thm:introduction 1.1}, we can describe explicitly the inertia action on the $\ell$-adic representations $T_\ell(M)\otimes_{\Z_\ell}\Q_\ell$, as well as the monodromy operator on $\bfD_{\st}(T_p(M)\otimes_{\Z_p}\Q_p)$ for the $p$-adic representation $T_p(M)\otimes_{\Z_p}\Q_p$, see Corollary \ref{cor:descriptions of ell-adic and p-adic repn of rigid 1-motive} and Corollary \ref{cor:geometric monodromy of abeloid gives rise to the monodromy on the repn}. Such explicit descriptions are among the key ingredients for proving the N\'eron-Ogg-Shafarevich criteria for rigid analytic 1-motives, see below.

\subsubsection{Formal 1-motives}
Algebraic 1-motives over $K$ having good reduction (see \cite[\S4, page 299]{Ray94}) admit integral models as algebraic 1-motives over $\cO_K$ by definition. For a rigid analytic 1-motive with good reduction over $K$, we study its integral formal model over $\Spf\cO_K$ by constructing formal models of each term in the associated tuple separately, which is closely related to the notion of formal 1-motives. 

Let $\cS$ be a formal scheme which admits a finitely generated ideal of definition, we define a \emph{formal 1-motive} over $\cS$ (see also Definition \ref{def:formal 1-motive}) as a two-term complex
\[[Y_{\cS}\xrightarrow{u}\cG],\]
where
\begin{itemize}
    \item $Y_{\cS}$ is \'etale locally a free abelian group of finite rank and sits in degree -1,
    \item $\cG$ is an extension of a formal abelian scheme $\mathcal{B}$ by a formal torus $\cT$ over $\cS$.
\end{itemize}
A morphism between two formal 1-motives over $\cS$ is defined to be a morphism of complexes. We denote the resulting category by $\mathscr{M}_{1,\cS}$. Then we have an equivalence of categories \[\mathscr{M}_{1,\cS}\xrightarrow{\simeq} {\mathop{\mathbf{LPPoin}}}_{\cS},\] 
where ${\mathop{\mathbf{LPPoin}}}_{\cS}$ is the category of tuples $(Y_{\cS},X_{\cS},v,v^{\vee},\cB,s)$ similar as before, see Proposition \ref{prop:equivalence for formal 1-motives}. Our first key result is the following theorem.

\begin{theorem}[Proposition \ref{prop:formal 1-motives and rigid 1-motives with good reduction} and Theorem \ref{thm:Neron-Ogg-Shafarevich for good reduction of rigid 1-motives}]\label{thm:introduction 1.2}
    Let $\cS=\Spf\cO_K$.
    \begin{enumerate}
        \item Consdier the full subcategory $\mathscr{M}_{1,K}^{\mathrm{good}}\subset\mathscr{M}_{1,K}$ of rigid analytic 1-motives with good reduction, which is clearly a full subcategory of $\mathscr{M}_{1,K}^{\mathrm{str}}$. Then there is a natural generic fiber functor
        \[\mathscr{M}_{1,\cS}\xrightarrow{\simeq} \mathscr{M}_{1,K}^{\mathrm{good}},\]
        which is an equivalence of categories. 

        \item (N\'eron-Ogg-Shafarevich criterion for good reduction)
    For $M\in \mathscr{M}_{1,K}$, the following statements are equivalent:
    \begin{enumerate}
        \item $M$ has good reduction.
        \item For any prime $\ell\neq p$, the Galois representation $T_\ell(M)$ is  unramified.
        \item The Galois representation $T_p(M)$ is crystalline. 
    \end{enumerate} 
    \end{enumerate}
\end{theorem}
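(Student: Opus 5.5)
For part (1), I will work through the ``displayed'' descriptions on both sides rather than with complexes directly. On the formal side there is the equivalence $\mathscr{M}_{1,\cS}\simeq\mathop{\mathbf{LPPoin}}_{\cS}$ (Proposition \ref{prop:equivalence for formal 1-motives}). On the rigid side, good-reduction 1-motives are strict, so Proposition \ref{prop:equivalence strict rigid 1-motives} gives $\mathscr{M}_{1,K}^{\mathrm{str}}\simeq\mathop{\mathbf{LPPoin}}_K^{\mathrm{str}}$. It therefore suffices to show that the generic fiber functor $(Y_\cS,X_\cS,v,v^\vee,\cB,s)\mapsto(Y,X,v_K,v_K^\vee,\cB^\rig,s_K)$ is fully faithful with essential image the tuples of good reduction.

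I will check this componentwise. Lattices: étale sheaves over $\Spf\cO_K$ unramified correspond to unramified lattices over $K$. Formal abelian schemes: $\cB\mapsto\cB^\rig$ is fully faithful onto abeloids with good reduction, for instance via the formal Néron model of \cite{BS95}, and $\cB^\vee$ maps to $(\cB^\rig)^\vee$. Points: $v\colon Y\to B$ extends uniquely to $Y_\cS\to\cB$ because $\cB(\cO_L)=B(L)$ for unramified $L/K$ (properness). The Poincaré biextension $\cP$ of $\cB\times\cB^\vee$ by $\Gmfml$ has generic fiber $P$, and a lift $s$ of $(v,v^\vee)$ extends integrally because the fibre of $P\to B\times B^\vee$ over integral points is a $\Gm^{\rig}$-torsor whose integral part is the $\widehat{\mathbb{G}}_m$-torsor; here ``good reduction'' is exactly the condition that $s$ lands in the integral part. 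This last condition is what I will have to pin down: good reduction should mean that $u$ extends, equivalently that the monodromy pairing $\mu$ vanishes, which is Theorem \ref{thm:introduction 1.1}(2). Morphisms extend uniquely by the same separatedness and properness arguments.

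For part (2), (a)$\Rightarrow$(b),(c) follows from (1) together with the compatibility of $\ell$-adic realizations with generic fibres (Proposition \ref{prop:formal completion, taking algebraic and rigid generic fibers, and rigid analytification}). For $\ell\neq p$, the $\ell$-divisible group of a formal 1-motive over $\cO_K$ is étale, so $T_\ell$ is unramified. For $\ell=p$, $M[p^\infty]$ extends to a $p$-divisible group over $\cO_K$, so $T_p$ is crystalline.

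For the converses, I first replace $M$ by its strictification $M'$, which has the same realizations and the same reduction type. I then use the explicit description of inertia and monodromy in Corollaries \ref{cor:descriptions of ell-adic and p-adic repn of rigid 1-motive} and \ref{cor:geometric monodromy of abeloid gives rise to the monodromy on the repn}. An unramified $T_\ell$ forces, first, the graded pieces $Y$ and $T_\ell(B)$ to be unramified, so by the classical Néron–Ogg–Shafarevich criterion for abeloids (via Lütkebohmert's potential semistable reduction and the Raynaud uniformization) $B$ has good reduction and $Y,T$ are unramified. Second, it forces the logarithmic monodromy $N$, which is expressed through $\mu$, to vanish. Hence $\mu=0$, and Theorem \ref{thm:introduction 1.1}(2) gives good reduction. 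Crystallinity at $p$ gives the same conclusion with $N$ on $\bfD_{\st}$ in place of inertia.

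The main obstacle is this step: checking that unramifiedness (respectively crystallinity) of the \emph{extension} kills the unipotent part, which is governed by $\mu$, and not merely the graded pieces. I expect it to follow from the monodromy decomposition $u=u^1_\pi+u^2_\pi$ of Theorem \ref{thm:introduction 1.1}(3), but I will first need to confirm that the abeloid part already has good rather than merely potentially good reduction.
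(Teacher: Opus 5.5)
Your part (1) through $\mathop{\mathbf{LPPoin}}$ works, though it is a detour compared with the paper. The paper argues directly on complexes: essential surjectivity is tautological from Definition \ref{def:good-semi-good}(1), and full faithfulness comes from $\cG$ being the formal N\'eron model of $\cG^{\rig}$ together with $\Hom_{K,\rig}(T,\cB'^{\rig})=0$. Your route additionally needs that ``$u$ extends'' is equivalent to ``$s$ lifts to $\mathcal{P}$''; this is exactly what the proof of Theorem \ref{thm:good reduction amounts to trivial goem. monodromy} extracts from Lemma \ref{lem:key lemma for constructing Raynaud's monodromy}. Your implication (a)$\Rightarrow$(b),(c) matches the paper.

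The genuine gap is in the converse, at the abeloid part. There is no classical N\'eron--Ogg--Shafarevich criterion for abeloid varieties to cite: Serre--Tate relies on algebraic N\'eron models of abelian varieties, and proving the abeloid case is the core of this theorem. L\"utkebohmert's theorem, the Raynaud uniformization and the non-degeneracy of $\mu$ (Corollary \ref{cor:geometric monodromy of abeloid gives rise to the monodromy on the repn}(1)) give only \emph{potentially} good reduction. They cannot descend good reduction from a ramified $L$ down to $K$. Without that descent you cannot apply Corollary \ref{cor:descriptions of ell-adic and p-adic repn of rigid 1-motive}(3) over $K$ at all, because it assumes $M$ semi-stable, and for a strict motive that already means the abeloid part has good reduction. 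For the same reason, in the case $\ell=p$, replacing inertia by ``$N=0$ on $\bfD_{\st}$'' detects only potentially good reduction.

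The missing idea is the formal N\'eron model count in Proposition \ref{prop:rigid 1-mot with crystalline p-adic repn has good red}.
\begin{itemize}
\item Since $A$ is quasi-compact, it has a formal N\'eron model $\mathcal{A}$ \cite[Thm.~1.2]{BS95}. The N\'eron property and $\ell\neq p$ give $A[\ell^i](\overline{K})^{I}=A[\ell^i](K^{\mathrm{ur}})=\mathcal{A}_0[\ell^i](\overline{k})$.
\item Unramifiedness makes the left side have order $\ell^{2i\dim A}$. The right side has order at most $\ell^{i(r+2a)}\cdot\#(\mathcal{A}_0/\mathcal{A}_0^{\circ})(\overline{k})$, where $r$ and $a$ are the dimensions of the torus and abelian parts of $\mathcal{A}_0^{\circ}$.
\item Letting $i\to\infty$ forces $\mathcal{A}_0^{\circ}$ to be an abelian variety. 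So $\mathcal{A}^{\circ}$ is a formal abelian scheme whose generic fibre is a proper open subgroup of the connected $A$, hence all of $A$.
\item For $\ell=p$, the paper first gets potentially good reduction from crystallinity and the non-degeneracy of $\mu$. It then reduces to this $\ell$-adic criterion by Fontaine's argument \cite[Prop.~7.5.1, Rmk.~7.5.3 i)]{Fon79}.
\end{itemize}

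Once the abeloid part has good reduction, $M$ is automatically strict. Your strictification step is therefore unnecessary. It also rests on a claim that is not established: that $M$ and $M'$ have the same \emph{good} reduction type. Only the semi-stable version, Proposition \ref{prop:rigid 1-motive has sst reduction iff its associated rigid 1-motive has sst reduction}, is proved.

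With good reduction of the abeloid part in hand, $M$ is semi-stable, and the step you flagged as the main obstacle becomes routine. The inertia matrix in Corollary \ref{cor:descriptions of ell-adic and p-adic repn of rigid 1-motive}(3)(a) has off-diagonal block $t_\ell^{\bar\mu}$, and $\overline{N}_M=\mu\otimes\id$. So unramifiedness or crystallinity forces $\mu=0$, and Theorem \ref{thm:good reduction amounts to trivial goem. monodromy}(2) concludes.
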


Part (2) of the above theorem is a vast generalization of the Serre-Tate theorem \cite{ST68} (for $(a)\Leftrightarrow (b)$) and Coleman-Iovita theorem \cite{CI99} (for $(a)\Leftrightarrow (c)$) on N\'eron-Ogg-Shafarevich criterion for good reduction of abelian varieties. For algebraic 1-motives over a finite extension of $\Q_p$, the equivalence $(a)\Leftrightarrow (b)$ was proved in \cite{Mat13}. We explain the idea of the proof of part (2). Assume that (a) holds, part (1) implies that the $\ell$-divisible group of $M$ admits an integral model, and thus $T_{\ell}(M)$ is unramified (resp. crystalline) for $\ell\neq p$ (resp. $\ell=p$), see Proposition \ref{prop:the p-adic repn of rigid 1-mot with good red is crystalline}. Conversely assume that (b) or (c) holds, then by using the explicit monodromy action from Corollary \ref{cor:descriptions of ell-adic and p-adic repn of rigid 1-motive} we are reduced to the core case of abeloid varieties. For the core case, we have to analyze the explicit geometry of the formal N\'eron models of abeloid varieties in the $\ell$-adic case, essentially prove the $\ell$-adic N\'eron-Ogg-Shafarevich criterion for the good reduction of abeloid varieties, and then combine the $\ell$-adic version and Fontaine's theory to reach the $p$-adic version, see Proposition \ref{prop:rigid 1-mot with crystalline p-adic repn has good red}.

\subsection{Log formal 1-motives and log $p$-divisible groups}
In case the strict rigid analytic 1-motive $M$ over $K$ has semi-stable reduction, we can extend it to an object over $\Ol_K$ using log formal algebraic geometry. 

More precisely,  let $\cS=(\cS,M_{\cS})$ be a locally noetherian fs log formal scheme admitting a finitely generated ideal of definition, we define a \emph{log formal 1-motive} over $\cS$ as a two-term complex \[\mathcal{N}=[Y_{\cS}\xrightarrow{u}\cG_{\log}]\] where
    \begin{itemize}
        \item $Y_{\cS}$ is \'etale locally a free abelian group of finite rank and sits in degree -1,
        \item $\cG$ is an extension of a formal abelian scheme $\mathcal{B}$ by a formal torus $\cT$ over $\cS$, and $\cG_{\log}$ is the logarithmic enlargement of $\cG$, cf. Definition \ref{def:log enlargement}.
    \end{itemize}
    A morphism between two log formal 1-motives over $\cS$ is defined to be a morphism of complexes. We denote the resulting category by $\mathscr{M}_{1,\cS}^{\log}$. Then we have an equivalence of categories \[\mathscr{M}_{1,\cS}^{\log}\xrightarrow{\simeq} {\mathop{\mathbf{LPPoin}}}_{\cS}^{\log},\] where ${\mathop{\mathbf{LPPoin}}}_{\cS}^{\log}$ is the category of tuples $(Y_{\cS},X_{\cS},v,v^{\vee},\cB,s)$ similarly as before, cf. Proposition \ref{prop:equivalence for log formal 1-motives}. For a log formal 1-motive $\mathcal{N}=[Y_{\cS}\xrightarrow{u}\cG_{\log}]$, we have the monodromy pairing (cf. Definition \ref{def:formal monodromy pairing})  \[\langle-,-\rangle_{\log}:Y_{\cS}\times X_{\cS}\to \Gml/\Gmfml\]
and a decomposition of $u$ similar to Theorem \ref{thm:introduction 1.1} (3), cf. Proposition \ref{prop:decomposition of log formal 1-motives w.r.t. a chosen chart}. 

For a prime $\ell$, we have the $\ell$-adic realization $T_\ell(\mathcal{N})$ and the log $\ell$-divisible group $\mathcal{N}[\ell^\infty]\in \BT^{\log}_{\mathcal{S},d}$. Here $\BT^{\log}_{\mathcal{S},d}$ is the category of log $\ell$-divisible groups over $\cS$ introduced in Definition \ref{Def: log p-div gp over formal base}, which is a variant of Kato's definition \cite{Kat23} in the algebraic setting (cf. Definition \ref{def log p-div Kato}). It is equivalent to the definition in \cite{Ino25b}, cf. Remark \ref{rem:log p-div groups formal base}. As in the classical (non log) setting, if $\cS$ comes from a formal completion of a suitable log scheme $S$, then we have a natural equivalence $\BT^{\log}_{S,d}\cong \BT^{\log}_{\mathcal{S},d}$, cf. Proposition \ref{prop:equivalence b.t. logBT over algebraic base and formal base}. In particular, we get a functor \[\mathscr{M}_{1,\cS}^{\log}\ra \BT^{\log}_{\mathcal{S},d},\]
which generalizes the construction in \cite[\S 4]{WZ24} over log schemes.

The following theorem is another key result which is the semi-stable analogue of Theorem \ref{thm:introduction 1.2}.

\begin{theorem}[Theorems \ref{thm:1-1 correspondence b.t. log fml 1-mot and strict sst rigid 1-mot} and \ref{thm:Neron-Ogg-Shafarevich for semi-stable reduction of rigid 1-motives}]\label{thm:introduction 1.3}
Let $\mathcal{S}=\Spf\,\Ol_K$ endowed with the canonical log structure. 
\begin{enumerate}
    \item Consider the full subcategory $\mathscr{M}_{1,K}^{\mathrm{str,st}}\subset\mathscr{M}_{1,K}^{\mathrm{str}} $ of strict rigid analytic 1-motives with semi-stable reduction. Then
    there is a natural generic fiber functor \[\mathscr{M}_{1,\cS}^{\log}\xrightarrow{\simeq}\mathscr{M}_{1,K}^{\mathrm{str,st}},\] which is an equivalence of categories. Moreover, the monodromy pairings (resp. the decompositions) on two sides are compatible under this functor.
    \item (N\'eron-Ogg-Shafarevich criterion for semi-stable reduction)
    For $M\in \mathscr{M}_{1,K}$, the following statements are equivalent:
    \begin{enumerate}
        \item $M$ has semi-stable reduction.
        \item For any prime $\ell\neq p$, the Galois representation $T_\ell(M)$ is semi-stable (i.e. its semisimplification is unramified).
        \item The Galois representation $T_p(M)$ is semi-stable. 
    \end{enumerate}
\end{enumerate}
\end{theorem}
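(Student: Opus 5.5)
Part (1) will be proved by reducing to the tuple descriptions on both sides. By Proposition \ref{prop:equivalence for log formal 1-motives} a log formal 1-motive over $\cS=\Spf\OK$ with its canonical log structure is the same as a tuple $(Y_\cS,X_\cS,v,v^\vee,\cB,s)$, where $\cB$ is a formal abelian scheme and $v,v^\vee,s$ take values in the log enlargements. By Proposition \ref{prop:equivalence strict rigid 1-motives}, a strict rigid analytic 1-motive is a tuple $(Y,X,v,v^\vee,B,s)$ in $\mathop{\mathbf{LPPoin}}_K^{\mathrm{str}}$.

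The generic fiber functor sends $\cB\mapsto\cB^\rig$, $Y_\cS\mapsto Y$ and $X_\cS\mapsto X$. For the lattice maps we use that $\Gml(\OK)=K^\times$ and $\Gml/\Gmfml(\OK)\cong K^\times/\OK^\times\cong\Z$. These identify log points of $\cG_{\log}$ over $\OK$ with $K$-points of $G=\cG^\rig$, and the log Poincaré biextension over $\cB\times\cB^\vee$ with the rigid Poincaré biextension $P$.

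Full faithfulness then follows from Theorem \ref{thm:introduction 1.2}(1) together with the unique extension of morphisms of lattices and tori. For essential surjectivity, take $M$ strict with semi-stable reduction. Proposition \ref{prop:rigid 1-motive has sst reduction iff its associated rigid 1-motive has sst reduction} lets us assume $B$ and $Y$ have good reduction, i.e. unramified $Y,X$ and $B=\cB^\rig$. Then $v$ and $v^\vee$ land in $\cB(\OK)$, so they extend, and $s$ extends to the log enlargement by the identification above. Compatibility of monodromy pairings amounts to checking that $\mu(y,x)$ equals the valuation of $\langle y,x\rangle_{\log}\in K^\times/\OK^\times$. This is immediate from the definition in \eqref{eq:the Q-valued monodromy pairing for rigid 1-motive} once both are computed through $s$. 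The decompositions then match by comparing Theorem \ref{thm:introduction 1.1}(3) with Proposition \ref{prop:decomposition of log formal 1-motives w.r.t. a chosen chart}, with the chart given by $\pi$.

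For part (2), the implication (a)$\Rightarrow$(b),(c) goes as follows. By Theorem \ref{thm:introduction 1.1}(4), $M_\pi^1$ has good reduction, so by Theorem \ref{thm:introduction 1.2}(2) its realizations are unramified or crystalline. By Corollary \ref{cor:descriptions of ell-adic and p-adic repn of rigid 1-motive}, $T_\ell(M)$ differs from $T_\ell(M_\pi^1)$ only by a unipotent twist built from $\mu$ and the Kummer classes of $\pi$. This makes $T_\ell(M)$ semi-stable. The same argument with $\bfD_{\st}$ and the monodromy operator $N$ induced by $\mu$ (Corollary \ref{cor:geometric monodromy of abeloid gives rise to the monodromy on the repn}) gives that $T_p(M)$ is semi-stable.

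For the converse, pass to $M'$, which has the same realizations. Semi-stability of $T_\ell(M)$ (resp. $T_p(M)$) implies the same for the graded pieces of the weight filtration, in particular for $T_\ell(Y)$, $T_\ell(T)$ and $T_\ell(B)$.
\begin{itemize}
\item For $Y$ and $T$, whose representations are Artin-type, semi-stable forces unramified (resp. crystalline forces unramified).
\item For $B$, which has potentially good reduction, the representation is potentially unramified (resp. potentially crystalline) and semi-stable. Quasi-unipotence then gives unramified for $\ell\neq p$; for $\ell=p$, $N=0$ on a potentially crystalline piece gives crystalline.
\end{itemize}
Theorem \ref{thm:introduction 1.2}(2), applied to $[0\to B]$ and to the lattices, then shows they have good reduction, which is semi-stable reduction for $M'$ and hence for $M$.

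The main obstacle is essential surjectivity in (1), namely extending $s$ integrally to the log enlargement of the formal Poincaré biextension. This requires knowing that the rigid Poincaré biextension restricted to $\cB^\rig\times\cB^{\vee,\rig}$ is the generic fiber of the formal one, compatibly with trivializations over $\OK$-points. I would handle this using Proposition \ref{prop:key prop. for biextensions in both rigid and formal settings}.
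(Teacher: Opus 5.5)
Your outline is correct in substance. It departs from the paper's route in both parts, and three steps need repair.

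\textbf{Part (1).} The paper never passes through the Poincar\'e biextension.
\begin{itemize}
\item Using $H^1_{\fl}(\cS,\Gmfml)=0=H^1_{\fl}(\Sp K,\GmK^{\rig})$, it writes $\cG_{\log}(\cS)=\cT_{\log}(\cS)\sqcup_{\cT(\cS)}\cG(\cS)$ and $G(K)=T(K)\sqcup_{\cT^{\rig}(K)}\cG^{\rig}(K)$. The formal N\'eron property, together with $\cT_{\log}(\cS)=\Hom(X,K^\times)=T(K)$ for split $X$, then gives $\Hom_{\cS}(Y_{\cS},\cG_{\log})\cong\Hom_{K,\rig}(Y,G)$ (Proposition \ref{prop:hom b.t. lattices and log enlargement of fml semi-ab sch equal hom b.t. lattices and semi-abeloid var}).
\item This is the identification you state in passing, and it alone gives essential surjectivity. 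So the obstacle you single out, extending $s$ into $\mathcal{P}^{\log}$, can be bypassed.
\item If you keep the tuple route, the same torsor argument does work for sections of $(v,v^\vee)^*\mathcal{P}^{\log}$ over $\cS$ versus $(v,v^\vee)^*P$ over $K$, via Proposition \ref{prop:key prop. for biextensions in both rigid and formal settings}(3). You then also owe a check that $\mathrm{GF}^{\log}$ agrees with the resulting tuple functor.
\item Here $G$ must be the pushout of $\cG^{\rig}$ along $\cT^{\rig}\hookrightarrow T$, not $\cG^{\rig}$ itself.
\item For full faithfulness, Theorem \ref{thm:introduction 1.2}(1) does not apply as stated, because $u_{\cS}$ does not factor through $\cG$. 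The paper instead uses three inputs: $\Hom_{\cS}(\cG_{\log},\cG'_{\log})=\Hom_{\cS}(\cG,\cG')$ (Proposition \ref{prop:identification of homo between log formal semi-ab to that of formal semi-ab}); the formal N\'eron property to extend $f_0|_{\cG^{\rig}}$; and injectivity of the identification above, for compatibility with $u$.
\item The log monodromy pairing is defined through $u$ and $\cG_{\log}/\cG$, so computing it ``through $s$'' would need a log analogue of Proposition \ref{prop:two constructions of Raynaud's monodromy agree}. The paper avoids this: it uses that proposition to express $\mu_0$ through $u$ and $G/\cG^{\rig}\cong T/\cT^{\rig}$, and compares directly with $\cG_{\log}/\cG$.
\end{itemize}

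\textbf{Part (2).} Your converse is the paper's argument. For (a)$\Rightarrow$(c), the paper goes through the log formal 1-motive, its log $p$-divisible group and \cite[Thm.~B]{BWZ23}, or equivalently Theorem \ref{thm:first description of l-adic realization of rigid 1-motive}(3).

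Your monodromy-decomposition route is more elementary and does work, but not by citing Corollaries \ref{cor:descriptions of ell-adic and p-adic repn of rigid 1-motive}(3)(b) and \ref{cor:geometric monodromy of abeloid gives rise to the monodromy on the repn}(2)(b). Those describe $\bfD_{\st}$ and $N$ with semi-stability already known, so citing them here is circular. The correct justification is as follows.
\begin{itemize}
\item First reduce to strict $M$, since the decomposition is only defined there.
\item Lemma \ref{lem:realization of sum of 1-motives is the Baer sum of the realizations} exhibits $V_p(M)$ as the Baer sum of the crystalline $V_p(M^1_\pi)$ and the pushout of $V_p(M^2_\pi)$.
\item After an unramified extension splitting $X$ and $Y$, $V_p(M^2_\pi)$ is assembled from Kummer extensions of $\Q_p$ by $\Q_p(1)$ attached to powers of $\pi$, which are semi-stable.
\item Semi-stable representations are closed under direct sums and subquotients, hence under pushouts and Baer sums.
\end{itemize}
For $\ell\neq p$ no decomposition is needed at all: the weight-graded pieces are unramified, so inertia already acts unipotently.
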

Part (1) of the above theorem is harder than the corresponding good reduction case of Theorem \ref{thm:introduction 1.2} (1). It requires a careful study of homomorphisms between logarithmic enlargement groups, cf. Propositions \ref{prop:identification of homo between log formal semi-ab to that of formal semi-ab} and \ref{prop:hom b.t. lattices and log enlargement of fml semi-ab sch equal hom b.t. lattices and semi-abeloid var}.
Part (2) (specially $(a)\Leftrightarrow (c)$) is a generalization of \cite[Corollary C]{BWZ23}. The proof is based on part (1) and Theorem \ref{thm:introduction 1.2} (2). In particular, for an abeloid variety $A$ over $K$ which has semi-stable reduction, by part (1) of the above theorem we can extend it to $\OK$ in terms of a log formal 1-motive, thus we can further construct the associated log $p$-divisible group over $\OK$, similar to the usual algebraic setting (cf. \cite{Ino25a}). 

In subsections \ref{subsec:formal completion and analytification I} and \ref{subsec:formal completion and analytification II}, we study various compatibilities between algebraic 1-motives and formal 1-motives, as well as the log version. In particular, we deduce the N\'eron-Ogg-Shafarevich criteria for good and semi-stable reductions for algebraic 1-motives over $K$ from the rigid analytic ones. We refer the reader to these subsections for more details.

\subsection{Conjugate uniformization of abeloid varieties}
Now we come back to the conjugate uniformization problem discussed at the beginning. Our approach will be local. First,
recall that Tate invented both the theories of $p$-divisible groups \cite{Tat67} and rigid analytic spaces \cite{Tat71}. Motivated by a geometric construction of Banach-Colmez spaces\footnote{Which in turn is a successor of Fontaine's theory of almost $C$-representations \cite{Fon03}, see \cite[Pr\'eface]{FF} and \cite{Fon21}.}, Fargues combined both into a new theory of $p$-divisible rigid analytic groups \cite{Far19, Far23}, which played a crucial role in the work of Scholze-Weinstein \cite{SW13} on classification of $p$-divisible groups over $\Ol_C$. In section \ref{section p-div rigid gp}, we construct a functor \[\mathscr{M}_{1,K}\ra \BT_K^{\rig},\] where the latter is the category of $p$-divisible rigid analytic groups over $K$. It turns out that we can make the range of this functor more precise.

In \cite{Gert26} Gerth introduced a notion of dualizable $p$-divisible rigid analytic groups (over more general base, see Definition \ref{def:dualizable}). In the case of base $S=\Spa\,K$, the category $\BT^{\rig,\dual}_K$ of dualizable $p$-divisible rigid analytic groups over $K$ is equivalent to the category $\Rep_{\Z_p}^{\HT,\{0,1\}}(\Gamma_K)$, which consists of (finite rank free) $\Z_p$-representations $\Lambda$ of $\Gamma_K:=\Gal(\overline{K}/K)$ such that the induced $\Q_p$-representation $\Lambda\otimes\Q_p$ is Hodge-Tate and has Hodge-Tate weights $\{0,1\}$. This equivalence is induced by Fargues' classification theorem \cite[Th\'eor\`eme 0.1]{Far19} and the definition of dualizable $p$-divisible rigid analytic groups. Then
we have the hierarchy of full subcategories \[\BT^{\rig,\dual,\mathrm{good}}_K\subset \BT^{\rig,\dual,\st}_K\subset \BT^{\rig,\dual,\mathrm{pst}}_K\subset \BT^{\rig,\dual}_K,\] consisting of dualizable $p$-divisible rigid analytic groups with good, semi-stable, potentially semi-stable reduction respectively, which are equivalent to the corresponding subcategories of crystalline, semi-stable, de Rham $p$-adic Galois representations respectively. Based on the potential semi-stable reduction theorem of L\"utkebohmert,  we find that the functor $\mathscr{M}_{1,K}\ra \BT_K^{\rig}$ factors through\footnote{If restricted on the subcategory of abeloid varieties over $K$, this also follows directly from the more general de Rham comparison theorem of Scholze \cite{Sch13}.} $\BT^{\rig,\dual,\mathrm{pst}}_K$, cf. Theorem \ref{thm: p-div rigid analytic gp ass to rigid 1-motive}. For $M\in \mathscr{M}_{1,K}$, we denote the associated $p$-divisible rigid analytic group as $M\langle p^\infty\rangle$, which contains the previous $p$-divisible group $M[p^\infty]$ and sits into an exact sequence by \cite{Far19}
\[0\ra M[p^\infty]\ra M\langle p^\infty\rangle\ra \Lie(M\langle p^\infty\rangle)\otimes_K\Ga^\rig\ra 0.\]

For $G\in \BT^{\rig,\dual}_K$, we have the Hodge-Tate exact sequence
\[0\ra \Lie(G)\otimes_KC(1)\ra T_p(G)\otimes_{\Z_p}C\ra \omega_{G^D}\otimes C\ra 0\]
which is split. Let $s_G: T_p(G)\ra \Lie(G)\otimes_KC(1)$ be the natural map induced by the splitting of the Hodge–Tate exact sequence $T_p(G)\otimes_{\Z_p}C\ra \Lie(G)\otimes_KC(1)$. Inspired by the work \cite{HMW24}, in subsection \ref{subsec:def-ILog}, we construct a map
\[I_G: G(K)\ra \frac{\Lie(G)\otimes_KC(1)}{s_G(T_p(G))},\]
called the integration map of $G$. The construction consists of two steps. First, for any $x\in G(K)$, in Proposition \ref{prop:representable-Gx-v-final} we construct a $p$-divisible rigid analytic group $G_x\in \BT^{\rig,\dual}_K$ which sits into an extension \[0\ra G\ra G_x\ra \Q_p/\Z_p\ra 0.\]  Let $T_x:=T_p(G_x)$, which sits into an induced extension on Tate modules and thus can be viewed as an element  $[T_x]\in H^1(K,T_p(G))=\Ext^1_{\Gamma_K}(\Z_p, T_p(G))$. Then in Lemma \ref{lem:PsiG-additive-final} we construct a map \[\Psi_G: H^1(K,T_p(G))\ra \frac{\Lie(G)\otimes_KC(1)}{s_G(T_p(G))} \] by explicit Galois cycle computations. It can also be interpreted by a boundary map in Galois cohomology, see Proposition \ref{prop:I-factorization-general}. Then we define $I_G(x)=\Psi_G([T_x])$ and show that this map is additive and functorial, cf. Proposition \ref{prop:IG-additive-functorial-v-final}. 

If furthermore $G\in \BT^{\rig,\dual,\mathrm{pst}}_K$, in subsection \ref{subsection int map universal covers} based on the work of Gerth \cite{Gert26}, we give another construction of the map $I_G$ using the Fargues-Fontaine curve \cite{FF}, generalizing the construction of \cite[subsection 6.4]{HMW24} in the algebraic and crystalline case. Roughly, this approach first constructs the integration map for the universal cover $\widetilde{G}=\varprojlim_{[p]}G$ (as a $v$-sheaf over $K$ in the sense of \cite{Sch17}), then passes to quotient to recover $I_G$:
we have a map
\[\widetilde{I}_{G}: \widetilde{G}(C)\times_{G(C)}G(K)\ra \Lie(G)\otimes_KC(1)\]
which fits into a commutative diagram
\[
\begin{tikzcd}
\widetilde G(C)\times_{G(C)}G(K)
\arrow[r,"\widetilde I_G"]
\arrow[d]
&
\Lie(G)\otimes_KC(1)
\arrow[d]
\\
G(K)
\arrow[r,"I_G"]
&
\Lie(G)\otimes_KC(1)/s_G(T_p(G))
\end{tikzcd}
\]
where the vertical maps are natural projections. See Proposition \ref{prop:Itilde=Jtilde} and the discussions above it for more information. Note that the work \cite{IMZ22} also first constructed the integration map on universal covers of abelian varieties.

In subsections \ref{subsec:ht-boundary-general} and \ref{subsec:kernel-image-general}, we analyze the integration map $I_G$ for a geneal $G\in \BT^{\rig,\dual}_K$ in more details. In particular, we determine its kernel and image, as well as various refinements in the de Rham/semi-stable/crystalline case. Coming back to an abeloid variety $A/K$, we have the associated $p$-divisible rigid analytic group $A\langle p^\infty\rangle \in \BT^{\rig,\dual,\mathrm{pst}}_K$. Let
$
A\langle p^\infty\rangle^{\et,\max}\subset A\langle p^\infty\rangle$
be the maximal étale \(p\)-divisible rigid analytic subgroup defined by Lemma \ref{lem:kernel-sG}. Put
$T_A:=T_p(A)=T_p(A\langle p^\infty\rangle), 
T_A^{\et}:=T_p\bigl(A\langle p^\infty\rangle^{\et,\max}\bigr)$,
and let
\[
q_A: A\langle p^\infty\rangle\ra
A^{\mathrm c}:=
A\langle p^\infty\rangle/A\langle p^\infty\rangle^{\et,\max}\]
be the quotient map. For a $p$-divisible rigid analytic group $G$ over $K$, let $G(K)_{p\text{-}\mathrm{div}}\subset G(K)$ be the subset of elements which admit a compatible system of $p$-power roots in $G(K)$.
We can now state our main results on the conjugate uniformization for $A$.
\begin{theorem}[Theorems \ref{thm:Fontaine-decomposition-abeloid} and \ref{thm:conj-unif-abeloid-dR}]\label{thm:introduction 1.4}
\begin{enumerate}
    \item Assume that $K$ is a finite extension of $\Q_p$. Then there is a canonical \(\Gamma_K\)-equivariant decomposition of
topological abelian groups
\[A(C)=
A[p'](\overline K)\bigoplus A\langle p^\infty\rangle(C),
\]
where \(A[p'](\overline K):=\bigcup_{(N,p)=1}A[N](\overline K)\) is endowed with the discrete topology. In particular, we get $A(K)=
A[p']( K)\bigoplus A\langle p^\infty\rangle(K)$.

    \item Let $I_A=I_{A\langle p^\infty\rangle}$ be the integration map of $A\langle p^\infty\rangle$ and $s_A=s_{A\langle p^\infty\rangle}$. 
    We have
\[
I_A\bigl(A\langle p^\infty\rangle(K)\bigr)\subset
\left(\frac{\Lie(A)\otimes_K C(1)}{s_A(T_A)}\right)^{\Gamma_K,\dR},
\]
where the right hand side is defined in Definition \ref{def:dR-locus-general}.
Moreover, we have
\[
\ker(I_A)=q_A^{-1}\!\bigl(A^{\mathrm c}(K)_{p\text{-}\mathrm{div}}\bigr).
\]
In particular, if \(T_A^{\et}=0\), then
$
\ker(I_A)=A\langle p^\infty\rangle(K)_{p\text{-}\mathrm{div}}$.

\item Assume that \(T_A^{\et}=0\). Let
$
y\in
\left(\frac{\Lie(A)\otimes_K C(1)}{s_A(T_A)}\right)^{\Gamma_K,\dR}$,
which determines a unique extension of dualizable
\(p\)-divisible rigid analytic groups
\[
0\ra A\langle p^\infty\rangle
\ra G_y
\ra \Q_p/\Z_p
\ra 0.
\]
Then
$y\in \mathrm{Im}(I_A)
\quad\Longleftrightarrow\quad
G_y$ admits a rigidification in the sense of
Definition~\ref{def:rigidified-extension-general}.
\end{enumerate}
\end{theorem}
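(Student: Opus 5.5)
Part (1) is the one place where the arithmetic of a finite extension $K/\Q_p$ enters. I plan to argue as follows. Set $H:=A\langle p^\infty\rangle$. By Fargues' theory, $H$ is the analytic subgroup of $A$ consisting of points $x$ with $p^n x\to 0$; equivalently, $H(C)$ is the topologically $p$-torsion part of $A(C)$. To obtain the decomposition I would use a Raynaud uniformization after a finite extension $L/K$, together with the formal N\'eron model or formal model $\mathcal{B}$. Then $A(C)$ contains an open subgroup $U$ (the points reducing to the identity of the formal group) which is a pro-$p$ group, and $H(C)\supset U$. Moreover $A(C)/U$ is a torsion group: it is the $\overline{k}$-points of the special fibre, extended by a lattice quotient modulo units, and this step uses that $\overline{k}$ is a union of finite fields. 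Hence $A(C)/H(C)$ is torsion of order prime to $p$ on the $p'$-part, and the $p$-primary torsion lies in $H$. Next, multiplication by $N$ with $(N,p)=1$ is an isomorphism on $H$. So every $x\in A(C)$ with $Nx\in H(C)$ decomposes uniquely as $x=t+h$, where $h=N^{-1}\cdot(Nx)$ computed in $H$ and $t$ is $N$-torsion. This gives the direct sum decomposition, and it is $\Gamma_K$-equivariant by canonicity. Topologically, $A[p'](\overline K)$ is discrete because $H$ is open. Taking $\Gamma_K$-invariants yields the statement for $A(K)$. The delicate point is that the decomposition requires $A(C)/H(C)$ to be torsion. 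For $\overline{K}$-points this holds because the residue field is algebraic over a finite field; passing to $C$ uses density and openness of $H(C)$.

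Parts (2) and (3) should be specializations of the general results on $I_G$ for $G=H\in\BT^{\rig,\dual,\mathrm{pst}}_K$. By Theorem \ref{thm: p-div rigid analytic gp ass to rigid 1-motive}, $H$ is dualizable with de Rham Tate module $T_A$. The general results of subsections \ref{subsec:ht-boundary-general} and \ref{subsec:kernel-image-general} give the following: when $G$ is de Rham, the image of $I_G$ lies in the de Rham locus of Definition \ref{def:dR-locus-general}. The argument is that the class $[T_x]$ of $G_x$ is de Rham, since $T_x$ is an extension of $\Z_p$ by $T_p(G)$ coming from a $K$-point and hence lies in $H^1_g$. The same results give $\ker(I_G)=q^{-1}(G^{c}(K)_{p\text{-div}})$, using Lemma \ref{lem:kernel-sG} to identify $\ker s_G$ with $T_p$ of the maximal étale subgroup. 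Applied to $G=H$, this is the statement of (2). The special case $T_A^{\et}=0$ then follows because $q_A$ is an isomorphism.

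For (3), $y$ in the de Rham locus determines $G_y$ by the bijection between de Rham extension classes and the de Rham locus, which is the boundary-map description of Proposition \ref{prop:I-factorization-general} combined with the equivalence $\BT^{\rig,\dual}_K\simeq\Rep^{\HT,\{0,1\}}_{\Z_p}(\Gamma_K)$. Then $y=I_A(x)$ exactly when $G_y\cong G_x$ for some $x\in H(K)$. Injectivity of $\Psi$ on this locus, given $T_A^{\et}=0$, comes from the kernel computation. The extensions of the form $G_x$ are precisely those that split compatibly over a $K$-point, which is the rigidified condition of Definition \ref{def:rigidified-extension-general}. I therefore expect (3) to be the general image theorem of subsection \ref{subsec:kernel-image-general} applied verbatim. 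The main obstacle is part (1), and specifically the torsion property of $A(C)/H(C)$ for a non-algebraic abeloid.
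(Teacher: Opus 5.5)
Your parts (2) and (3) are the paper's argument: specialize Proposition~\ref{prop:image-in-dR-locus-general}, Theorem~\ref{thm:kernel-general} and Corollary~\ref{cor:image-dR-injective-general} to $G=A\langle p^\infty\rangle$. Two attributions need correcting. That the Kummer class lies in $H^1_g$ is the content of Lemma~\ref{lem:kummer-dR-admissibility}, proved via the restriction $\widetilde G(C)\to V\otimes_{\Q_p}B_{\dR}$; it is not automatic from ``coming from a $K$-point''. Bijectivity of $\delta^{\HT}$ when $T_A^{\et}=0$ is Lemma~\ref{lem:ht-boundary-general}, not the kernel computation.

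For (1) you take a genuinely different route.

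\emph{The paper's route.} It black-boxes the arithmetic input by citing Heuer. First, $A_C^{\mathrm{tt}}(C)=A(C)$ for $C=\C_p$ \cite[Prop.~2.23]{Heu24b}. Second, over an algebraically closed field one has $A(C)=A_C\langle p^\infty\rangle(C)+\bigcup_{(N,p)=1}A[N](C)$ \cite[Prop.~2.14(3)]{Heu24b}. After that it only needs Lemma~\ref{lem:prime-to-p-on-top-p-torsion} and openness of $A\langle p^\infty\rangle(C)$, and it also obtains the formula $\lim_m[p^{m!}]x$ for the projection.

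\emph{Your route.} You prove directly that $A(C)/A\langle p^\infty\rangle(C)$ is torsion. Take a split Raynaud uniformization $A_L=G/Y$ and the filtration $U\subset\mathcal G(\mathcal O_C)\subset A(C)$. Its graded pieces are topologically $p$-torsion, then $\mathcal G_s(\overline{\mathbb F}_p)$, then $\Hom(X,\Q)/\mathrm{val}(Y)$. The relation $[p]^{-1}(A\langle p^\infty\rangle)=A\langle p^\infty\rangle$ removes the $p$-part of the quotient, and invertibility of prime-to-$p$ multiplication gives the splitting.

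This works, and it is more transparent about where $[K:\Q_p]<\infty$ enters: the residue field of $\mathcal O_C$ is a union of finite fields. You should make explicit the inputs it uses silently:
\begin{itemize}
\item $\mathrm{val}(Y)$ has full rank in $\Hom(X,\Q)$. This is the lattice condition, equivalently Corollary~\ref{cor:geometric monodromy of abeloid gives rise to the monodromy on the repn}(1). Without it the last graded piece is not torsion.
\item $A\langle p^\infty\rangle$ commutes with base change to $L$ and to $C$ (Proposition~\ref{prop:base-change-top-p-torsion}).
\item $A(K)\cap A\langle p^\infty\rangle(C)=A\langle p^\infty\rangle(K)$, which you need when passing to $\Gamma_K$-invariants.
\end{itemize}
Finally, $U$ is not pro-$p$ in any literal sense; ``topologically $p$-torsion'' is the property you actually use.
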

If $A=B^\rig$ for an abelian variety $B/K$, then $A\langle p^\infty\rangle=B^{(p)}$, where $B^{(p)}$ is the rigid analytic group introduced by Fontaine in \cite[page 286]{Fon03} and $B^{(p)}(\overline{K})$ is the group introduced at the beginning\footnote{Therefore, our approach based on $p$-divisible rigid analytic groups illuminates the original idea of Fontaine in \cite[\S 1.1]{Fon03}. In fact, Proposition 1.1 of loc. cit. can be translated as a very special case of Fargues' classification theorem in \cite{Far19}, cf. Theorem \ref{thm Fargues classification}.} of this introduction. Part (1) of the above theorem generalizes Fontaine's decomposition in \cite[page 285]{Fon03} for abelian varieties. When $[K:\Q_p]<\infty$, by abuse of notation we also denote the composition of the projection map $A(K)\ra A\langle p^\infty\rangle(K)$ with $I_{A\langle p^\infty\rangle}$ as $I_A$. The map \[I_A: A(K)\ra \frac{\Lie(A)\otimes_K C(1)}{s_A(T_A)}\] is the desired integration map for $A$.
Parts (2) and (3) are deduced from more general results on dual $p$-divisible rigid analytic groups, see Theorems \ref{thm:kernel-general}, \ref{thm:image-general}, Proposition \ref{prop:image-in-dR-locus-general} and Corollary \ref{cor:image-dR-injective-general}. There are also semi-stable and good reduction refinements whenever the group $A\langle p^\infty\rangle$ has such reduction, cf. Corollaries \ref{cor:conj-unif-abeloid-semi-stable} and \ref{cor:conj-unif-abeloid-good}, which recover the main results of \cite{ND24} and \cite{IMZ22}, \cite{HMW24} respectively.

\subsection{Organization of paper}

In section \ref{section rigid analytic 1-motives}, we discuss basics on abeloid varieties and rigid analytic 1-motives over $K$, establish various fundamental results generalizing those in the algebraic setting, and in particular prove Theorem \ref{thm:introduction 1.1}. We also introduce the category of formal 1-motives over $\Ol_K$ to study good reductions of rigid analytic 1-motives, prove the N\'eron-Ogg-Shafarevich criterion for good reduction (Theorem \ref{thm:introduction 1.2}), and discuss the formal completion and analytification functors which go through the algebraic theory, the formal theory, and the rigid analytic theory. In section \ref{section log formal 1-motives}, we define and study log formal 1-motives, including the $\ell$-adic realizations and the associated log $\ell$-divisible groups. In particular, we prove the N\'eron-Ogg-Shafarevich criterion for semi-stable reduction (Theorem \ref{thm:introduction 1.3}). In section \ref{section p-div rigid gp}, we review the theory of $p$-divisible rigid analytic groups following \cite{Far19, Far23, Gert26}. We construct $p$-divisible
rigid analytic groups from rigid analytic 1-motives over $K$ and log $p$-divisible groups over $\Ol_K$. Finally, in section \ref{sec:log-int} we first define and study integration maps for dualizable $p$-divisible rigid analytic groups. Then, we apply these constructions to deduce Theorem \ref{thm:introduction 1.4} on conjugate uniformization of abeloid varieties.

\subsection{Notation and conventions}
\begin{itemize}
    \item 
Throughout, let $K$ be a complete discretely valued field with ring of integers $\OK$ and perfect residue field $k$. Fix a separable closure $\overline K$; put $C:=\widehat{\overline K}$ and $\Gamma_K:=\Gal(\overline K/K)$. 

\item Let $p$ be a fixed prime number. If $K$ is moreover of characteristic 0 and $k$ is of characteristic $p$, which will be the case for sections \ref{section p-div rigid gp} and \ref{sec:log-int}, we call $K$ $p$-adic. 

\item Let $\Rep_{\Z_p}(\Gamma_K)$ be the category of continuous representations of $\Gamma_K$ on finite rank free $\Z_p$-modules. For a $p$-adic field  $K$ and $\Lambda\in \Rep_{\Z_p}(\Gamma_K)$ a $\Z_p$-representation, it is called Hodge-Tate/de Rham/semi-stable/crystalline, if the induced $\Q_p$-representation $\Lambda\otimes_{\Z_p}\Q_p$ is Hodge-Tate/de Rham/semi-stable/crystalline in the usual sense of $p$-adic Hodge theory, e.g. see \cite{FO22}.

\item We endow
$S:=\Spec \OK$ and  $\cS:=\Spf \OO_K$
with their canonical (fine and saturated) log structures associated to $\gamma\colon \N\!\to\!\OO_K$, $1\mapsto\pi$ for a fixed uniformizer $\pi\in\OO_K$. In some sections, $\cS$ will denote a more general formal scheme which admits a finitely generated ideal of definition $\cI$.

\item We denote the category of quasi-separated rigid analytic spaces over $\Sp\,K$ as $(\Rig/K)$, which we also view as a full subcategory of adic spaces which are quasi-separated and locally of finite type over $\Spa\,K:=\Spa(K,\OO_K)$ by the standard functor.

\item For a scheme $X$ which is separated and locally of finite type over $K$, we denote the associated rigid analytic space as $X^\rig$. For a formal scheme $\mathcal{X}$ which is formally locally of finite type over $\Spf\,\Ol_K$, we denote its rigid analytic fiber over $K$ (in the sense of Berthelot and Raynaud) as $\mathcal{X}^\rig$ or $\mathcal{X}^{\mathrm{ad}}_\eta$ (especially when we view it as an adic space; see also \cite[\S 2.2]{SW13}).

\item We will work with the site $(\mathrm{Rig}/K)_{\fl}$, which is the category $(\Rig/K)$ equipped with the fppf toplogy.
If $X$ is a rigid analytic group over $K$, we write the same letter for the associated sheaf on $(\mathrm{Rig}/K)_{\fl}$.  

\item We will sometimes work with the $v$-site $K_v:=(\Spa\,K)_v$ of $\Spa\,K$, i.e. consider the assoicated diamond $\mathrm{Spd}\,K:=(\Spa\,K)^\Diamond$ and work with the cateogy of perfectoid spaces over $\mathrm{Spd}\,K$ equipped with the $v$-topology, cf. \cite{Sch17}.

\item Every topological ring is assumed to have a fundamental system of neighbourhoods of $0$ given by ideals. We follow the convention that formal schemes admit a finitely generated ideal of definition Zariski locally.

\end{itemize}

\vspace{1.5ex} 
\noindent\textbf{Acknowledgements.}
The first author warmly thanks his PhD supervisors, Adrian Iovita and Nicola Mazzari, for introducing him to the problem of the $p$-adic uniformization of abelian varieties, and for their invaluable support and guidance throughout his studies. The second author was partially supported by the CAS
Project for Young Scientists in Basic Research, Grant No. YSBR-033, and the NSFC grant No. 12288201. 
Part of this work was done during the third author's visit at DPMMS of University of Cambridge, and he would like to thank Professor Tony Scholl for the invitation and his hospitality. The authors would also like to thank Lucas Gerth for helpful comments on the first version of the article. 
In this paper, DeepSeek was used to correct grammatical errors and improve English writing, while ChatGPT and Gemini were used as substitutes for Google Scholar searches. All mathematical proofs and results are the sole responsibility of the authors.

\section{Abeloid varieties and rigid analytic 1-motives}\label{section rigid analytic 1-motives}
In this section, we discuss some basics on abeloid varieties (\cite{BL91},\cite{Lut95}, \cite{Lu09}, \cite{Lut16} and \cite{Ray94}) and rigid analytic 1-motives, generalizing some well-known results about abelian varieties and 1-motives to the analytic setting. In particular, we study analogues of Raynaud's geometric monodromy pairings, and prove the N\'eron-Ogg-Shafarevich criterion for good reduction of rigid analytic 1-motives.

\subsection{Abeloid varieties and rigid analytic tori}\label{subsection abeloid and tori}
\subsubsection{Extensions of abeloid varieties by rigid analytic tori}\label{subsubsec:ext of abeloids by rigid tori}

\begin{definition}
\begin{enumerate}
    \item An \emph{abeloid variety} over $K$ is a rigid analytic group variety $A$ which is proper, smooth and connected.
    
    \item A \emph{rigid analytic torus}\footnote{This definition is not standard, since in some literatures it also refers to the rigid analytic fibers of formal tori. In this paper we will only use the definition here. } over $K$ is the analytification of an algebraic torus $T$ over $K$, i.e. $T^\rig=\cHom_{K,\rig}(X,\GmK^{\rig})$ with $X$ the character group of $T$ and $\GmK^\rig$ the rigid analytification of $\GmK$. We also call $X$ the \emph{character group} of the rigid analytic torus. If $X$ is a constant group, we call the rigid analytic torus \emph{split}.
\end{enumerate}
\end{definition} 

An abeloid variety is automatically commutative by \cite[Corollary 7.1.4]{Lut16}. If $B$ is an abelian variety over $K$, the associated rigid analytic variety $B^\rig$ is an abeloid variety in a canonical way. By \cite[Proposition 7.1.10]{Lut16}, an abeloid variety $A$ is algebraic, i.e. it is the analytification of an abelian variety, if and only if there exists an ample line bundle on $A$. For an abeloid variety $A$ over $K$, let $\mathcal{M}(A)$ be the field of meromorphic functions on $A$. By \cite{Lut16} Proposition 7.1.8, the transcendence degree of $\mathcal{M}(A)$ over $K$ satisfies \[\mathrm{tr.deg}(\mathcal{M}(A)/K)\leq \dim\,A\] and the equality holds if and only if $A$ is algebraic.

Recall that when $G/K$ is a smooth rigid analytic group, a \emph{formal N\'eron model} of $G$ is a smooth formal $\OK$‑group scheme $\mathcal G$ whose rigid generic fiber $\mathcal G^{\rig}$ is an open subgroup of $G$, which satisfies the N\'eron mapping property for morphisms from smooth formal $\OK$‑schemes. It means that the largest open rigid subgroup $N\subset G$ that admits a smooth formal $\mathcal O_K$‑model $\mathcal N$ and satisfies the usual N\'eron mapping property for maps from smooth formal $\mathcal O_K$‑schemes (see \cite[§7.6, page 345]{Lut16}), which follows the formalism of Bosch–Schl\"oter \cite{BS95}. Existence and the key extensions of morphisms used in our set‑up are provided in \cite[Thm. 1.2, Prop.~2.3 and 2.4, Thm.~2.6]{BS95}. 

Let $A/K$ be an abeloid variety.  We say that $A$ has \emph{good reduction (over $K$)} if there exists a formal abelian scheme $\mathcal B$ over $\Spf \mathcal O_{K}$ with rigid generic fiber $A$. 
Such a formal abelian scheme $\mathcal B$ is then the formal N\'eron model of $A$ in the sense of \cite{BS95}. 

\begin{example}\label{example non algebraic}
Here is a quick way to find lots of non algebraic abeloid varieties: let $A_0$ be an abelian variety over $k$ of dimension $g\geq 2$. By Serre-Tate theorem and deformations of $p$-divsible groups, the unpolarized formal deformation space of $A_0$ is formally smooth of dimension $g^2$ over $W=W(k)$. However, if we fix a polarization of $A_0$ and consider the polarized deformations, this forms a closed subspace of dimension $g(g+1)/2< g^2$. In particular, one can find many formal abelian schemes $\mathcal{B}$ over $\Ol_K$ of relative dimension $g$ which are not algebraic, where $K$ is some suitable finite extension of the fraction field of $W$. Then the generic fibers $\mathcal{B}^\rig$ are non algebraic abeloid varieties, which have good reduction.

For another construction of nonalgebraizable formal abelian schemes over $\OK$, see \cite[Theorem 2.2.3, and the paragraph under it]{CCO14}. See also \cite[Remark 8.5.24 (b)]{Ill05} for a construction of nonalgebraizable formal abelian schemes over $\C[[t]]$.
\end{example}

By \cite[Corollary 7.6.5]{Lut16}, an abeloid variety $A/K$ has a dual $A^\vee/K$, which is still an abeloid variety over $K$, representing the rigid analytic Picard functor $\Pic^\tau_{A/K}$ of translation invariant line bundles. Moreover, we have $(A^\vee)^\vee\cong A$.

Let $(\mathop{\mathrm{Rig}}/K)$ be the category of quasi-separated rigid analytic spaces over $\Sp K$. Let $\cS$ be a formal scheme which admits a finitely generated ideal of definition $\cI$, and let $(\mathop{\mathrm{FSch}}/\cS)$ be the category of formal schemes which are adic over $\cS$. The formal scheme $\cS$ will be often $\Spf\cO_K$. We endow $(\mathop{\mathrm{Rig}}/K)$ (resp. $(\mathop{\mathrm{FSch}}/\cS)$) with the fppf topology, and denote the resulting site by $(\mathop{\mathrm{Rig}}/K)_{\fl}$ (resp. $(\mathop{\mathrm{FSch}}/\cS)_{\fl}$), see \cite[\S3]{BX96} for the fppf topology of rigid spaces. Let $\cExt^i_{K,\rig}(-,-)$ (resp. $\cExt^i_{\cS}(-,-)$) be the $i$-th derived bifunctor of $\cHom_{K,\rig}(-,-)$ (resp. $\cHom_{\cS}(-,-)$) on the site $(\mathop{\mathrm{Rig}}/K)_{\fl}$ (resp. $(\mathop{\mathrm{FSch}}/\cS)_{\fl}$), let $\Ext^i_{K,\rig}(-,-)$ be the $i$-th derived bifunctor of $\Hom_{K,\rig}(-,-)$ on the site $(\mathop{\mathrm{Rig}}/K)_{\fl}$.

\begin{definition}\label{def:formal tori}
    We define 
    \[\Gmfml\]
    as the sheaf
$\mathcal{U}\mapsto\Gamma(\mathcal{U},\cO_{\mathcal{U}}^{\times})$
on $(\mathrm{FSch}/\cS)_{\fl}$. The sheaf $\Gmfml$ is represented by the formal group scheme $\Spf\cO_{\cS}[T,T^{-1}]^{\wedge}$ over $\cS$, where $(-)^{\wedge}$ denotes the $\cI$-adic completion. A \emph{formal torus} $\cT$ over $\cS$ is defined to be a formal group scheme over $\cS$ given by $\cHom_{\cS}(X_{\cS},\Gmfml)$, where $X_{\cS}$ is a sheaf on $(\mathrm{FSch}/\cS)_{\fl}$ which is \'etale locally isomorphic to $\Z^r$.
\end{definition}

\begin{proposition}\label{prop:extensions of rigid analytic groups by rigid tori are representable}
    Let $G$ be a commutative rigid analytic group over $K$, and let $T$ be a rigid analytic torus with character group $X$ over $K$, i.e. $T=\cHom_{K,\rig}(X,\GmK^{\rig})$. Let \[0\to T\to E\to G\to 0\] be a short exact sequence of sheaves on $(\mathop{\mathrm{Rig}}/K)_{\fl}$. Then we have the following.
    \begin{enumerate}
        \item The sheaf $E$ is represented by a rigid analytic group over $K$. 
        
        \item If $G$ is smooth over $K$, so is $E$.
        
        \item If $G$ is a rigid analytic torus, so is $E$.
    \end{enumerate} 
\end{proposition}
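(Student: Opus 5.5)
The idea is to show that $E$ is a $T$-torsor over $G$ and then descend, after a finite Galois extension splitting $T$, to a split torus. There each $\GmK^{\rig}$-torsor is the complement of the zero section of a line bundle, hence representable.

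First reduce to $T$ split. Choose a finite Galois extension $L/K$ over which $X$ becomes constant, say $X_L\cong\Z^r$ and $T_L\cong(\mathbb{G}_{m,L}^{\rig})^r$. Suppose $E_L$ is representable by a rigid analytic group over $L$. The Galois group $\Gal(L/K)$ acts on $E_L$ compatibly with the descent datum coming from $E$, since $E$ is a sheaf on $(\mathrm{Rig}/K)_{\fl}$. Finite étale descent for rigid analytic spaces, which is effective for Galois descent data (e.g. via \cite{BX96} or the quasi-separatedness/affinoid covering arguments), then gives representability of $E$ over $K$. So assume $T=(\GmK^{\rig})^r$. Splitting $X=\Z^r$ splits the problem: $E$ is an iterated extension by $\GmK^{\rig}$. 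More precisely, $E/(\GmK^{\rig})^{r-1}$ is an extension of $G$ by $\GmK^{\rig}$, and we induct. So it suffices to treat $r=1$.

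For $r=1$, view the surjection $E\to G$ as a $\GmK^{\rig}$-torsor on $G$ for the flat topology. By Hilbert 90 in the rigid fppf setting (as in \cite{BX96}: flat $\Gm$-torsors are Zariski/admissible-locally trivial, i.e. $H^1_{\fl}(G,\Gm)=\Pic(G)$), this torsor corresponds to a line bundle $\mathcal{L}$ on $G$. The torsor is then represented by $\mathbb{V}(\mathcal{L})\smallsetminus\{0\}$, the relative analytic spectrum of $\bigoplus_{n\in\Z}\mathcal{L}^{\otimes n}$. This is a rigid space over $G$, quasi-separated since $G$ is. The group law on $E$, being a morphism of sheaves, is then a morphism of representables by Yoneda, so $E$ is a rigid analytic group. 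This gives (1). The main obstacle is precisely this step: identifying flat-topology $\Gm$-torsors with admissibly locally trivial ones in the rigid setting, and making the descent in the first reduction effective. I would rely on the results of \cite{BX96} for both.

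For (2), $E\to G$ is locally on $G$ isomorphic to $G\times T$, hence smooth; if $G$ is smooth over $K$, so is $E$. For (3), if $G=\cHom(X',\GmK^{\rig})$ is a torus, we may work after a finite extension $L$ splitting both tori. We need $\Ext^1(\mathbb{G}_m^{\rig},\mathbb{G}_m^{\rig})=0$ over $L$. The representing line bundle on $\mathbb{G}_m^{\rig}$ is trivial, since $\Pic$ of the rigid punctured line vanishes, being a Stein space with $\Pic=0$. So $E\cong G\times T$ as spaces, and the group structure is a $2$-cocycle $G\times G\to T$. Invertible functions on $(\mathbb{G}_m^{\rig})^n$ are units times monomials, so the cocycle is a bilinear character up to constants, and symmetry plus commutativity of $E$ shows it is a coboundary. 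Hence $E_L\cong G_L\times T_L$ is a split torus. Therefore $E$ is a form of a split torus. Such a form is the analytification of an algebraic torus, with character group $\cHom(E,\GmK^{\rig})$, an étale-locally constant extension of $X$ by $X'$. Alternatively, $E_L$ is the analytification of an algebraic torus and the Galois descent datum is algebraic, since morphisms between analytified tori are algebraic: they are given by monomials.
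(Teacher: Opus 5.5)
Your proposal is correct and follows essentially the same route as the paper: Galois descent to the split case, then identifying flat $\GmK^{\rig}$-torsors with line bundles via flat descent so that $E$ is a line bundle minus its zero section, and for (3) using triviality of line bundles on the split torus together with triviality of the monomial (hence algebraic) factor systems. The differences are minor: you induct on the rank by passing to $E/(\GmK^{\rig})^{r-1}$ instead of pulling back $E_1\times E_2$ along the diagonal of $G$, and you show the factor system is trivial by a direct cocycle computation (also spelling out the descent of the torus structure) rather than appealing to the vanishing of the algebraic $\Ext^1(\mathbb{G}_m,\mathbb{G}_m)$.
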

\begin{proof}
    (1) By Galois descent, see \cite[Example 1.2.4]{CT09}, we may assume that $X\cong\Z^r$, or equivalently $T\cong (\GmK^{\rig})^{r}$. 

    First we deal with the case $r=1$. We can identify line bundles with $\GmK^{\rig}$-torsors by \cite[Exercsie 4.3.5 (3) and \S4.7]{FvdP04}. Then $E$ corresponds to an fppf-line bundle $L$ on $G$, which is actually a line bundle for the analytic topology by descent (see \cite[Thm. 3.1]{BG98}), and thus $L$ is representable by a rigid space. Further $E$ can be obtained by removing the zero section from $L$, and is also representable by a rigid space.

    Now we deal with the case $r=2$. Since the functor $\Ext^1_{K,\rig}(G,-)$ is addtive, $E$ can be obtained from two extensions $E_1$ and $E_2$ of $G$ by $\GmK^{\rig}$ as the pullback of $E_1\times_{\Sp K}E_2$ along the diagonal map $G\to G\times_{\Sp K}G$, and thus $E$ is representable by rigid space over $K$. 

    The general case can be performed inductively in the same way as for $r=2$.

    (2) This is clear by the proof of (1).

    (3) We may assume that both $T$ and $G$ are split, namely $T\cong(\GmK^{\rig})^r$ and $T\cong(\GmK^{\rig})^s$. Then it suffices to show \[\Ext^1_{K,\rig}(\GmK^{\rig},\GmK^{\rig})=0.\] By \cite[Thm. 6.3.3 (2)]{FvdP04}, any extension of $\GmK^{\rig}$ by $\GmK^{\rig}$ is isomorphic to $\GmK^{\rig}\times_{\Sp K}\GmK^{\rig}$ as a rigid space over $K$. Hence by the analytic analogue of \cite[Chap. VII, Prop. 7]{Ser88}, extensions of $\GmK^{\rig}$ by $\GmK^{\rig}$ are classified by the regular symmetric factor systems $\GmK^{\rig}\times_{\Sp K}\GmK^{\rig}\to\GmK^{\rig}$ modulo trivial ones (see \cite[Chap. VII, \S4]{Ser88}). It suffices to show that all such factor systems are trivial. By \cite[Thm. 6.3.3 (1)]{FvdP04}, such factors systems are all algebraic and thus trivial by the vanishing of the extension group of $\GmK$ by $\GmK$ as algebraic groups. 
\end{proof}

\begin{theorem}[Weil-Barsotti formula]\label{thm:Weil-Barsotti formula}
\begin{enumerate}
    \item Let $A$ be an abeloid variety over $K$. Then the sheaf $\cExt^1_{K,\rig}(A,\GmK^{\rig})$ is represented by the dual abeloid variety $A^{\vee}$ of $A$.
    \item Let $\mathcal{A}$ be a formal abelian scheme over $\cS$. Then $\cExt^1_{\cS}(\mathcal{A},\Gmfml)$ is represented by the dual formal abelian scheme of $\mathcal{A}$.
        
    \item Assume that $\cS=\Spf\cO_K$ and the abeloid variety $A$ from (1) is the rigid analytic fiber of  the formal abelian scheme $\mathcal{A}$ from (2). Then the rigid analytic fiber of $\cExt^1_{\cS}(\mathcal{A},\Gmfml)$ is $\cExt^1_{K,\rig}(A,\GmK^{\rig})$.
\end{enumerate}
\end{theorem}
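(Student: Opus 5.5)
The plan is to reduce all three parts to the standard identification of $\cExt^1(-,\Gm)$ with translation-invariant line bundles, i.e.\ with the Picard functor of degree-zero type, and then invoke the known representability of that Picard functor.

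For (1), I first compute $\cHom_{K,\rig}(A,\GmK^{\rig})=0$. Indeed, $A$ is proper and connected, so any morphism $A_{U}\to \GmK^{\rig}$ over a test space $U$ is constant along fibers. Being a homomorphism, it is therefore trivial. Next, as in the proof of Proposition \ref{prop:extensions of rigid analytic groups by rigid tori are representable}, an extension $0\to\GmK^{\rig}\to E\to A_U\to 0$ is the same as a $\GmK^{\rig}$-torsor, hence an analytic line bundle $L$ on $A_U$ (by \cite[Thm. 3.1]{BG98}), together with the data of a group law. The analytic analogue of the Weil--Rosenlicht/Serre argument (\cite[Chap. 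VII]{Ser88}) identifies this data with a rigidification of $L$ along the zero section plus an isomorphism $m^*L\cong p_1^*L\otimes p_2^*L$ compatible with that rigidification. Such an isomorphism exists exactly when $L$ satisfies the theorem of the square, i.e.\ when $L\in\Pic^\tau$. Because $\cHom(A,\GmK^{\rig})=0$, the automorphisms are controlled and the rigidified isomorphism is unique. Sheafifying in $U$ then gives $\cExt^1_{K,\rig}(A,\GmK^{\rig})\cong \Pic^{\tau}_{A/K}$ (rigidified version), and this is represented by $A^\vee$ by \cite[Corollary 7.6.5]{Lut16}. Part (2) follows the identical argument on $(\mathrm{FSch}/\cS)_{\fl}$ with $\Gmfml$, working modulo $\cI^n$ and passing to the limit. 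In that setting one uses that the dual formal abelian scheme represents the rigidified $\Pic^0$ of a formal abelian scheme, obtained as the limit of the duals of the abelian schemes $\mathcal{A}\otimes\cO_\cS/\cI^n$.

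For (3), I compare the two sides via the Raynaud generic fiber functor, which sends $\Gmfml$ to the open unit-circle subgroup of $\GmK^{\rig}$. There is then a natural map $(\mathcal{A}^\vee)^{\rig}\to A^\vee$ obtained by pushing extensions along that inclusion. Both sides are abeloid varieties of the same dimension, and both are the formal N\'eron model side of the duality. I would show the map is an isomorphism by observing that it is a homomorphism of proper smooth groups that induces an isomorphism on Lie algebras, via $H^1(A,\cO)$ computed compatibly through the formal model. Alternatively, one can use the N\'eron mapping property of $\mathcal{A}^\vee$, which identifies it with the formal N\'eron model of $A^\vee$. The point to check is that every extension of $A$ by $\GmK^{\rig}$ in $\Pic^\tau$ reduces to one by the unit circle. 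This holds because $A^\vee$ is proper, so its points extend to $\mathcal{A}^\vee$.

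I expect the main obstacle to be the step in (1) where extensions are identified with $\Pic^\tau$ on the rigid fppf site. That step needs an analytic theorem of the square and fppf descent of line bundles in families over a general base $U$, not just over $K$. I would handle it first over $K$ and its finite extensions, and then treat families using \cite{BG98} together with Lütkebohmert's construction of the dual.
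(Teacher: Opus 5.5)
Your treatment of (1) and (2) matches the paper's. Extensions of $A\times_K U$ by $\GmK^{\rig}$ are identified with rigidified translation-invariant line bundles, using fppf descent of line bundles \cite[Thm.~3.1]{BG98} and \cite[Exp.~VII, Prop.~1.3.5]{sga7-1}. That functor is represented by $A^\vee$, and the formal case is done level by level over $\mathcal{A}\times_{\cS}S_n$. One slip: every line bundle satisfies the theorem of the square. The condition for an isomorphism $m^*L\cong p_1^*L\otimes p_2^*L$ is translation invariance.

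The gap is in (3), which the paper takes from Lütkebohmert \cite[Thm.~2.10]{Lut90}. Neither of your substitutes shows that the natural map $\phi\colon(\mathcal{A}^\vee)^{\rig}\to A^\vee$ is an isomorphism.

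\emph{The Lie algebra route.} An isomorphism on Lie algebras only makes $\phi$ \'etale. Together with properness, equal dimension and connectedness of $A^\vee$, this gives an \'etale isogeny, not an isomorphism. Multiplication by an integer $n\geq 2$ invertible in $K$ on any positive-dimensional abeloid variety has all these properties and is not an isomorphism. You must also show that $\phi$ has trivial kernel. Concretely: a rigidified extension $\mathcal{E}$ of $\mathcal{A}_{\cO_L}$ by $\Gmfml$, whose generic fiber splits after pushout along $\Gmfml^{\rig}\hookrightarrow\GmK^{\rig}$, is already split. This needs, for example, $\Hom(A_L,\GmK^{\rig}/\Gmfml^{\rig})=0$ for the connected quasi-compact $A_L$, followed by the N\'eron mapping property of $\mathcal{E}$ to extend the generic splitting. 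None of this appears in your sketch.

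\emph{The N\'eron-model route.} This is circular. Saying that points of $A^\vee$ extend to $\mathcal{A}^\vee$, or that $\mathcal{A}^\vee$ is the formal N\'eron model of $A^\vee$, presupposes $(\mathcal{A}^\vee)^{\rig}\cong A^\vee$, which is the claim. Properness of $A^\vee$ does not by itself produce a map $A^\vee\to(\mathcal{A}^\vee)^{\rig}$.

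The missing input is an extension statement. Rigidified translation-invariant line bundles on $A\times U$ must extend, uniquely, to translation-invariant formal line bundles on $\mathcal{A}\times\mathcal{U}$ for a formal model $\mathcal{U}$ of $U$. This is what the paper imports from \cite[Thm.~2.10]{Lut90} and its proof.
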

\begin{proof}
    (1) First, we stress that the line bundles for the fppf topology are the same as the line bundles for the analytic topology by descent \cite[Thm. 3.1]{BG98}. Let $V\in (\mathop{\mathrm{Rig}}/K)$. By \cite[Exp. VII, Prop. 1.3.5]{sga7-1}, the extensions of $A\times_KV$ by $\GmK^{\rig}\times_KV$ over $V$ correspond to the translation invariant rigidified line bundles on $A\times_KV$. Therefore, the sheaf $\cExt^1_{K,\rig}(A,\Gm)$ is identified with the analytic Picard functor $\Pic^\tau_{A/K}$ of translation invariant line bundles, hence representable by the dual abeloid variety of $A$ by \cite[Cor. II.2]{Lut95}.

    (2) Let $S_n$ be the scheme corresponding to $\cI^{n+1}$. Since we have $\cExt^1_{S_n}(\mathcal{A}\times_{\cS}S_n,\Gm)\cong (\mathcal{A}\times_{\cS}S_n)^\vee$ on $S_n$ canonically, the result follows.

    (3) This follows from \cite[Thm. 2.10]{Lut90} and its proof.
\end{proof}

\subsubsection{Biextensions of abeloid varieties by the rigid analytic multiplicative group}

We first show that there is no non-trivial homomorphism from any abeloid variety to any rigid analytic torus.

We have the following observation from Kiehl's finiteness theorem.

\begin{lemma}\label{lem:global-functions-abeloid}
Let \(A\) be an abeloid variety over \(K\). Then
$\Gamma(A,\mathcal O_A)=K$.
\end{lemma}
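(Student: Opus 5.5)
By Kiehl's finiteness theorem, since $A$ is proper over $\Sp K$, the ring $R:=\Gamma(A,\mathcal O_A)$ is a finite-dimensional $K$-algebra. The plan is to show that this finite $K$-algebra is in fact a field equal to $K$, using connectedness, reducedness, and the existence of a $K$-rational point (the identity section $e\in A(K)$).

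First, $A$ is smooth, hence reduced, so $R$ is a reduced finite $K$-algebra, i.e. a finite product of finite field extensions $L_1\times\cdots\times L_m$ of $K$. Any nontrivial idempotent of $R$ would decompose $A$ as a disjoint union of two nonempty admissible open subsets. Since $A$ is connected, $R$ has no nontrivial idempotents, so $m=1$ and $R=L$ is a finite field extension of $K$.

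Next, evaluation at the identity gives a $K$-algebra homomorphism $e^\ast\colon R\to \Gamma(\Sp K,\mathcal O)=K$, namely $f\mapsto f(e)$. This is a $K$-algebra map from the field $L$ to $K$, so it is injective. Comparing dimensions gives $[L:K]\le 1$, hence $R=K$.

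The only point requiring care is the appeal to Kiehl's theorem, i.e. the finiteness of coherent cohomology for proper rigid spaces, applied to $\mathcal O_A$. Everything else is formal, and I expect no real obstacle. Alternatively, one could argue by the maximum modulus principle on the affinoid covering together with properness. However, Kiehl's theorem is the cleanest route and is the one suggested by the paper.
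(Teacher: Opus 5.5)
Your proposal is correct and follows the paper's proof essentially step for step: Kiehl's finiteness theorem, then reducedness and connectedness to make $\Gamma(A,\mathcal O_A)$ a finite field extension of $K$, then the identity section $e^\ast$. Your closing step, that the field homomorphism $e^\ast\colon L\to K$ is injective and so $[L:K]\le 1$, is a slightly more explicit version of the paper's remark that $K\to\Gamma(A,\mathcal O_A)\xrightarrow{e^\ast}K$ is the identity.
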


\begin{proof}
By definition \(A\to \Sp\,K\) is proper, Kiehl's finiteness theorem \cite[Thm.~1.6.4]{Lut16} implies that \(\Gamma(A,\mathcal O_A)\) is a finite-dimensional \(K\)-algebra. Because \(A\) is smooth, it is reduced, hence \(\Gamma(A,\mathcal O_A)\) is reduced.
Moreover, \(A\) is connected, so \(\Gamma(A,\mathcal O_A)\) has no nontrivial
idempotents: indeed, a nontrivial idempotent would decompose \(A\) into two
disjoint nonempty admissible open-and-closed subsets. Therefore
\(K\to \Gamma(A,\mathcal O_A)\) is a finite field extension. Let \(e_A:\Sp\,K\to A\) be the identity section. Then the composition
$K\to\Gamma(A,\mathcal O_A)\xrightarrow{e_A^*} K$
is $\id_K$, which forces \(\Gamma(A,\mathcal O_A)=K\).
\end{proof}

If \(R\) is an affinoid \(K\)-algebra and let
$Y\subset \mathbb P^n_R$
be a closed analytic subset of relative projective space. By \cite[Cor.~1.6.12]{Lut16} (rigid Chow theorem), \(Y\) is the vanishing
locus of finitely many homogeneous polynomials in \(R[\xi_0,\dots,\xi_n]\). In particular, every closed analytic subset of \(\mathbb P^n_K\) is algebraic.

\begin{proposition}\label{prop:Hom-abeloid-Gm-vanishes}
Let \(A\) be an abeloid variety over \(K\). Then every morphism $A\to \GmK^{\rig}$ of rigid analytic varieties is constant. 
In particular a homomorphism $A\to \GmK^{\rig}$ of rigid analytic groups must be trivial. 
\end{proposition}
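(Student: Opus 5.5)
A morphism $f\colon A\to \GmK^{\rig}$ is the same thing as a unit of $\Gamma(A,\mathcal O_A)$: the rigid analytic space $\GmK^{\rig}$ represents the functor $V\mapsto \Gamma(V,\mathcal O_V)^\times$ on $(\Rig/K)$, since it is the analytification of $\Spec K[t,t^{-1}]$ and morphisms into an analytification of an affine scheme of finite type are given by global sections satisfying the defining equations. So I will translate $f$ into the invertible function $f^*(t)\in\Gamma(A,\mathcal O_A)^\times$.

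By Lemma \ref{lem:global-functions-abeloid} we have $\Gamma(A,\mathcal O_A)=K$, so $f^*(t)=c$ for some $c\in K^\times$. Then $f$ factors as
\[A\to \Sp K\xrightarrow{\,c\,}\GmK^{\rig},\]
i.e. $f$ is constant with value $c$. This gives the first assertion.

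For the second assertion, let $f$ be a homomorphism of rigid analytic groups. It sends the identity section $e_A$ to $1$, so the constant value is $c=f(e_A)=1$, and $f$ is trivial.

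The only step that requires care is the identification of morphisms into $\GmK^{\rig}$ with global units, that is, the universal property of analytification for $\mathbb{A}^1$ and $\Gm$ with respect to arbitrary (not necessarily affinoid) rigid spaces. It holds because morphisms glue over an admissible affinoid covering of $A$, and on affinoids it is the standard statement \cite[\S1.4]{Lut16}. The rigid Chow theorem quoted before the statement is not needed for this argument.
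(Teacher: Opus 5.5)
Your proof is correct and follows the paper's argument. The paper likewise identifies $\Gamma(A,\GmK^{\rig})$ with $\Gamma(A,\mathcal{O}_A^\times)$, citing \cite[Exercise 4.3.5 (3)]{FvdP04} for the universal property rather than \cite{Lut16}. It then uses Lemma \ref{lem:global-functions-abeloid} to get $K^\times$, so every morphism factors through a point, and a homomorphism must send the identity to $1$.
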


\begin{proof}
By \cite[Exercise 4.3.5 (3)]{FvdP04} and Lemma \ref{lem:global-functions-abeloid}, we have 
\[\Gamma(A,\GmK^{\rig})=\Gamma(A,\mathcal{O}_A^\times)=K^\times,\]
i.e. any morphism $A\to\GmK^{\rig}$ factors through a point of $\GmK^{\rig}$. Then the result follows.
\end{proof}

\begin{lemma}\label{lem:a formal abelian scheme is the formal Neron model of its generic fiber}
    Let $\cS=\Spf\cO_K$, $\mathcal{A}$ a formal abelian scheme over $\cS$, and $\mathcal{A}^{\rig}$ the rigid analytic fiber of $\mathcal{A}$. 
    \begin{enumerate}
        \item We have that $\mathcal{A}$ is the formal N\'eron model of $\mathcal{A}^{\rig}$. 

        \item In particular, $\Hom_{\cS}(\Z,\mathcal{A})=\Hom_{K,\rig}(\Z,\mathcal{A}^{\rig})$.
    \end{enumerate}
\end{lemma}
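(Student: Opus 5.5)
For (1), I will verify the N\'eron mapping property directly. The formal N\'eron model of a smooth rigid group is the largest open subgroup admitting a smooth formal model with the N\'eron property (\cite{BS95}). It therefore suffices to show two things: $\mathcal{A}$ is smooth, and for every smooth formal $\OK$-scheme $\mathcal{Z}$, every rigid morphism $f:\mathcal{Z}^{\rig}\to\mathcal{A}^{\rig}$ extends uniquely to a morphism $\mathcal{Z}\to\mathcal{A}$. Smoothness holds by definition of a formal abelian scheme. Uniqueness follows because $\mathcal{Z}$ is flat over $\OK$, hence $\pi$-torsion free, and $\mathcal{A}$ is separated. So two formal morphisms that agree on the generic fiber agree, since functions on a flat formal scheme inject into functions on its rigid fiber.

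For existence, I will follow the proof of the classical fact that an abelian scheme over a DVR is the N\'eron model of its generic fiber, i.e.\ the extension property for smooth schemes into abelian schemes (Weil's extension theorem). The argument runs as follows.
\begin{enumerate}
\item Work locally on $\mathcal{Z}$, which we may take affine and connected.
\item The graph of $f$ extends to an admissible formal blow-up $\mathcal{Z}'\to\mathcal{Z}$ with a morphism $\mathcal{Z}'\to\mathcal{A}$, by Raynaud's theory of formal models.
\item The locus where this map fails to descend has codimension $\ge 2$ in $\mathcal{Z}$, since $\mathcal{Z}$ is normal and $\mathcal{A}$ is proper.
\item Rational maps from smooth formal schemes into formal group schemes without affine part are defined in codimension $1$. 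This is the formal analogue of Weil's theorem, which can be checked modulo $\pi^{n}$ via the group law trick $(x,y)\mapsto f(x)f(y)^{-1}$.
\end{enumerate}
Rather than redo this, I plan to cite \cite[Prop.~2.3/Thm.~2.6]{BS95} or \cite[\S7.6]{Lut16}. Those sources show that a formal abelian scheme (more generally, a formal group whose special fiber is an abelian variety) is the formal N\'eron model of its generic fiber; in the paper's language this is the statement, recorded just before Example~\ref{example non algebraic}, that good reduction implies $\mathcal{B}$ is the formal N\'eron model. The main obstacle is this existence step. If a direct citation is unavailable, I would try the blow-up and codimension-two argument above.

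For (2), note that $\Hom_{\cS}(\Z,\mathcal{A})=\mathcal{A}(\cS)$ and $\Hom_{K,\rig}(\Z,\mathcal{A}^{\rig})=\mathcal{A}^{\rig}(K)$. Apply the N\'eron property from (1) with $\mathcal{Z}=\cS=\Spf\OK$, which is smooth over itself and has rigid fiber $\Sp K$. This gives that the restriction map $\mathcal{A}(\OK)\to\mathcal{A}^{\rig}(K)$ is bijective. Equivalently, properness of $\mathcal{A}$ gives the valuative criterion at the formal level: a $K$-point extends to an $\OK$-point by taking the closure.
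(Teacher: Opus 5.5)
Your part (2) is exactly the paper's argument (the N\'eron property applied with $\mathcal{Z}=\cS$). For part (1), however, the paper never constructs extensions $\mathcal{Z}\to\mathcal{A}$. It invokes \cite[Criterion 1.4]{BS95}, a criterion for a smooth formal group scheme to be the formal N\'eron model of its rigid generic fibre. Like the BLR criterion via strict henselizations, it only asks for a bijectivity statement on points over unramified extensions. For $\mathcal{A}$ that statement is automatic, and not because of properness. If $\Spf B\subset\mathcal{A}$ is an affine open, a $K'$-point of $\Sp(B\otimes_{\OK}K)$ sends the topologically finitely generated $\OK$-algebra $B$ into power-bounded elements, i.e.\ into $\cO_{K'}$. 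So ``taking the closure'' in your (2) is not the mechanism; there is nothing to close in a formal scheme. This is also why the paper can apply the same criterion to $\Gmfml$, $\cT$ and $\cG$, which are not proper. The point-extension you already observe in (2) is the only input the paper's proof of (1) needs.

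Your route instead makes existence of extensions for arbitrary smooth $\mathcal{Z}$ the whole proof, and as written that step is not secured. The remark before Example~\ref{example non algebraic} is just the statement of this lemma, so citing it is circular. The fallback sketch breaks at step 4, for three reasons:
\begin{enumerate}
\item For $n\geq 1$ the truncations $\mathcal{Z}\otimes\OK/\pi^{n+1}$ are not even reduced.
\item The bad locus from step 3 has codimension only $\geq 1$ in the special fibre, so it may be a divisor there.
\item Formal geometry has no Hartogs principle across closed subsets of the special fibre. For $\mathcal{Z}=\Spf\OK\langle T\rangle$ with bad locus $\{T=0\}$, the formal open complement is $\Spf\OK\langle T,T^{-1}\rangle$, and $T^{-1}$ does not extend.
\end{enumerate}
So ``defined in codimension $1$ $\Rightarrow$ defined everywhere'' cannot be obtained by codimension counting plus the group-law trick modulo $\pi^n$. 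It has to use that $f$ is defined on all of $\mathcal{Z}^{\rig}$, which $T^{-1}$ is not. That is the extension-of-morphisms work the paper avoids by citing the criterion. Note also that Weil's theorem is precisely that implication, valid for any smooth separated group target; the absence of an affine part plays no role in it.
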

\begin{proof}
    (1) follows from \cite[Criterion 1.4]{BS95}. 

    Since $\Hom_{\cS}(\Z,\mathcal{A})=\Gamma(\cS,\mathcal{A})$ and $\Hom_{K,\rig}(\Z,\mathcal{A}^{\rig})=\Gamma(\Sp K,\mathcal{A}^{\rig})$, (2) follows from the property of formal N\'eron model.
\end{proof}

Now we are ready to describe biextensions of abeloid varieties by the rigid analytic multiplicative group.

\begin{proposition}\label{prop:key prop. for biextensions in both rigid and formal settings}
    Let $\Biext^i_{K,\rig}(-,-;-)$ (resp. $\Biext^i_{\cS}(-,-;-)$) be the biextension functor on the site $(\mathop{\mathrm{Rig}}/K)_{\fl}$ (resp. $(\mathop{\mathrm{FSch}}/\cS)_{\fl}$), see \cite[Exp. VII, \S2.5]{sga7-1}.
    \begin{enumerate}
        \item Let $A_1$ and $A_2$ be abeloid varieties over $K$, and $X$ an \'etale locally constant sheaf of finite rank free abelian groups over $K$. Then we have canonical isomorphisms
        \[\Biext^1_{K,\rig}(A_1,A_2;\GmK^{\rig})\xrightarrow{\cong}\Hom_{K,\rig}(A_1,A_2^{\vee})\]
        and
        \[\Ext^1_{K,\rig}(A_1,T^{\rig})\xrightarrow{\cong}\Biext^1_{K,\rig}(A_1,X;\GmK^{\rig})\xrightarrow{\cong}\Hom_{K,\rig}(X,A_1^{\vee}),\]
        where $T$ is the algebraic torus with character group $X$ over $K$.

        In particular, for $A_2=A_1^{\vee}$ there exists a biextension $P$ corresponding to $A_1\xrightarrow{\id}A_1^{\vee\vee}=A_1$, and we call it the \textbf{Poincar\'e biextension of $A_1$}.
        
        \item Let $\cS$ be a formal scheme which admits a finitely generated ideal of definition $\cI$. Let $\mathcal{A}_1$, $\mathcal{A}_2$ be formal abelian schemes over $\cS$, and $X_{\cS}$ an \'etale locally constant sheaf of finite rank free abelian groups over $\cS$. Then we have canonical isomorphisms 
        \[\Biext^1_{\cS}(\mathcal{A}_1,\mathcal{A}_2;\Gmfml)\xrightarrow{\cong}\Hom_{\cS}(\mathcal{A}_1,\mathcal{A}_2^{\vee})\]
        and
        \[\Ext^1_{\cS}(\mathcal{A}_1,\mathcal{T})\xrightarrow{\cong}\Biext^1_{\cS}(\mathcal{A}_1,X_{\cS};\Gmfml)\xrightarrow{\cong}\Hom_{\cS}(X_{\cS},\mathcal{A}_1^{\vee}),\]
        where $\mathcal{T}$ is the formal torus with character group $X_{\cS}$ over $\cS$.

        In particular, for $\mathcal{A}_2=\mathcal{A}_1^{\vee}$ there exists a biextension $\mathcal{P}$ correspoinding to $\mathcal{A}_1\xrightarrow{\id}\mathcal{A}_1^{\vee\vee}=\mathcal{A}_1$, and we call it the \textbf{Poincar\'e biextension of $\mathcal{A}_1$}.
        
        \item Assume that $\cS=\Spf\cO_K$ and the abeloid variety $A_1$ in (1) is the generic fiber of the formal abelian scheme $\mathcal{A}_1$. Note that the Poincar\'e biextension $P$ is a $\GmK^{\rig}$-torsor over $A_1\times_{\Sp K} A_1^{\vee}$, and thus represented by the corresponding line bundle with its zero section removed. Similarly $\mathcal{P}$ is also representable. Then $P$ is the pushout of the generic fiber $\mathcal{P}^{\rig}$ of $\mathcal{P}$ along $\Gmfml^{\rig}\to \GmK^{\rig}$.
    \end{enumerate}
\end{proposition}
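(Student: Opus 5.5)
We follow Grothendieck's treatment in \cite[Exp.~VII]{sga7-1}, which is formal once the right vanishing statements are available on the relevant sites. The general mechanism runs as follows. For sheaves $F_1,F_2,H$ there is a natural exact sequence
\[0\to \Ext^1(F_1,\cHom(F_2,H))\to \Biext^1(F_1,F_2;H)\to \Hom(F_1,\cExt^1(F_2,H))\to \Ext^2(F_1,\cHom(F_2,H)),\]
see \cite[Exp.~VII, 3.6.5]{sga7-1}. It comes from the description of $\Biext^1$ as $\Ext^1(F_1\overset{L}{\otimes}F_2,H)$ in low degrees, together with the spectral sequence for $R\Hom(F_1,R\cHom(F_2,H))$.

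\textbf{Part (1), abeloid case.} Take $F_1=A_1$, $F_2=A_2$ and $H=\GmK^{\rig}$. We first need $\cHom_{K,\rig}(A_2,\GmK^{\rig})=0$. Proposition~\ref{prop:Hom-abeloid-Gm-vanishes} gives this over $K$. Its proof only used that $\Gamma(A,\cO_A)=K$, and this extends to any base $V\in(\Rig/K)$: for $V$ affinoid, properness, flatness and geometric connectedness of $A$ give $\Gamma(A\times V,\cO)=\Gamma(V,\cO_V)$, by Kiehl plus cohomology and base change, or by reducing to points. Hence a homomorphism $A_2\times V\to\GmK^{\rig}\times V$ is constant in the $A_2$-direction, so it is trivial. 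The two outer terms of the sequence then vanish. Theorem~\ref{thm:Weil-Barsotti formula}(1) identifies $\cExt^1(A_2,\GmK^{\rig})=A_2^\vee$, which gives $\Biext^1(A_1,A_2;\GmK^{\rig})\cong\Hom(A_1,A_2^\vee)$.

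\textbf{Part (1), torus case.} Put $F_2=X$. Then $\cHom(X,\GmK^{\rig})=T^{\rig}$ and $\cExt^1(X,\GmK^{\rig})=0$; the latter holds because $X$ is étale locally $\Z^r$ and $\cExt^1(\Z,-)=0$. The sequence therefore collapses to $\Ext^1(A_1,T^{\rig})\cong\Biext^1(A_1,X;\GmK^{\rig})$. Using the symmetry of biextensions and applying the sequence with the roles exchanged ($F_1=X$, $F_2=A_1$), the outer term $\Ext^1(X,\cHom(A_1,\GmK^{\rig}))$ vanishes because $\cHom(A_1,\GmK^{\rig})=0$, and the middle identifies with $\Hom(X,\cExt^1(A_1,\GmK^{\rig}))=\Hom(X,A_1^\vee)$. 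The Poincaré biextension is then the preimage of $\id$ under the isomorphism, using $A_1^{\vee\vee}=A_1$.

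\textbf{Part (2).} The argument is identical on $(\mathrm{FSch}/\cS)_{\fl}$, using Theorem~\ref{thm:Weil-Barsotti formula}(2) and the vanishing of $\cHom_{\cS}(\mathcal A,\Gmfml)$. For the latter we reduce modulo $\cI^{n+1}$: there it is the classical fact that homomorphisms from an abelian scheme to $\Gm$ are trivial, since global functions on an abelian scheme come from the base. We then pass to the limit.

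\textbf{Part (3).} Representability is immediate, since a $\GmK^{\rig}$-torsor is a line bundle minus its zero section, as in Proposition~\ref{prop:extensions of rigid analytic groups by rigid tori are representable}. Taking the generic fiber and then pushing out along $\Gmfml^{\rig}\to\GmK^{\rig}$ defines a map from formal biextensions of $(\mathcal A_1,\mathcal A_1^\vee)$ to rigid biextensions of $(A_1,A_1^\vee)$. This map is compatible with the identifications in (1) and (2), because it is compatible with the maps $\cExt^1(\mathcal A_1^\vee,\Gmfml)\to\cExt^1(A_1^\vee,\GmK^{\rig})$; by Theorem~\ref{thm:Weil-Barsotti formula}(3), the latter is the generic-fiber map $\mathcal A_1\to A_1$. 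Hence $\mathcal P$ goes to the biextension corresponding to $\id_{A_1}$, which is $P$.

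The main obstacle is the relative vanishing $\cHom_{K,\rig}(A,\GmK^{\rig})=0$ over arbitrary rigid bases, together with the naturality check in (3). The rest is formal homological algebra from SGA7.
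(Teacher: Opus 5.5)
Your proposal is correct and follows essentially the same route as the paper. Both use the SGA~7 low-degree exact sequence for $\Biext^1$, fed with the vanishing of $\cHom(A,\GmK^{\rig})$ and $\cExt^1(X,\GmK^{\rig})$ and the Weil--Barsotti identification, and both handle (3) with a naturality diagram comparing the formal and rigid identifications. Two of your steps are harmless refinements rather than a different route: you prove that $\cHom_{K,\rig}(A,\GmK^{\rig})=0$ over arbitrary bases, whereas the paper only cites the statement over $K$, and in (3) you compose generic fibre and pushout directly instead of passing through the paper's intermediate row with $\Gmfml^{\rig}$-coefficients.
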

\begin{proof}
    (1) Since $\cHom_{K,\rig}(A_2,\GmK^{\rig})=0$ by Proposition \ref{prop:Hom-abeloid-Gm-vanishes} and $\cExt_{K,\rig}(A_2,\GmK^{\rig})=A_2^{\vee}$ by Theorem \ref{thm:Weil-Barsotti formula} (1),  we get 
    \[\Biext^1_{K,\rig}(A_1,A_2;\GmK^{\rig})\xrightarrow{\cong}\Hom_{K,\rig}(A_1,A_2^{\vee})\]
    by \cite[Exp. VIII, (1.1.4)]{sga7-1}. Since $\cExt_{K,\rig}(X,\GmK^{\rig})=0$ and $\cHom_{K,\rig}(A_1,\GmK^{\rig})=0$, we get 
    \[\Ext^1_{K,\rig}(A_1,T^{\rig})\xrightarrow{\cong}\Biext^1_{K,\rig}(A_1,X;\GmK^{\rig})\xrightarrow{\cong}\Hom_{K,\rig}(X,A_1^{\vee})\]
    also by \cite[Exp. VIII, (1.1.4)]{sga7-1}.

    (2) The proof is similar to (1).

    (3) Consider the following commutative diagram
    \[\xymatrix{
    \Biext^1_{\cS}(\mathcal{A}_1,\mathcal{A}_1^{\vee};\Gmfml)\ar[d]\ar[r]^-{\cong} &\Hom_{\cS}(\mathcal{A}_1,\cExt^1_{\cS}(\mathcal{A}_1^{\vee},\Gmfml))\ar[d]\ar[r]^-{\cong} &\Hom_{\cS}(\mathcal{A}_1,\mathcal{A}_1)\ar[dd]^{\cong} \\
    \Biext^1_{K,\rig}(A_1,A_1^{\vee};\Gmfml^{\rig})\ar[d]\ar[r]^-{\cong} &\Hom_{K,\rig}(A_1,\cExt^1_{K,\rig}(A_1^{\vee},\Gmfml^{\rig}))\ar[d]  \\
    \Biext^1_{K,\rig}(A_1,A_1^{\vee};\GmK^{\rig})\ar[r]^-{\cong} &\Hom_{K,\rig}(A_1,\cExt^1_{K,\rig}(A_1^{\vee},\GmK^{\rig}))\ar[r]^-{\cong} &\Hom_{K,\rig}(A_1,A_1),  \\
    }\]
    where the horizontal isomorphisms in the first and the third row are as in (1) and (2), the horizontal isomorphism in the middle follows from $\cHom_{K,\rig}(A_1^\vee,\Gmfml^{\rig})=0$ which follows from $\cHom_{K,\rig}(A_1^\vee,\GmK^{\rig})=0$, the upper left and the upper middle vertical maps are induced by the restriction to generic fiber, the lower left and the lower middle vertical maps are pushouts along $\Gmfml^{\rig}\hookrightarrow\GmK^{\rig}$, and the right vertical map is also induced by the restriction to generic fiber and an isomorphism by Lemma \ref{lem:a formal abelian scheme is the formal Neron model of its generic fiber} (1). Then the statement follows from this diagram. 
\end{proof}

\subsubsection{Raynaud uniformization}
By \cite[Theorem II]{Lut95}  (see also \cite[Theorem 7.6.4]{Lut16}), after a suitable finite field extension of $K$, any abeloid variety over $K$ admits a Raynaud uniformization \cite[\S1]{BL91}.  

\begin{theorem}[Raynaud uniformization]\label{thm:split Raynaud uniformization}
Let $\cS=\Spf\cO_K$, and let $A$ be an abeloid variety over $K$. After a suitable finite field extension of $K$, $A$ admits a Raynaud uniformization described as follows.
\begin{itemize}
\item There exists a largest connected analytic subgroup of $A$ which admits a formal smooth model $\mathcal{G}$, and $\mathcal{G}$ fits into a short exact sequence of smooth formal group schemes
\[
0 \to \mathcal T \to \mathcal G \to \mathcal B \to 0
\]
over $\cS$ with $\mathcal T$ a split formal torus and
$\mathcal B$ a formal abelian scheme.

\item Let $\mathcal{X}$ be the character group of $\mathcal{T}$, and let $X$ be the generic fiber of $\mathcal{X}$. Let $T$ be the algebraic torus over $K$ with character group $X$, and let $T^{\rig}$ be the rigid group associated to $T$. Then $\mathcal{T}^{\rig}$ is canonically an open subgroup of $T^{\rig}$. Let $G$ be the pushout 
\[\xymatrix{
0\ar[r] &\mathcal{T}^{\rig}\ar[r]\ar@{^(->}[d] &\mathcal{G}^{\rig}\ar[r]\ar@{^(->}[d] &\mathcal{B}^{\rig}\ar[r]\ar@{=}[d] &0 \\
0\ar[r] &T^{\rig}\ar[r] &G\ar[r] &\mathcal{B}^{\rig}\ar[r] &0
}\]
on $(\mathop{\mathrm{Rig}}/K)_{\fl}$.

\item The composition $\mathcal{T}^{\rig}\hookrightarrow\mathcal{G}^{\rig}\hookrightarrow A$ extends to a homomorphism $T^{\rig}\to A$, which induces a homomorphism $\pi:G\to A$ by the universal property of pushout.

\item The homomorphism $\pi$ is surjective with $Y:=\ker(\pi)$ a split lattice of rank $\dim T$ inside $G$ (see \cite[page 656]{BL91} for the definition of split lattice), and thus $A$ fits into a short exact sequence
\[0\to Y\to G\xrightarrow{\pi} A\to 0\]
on $(\mathop{\mathrm{Rig}}/K)_{\fl}$.
\end{itemize}
\end{theorem}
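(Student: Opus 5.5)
This theorem is essentially a repackaging of L\"utkebohmert's results \cite[Theorem II]{Lut95}, \cite[Theorem 7.6.4]{Lut16} together with the Bosch--L\"utkebohmert construction \cite[\S1]{BL91}. The plan is to take the structural input from these sources, after a finite extension of $K$, and then check that the pushout description matches the formulation above.

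\textbf{Step 1: the formal subgroup $\mathcal G$.} By the semi-stable reduction theorem \cite[Theorem II]{Lut95}, after a finite extension $K'/K$ we may assume that $A$ has semi-stable reduction. In the terminology of \cite[\S7.6]{Lut16}, this means the formal N\'eron model (\cite{BS95}) of $A$ has identity component $\mathcal G$ that is a formal semi-abelian scheme. Thus $\mathcal G$ is an extension $0\to\mathcal T\to\mathcal G\to\mathcal B\to 0$ of a formal abelian scheme by a formal torus. After a further finite unramified extension, the character group $\mathcal X$ becomes constant, so $\mathcal T$ is split. Maximality of $\mathcal G^{\rig}$ among connected analytic subgroups with a smooth formal model follows from the N\'eron mapping property: any such subgroup $H$ with smooth model $\mathcal H$ maps into the formal N\'eron model, and by connectedness it lands in the identity component.

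\textbf{Step 2: the pushout $G$.} Since $\mathcal T\cong\Gmfml^r$, its generic fiber is the unit polydisc torus $\{|t_i|=1\}\subset(\GmK^{\rig})^r=T^{\rig}$, which gives the open immersion $\mathcal T^{\rig}\hookrightarrow T^{\rig}$. The pushout $G$ exists as a sheaf. By Proposition \ref{prop:extensions of rigid analytic groups by rigid tori are representable} it is representable and smooth, being an extension of the smooth group $\mathcal B^{\rig}$ by $T^{\rig}$.

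\textbf{Step 3: extending $\mathcal T^{\rig}\hookrightarrow A$ to $T^{\rig}\to A$.} I expect this to be the main obstacle, since it is the genuinely analytic input. The approach is the one in \cite{BL91} and \cite[\S7.6]{Lut16}. A homomorphism $\mathcal T^{\rig}\to A$ from the unit torus extends to $T^{\rig}$ because $A$ is proper. Concretely, one first extends over each annulus $\{|\pi|^{n}\le|t|\le 1\}$ by a rigid analytic continuation and properness (extension of morphisms into proper spaces across annuli, \cite[\S7.6]{Lut16}). One then uses the group law to propagate to all of $T^{\rig}$: since $T^{\rig}=\bigcup_n \pi^{n\Z^r}\cdot\mathcal T^{\rig}$-type covers, a homomorphism is determined by images of the elements $\pi^{e_i}$, and these are produced by the extension across annuli. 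With this extension in hand, the universal property of the pushout yields $\pi\colon G\to A$ restricting to the given map on $\mathcal G^{\rig}$.

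\textbf{Step 4: surjectivity and the kernel.} The map $\pi$ is \'etale, since it is an isomorphism on the open subgroups $\mathcal G^{\rig}$. Its image is an open subgroup of the connected group $A$, and it is also closed, because its complement is a union of cosets. Hence $\pi$ is surjective. The kernel $Y$ is discrete. To see that it is a lattice, note that $Y\cap\mathcal G^{\rig}=0$, and compose with the valuation map $G\to T^{\rig}/\mathcal T^{\rig}\cong\R^r$ (defined via the pushout, using $\Hom(X,\,|\cdot|)$ locally). The properness of $A=G/Y$ then forces the image of $Y$ to be a full lattice in $\R^r$, so $\operatorname{rk}Y=r=\dim T$. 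Splitness of $Y$ in the sense of \cite[page 656]{BL91} is exactly this statement, that $Y$ maps isomorphically onto a lattice of $\R^r$, after possibly enlarging $K$ so that $Y$ is constant. Finally, exactness of $0\to Y\to G\to A\to0$ on $(\Rig/K)_{\fl}$ follows because $\pi$ is an \'etale surjective homomorphism with kernel $Y$.
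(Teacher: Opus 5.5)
Your overall route is the paper's. The paper gives no argument of its own: it imports the statement from \cite[Theorem II]{Lut95} (see also \cite[Theorem 7.6.4]{Lut16}) together with \cite[\S1]{BL91}. Steps 1, 2 and 4 are reasonable unpackings of that input. One small addition is needed for the maximality claim in Step 1: your N\'eron-property argument also requires that a smooth formal model of a connected rigid group has connected special fibre. This holds because specialization is surjective and tubes of open-closed subsets are open-closed.

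The problem is Step 3. You correctly identify it as the crux, but the justification you give is not valid. There is no general principle extending morphisms from the circle $\{|t|=1\}$ into a proper rigid space across annuli. For example, $f(t)=\sum_{m\ge1}\pi^m t^{-m^2}$ converges on $|t|=1$ and so defines a morphism $\{|t|=1\}\to\mathbb{P}^1_K$. On $|t|=|\pi|$ its terms have absolute value $|\pi|^{m-m^2}\to\infty$. Because the series is lacunary, multiplying by any polynomial cannot cancel these terms. Hence $f$ does not extend, even meromorphically, to the annulus $\{|\pi|\le|t|\le1\}$. So properness of the target gives nothing by itself, and the homomorphism property and the global structure of $A$ must enter.

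Your fallback via the group law does not close the gap either. The translates $\pi^{v}\mathcal{T}^{\rig}$ for $v\in\Z^r$ cover only the open subgroup of $T^{\rig}$ where every $|t_i|$ lies in $|\pi|^{\Z}$. This misses, for instance, the point $t=\pi^{1/2}$, which is defined over a ramified quadratic extension. Knowing the images of the $\pi^{e_i}$ therefore determines the map only on that open subgroup. You would still need to continue it over the open annuli in between, and that continuation is exactly what is missing.

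For homomorphisms the extension statement is true, but only as part of the conclusion of L\"utkebohmert's theorem; it is the deep content of \cite[Theorem II]{Lut95} and \cite[Theorem 7.6.4]{Lut16}. The correct way to handle Step 3 is to quote it from there, as the paper does, rather than to derive it from properness.
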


Without passing to a finite field extension of $K$, we have the following result.

\begin{proposition}
    Let $A$ be an abeloid variety over $K$. Then $A$ fits into the following commutative diagram
\begin{equation}\label{Raynaud data}
\xymatrix{
&&0\ar[d] \\
&&Y\ar[d] \\
0\ar[r] &T^{\rig}\ar[r] &G\ar[r]\ar[d]^{\pi} &B\ar[r] &0 \\
&&A\ar[d] \\
&&0
}
\end{equation}
of rigid analytic groups over $K$, such that 
\begin{enumerate}
    \item $T^{\rig}$ is the rigid analytic group assocaited to an algebraic torus $T$ over $K$.
    \item $X:=\cHom_{K}(T,\Gm)$ and $Y$ are both \'etale locally a constant free abelian group of the same rank,
    \item $B$ is the generic fiber of a formal abelian scheme after passing to a finite field extention of $K$,
    \item $Y$ is a lattice inside $G$ (see \cite[page 656]{BL91} for the definition of lattice),
    \item the row and the column are exact sequences on $(\mathop{\mathrm{Rig}}/K)_{\fl}$.
\end{enumerate}
\end{proposition}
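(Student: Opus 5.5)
The plan is Galois descent from the split Raynaud uniformization. By Theorem \ref{thm:split Raynaud uniformization}, choose a finite Galois extension $L/K$ such that $A_L$ admits a Raynaud uniformization $0\to Y_L\to G_L\xrightarrow{\pi_L} A_L\to 0$, with $0\to T_L^{\rig}\to G_L\to \mathcal{B}^{\rig}\to 0$, $\mathcal{B}$ a formal abelian scheme over $\cO_L$ and $T_L$ split. We may take $L/K$ Galois, since enlarging $L$ preserves the uniformization: the largest connected analytic subgroup with a formal smooth model is compatible with unramified or tame base change up to the usual N\'eron-model subtleties, and in any case we only need some Galois $L$ over which all the data exist. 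The task is then to show that every piece of the diagram is canonical, hence $\Gamma=\Gal(L/K)$-stable, and to descend.

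\textbf{Canonicity.} The subgroup $\mathcal{G}^{\rig}\subset A_L$ is characterized intrinsically as the largest connected analytic subgroup admitting a formal smooth model, so for each $\sigma\in\Gamma$ its conjugate $\sigma^*(\mathcal{G}^{\rig})$ equals $\mathcal{G}^{\rig}$. The torus part $\mathcal{T}^{\rig}$ is canonical inside $\mathcal{G}^{\rig}$, being its maximal formal subtorus. Hence $\Gamma$ acts semilinearly on $\mathcal{T}^{\rig}$, on $\mathcal{B}^{\rig}$, and on the character group $X_L$. Since $X_L$ is a finitely generated free $\Z$-module with a continuous $\Gamma$-action, it descends to an \'etale-locally constant sheaf $X$ on $K$, and we let $T$ be the corresponding algebraic torus, so that $T^{\rig}_L=T_L^{\rig}$ with its descent datum.

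Next we check that the maps are $\Gamma$-equivariant. The extension $T^{\rig}\to A$ of $\mathcal{T}^{\rig}\hookrightarrow A$ is unique: two extensions differ by a homomorphism $T^{\rig}_L\to A_L$ vanishing on the open subgroup $\mathcal{T}^{\rig}$. Such a homomorphism factors through the discrete quotient $T^{\rig}/\mathcal{T}^{\rig}\cong \Hom(X,\Z)$ (the valuation), while $A$ has no nonzero maps from a lattice to itself that are trivial on the connected part; more precisely, a homomorphism from a connected rigid group that vanishes on an open subgroup is trivial. Uniqueness gives $\Gamma$-equivariance. The pushout $G_L$ is then equivariant by its universal property, so $\pi_L$ is equivariant and $Y_L=\ker\pi_L$ is $\Gamma$-stable.

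\textbf{Descent.} Descend $T^{\rig}$, $G$, $B$, and $A$ (already over $K$), together with the maps, using Galois descent for rigid spaces as in \cite[Example 1.2.4]{CT09}. Affinoid coverings can be chosen $\Gamma$-stable because the groups are quasi-separated and the descent data are effective for these group objects; for $G$ one may alternatively descend the extension class, with $G$ representable by Proposition \ref{prop:extensions of rigid analytic groups by rigid tori are representable}. The lattice $Y_L$ is a discrete $\Gamma$-stable subgroup and descends to an \'etale-locally constant sheaf $Y$ of rank $\dim T$.

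\textbf{Properties.} Exactness of the row and the column over $K$ follows because exactness of fppf sheaves can be checked after the fppf cover $\Sp L\to\Sp K$. Property (3) holds by construction. The lattice property (4) is defined via quasi-compact subsets and is insensitive to finite base change.

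\textbf{Main obstacle.} The delicate step is the $\Gamma$-stability of $\mathcal{G}^{\rig}$ and the uniqueness of the extension $T^{\rig}\to A$. For the latter, the plan is to argue that a homomorphism $T^{\rig}_L\to A_L$ is determined by its restriction to the open subgroup $\mathcal{T}^{\rig}$, using that $T^{\rig}$ is connected and that morphisms of reduced rigid spaces agreeing on a nonempty open subset of a connected space agree everywhere by the identity theorem.
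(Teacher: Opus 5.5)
Your argument is correct and has the same skeleton as the paper's proof. Both pass to a finite Galois extension $L/K$ over which the split Raynaud uniformization exists, equip every piece with a compatible $\Gamma=\Gal(L/K)$-action, and descend via \cite[Example 1.2.4]{CT09}, checking exactness and the lattice condition after base change.

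Where you differ is in how the $\Gamma$-equivariance is obtained.
\begin{itemize}
\item The paper starts from the descent isomorphisms $A_L^{\sigma}\cong A_L$ and invokes \cite[6.10 (a)]{BL91}: isomorphisms of abeloid varieties lift uniquely to their Raynaud uniformizations. This gives compatible isomorphisms $Y_L^\sigma\cong Y_L$ and $G_L^\sigma\cong G_L$. It then uses $\Hom_{L,\rig}((T_L^{\rig})^\sigma,B_L)=0$, which holds because $B_L$ has good reduction \cite[Prop.~2.4]{BS95}, to see that these isomorphisms respect $0\to T_L^{\rig}\to G_L\to B_L\to 0$.
\item You construct the action directly. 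Maximality makes $\mathcal{G}^{\rig}$ stable under $\Gamma$, and its formal torus is canonical. The extension $T_L^{\rig}\to A_L$ of $\mathcal{T}^{\rig}\hookrightarrow A_L$ is unique, and the universal property of the pushout gives the action on $G_L$, hence on $Y_L=\ker\pi_L$.
\end{itemize}
In effect you reprove the special case of \cite[6.10 (a)]{BL91} that is needed. This makes your argument more self-contained, and the cocycle condition is automatic since everything is induced from the genuine action on $A_L$. The paper's version is shorter because it outsources exactly this step to the functoriality of Raynaud uniformization.

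Two small repairs.
\begin{itemize}
\item Your remark that a homomorphism $T_L^{\rig}\to A_L$ killing $\mathcal{T}^{\rig}$ factors through a discrete group, and is therefore zero, proves nothing. Homomorphisms from a lattice to $A_L$ can certainly be nonzero (any point of $A_L$ gives one). The uniqueness rests solely on the identity theorem.
\item For the identity theorem you should use that $T_L^{\rig}$ is smooth and connected, hence irreducible. Connected and reduced is not enough in general. Likewise, quasi-separatedness is not what makes Galois descent effective here; simply cite \cite{CT09} as the paper does.
\end{itemize}
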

\begin{proof}
    We modify the proof of \cite[Thm. 1.2]{BX96} for abelian varieties into our abeloid case. 
    
    By Theorem \ref{thm:split Raynaud uniformization}, there exists a finite Galois field extension $L$ over $K$ such that $A_L:=A_K\otimes_{K}L$ fits into a short exact sequence $0\to Y_L\to G_L\xrightarrow{\pi} A_L\to 0$ such that $G_L$ fits into a short exact sequence $0\to T_L^{\rig}\to G_L\to B_L\to 0$ and the objects involved correspond to the objects in Theorem \ref{thm:split Raynaud uniformization} with the same letters without sub-index $L$. 

    Let $\Gamma:=\Gal(L/K)$. For $\sigma\in\Gamma$, let $A^{\sigma}_L$ be $A_L$ regarded as a rigid group via $A_L\to\Sp L\xrightarrow{\sigma}\Sp L$. Similarly we have $Y^{\sigma}_L$, $G^{\sigma}_L$, $(T^{\rig}_L)^\sigma$ and $B^{\sigma}_L$. Since $A_L$ is the base change of $A_K$, we have an isomorphism $A^{\sigma}_L\xrightarrow{\cong} A_L$ of rigid groups over $L$ which gives the descent data as $\sigma$ varies in $\Gamma$. By \cite[6.10 (a)]{BL91}, we get a commutative diagram
    \[\xymatrix{
    0\ar[r] &Y^{\sigma}_L\ar[r]\ar[d]^\cong &G^{\sigma}_L\ar[r]\ar[d]^\cong &A^{\sigma}_L\ar[r]\ar[d]^\cong &0 \\
    0\ar[r] &Y_L\ar[r] &G_L\ar[r] &A_L\ar[r] &0
    }\]
    of rigid groups over $L$ with exact rows. Let $X_L\cong\Z^r$ be the character group of $T_L$. Let $\mathcal{T}_{\mathcal{O}_L}$ be the formal torus over $\Spf\mathcal{O}_L$ with character group $X_L$. Since $B_L$ has good reduction, we get $\Hom_{L,\rig}((\mathcal{T}_{\mathcal{O}_L}^{\rig})^\sigma,B_L)=0$ by \cite[Prop. 2.4]{BS95} and the corresponding result for the formal models. It follows that $\Hom_{L,\rig}((T_{L}^{\rig})^\sigma,B_L)=0$. Thus $G^{\sigma}_L\to G_L$ induces the following commutative diagram
    \[\xymatrix{0\ar[r] &(T_L^{\rig})^{\sigma}\ar[r]\ar[d]^\cong &G^{\sigma}_L\ar[r]\ar[d]^\cong &B^{\sigma}_L\ar[r]\ar[d]^\cong &0 \\
    0\ar[r] &T_L\ar[r] &G_L\ar[r] &B_L\ar[r] &0}\]
    of rigid groups over $L$ with exact rows. As $\sigma$ varies in $\Gamma$, these isomorphisms give rise to the descent data for $T_L$, $B_L$, $G_L$ and $Y_L$ respectively. By \cite[Example 1.2.4]{CT09}, they descend to rigid groups $T$, $B$, $G$ and $Y$ over $K$, and we get the diagram \eqref{Raynaud data}. Apparently conditions (1)-(5) are all satisfied.
\end{proof}

\begin{definition}\label{Definition:Raynaud data}
    We call the diagram \eqref{Raynaud data} the \emph{Raynaud data} of $A$. If the Raynaud data can be obtained from Theorem \ref{thm:split Raynaud uniformization} directly without Galois descent, we call it \emph{split}.
\end{definition}

\begin{proposition}
    The association of Raynaud data \eqref{Raynaud data} to $A$ is functorial.
\end{proposition}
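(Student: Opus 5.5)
We reduce to the split case over a finite Galois extension $L/K$ and then descend. Let $f\colon A_1\to A_2$ be a homomorphism of abeloid varieties over $K$ with Raynaud data $(Y_i,T_i,G_i,B_i,\pi_i)$. First choose a finite Galois extension $L/K$ over which both $A_1$ and $A_2$ admit split Raynaud uniformizations as in Theorem \ref{thm:split Raynaud uniformization}. We aim to show that $f_L$ lifts uniquely to a morphism of the whole diagram \eqref{Raynaud data}, namely maps $\tilde f\colon G_{1,L}\to G_{2,L}$ with $\pi_2\circ\tilde f=f\circ\pi_1$, compatible maps $T_{1,L}^{\rig}\to T_{2,L}^{\rig}$ and $B_{1,L}\to B_{2,L}$, and the induced map $Y_{1,L}\to Y_{2,L}$. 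Uniqueness then makes the lift $\Gamma$-equivariant with respect to the descent data of the previous proof. Hence it descends to $K$ by \cite[Example 1.2.4]{CT09}, and uniqueness also gives compatibility with composition and identities.

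\textbf{Step 1: the formal part.} The subgroup $\mathcal{G}_i^{\rig}\subset A_i$ is the largest connected analytic subgroup admitting a smooth formal model, and it is the identity component of the formal N\'eron model. By the N\'eron mapping property \cite[Prop.~2.3, 2.4]{BS95}, the composite $\mathcal{G}_1^{\rig}\to A_1\to A_2$ extends to a morphism of formal models $\mathcal{G}_1\to\mathcal{G}_2$, and connectedness makes it land in the identity component. Since $\Hom(\mathcal{T}_1,\mathcal{B}_2)=0$ (the formal torus has no nontrivial maps to a formal abelian scheme, as used in the previous proof), the map $\mathcal{G}_1\to\mathcal{G}_2$ respects the torus--abelian extensions. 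This gives $\mathcal{T}_1\to\mathcal{T}_2$, equivalently a map of character groups $X_2\to X_1$, and hence $T_1^{\rig}\to T_2^{\rig}$ and $\mathcal{B}_1\to\mathcal{B}_2$. Taking pushouts yields $\tilde f\colon G_1\to G_2$ together with compatible maps on $T$ and $B$.

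\textbf{Step 2: compatibility with $\pi$.} We need $\pi_2\circ\tilde f=f\circ\pi_1$. Both sides are homomorphisms $G_1\to A_2$ which agree on $\mathcal{G}_1^{\rig}$. Their difference is therefore a homomorphism from $G_1$ that vanishes on $\mathcal{G}_1^{\rig}$, so it factors through $G_1/\mathcal{G}_1^{\rig}=T_1^{\rig}/\mathcal{T}_1^{\rig}\cong X_1^\vee\otimes(\GmK^{\rig}/\Gmfml^{\rig})$. On $L$-points this quotient is a discrete lattice-type group, but as a sheaf it is not of finite type, so we argue differently. The restriction of the difference to $T_1^{\rig}$ is a homomorphism $T_1^{\rig}\to A_2$ that kills $\mathcal{T}_1^{\rig}$. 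Any such homomorphism is determined by its restriction to the open subgroup $\mathcal{T}_1^{\rig}$, by the rigidity/uniqueness in the extension statement of Theorem \ref{thm:split Raynaud uniformization}, which follows from \cite[\S1, 6.10]{BL91}. So the difference is zero on $T_1^{\rig}$ and factors through $B_1$; more precisely, $\tilde f$ may be adjusted so that this holds, since $T_1^{\rig}$ acts freely. The remaining map $B_1\to A_2$ vanishes on $\mathcal{B}_1^{\rig}=B_1$ and is therefore zero. Once commutativity holds, $\tilde f$ maps $Y_1=\ker\pi_1$ into $Y_2$.

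\textbf{Main obstacle.} The critical step is Step 2, specifically the claim that a homomorphism $T^{\rig}\to A$ is determined by its restriction to $\mathcal{T}^{\rig}$, since this is what pins down the lift uniquely. I would first try to invoke \cite[6.10 (a)]{BL91} directly: it asserts that a morphism of uniformized groups lifts uniquely to the uniformizations, and Bosch--L\"utkebohmert's argument is analytic and applies verbatim to abeloid varieties. If that fails, the fallback is an identity-principle argument: $T^{\rig}$ is connected and reduced, and two analytic maps into the separated space $A$ that agree on a nonempty open affinoid subgroup agree everywhere.
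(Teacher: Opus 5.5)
Your proposal is correct and follows the same route as the paper: reduce to split Raynaud data and descend, extend $f$ to $\mathcal{G}_1\to\mathcal{G}_2$ using that $\mathcal{G}_i$ is the identity component of the formal N\'eron model (\cite[Cor.~6.5]{Lut95}), split off the torus and abelian parts, push out to obtain $G_1\to G_2$, and restrict to the lattices. The paper simply asserts the compatibility $\pi_2\circ\tilde f=f\circ\pi_1$, and your identity-principle fallback is the right justification: applied directly to the connected smooth group $G_1$ and its open subgroup $\mathcal{G}_1^{\rig}$, on which both maps agree, it settles Step 2 at once, so the detour through $T_1^{\rig}$ and the remark about ``adjusting'' $\tilde f$ (which is neither needed nor compatible with your construction) can be dropped.
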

\begin{proof}
    Let $f:A\to A'$ be a homomorphism of abeloid varieties over $K$. We show that $f$ induces a homomorphism from the diagram \eqref{Raynaud data} for $A$ to that for $A'$. Since \eqref{Raynaud data} for $A$ is obtained from a split Raynaud data over a finite field extension by Galois descent, it suffices to deal with the case that both $A$ and $A'$ have split Raynaud data. By \cite[Cor. 6.5]{Lut95}, $\mathcal{G}$ (resp. $\mathcal{G'}$) is the neutral component of the formal N\'eron model of $A$ (resp. $A'$), hence $f$ induces a homomorphism $f_{\cS}:\mathcal{G}\to\mathcal{G}'$ which induces $f_{\cS,\mathrm{t}}:\mathcal{T}\to\mathcal{T}'$ and $f_{\cS,\mathrm{ab}}:\mathcal{B}\to\mathcal{B}'$. Note that $f_{\cS,\mathrm{t}}$ induces a map $f_{\mathrm{t}}:T^{\rig}\to T^{'\rig}$. By the universal property of pushout, $f_{\cS}^{\rig}$ induces a homomorphism $f_{\mathrm{sab}}:G\to G'$ which is compatible with the map $f_{\cS,\mathrm{ab}}^{\rig}$ and $f_t$. We also have the compatibility between $f_{\mathrm{sab}}$ and $f$, and thus $f_{\mathrm{sab}}$ restricts to a map $f_{\mathrm{l}}:Y\to Y'$. To sum up, we get a homomorphism between the split Raynaud data.
\end{proof}

\begin{definition}\label{def:semi-stable reduction for abeloids}
    An abeloid variety $A$ over $K$ with Raynaud data \eqref{Raynaud data} has \emph{semi-stable reduction},
    if $X$ and $Y$ are unramified over $K$ and $B$ is the rigid analytic generic fiber of a formal abelian scheme over $\cS$. 
\end{definition}

Let $A$ be an abeloid variety over $K$, and let
\(\mathcal N\) be its formal Néron model. One checks easily that \(A\) has
semi-stable reduction if and only if the identity component
\(\mathcal N_k^0\) of the special fiber is a semi-abelian group scheme over $k$,
equivalently, if there is an exact sequence
\[
0\longrightarrow T_k\longrightarrow \mathcal N_k^0
\longrightarrow B_k\longrightarrow 0
\]
with \(T_k\) a torus and \(B_k\) an abelian variety.

By Theorem \ref{thm:split Raynaud uniformization}, for any abeloid variety $A$ over $K$, there is a finite field extension $K'$ of $K$ such that $A_{K'}$ has split Raynaud data. In particular, $A_{K'}$ has semi-stable reduction. In other words, any abeloid variety $A$ over $K$ has potentially semi-stable reduction.

\begin{proposition}
    Assume that $A$ has semi-stable reduction. Let $\cS=\Spf\cO_K$ and let $X_{\cS}$ be the \'etale locally constant group over $\cS$ extending $X$, $\mathcal{T}$ the formal torus with character group $X_{\cS}$, and let $\mathcal{B}$ be the formal abelian scheme with rigid analytic generic fiber $B$. Then there exists a formal group scheme $\mathcal G$ which is smooth over $\cS$ and fits into a short exact sequence
    \[0\to \mathcal{T}\to \mathcal{G}\to \mathcal{B}\to 0\]
    such that $G$ is the pushout of the rigid group $\mathcal{G}^{\rig}$ along $\mathcal{T}^{\rig}\hookrightarrow T^{\rig}$.
\end{proposition}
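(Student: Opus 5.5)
Since $B$ has good reduction, we work with $\mathcal{B}$ and describe the extension $G$ of $B$ by $T^{\rig}$ through the class map of Proposition \ref{prop:key prop. for biextensions in both rigid and formal settings}. Part (1) there gives
\[\Ext^1_{K,\rig}(B,T^{\rig})\cong\Hom_{K,\rig}(X,B^{\vee}),\]
so $G$ corresponds to a homomorphism $v^\vee: X\to B^\vee$. Part (2) gives, over $\cS$,
\[\Ext^1_{\cS}(\mathcal{B},\mathcal{T})\cong\Hom_{\cS}(X_{\cS},\mathcal{B}^\vee).\]
The plan is therefore to lift $v^\vee$ to a homomorphism $X_{\cS}\to\mathcal{B}^\vee$, take the corresponding extension $\mathcal{G}$, and check that its generic fibre pushed out to $T^{\rig}$ recovers $G$.

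First, $\mathcal{B}^\vee$ is a formal abelian scheme whose generic fibre is $B^\vee$, by Theorem \ref{thm:Weil-Barsotti formula} (3). Lemma \ref{lem:a formal abelian scheme is the formal Neron model of its generic fiber} then says $\mathcal{B}^\vee$ is the formal Néron model of $B^\vee$. Since $X$ is unramified, $X_{\cS}$ becomes constant over $\Spf\cO_{L}$ for some finite unramified extension $L/K$, and the Néron mapping property is compatible with étale base change. Applying part (2) of that lemma after this base change, together with Galois descent along $\cO_L/\cO_K$, extends $v^\vee$ uniquely to $v^\vee_{\cS}:X_{\cS}\to\mathcal{B}^\vee$.

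Next, let $\mathcal{G}$ be the extension of $\mathcal{B}$ by $\mathcal{T}$ corresponding to $v^\vee_{\cS}$. It is representable, and smooth over $\cS$, being a $\mathcal{T}$-torsor over the smooth $\mathcal{B}$. One can also argue as in Proposition \ref{prop:extensions of rigid analytic groups by rigid tori are representable}, building it étale locally from products of $\Gmfml$-torsors given by line bundles on $\mathcal{B}$.

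The main step is compatibility: the pushout of $\mathcal{G}^{\rig}$ along $\mathcal{T}^{\rig}\hookrightarrow T^{\rig}$ should have class $v^\vee$. We reduce to $X\cong\Z$ by additivity of Ext in $X$ and étale descent. In that case the class of $\mathcal{G}$ is the pullback of the Poincaré biextension $\mathcal{P}$ along $\id\times v^\vee_{\cS}(1)$, and $G$ is the pullback of $P$ along $\id\times v^\vee(1)$. Taking generic fibres and pushing out along $\Gmfml^{\rig}\to\GmK^{\rig}$ commutes with pullbacks. Proposition \ref{prop:key prop. for biextensions in both rigid and formal settings} (3) identifies the pushout of $\mathcal{P}^{\rig}$ with $P$, so the two classes agree.

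The hardest point is making sure that the identifications $\Ext^1\cong\Hom(X,\,\cdot^\vee)$ on the formal and rigid sides are compatible with the generic-fibre-plus-pushout map. I would establish this by a diagram chase like the one in the proof of Proposition \ref{prop:key prop. for biextensions in both rigid and formal settings} (3), using $\cHom(\mathcal{B},\Gmfml)=0$ and its rigid counterpart. Uniqueness of $\mathcal{G}$ is not required.
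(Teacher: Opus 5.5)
Your proposal is correct and follows the paper's proof. You extend $v^\vee$ to $v^\vee_{\cS}$ using the formal N\'eron model property together with Galois descent for the unramified $X$, take the extension $\mathcal{G}$ of $\mathcal{B}$ by $\mathcal{T}$ that it determines, and conclude from the compatibility of the identifications $\Ext^1\cong\Hom(X,(\cdot)^\vee)$ with passing to the generic fibre and pushing out along $\mathcal{T}^{\rig}\hookrightarrow T^{\rig}$. The paper simply asserts that the relevant diagram commutes, whereas you justify it by reducing to $X\cong\Z$ and invoking Proposition \ref{prop:key prop. for biextensions in both rigid and formal settings} (3); that is a sound way to fill in that step.
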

\begin{proof}
    Consider the following commutative diagram \[\xymatrix{
    \Ext^1_{\cS}(\mathcal{B},\mathcal{T})\ar[r]^\cong\ar[d] &\Hom_{\cS}(X,\mathcal{B}^{\vee})\ar[dd]^\cong  \\
    \Ext^1_{K,\rig}(\mathcal{B}^{\rig},\mathcal{T}^{\rig})\ar[d]  \\
    \Ext^1_{K,\rig}(B,T^{\rig})\ar[r]^{\cong} &\Hom_{K,\rig}(X,B^{\vee}),
    }\]
    where the upper left vertical map and the right vertical map are given by the restriction to generic fiber, and the left lower vertical map is induced by the pushout along $\mathcal{T}^{\rig}\rightarrow T^{\rig}$. The right vertical map is an isomorphism by Lemma \ref{lem:a formal abelian scheme is the formal Neron model of its generic fiber}, the horizontal maps are isomorphisms by Proposition \ref{prop:key prop. for biextensions in both rigid and formal settings}.
    
    The short exact sequence $0\to T^{\rig}\to G\to B\to 0$ corresponds to a homomorphism $v^\vee:X\to B^\vee$ which extends to a homomorphism $v^\vee_{\cS}:X_{\cS}\to \mathcal{B}^\vee$. Then $v^\vee_{\cS}$ gives rise to a short exact sequence
    $0\to \mathcal{T}\to \mathcal{G}\to \mathcal{B}\to 0$. By the commutativity of the above diagram, the pushout of $\mathcal{G}^{\rig}$ along $\mathcal{T}^{\rig}\hookrightarrow T^{\rig}$ is nothing but $G$.
\end{proof}

\begin{example}\label{example:semistable abeloid}
Let $g\geq 1$ be an integer and $T=\Gm^g$. Take a lattice $Y\subset T^\rig$ of rank $g$ and set $A:=T^\rig/Y$. Then $A$ is an abeloid variety of dimension $g$ over $K$, with split Raynaud data in which $B=0$. In particular, it has semi-stable reduction. By \cite{FvdP04} Theorem 6.6.1,  $A$ is algebraic if and only if there exists a homomorphism $\lambda: Y\ra X=\cHom_K(T,\Gm)$ such that the symmetric bilinear form $\langle \cdot,\lambda(\cdot)\rangle: Y\times Y\ra \R$ is positive definite, where $\langle \cdot,\cdot\rangle: Y\times X\ra \R$ is the bilinear form defined by composition of the natural pairing $Y\times X\ra \G_m$ with $-\log|\cdot|_K$. See also \cite{Lut16} Theorem 6.4.4, which is more general for all abeloid varieties with split Raynaud data. 

In the case $g=1$, $A$ is always algebraic, which is a Tate curve: $A=E^\rig$, for an elliptic curve $E$ over $K$ which has multiplicative reduction.
\end{example}

\subsection{Rigid analyitic 1-motives}\label{subsec:rigid analytic 1-motives}
As a generalization of abeloid varieties,
we introduce the following rigid analytic analogue of Deligne's 1-motives (\cite{Del74}, \S 10) over $K$.

\begin{definition}\label{def:semi-abeloid and rigid 1-motive}
    \begin{enumerate}
        \item A \emph{semi-abeloid variety} $G$ over $K$ is a sheaf of abelian groups $G$ over $(\mathop{\mathrm{Rig}}/K)_{\fl}$ which is an extension 
        \[0\to T\to G\to A\to 0\]
        of an abeloid variety $A$ by a rigid analytic torus $T$ over $K$. Note that $G$ is automatically represented by a smooth rigid analytic group over $K$ by Proposition \ref{prop:extensions of rigid analytic groups by rigid tori are representable}.
        
        \item A \emph{rigid analytic 1-motive}, or simply \emph{rigid 1-motive} over $K$ is a two-term complex of sheaves 
        \[M=[Y\xrightarrow{u} G]\] 
        on $(\mathop{\mathrm{Rig}}/K)_{\fl}$, where $Y$ is \'etale locally the constant sheaf associated to a finite rank free abelian group, and $G$ is a semi-abeloid variety over $K$ sitting in degree 0. A \emph{morphism of rigid 1-motives} is just a morphism of complexes. We denote the category of rigid analytic  1-motives over $K$ by 
        \[\mathscr{M}_{1,K}.\]
    \end{enumerate}
\end{definition}
Note that the notion of rigid analytic 1-motives was already implicitly discussed in \cite[\S4.2]{Ray94}. 

\begin{example}\label{example Picard}
\begin{enumerate}
\item 

If $G$ is a semi-abelian variety over $K$, then the rigid analytification $G^\rig$ is a semi-abeloid variety. Here we used the fact that the rigid analytification preserves short exact sequences, for a reference see Lemma \ref{lem:rigid analytification preserves kernel and s.e.s.}. Similarly, if $[Y\ra G]$ is a 1-motive over $K$ in the sense of Deligne, we get the associated rigid analytic 1-motive $[Y\ra G^\rig]$.    

\item
Let $X$ be a proper smooth and connected rigid analytic variety over $K$. Assume that $X$ admits a $K$-rational point and a strict semi-stable formal model over $\mathcal{O}_K$. Then it is known that there exists a unique smooth rigid analytic group variety $\mathrm{Pic}_{X/K}$ representing the Picard functor for $X/K$. The Picard variety $\mathrm{Pic}_{X/K}^0$ is defined as the identity component of $\mathrm{Pic}_{X/K}$. Then $\mathrm{Pic}_{X/K}^0$ is a semi-abeloid variety over $K$. For more details, see \cite[Thm. 0.1]{HL00} or \cite[Thm. 9.6]{Lu09}.

\item
Let $A$ be an abeloid variety over $K$. The rigid analytic group $G$ in the Raynaud data \eqref{Raynaud data} of $A$ is a semi-abeloid variety by construction, and we get a rigid 1-motive $M=[Y\to G]$ out of \eqref{Raynaud data} such that $A=G/Y$. Recall that here $G$ sits into an extension $0\ra T^\rig\ra G\ra B\ra 0$. We call $M$ the \emph{Raynaud rigid 1-motive} of $A$. One can also apply the construction in the following paragraph (with $A$ replaced by $B$) to obtain the displayed version $(B, Y, X, v, v^\vee, s)$ of the rigid analytic 1-motive $M$.
\end{enumerate}
\end{example}

Given a rigid analytic 1-motive $[Y\xrightarrow{u} G]$ as above,
let 
\[v:Y\to A\]
be the composition $Y\xrightarrow{u}G\to A$. Let $X$ be the character group of $T$. By the isomorphism $\Ext^1_{K,\rig}(A,T)\cong\Hom_{K,\rig}(X,A^\vee)$ (see Proposition \ref{prop:key prop. for biextensions in both rigid and formal settings} (1)), $G$ corresponds to a map
\[v^{\vee}:X\to A^{\vee}.\]
Explicitly, for any section $x\in X$, $v^\vee(x)$ is given by the pushout of the extension $G$ of $A$ by $T$ along $x$ which is a section of $A^\vee=\cExt^1_{K,\rig}(A,\GmK^{\rig})$. 

Let $P$ be the Poincar\'e biextension of $(A,A^\vee)$ by $\GmK^{\rig}$. Then $G$ as a class in $\Ext^1_{K,\rig}(A,T)$ corresponds to $(1_A,v^{\vee})^*P$ in the commutative diagram 
\begin{equation}\label{eq:key biext diagram for symmetric description of rigid 1-mot}
    \xymatrix{
    &\Biext^1_{K,\rig}(A,A^\vee;\GmK^{\rig})\ar[r]^-\cong\ar[d]^{(1_A,v^{\vee})^*} &\Hom_{K,\rig}(A,A)  \\
    \Ext^1_{K,\rig}(A,T)\ar[r]^-\cong\ar[d]^{v^*} &\Biext^1_{K,\rig}(A,X;\GmK^{\rig})\ar[d]^{(v,1_X)^*}  \\
    \Ext^1_{K,\rig}(Y,T)\ar[r]^-\cong &\Biext^1_{K,\rig}(Y,X;\GmK^{\rig}).}
\end{equation}
The pullback of $G\in\Ext^1_{K,\rig}(A,T)$ along $v$ has a section $u$, so $v^*G=0$. Thus \[(v,v^\vee)^*P=(v,1_X)^*(1_A,v^{\vee})^*P=0.\] The section $u$ of the extension $v^*G$ corresponds to a section \[s:Y\times X\to (v,v^\vee)^*P\] of $(v,v^\vee)^*P$. Such a section corresponds to a bilinear $Y\times X\to P$ which we still denote by $s$ by abuse of notation. So we get the following diagram 
\begin{equation}\label{eq:symmetric description of rigid 1-mot}
\xymatrix{
    &P\ar[d] \\
    Y\times X\ar[r]_{(v,v^\vee)}\ar[ru]^s &A\times A^\vee.}
\end{equation}

The procedure for obtaining the above diagram from $Y\xrightarrow{u}G$ is reversible. Indeed given a diagram \eqref{eq:symmetric description of rigid 1-mot}, we have a biextension $(1_A,v^{\vee})^*P$ (resp. $(v,v^{\vee})^*P$) of $(A,X)$ (resp. $(Y,X)$) by $\GmK^{\rig}$, which corresponds to an extension $G$ (resp. $v^*G$) of $A$ (resp. $Y$) by $T$ as in the following pullback diagram
\[\xymatrix{
0\ar[r] &T^{\rig}\ar[r]\ar@{=}[d] &v^*G\ar[r]\ar[d] &Y\ar[r]\ar[d]^v\ar@/_1pc/[l]_{\tilde{s}} &0 \\
0\ar[r] &T^{\rig}\ar[r] &G\ar[r] &A\ar[r] &0 .
}\]
As said before, the sections of the extension $v^*G$ correspond to the sections of the biextension $(v,v^{\vee})^*P$, and we denote the section of $v^*G$ corresponding to $s$ by $\tilde{s}$. Then the composition \[Y\xrightarrow{\tilde{s}}v^*G\to G\] gives rise to a rigid 1-motive. Such a procedure is exactly inverse to the above procedure which associates a diagram \eqref{eq:symmetric description of rigid 1-mot} to a rigid 1-motive. So a rigid 1-motive is the same as such a diagram \eqref{eq:symmetric description of rigid 1-mot}. Switching the position of the factors of the two couples $(Y,X)$ and $(A,A^\vee)$, we get another rigid 1-motive $[X\xrightarrow{u^\vee}G^\vee]$, which we call the \emph{dual rigid analytic 1-motive} of $[Y\xrightarrow{u}G]$.  

\begin{remark}\label{rem:failure of functoriality for the symmetric description of rigid 1-mov}
    The above description of rigid 1-motives in terms of Poincar\'e biextension is not functorial unlike the algebraic case. This is caused by $\Hom_{K,\rig}(T,A)\neq0$ in general with $T$ a rigid analytic torus and $A$ an abeloid variety over $K$, for example $\Hom_{K,\rig}(\GmK^{\rig},E_q^{\rig})\cong\Z$ with $E_q$ be the algebraic Tate curve over $K$ with $q$-invariant $q\in K^\times$. Indeed, let $M_i=[Y_i\xrightarrow{u_i}G_i]\in\mathscr{M}_{1,K}$ for $i=1,2$, let $T_i$ and $A_i$ be the torus and abeloid parts of $G_i$ respectively, and let $(f_{-1},f_0):M_1\to M_2$ be a morphism of rigid 1-motives. Then if the composition $T_1\hookrightarrow G_1\xrightarrow{f_0}G_2\to A_2$ is not zero (always zero in the algebraic case), $f_0$ does not induce morphisms $T_1\to T_2$ and $A_1\to A_2$, and thus we can not get a morphism between the diagrams \eqref{eq:symmetric description of rigid 1-mot} for $M_1$ and $M_2$.
\end{remark}

\begin{remark}\label{rem: relative rigid 1-motives}
One can also define rigid analytic 1-motives in the relative setting. Namely, let $S$ be a rigid analytic space over $K$, then \begin{enumerate}
    \item 
a relative abeloid variety $A\ra S$ is a proper smooth group object in the category $(\Rig/S)$ of rigid analytic spaces over $S$ such that all its geometric fibers are connected,
\item a relative semi-abeloid variety $G\ra S$ is an extension of abelian sheaves $0\ra T\ra G\ra A\ra 0$ on $(\Rig/S)_{\fl}$, where $A$ is a relative abeloid variety over $S$ and $T$ is a sheaf which \'etale locally isomorphic to a split rigid analytic torus over $S$, 
\item a rigid analytic 1-motive over $S$ is a morphism of sheaves $u: Y\ra G$ on $(\Rig/S)_{\fl}$, where $G$ is a semi-abeloid variety over $S$ and $Y$ is \'etale locally the constant sheaf assoicated to a finite rank free abelian group. 
\end{enumerate}
We have a natural category $\mathscr{M}_{1,S}$ of rigid analytic 1-motives over $S$, which recovers Definition \ref{def:semi-abeloid and rigid 1-motive} when $S=\Sp\,K$, and objects in $\mathscr{M}_{1,S}$ can be viewed as families of rigid analytic 1-motives over non-archimedean fields. In the rest of this paper, we mainly study rigid analytic 1-motives over $S=\Sp\,K$, and leave the relative rigid analytic 1-motives to a future study. 
\end{remark}

We discuss reductions of rigid analytic 1-motives, generalizing some definitions and results of Raynaud \cite{Ray94}.

\begin{definition}\label{def:good-semi-good}
Let $\cS=\Spf\cO_K$. Let \(M=[Y\xrightarrow{u} G]\) be a rigid analytic \(1\)-motive over \(K\), where $G$ fits into a short exact sequence
\[
0\;\longrightarrow\; T \;\longrightarrow\; G \xrightarrow{\;\pi\;} A \;\longrightarrow\; 0
\]
with $T$ a rigid analytic torus and \(A\) an abeloid variety.
\begin{enumerate}
\item (\emph{Good reduction}) We say that \(M\) has \emph{good reduction} if we have the following.
\begin{enumerate}[label=\roman*)]
\item $Y$ extends to an \'etale locally constant group \(Y_{\cS}\) over $\cS$,
\item The character group $X$ of $T$ extends to an \'etale locally constant group \(X_{\cS}\) over $\cS$,
\item $A$ is the rigid analyic fiber of a formal abelian scheme $\mathcal{A}$ over $\cS$. Let $\mathcal{A}^\vee$ (resp. $A^\vee$) be the dual of $\mathcal A$ (resp. $A$). The extension $G$ corresponds to a homomorphism $v^\vee:X\to A^\vee$ which extends to a homomorphism $v^\vee_{\cS}:X_{\cS}\to \mathcal{A}^\vee$ by Lemma \ref{lem:a formal abelian scheme is the formal Neron model of its generic fiber} and Galois descent. Then $v^\vee_{\cS}$ gives rise to a short exact sequence of smooth formal group schemes
\[
0\;\longrightarrow\; \mathcal T \;\longrightarrow\; \mathcal G \;\longrightarrow\; \mathcal A \;\longrightarrow\; 0
\]
over $\cS$ with $\mathcal{T}=\cHom_{\cS}(X_{\cS},\Gmfml)$ such that $G$ is the pushout of $\mathcal{G}^{\rig}$ along $\mathcal{T}^{\rig}\hookrightarrow T$ in $(\mathrm{Rig}/K)_{\fl}$,
\item and $u$ extends to a homomorphism \(u_{\cS}:Y_{\cS}\to \mathcal G\).
\end{enumerate}

\item(\emph{Semi‑stable reduction}) We say that \(M\) has \emph{semi‑stable reduction} if we have the following.
\begin{enumerate}[label=\roman*)]
\item $Y$ extends to an \'etale locally constant group \(Y_{\cS}\) over $\cS$,
\item The character group $X$ of $T$ extends to an \'etale locally constant group \(X_{\cS}\) over $\cS$,
\item $A$ has semi-stable reduction (see Definition \ref{def:semi-stable reduction for abeloids}).
\end{enumerate}

\item We say that $M$ has \emph{potentially good reduction} (resp. \emph{potentially semi-stable reduction}), if it has good reduction (resp. semi-stable reduction) after base change to a finite field extension of $K$.
\end{enumerate}
\end{definition}

\begin{remark}
\begin{enumerate}
    \item 
Since both $Y$ and $T$ always have potentially good reduction apparently, and $A$ always has potentially semi-stable reduction by Theorem \ref{thm:split Raynaud uniformization}, $M$ always has potentially semi-stable reduction.  
\item We say that $G$ has good reduction if conditions (i) and (ii) of Definition \ref{def:good-semi-good} (1) hold. Note that this does not mean that $G$ itself extends to a smooth fromal group over $\OK$ in the usual sense. This subtlety is related to the non-properness of $G$ if $T\neq 0$. Similarly, we say that $G$ has potentially good reduction if it has good reduction after base change to a finite field extension of $K$. This is equivalent to the abeloid quotient $A$ has potentially good reduction.
\end{enumerate}
\end{remark}

As discussed in Remark \ref{rem:failure of functoriality for the symmetric description of rigid 1-mov}, the description of rigid 1-motives in terms of Poincar\'e biextension is not functorial unlike the algebraic case. This problem can be solved by restricting to the full subcategory of strict rigid 1-motives in the sense of Raynaud (see \cite[Def. 4.2.3]{Ray94}). 

\begin{definition}\label{def:strict rigid 1-motives}
    A rigid analytic 1-motive $M=[Y\xrightarrow{u}G]$ over $K$ is called \emph{strict} if $G$ has potentially good reduction, equivalently if the abeloid part of $G$ has potentially good reduction. We denote the full subcategory of strict rigid 1-motives of $\mathscr{M}_{1,K}$ by $\mathscr{M}^{\mathrm{str}}_{1,K}$. 
\end{definition}
In the following, we will sometimes work in the category
\(\mathcal D^b_{\rig}(\fl) :=D^b\!\left(\mathrm{Ab}\bigl((\mathrm{Rig}/K)_{\fl}\bigr)\right)\). There is a natural functor $\mathscr M_{1,K}\longrightarrow \mathcal D^b_{\rig}(\fl)$ sending a rigid analytic \(1\)-motive to the corresponding two-term complex of fppf sheaves, with the lattice in degree \(-1\).

\begin{proposition}\label{prop:morphisms from strict rigid 1-motives to rigid 1-motives are the same as in the derived category}
    Let $M_i=[Y_i\to G_i]$, $i=1,2$ be two rigid 1-motives with $M_1$ strict. Then any morphism $M_1\to M_2$ in $\mathcal{D}_{\rig}^{\mathrm{b}}(\fl)$ comes from a unique morphism of rigid 1-motives.
\end{proposition}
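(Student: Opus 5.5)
The proof follows Deligne's argument for algebraic 1-motives (\cite[\S10]{Del74}; compare \cite[Prop.~4.2.5]{Ray94}). For two-term complexes $M_i=[Y_i\to G_i]$ in degrees $[-1,0]$ there is the standard long exact sequence (``stupid'' truncation) computing $\Hom_{\mathcal D^b_{\rig}(\fl)}(M_1,M_2)$. Morphisms of complexes map to this group, and the map is a bijection as soon as
\[
\Hom(Y_1,Y_2[-1])=0,\qquad \Hom(G_1,Y_2)=0,\qquad \Ext^1(G_1,Y_2)=0
\]
in $\mathcal D^b_{\rig}(\fl)$. More precisely, one filters $M_1$ by $G_1\to M_1\to Y_1[1]$ and $M_2$ by $G_2\to M_2\to Y_2[1]$. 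The only groups that can obstruct surjectivity or injectivity are the $\Hom$ and $\Ext^1$ from $G_1$ to $Y_2$, together with negative Ext groups, which vanish. So the plan is to write out this dévissage and reduce the statement to two vanishing results for a semi-abeloid $G_1$ with potentially good reduction: $\Hom_{K,\rig}(G_1,Y_2)=0$ and $\Ext^1_{K,\rig}(G_1,Y_2)=0$.

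\textbf{Vanishing of $\Hom$.} Because $G_1$ is smooth and connected, every morphism to an étale locally constant sheaf is constant; as it is a homomorphism, it is zero. Concretely, after a finite extension $L$ the sheaf $Y_2$ becomes $\Z^r$, and any map from a connected space into a discrete one is constant. This step is routine.

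\textbf{Vanishing of $\Ext^1$.} Using Galois descent, I pass to a finite Galois extension on which $Y_2\cong\Z^r$, $T_1$ is split, and the abeloid part $A_1$ has good reduction. Recovering the statement over $K$ is a Hochschild--Serre argument combined with the $\Hom$ vanishing already proved, since $\Hom$ vanishes over $L$ as well.

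It then suffices to show $\Ext^1(G_1,\Z)=0$. The sequence $0\to T_1\to G_1\to A_1\to 0$ reduces this to $\Ext^1(A_1,\Z)=0$ and $\Ext^1(\GmK^{\rig},\Z)=0$, using $\Hom(T_1,\Z)=0$.

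An extension $0\to\Z\to E\to H\to 0$ with $H$ connected is a $\Z$-torsor. Such a torsor is classified by $H^1_{\fl}(H,\Z)$, which I would compare with étale, and then admissible, cohomology of $H$ with coefficients in $\Z$. This in turn is controlled by $\Z$-torsors, that is, by étale covering maps with group $\Z$.
\begin{itemize}
\item For $H=\GmK^{\rig}$, such a covering with a group structure is itself a commutative rigid group, and by Proposition~\ref{prop:extensions of rigid analytic groups by rigid tori are representable}-type arguments its identity component maps isomorphically onto $H$. This splits the extension. Alternatively, one can use that $\GmK^{\rig}$ is simply connected for such coverings, by the explicit description via annuli.
\item For $H=A_1$ with good reduction, $A_1=\mathcal A^{\rig}$. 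Here I would use that the universal cover, in the sense of topological or étale $\Z$-coverings, of an abeloid with good reduction is trivial. Equivalently, an extension of $A_1$ by $\Z$ would give a nontrivial homomorphism $\Z\to$ ``$A_1$'' in the style of Raynaud uniformization, which is impossible because the torus part of $A_1$ vanishes.
\end{itemize}

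\textbf{Main obstacle.} I expect this second bullet to be the hard step, and this is precisely where strictness enters. For a Tate curve $\GmK^{\rig}\to E_q^{\rig}$, there is a nontrivial extension $0\to q^{\Z}\to\GmK^{\rig}\to E_q^{\rig}\to 0$. To prove vanishing in the good reduction case, I would take the identity component $E^0$ of an extension $E$, which is a connected smooth rigid group. I would then show that $E^0\to A_1$ is an isomorphism by a properness argument on the formal model: $E^0$ is an open-and-closed étale cover of the proper $A_1$ with finite fibres, hence finite over $A_1$. Its pullback to the formal Néron model $\mathcal A$ would be a finite étale isogeny. This forces $E^0\cap\Z$ to be a finite subgroup of $\Z$, hence zero, so that $E^0\cong A_1$ and the extension splits.
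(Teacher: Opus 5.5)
\textbf{Where the proposal breaks.} Your reduction is the right one. For two-term complexes, bijectivity of $\Hom(M_1,M_2)\to\Hom_{\mathcal{D}^b_{\rig}(\fl)}(M_1,M_2)$ follows from $\Hom(G_1,Y_2)=0$ and $\Ext^1(G_1,Y_2)=0$, which is the standard reduction in the argument of \cite[Prop.~4.2.4 i)]{Ray94} that the paper invokes. The $\Hom$-vanishing and the Galois descent are fine. The torus case $\Ext^1(\GmK^{\rig},\Z)=0$ is fine if you use your second option, simple connectedness of $\GmK^{\rig}$; your first option has the same defect as below.

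The gap is in the one step with real content, $\Ext^1_{K,\rig}(\mathcal{A}^{\rig},\Z)=0$. You claim $E^0\to A_1$ is an étale cover ``with finite fibres'', hence finite, hence a finite étale isogeny on the formal Néron model, hence $E^0\cap\Z$ is finite.
\begin{itemize}
\item The fibres of $E^0\to A_1$ are cosets of $E^0\cap\Z$. So ``finite fibres'' is exactly the conclusion you want, and the argument is circular.
\item The Néron-model step also presupposes finiteness, and it is superfluous: once $E^0\cap\Z$ is finite it is $0$, since $\Z$ is torsion free.
\item Your own Tate curve example shows no argument of this shape can work. The map $\GmK^{\rig}\to E_q^{\rig}$ is a connected étale surjection onto a proper abeloid, yet its fibres $q^{\Z}$ are infinite.
\end{itemize}
Properness is shared by $E_q^{\rig}$, so good reduction is never actually used in your sketch. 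The remark about ``a nontrivial homomorphism $\Z\to A_1$ in the style of Raynaud uniformization'' does not supply that input either.

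\textbf{What is needed.} You need an input that genuinely distinguishes good reduction: $\mathcal{A}^{\rig}$ is simply connected, by \cite[Thm.~7.4.7]{FvdP04}. The paper packages this as Lemma~\ref{lem:extensions of good reduction abeloids by split tori are simply connected}. After your finite extension, $G_1$ is an extension of $\mathcal{A}^{\rig}$ by a split torus, and the lemma gives $H^1_{\mathrm{zar}}(G_1,\Z)=0$ for the analytic topology. This treats $G_1$ directly, so the torus/abeloid dévissage is no longer needed.

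To finish, combine this with the comparison of fppf and analytic $\Z$-torsors that you already planned. An extension $0\to\Z\to E\to G_1\to 0$ is then trivial as a torsor. The resulting normalized cocycle $G_1\times G_1\to\Z$ is locally constant on a connected space, hence zero, and the extension splits. With this replacing your last paragraph, the proof goes through.
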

\begin{proof}
    The proof of \cite[Prop. 4.2.4 i)]{Ray94} (can also compare with the algebraic situation \cite[Prop. 2.3.1]{Ray94}) works also here.
\end{proof}

\begin{proposition}\label{prop:functorial association of strict rigid 1-motives to rigid 1-motives}
    Let $M=[Y\xrightarrow{u}G]$ be a rigid analytic 1-motive over $K$. Then there is a functorial way to associate $M$ with a strict rigid analytic 1-motive $M'=[Y'\xrightarrow{u'}G']$ over $K$ together with a map $M'\to M$ which is a quasi-isomorphism in the category of complexes of sheaves of abelian groups on $(\mathop{\mathrm{Rig}}/K)_{\fl}$.
\end{proposition}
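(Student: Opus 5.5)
We construct $M'$ by replacing the abeloid part of $G$ with the semi-abeloid group from its Raynaud data, following Raynaud's strictification \cite[4.2]{Ray94}. Let $0\to T\to G\to A\to 0$ be the structure of $G$, and let $0\to Y_A\to G_A\to A\to 0$ be the Raynaud data \eqref{Raynaud data} of $A$. Here $0\to T_A\to G_A\to B\to 0$ and $B$ has potentially good reduction. Put
\[G'':=G\times_A G_A ,\]
which is an extension of $G_A$ by $T$. Next put $Y'':=Y\times_G G''$. Since $G''\to G$ is surjective with kernel $Y_A$, the sequence $0\to Y_A\to Y''\to Y\to 0$ is exact. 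Hence $Y''$ is étale locally a finite free $\Z$-module: it is an extension of lattices, and $\Ext^1(Y,Y_A)$ vanishes étale locally. Take $Y':=Y''$ and $u':Y'\to G'$ the induced map with $G':=G''$. By construction the square
\[\xymatrix{Y'\ar[r]\ar[d] & G'\ar[d]\\ Y\ar[r] & G}\]
is cartesian, and both vertical maps are surjective with the same kernel $Y_A$. So the cone of $M'\to M$ is acyclic, and $M'\to M$ is a quasi-isomorphism of complexes of fppf sheaves.

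It remains to see that $G'$ is a strict semi-abeloid variety. $G'$ is an extension of $G_A$ by $T$, and $G_A$ is an extension of $B$ by $T_A$. I would show that the preimage $\widetilde T$ of $T_A$ in $G'$ is an extension of $T_A$ by $T$. By Proposition \ref{prop:extensions of rigid analytic groups by rigid tori are representable}(3), $\widetilde T$ is then a rigid analytic torus, so $G'$ is an extension of $B$ by $\widetilde T$. This makes $G'$ a semi-abeloid variety whose abeloid part $B$ has potentially good reduction, i.e. $M'$ is strict.

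Functoriality is the main obstacle. Let $f=(f_{-1},f_0):M_1\to M_2$ be a morphism. The composite $T_1\to G_1\to G_2\to A_2$ need not vanish (Remark \ref{rem:failure of functoriality for the symmetric description of rigid 1-mov}), so $f_0$ need not induce a map $A_1\to A_2$, and we cannot simply use functoriality of Raynaud data for abeloids. I would first try to avoid building the map by hand. Compose to get $M_1'\to M_1\to M_2$, and note that $M_2'\to M_2$ is a quasi-isomorphism. This gives a morphism $M_1'\to M_2'$ in $\mathcal{D}^b_{\rig}(\fl)$. Since $M_1'$ is strict, Proposition \ref{prop:morphisms from strict rigid 1-motives to rigid 1-motives are the same as in the derived category} shows it comes from a unique morphism of rigid 1-motives $f':M_1'\to M_2'$. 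The square with $M_1'\to M_1$ and $M_2'\to M_2$ commutes in the derived category, and it should commute on the nose by the uniqueness part of the same proposition, since $M_1'$ is strict. Uniqueness also yields compatibility with identities and composition, so $M\mapsto M'$ is a functor.

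The point to check carefully is that this uniqueness applies with target $M_2$, which is not strict; the proposition as stated allows an arbitrary target, so it does. A second check is that the construction is independent of choices up to unique isomorphism. This follows from the same argument, since Raynaud data are canonical up to unique isomorphism and Galois descent is compatible with the construction.
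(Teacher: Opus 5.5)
Your construction of $M'$ (pull $G$ back along the Raynaud uniformization $G_A\to A$ of the abeloid quotient, then pull $Y$ back along $G'\to G$) is exactly the paper's. So are your checks that $M'\to M$ is a quasi-isomorphism and that $M'$ is strict via Proposition \ref{prop:extensions of rigid analytic groups by rigid tori are representable}(3).

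The proposal is correct, but you reach functoriality by a genuinely different route.

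The paper builds the lift by hand. It first reduces by Galois descent to the split semi-stable case. It then uses $H^1_{\zar}(G_1',\Z)=0$ (Lemma \ref{lem:extensions of good reduction abeloids by split tori are simply connected}) and \cite[Thm.~1.2(c)]{BL91} to factor $G_1'\to G_1\to G_2\to A_2$ through the uniformization $H_2\to A_2$. This gives $f_0'$ from the fibre-product description of $G_2'$, and $f_{-1}'$ then comes from the cartesian square defining $Y_2'$.

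You instead define $f'$ as the unique morphism of rigid 1-motives representing $\varepsilon_2^{-1}\circ f\circ\varepsilon_1$ in $\mathcal{D}^b_{\rig}(\fl)$. You then use injectivity of $\Hom_{\mathscr M_{1,K}}(M_1',-)\to\Hom_{\mathcal D^b_{\rig}(\fl)}(M_1',-)$, for an arbitrary target, to get three things: strict commutativity of the square, uniqueness of $f'$, and compatibility with identities and composites. This is valid.

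It is precisely the argument the paper uses right afterwards to prove the adjunction in Proposition \ref{prop:strictification-right-adjoint}. Your route therefore yields functoriality and the universal property in one stroke, and it sidesteps the explicit Galois-descent reduction.

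The price is that all the geometric content is outsourced to Proposition \ref{prop:morphisms from strict rigid 1-motives to rigid 1-motives are the same as in the derived category}. The paper justifies that result only by citing \cite[Prop.~4.2.4 i)]{Ray94}. Its rigid-analytic proof needs the same input the paper invokes directly: vanishing of $\Hom$ and $\Ext^1$ from a semi-abeloid variety with potentially good reduction into a lattice, i.e. the simple connectedness of $G_1'$. The paper's hands-on lift makes that input explicit and does not depend on the derived-category comparison.
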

\begin{proof}
    First we construct $M'$. Let $0\to T\to G\to A\to 0$ be the torus-abeloid decomposition of $G$. Let $0\to T''\to H\to A'\to 0$ and $0\to L\to H\to A\to 0$ be the Raynaud data of $A$, and thus $A'$ has potentially good reduction. Consider the following commutative diagrams
    \begin{equation}\label{eq:construction of strict rig 1-mot diagram1}
        \xymatrix{
    &&0\ar[d] &0\ar[d] \\
    &&L\ar@{=}[r]\ar[d] &L\ar[d]  \\
    0\ar[r] &T\ar[r]\ar@{=}[d] &G'\ar[r]\ar[d] &H\ar[r]\ar[d] &0  \\
    0\ar[r] &T\ar[r] &G\ar[r]\ar[d] &A\ar[r]\ar[d] &0  \\
    &&0&0
    }
    \qquad
    \xymatrix{
    & &0\ar[d] &0\ar[d]  \\
    0\ar[r] &T\ar@{=}[d]\ar[r] &T'\ar[r]\ar[d] &T''\ar[r]\ar[d] &0  \\
    0\ar[r] &T\ar[r] &G'\ar[r]\ar[d] &H\ar[r]\ar[d] &0  \\
    & &A'\ar@{=}[r]\ar[d] &A'\ar[d] \\
    & &0 &0
    }
    \end{equation}
    with exact rows and columns, where $G'$ is the pullback of $G$ along $H\to A$ and $T':=\ker(G'\to H\to A')$. As an extension of $T''$ by $T$, $T'$ is a rigid analytic torus by Proposition \ref{prop:extensions of rigid analytic groups by rigid tori are representable}. Since the abeloid variety $A'$ has potentially good reduction, the semi-abeloid variety $G'$ has potentially good reduction. We further have the following pullback diagram
    \begin{equation}\label{eq:construction of strict rig 1-mot diagram2}
        \xymatrix{
    0\ar[r] &L\ar[r]\ar@{=}[d] &Y'\ar[r]\ar[d]^{u'} &Y\ar[r]\ar[d]^u &0 \\
    0\ar[r] &L\ar[r] &G'\ar[r] &G\ar[r] &0
    }.
    \end{equation}
    Clearly $Y'$ is \'etale locally isomorphic to $\Z^r$, hence $M':=[Y'\xrightarrow{u'}G']$ is a rigid 1-motive which is strict and quasi-isomorphic to $M$.

    Next we prove the functoriality. Let $M_i=[Y_i\xrightarrow{u_i}G_i]$, $i=1,2$, be two rigid 1-motives over $K$, and let $f=(f_{-1},f_0):M_1\to M_2$ be a homomorphism. Let $M'_i=[Y'_i\xrightarrow{u'_i}G'_i]$, $H_i$, $A_i$, $L_i$ be as in the construction step for $M_i$. We are going to lift $f$ to a unique homomorphism $f':M'_1\to M'_2$. First we lift $f_0$.
    We may assume that $G_1$ and $G_2$ are both semi-stable and all the lattices and tori involved are split, since the general case follows by the uniqueness of lifting with the help of Galois descent. By construction, $G_2'$ is the fiber product of $G_2\to A_2\leftarrow H_2$. Consider the following commutative diagram
    \[\xymatrix{
    G'_1\ar@{..>}[rrd]\ar[rdd]\ar@{-->}[rd]\ar[d] \\
    G_1\ar[rd]_{f_0} &G_2'\ar[r]\ar[d] &H_2\ar[d] \\
    &G_2\ar[r] &A_2
    }\]
    without the dotted and the dashed arrows.
    Since the cohomological group $H^1_{\zar}(G_1',\Z)=0$ for the rigid analytic topology by Lemma \ref{lem:extensions of good reduction abeloids by split tori are simply connected} below, the composition $G'_1\to G_1\xrightarrow{f_0} G_2\to A_2$ factors through $H_2\to A_2$ by \cite[Thm. 1.2 (c)]{BL91}. Since we are working with sheaves of abelian group, such a factorization is unique. By the universal property of pullback, we get a uniqe lifting $f'_0:G'_1\to G'_2$ of $f_0$. 
    
    Now we lift $f_{-1}$. Consider the following commutative diagram
    \[\xymatrix{
    Y'_1\ar[rrr]\ar@{-->}[dd]_{f'_{-1}}\ar[rd]^{u'_1} &&&Y_1\ar[dd]^{f_{-1}}\ar[ld]_{u_1} \\
    &G'_1\ar[r]\ar[dd]_(.3){f'_0} &G_1\ar[dd]^(.3){f_0} \\
    Y'_2\ar@{..>}[rrr]\ar[rd]_{u'_2} &&&Y_2\ar[ld]^{u_2} \\
    &G'_2\ar[r] &G_2
    }\]
    without the dashed arrow. By diagram chasing, one can see that the compositions $Y'_1\xrightarrow{u'_1}G'_1\xrightarrow{f'_0}G'_2\to G_2$ and $Y'_1\to Y_1\xrightarrow{f_{-1}}Y_2\xrightarrow{u_2}G_2$ coincide and $Y'_2$ is the pullback of $G'_2\to G_2\leftarrow Y_2$, we get a unique lift of $f_{-1}$ to $f'_{-1}$ by the universal property of pullback. We are done.
\end{proof}
\begin{lemma}\label{lem:extensions of good reduction abeloids by split tori are simply connected}
    Let $\mathcal{A}$ be a formal abelian scheme over $\cS$, and let $G$ be an extension of $\mathcal{A}^{\rig}$ by $T:=(\GmK^{\rig})^r$. Then the rigid analytic cohomological group $H^1_{\zar}(G,\Z)=0$. 
\end{lemma}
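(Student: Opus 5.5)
Reading $H^1_{\zar}(G,\Z)$ as Čech cohomology of the constant sheaf $\Z$ for the rigid (admissible) topology, the class of $G$ is classified by $\Z$-torsors, i.e.\ ``topological'' coverings. The plan is to reduce the vanishing for $G$ to two inputs: the vanishing for $\mathcal{A}^{\rig}$, and the vanishing for $(\GmK^{\rig})^r$ and its base changes.

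First step: show $H^1(\mathcal{A}^{\rig},\Z)=0$. Since $\mathcal{A}$ is a formal abelian scheme, $\mathcal{A}^{\rig}$ has a smooth formal model whose special fiber $\mathcal{A}_k$ is an abelian variety, in particular irreducible. A $\Z$-torsor on $\mathcal{A}^{\rig}$ is locally trivial on an admissible covering. After an admissible formal blow-up we may assume this covering comes from a Zariski covering of a formal model. I would then use the specialization map to transport the torsor to a locally constant sheaf on the special fiber of the model. Because the model can be chosen with irreducible (hence connected, ``simply connected for $\Z$'') reduction, the torsor is trivial. Concretely, I would invoke the fact, as in \cite[Thm.~1.2]{BL91} and its proof, that for a smooth formal scheme with irreducible special fiber the rigid fiber has trivial $H^1(-,\Z)$. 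In fact this is exactly the hypothesis that \cite{BL91} uses for lifting maps to tori along uniformizations.

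Second step: pass from $\mathcal{A}^{\rig}$ to $G$. The torsor $G\to\mathcal{A}^{\rig}$ under $T=(\GmK^{\rig})^r$ is Zariski locally trivial, since line bundles are locally trivial for the analytic topology, as in Proposition~\ref{prop:extensions of rigid analytic groups by rigid tori are representable}. I would use the Leray spectral sequence for $p:G\to\mathcal{A}^{\rig}$, which gives
\[0\to H^1(\mathcal{A}^{\rig},p_*\Z)\to H^1(G,\Z)\to H^0(\mathcal{A}^{\rig},R^1p_*\Z).\]
Because the fibers are connected, $p_*\Z=\Z$, and the first term vanishes by the first step. For the last term, locally over an affinoid connected $U$, the space $p^{-1}(U)\cong U\times(\GmK^{\rig})^r$, so I need $H^1(U\times(\GmK^{\rig})^r,\Z)$ to vanish sheaf-locally on $U$. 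For this I would cover $\GmK^{\rig}$ by the annuli $\{|\pi|^{n+1}\le|z|\le|\pi|^{n}\}$. The nerve of this covering is a line, hence contractible, and each annulus times a small affinoid with irreducible reduction has trivial $H^1(-,\Z)$. A Mayer--Vietoris/Čech argument then kills $R^1p_*\Z$.

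Alternatively, I could avoid Leray by using the formal model $\mathcal{G}$ of the extension by the formal torus, as in the Raynaud situation. That route shows that $G$ is covered by translates of the connected formal-model pieces indexed by $\Z^r$, and it computes Čech $H^1$ via the nerve $\Z^r$ (a tiling of $\R^r$), which is contractible. The main obstacle is the rigorous passage between admissible $\Z$-torsors and the combinatorics of the reduction (the Bosch–Lütkebohmert analysis). I would follow \cite[\S1]{BL91}, where exactly this vanishing for extensions of good-reduction abeloids by split tori is used, adapting it from the algebraic to the formal abelian scheme case.
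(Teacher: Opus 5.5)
Your architecture mirrors the paper's: vanishing on the base $\mathcal{A}^{\rig}$, vanishing in the torus directions, glued via Leray for $p\colon G\to\mathcal{A}^{\rig}$. The paper gets simple connectivity of $\mathcal{A}^{\rig}$ from \cite[Thm.~7.4.7]{FvdP04} and of the fibres $G_a\cong T_{F_a}$ over analytic points from \cite[Observation~7.4.8]{FvdP04}. It then concludes with the fibration theorem \cite[Thm.~7.4.6]{FvdP04} and the link between simple connectivity and $H^1(-,\Z)=0$. The problem is that neither of your two vanishing inputs is established by the arguments you give.

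For the base, a $\Z$-torsor is only trivialized on some admissible covering. To see that covering on a formal model you must pass to an admissible blow-up, and its special fibre is no longer irreducible. So your principle ``irreducible reduction $\Rightarrow$ torsor trivial'' applies to the model you started with, not to the one on which the torsor is visible. A constant sheaf on a reducible special fibre can have nonzero $H^1$: for a cycle of lines, $H^1$ is $\Z^{b_1}$ of the dual graph. Controlling the dual complexes of all admissible blow-ups of $\mathcal{A}$ is exactly the nontrivial content, and your sketch does not address it. Citing \cite[Thm.~1.2]{BL91} does not supply it either. There the vanishing of $H^1(Z,\Z)$ enters as a hypothesis for lifting morphisms, which is precisely how the paper uses part (c) in the proof of Proposition~\ref{prop:functorial association of strict rigid 1-motives to rigid 1-motives}.

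The same gap recurs in the torus directions. The annuli $\{|\pi|^{n+1}\le|z|\le|\pi|^n\}$ have reduction two crossing lines, not irreducible reduction. So vanishing of $H^1$ for an annulus times a small affinoid does not follow from your base-case principle, even granting it. In your alternative route, the translates $\pi^{n}\cdot\mathcal{G}^{\rig}$ ($n\in\Z^r$) do not cover $G$: $K$ is discretely valued, and $T$ has rigid points of non-integral valuation. If you replace them by the preimages of the cubes $\prod_i[n_i,n_i+1]$ under $G\to G/\mathcal{G}^{\rig}\cong\Q^r$, you are back to pieces without irreducible reduction.

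Moreover, killing $R^1p_*\Z$ sheaf-locally needs vanishing on a family of opens forming a basis of the G-topology. Affinoids with good reduction over $K$ do not form one: a small disc around a point defined only over a ramified extension contains no such affinoid. You would instead have to run the \v{C}ech spectral sequence for one fixed trivializing covering $\{U_i\}$ of $\mathcal{A}^{\rig}$ by good-reduction affinoids, which requires $H^1(U_i\times(\GmK^{\rig})^r,\Z)=0$.

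The missing ingredient throughout is a model-independent way to compute $H^1(-,\Z)$. The analytic-point and simple-connectivity theory of \cite[\S7]{FvdP04} provides one, as does contractibility of the associated Berkovich spaces. Once you import \cite[Thm.~7.4.7, Obs.~7.4.8, Thm.~7.4.6]{FvdP04} for $\mathcal{A}^{\rig}$ and for the pieces $U_i\times(\GmK^{\rig})^r$, your Leray argument goes through. It is then a sheaf-theoretic repackaging of the paper's fibration argument rather than an independent proof.
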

\begin{proof}
    Let $p:G\to \mathcal{A}^{\rig}$ be the projection. By \cite[Thm. 7.4.7]{FvdP04} $\mathcal{A}^{\rig}$ is simply connected in the sense of loc. cit. page 161. For any analytic point (in the sense of \cite[p.201, the paragraph before Cor. 7.1.11]{FvdP04}, which corresponds to a point of the Berkovich space associated to $\mathcal{A}^{\rig}$, or equivalently a rank one point of the corresponding adic space) $a$ of $\mathcal{A}^{\rig}$, the fiber $G_a$ is a $T$-torsor over the field $F_a\supset K$ as in \cite[Def. 7.4.4]{FvdP04}. We actually have $G_a\cong T$, hence is simply connected by \cite[Observation 7.4.8]{FvdP04}. Then by \cite[Thm. 7.4.6]{FvdP04} $G$ is simply connected, and thus the result follows by \cite[Thm. 7.1,2, Thm. 7.1.7, and Lem. 7.4.1]{FvdP04} (see also the paragraphy right after \cite[Prop. 7.4.3]{FvdP04}).
\end{proof}

Let \(C:\mathscr M_{1,K}\longrightarrow \mathcal D^b_{\rig}(\fl)\) be the functor sending a rigid analytic \(1\)-motive to the corresponding two-term complex of fppf sheaves, with the lattice in degree \(-1\). By Proposition \ref{prop:morphisms from strict rigid 1-motives to rigid 1-motives are the same as in the derived category} we have
\[
\Hom_{\mathscr M_{1,K}}(N,P)
\xrightarrow{\cong}
\Hom_{\mathcal D^b_{\rig}(\fl)}\bigl(C(N),C(P)\bigr)
\]
for \(N\in\mathscr M^{\mathrm{str}}_{1,K}\) and
\(P\in\mathscr M_{1,K}\).

\begin{proposition}\label{prop:strictification-right-adjoint}
Let
$
i:\mathscr M^{\mathrm{str}}_{1,K}\hookrightarrow \mathscr M_{1,K}$
be the inclusion functor, and
\[
j:\mathscr M_{1,K}\longrightarrow \mathscr M^{\mathrm{str}}_{1,K},
\qquad
M\longmapsto M'
\]
the strictification functor constructed in
Proposition~\ref{prop:functorial association of strict rigid 1-motives to rigid 1-motives}. Then \(j\) is right adjoint to \(i\). 
\end{proposition}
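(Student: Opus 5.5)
We exhibit a natural bijection
\[
\Hom_{\mathscr M^{\mathrm{str}}_{1,K}}(N,M')\xrightarrow{\ \cong\ }\Hom_{\mathscr M_{1,K}}(i(N),M)
\]
for $N\in\mathscr M^{\mathrm{str}}_{1,K}$ and $M\in\mathscr M_{1,K}$. The map is composition with the natural morphism $\varepsilon_M:M'\to M$ of Proposition~\ref{prop:functorial association of strict rigid 1-motives to rigid 1-motives}, i.e.\ $g\mapsto \varepsilon_M\circ g$. This $\varepsilon$ will be the counit, and the unit $\eta_N:N\to N'$ comes from the inverse bijection applied to $\id_N$.

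To prove bijectivity we pass to the derived category through the functor $C$. Consider the commutative square whose horizontal arrows are $C$ and whose vertical arrows are composition with $\varepsilon_M$, respectively $C(\varepsilon_M)$:
\[
\begin{array}{ccc}
\Hom_{\mathscr M_{1,K}}(N,M') & \longrightarrow & \Hom_{\mathcal D^b_{\rig}(\fl)}(C(N),C(M'))\\
\downarrow & & \downarrow\\
\Hom_{\mathscr M_{1,K}}(N,M) & \longrightarrow & \Hom_{\mathcal D^b_{\rig}(\fl)}(C(N),C(M)).
\end{array}
\]
Both horizontal arrows are bijections by Proposition~\ref{prop:morphisms from strict rigid 1-motives to rigid 1-motives are the same as in the derived category}, which applies because $N$ is strict; this is stated just before the proposition. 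The right vertical arrow is a bijection because $\varepsilon_M$ is a quasi-isomorphism of complexes of fppf sheaves, so $C(\varepsilon_M)$ is an isomorphism in $\mathcal D^b_{\rig}(\fl)$. The square commutes since $C$ is a functor. Hence the left vertical arrow is bijective. Since $\mathscr M^{\mathrm{str}}_{1,K}$ is a full subcategory, its source is $\Hom_{\mathscr M^{\mathrm{str}}_{1,K}}(N,M')$.

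For naturality, the bijection is compatible with maps in $N$ trivially, since it is given by post-composition. It is compatible with maps $f:M_1\to M_2$ exactly when $\varepsilon_{M_2}\circ f'=f\circ\varepsilon_{M_1}$, which is the functoriality statement of Proposition~\ref{prop:functorial association of strict rigid 1-motives to rigid 1-motives}: the lift $f'$ was constructed precisely to be compatible with the projections $G_i'\to G_i$ and $Y_i'\to Y_i$. Alternatively, this compatibility follows from the uniqueness in the same bijection. The standard hom-set characterization of adjunctions then gives $i\dashv j$.

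The main subtlety is that Proposition~\ref{prop:functorial association of strict rigid 1-motives to rigid 1-motives} only asserts a functorial construction. We must check that the $f'$ produced there is the unique morphism over $f$, so that $j$ is genuinely a functor whose action agrees with the one forced by the adjunction. This uniqueness follows from the bijectivity shown above, applied with $N=M_1'$: any two lifts of $f\circ\varepsilon_{M_1}$ through $\varepsilon_{M_2}$ coincide. So no extra geometric input is needed beyond the two cited propositions.
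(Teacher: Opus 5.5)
Your proposal is correct and is essentially the paper's argument: the same commutative square, with postcomposition by $\varepsilon_M$ and by $C(\varepsilon_M)$ as one pair of parallel arrows and the bijections from strictness of $N$ as the other pair, so that the quasi-isomorphism $\varepsilon_M$ forces $\Hom(N,M')\to\Hom(N,M)$ to be bijective. Your extra checks go slightly beyond what the paper writes out: naturality in both variables, and that uniqueness of the lift $f'$ follows from the bijection applied to the strict object $M_1'$. The paper instead takes the naturality of $\varepsilon$ directly from the functoriality of the strictification construction.
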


\begin{proof}
By construction
\(j(M)=M'\) is strict. Let
\[
\varepsilon_M:ij(M)=M'\longrightarrow M
\]
be the canonical morphism, and it is a quasi-isomorphism of complexes of fppf sheaves. Moreover the construction
is functorial in \(M\), hence the morphisms $\{\varepsilon_M\}_{M\in \mathscr{M}_{1,K}}$ form a natural
transformation
\[
\varepsilon:ij\Longrightarrow \mathrm{id}_{\mathscr M_{1,K}}.
\]

Let \(N\in\mathscr M^{\mathrm{str}}_{1,K}\). Since \(i\) is fully faithful,
\[
\Hom_{\mathscr M^{\mathrm{str}}_{1,K}}\bigl(N,j(M)\bigr)
=
\Hom_{\mathscr M_{1,K}}\bigl(iN,ij(M)\bigr).
\]
Consider the commutative diagram
\[
\xymatrix{
\Hom_{\mathscr M_{1,K}}\bigl(iN,ij(M)\bigr)
\ar[r]^-{\varepsilon_{M,*}}
\ar[d]_{\cong}
&
\Hom_{\mathscr M_{1,K}}\bigl(iN,M\bigr)
\ar[d]^{\cong}
\\
\Hom_{\mathcal D^b_{\rig}(\fl)}
\bigl(C(iN),C(ij(M))\bigr)
\ar[r]^-{C(\varepsilon_M)_*}
&
\Hom_{\mathcal D^b_{\rig}(\fl)}
\bigl(C(iN),C(M)\bigr).
}
\]
The vertical arrows are bijections because \(N\) is strict. The lower horizontal arrow is a bijection because \(\varepsilon_M\) is a quasi-isomorphism. Hence the upper horizontal arrow is a bijection. Thus $\Hom_{\mathscr M^{\mathrm{str}}_{1,K}}\bigl(N,j(M)\bigr)
\xrightarrow{\;\sim\;}
\Hom_{\mathscr M_{1,K}}\bigl(iN,M\bigr)$, so we are done.
\end{proof}

\begin{example}
Let $A$ be an abeloid variety over $K$, then the Raynaud rigid analytic 1-motive $M$ attached to $A$ (see Example \ref{example Picard} (3)) is strict. 
Further if $A$ has semi-stable reduction\footnote{Note that having semi-stable reduction for $A$ as an abeloid variety and as a rigid 1-motive is the same.}, then $M$ has semi-stable reduction. 
\end{example}

\begin{proposition}\label{prop:rigid 1-motive has sst reduction iff its associated rigid 1-motive has sst reduction}
    Let $M=[Y\xrightarrow{u}G]$ be a rigid analytic 1-motive over $K$, and let $M'=[Y'\xrightarrow{u'}G']$ be the strict rigid analytic 1-motive associated to $M$. Then $M$ has semi-stable reduction if and only if $M'$ has semi-stable reduction.
\end{proposition}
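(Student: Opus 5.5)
Semi-stable reduction of $M$ only involves three pieces of data: the lattice $Y$, the character group $X$ of the torus $T$, and the abeloid part $A$ of $G$. So the plan is to compare these pieces for $M$ and $M'$ through the construction in Proposition \ref{prop:functorial association of strict rigid 1-motives to rigid 1-motives}. Recall that $0\to L\to H\to A\to 0$ and $0\to T''\to H\to A'\to 0$ are the Raynaud data of $A$. The torus of $G'$ is $T'$, an extension $0\to T\to T'\to T''\to 0$. Its abeloid part is $A'$, and its lattice $Y'$ is an extension $0\to L\to Y'\to Y\to 0$. Passing to character groups, we get an exact sequence of Galois modules $0\to X''\to X'\to X\to 0$, where $X''$ is the character group of $T''$. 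Throughout, ``extends to an étale locally constant group over $\cS$'' for such lattices will be read as ``unramified as a $\Gamma_K$-module'', i.e. the inertia $I_K$ acts trivially.

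For the direction ($\Rightarrow$), assume $M$ has semi-stable reduction. Then $Y$ and $X$ are unramified and $A$ has semi-stable reduction. By Definition \ref{def:semi-stable reduction for abeloids}, the latter means that $L$ and $X''$ are unramified and $A'$ is the generic fiber of a formal abelian scheme.

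The key step is to show that $Y'$ and $X'$ are unramified, and not merely extensions of unramified modules. Since $L$ and $Y$ are unramified, $I_K$ acts on $Y'$ through a homomorphism $I_K\to\Hom(Y,L)$. This map is a continuous cocycle with values in a torsion-free group, so it factors through a finite quotient of $I_K$ on which it is additive. A homomorphism from a finite group to the torsion-free group $\Hom(Y,L)$ is zero, so $I_K$ acts trivially on $Y'$. The same argument applies to $X'$.

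Hence $Y'$ and $X'$ are unramified. Moreover $A'$, the abeloid part of $G'$, has good reduction, so in particular it has semi-stable reduction, being its own Raynaud data with trivial torus and lattice. Thus $M'$ has semi-stable reduction.

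For the direction ($\Leftarrow$), assume $M'$ has semi-stable reduction. Then $Y'$ and $X'$ are unramified, and therefore so are their subquotients $L$, $Y$, $X''$ and $X$. The abeloid $A'$ has semi-stable reduction and is strict, so its own Raynaud data has trivial torus part; this should force $A'$ to have good reduction. By uniqueness and functoriality of Raynaud data, the data $(L,T'',H,A')$ is then exactly the data required in Definition \ref{def:semi-stable reduction for abeloids} for $A$. So $A$ has semi-stable reduction, and $M$ does too.

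The main obstacle I expect is the step showing that a semi-stable abeloid with potentially good reduction actually has good reduction. I would prove it through formal Néron models. The identity component of the formal Néron model of $A'$ has semi-abelian special fiber, and its toric rank is invariant under finite extension. That rank must vanish because $A'$ has potentially good reduction. A Néron model with abelian identity-component special fiber then gives a formal abelian scheme, following Lütkebohmert's description in \cite[Cor. 6.5]{Lut95}.
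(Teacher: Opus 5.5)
Your argument is correct and is essentially the paper's proof. Both compare $(Y,X,A)$ with $(Y',X',A')$ in each direction, using the extensions $0\to L\to Y'\to Y\to 0$ and $0\to X''\to X'\to X\to 0$ together with the fact that $(T'',H,A',L)$ is the Raynaud data of $A$.

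You also make explicit two points the paper passes over quickly. The first is that an extension of unramified lattices is unramified, which you get from the finite image of inertia in the torsion-free group $\Hom(Y,L)$. The second is that, in the converse direction, the semi-stable abeloid $A'$ actually has good reduction because it has potentially good reduction; the paper simply asserts this.
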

\begin{proof}
    Let the notations be as in the proof of Proposition \ref{prop:functorial association of strict rigid 1-motives to rigid 1-motives}, and we will work with the diagrams \eqref{eq:construction of strict rig 1-mot diagram1} and \eqref{eq:construction of strict rig 1-mot diagram2} there. Let $X$ (resp. $X'$, resp. $X''$) be the character group of $T$ (resp. $T'$, resp. $T''$).

    Assume that $M$ has semi-stable reduction. Then both $X$ and $Y$ are unramified, and $A$ has semi-stable reduction. Then $L$ and $X''$ are unramified, and $A'$ has good reduction. So $Y'$ as an extension of $Y$ by $L$ is also unramified by considering the corresponding Galois $\Z$-module representations. The short exact sequence $0\to T\to T'\to T''\to 0$ gives rise to a short exact sequence $0\to X''\to X'\to X\to0$, and thus $X'$ is also umramified. Therefore $M'$ has semi-stable reduction.

    Conversely assume that $M'$ has semi-stable reduction. Then both $Y'$ and $X'$ are unramified, and $A'$ has good reduction. Then $Y$, $L$, $X$, and $X''$ are all unramified. It follows that $A$ has semi-stable reduction, and therefore $M$ has semi-stable reduction. 
\end{proof}

\begin{definition}\label{def:rigid LPPoin}
Let $M=[Y\xrightarrow{u}G]$ be a rigid analytic 1-motive.
    Recall that  we have associated $M=[Y\xrightarrow{u}G]$ with the following diagram
    \[\xymatrix{
    &P_A\ar[d] \\
    Y\times X\ar[r]_{(v,v^\vee)}\ar[ru]^s &A\times A^\vee,}\]
    and we denote the diagram as 
    \[(Y\xrightarrow{v}A,X\xrightarrow{v^{\vee}}A^{\vee},Y\times X\xrightarrow{s}P_A),\]
    abbreviated as a tuple $(Y,X,v,v^{\vee},A,s)$ for shortening formulas when necessary. We define a morphism \[(Y_1,X_1,v_1,v_1^{\vee},A_1,s_1)\to (Y_2,X_2,v_2,v_2^{\vee},A_2,s_2)\]
    of such tuples as the following data
    \begin{enumerate}
    \item $f_{-1}:Y_1\to Y_2$
    \item $f_{-1}^{\vee}:X_2\to X_1$
    \item $f_{\mathrm{ab}}:A_1\to A_2$
    \end{enumerate}
    satisfying
    \begin{enumerate}[label=(\roman*)]
    \item $f_{\mathrm{ab}}\circ v_1=v_2\circ f_{-1}$,
    \item $f_{\mathrm{ab}}^{\vee}\circ v_2^{\vee}=v_1^{\vee}\circ f_{-1}^{\vee}$ with $f_{\mathrm{ab}}^{\vee}$ the dual of $f_{\mathrm{ab}}$,
    \item $s_1(-,f_{-1}^{\vee}(-))=s_2(f_{-1}(-),-)$.
    \end{enumerate}
    We denote the category of such tuples by $\mathop{\mathbf{LPPoin}}_K$.
\end{definition}

\begin{definition}
    A tuple $(Y,X,v,v^{\vee},A,s)$ in $\mathop{\mathbf{LPPoin}}_K$ is called \emph{strict}, if $A$ has potentially good reduction. We denote the full subcategory of $\mathop{\mathbf{LPPoin}}_K$ consisting of strict objects by $\mathop{\mathbf{LPPoin}}_K^{\mathrm{str}}$.
\end{definition}

For $M\in\mathscr{M}_{1,K}$, it is clear that $M\in\mathscr{M}_{1,K}^{\mathrm{str}}$ if and only if the associated tuple 
\[(Y,X,v,v^{\vee},A,s)\in {\mathop{\mathbf{LPPoin}}}_K^{\mathrm{str}}.\]

\begin{proposition}\label{prop:equivalence strict rigid 1-motives}
    The association of $(Y,X,v,v^{\vee},A,s)$ to a strict rigid 1-motive $[Y\xrightarrow{u}G]$ defines an equivalence of categories
    \[\mathscr{M}_{1,K}^{\mathrm{str}}\xrightarrow{\simeq} {\mathop{\mathbf{LPPoin}}}_K^{\mathrm{str}}.\]
\end{proposition}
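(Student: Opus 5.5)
The construction preceding Definition \ref{def:rigid LPPoin} already gives a bijection on objects: from $M=[Y\xrightarrow{u}G]$ we obtain $(Y,X,v,v^\vee,A,s)$, and the reverse procedure recovers $M$ up to isomorphism. So the plan is to make this a functor, and then show it is fully faithful and essentially surjective on the strict subcategories. The only genuine issue is functoriality, which fails in general (Remark \ref{rem:failure of functoriality for the symmetric description of rigid 1-mov}). The key input will therefore be the vanishing
\[\Hom_{K,\rig}(T_1,A_2)=0\]
whenever $T_1$ is a rigid analytic torus and $A_2$ is an abeloid variety with potentially good reduction.

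\emph{Step 1 (vanishing).} Since the target is a Hom group of sheaves, Galois descent lets us pass to a finite extension $L/K$. There $T_1$ is split, so $T_1\cong(\G_{m,L}^\rig)^r$, and $A_2=\mathcal A^\rig$ for a formal abelian scheme $\mathcal A$. This is exactly the argument already used in the proof of the descent of Raynaud data. One first gets $\Hom(\mathcal T^\rig,\mathcal A^\rig)=0$ for the formal torus $\mathcal T$, via \cite[Prop. 2.4]{BS95} and the formal case. Any homomorphism $\G_m^\rig\to\mathcal A^\rig$ is then killed on the open subgroup $\mathcal T^\rig$. Since $\G_m^\rig$ is the union of translates of $\mathcal T^\rig$ by powers of $\pi$ (which lie in $K^\times$), the homomorphism factors through $\G_m^\rig/\mathcal T^\rig\cong\Z$. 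This leaves a homomorphism $\Z\to\mathcal A^\rig$ that must also be continuous on $\G_m^\rig$. Ruling this out, equivalently showing $\Hom(\G_m^\rig,\mathcal A^\rig)=0$ directly, is the delicate point. My first attempt would use simple connectivity as in Lemma \ref{lem:extensions of good reduction abeloids by split tori are simply connected}: a morphism of rigid spaces $\G_m^\rig\to\mathcal A^\rig$ is determined by the formal-model data, and the N\'eron mapping property applied to the smooth formal models of the annuli forces it to be constant. Alternatively one can quote \cite[Prop. 2.4]{BS95} in the form cited in the paper, which is stated for $T^\rig$ after the extension argument.

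\emph{Step 2 (functor).} Let $(f_{-1},f_0)\colon M_1\to M_2$ be a morphism with $M_2$ strict. By Step 1 the composite $T_1\to G_1\to G_2\to A_2$ vanishes, so $f_0$ induces $f_t\colon T_1\to T_2$ and $f_{\mathrm{ab}}\colon A_1\to A_2$. Put $f^\vee_{-1}:=X(f_t)\colon X_2\to X_1$. Condition (i) is immediate. Condition (ii) says that the pushout of $G_1$ along $f_t$ agrees with the pullback of $G_2$ along $f_{\mathrm{ab}}$ in $\Ext^1(A_1,T_2)$; via Proposition \ref{prop:key prop. for biextensions in both rigid and formal settings} this translates into $f_{\mathrm{ab}}^\vee v_2^\vee=v_1^\vee f_{-1}^\vee$. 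Condition (iii) follows because the trivializations $s_i$ correspond to the sections $u_i$ under diagram \eqref{eq:key biext diagram for symmetric description of rigid 1-mot}, and $f_0u_1=u_2f_{-1}$. Functoriality of the pullback and pushout identifications of biextensions makes this assignment compatible with composition.

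\emph{Step 3 (equivalence).} Essential surjectivity follows from the reverse construction, noting that $A$ is unchanged by it, so strictness is preserved. Full faithfulness: given a morphism of tuples $(f_{-1},f^\vee_{-1},f_{\mathrm{ab}})$, condition (ii) gives an isomorphism $(f_t)_*G_1\cong f_{\mathrm{ab}}^*G_2$ of extensions. This yields $f_0\colon G_1\to G_2$, and condition (iii) shows $f_0u_1=u_2f_{-1}$. The choice of $f_0$ is ambiguous only up to $\Hom(A_1,T_2)$, which vanishes by Proposition \ref{prop:Hom-abeloid-Gm-vanishes}, so $f_0$ is unique. Conversely, $f_0$ is recovered from $(f_t,f_{\mathrm{ab}})$ by the same vanishing, and $f_t$ is recovered from $f^\vee_{-1}$. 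Hence the functor is bijective on Hom sets.

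I expect Step 1 to be the main obstacle; Steps 2 and 3 are formal biextension bookkeeping as in \cite[\S10]{Del74}.
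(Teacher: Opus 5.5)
Your strategy is the same as the paper's. The only non-formal input is that the composite $T_1\to G_1\to G_2\to A_2$ vanishes when $A_2$ has potentially good reduction. After that, $f_0$ splits into $f_{\mathrm t}$ and $f_{\mathrm{ab}}$, conditions (i)--(iii) are checked as in the algebraic case, and full faithfulness is biextension bookkeeping using $\Hom_{K,\rig}(A_1,T_2)=0$. The paper obtains the vanishing, after splitting $T_1$ and making $A_2$ good, from $\Hom_{\cS}(\Gmfml,\mathcal{A})=0$ \cite[Prop.~8.5]{BL84}. Your Steps 2 and 3 are fine.

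Step 1, however, does not work as written when you pass from $\mathcal{T}^{\rig}$ to $\GmK^{\rig}$.
\begin{itemize}
\item It is false that $\GmK^{\rig}$ is the union of the circles $\pi^n\mathcal{T}^{\rig}$, $n\in\Z$. The classical point $z$ with $z^2=\pi$ has $|z|=|\pi|^{1/2}$ and lies in none of them.
\item The quotient $\GmK^{\rig}/\mathcal{T}^{\rig}$ is not the constant sheaf $\Z$; the paper records $\GmK^{\rig}/\Gmfml^{\rig}\cong\Q$.
\item More fundamentally, a family of disjoint circles $\{|z|=r\}$ is never an admissible covering of the connected space $\GmK^{\rig}$. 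So neither ``factoring through the quotient'' nor applying the N\'eron property to smooth models of annuli yields a proof. Those smooth models, even after ramified base change, only produce such circles.
\end{itemize}

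The missing observation is short. Let $\phi\colon \GmK^{\rig}\to\mathcal{A}^{\rig}$ be a homomorphism. First, $\phi$ vanishes on $\mathcal{T}^{\rig}$, by the N\'eron mapping property of $\mathcal{A}$ together with the formal vanishing. Next, $\ker\phi=\phi^{-1}(0)$ is a Zariski-closed analytic subspace of $\GmK^{\rig}$, since $\mathcal{A}^{\rig}$ is separated. The space $\GmK^{\rig}$ is connected and smooth, hence irreducible. Since $\ker\phi$ contains the nonempty admissible open $\mathcal{T}^{\rig}$, the identity theorem gives $\ker\phi=\GmK^{\rig}$. With this in place of your reduction to $\Z$, Step 1, and hence the whole argument, goes through.
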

\begin{proof}
    By the discussion after Example \ref{example Picard}, the association is a bijection on objects between $\mathscr{M}_{1,K}$ and $\mathop{\mathbf{LPPoin}}_K$ (not functorial as discussed in Remark \ref{rem:failure of functoriality for the symmetric description of rigid 1-mov}). Clearly the association respects strictness, thus also gives a bijection on objects between $\mathscr{M}_{1,K}^{\mathrm{str}}$ and $\mathop{\mathbf{LPPoin}}_K^{\mathrm{str}}$. We are left to show that the association is functorial when restricted to strict objects. Note that the fully faithfulness is same as in the algebraic case once we have the functoriality.
    
    Now we prove that the association is functorial. Let
    \[f=(f_{-1},f_0):M_1\to M_2\]
    be a homomorphism in $\mathscr{M}_{1,K}^{\mathrm{str}}$ with $M_i=[Y_i\xrightarrow{u_i}G_i]$. Let 
    \[(Y_i\xrightarrow{v_i}A_i,X_i\xrightarrow{v_i^{\vee}}A_i^{\vee},Y_i\times X_i\xrightarrow{s_i}P_{A_i})\]
    be the tuple associated to $M_i$.  Consider the following diagram
    \[\xymatrix{
    0\ar[r] &T_1\ar[r]\ar@{-->}[d]^{f_{\mathrm{t}}} &G_1\ar[r]\ar[d]^{f_0} &A_1\ar[r]\ar@{-->}[d]^{f_{\mathrm{ab}}} &0 \\
    0\ar[r] &T_2\ar[r] &G_2\ar[r] &A_2\ar[r] &0.
    }\]
    We claim that the composition $T_1\to G_1\xrightarrow{f_0}G_2\to A_2$ is trivial. Indeed we may assume that $T_1$ is split and $A_2$ extends to a formal abelian scheme $\mathcal{A}_2$ over $\cS$, then the result follows from $\Hom_{\cS}(\Gmfml,\mathcal{A})=0$ by \cite[Prop. 8.5]{BL84}. It follows that there exist a unique homomorphism $f_{\mathrm{ab}}:A_1\to A_2$ and a unique homomorphism $f_{\mathrm{t}}:T_1\to T_2$ such that the above diagram is commutative. The homomorphism $f_{\mathrm{t}}$ induces a homomorphism $f_{-1}^{\vee}:X_2\to X_1$ on the corresponding character groups. It suffices to show that $(f_{-1},f_{-1}^{\vee},f_{\mathrm{ab}})$ defines a homomorphism of the corresponding tuples. We check the equalities (i)-(iii) from Definition \ref{def:rigid LPPoin}. The equality (i) $f_{\mathrm{ab}}\circ v_1=v_2\circ f_{-1}$ follows from $f_{0}\circ u_1=u_2\circ f_{-1}$. By the construction of $v_1^{\vee}$ and $v_2^{\vee}$, the above commutative diagram gives rise to equality (ii) $f_{\mathrm{ab}}^{\vee}\circ v_2^{\vee}=v_1^{\vee}\circ f_{-1}^{\vee}$. The equality (iii) can be proven in the same way as in the algebraic case, see for example \cite[the end of \S2.5]{Zha21}. Then the functoriality follows by further standard formal considerations.    
\end{proof}

\subsection{$\ell$-adic realizations of rigid analytic 1-motives}\label{subsection l-adic realizations}

\begin{proposition}\label{prop:n-map exact seq}

\begin{enumerate}
    \item Assume that $\cS=\Spf\cO_K$. Let $0\to\mathcal{T}\to\mathcal{G}\to\mathcal{B}\to0$ be a short exact sequence of formal group schemes over $(\mathrm{FSch}/\cS)_{\fl}$ with $\mathcal{T}$ a formal torus and $\mathcal{B}$ a formal abelian scheme. Then $\mathcal{G}^{\rig}[n]$ is the analytification of the generic fiber of an algebraic finite flat group scheme over $S=\Spec\mathcal{O}_K$ and we have a short exact sequence
    \[0\to \mathcal{G}^{\rig}[n]\to \mathcal{G}^{\rig}\xrightarrow{n}\mathcal{G}^{\rig}\to 0\]
    of sheaves on $(\mathrm{Rig}/K)_{\fl}$.
    
    \item Let $A$ be an abeloid variety over $K$. Then the kernel $A[n]$ is the analytification of an algebraic finite group scheme over $K$, and we have a short exact sequence
    \[0\to A[n]\to A\xrightarrow{n}A\to 0\]
    of sheaves on $(\mathrm{Rig}/K)_{\fl}$.
    
    \item Let $0\to T\to G\to A\to 0$ be a short exact sequence on $(\mathrm{Rig}/K)_{\fl}$ with $T$ a rigid analytic torus over $K$ and $A$ an abeloid variety over $K$. Then $G[n]$ is the analytification of an algebraic finite group scheme over $K$ and we have a short exact sequence
    \[0\to G[n]\to G\xrightarrow{n} G\to 0\]
    of sheaves on $(\mathrm{Rig}/K)_{\fl}$.
\end{enumerate}
\end{proposition}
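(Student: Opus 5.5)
We treat the three parts in order: first the formal semi-abelian case, then reduce the abeloid case to it via the Raynaud uniformization, and finally deduce the semi-abeloid case.

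\emph{Part (1).} For $\mathcal{G}$ over $\cS=\Spf\cO_K$, the multiplication $n:\mathcal{G}\to\mathcal{G}$ is finite flat. Indeed, on each truncation $S_m=\Spec \cO_K/\pi^{m+1}$ the sequence $0\to\mathcal{T}_m\to\mathcal{G}_m\to\mathcal{B}_m\to 0$ is an extension of an abelian scheme by a torus. For such extensions $n$ is finite flat and surjective (fibrewise check plus the fibrewise flatness criterion).

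Hence $\mathcal{G}[n]$ is a compatible system of finite flat group schemes over the $S_m$. By Grothendieck existence for finite (hence proper) schemes it algebraizes to a finite flat group scheme $G_n$ over $S=\Spec\cO_K$, and $\mathcal{G}^{\rig}[n]=((G_n)_K)^{\rig}$.

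Exactness of $0\to\mathcal{G}^{\rig}[n]\to\mathcal{G}^{\rig}\xrightarrow{n}\mathcal{G}^{\rig}\to0$ on $(\mathrm{Rig}/K)_{\fl}$ then follows because the rigid generic fibre of the finite flat surjection $n:\mathcal{G}\to\mathcal{G}$ is finite flat surjective, hence an fppf covering in the sense of \cite{BX96}. The kernel statement is formal, since taking generic fibres commutes with fibre products.

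\emph{Part (2).} We may pass to a finite Galois extension $L/K$ over which $A$ has split Raynaud data $0\to Y\to G\to A\to0$ with $0\to T^{\rig}\to G\to\mathcal{B}^{\rig}\to0$. The statements descend by Galois descent for finite group schemes (\cite[Example 1.2.4]{CT09}) and because fppf surjectivity is local.

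Over $L$, $G$ is the pushout of $\mathcal{G}^{\rig}$ along $\mathcal{T}^{\rig}\hookrightarrow T^{\rig}$. Apply the snake lemma to $n$ on $0\to\mathcal{T}^{\rig}\to T^{\rig}\to T^{\rig}/\mathcal{T}^{\rig}\to0$ and on the pushout square. Here $T^{\rig}/\mathcal{T}^{\rig}\cong X^\vee\otimes\Z$ is a lattice, and $T^{\rig}[n]=\mu_n\otimes X^\vee=\mathcal{T}^{\rig}[n]$. This gives $G[n]=\mathcal{G}^{\rig}[n]$ and shows that $n$ is surjective on $G$.

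Next apply the snake lemma to $0\to Y\to G\to A\to0$, using that $n$ is injective on $Y$. This yields $0\to G[n]\to A[n]\to Y/nY\to 0$ and the surjectivity of $n$ on $A$. Thus $A[n]$ is an extension of the finite étale $Y/nY$ by the algebraic finite group $G[n]$, hence finite.

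The main obstacle is showing that $A[n]$ is \emph{algebraic}, i.e.\ the analytification of a finite $K$-group scheme, rather than merely some finite rigid group. I would argue as follows: a finite rigid space over $K$ is the analytification of $\Spec$ of its (finite) ring of global functions, and the group law algebraizes likewise, so finite rigid groups are equivalent to finite $K$-group schemes. Alternatively, since $A\to A$ is proper and quasi-finite, $A[n]$ is finite over $\Sp K$ and this equivalence again applies.

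\emph{Part (3).} Apply the snake lemma to $n$ on $0\to T\to G\to A\to0$. Since $n$ is surjective on $T$ (by the Kummer sequence for $\GmK^{\rig}$ after splitting $T$) and on $A$ (by Part (2)), it is surjective on $G$, and we get $0\to T[n]\to G[n]\to A[n]\to0$. Then $G[n]$ is an fppf extension of finite rigid groups, hence represented by a finite rigid group (via Proposition \ref{prop:extensions of rigid analytic groups by rigid tori are representable}-type descent, or directly since $G[n]$ is a $T[n]$-torsor over $A[n]$). It is therefore algebraic by the finite-group equivalence used in Part (2).
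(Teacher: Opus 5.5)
Your overall route is the paper's. In (1) you use finite flatness of $\mathcal{G}[n]$, Grothendieck existence, and fppf surjectivity from the generic fibre of the faithfully flat map $[n]$. In (2) you pass by Galois descent to split Raynaud data, derive $0\to G[n]\to A[n]\to Y/nY\to 0$, and conclude finiteness and algebraicity of $A[n]$. In (3) you apply the snake lemma to $0\to T\to G\to A\to 0$. You are also more careful than the paper in separating $\mathcal{G}^{\rig}$ from the pushout $G$, which is where $Y$ actually lives.

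However, one step in (2) fails as written: $Q:=T^{\rig}/\mathcal{T}^{\rig}$ is \emph{not} the lattice $X^\vee\otimes\Z$, and with that identification your conclusion does not follow. Since $[n]$ is surjective on $\mathcal{G}^{\rig}$ by (1), the snake lemma for $[n]$ on $0\to\mathcal{G}^{\rig}\to G\to Q\to 0$ gives $G/nG\cong Q/nQ$. For a lattice this would be $X^\vee\otimes\Z/n\Z\neq 0$, so you would have \emph{disproved} the surjectivity of $[n]$ on $G$ that you then assert. Similarly, the snake lemma on $0\to\mathcal{T}^{\rig}\to T^{\rig}\to Q\to 0$ would contradict Kummer surjectivity on $T^{\rig}$.

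The truth is the opposite. You have $\mathcal{T}^{\rig}[n]=T^{\rig}[n]$, and $[n]$ is fppf-surjective on $\mathcal{T}^{\rig}$ (part (1) with $\mathcal{B}=0$) and on $T^{\rig}$. So that last snake lemma yields $Q[n]=0$ and $Q/nQ=0$, i.e.\ $Q\cong X^\vee\otimes(\GmK^{\rig}/\Gmfml^{\rig})$ is uniquely divisible. This is consistent with the paper's identification of $\GmK^{\rig}/\Gmfml^{\rig}$ with $\Q$ via the valuation. For example, the circle $\{|z|=|\pi|^{1/m}\}$ is a non-trivial $\Gmfml^{\rig}$-torsor over $\Sp K$ and gives a section of valuation $1/m$.

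With this correction, your pushout argument yields both $G[n]=\mathcal{G}^{\rig}[n]$ and $nG=G$. More directly, surjectivity also follows from $0\to T^{\rig}\to G\to\mathcal{B}^{\rig}\to 0$, since $[n]$ is surjective on $T^{\rig}$ and on $\mathcal{B}^{\rig}$. The rest of your proposal then goes through unchanged.
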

\begin{proof}
    (1) By \cite[Chap. II, Cor. 2.4.2]{FK18}, we have $\mathcal{G}[n]^{\rig}=\mathcal{G}^{\rig}[n]$. Since $\mathcal{T}[n]$ and $\mathcal{B}[n]$ are finite flat formal group schemes over $\cS$, so is $\mathcal{G}[n]$ as an fppf-torsor under $\mathcal{T}[n]$ over $\mathcal{B}[n]$ by descent. By \cite[Prop. 5.4.4]{EGA3-1} and \cite[Lem. 3.1.7]{GM71}, $\mathcal{G}[n]$ is the formal completion of a finite flat group scheme over $S=\Spec\mathcal{O}_K$. Since $\mathcal{G}\xrightarrow{n}\mathcal{G}$ is faithfully flat, so is $\mathcal{G}^{\rig}\xrightarrow{n}\mathcal{G}^{\rig}$ by the construction of the rigid generic fiber functor. Then the results follow.
    
    (2) By Galois descent, we may assume that $A$ has semi-stable reduction. Let $0\to\mathcal{T}\to\mathcal{G}\to\mathcal{B}\to0$ and $0\to Y\to\mathcal{G}^{\rig}\to A\to0$ be the Raynaud data of $A$. Then we have a short exat sequence $0\to \mathcal{G}^{\rig}[n]\to A[n]\to Y/nY\to 0$. Since $\mathcal{G}^{\rig}[n]$ is finite over $\Sp K$, the map $A[n]\to Y/nY$ is finite by descent (see \cite[Thm. 4.2.7]{Con06}). It follows that $A[n]$ is finite over $\Sp K$, hence algebraic by rigid analytic GAGA (see \cite[\S6.3, Thm. 14]{Bos14}). Since $\mathcal{G}^{\rig}\xrightarrow{n}\mathcal{G}^{\rig}$ is surjective, so is $A\xrightarrow{n}A$, and thus we have a short exact sequence $0\to A[n]\to A\xrightarrow{n}A\to 0$.

    (3) By (2), we are reduced to treat the case $G=T$. Further it suffices to treat the case $G=\GmK^{\rig}$. The homomorphism $\GmK^{\rig}\xrightarrow{n} \GmK^{\rig}$ is nothing but the rigid analytification of $\GmK\xrightarrow{n} \GmK$. By \cite[Chap. II, Prop. 9.1.10]{FK18}, we have $\ker(\GmK^{\rig}\xrightarrow{n} \GmK^{\rig})=\ker(\GmK\xrightarrow{n} \GmK)^{\rig}$. For the surjectivity of $\GmK^{\rig}\xrightarrow{n} \GmK^{\rig}$, see \cite[Example 8.2.2, 6]{FvdP04} when $n$ is invertible in $K$, and the proof in loc. cit. actually shows the surjectivity for the flat topology for any $n$.
\end{proof}

\begin{definition}
    Let $M=[Y\xrightarrow{u}G]\in\mathscr{M}_{1,K}$. For any positive integer $n$, we define 
    \[T_{\Z/n\Z}(M):=H^{-1}(M\otimes_{\Z}^L\Z/n\Z).\]
    For any prime number $\ell$, we define the \emph{$\ell$-divisible group} of $M$ as
    \[M[\ell^{\infty}]:=\varinjlim_{i}T_{\Z/\ell^i\Z}(M).\]
    If $\ell$ is different from the characteristic of the base field $K$, we define the \emph{$\ell$-adic Tate module} of $M$ as
    \[T_\ell(M):=\varprojlim_{i}T_{\Z/\ell^i\Z}(M).\]
\end{definition}

Since \[H^{-1}(Y\otimes_{\Z}^L\Z/n\Z)=0, \quad H^{0}(Y\otimes_{\Z}^L\Z/n\Z)\cong Y\otimes_{\Z}\Z/n\Z,\] \[H^{-1}(G\otimes_{\Z}^L\Z/n\Z)\cong G[n]:=\ker(G\xrightarrow{n}G),\quad \text{and} \quad H^{0}(G\otimes_{\Z}^L\Z/n\Z)=0,\] 
the exact triangle $Y\xrightarrow{u} G\to M\xrightarrow{+1}$ in $\mathcal{D}^{\mathrm{b}}_{\rig}(\fl)$ gives rise to a short exact sequence
\[0\to G[n]\to T_{\Z/n\Z}(M)\to Y\otimes_{\Z}\Z/n\Z\to 0.\]

\begin{proposition}
    Let $F$ be a finite rigid group over $K$, which is automatically the rigid analytification of an algebraic finite group scheme over $K$. For any rigid space $U$ over $K$, any fppf-torosr $Q$ under $F$ over $U$ is representable, and algebraizable provided that $U$ is algebraizable. 

    In particular, $T_{\Z/n\Z}(M)$ for any rigid 1-motive $M$ is a finite rigid group over $K$.
\end{proposition}
\begin{proof}
    Let $F=\Sp C$ with $C$ a finite $K$-algebra. Let $U'\to U$ be an fppf cover such that $Q':=Q\times_UU'\cong F\times_{\Sp K}U'$. Note that $Q'$ is apparently representable. Since $F$ is finite over $K$, $\mathcal{O}_{Q'}$ is a finite $\mathcal{O}_{U'}$-algebra. Since $Q\times_UU'$ carries a natural descent data, $\mathcal{O}_{Q'}$ descends to a finite $\mathcal{O}_U$-algebra by \cite[Thm. 3.1]{BG98}, and the resulting algebra, which is clearly algebraic provided that $U$ is algebraizable, represents $Q$. 

    By the short exact sequence $0\to G[n]\to T_{\Z/n\Z}(M)\to Y\otimes_{\Z}\Z/n\Z\to 0$, $T_{\Z/n\Z}(M)$ is an fppf-torsor under $G[n]$ over $Y\otimes_{\Z}\Z/n\Z$, hence represented by a finite rigid group over $K$.
\end{proof}

Since the family $\{G[\ell^i]\}_i$ satisfies the Mittag-Leffler condition, we get a short exact sequence
\[0\to T_\ell(G)\to T_\ell(M)\to Y\otimes_{\Z}\Z_\ell\to 0.\]
The short exact sequence $0\to T\to G\to A\to 0$ induces short exact sequences
\[0\to T[n]\to G[n]\to A[n]\to 0\]
and
\[0\to T_\ell(T)\to T_\ell(G)\to T_\ell(A)\to 0.\]
Therefore we get a filtration \[T_\ell(T)\subset T_\ell(G)\subset T_\ell(M)\] with graded pieces $T_\ell(T)$, $T_\ell(A)$ and $Y\otimes_{\Z}\Z_\ell$. 

Similarly we also have a filtration
\[T[\ell^{\infty}]\subset G[\ell^{\infty}]\subset M[\ell^{\infty}]\] of the $\ell$-divisible group of $M$ with graded pieces $T[\ell^{\infty}]$, $A[\ell^{\infty}]$ and $Y\otimes_\Z\Q_\ell/\Z_\ell$.

Let $v$ be the composition $Y\xrightarrow{u}G\to A$, we denote the rigid 1-motive $[Y\xrightarrow{v}A]$ by $M_A$. The short exact sequence $0\to T\to M\to [Y\xrightarrow{v}A]\to 0$ of complexes gives exact sequences
\[0\to T[n]\to T_{\Z/n\Z}(M)\to T_{\Z/n\Z}(M_A)\to 0,\]
\[0\to T_\ell(T)\to T_\ell(M)\to T_\ell(M_A)\to 0,\]
and
\[0\to T[\ell^{\infty}]\to M[\ell^{\infty}]\to M_A[\ell^{\infty}]\to 0.\]

\begin{lemma}\label{lem:realization of sum of 1-motives is the Baer sum of the realizations} 
    Let $M_i=[Y\xrightarrow{u_i}G]\in\mathscr{M}_{1,K}$, $i=1,2$, with the same terms. Let $u=u_1+u_2$ and $M=[Y\xrightarrow{u}G]$. Then for any integer $n$, $T_{\Z/n\Z}(M)$ as an extension of $Y/nY$ by $G[n]$ is the Baer sum of the extensions $T_{\Z/n\Z}(M_1)$ and $T_{\Z/n\Z}(M_2)$. Moreover for any prime number $\ell$ which is invertible in $K$, $T_{\ell}(M)$ as an extension of $Y\otimes_{\Z}\Z_{\ell}$ by $T_{\ell}(G)$ is the Baer sum of the extensions $T_{\ell}(M_1)$ and $T_{\ell}(M_2)$.
\end{lemma}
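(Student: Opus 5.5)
We make $T_{\Z/n\Z}(M)$ explicit and then compare the extensions directly. Since $n:G\to G$ is an epimorphism of fppf sheaves (Proposition \ref{prop:n-map exact seq} (3)), $T_{\Z/n\Z}(M)=H^{-1}(M\otimes^L_\Z\Z/n\Z)$ can be computed with the resolution $[\Z\xrightarrow{n}\Z]$ of $\Z/n\Z$. Taking the total complex of $[Y\to G]\otimes[\Z\xrightarrow{n}\Z]$ and computing $H^{-1}$ gives the standard description
\[
T_{\Z/n\Z}(M)\cong\frac{\{(y,g)\in Y\times G:\ u(y)=ng\}}{\{(ny,u(y)):\ y\in Y\}}.
\]
In this description $G[n]\hookrightarrow T_{\Z/n\Z}(M)$ is $g\mapsto(0,g)$, and the projection to $Y/nY$ is $(y,g)\mapsto y$. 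We should check that this identification is compatible with the short exact sequence $0\to G[n]\to T_{\Z/n\Z}(M)\to Y/nY\to0$ coming from the triangle $Y\to G\to M$. This is a routine sign check with the connecting maps, and it is the step where one must be careful.

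Next we realize the Baer sum. Write $E_i=T_{\Z/n\Z}(M_i)$. The Baer sum $E_1+E_2$ is the fiber product $E_1\times_{Y/nY}E_2$ modulo the antidiagonal copy $\{(a,-a):a\in G[n]\}$. Define a map
\[
\phi:E_1\times_{Y/nY}E_2\to T_{\Z/n\Z}(M)
\]
on representatives. Given a pair with the same image in $Y/nY$, first change the second representative by an element $(nz,u_2(z))$ so that both representatives have the same $y$. This adjustment is ambiguous only up to $z\in Y[n]=0$, since $Y$ is torsion free. Then send $((y,g_1),(y,g_2))\mapsto(y,g_1+g_2)$. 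The point lies in $T_{\Z/n\Z}(M)$ because $u(y)=u_1(y)+u_2(y)=n(g_1+g_2)$. Changing $(y,g_1)$ by $(ny',u_1(y'))$ forces the same change of $y$ in the second factor, and the image changes by $(ny',u(y'))$, which is zero in the quotient. So $\phi$ is well defined.

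The map $\phi$ kills the antidiagonal, restricts to the identity on $G[n]$, and is compatible with the projections to $Y/nY$. Hence it induces a morphism of extensions $E_1+E_2\to T_{\Z/n\Z}(M)$, which is an isomorphism by the five lemma.

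All of these constructions are of fppf sheaves, defined on local sections and sheafified, so no representability issue arises. For the $\ell$-adic statement, the isomorphisms for $n=\ell^i$ are compatible with the transition maps: the map $E_{\ell^{i+1}}\to E_{\ell^i}$ is $(y,g)\mapsto(y,\ell g)$, and the construction commutes with it. Baer sum commutes with inverse limits of extensions because the Mittag-Leffler property holds, as noted before the statement. Passing to $\varprojlim_i$ then identifies $T_\ell(M)$ with the Baer sum of $T_\ell(M_1)$ and $T_\ell(M_2)$.

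The main obstacle is the first step: justifying the explicit model of $H^{-1}$ and its compatibility, including signs, with the extension structure coming from the triangle. Once that is in place the rest is formal.
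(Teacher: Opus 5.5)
Your proof is correct, but it works through an explicit model, whereas the paper argues purely functorially. The paper introduces the auxiliary rigid 1-motive $M'=[Y\xrightarrow{(u_1,u_2)}G\times_{\Sp K}G]$, which maps to $M$ via $(\mathrm{id}_Y,m_G)$ and to $M_1\times M_2$ via $(\Delta_Y,\mathrm{id})$. Applying $T_{\Z/n\Z}$ gives a diagram of extensions showing that $T_{\Z/n\Z}(M')=\Delta_{Y/nY}^*\bigl(T_{\Z/n\Z}(M_1)\times T_{\Z/n\Z}(M_2)\bigr)$ and $T_{\Z/n\Z}(M)=(m_{G[n]})_*T_{\Z/n\Z}(M')$, which is the definition of the Baer sum; the $\ell$-adic case is then treated as formal. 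Your map $\phi$ is exactly this $m_*\Delta^*$ written in coordinates. In your model, $T_{\Z/n\Z}(M')$ consists of classes of triples $(y,g_1,g_2)$ with $u_i(y)=ng_i$, and your ``adjust the second representative'' step is its identification with $E_1\times_{Y/nY}E_2$.

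The functorial route needs only naturality in the 1-motive of $0\to G[n]\to T_{\Z/n\Z}(-)\to Y/nY\to 0$. So the sign issue you flag as the main obstacle never arises, and the argument carries over verbatim to formal and log formal 1-motives, where the paper later invokes ``the analogue'' of this lemma. That obstacle is harmless anyway. Your model is natural in $M$, so any discrepancy with the triangle-induced maps is a sign independent of $M$. Pushing out or pulling back along $-1$ is a homomorphism on $\Ext^1$, so such a sign cannot affect a Baer-sum identity.

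In exchange, your route shows exactly where torsion-freeness of $Y$ and surjectivity of $[n]$ on $G$ are used. It also spells out the compatibility with the transition maps and the Mittag-Leffler passage to $T_\ell$.

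One small misattribution: the resolution $[\Z\xrightarrow{n}\Z]$ computes $M\otimes^L_{\Z}\Z/n\Z$ regardless of any surjectivity. Surjectivity of $n:G\to G$ is what you need for the surjection onto $Y/nY$.
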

\begin{proof}
    It suffices to deal with $\Z/n\Z$-case. Let $M':=[Y\xrightarrow{(u_1,u_2)}G\times_{\Sp K} G]$. The diagram 
    \[\xymatrix{
    Y\ar@{=}[r]\ar[d]_{u} &Y\ar[d]^{(u_1,u_2)}\ar[r]^{\Delta_Y} &Y\times Y\ar[d]^{u_1\times u_2} \\
    G &G\times_{\Sp K}G\ar[l]_-{m_G} &G\times_{\Sp K}G\ar@{=}[l]
    }\]
    induces the following commutative diagram
    \[\xymatrix{
    0\ar[r] &G[n]\ar[r] &T_{\Z/n\Z}(M)\ar[r] &Y/nY\ar[r] &0 \\
    0\ar[r] &G[n]\times_{\Sp K}G[n]\ar[u]^{m_{G[n]}}\ar[r]\ar@{=}[d] &T_{\Z/n\Z}(M')\ar[r]\ar[u]\ar[d] &Y/nY\ar[r]\ar@{=}[u]\ar[d]^{\Delta_{Y/nY}} &0 \\    
    0\ar[r] &G[n]\times_{\Sp K}G[n]\ar[r] &T_{\Z/n\Z}(M_1)\times_{\Sp K}T_{\Z/n\Z}(M_2)\ar[r] &Y/nY\times Y/nY\ar[r] &0 
    }\]
    with exact rows, and thus $T_{\Z/n\Z}(M)$ is the Baer sum of $T_{\Z/n\Z}(M_1)$ and $T_{\Z/n\Z}(M_2)$.
\end{proof}

\begin{proposition}\label{prop:strictification doesn't change l-adic realizations and l-divisible groups}
    Let $M=[Y\xrightarrow{u}G]\in\mathscr{M}_{1,K}$, and let $M'=[Y'\xrightarrow{u'}G']$ be the strict rigid 1-motive associated to $M$. Then we have canonical isomorphisms
    \[T_{\Z/n\Z}(M')\xrightarrow{\cong}T_{\Z/n\Z}(M),
    \quad
    M'[\ell^{\infty}]\xrightarrow{\cong}M[\ell^{\infty}], \quad
    \text{ and }\quad
    T_{\ell}(M')\xrightarrow{\cong}T_\ell(M).\]
\end{proposition}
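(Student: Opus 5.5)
The plan is to deduce everything from the fact, proved in Proposition \ref{prop:functorial association of strict rigid 1-motives to rigid 1-motives}, that the canonical map $\varepsilon_M:M'\to M$ is a quasi-isomorphism of complexes of fppf sheaves on $(\mathrm{Rig}/K)_{\fl}$. The definitions $T_{\Z/n\Z}(-)=H^{-1}(-\otimes^L_{\Z}\Z/n\Z)$, $M[\ell^\infty]=\varinjlim_i T_{\Z/\ell^i\Z}(M)$ and $T_\ell(M)=\varprojlim_i T_{\Z/\ell^i\Z}(M)$ depend only on the isomorphism class of $M$ in $\mathcal D^b_{\rig}(\fl)$. Moreover, they depend on it functorially. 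So $\varepsilon_M$ should induce all three isomorphisms at once.

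First I will show that $\varepsilon_M\otimes^L_{\Z}\Z/n\Z$ is an isomorphism in $\mathcal D^b_{\rig}(\fl)$. Since $-\otimes^L_{\Z}\Z/n\Z$ is a triangulated functor, it preserves isomorphisms. To be concrete, the cone of the termwise map $M'\to M$ is acyclic. Tensoring with the two-term resolution $[\Z\xrightarrow{n}\Z]$ of $\Z/n\Z$ gives another acyclic complex, because it is the cone of multiplication by $n$ on an acyclic complex. Taking $H^{-1}$ then yields an isomorphism of fppf sheaves $T_{\Z/n\Z}(M')\xrightarrow{\cong}T_{\Z/n\Z}(M)$. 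These objects are represented by finite rigid groups by the preceding proposition. Since $\Z/n\Z$-realizations form a full subcategory of sheaves, this is an isomorphism of rigid groups. Naturality in $n$ comes from the functoriality of $H^{-1}(\varepsilon_M\otimes^L -)$ with respect to the maps $\Z/\ell^{i}\Z\to\Z/\ell^{i+1}\Z$ (inclusion) and $\Z/\ell^{i+1}\Z\to\Z/\ell^i\Z$ (projection). Passing to the colimit and the limit gives the isomorphisms for $M[\ell^\infty]$ and for $T_\ell(M)$.

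As a sanity check, I would also compare with the explicit description coming from diagrams \eqref{eq:construction of strict rig 1-mot diagram1} and \eqref{eq:construction of strict rig 1-mot diagram2}. There is a map of short exact sequences
\[0\to G'[n]\to T_{\Z/n\Z}(M')\to Y'/nY'\to 0 \quad\longrightarrow\quad 0\to G[n]\to T_{\Z/n\Z}(M)\to Y/nY\to 0.\]
Applying the snake lemma to $0\to L\to G'\to G\to 0$ and to $0\to L\to Y'\to Y\to 0$ (with $L$ torsion-free) gives
\[0\to G'[n]\to G[n]\to L/nL\to G'/nG'\]
and
\[0\to L/nL\to Y'/nY'\to Y/nY\to 0.\]
By Proposition \ref{prop:n-map exact seq}(3), $G'/nG'=0$, so the first sequence ends in a surjection onto $L/nL$. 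The two $L/nL$ contributions then cancel in the total complex, which recovers the isomorphism directly.

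The main point to check is that the identification of $T_{\Z/n\Z}(M)$ with the middle term of the short exact sequence is compatible with $\varepsilon_M$. This is formal from the map of exact triangles $Y'\to G'\to M'$ and $Y\to G\to M$. So I expect no genuine obstacle; the only care needed is that everything is computed in the fppf topology, where multiplication by $n$ is surjective on semi-abeloid varieties.
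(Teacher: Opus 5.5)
Your proposal is correct and takes the same approach as the paper, whose proof simply notes that the canonical map $M'\to M$ is a quasi-isomorphism and therefore induces isomorphisms on $H^{-1}(-\otimes^L_{\Z}\Z/n\Z)$, and hence on the resulting colimit $M[\ell^\infty]$ and limit $T_\ell(M)$. Your explicit snake-lemma cross-check is a valid supplement but is not needed: the connecting map $L/nL\to L/nL$ you would need is indeed the identity up to sign.
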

\begin{proof}
    Since the canonical homomorphism $M'\to M$ is a quasi-isomorphism, we get the results.
\end{proof}

The filtration $T_\ell(T)\subset T_\ell(G)\subset T_\ell(M)$ (resp. $T[\ell^{\infty}]\subset G[\ell^{\infty}]\subset M[\ell^{\infty}]$) of $T_\ell(M)$ (resp. $M[\ell^{\infty}]$) is probably not very interesting. The interesting one is actually the one induced by that of $M'$ as shown by the theorem below.

Assume that $\ell$ is invertible in $K$. Since $T_{\Z/\ell^i\Z}(M)$ is the analytification of an algebraic finite group scheme $F_i$ over $K$, $T_{\Z/\ell^i\Z}(M)$ and $F_i$ have the same $\overline{K}$-valued points. Therefore $T_\ell(M)$ is a finite rank Galois $\Z_\ell$-representation.

\begin{theorem}\label{thm:first description of l-adic realization of rigid 1-motive}
    Let $M=[Y\xrightarrow{u}G]\in\mathscr{M}_{1,K}^{\mathrm{str}}$, and let $T$ (resp. $A$) be the torus (resp. abeloid) part of $G$. Let $k$ be the residue field of $K$. Assume that $\ell\neq \mathrm{char}(K)$. Then we have the following.
    \begin{enumerate}
        \item Assume that $M$ has good reduction. Let $\mathcal{A}$, $X_{\cS}$, $Y_{\cS}$, $\mathcal{G}$ and $Y_{\cS}\xrightarrow{u_{\cS}}\mathcal{G}$ be the data as Definition \ref{def:good-semi-good} (1). Denote the complex $Y_{\cS}\xrightarrow{u_{\cS}}\mathcal{G}$ (in degree -1 and 0) by $M_{\cS}$. Then we can define $M_{\cS}[\ell^{\infty}]$ in the same way as for $M[\ell^{\infty}]$. Then $M[\ell^{\infty}]$ is the generic fiber of $M_{\cS}[\ell^{\infty}]$, and thus $T_\ell(M)$ is unramified (resp. crystalline) if $\ell\neq\mathrm{char}(k)$ (resp. $\ell=\mathrm{char}(k)$).
        
        \item The Galois representation $T_\ell(M)$ is potenitally semi-stable with the graded pieces of the canonical filtration \[T_\ell(T)\subset T_\ell(G)\subset T_\ell(M)\] potentially umramified (resp. potentially crystalline) if $\ell\neq\mathrm{char}(k)$ (resp. $\ell=\mathrm{char}(k)$).
        
        \item Assume that $M$ is semi-stable. If $\ell\neq \mathrm{char}(k)$, then $T_\ell(M)$ is semi-stable and the graded pieces of the canonical filtration are unramified. If $\ell=\mathrm{char}(k)$, then the algebraic $\ell$-divisible group associated to $M[\ell^\infty]$ is semi-stable in the sense of \cite[\S2.2]{deJ98}, and the representation $T_\ell(M)$ is semi-stable.
    \end{enumerate} 
\end{theorem}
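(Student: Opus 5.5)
For (1), I will build the integral model step by step and compare torsion levels with Proposition \ref{prop:n-map exact seq}. Put $M_{\cS}=[Y_{\cS}\xrightarrow{u_{\cS}}\mathcal G]$ and $T_{\Z/n\Z}(M_{\cS}):=H^{-1}(M_{\cS}\otimes^L\Z/n\Z)$ on $(\mathrm{FSch}/\cS)_{\fl}$. Since $n:\mathcal G\to\mathcal G$ is faithfully flat (proof of Proposition \ref{prop:n-map exact seq} (1)), this sits in an exact sequence
\[0\to \mathcal G[n]\to T_{\Z/n\Z}(M_{\cS})\to Y_{\cS}/nY_{\cS}\to 0 .\]
The object $\mathcal G[n]$ is finite flat and is the formal completion of a finite flat group scheme over $S=\Spec\OK$. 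Hence $T_{\Z/n\Z}(M_{\cS})$ is an fppf torsor under $\mathcal G[n]$ over the finite étale $Y_{\cS}/nY_{\cS}$. It is therefore finite flat, and it algebraizes by the same EGA/Grothendieck–Messing argument used there. Next I take rigid generic fibers. The functor $(-)^{\rig}$ is exact on these finite flat objects, and $G$ is the pushout of $\mathcal G^{\rig}$ along $\mathcal T^{\rig}\hookrightarrow T$.

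The comparison of torsion is the step I expect to need the most care. I need $\mathcal G^{\rig}[n]\cong G[n]$. First, $\mathcal T^{\rig}[n]=T[n]$, since both are $\mu_n\otimes X^\vee$. Second, the quotient $T/\mathcal T^{\rig}\cong G/\mathcal G^{\rig}$ is torsion-free: it is étale-locally $(K^\times/\OK^\times)^r\subset\Z^r$-valued. Applying the snake lemma to multiplication by $n$ then gives the isomorphism. Compatibility with $u$ and $u_{\cS}^{\rig}$ then yields $T_{\Z/n\Z}(M)\cong T_{\Z/n\Z}(M_{\cS})^{\rig}$. Passing to the limit over $n=\ell^i$ gives $M[\ell^\infty]=(M_{\cS}[\ell^\infty])^{\rig}$.

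If $\ell\neq\mathrm{char}(k)$, each level is finite étale over $\OK$, so $T_\ell(M)$ is unramified. If $\ell=\mathrm{char}(k)$, then $M_{\cS}[\ell^\infty]$ algebraizes to an $\ell$-divisible group over $\OK$, with $\mathrm{char}\,K=0$ since $\ell\neq\mathrm{char}K$. Its Tate module is crystalline by Fontaine/Breuil–Kisin.

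For (2), strictness of $M$ lets me pass to a finite extension $K'$ over which $A$ has good reduction and $X,Y$ become constant. Over $K'$ conditions (i)–(iii) of Definition \ref{def:good-semi-good} (1) hold, and the only missing condition is (iv). Theorem \ref{thm:introduction 1.1}(3) is not yet available, so I argue with the graded pieces directly. These are $T_\ell(T)=X^\vee\otimes\Z_\ell(1)$, $T_\ell(A)$, and $Y\otimes\Z_\ell$. By (1) applied to $[0\to A]$, the piece $T_\ell(A)$ is unramified, resp. crystalline, over $K'$. The other two are clearly so.

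The remaining task is potential semistability of the whole representation. I reduce to $[Y\to G]$ over $K'$ and change $u$ by an element of $\Hom(Y,T)$ with values in $\pi^{\Z}$-powers. To get such a decomposition I use that $G(K')/\mathcal G^{\rig}(K')\cong T(K')/\mathcal T^{\rig}(K')$, which holds because $\mathcal A$ is a formal Néron model, so $\mathcal A(\OO_{K'})=A(K')$. For each basis vector $y$, I choose $t_y\in T(K')$ with $u(y)t_y^{-1}\in\mathcal G(\OO_{K'})$. The map $u_1:y\mapsto u(y)t_y^{-1}$ then has good reduction. The twist $u_2:y\mapsto t_y$ gives a 1-motive $[Y\to T]$ whose Tate module is a Kummer-type extension, which is semi-stable (Tate-curve style). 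By Lemma \ref{lem:realization of sum of 1-motives is the Baer sum of the realizations}, $T_\ell(M)$ is the Baer sum of these two extensions of $Y\otimes\Z_\ell$ by $T_\ell(G)$. Semi-stable extensions are stable under Baer sum, being a subquotient of a direct sum. This proves (2).

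For (3), the semistable hypothesis gives that $A$ has semi-stable reduction, so I work with $M'$ via Proposition \ref{prop:strictification doesn't change l-adic realizations and l-divisible groups} and Proposition \ref{prop:rigid 1-motive has sst reduction iff its associated rigid 1-motive has sst reduction}. Then $M'$ is strict with $A'$ of good reduction and $X',Y'$ unramified. Because unramified lattices and $\mathcal A$ descend, the argument above works over $K$ itself without extension. Its pieces are then unramified when $\ell\ne p$, with $T_\ell(M)$ semi-stable. When $\ell=p$, the decomposition shows that $G[\ell^\infty]$ extends to an $\ell$-divisible group over $\OK$ and that $M[\ell^\infty]$ is an extension of the étale $Y\otimes\Q_\ell/\Z_\ell$ by it. This is exactly de Jong's semi-stability, and it gives semi-stable $T_\ell(M)$.
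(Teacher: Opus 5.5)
Part (1) is the paper's argument, with the identification $T_{\Z/n\Z}(M)\cong T_{\Z/n\Z}(M_{\cS})^{\rig}$ spelled out. Your snake-lemma argument, using that $T/\mathcal{T}^{\rig}$ is torsion-free, is what the paper records later as Proposition~\ref{prop:formal 1-motive ass. to rigid 1-mot with good red}(2).

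For (2) you take a genuinely different route. The paper passes to an extension over which $M$ is semi-stable and invokes (3); for $\ell=p$, part (3) checks de Jong semi-stability of $M[\ell^\infty]$ and then appeals to \cite[Thm.~B]{BWZ23}. You instead write $u=u_1+u_2$ over $K'$, with $u_1$ of good reduction and $u_2$ valued in $T$. You then conclude by pushout along $T_\ell(T)\hookrightarrow T_\ell(G)$ and Baer sums, using only crystallinity of $\ell$-divisible groups over $\mathcal{O}_{K'}$ and semi-stability of Kummer extensions. This is correct. The isomorphism $G(K')/\mathcal{G}^{\rig}(K')\cong T(K')/\mathcal{T}^{\rig}(K')$ also needs surjectivity of $G(K')\to A(K')$ and of $\mathcal{G}(\mathcal{O}_{K'})\to\mathcal{A}(\mathcal{O}_{K'})$, i.e.\ vanishing of $H^1$ of split tori on both sides. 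Your route avoids log $p$-divisible groups and is in effect a hands-on early version of the paper's later decomposition (Theorem~\ref{thm:decomposition of rigid 1-motives w.r.t. a chosen uniformizer}). What it does not give by itself is the geometric assertion in (3).

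That is where the gap lies. For $\ell=p$ you claim that ``$G[\ell^\infty]$ extends over $\OK$ and $M[\ell^\infty]$ is an extension of the \'etale $Y\otimes\Q_\ell/\Z_\ell$ by it'' is exactly de Jong's semi-stability. It is not. For the filtration $T[\ell^\infty]\subset G[\ell^\infty]\subset M[\ell^\infty]$ you must also show that the quotient by the multiplicative part, $M[\ell^\infty]/T[\ell^\infty]=M_A[\ell^\infty]$ with $M_A=[Y\xrightarrow{v}A]$, has an integral model over $\OK$.

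Your weaker condition cannot suffice, because it does not even force semi-stability of the Tate module, which de Jong semi-stability does by \cite[Thm.~B]{BWZ23}. Take the $p$-divisible group over $K$ with $T_p=\Z_p^2$ and $\sigma\mapsto\left(\begin{smallmatrix}1&\log\chi_{\mathrm{cyc}}(\sigma)\\0&1\end{smallmatrix}\right)$. It contains a copy of $\Q_p/\Z_p$ with good reduction and has constant \'etale quotient $\Q_p/\Z_p$, yet it is not even Hodge--Tate.

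The fix is the paper's step. The map $v$ extends to $v_{\cS}\colon Y_{\cS}\to\mathcal{A}$ by the formal N\'eron property (Lemma~\ref{lem:a formal abelian scheme is the formal Neron model of its generic fiber}); in your notation, $v$ is the image of $u_1$, since $u_2$ lands in $T$. Hence $M_A$ has good reduction, and (1) gives the integral model of $M_A[\ell^\infty]$.

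Semi-stability of $T_\ell(M)$ itself survives via your Baer-sum argument, but ``works over $K$ itself'' needs care. Choosing $t_y$ on a basis requires $Y$ to be constant. Either run the argument over an unramified extension splitting $X$ and $Y$, where unramifiedness, crystallinity and semi-stability may all be checked, or take the canonical $\pi$-power lift so that $u_2$ is Galois-equivariant. Finally, the detour through $M'$ is unnecessary: $M$ is strict by hypothesis, so $M'=M$ and $A$ already has good reduction.
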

\begin{proof}
    (1) The $\ell$-divisible group $M[\ell^{\infty}]$ has a formal model $M_{\cS}[\ell^{\infty}]$ which is an $\ell$-divisible group over $\cS$ (see \cite[\S2.4.2]{deJ95}). By \cite[2.4.4]{deJ95}, $M_{\cS}[\ell^{\infty}]$ algebraizes to a $p$-divisible group over $\Spec\mathcal{O}_K$. Then the result follows.
    
    (2) Since $M$ admits semi-stable reduction after passing to a finite field extension of $K$, the result follows from (3).

    (3) Let $v$ be the composition $Y\xrightarrow{u}G\to A$, and let $M_A:=[Y\xrightarrow{v}A]$. Since $M$ is strict and has semi-stable reduction, $A$ extends to a formal abelian scheme $\mathcal{A}$ over $\cS$, $Y$ and the character group $X$ of $T$  extend to \'etale locally constant groups $Y_{\cS}$ and $X_{\cS}$ over $\cS$ respectively. Then the graded pieces of the canonical filtration have integral model, hence the action of the inertial group on $T_\ell(M)$ is unipotent when $\ell\neq \mathrm{char}(k)$. 
    
    Now we deal with the case $\ell=\mathrm{char}(k)$. By the universal property of formal N\'eron model, $v$ extends to a homomorphism $v_{\cS}:Y_{\cS}\to\mathcal{A}$. One can see that $\bigcup_iT_{\Z/\ell^i\Z}([Y_{\cS}\xrightarrow{v_{\cS}}\mathcal{A}])$ gives rise to an integral model of $M_A[\ell^{\infty}]=M[\ell^{\infty}]/T[\ell^{\infty}]$. Also $\cG[\ell^{\infty}]$ gives rise to an integral model of $G[\ell^{\infty}]$. We denote the $\ell$-divisible group over $\Spec\mathcal{O}_K$ corresponding to $T[\ell^{\infty}]$, $A[\ell^{\infty}]$, $G[\ell^{\infty}]$, $M[\ell^{\infty}]$, and $M_A[\ell^{\infty}]$ by the same notation. The by considering the filtration $T[\ell^{\infty}]\subset G[\ell^{\infty}]\subset M[\ell^{\infty}]$, one sees that $M[\ell^{\infty}]$ is semi-stable in the sense of \cite[\S2.2]{deJ98}. It follows that the Galois representation $T_\ell(M)$ is semi-stable by \cite[Thm. B]{BWZ23}.
\end{proof}

\begin{definition}
    Let $M_i=[Y_i\xrightarrow{u_i}G_i]\in\mathscr{M}_{1,K}$ for $i=1,2$. A morphism $f=(f_{-1},f_0):M_1\to M_2$ is called an \emph{isogeny}, if $f_0$ is surjective with finite kernel and $f_{-1}$ is injective with finite cokernel.
\end{definition}

\begin{proposition}
    Let $f=(f_{-1},f_0):M_1\to M_2$ be an isogeny in $\mathscr{M}_{1,K}$. Let $\ell$ be a prime number which is not $\mathrm{char}(K)$. Then $f$ induces an a commutative diagram
    \[\xymatrix{
    &0\ar[d] &0\ar[d] &0\ar[d] \\
    0\ar[r] &T_{\ell}(G_1)\ar[r]\ar[d]_{T_{\ell}(f_0)} &T_{\ell}(M_1)\ar[r]\ar[d]^{T_{\ell}(f)} &Y_1\otimes_{\Z}\Z_{\ell}\ar[r]\ar[d]^{f_{-1}\otimes_{\Z}\Z_{\ell}} &0  \\
    0\ar[r] &T_{\ell}(G_2)\ar[r]\ar[d] &T_{\ell}(M_2)\ar[r]\ar[d] &Y_2\otimes_{\Z}\Z_{\ell}\ar[r]\ar[d] &0  \\
    0\ar[r] &\ker(f_0)[\ell^{\infty}]\ar[r]\ar[d] &\coker(T_{\ell}(f)) \ar[r]\ar[d] &\coker(f_{-1})\otimes_{\Z}\Z_{\ell}\ar[d]\ar[r] &0 \\
    &0 &0 &0
    }\]
    with exact rows and columns. In particular, if $\ell$ is coprime to the orders of $\ker(f_0)$ and $\coker(f_{-1})$, $f$ induces an isomorphism $T_{\ell}(f):T_{\ell}(M_1)\xrightarrow{\cong} T_{\ell}(M_2)$. 
\end{proposition}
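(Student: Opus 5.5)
The diagram will come from the snake lemma applied to the two horizontal short exact sequences
\[0\to T_\ell(G_i)\to T_\ell(M_i)\to Y_i\otimes_{\Z}\Z_\ell\to 0\qquad (i=1,2),\]
which were established above. The vertical maps are induced by functoriality of $T_{\Z/\ell^i\Z}(-)=H^{-1}(-\otimes^L_\Z\Z/\ell^i\Z)$ and passage to the limit. Commutativity of the two top squares is then automatic, because the exact triangles $Y_i\to G_i\to M_i\xrightarrow{+1}$ are functorial in $f$.

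The first step is to show that the vertical maps are injective. For the right column, $f_{-1}$ is injective between free $\Z$-modules of equal rank, so $f_{-1}\otimes\Z_\ell$ is injective, with cokernel $\coker(f_{-1})\otimes_\Z\Z_\ell$ since $\Z_\ell$ is flat over $\Z$. For the left column, write $F=\ker(f_0)$, a finite flat rigid group. Because $f_0$ is surjective as a map of fppf sheaves, we have an exact sequence $0\to F\to G_1\to G_2\to 0$. Applying $\otimes^L\Z/\ell^i$, or directly the snake lemma to multiplication by $\ell^i$ together with Proposition \ref{prop:n-map exact seq} (which gives surjectivity of $\ell^i$ on $G_1$ and $G_2$), yields exact sequences
\[0\to F[\ell^i]\to G_1[\ell^i]\to G_2[\ell^i]\to F/\ell^iF\to 0.\]
Taking the inverse limit over $i$ on $\overline K$-points, which are finite groups, so Mittag--Leffler holds:
\begin{itemize}
\item $\varprojlim F[\ell^i]=0$, since $F$ is finite and the transition maps are multiplication by $\ell$;
\item $\varprojlim F/\ell^iF$ is the $\ell$-primary part $F[\ell^\infty]$.
\end{itemize}
This gives $0\to T_\ell(G_1)\to T_\ell(G_2)\to \ker(f_0)[\ell^\infty]\to 0$. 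Exactness on the right in the limit needs care, but it follows because all groups involved are finite, so the $\varprojlim^1$ terms vanish. Here I use that $\ell\ne\mathrm{char}(K)$, so that these finite group schemes are étale and can be handled via their $\overline K$-points.

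The snake lemma now gives the middle column: $T_\ell(f)$ is injective because both outer kernels vanish, and its cokernel sits in
\[0\to \ker(f_0)[\ell^\infty]\to\coker(T_\ell(f))\to\coker(f_{-1})\otimes\Z_\ell\to 0,\]
where the connecting map is zero because the left kernel vanishes. The final claim is then immediate: if $\ell$ is coprime to $|\ker(f_0)|$ and $|\coker(f_{-1})|$, both outer cokernels vanish, so $T_\ell(f)$ is an isomorphism.

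The main obstacle is the identification of the left column, namely the cokernel of $T_\ell(f_0)$ as $\ker(f_0)[\ell^\infty]$. Two things need checking: that the surjectivity of $f_0$ as fppf sheaves gives the short exact sequence used above, and that the passage to limits is justified by finiteness.
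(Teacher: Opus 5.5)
Your argument is correct, and it is the routine snake-lemma proof that the paper omits (the paper says only ``routine, left to the reader''). One small slip: the connecting map $\ker(f_{-1}\otimes_{\Z}\Z_\ell)\to\coker(T_\ell(f_0))$ vanishes because its source, the right-hand kernel, is zero, not the left one; this is harmless here since you have shown both kernels vanish.
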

\begin{proof}
    This is routine, and we leave it to the reader.
\end{proof}

\subsection{Raynaud's geometric monodromy pairings}\label{subsec:Raynaud's geometric monodromy pairing}
Throughout this subsection, $\cS=\Spf\cO_K$. Let $M=[Y\xrightarrow{u}G]\in\mathscr{M}_{1,K}^{\mathrm{str}}$. Then the abeloid part $A$ of $G$ has potentially good reduction. Recall that $M$ amounts to the following diagram
\[\xymatrix{
    &P\ar[d] \\
    Y\times X\ar[r]_-{(v,v^\vee)}\ar[ru]^s &A\times_{\Sp K} A^\vee.}\]

\subsubsection{Semi-stable case}
First we assume that $M$ has semi-stable reduction. Then both $Y$ and $G$ have good reduction, i.e. $Y$ (resp. $X$) extends to an \'etale locally constant sheaf $Y_{\cS}$ (resp. $X_{\cS}$) over $\cS$, $A$ admits a formal abelian scheme model $\mathcal{A}$ over $\cS$. Let $\mathcal{G}$ be the formal group which is an extension of $\mathcal{A}$ by $\mathcal{T}:=\cHom_{\cS}(X_{\cS},\Gmfml)$, and let $T:=\cHom_{K,\rig}(X,\GmK^{\rig})$. 

Let $\mathcal{P}$ be the Poincar\'e biextension of $(\mathcal{A},\mathcal{A}^\vee)$ by $\Gmfml$. Then $P$ is obtained by pushing $\mathcal{P}^{\rig}$ out along $\Gmfml^{\rig}\hookrightarrow\GmK^{\rig}$.

The homomorphisms
\[v^\vee:X\to A^{\vee}\quad  \text{ and }\quad v:Y\to A\] 
extend to homomorphisms 
\[v^\vee_{\cS}:X_{\cS}\to \mathcal{A}^\vee \quad
\text{ and } \quad
v_{\cS}:Y_{\cS}\to \mathcal{A}
\]
respectively by Lemma \ref{lem:a formal abelian scheme is the formal Neron model of its generic fiber}. We have the following diagram
\begin{equation}\label{eq:integral model of Poincare biext}
\xymatrix{
    &\GmK^{\rig}\ar[d] &\Gmfml^{\rig}\ar@{_(->}[l]\ar[d]\\
    &P\ar[d]  &\mathcal{P}^{\rig}\ar[d]\ar@{_(->}[l]_{\iota} \\
    Y_{\cS}^{\rig}\times X_{\cS}^{\rig}=Y\times X\ar[r]_-{(v,v^\vee)}\ar[ru]^{s}\ar@{-->}[rru]|\hole &A\times_{\Sp K} A^\vee &(\mathcal{A})^{\rig}\times_{\Sp K} (\mathcal{A}^\vee)^{\rig}\ar@{=}[l] 
    }
\end{equation}
without the dashed arrow (which will be used later), where the vertical sequences are biextensions, and the triangle at the left side is \eqref{eq:symmetric description of rigid 1-mot}. The rigid analytic 1-motive $M$ has good reduction, i.e. $u$ extends to a homomorphism $Y_{\cS}\to \mathcal{G}$, if and only if $s$ lifts to a section of the biextension $\mathcal{P}$ along $(v_{\cS},v^\vee_{\cS})$. The section $s$ is a trivialization of the biextension $(v,v^\vee)^*P$ of $(Y,X)$ by $\GmK^{\rig}$ which is induced by the rigid generic fiber of $(v_{\cS},v^\vee_{\cS})^*\mathcal{P}$. 

\begin{lemma}\label{lem:key lemma for constructing Raynaud's monodromy}
    In the above situation, we have 
    \begin{enumerate}
        \item a commutative diagram
        \[\xymatrix{
        \Ext^1_{\cS}(Y_{\cS},\cT)\ar[r]^-{\cong}\ar@{^(->}[d] &\Biext^1_{\cS}(Y_{\cS},X_{\cS};\Gmfml)\ar@{^(->}[d]  \\
        \Ext^1_{K,\rig}(Y,T)\ar[r]^-{\cong} &\Biext^1_{K,\rig}(Y,X;\GmK^{\rig})
        }\]
        with vertical maps injective, in particular $(v_{\cS},v^\vee_{\cS})^*\mathcal{P}$ is a trivial biextension of $(Y_{\cS},X_{\cS})$ by $\Gmfml$, and the extension $\mathcal{E}$ of $Y_{\cS}$ by $\cT$ corresponding to $(v_{\cS},v^\vee_{\cS})^*\mathcal{P}$ is also trivial;

        \item a commutative diagram
        \[\xymatrix{
        \Ext^0_{\cS}(Y_{\cS},\cT)\ar[r]^-{\cong}\ar[d]^{\cong} &\Biext^0_{\cS}(Y_{\cS},X_{\cS};\Gmfml)\ar[d]^{\cong} \\
        \Ext^0_{K,\rig}(Y,\cT^{\rig})\ar@{^(->}[d]\ar[r]^-{\cong} &\Biext^0_{K,\rig}(Y,X;\Gmfml^{\rig})\ar@{^(->}[d] \\
        \Ext^0_{K,\rig}(Y,T)\ar[r]^-{\cong} &\Biext^0_{K,\rig}(Y,X;\GmK^{\rig});\\
        }\]
        
        \item and thus two injections
        \[\xymatrix{
        \frac{\Ext^0_{K,\rig}(Y,T)}{\Ext^0_{\cS}(Y_{\cS},\cT)}\ar[r]^-{\cong}\ar[d]^{\cong} &\frac{\Ext^0_{K,\rig}(Y,T)}{\Ext^0_{K,\rig}(Y,\cT^{\rig})}\ar@{^(->}[r]\ar[d]^{\cong} &\Ext^0_{K,\rig}(Y,(T/\cT^{\rig})_{\zar})\ar[d]^{\cong}\\
        \frac{\Biext^0_{K,\rig}(Y,X;\GmK^{\rig})}{\Biext^0_{\cS}(Y_{\cS},X_{\cS};\Gmfml)}\ar[r]^-{\cong} &\frac{\Biext^0_{K,\rig}(Y,X;\GmK^{\rig})}{\Biext^0_{K,\rig}(Y,X;\Gmfml^{\rig})}\ar@{^(->}[r] &\Biext^0_{K,\rig}(Y,X;(\GmK^{\rig}/\Gmfml^{\rig})_{\zar}),}\]
        where $(T/\cT^{\rig})_{\zar}$ and $(\GmK^{\rig}/\Gmfml^{\rig})_{\zar}$ denote the quotients on the category $(\mathrm{Rig}/K)$ endowed with the rigid analytic topology (also called Zariski topology in some references).
    \end{enumerate}
\end{lemma}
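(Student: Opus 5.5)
The plan is to reduce everything to the split case by Galois descent. We take a finite unramified extension over which $Y_{\cS}$ and $X_{\cS}$ become constant, say $Y_{\cS}\cong\Z^m$ and $X_{\cS}\cong\Z^r$. All groups in the diagrams are compatible with this base change. On the formal side, $\Ext^i$ and $\Biext^i$ of étale locally constant objects are computed by Galois (étale) cohomology. The horizontal isomorphisms in (1) and (2) are the standard ones from \cite[Exp. VIII, (1.1.4)]{sga7-1}, exactly as in the proof of Proposition \ref{prop:key prop. for biextensions in both rigid and formal settings}. They use $\cExt^1(X,\Gmfml)=0$ (resp. $\cExt^1_{K,\rig}(X,\GmK^{\rig})=0$) and $\cHom(X,\Gmfml)=\cT$ (resp. $T$), since $X$ is étale locally $\Z^r$. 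The squares commute because both horizontal maps are built from the same spectral sequence, and this spectral sequence is natural under restriction to the generic fiber followed by pushout along $\Gmfml^{\rig}\hookrightarrow\GmK^{\rig}$.

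For (2), in the split case we have
\[\Ext^0_{\cS}(Y_{\cS},\cT)=\cT(\cS)^m=(\cO_K^\times)^{rm}=\Ext^0_{K,\rig}(Y,\cT^{\rig}),\]
and $\Ext^0_{K,\rig}(Y,T)=(K^\times)^{rm}$. So the top vertical map is an isomorphism and the lower one is the injection $\cO_K^\times\subset K^\times$. The general case follows by taking Galois invariants, which preserves isomorphisms and injections. The biextension column is then identified with this one via the horizontal isomorphisms.

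For (1), the key computation is $\Ext^1_{\cS}(Y_{\cS},\cT)=H^1(\cS_{\et},\cHom(Y_{\cS},\cT))$, which we get using $\cExt^1_{\cS}(\Z,\cT)=0$. Similarly $\Ext^1_{K,\rig}(Y,T)=H^1(K,\cHom(Y,T))$ after passing to étale cohomology via Hilbert 90 for tori. In the split case the formal group is $H^1(\Spf\cO_K,\Gmfml^{rm})$. This vanishes, since $\Pic$ of a local ring is trivial and $\cO_K$ is henselian. The rigid group $H^1(K,\Gm^{rm})$ also vanishes by Hilbert 90.

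For nonsplit unramified $Y,X$, we use the Hochschild–Serre sequence for the unramified Galois extension. It identifies the formal $\Ext^1$ with $H^1(\Gal(L/K),\Hom(Y,X)^\vee\otimes\cO_L^\times)$ and the rigid one with the corresponding group with $L^\times$ in place of $\cO_L^\times$. Injectivity then comes from the exact sequence
\[0\to\cO_L^\times\to L^\times\xrightarrow{v}\Z\to0,\]
where the valuation map is Galois-equivariant with trivial action on $\Z$. The connecting map $H^0(\Z\text{-part})\to H^1(\cO_L^\times\text{-part})$ has to be shown to be zero. This is the step I expect to be the real obstacle. I would try to show it by noting that a uniformizer $\pi$ of $K$ gives an equivariant splitting $n\mapsto\pi^n$ of $L^\times\to\Z$, because $L/K$ is unramified. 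With this splitting the long exact sequence breaks up, and the vertical maps become injective.

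The "in particular" of (1) then follows formally. The biextension $(v,v^\vee)^*P$ is trivial by the construction in \eqref{eq:key biext diagram for symmetric description of rigid 1-mot}. It is the image of $(v_{\cS},v^\vee_{\cS})^*\mathcal{P}$: restricting to the generic fiber and pushing out gives $P$ by Proposition \ref{prop:key prop. for biextensions in both rigid and formal settings} (3), and $(v_{\cS},v^\vee_{\cS})$ restricts to $(v,v^\vee)$. By injectivity, $(v_{\cS},v^\vee_{\cS})^*\mathcal{P}$ and the corresponding extension $\mathcal E$ are trivial.

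For (3), the first horizontal isomorphisms come directly from (2). For the injection, apply $\Hom(Y,-)$ to $0\to\cT^{\rig}\to T\to(T/\cT^{\rig})_{\zar}\to0$ on the rigid analytic site; left exactness of $\Hom$ gives the injection of the quotient. The biextension row is identified with the extension row using the isomorphisms of (2), together with $\Biext^0(Y,X;F)=\Hom(Y\otimes X,F)$ for $F=(\GmK^{\rig}/\Gmfml^{\rig})_{\zar}$. Here we use $(T/\cT^{\rig})_{\zar}=\cHom(X,(\GmK^{\rig}/\Gmfml^{\rig})_{\zar})$, which holds because $X$ is locally free.
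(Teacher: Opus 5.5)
Your proposal is correct and follows essentially the paper's proof. For (1) the paper likewise passes to a finite unramified Galois extension $K'$ splitting $X$ and $Y$, uses the vanishing of $H^1$ over $K'$ and Hochschild--Serre to identify the two groups with $H^1\bigl(\Gal(K'/K),(Y\otimes X)^\vee\otimes\cO_{K'}^\times\bigr)$ and $H^1\bigl(\Gal(K'/K),(Y\otimes X)^\vee\otimes K'^{\times}\bigr)$, and gets injectivity from the equivariant splitting $K'^{\times}\cong\cO_{K'}^\times\oplus\pi^{\Z}$; the only differences are cosmetic (for (2) the paper cites that $\cT$ is the formal N\'eron model of $\cT^{\rig}$ instead of your explicit computation plus Galois invariants, and your coefficient module should read $(Y\otimes X)^\vee$ rather than $\Hom(Y,X)^\vee$, which does not affect the argument).
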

\begin{proof}
    Let $L_{\cS}:=Y_{\cS}\otimes_{\Z}X_{\cS}$ and $L:=Y\otimes_{\Z}X$. 
    
    (1) We have the following commutative diagram
    \[\xymatrix@C=12pt{
    \Ext^1_{\cS}(Y_{\cS},\cT)\ar[r]^-{\cong}\ar[d] &\Biext^1_{\cS}(Y_{\cS},X_{\cS};\Gmfml)\ar[r]^-{\cong}\ar[d] &\Ext^1_{\cS}(L_{\cS},\Gmfml)\ar[r]^-{\cong}\ar[d] &H^1_{\fl}(\cS,L_{\cS}^{\vee}\otimes\Gmfml)\ar[d] \\
    \Ext^1_{K,\rig}(Y,\cT^{\rig})\ar[r]^-{\cong}\ar[d] &\Biext^1_{K,\rig}(Y,X;\Gmfml^{\rig})\ar[r]^-{\cong}\ar[d] &\Ext^1_{K,\rig}(L,\Gmfml^{\rig})\ar[r]^-{\cong}\ar[d] &H^1_{\fl}(\Sp K,L^{\vee}\otimes\Gmfml^{\rig})\ar[d] \\
    \Ext^1_{K,\rig}(Y,T)\ar[r]^-{\cong} &\Biext^1_{K,\rig}(Y,X;\GmK^{\rig})\ar[r]^-{\cong} &\Ext^1_{K,\rig}(L,\GmK^{\rig})\ar[r]^-{\cong} &H^1_{\fl}(\Sp K,L^{\vee}\otimes\GmK^{\rig}).
    }\]
    Let $K'$ be a finite unramified Galois extension of $K$ which splits both $X$ and $Y$, let $\cS':=\Spf\cO_{K'}$ with $\cO_{K'}$ the ring of integers of $K'$, and let $\Lambda:=\Gal(K'/K)$. We have spectral sequences
    \[H^i(\Lambda,H^j_{\fl}(\cS',L_{\cS}^{\vee}\otimes\Gmfml))\Rightarrow H^{i+j}_{\fl}(\cS,L_{\cS}^{\vee}\otimes\Gmfml)\]
    and
    \[H^i(\Lambda,H^j_{\fl}(\Sp K',L^{\vee}\otimes\GmK^{\rig}))\Rightarrow H^{i+j}_{\fl}(\Sp K,L^{\vee}\otimes\GmK^{\rig}).\]
    Since $H^1_{\fl}(\cS',L_{\cS}^{\vee}\otimes\Gmfml)=0$ and $H^1_{\fl}(\Sp K',L^{\vee}\otimes\GmK^{\rig})=0$ \footnote{Note that we have $H^1_{\fl}(\Sp K',L^{\vee}\otimes\Gmfml^{\rig})=L^{\vee}\otimes H^1_{\fl}(\Sp K',\Gmfml^{\rig})\cong L^{\vee}\otimes\Q/\Z\neq 0$.}, we get 
    \[H^1_{\fl}(\cS,L_{\cS}^{\vee}\otimes\Gmfml)\xleftarrow{\cong} H^1(\Lambda,H^0_{\fl}(\cS',L_{\cS}^{\vee}\otimes\Gmfml))=H^1(\Lambda,L_{\cS}^{\vee}\otimes \cO_{K'}^\times)\]
    and
    \[H^1_{\fl}(\Sp K,L^{\vee}\otimes\GmK^{\rig}) \xleftarrow{\cong} H^1(\Lambda,H^0_{\fl}(\Sp K',L^{\vee}\otimes\GmK^{\rig}))=H^1(\Lambda,L^{\vee}\otimes (K')^{\times}).\]    
    So we further get a commutative diagram
    \begin{equation}\label{eq:diagram for Ext and Biext involving lattices}
        \xymatrix{
    \Ext^1_{\cS}(Y_{\cS},\cT)\ar[r]^-{\cong}\ar[d] &\Biext^1_{\cS}(Y_{\cS},X_{\cS};\Gmfml)\ar[r]^-{\cong}\ar[d] &H^1(\Lambda,L_{\cS}^{\vee}\otimes \cO_{K'}^\times)\ar[d] \\
    \Ext^1_{K,\rig}(Y,T)\ar[r]^-{\cong} &\Biext^1_{K,\rig}(Y,X;\GmK^{\rig})\ar[r]^-{\cong} &H^1(\Lambda,L^{\vee}\otimes (K')^{\times}).
    }
    \end{equation}
    Since $L^{\vee}\otimes (K')^{\times}\cong L^{\vee}\otimes (\cO_{K'}^\times \oplus \pi^{\Z})$, the right vertical map in the above diagram is injective, and thus so are the other two vertical maps.

    (2) Since $\Gmfml$ (resp. $\cT$) is the formal N\'eron model of $\Gmfml^{\rig}$ (resp. $\cT^{\rig}$) by \cite[Criterion 1.4]{BS95}, the vertical isomorphisms follow. The injections follow from the injections $\Gmfml^{\rig}\hookrightarrow\GmK^{\rig}$ and $\cT^{\rig}\hookrightarrow T$.

    (3) This is clear by (2).
\end{proof}

By Lemma \ref{lem:key lemma for constructing Raynaud's monodromy} (1), the biextension $(v_{\cS},v^\vee_{\cS})^*\mathcal{P}$ is trivial, so we can choose a section 
\begin{equation}\label{eq:integral section of the pullback to YxX of the Poincare biext}
    s_{\cS}:Y_{\cS}\times X_{\cS}\to \mathcal{P}
\end{equation}
such that $(s_{\cS})^{\rig}$ gives rise to a dashed arrow in \eqref{eq:integral model of Poincare biext}. Then 
\[s-\iota\circ (s_{\cS})^{\rig}\in \Biext^0_{K,\rig}(Y,X;\GmK^{\rig}),\]
where $\iota$ is as in \eqref{eq:integral model of Poincare biext}, and its class in 
\[\frac{\Biext^0_{K,\rig}(Y,X;\GmK^{\rig})}{\Biext^0_{K,\rig}(Y,X;\Gmfml^{\rig})}\cong\frac{\Biext^0_{K,\rig}(Y,X;\GmK^{\rig})}{\Biext^0_{\cS}(Y_{\cS},X_{\cS};\Gmfml)} \hookrightarrow \Biext^0_{K,\rig}(Y,X;(\GmK^{\rig}/\Gmfml^{\rig})_{\zar})\]
is independent of the choice of $s_{\cS}$.
Therefore we get a bilinear map 
\begin{equation}\label{eq:the monodromy pairing with target in zariski quotient of GmK^rig by Gmfml^rig for rigid 1-motive in sst case}
    \langle-,-\rangle_{\rig}:Y\times X\to (\GmK^{\rig}/\Gmfml^{\rig})_{\zar}.
\end{equation}
We can also regard \eqref{eq:the monodromy pairing with target in zariski quotient of GmK^rig by Gmfml^rig for rigid 1-motive in sst case} as a pairing into $\GmK^{\rig}/\Gmfml^{\rig}$, and will not keep the notation for the resulting new pairing
\begin{equation}\label{eq:the monodromy pairing for rigid 1-motive in sst case}
    \langle-,-\rangle_{\rig}:Y\times X\to \GmK^{\rig}/\Gmfml^{\rig}.
\end{equation}
Since both $X$ and $Y$ are unramified for the Galois action under the semi-stable assumption on $M$, $\langle-,-\rangle_{\rig}$ amounts to a bilinear map
\begin{equation}\label{eq:the Z-valued monodromy pairing for rigid 1-motive in semi-stable case}
    \mu_0:Y\times X\to \Z
\end{equation}
which is compatible with the Galois actions on $Y$ and $X$. 

\begin{remark}
    The above construction of $\mu_0$ follows \cite[p.308 of \S4.3]{Ray94}. It is slightly more complicated than that in loc. cit. due to the fact that $\GmK^{\rig}$ is connected to $\Gmfml$ through $\Gmfml^{\rig}$, while $\GmK$ is directly related to $\mathbb{G}_{\mathrm{m},\cO_K}$ in the algebraic case as its generic fiber. On the other hand, the existence of $\Gmfml^{\rig}\hookrightarrow\GmK^{\rig}$ allows us to produce an alternative construction of $\mu_0$ below, which is much easier than the rigid analogue of Raynaud's construction. We will see a similar construction \eqref{eq:monodromy pairing for log formal 1-motive} for log formal 1-motives later thanks to the existence of $\Gmfml\hookrightarrow\Gml$.
\end{remark}

Recall that the map $u:Y\to G$ is recovered from $s$ as follows
\begin{equation}\label{eq:obtaining u from s}
    \xymatrix{
0\ar[r] &T\ar[r]\ar@{=}[d] &v^*G\ar[rr]\ar[d]_{f} &&Y\ar[r]\ar[d]^v\ar@/_1pc/[ll]_{\tilde{s}}\ar[lld]_{u=f\circ\tilde{s}} &0 \\
0\ar[r] &T\ar[r] &G\ar[rr] &&A\ar[r] &0,
}
\end{equation}
where $\tilde{s}$ corresponds to $s$ under the identification of the trivializations of $v^*G\leftrightsquigarrow (v,v^{\vee})^*P$ along $\Ext^1_{K,\rig}(Y,T)\xrightarrow{\cong} \Biext^1_{K,\rig}(Y,X;\GmK^{\rig})$. We also have a commutative diagram
\[\xymatrix{
0\ar[r] &\cT^{\rig}\ar[r]\ar@{_(->}[d] &\cG^{\rig}\ar[r]\ar[d] &\mathcal{A}^{\rig}\ar[r]\ar@{=}[d] &0 \\
0\ar[r] &T\ar[r] &G\ar[r] &A\ar[r] &0
}\]
with exact rows, and it induces 
\[\cHom_{K,\rig}(X,\GmK^{\rig}/\Gmfml^{\rig})=T/\cT^{\rig}\xrightarrow{\cong}G/\cG^{\rig}.\]
Then we get
\begin{equation}\label{eq:alternative construction of monodromy pairing from u via extensions}
    Y\xrightarrow{u}G\xrightarrow{g} G/\cG^{\rig}\cong T/\cT^{\rig}=\cHom_{K,\rig}(X,\GmK^{\rig}/\Gmfml^{\rig}),
\end{equation}
and thus get a bilinear map
\begin{equation}\label{eq:the monodromy pairing for rigid 1-motive in sst case, prime}
    \langle-,-\rangle_{\rig}':Y\times X\to \GmK^{\rig}/\Gmfml^{\rig}.
\end{equation}

\begin{proposition}\label{prop:two constructions of Raynaud's monodromy agree}
    In the above situation, we have
    \[\langle-,-\rangle_{\rig}=\langle-,-\rangle_{\rig}'.\]
\end{proposition}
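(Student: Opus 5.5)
The plan is to compare both pairings through the integral trivialization $s_{\cS}$. Fix $s_{\cS}$ as in \eqref{eq:integral section of the pullback to YxX of the Poincare biext}. Under the isomorphism $\Ext^1_{\cS}(Y_{\cS},\cT)\cong\Biext^1_{\cS}(Y_{\cS},X_{\cS};\Gmfml)$ of Lemma \ref{lem:key lemma for constructing Raynaud's monodromy} (1), $s_{\cS}$ corresponds to a trivialization $\tilde{s}_{\cS}$ of the extension $v_{\cS}^*\cG$ of $Y_{\cS}$ by $\cT$. Composing with $v_{\cS}^*\cG\to\cG$ gives a homomorphism $u_{\cS}^0:Y_{\cS}\to\cG$ lifting $v_{\cS}$. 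Its rigid fiber, pushed into $G$, gives $u^0:Y\to\cG^{\rig}\to G$. By construction $u^0$ lies in $\cG^{\rig}$, so the composite $Y\xrightarrow{u^0}G\xrightarrow{g}G/\cG^{\rig}$ is zero.

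Next I would show that $u-u^0$ lands in $T$ and computes the first pairing. Both $u$ and $u^0$ lift $v$, so $u-u^0:Y\to T$, i.e.\ $u-u^0\in\Ext^0_{K,\rig}(Y,T)=\Hom(Y,T)$. The key compatibility is the following: under the bottom isomorphism $\Ext^0_{K,\rig}(Y,T)\cong\Biext^0_{K,\rig}(Y,X;\GmK^{\rig})$ of Lemma \ref{lem:key lemma for constructing Raynaud's monodromy} (2), the element $u-u^0$ corresponds to $s-\iota\circ(s_{\cS})^{\rig}$. This holds because the identification of trivializations of $v^*G$ with trivializations of $(v,v^\vee)^*P$ is a torsor isomorphism under these two $\Ext^0\cong\Biext^0$ groups, compatibly with difference. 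On the generic side, $\tilde{s}$ corresponds to $s$ by \eqref{eq:obtaining u from s}. The section $\tilde{s}_{\cS}^{\rig}$, pushed out to $T$, corresponds to $\iota\circ(s_{\cS})^{\rig}$. For this one uses the commutative diagram in the proof of Proposition \ref{prop:key prop. for biextensions in both rigid and formal settings} (3): $P$ is the pushout of $\mathcal{P}^{\rig}$, and $G$ is the pushout of $\cG^{\rig}$, both compatible with pulling back along $(v_{\cS},v_{\cS}^\vee)$ and passing to generic fibers. Passing to the quotient then identifies $\langle-,-\rangle_{\rig}$ with the image of $u-u^0$ in $\cHom(Y,T/\cT^{\rig})=\cHom(Y\otimes X,\GmK^{\rig}/\Gmfml^{\rig})$.

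Finally, $\langle-,-\rangle'_{\rig}$ is the image of $u$ under $g$ followed by $G/\cG^{\rig}\cong T/\cT^{\rig}$. Since $g\circ u^0=0$, this equals $g\circ(u-u^0)$. On $T$, the map $g$ restricts to the projection $T\to T/\cT^{\rig}$, because the isomorphism $T/\cT^{\rig}\cong G/\cG^{\rig}$ is induced by $T\hookrightarrow G$. The two pairings therefore agree, and the result is independent of the choice of $s_{\cS}$.

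The main obstacle is the torsor-compatibility step: checking that the isomorphisms $\Ext^i(Y,T)\cong\Biext^i(Y,X;\GmK^{\rig})$ for $i=0,1$ from \cite[Exp.~VIII]{sga7-1} are compatible with the correspondence between trivializations, and with base change from $\cS$ to the rigid fiber followed by pushout along $\cT^{\rig}\hookrightarrow T$. I would handle this by working étale locally with $X$ and $Y$ split, reducing to $X=Y=\Z$. There the statements become the tautology that an extension of $\Z$ by $\G$ is a $\G$-torsor (the fiber over $1$), and sections correspond to points of that fiber.
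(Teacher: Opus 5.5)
Your argument is correct and is essentially the paper's proof. The paper likewise fixes $s_{\cS}$ and transports it to a section $\tilde s_{\cS}$ of $v_{\cS}^*\cG$. It then uses the torsor-compatibility of the section correspondences to get $\alpha(\tilde s - i\circ(\tilde s_{\cS})^{\rig}) = s-\iota\circ s_{\cS}^{\rig}$, and kills the integral part via the quotient $v^*G\to v^*G/(v_{\cS}^*\cG)^{\rig}\cong T/\cT^{\rig}$. That last step is your $g\circ u^0=0$ step, phrased inside $v^*G=G\times_A Y$ rather than in $G$.

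The one difference is that you propose to actually verify the torsor-compatibility of the $\Ext\leftrightarrow\Biext$ section correspondence, by reducing to $X=Y=\Z$. The paper simply asserts this compatibility in its diagram of section sets.
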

\begin{proof}
    We have the following mapping diagram
    \[\xymatrix{
    v_{\cS}^*\cG\ar@{<->}[r]\ar@{|->}[d] &(v_{\cS},v_{\cS}^{\vee})^*\mathcal{P}\ar@{|->}[d] \\
    v^*G\ar@{<->}[r] &(v,v^{\vee})^*P
    }\]
    along the left square of \eqref{eq:diagram for Ext and Biext involving lattices}. Let $S_{\Ext}(v_{\cS}^*\cG)$ be the set of sections of the extension $v_{\cS}^*\cG\in\Ext^1_{\cS}(Y_{\cS},\cT)$, and similarly for $S_{\Biext}((v_{\cS},v_{\cS}^{\vee})^*\mathcal{P})$, $S_{\Ext}(v^*G)$ and $S_{\Biext}((v,v^{\vee})^*P)$. Note that $S_{\Ext}(v_{\cS}^*\cG)$ (resp. $S_{\Biext}((v_{\cS},v_{\cS}^{\vee})^*\mathcal{P})$, resp. $S_{\Ext}(v^*G)$, resp. $S_{\Biext}((v,v^{\vee})^*P)$) is a torsor under the group $\Ext^0_{\cS}(Y_{\cS},\cT)$ (resp. $\Biext^0_{\cS}(Y_{\cS},X_{\cS};\Gmfml)$, resp. $\Ext^0_{K,\rig}(Y,T)$, resp. $\Biext^0_{K,\rig}(Y,X;\GmK^{\rig})$). We have a commutative diagram
    \begin{equation}\label{eq:diagram of section sets}
        \xymatrix{
    \tilde{s}_{\cS}\in \quad S_{\Ext}(v_{\cS}^*\cG)\ar[r]^-{\mathrm{bij.}}\ar[d] &S_{\Biext}((v_{\cS},v_{\cS}^{\vee})^*\mathcal{P})\ar[d] \quad \ni s_{\cS}\\
    \tilde{s}\in \quad S_{\Ext}(v^*G)\ar[r]^-{\mathrm{bij.}} &S_{\Biext}((v,v^{\vee})^*P) \quad \ni s
    }
    \end{equation}
    of sets, such that the maps are compatible with the torsor structures and the upper rectangle of the commutative diagram
    \begin{equation}\label{eq:diagram of Ext^0 and Biext^0}
    \xymatrix{
    \Ext^0_{\cS}(Y_{\cS},\cT)\ar[r]^-{\cong}\ar@{^(->}[d] &\Biext^0_{\cS}(Y_{\cS},X_{\cS};\Gmfml)\ar@{^(->}[d] \\
    \Ext^0_{K,\rig}(Y,T)\ar[r]^-{\alpha}_-{\cong}\ar[d]_{\beta} &\Biext^0_{K,\rig}(Y,X;\GmK^{\rig})\ar[d]^{\delta}\\
    \Ext^0_{K,\rig}(Y,T/\cT^{\rig})\ar[r]^-{\gamma}_-{\cong} &\Biext^0_{K,\rig}(Y,X;\GmK^{\rig}/\Gmfml^{\rig}).
    }
    \end{equation}
    Note that the upper part of \eqref{eq:diagram of Ext^0 and Biext^0} is the outer rectangle of the diagram of Lemma \ref{lem:key lemma for constructing Raynaud's monodromy} (2). Let $s$ be as in \eqref{eq:integral model of Poincare biext} regarded as an element of $S_{\Biext}((v,v^{\vee})^*P)$, $\tilde{s}$ as in \eqref{eq:obtaining u from s}, $s_{\cS}$ as in \eqref{eq:integral section of the pullback to YxX of the Poincare biext} regarded as an element of $S_{\Biext}((v_{\cS},v_{\cS}^{\vee})^*\mathcal{P})$. Note that $s$ corresponds to $\tilde{s}$ along the lower horizontal bijection of \eqref{eq:diagram of section sets}, and let $\tilde{s}_{\cS}\in S_{\Ext}(v_{\cS}^*\cG)$ be corresponding to $s_{\cS}$ along the upper horizontal bijection of \eqref{eq:diagram of section sets}. Consider the following commutative diagram  
    \begin{equation}\label{eq:diagram for comparing the two constructions of Raynaud's monodromy}
        \xymatrix{
        \cT^{\rig}\ar@{=}[dd]\ar[rd]\ar[rr] &&(v_{\cS}^*\cG)^{\rig}\ar[rr]\ar@{..>}[dd]\ar[rd]_(.4){i} &&Y_{\cS}^{\rig}\ar@{..>}[dd]^(.7){v_{\cS}^{\rig}}\ar@{=}[rd]\ar@/_1.5pc/[ll]_{(\tilde{s}_{\cS})^{\rig}} \\
        &T\ar[rr]\ar@{=}[dd] &&v^*G\ar[rr]\ar[dd]_(.3)j &&Y\ar[dd]^v\ar@/_1.5pc/[ll]_(.6){\tilde{s}}\ar@/^1.5pc/[lldd]^(.3){u} \\
        \cT^{\rig}\ar@{..>}[rr]\ar[rd] &&\cG^{\rig}\ar@{..>}[rr]\ar@{..>}[rd] &&\mathcal{A}^{\rig}\ar@{=}[rd] \\
        &T\ar[rr] &&G\ar[rr] &&A
        }
    \end{equation} 
    with all rows short exact sequence (zeros omitted from the ends for saving space), and we get a commutative diagram
    \begin{equation}\label{eq:another diagram for comparing the two constructions of Raynaud's monodromy}
        \xymatrix{
    Y\ar[r]^{\tilde{s}}\ar[rd]_u &v^*G\ar[r]^-f\ar[d]^j &v^*G/(v_{\cS}^*\cG)^{\rig}\ar@{=}[d] \\
    &G\ar[r]^g &G/\cG^{\rig} &T/\cT^{\rig}\ar@{=}[l]\ar@{=}[lu]
    }
    \end{equation}
    by diagram chasing. Then the result follows from the following computation in which the notations are as in \eqref{eq:alternative construction of monodromy pairing from u via extensions}, \eqref{eq:diagram of section sets}, \eqref{eq:diagram of Ext^0 and Biext^0}, \eqref{eq:diagram for comparing the two constructions of Raynaud's monodromy}, and \eqref{eq:another diagram for comparing the two constructions of Raynaud's monodromy}
    \begin{align*}
        \langle-,-\rangle_{\rig}=&\delta(s-\iota\circ s_{\cS}^{\rig})=(\delta\circ\alpha)(\tilde{s}-i\circ(\tilde{s}_{\cS})^{\rig})=(\gamma\circ\beta)(\tilde{s}-i\circ(\tilde{s}_{\cS})^{\rig}) \\
        =&\gamma(f\circ(\tilde{s}-i\circ(\tilde{s}_{\cS})^{\rig}))=\gamma(f\circ\tilde{s})=\gamma(g\circ j\circ\tilde{s})=\gamma(g\circ u)\\
        =&\langle-,-\rangle'_{\rig} .
    \end{align*}
\end{proof}

\begin{definition}
    Let $M=[Y\xrightarrow{u}G]\in\mathscr{M}_{1,K}^{\mathrm{str}}$ be  semi-stable. We call \eqref{eq:the monodromy pairing for rigid 1-motive in sst case} (which equals \eqref{eq:the monodromy pairing for rigid 1-motive in sst case, prime}) the \emph{$\GmK^{\rig}/\Gmfml^{\rig}$-valued geometric monodromy pairing} of $M$, and call $\mu_0$ \eqref{eq:the Z-valued monodromy pairing for rigid 1-motive in semi-stable case} (which is obtained out of \eqref{eq:the monodromy pairing for rigid 1-motive in sst case}) the \emph{geometric monodromy pairing} or \emph{Raynaud monodromy pairing} of $M$.
\end{definition}

\subsubsection{General case}
In the general case that $Y$ and $G$ have only potentially good reduction, we extend the valuation $v_K$ of $K$ to $\overline{K}$ with value group $\Q$, and still denote it by $v_K$. We first pass to a finite field extension $K'$ of $K$ such that $M_{K'}$ admits semi-stable reduction, then adjust the bilinear map \eqref{eq:the Z-valued monodromy pairing for rigid 1-motive in semi-stable case} for $M_{K'}$ with respect to the valuation $v_K$ on $\overline{K}$, and thus obtain a bilinear map
\begin{equation}\label{eq:the Q-valued monodromy pairing for rigid 1-motive}
    \mu:Y\times X\to \Q.
\end{equation}
The map $\mu$ is compatible with the Galois action, and it factors through $\Z\hookrightarrow \Q$ if both $Y$ and $G$ have good reductions. If this is the case, then $\mu$  factors as $Y\times X\xrightarrow{\mu_0}\Z\hookrightarrow \Q$.

\begin{definition}
    Let $M=[Y\xrightarrow{u}G]\in\mathscr{M}_{1,K}^{\mathrm{str}}$. We call the bilinear map $\mu$ the \emph{geometric monodromy pairing} or the \emph{Raynaud monodromy pairing} of $M$.
\end{definition}

\begin{remark}
    We actually have $\GmK^{\rig}/\Gmfml^{\rig}\cong\Q$. In case that $M$ is strict semi-stable, the $\GmK^{\rig}/\Gmfml^{\rig}$-valued monodromy pairing is just $\mu$ which is essentially just $\mu_0$. The reason that we present the $\GmK^{\rig}/\Gmfml^{\rig}$-valued monodromy pairing and give it a name, is that we will use it to compare $\mu_0$ with the monodromy pairing for log formal 1-motives in subsection \ref{subsec:log formal 1-motives}.
\end{remark}

\begin{theorem}\label{thm:good reduction amounts to trivial goem. monodromy}
    Let $M=[Y\xrightarrow{u}G]\in\mathscr{M}_{1,K}^{\mathrm{str}}$.
    \begin{enumerate}
        \item $M$ has potentially good reduction if and only if $\mu$ is the trivial pairing.
        \item Suppose that both $Y$ and $G$ have good reduction. Then $M$ has good reduction if and only if $\mu_0$ is the trivial pairing.
    \end{enumerate}
\end{theorem}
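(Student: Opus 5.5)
I will prove (2) first and then deduce (1) by base change.

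\emph{Part (2).} Assume $Y$ and $G$ have good reduction, so $Y_{\cS}$, $X_{\cS}$, $\mathcal{A}$ and $\cG$ exist as in the semi-stable setup, and $\mu_0$ is defined. By Definition~\ref{def:good-semi-good}(1), $M$ has good reduction if and only if $u$ extends to $u_{\cS}:Y_{\cS}\to\cG$. Use the description of the pairing in Proposition~\ref{prop:two constructions of Raynaud's monodromy agree}, namely $\langle-,-\rangle_{\rig}'$, given by the composite
\[
Y\xrightarrow{u}G\to G/\cG^{\rig}\cong\cHom_{K,\rig}(X,\GmK^{\rig}/\Gmfml^{\rig}).
\]
If $u$ extends, then $u$ factors through $\cG^{\rig}$, so the pairing vanishes and $\mu_0=0$.

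Conversely, suppose $\mu_0=0$. Then $u(Y)\subset\cG^{\rig}$ as sheaves, possibly after an unramified extension splitting $Y$ and $X$, with Galois descent at the end. So $u$ factors as $Y\to\cG^{\rig}$. Since $\cG$ is the formal N\'eron model of $\cG^{\rig}$ (or at least satisfies the extension property for the \'etale $Y_{\cS}$), the map extends to $Y_{\cS}\to\cG$. To justify the N\'eron-type extension, I will reduce to sections: after splitting, $Y_{\cS}\cong\Z^r$, so I only need $\cG(\cO_{K'})=\cG^{\rig}(K')$. This follows from the exact sequence $0\to\cT\to\cG\to\mathcal{A}\to0$ together with three facts: $\mathcal{A}(\cO)=\mathcal{A}^{\rig}(K)$ by Lemma~\ref{lem:a formal abelian scheme is the formal Neron model of its generic fiber}, $\cT(\cO)=\cT^{\rig}(K)$, and the surjectivity of $H^1$-vanishing for $\cT$ over $\cO_{K'}$ (split formal torus, $H^1(\cS',\Gmfml)=0$). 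Apply the five lemma to the resulting map of sequences.

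Alternatively, the section-theoretic version works. By Lemma~\ref{lem:key lemma for constructing Raynaud's monodromy}(3), the class of $s-\iota\circ s_{\cS}^{\rig}$ lands in $\Biext^0_{K,\rig}(Y,X;\Gmfml^{\rig})=\Biext^0_{\cS}(Y_{\cS},X_{\cS};\Gmfml)$. Hence $s_{\cS}$ can be modified to a section lifting $s$, which gives the extension of $u$ by the discussion after \eqref{eq:integral model of Poincare biext}. I will present this version, since it uses only the stated lemma.

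\emph{Part (1).} The pairing $\mu$ is defined by passing to $K'$ over which $M$ is semi-stable and rescaling $\mu_0$ by the ramification index, so $\mu=0$ if and only if $\mu_{0,K'}=0$. I also need that $\mu_{0,K''}=e(K''/K')\,\mu_{0,K'}$ for further extensions $K''/K'$, which is a compatibility of the construction with base change. If $M$ has potentially good reduction, choose $K'$ over which $M$ has good reduction (which implies semi-stable). Then part (2) gives $\mu_{0,K'}=0$, hence $\mu=0$. Conversely, if $\mu=0$, then $\mu_{0,K'}=0$, and part (2) applied over $K'$ gives good reduction of $M_{K'}$.

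The main obstacle is this base-change compatibility of $\mu_0$, i.e.\ the independence of $\mu$ from the choice of $K'$. I will handle it through $\langle-,-\rangle'_{\rig}$. Under base change, $\cG_{\cO_{K'}}$ is still the model and the valuation map $\GmK^{\rig}/\Gmfml^{\rig}\to\Q$ is compatible, so the scaling by the ramification index is immediate from the definition of $v_K$ on $\overline{K}$.
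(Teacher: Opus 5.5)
Your proposal is correct and follows the paper's route. You reduce (1) to (2) by passing to a finite extension $K'$ over which $M_{K'}$ is semi-stable. You prove (2) as the paper does: $\mu_0=0$ iff $s-\iota\circ s_{\cS}^{\rig}$ lies in $\Biext^0_{K,\rig}(Y,X;\Gmfml^{\rig})\cong\Biext^0_{\cS}(Y_{\cS},X_{\cS};\Gmfml)$ (Lemma~\ref{lem:key lemma for constructing Raynaud's monodromy}), iff $s$ lifts to a section of $\mathcal{P}$, iff $u$ extends to $Y_{\cS}\to\cG$. Your explicit check that $\mu$ does not depend on the choice of $K'$ (the scaling of $\mu_0$ by the ramification index) is a point the paper leaves implicit, and is a useful addition.
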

\begin{proof}
    It suffices to show (2). By construction, $\mu_0$ is trivial if and only if the section $s$ in \eqref{eq:integral model of Poincare biext} lifts to a section $Y\times X\to \mathcal{P}^{\rig}$ of the $\Gmfml^{\rig}$-biextension $\mathcal{P}^{\rig}$. 
    
    By the injectivity of the vertical maps in Lemma \ref{lem:key lemma for constructing Raynaud's monodromy} (1), the extension $v_{\cS}^*\cG$ and the biextension $(v_{\cS},v_{\cS}^{\vee})^*\mathcal{P}$ are both trivial, and thus the extension $v^*\cG^{\rig}$ and the biextension $(v,v^{\vee})^*\mathcal{P}^{\rig}$ are also trivial. Let $S_{\Ext}(v^*\cG^{\rig})$ (resp. $S_{\Biext}((v,v^{\vee})^*\mathcal{P}^{\rig})$) be the set of sections of the extension $v^*\cG^{\rig}$ (resp. biextension $(v,v^{\vee})^*\mathcal{P}^{\rig}$). Then by the upper vertical isomorphisms from the diagram of Lemma \ref{lem:key lemma for constructing Raynaud's monodromy} (2), the vertical maps in the following commutative diagram
    \[\xymatrix{
    S_{\Ext}(v_{\cS}^*\cG)\ar[r]^-{\mathrm{bij.}}\ar[d]_-{\mathrm{bij.}} &S_{\Biext}((v_{\cS},v_{\cS}^{\vee})^*\mathcal{P})\ar[d]^-{\mathrm{bij.}} \\
    S_{\Ext}(v^*\cG^{\rig})\ar[r]^-{\mathrm{bij.}} &S_{\Biext}((v,v^{\vee})^*\mathcal{P}^{\rig}) 
    }\]
    are bijective. Therefore we further get $\mu_0$ is trivial if and only if the section $s$ in \eqref{eq:integral model of Poincare biext} lifts to a section $Y_{\cS}\times X_{\cS}\to \mathcal{P}$ of the $\Gmfml$-biextension $\mathcal{P}$, equivalently if and only if $Y\xrightarrow{u}G$ lifts to a homomorphism $Y_{\cS}\xrightarrow{u}\mathcal{G}$, i.e. $M$ has good reduction.
\end{proof}

\begin{theorem}\label{thm:decomposition of rigid 1-motives w.r.t. a chosen uniformizer}
    Let $M=[Y\xrightarrow{u}G]\in\mathscr{M}_{1,K}^{\mathrm{str}}$ such that its geometric monodromy pairing $\mu:Y\times X\to\Q$ has image in $\Z$. Let $T$ and $A$ be the torus and abeloid part of $G$ respectively. Recall that $\pi$ is a chosen uniformizer of $\mathcal{O}_K$, and we define
    \begin{align*}
        u_{\pi}^2:&Y\to T=\cHom_{K,\rig}(X,\GmK^{\rig}) \\
        &y\mapsto (x\mapsto \pi^{\mu(y,x)})
    \end{align*}
    and $u_{\pi}^1:=u-u_{\pi}^2$. Then we have the following.
    \begin{enumerate}
        \item The rigid 1-motive $M_{\pi}^1=[Y\xrightarrow{u_{\pi}^1}G]$ has potentially good reduction.

        \item $M$ has semi-stable reduction if and only if $M_{\pi}^1$ has good reduction.
    \end{enumerate}
\end{theorem}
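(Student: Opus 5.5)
The plan is to reduce everything to Theorem \ref{thm:good reduction amounts to trivial goem. monodromy} by showing that the monodromy pairing is additive in $u$ and computing the pairing of the explicit piece $u_\pi^2$. Fix the data $T,A,X,v^\vee$ of $G$; these do not depend on $u$. Under Proposition \ref{prop:equivalence strict rigid 1-motives}, changing $u$ to $u+w$ with $w:Y\to T$ leaves $v$ unchanged, since $w$ dies in $A$. It also leaves $v^\vee$ unchanged, and it changes $s$ by the element of $\Biext^0_{K,\rig}(Y,X;\GmK^{\rig})\cong\Hom(Y,T)$ corresponding to $w$.

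The first step is additivity. I would use the description $\langle-,-\rangle'_{\rig}$ of Proposition \ref{prop:two constructions of Raynaud's monodromy agree}, which is given by the composite $Y\xrightarrow{u}G\to G/\cG^{\rig}\cong T/\cT^{\rig}$ and is visibly additive in $u$. This works after passing to a finite extension $K'$ over which $M$, and hence also $M_\pi^1$ and $[Y\xrightarrow{u_\pi^2}G]$, has semi-stable reduction. All three motives share $Y$ and $G$, so this is possible. Normalizing by $v_K$ as in \eqref{eq:the Q-valued monodromy pairing for rigid 1-motive} then gives
\[\mu_{M}=\mu_{M_\pi^1}+\mu_{[Y\xrightarrow{u_\pi^2}G]}.\]

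The second step computes $\mu$ for $u_\pi^2$. The map $y\mapsto(x\mapsto\pi^{\mu(y,x)})$ lands in $T(K)$, and its image in $T/\cT^{\rig}=\cHom(X,\GmK^{\rig}/\Gmfml^{\rig})$ is $x\mapsto v_K(\pi^{\mu(y,x)})=\mu(y,x)$. One point needs checking: $u_\pi^2$ must be Galois equivariant, so that it is a genuine morphism of sheaves. This holds because $\mu$ is Galois compatible and $\pi\in K$. Hence $\mu_{M_\pi^1}=\mu-\mu=0$, and Theorem \ref{thm:good reduction amounts to trivial goem. monodromy}(1) gives (1). Note that $M_\pi^1$ is still strict, since it has the same $G$.

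For (2), I would argue in two directions. Suppose first that $M$ is semi-stable. Then $Y$ and $G$ have good reduction over $K$, so the same holds for $M_\pi^1$. Its $\Z$-valued pairing $\mu_0$, computed over $K$ itself, is $0$ by the computation above. Here the integrality of $\mu$ ensures $u_\pi^2$ has no ramified contribution, since $v_K(\pi)=1$. Theorem \ref{thm:good reduction amounts to trivial goem. monodromy}(2) then gives good reduction of $M_\pi^1$. Conversely, if $M_\pi^1$ has good reduction, then $Y$ and $X$ are unramified and $A$ comes from a formal abelian scheme. These conditions depend only on $Y$ and $G$, so $M$ is semi-stable by Definition \ref{def:good-semi-good}(2).

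The main obstacle is making the additivity rigorous across the base change. One has to verify that the formal models $\cG$ and $\cT$ over $\cO_{K'}$ and the identification $G/\cG^{\rig}\cong T/\cT^{\rig}$ are compatible with the $v_K$-normalization, so that the $\Q$-valued $\mu$ is really the quotient map composed with $u$, rescaled by the ramification index. I would handle this by working throughout with the $\GmK^{\rig}/\Gmfml^{\rig}\cong\Q$-valued version.
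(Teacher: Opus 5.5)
Your proposal is correct and follows the paper's argument. The paper twists the biextension section to $s_\pi^1(y,x)=s(y,x)\pi^{-\mu(y,x)}$, notes that the pairing of $M_\pi^1$ is then zero, and concludes with Theorem \ref{thm:good reduction amounts to trivial goem. monodromy}(1) and (2). You do the same computation through the equivalent description $\langle-,-\rangle'_{\rig}$ of Proposition \ref{prop:two constructions of Raynaud's monodromy agree}, where additivity in $u$ is visible, and you also check two details the paper leaves implicit: that $u_\pi^2$ is Galois equivariant and that the $v_K$-normalization is compatible with base change.
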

\begin{proof}
    (1) The homomorphism $u_{\pi}^2$ corresponds to the bilinear map 
    \[Y\times X\to \GmK^{\rig},\quad
    (y,x)\mapsto \pi^{\mu(y,x)}\] under the canonical identifications
    \[\Hom_{K,\rig}(Y,T)=\Hom_{K,\rig}(Y,\cHom_{K,\rig}(X,\GmK^{\rig}))=\Biext^0(Y,X;\GmK^{\rig}).\]
    Let $s:Y\times X\to P$ be the map in the diagram \eqref{eq:symmetric description of rigid 1-mot} for $M$. The sections $Y\times X\to P$ of the Poincar\'e biextension along $(v,v^{\vee}):Y\times X\to A\times_{\Sp K} A^{\vee}$ is a torsor under $\Biext^0(Y,X;\GmK^{\rig})=\Hom_{K,\rig}(Y,T)$, and we use the section $s$ to construct a new section by letting 
    \[s_{\pi}^1(y,x):=s(y,x)\pi^{-\mu(y,x)}\] 
    for any local section $(y,x)\in Y\times X$. The new section $s_{\pi}^1$ gives rise to a rigid 1-motive $M_{\pi}^1:=[Y\xrightarrow{u_{\pi}^1}G]$ whose monodromy pairing is just $0$, and thus $M_{\pi}^1$ has potentially good reduction by Theorem \ref{thm:good reduction amounts to trivial goem. monodromy} (1).

    (2) Assume that $M$ has semi-stable reduction, then $M_{\pi}^1$ has good reduction by Theorem \ref{thm:good reduction amounts to trivial goem. monodromy} (2). The other implication is trivial.
\end{proof}

\begin{corollary}\label{cor:descriptions of ell-adic and p-adic repn of rigid 1-motive}
    Let the setting and notations be as in Theorem \ref{thm:decomposition of rigid 1-motives w.r.t. a chosen uniformizer}.  Let $\ell\neq\mathrm{char}(K)$ be a prime number, and let $k$ be the residue field of $K$. Let $M_{\pi}^2:=[Y\xrightarrow{u_{\pi}^2}T]$, and let $\rho_{\pi}^2$ be the pushout of the extension 
    \[0\to T_\ell(T)\to T_\ell(M_{\pi}^2)\to Y\otimes_{\Z}\Z_\ell\to 0\] 
    along $T_\ell(T)\hookrightarrow T_\ell(G)$. Then we have the following.
    \begin{enumerate}
        \item $T_\ell(M)$ is the Baer sum of $\rho_{\pi}^2$ and $T_\ell(M_{\pi}^1)$ as extensions of $Y\otimes_{\Z}\Z_\ell$ by $T_{\ell}(G)$. Note that $T_\ell(M_{\pi}^1)$ is potentially unramified (resp. potentially crystalline) when $\ell\neq\mathrm{char}(k)$ (resp. $\ell=\mathrm{char}(k)$) by Theorem \ref{thm:first description of l-adic realization of rigid 1-motive} (1). 

        \item Let $Y''$ and $X''$ be the left and the right kenel of the monodromy pairing $\mu:Y\times X\to\Q$, let $Y':=Y/Y''$ and $X':=X/X''$, and let $T':=\cHom(X',\Gm)$ and $T'':=\cHom(X'',\Gm)$. Let $\bar{\mu}:Y'\times X'\to \Q$ be the bilinear map induced by $\mu$, and let 
        \[\overline{M}_{\pi}^2:=[Y'\xrightarrow{\bar{u}_{\pi}^2} T']\] 
        be the rigid 1-motive induced by $\bar{\mu}$. Then $\bar{\mu}$ is non-degenerate, $u_{\pi}^2$ factors as
        \[Y\twoheadrightarrow Y'\xrightarrow{\bar{u}_{\pi}^2}T'\hookrightarrow T,\]
        and $T_\ell(M_{\pi}^2)$ can be described by the following commutative diagram
        \[\xymatrix{
        0\ar[r] &T_\ell(T')\ar[r]\ar[d] &T_\ell(\overline{M}_{\pi}^2)\ar[r]\ar[d] &Y'\otimes_{\Z}\Z_\ell\ar[r]\ar@{=}[d] &0 \\
        0\ar[r] &T_\ell(T)\ar[r] &\star\ar[r] &Y'\otimes_{\Z}\Z_\ell\ar[r] &0 \\
        0\ar[r] &T_\ell(T)\ar[r]\ar@{=}[u] &T_\ell(M_{\pi}^2)\ar[r]\ar[u] &Y\otimes_{\Z}\Z_\ell\ar[r]\ar[u] &0
        }\]
        of Galois $\Z_\ell$-representations with exact rows and columns.
        
        \item Assume that $M$ is semi-stable. We denote the Galois representation $Y''\otimes_{\Z}\Z_\ell$ by $\rho_{\ell,Y''}$, similarly we have $\rho_{\ell,Y'}$, $\rho_{\ell,X'}$, $\rho_{\ell,X''}$, $\rho_{\ell,T'}$, $\rho_{\ell,T''}$, and $\rho_{\ell,A}$. Note that $\rho_{\ell,T'}=\rho_{\ell,X'}^{\vee}(1)$ (the Tate twist of the dual of $\rho_{\ell,X'}$), $\rho_{\ell,T''}=\rho_{\ell,X''}^{\vee}(1)$. 
        \begin{enumerate}
            \item Assume further that $\ell\neq\mathrm{char}(k)$, then the inertial group $I$ acts on $T_\ell(M)$ as 
            \[\begin{pmatrix}
            \rho_{\ell,X'}^{\vee}(1)|_I & 0 &0 &0 &t_\ell^{\bar{\mu}(-,-)\otimes_{\Z}\Z_l} \\
            0 &\rho_{\ell,X''}^{\vee}(1)|_I &0 &0 &0 \\
            0 &0 &\rho_{\ell,A}|_I &0 &0 \\
            0 &0 &0 &\rho_{\ell,Y''}|_I &0 \\
            0 &0 &0 &0 &\rho_{\ell,Y'}|_I
            \end{pmatrix},\]
            where $t_\ell:I\to \Z_\ell(1)$ is the $\ell$-adic tame character.  
        
            \item Assume that $\mathrm{char}(K)=0$ and $p:=\mathrm{char}(k)>0$. Let 
            \[D_M:=\bfD_{\st}(T_p(M))=(T_p(M)\otimes_{\Z_p}B_{\st})^{\Gamma_K}\]
            be the $(\varphi,N)$-module associated to the $p$-adic Galois representation $T_p(M)$, and thus we have an endomorphism $N_M:D_M\to D_M$ of $K_0$-vector space. Similarly we define $D_{M_{\pi}^i}$ and $N_{M_{\pi}^i}$ for $i=1,2$. Then
            \begin{enumerate}
            \item the map $N_?$ factors as
            \[D_?\to \bfD_{\st}(Y\otimes_{\Z}\Z_p)\xrightarrow{\overline{N}_{?}}\bfD_{\st}(T_p(G))\to D_{?}\]
            with $?=M,M_{\pi}^1$;
            
            \item the map $N_{M_{\pi}^2}$ factors as
            \[D_{M_{\pi}^2}\to \bfD_{\st}(Y\otimes_{\Z}\Z_p)\xrightarrow{\overline{N}_{M_{\pi}^2}} \bfD_{\st}(X^{\vee}\otimes_{\Z}\Z_p(1))\to D_{M_{\pi}^2};\] 
            
            \item $\overline{N}_{M_{\pi}^1}=0$ and $\overline{N}_M=\overline{N}_{M_{\pi}^1}+\overline{N}_{M_{\pi}^2}=\overline{N}_{M_{\pi}^2}$, where $\overline{N}_{M_{\pi}^2}$ denotes the composition 
            \[\bfD_{\st}(Y\otimes_{\Z}\Z_p)\xrightarrow{\overline{N}_{M_{\pi}^2}} \bfD_{\st}(X^{\vee}\otimes_{\Z}\Z_p(1))\hookrightarrow \bfD_{\st}(T_p(G))\]
            by abuse of notation; 
            
            \item let $\Gamma_k$ be the absolute Galois group of $k$, $K_0:=W(k)\otimes_{\Z_p}\Q_p$, and $\widehat{K_0^{\mathrm{um}}}$ the completion of the maximal unramified extension of $K_0$, $\mathbbm{1}(1)$ the Tate twist of $\mathbbm{1}:=\bfD_{\st}(\Z_p)$ (see \cite[Def. 9.4, Lem. 9.5]{FO22}), then we have
            \begin{align*}
            &\bfD_{\st}(Y\otimes_{\Z}\Z_p)=(Y\otimes_{\Z}\widehat{K_0^{\mathrm{um}}})^{\Gamma_k}, \\
            &\bfD_{\st}(X^{\vee}\otimes_{\Z}\Z_p(1))=(X^{\vee}\otimes_{\Z}\widehat{K_0^{\mathrm{um}}})^{\Gamma_k}\otimes_{K_0}\mathbbm{1}(1)                
            \end{align*}
            and $\overline{N}_{M_{\pi}^2}$ fits into the following commutative diagram
            \[\xymatrix{
            (Y\otimes_{\Z}\widehat{K_0^{\mathrm{um}}})^{\Gamma_k}\ar[r]^-{\overline{N}_{M_{\pi}^2}}\ar@{^(->}[d] &(X^{\vee}\otimes_{\Z}\widehat{K_0^{\mathrm{um}}})^{\Gamma_k}\ar@{^(->}[d]  \\
            Y\otimes_{\Z}\widehat{K_0^{\mathrm{um}}}\ar[r]^{\mu(-,-)\otimes \id} &X^{\vee}\otimes_{\Z}\widehat{K_0^{\mathrm{um}}}
            } \]
            after we identify $\mathbbm{1}(1)$ to $K_0$ as a $K_0$-vector space.
            \end{enumerate}
        \end{enumerate}
    \end{enumerate} 
\end{corollary}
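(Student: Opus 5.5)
Part (1) is immediate from Lemma \ref{lem:realization of sum of 1-motives is the Baer sum of the realizations}. I apply it to $u=u_\pi^1+u_\pi^2$, viewing $u_\pi^2$ as a map $Y\to T\hookrightarrow G$. The only point to check is that $T_\ell([Y\xrightarrow{u_\pi^2}G])$ is the pushout $\rho_\pi^2$ of $T_\ell(M_\pi^2)$ along $T_\ell(T)\hookrightarrow T_\ell(G)$. This follows from the morphism of complexes $M_\pi^2\to[Y\xrightarrow{u_\pi^2}G]$, which is the identity on $Y$, together with functoriality of the exact sequence $0\to T_\ell(G)\to T_\ell(M)\to Y\otimes\Z_\ell\to 0$. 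The potential unramifiedness or crystallinity of $T_\ell(M_\pi^1)$ comes from Theorem \ref{thm:decomposition of rigid 1-motives w.r.t. a chosen uniformizer}(1) combined with Theorem \ref{thm:first description of l-adic realization of rigid 1-motive}(1).

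For (2), $\bar\mu$ is nondegenerate by construction, since its kernels have been divided out; $Y'$ and $X'$ are torsion-free because $Y''$, $X''$ are saturated kernels. Because $u_\pi^2(y)$ is the character $x\mapsto\pi^{\mu(y,x)}$, it kills $Y''$ and lands in $\cHom(X',\Gm)=T'$, which gives the factorization. I then take $\star$ to be the pushout of $T_\ell(\overline{M}_\pi^2)$ along $T_\ell(T')\to T_\ell(T)$. Its pullback along $Y\otimes\Z_\ell\to Y'\otimes\Z_\ell$ is $T_\ell(M_\pi^2)$, by functoriality of $T_\ell$ applied to the morphisms of 1-motives $M_\pi^2\to[Y'\to T]\leftarrow\overline{M}_\pi^2$.

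For (3)(a), I use (1) and (2). Since $M$ is semi-stable, $M_\pi^1$ has good reduction by Theorem \ref{thm:decomposition of rigid 1-motives w.r.t. a chosen uniformizer}(2), so its extension class is unramified; in a basis adapted to the filtration, $I$ therefore acts on $T_\ell(M)$ through the graded pieces plus the class of $\rho_\pi^2$. For $\rho_\pi^2$ I compute $T_{\Z/\ell^n}$ of $\overline{M}_\pi^2$ explicitly: a lift of $y$ is an $\ell^n$-th root of $(\pi^{\bar\mu(y,x)})_x$. Hence $\sigma\in I$ acts by $y\mapsto y+t_\ell(\sigma)^{\bar\mu(y,-)}$ in $T_\ell(T')$, which produces the displayed matrix. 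The blocks for $X''$, $A$, $Y''$ are inert because $T''$ splits off and $u_\pi^2$ is zero on $Y''$. I also need the graded pieces from the weight filtration to be arranged compatibly with the $'$/$''$ splitting; here I would use that $X$ and $Y$ are unramified, so the restriction of $\rho_{\ell,X'}$ etc. to $I$ is trivial anyway.

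For (3)(b), the factorization in (i) holds because $N$ lowers the weight: $N$ is zero on $\bfD_{\st}$ of crystalline pieces, and $T_p(T)$, $T_p(A)$, $Y\otimes\Z_p$ are crystalline (the latter being unramified, and $A$ having good reduction). So $N$ kills $\bfD_{\st}(T_p(G))$ and factors through the quotient. The analogous argument applied to $M_\pi^2$ gives (ii). For (iii), $\overline N_{M_\pi^1}=0$ because $T_p(M_\pi^1)$ is crystalline by Theorem \ref{thm:first description of l-adic realization of rigid 1-motive}(1). Additivity $\overline N_M=\overline N_{M_\pi^1}+\overline N_{M_\pi^2}$ follows because $\bfD_{\st}$ is exact on semi-stable representations, so it turns Baer sums into Baer sums, and $\overline N$ depends additively on the extension class.

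The main obstacle is (iv): the explicit computation of $N$ for the Kummer-type representation $T_p(\overline{M}_\pi^2)$. I would reduce by bilinearity and Galois descent over $\widehat{K_0^{\mathrm{um}}}$ to the rank-one Tate-type extension of $\Z_p$ by $\Z_p(1)$ attached to $\pi^{m}$. For that extension, the standard computation with $\log[\tilde\pi]\in B_{\st}$ and $N(\log[\tilde\pi])=-1$ (up to the sign convention of \cite{FO22}) shows that $N$ equals $m$ times the identification $\mathbbm{1}\to\mathbbm{1}(1)$. The delicate point is fixing the sign and normalization so that the result is exactly $\mu\otimes\id$.
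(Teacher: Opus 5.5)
Your proposal follows the paper's route. Part (1) is the Baer-sum lemma applied to $u=u_\pi^1+u_\pi^2$. Parts (2) and (3)(a) are elementary pushout/pullback and Kummer-theory computations, which the paper leaves as exercises. Part (3)(b)(iii) uses additivity of $\overline N$ under Baer sums, for which the paper points to \cite{BWZ23}. Part (3)(b)(iv) reduces to rank one by bilinearity and unramified descent. The only difference in (iv) is that you do the rank-one computation by hand with $\log[\tilde\pi]$, whereas the paper quotes Berger's description of $\bfD_{\st}$ of $[\Z\to\GmK]$ \cite{Ber11}. The sign and normalization you flag are absorbed by the paper's identification of $\mathbbm{1}(1)$ with $K_0$.

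One step does not work as justified: in (3)(b)(i) you deduce that $N$ kills $\bfD_{\st}(T_p(G))$ from the crystallinity of the graded pieces $T_p(T)$ and $T_p(A)$. That inference is false in general. An extension of crystalline representations need not be crystalline; the Tate module of a Tate curve is an extension of $\Z_p$ by $\Z_p(1)$ with $N\neq 0$. Slope considerations also do not exclude a nonzero component $\bfD_{\st}(T_p(A))\to\bfD_{\st}(T_p(T))$ when $A$ has an \'etale part, since the residue field is only assumed perfect. What you need is that $T_p(G)$ itself is crystalline. This is true here and uses a geometric input. In the strict semi-stable case $G$ is the pushout of $\cG^{\rig}$ along $\cT^{\rig}\hookrightarrow T$, and $G/\cG^{\rig}$ is torsion-free. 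Hence $G[p^\infty]=\cG^{\rig}[p^\infty]$ is the generic fiber of the $p$-divisible group $\cG[p^\infty]$ over $\cO_K$. Equivalently, $T_p(G)\subset T_p(M_\pi^1)$, which you already know is crystalline in (iii). With this correction (i) goes through. Your argument for (ii) is fine as written, because $T_p(M_\pi^2)$ has only the two graded pieces, both crystalline.
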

\begin{proof}
    (1) This follows from Lemma \ref{lem:realization of sum of 1-motives is the Baer sum of the realizations}.

    (2) We leave this as an exercise on the behaviour of representations under pushout and pullback.

    (3) We have $\overline{N}_{M_{\pi}^1}=0$ by Theorem \ref{thm:decomposition of rigid 1-motives w.r.t. a chosen uniformizer} (2) and Theorem \ref{thm:first description of l-adic realization of rigid 1-motive} (1). For the rest part of (b) (iii), (b) (i), (b) (ii), and (a), we leave them as exercises to the reader (one can look at \cite[Lem. 6.15]{BWZ23} for some hints).
    
    Now we prove (b) (iv).  As both $X$ and $Y$ carry trivial action by the inertia group $I$, we have
    \[\bfD_{\st}(Y\otimes_{\Z}\Z_p)=(Y\otimes_{\Z}B_{\st})^{\Gamma_K}=(Y\otimes_{\Z}\widehat{K_0^{\mathrm{um}}})^{\Gamma_k}\]
    and
    \[\bfD_{\st}(X^{\vee}\otimes_{\Z}\Z_p(1))=\bfD_{\st}(X^{\vee}\otimes_{\Z}\Z_p)\otimes_{K_0}\mathbbm{1}(1)=(X^{\vee}\otimes_{\Z}\widehat{K_0^{\mathrm{um}}})^{\Gamma_k}\otimes_{K_0}\mathbbm{1}(1).\]
    We first deal with the case that $X\cong\Z^r$ and $Y\cong\Z^s$. We fix a basis $e_1,\cdots,e_r$ for $X$ and a basis $f_1,\cdots,f_s$ for $Y$, and let $\iota_a:\Z\to Y$ and $\pi_a:Y\to\Z$ be the inclusion and the projection corresponding to $f_a$, and let $i_b:\GmK^{\rig}\to T$ and $p_b:T\to\GmK^{\rig}$ be the inclusion and the projection corresponding to $e_b$. Let $w_{ab}:\Z\to\GmK^{\rig}$ be the map $p_b\circ u_{\pi}^2\circ \iota_a$, and let $M_{ab}:=[\Z\xrightarrow{w_{ab}}\GmK^{\rig}]$. Clearly we have $u_{\pi}^2=\sum_{a}\sum_b i_b\circ w_{ab}\circ \pi_a$. Therefore $T_{p}(M_{\pi}^2)$ is the Baer sum of 
    \[T_{p}([Y\xrightarrow{i_b\circ w_{ab}\circ \pi_a}T]),\quad a=1,\cdots,s, \quad \text{and}\quad b=1,\cdots,r\] 
    by Lemma \ref{lem:realization of sum of 1-motives is the Baer sum of the realizations}. By \cite[Lem. 6.15 (b.5)]{BWZ23}, we get \[\overline{N}_{M_{\pi}^2}=\sum_{a}\sum_{b}i_b\circ \overline{N}_{M_{ab}}\circ \pi_a.\] By the explicit description of $\bfD_{\st}([\Z\to\GmK])$ from \cite[II.4]{Ber11}, we have 
    \[\overline{N}_{M_{ab}}=\left(\bfD_{\st}(Y\otimes_{\Z}\Z_p)\xrightarrow{\pi_a} \bfD_{\st}(\Z_p)\xrightarrow{v_K(w_{ab}(1))}\bfD_{\st}(T_p(\GmK^{\rig}))\xrightarrow{i_b}\bfD_{\st}(T_p(T))\right), \]
    where $v_K$ is the valuation of $K$ as before.
    We have the identifications $\bfD_{\st}(Y\otimes_{\Z}\Z_p)\cong Y\otimes_{\Z}\mathbbm{1}$ and $\bfD_{\st}(T_p(T))\cong \bfD_{\st}(X^{\vee}\otimes_{\Z}\Z_p(1))\cong X^{\vee}\otimes_{\Z}\mathbbm{1}(1)$. As $K_0$-vector spaces we have $\mathbbm{1}=\mathbbm{1}(1)=K_0$, and thus we get
    \[\overline{N}_{M_{ab}}=\left(Y\otimes_{\Z}K_0\xrightarrow{\mu(-,-)\otimes \id} X^{\vee}\otimes_{\Z}K_0 \right).\]
    Therefore the result follows from this description. The general case is clear by the special case.
\end{proof}

\begin{remark}
    Note that the $I$-representations $\rho_{\ell,X'}|_I$, $\rho_{\ell,X''}|_I$, $\rho_{\ell,Y'}|_I$, and $\rho_{\ell,Y''}|_I$ are all trivial for any prime $\ell\neq \mathrm{char}(K)$, and all the representations in the diagonal of the matrix in (3) (a) are trivial if $\ell\neq \mathrm{char}(k)$. We use the superfluous notations in order to exhibit the structure of the representation $T_{\ell}(M)$.
\end{remark}

\begin{corollary}\label{cor:geometric monodromy of abeloid gives rise to the monodromy on the repn}
    Let $A$ be an abeloid variety over $K$, let $M=[Y\xrightarrow{u}G]$ be the Raynaud rigid 1-motive of $A$. 
    \begin{enumerate}
        \item If $Y\neq0$, equivalently $X\neq0$, equivalently $A$ does not have potentially good reduction, then the geometric monodromy pairing $\mu:Y\times X\to\Q$ of $M$ is non-degenerate in the sense that $Y\otimes_{\Z}\Q\to \Hom_{\Z}(X,\Q)$ is an isomorphism.
        
        \item Assume that $A$ is semi-stable. Let $k$ be the residue field of $K$ and let $\ell\neq\mathrm{char}(K)$ be a prime number. 
        \begin{enumerate}
            \item If $\ell\neq\mathrm{char}(k)$ , then the monodromy of the Galois representation $T_\ell(A) =T_\ell(M)$ is given by
        \[T_\ell(M)(1)\twoheadrightarrow Y\otimes_{\Z}\Z_\ell(1) \xrightarrow{\mu(-,-)} X^{\vee}\otimes_{\Z}\Z_\ell(1)\hookrightarrow T_\ell(M).\]

            \item Assume moreover $\mathrm{char}(K)=0$. If $p:=\mathrm{char}(k)>0$, then the monodromy operator on $\bfD_{\st}(T_p(A))=\bfD_{\st}(T_p(M))$ can be described as
            \[\xymatrix{
            \bfD_{\st}(T_p(M))\ar@{->>}[r] &(Y\otimes_{\Z}\widehat{K_0^{\mathrm{um}}})^{\Gamma_k}\ar[r]\ar@{^(->}[d] &(X^{\vee}\otimes_{\Z}\widehat{K_0^{\mathrm{um}}})^{\Gamma_k}\otimes_{K_0}\mathbbm{1}(1)\ar@{^(->}[r]\ar@{^(->}[d] &\bfD_{\st}(T_p(M))  \\
            &(Y\otimes_{\Z}\widehat{K_0^{\mathrm{um}}})^{\Gamma_k}\ar[r]^{\mu(-,-)\otimes\id} &(X^{\vee}\otimes_{\Z}\widehat{K_0^{\mathrm{um}}})^{\Gamma_k}
            }\]
        \end{enumerate}
    \end{enumerate}    
\end{corollary}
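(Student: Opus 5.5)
Part (2) will follow from Corollary \ref{cor:descriptions of ell-adic and p-adic repn of rigid 1-motive} once part (1) is in hand, so the real content is the non-degeneracy statement (1).

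\textbf{Part (1).} I would first reduce to the case where $A$ has split Raynaud data. The pairing $\mu$ is defined after a finite extension $K'$ and renormalized by $v_K$, so non-degeneracy over $\Q$ is insensitive to base change. In the split case, Theorem \ref{thm:decomposition of rigid 1-motives w.r.t. a chosen uniformizer} (1) shows that $M_\pi^1=[Y\xrightarrow{u_\pi^1}G]$ has potentially good reduction, where $u=u_\pi^1+u_\pi^2$. Here $\mu$ is integral after base change, so we may apply that theorem over $K'$ with a uniformizer of $K'$.

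Suppose $\mu$ is degenerate. Since $\operatorname{rk}Y=\operatorname{rk}X$, degeneracy forces the left kernel $Y''\subset Y$ to be nonzero. On $Y''$ we have $u|_{Y''}=u_\pi^1|_{Y''}$, so $[Y''\to G]$ has potentially good reduction. Passing to a further extension, $u(Y'')$ lies in $\cG^{\rig}$ up to translation by the good-reduction model. More precisely, $u_\pi^1$ extends to $Y_{\cS}\to\cG$, so $u(Y'')\subset\cG^{\rig}(K)$.

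The main obstacle is now to contradict that $Y$ is a lattice in $G$ with $A=G/Y$. I would use the defining property of Theorem \ref{thm:split Raynaud uniformization}: $\cG^{\rig}$ is an open subgroup of $A$, and the projection $\pi:G\to A$ restricted to $\cG^{\rig}$ is injective. Given this injectivity, $Y\cap\cG^{\rig}=0$. Since $Y''\ne0$ and $Y$ is torsion-free, $u(Y'')$ is a nonzero subgroup of $\cG^{\rig}(K)\cap Y$, which is a contradiction.

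Alternatively, I would use the discreteness of the lattice together with the quasi-compactness (boundedness) of $\cG^{\rig}$. The lattice condition in \cite{BL91} says that $Y$ maps bijectively onto a lattice in $T/\cT^{\rig}\cong\Hom(X,\Q)$ under the map \eqref{eq:alternative construction of monodromy pairing from u via extensions}. By Proposition \ref{prop:two constructions of Raynaud's monodromy agree}, that map is precisely $y\mapsto\mu(y,-)$, so $Y\otimes\Q\to\Hom(X,\Q)$ is an isomorphism directly. I would present this second argument as the main proof. The point to verify carefully is that the lattice condition of \cite[page 656]{BL91} is exactly injectivity with full rank of $Y\to G/\cG^{\rig}\cong\Hom(X,\Q)$. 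The ranks agree because $\operatorname{rk}Y=\dim T$.

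\textbf{Part (2).} Assume $A$, hence $M$, is semi-stable. For (a), use the matrix in Corollary \ref{cor:descriptions of ell-adic and p-adic repn of rigid 1-motive} (3)(a). By (1), $Y''=X''=0$, and all diagonal blocks are trivial on inertia, so $\sigma\in I$ acts as $1+t_\ell(\sigma)\,\mu$. This is the logarithm formula for the monodromy as the stated composite through $Y\otimes\Z_\ell(1)\to X^\vee\otimes\Z_\ell(1)=T_\ell(T)$; the twist is bookkeeping. For (b), apply Corollary \ref{cor:descriptions of ell-adic and p-adic repn of rigid 1-motive} (3)(b)(i)--(iv). Since $\overline N_M=\overline N_{M_\pi^2}$ lands in $\bfD_{\st}(T_p(T))$, the diagram is exactly (iv).
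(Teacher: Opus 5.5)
Your main argument for (1) is the paper's: it identifies $Y\xrightarrow{\mu}\Hom_{\Z}(X,\Q)\to\Hom_{\Z}(X,\R)$ with $Y\to G$ followed by $-\log|\cdot|$, and then invokes the lattice condition of \cite{BL91} (injective with cocompact image). For (2), the paper likewise reads the result off Corollary \ref{cor:descriptions of ell-adic and p-adic repn of rigid 1-motive}, as you do. Your alternative argument for (1) also works, using $Y\cap\cG^{\rig}=0$ from the injectivity of $\cG^{\rig}\to A$ together with $\operatorname{rk}Y=\operatorname{rk}X$; it needs only that injectivity, not cocompactness.
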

\begin{proof}
    (1) We may assume that $A$ has semi-stable reduction. By construction, the composition 
    \[Y\xrightarrow{\mu(-,-)} \Hom_{\Z}(X,\Q)\to \Hom_{\Z}(X,\R)\] 
    is nothing but the composition of $Y\to G$ and the map $-\log\vert -\vert$ from \cite[page 343]{Lut16}. Since $Y$ is a lattice in $G$, the composition is injective with cocompact image by the definition of lattice (see \cite[page 656]{BL91} or \cite[page 257]{Lut16}). Then the result follows.

    (2) This follows from Corollary \ref{cor:descriptions of ell-adic and p-adic repn of rigid 1-motive}. 
\end{proof}

\subsection{Formal 1-motives}\label{subsec:formal 1-motives}
Recall that the integral model of an algebraic 1-motive with good reduction (resp. semi-stable reduction) over $K$ is an algebraic 1-motive (resp. log 1-motive) over $\OK$. For rigid analytic 1-motives with good reduction over $K$, the integral models cannot be the usual algebraic 1-motives over $\OO_K$, but rather its analogue in formal geometry. We define the analogue over any formal scheme (not just $\Spf \mathcal{O}_K$) as follows. 

As in subsubsection \ref{subsubsec:ext of abeloids by rigid tori}, let $\cS$ be a formal scheme which admits a finitely generated ideal of definition $\cI$, and let $(\mathrm{FSch}/\cS)_{\fl}$ be the fppf site of formal schemes which are adic over $\cS$.

\begin{definition}\label{def:formal 1-motive}
    A \emph{formal 1-motive} over $\cS$ is a two-term complex 
    \[\mathcal{M}=[Y_{\cS}\xrightarrow{u}\cG]\] 
    of sheaves of abelian groups on $(\mathrm{FSch}/\cS)_{\fl}$, where
    \begin{enumerate}
        \item $Y_{\cS}$ is \'etale locally a free abelian group of finite rank and sits in degree -1,
        \item $\cG$ is an extension of a formal abelian scheme $\mathcal{B}$ by a formal torus $\cT$ over $\cS$.
    \end{enumerate}
    We denote the category of formal 1-motives over $\cS$ by $\mathscr{M}_{1,\cS}$. When $\cS=\Spf\,\Ol_K$, we denote the 2-term complex $[Y_{\cS}^{\rig}\xrightarrow{u^{\rig}}\cG^{\rig}]$ associated to $\mathcal{M}$ as $\mathcal{M}^{rig}$.
\end{definition}

Recall that based on Theorem \ref{thm:Weil-Barsotti formula} (1) and Proposition \ref{prop:key prop. for biextensions in both rigid and formal settings} (1), we described rigid 1-motives in terms Poincar\'e biextensions in subsection \ref{subsec:rigid analytic 1-motives}. Moreover, such a description restricted to strict rigid 1-motives is actually an equivalence $\mathscr{M}_{1,K}^{\mathrm{str}}\xrightarrow{\simeq} {\mathop{\mathbf{LPPoin}}}_K^{\mathrm{str}}$ of categories, see Proposition \ref{prop:equivalence strict rigid 1-motives}. Note that for algebraic 1-motives over any base scheme we have the equivalence without any condition, see \cite[Prop. 10.2.14]{Del74} for the case that the base is a field, and \cite[\S2.4.1]{Ray94} for the general base case. We need the strict condition in order to have $\Hom_{K,\rig}(\GmK^{\rig},B)=0$ ($B$ is an abeloid variety) for having the functoriality, while $\Hom_S(\Gm,B)=0$ ($B$ is an abelian scheme over any base scheme $S$) always holds. The formal setting is similar to the algebraic case, and $\Hom_{\cS}(\Gmfml,\cB)=0$ ($\cB$ is a formal abelian scheme over $\cS$) always holds. We will repeat the arguments as in subsection \ref{subsec:rigid analytic 1-motives} and Proposition \ref{prop:equivalence strict rigid 1-motives} in the formal setting, in order to reach an equivalence of categories similar to Proposition \ref{prop:equivalence strict rigid 1-motives}. In principal it would be silly to make such a repetition. However, we do need the formal constructions later for a similar equivalence of categories for log formal 1-motives.

Given a formal 1-motive $[Y_{\cS}\xrightarrow{u} \cG]$ over $\cS$ as above, let 
\[v:Y_{\cS}\to \cB\]
be the composition $Y_{\cS}\xrightarrow{u}\cG\to \cB$. Let $X_{\cS}$ be the character group of $\cT$. By the isomorphism $\Ext^1_{\cS}(\cB,\cT)\cong\Hom_{\cS}(X_{\cS},\cB^\vee)$ (see Proposition \ref{prop:key prop. for biextensions in both rigid and formal settings} (2)), $\cG$ corresponds to a homomorphism 
\[v^{\vee}:X_{\cS}\to \cB^{\vee}.\]

Let $\mathcal{P}$ be the Poincar\'e biextension of $(\cB,\cB^\vee)$ by $\Gmfml$. Then $\cG$ as a class in $\Ext^1_{\cS}(\cB,\cT)$ corresponds to $(1_A,v^{\vee})^*\mathcal{P}$ in the commutative diagram 
\begin{equation}\label{eq:key biext diagram for symmetric description of formal 1-mot}
    \xymatrix{
&\Biext^1_{\cS}(\cB,\cB^\vee;\Gmfml)\ar[r]^-\cong\ar[d]^{(1_{\cB},v^{\vee})^*} &\Hom_{\cS}(\cB,\cB)  \\
\Ext^1_{\cS}(\cB,\cT)\ar[r]^-\cong\ar[d]^{v^*} &\Biext^1_{\cS}(\cB,X_{\cS};\Gmfml)\ar[d]^{(v,1_{X_{\cS}})^*}  \\
\Ext^1_{\cS}(Y_{\cS},\cT)\ar[r]^-\cong &\Biext^1_{\cS}(Y_{\cS},X_{\cS};\Gmfml).
}
\end{equation}
The pullback of $\cG\in\Ext^1_{\cS}(\cB,\cT)$ along $v$ has a section $u$, so $v^*\cG=0$. Thus 
\[(v,v^\vee)^*\mathcal{P}=(v,1_{X_{\cS}})^*(1_{\cB},v^{\vee})^*\mathcal{P}=0.\] 
The section $u$ of the extension $v^*\cG$ corresponds to a section 
\[s:Y_{\cS}\times X_{\cS}\to (v,v^\vee)^*\mathcal{P}\] 
of $(v,v^\vee)^*\mathcal{P}$. Such a section corresponds to a morphism $Y_{\cS}\times X_{\cS}\to \mathcal{P}$ which we still denote by $s$ by abuse of notation. So we get the following diagram 
\begin{equation}\label{symmetric description of formal 1-mot}
\xymatrix{
    &\mathcal{P}\ar[d] \\
    Y_{\cS}\times X_{\cS}\ar[r]_{(v,v^\vee)}\ar[ru]^s &\cB\times_{\cS} \cB^\vee.}
\end{equation}

The procedure for obtaining the above diagram from $Y_{\cS}\xrightarrow{u}\cG$ is reversible. Indeed given a diagram \eqref{symmetric description of formal 1-mot}, we have a biextension $(1_{\cB},v^{\vee})^*\mathcal{P}$ (resp. $(v,v^{\vee})^*\mathcal{P}$) of $(\cB,X_{\cS})$ (resp. $(Y_{\cS},X_{\cS})$) by $\Gmfml$, which corresponds to an extension $\cG$ (resp. $v^*\cG$) of $\cB$ (resp. $Y_{\cS}$) by $\cT$ as in the following pullback diagram
\[\xymatrix{
0\ar[r] &\cT\ar[r]\ar@{=}[d] &v^*\cG\ar[r]\ar[d] &Y_{\cS}\ar[r]\ar[d]^v\ar@/_1pc/[l]_{\tilde{s}} &0 \\
0\ar[r] &\cT\ar[r] &\cG\ar[r] &\cB\ar[r] &0.
}\]
As said before, the sections of the extension $v^*\cG$ correspond to the sections of the biextension $(v,v^{\vee})^*\mathcal{P}$, and we denote the section of $v^*\cG$ corresponding to $s$ by $\tilde{s}$. Then the composition 
\[Y_{\cS}\xrightarrow{\tilde{s}}v^*\cG\to \cG\] 
gives rise to a formal 1-motive over $\cS$. Such a procedure is exactly inverse to the above procedure which associates a diagram \eqref{symmetric description of formal 1-mot} to a formal 1-motive. So a formal 1-motive is the same as such a diagram \eqref{symmetric description of formal 1-mot}. Switching the position of the factors of the two couples $(Y_{\cS},X_{\cS})$ and $(\cB,\cB^\vee)$, we get another formal 1-motive $[X_{\cS}\xrightarrow{u^\vee}\cG^\vee]$, which we call the \emph{dual formal 1-motive} of $[Y_{\cS}\xrightarrow{u}\cG]$.  

\begin{definition}\label{def:formal LPPoin}
    We denote a diagram of the form \eqref{symmetric description of formal 1-mot} as a tuple $(Y_{\cS},X_{\cS},v,v^{\vee},\cB,s)$, and define morphisms between such tuples in the same way as in the rigid case, see Definition \ref{def:rigid LPPoin}, and denote the resulting category of such tuples as $\mathop{\mathbf{LPPoin}}_{\cS}$.
\end{definition}

\begin{proposition}\label{prop:equivalence for formal 1-motives}
    The association of $(Y_{\cS},X_{\cS},v,v^{\vee},\cB,s)$ to a formal 1-motive $\mathcal{M}=[Y_{\cS}\xrightarrow{u}\cG]$ gives rise to an equivalence 
    \[\mathscr{M}_{1,\cS}\xrightarrow{\simeq} {\mathop{\mathbf{LPPoin}}}_{\cS}\]
    of categories.
\end{proposition}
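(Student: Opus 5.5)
We follow the proof of Proposition \ref{prop:equivalence strict rigid 1-motives}, using the formal analogues established above. The discussion preceding the statement already gives a bijection on objects. Given $\mathcal{M}$, we produce the tuple $(Y_{\cS},X_{\cS},v,v^{\vee},\cB,s)$ from diagram \eqref{eq:key biext diagram for symmetric description of formal 1-mot} and the isomorphisms of Proposition \ref{prop:key prop. for biextensions in both rigid and formal settings} (2). Conversely, a tuple determines $\cG$ via $(1_{\cB},v^\vee)^*\mathcal{P}$ and determines $u$ via the section $\tilde s$ of $v^*\cG$, and the two procedures are mutually inverse. What remains is to define the functor on morphisms and to show that it is fully faithful.

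\emph{Functoriality.} Let $f=(f_{-1},f_0)\colon \mathcal{M}_1\to\mathcal{M}_2$ be a morphism. The first step is to show that the composition $\cT_1\to\cG_1\xrightarrow{f_0}\cG_2\to\cB_2$ vanishes. We may check this \'etale locally on $\cS$, where $\cT_1\cong\Gmfml^r$, so it suffices to know $\Hom_{\cS}(\Gmfml,\cB_2)=0$. This holds by \cite[Prop. 8.5]{BL84}; alternatively, reduce modulo $\cI^{n+1}$ and use the classical fact that homomorphisms from a torus to an abelian scheme over the scheme $S_n$ vanish. Consequently $f_0$ induces unique maps $f_{\mathrm t}\colon\cT_1\to\cT_2$ and $f_{\mathrm{ab}}\colon\cB_1\to\cB_2$. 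The map $f_{\mathrm t}$ gives $f_{-1}^\vee\colon X_{2,\cS}\to X_{1,\cS}$ on character groups.

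We then verify conditions (i)--(iii) of Definition \ref{def:rigid LPPoin}.
\begin{enumerate}
\item Condition (i) follows from $f_0\circ u_1=u_2\circ f_{-1}$.
\item Condition (ii) follows from naturality of $\Ext^1_{\cS}(\cB,\cT)\cong\Hom_{\cS}(X_{\cS},\cB^\vee)$ with respect to $f_{\mathrm{ab}}$ and $f_{\mathrm t}$. The key point is that both $f_{\mathrm{ab}}^*\cG_2$ and $f_{\mathrm t,*}\cG_1$ are isomorphic, via $f_0$, to the same extension of $\cB_1$ by $\cT_2$.
\item Condition (iii) follows from the compatibility of $\tilde s_1,\tilde s_2$ with $f_0$, transported through the isomorphism $\Ext^1\cong\Biext^1$. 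Here we use that $(f_{\mathrm{ab}},1)^*\mathcal{P}_2\cong(1,f_{\mathrm{ab}}^\vee)^*\mathcal{P}_1$ canonically, as biextensions of $\cB_1\times\cB_2^\vee$. The argument is identical to the algebraic one in \cite[end of \S2.5]{Zha21}.
\end{enumerate}
Compatibility with composition is clear from the uniqueness of $f_{\mathrm t}$ and $f_{\mathrm{ab}}$.

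\emph{Full faithfulness.} For faithfulness, note that $f_0$ is determined by $(f_{\mathrm t},f_{\mathrm{ab}})$ up to a homomorphism $\cB_1\to\cT_2$. Such a homomorphism vanishes because $\cHom_{\cS}(\cB_1,\Gmfml)=0$; this reduces over each $S_n$ to the vanishing of homomorphisms from an abelian scheme to $\Gm$. Also $f_{-1}$ is part of the data. For fullness, start from a morphism of tuples $(f_{-1},f_{-1}^\vee,f_{\mathrm{ab}})$. Condition (ii) gives a morphism of extensions $f_0\colon\cG_1\to\cG_2$ compatible with $f_{\mathrm t}$ and $f_{\mathrm{ab}}$. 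It is unique, since differences between such maps lie in $\Hom(\cB_1,\cT_2)=0$. Finally, condition (iii) translates back, through the torsor identifications of section sets, into $f_0\circ u_1=u_2\circ f_{-1}$.

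The main obstacle is the bookkeeping in condition (iii) and its converse, namely matching the extension and biextension pullbacks along both $f_{\mathrm{ab}}$ and $f_{\mathrm{ab}}^\vee$. This is purely formal and can be carried out sheaf-theoretically exactly as in the algebraic case, because all the vanishing results it needs hold over a general $\cS$.
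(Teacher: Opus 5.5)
Your proposal is correct and follows the paper's argument. The paper reduces to the proof of Proposition~\ref{prop:equivalence strict rigid 1-motives}: it uses $\Hom_{\cS}(\Gmfml,\cB)=0$ to split $f_0$ into $f_{\mathrm t}$ and $f_{\mathrm{ab}}$, checks conditions (i)--(iii) as in the algebraic case, and treats full faithfulness as formal; you simply spell out the faithfulness and fullness steps (via $\cHom_{\cS}(\cB_1,\Gmfml)=0$ and the identification $\Ext^1_{\cS}(\cB,\cT)\cong\Hom_{\cS}(X_{\cS},\cB^\vee)$ used in condition (ii)) that the paper leaves as ``the same as in the algebraic case''.
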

\begin{proof}
    We have $\Hom_{\cS}(\Gmfml,\mathcal{A})=0$ for any formal abelian scheme $\mathcal{A}$ over $\cS$. Then the proof is similar to that of Proposition \ref{prop:equivalence strict rigid 1-motives}. 
\end{proof}

For $\mathcal{M}=[Y_{\cS}\xrightarrow{u}\cG] \in \mathscr{M}_{1,\cS}$, an integer $n$, and a prime number $\ell$, we can define functorially $T_{\Z/n\Z}(\mathcal{M})$, $\mathcal{M}[\ell^{\infty}]$, $T_{\ell}(\mathcal{M})$ in the same way as for rigid 1-motives. Then $T_{\Z/n\Z}(\mathcal{M})$ is finite and locally free over $\cS$, and $\mathcal{M}[\ell^{\infty}]$ is an $\ell$-divisible group in the sense of \cite[\S2.4.2 a)]{deJ95}. In particular, we have a functor $\mathscr{M}_{1,\cS}\ra \BT_{\cS}$ for a fixed prime number $\ell$.

\begin{proposition}\label{prop:algebraization of l-div gp of formal 1-mot}
    If $\cS$ is the formal completion of a scheme $S$ along a closed subscheme $S_0\hookrightarrow S$, then $T_{\Z/n\Z}(\mathcal{M})$ algebraizes to a finite locally free group scheme over $S$ by \cite[Prop. 5.4.4]{EGA3-1} and $\mathcal{M}[\ell^{\infty}]$ algebraizes to an $\ell$-divisible group over $S$.
\end{proposition}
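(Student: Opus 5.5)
The plan is to reduce everything to the finite levels $T_{\Z/n\Z}(\mathcal{M})$ and then algebraize each level by Grothendieck's existence theorem, keeping track of group structures and transition maps. Write $\cS=\widehat{S}$, the completion along $S_0$, and $S_m$ for the $m$-th infinitesimal neighbourhood of $S_0$ in $S$. Then a finite locally free formal group $\mathcal{F}$ over $\cS$ amounts to a compatible system of finite locally free group schemes $\mathcal{F}\times_{\cS}S_m$. The underlying coherent algebras form an adic system of finite locally free $\mathcal{O}_{S_m}$-modules. For the existence theorem \cite[Prop. 5.4.4]{EGA3-1} to apply I will assume, as in the rest of the paper, that $S$ is affine (or proper over an affine) noetherian and $S_0$ is defined by an ideal $I$ with $A$ being $I$-adically complete. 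The main case is $S=\Spec\OK$, where this is automatic.

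\textbf{Step 1.} I will show that $T_{\Z/n\Z}(\mathcal{M})$ is finite locally free over $\cS$. Using the exact sequences $0\to\cT[n]\to\cG[n]\to\cB[n]\to 0$ and $0\to\cG[n]\to T_{\Z/n\Z}(\mathcal{M})\to Y_{\cS}/nY_{\cS}\to 0$, I argue as in the proof of Proposition \ref{prop:n-map exact seq} (1): $\cT[n]$ and $\cB[n]$ are finite locally free, and torsors under finite locally free groups over finite locally free bases are again finite locally free by fppf descent. The exactness of these sequences uses that multiplication by $n$ is fppf surjective on $\cT$ and $\cB$; I will check this levelwise over each $S_m$, where it is classical.

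\textbf{Step 2.} The existence theorem gives a finite $\mathcal{O}_S$-algebra $\mathcal{B}_n$, locally free by flatness of all truncations together with the local criterion, whose completion recovers $T_{\Z/n\Z}(\mathcal{M})$. By full faithfulness of completion for finite $S$-schemes (\cite[Lem. 3.1.7]{GM71}, or fully faithfulness in 5.4.1 of \cite{EGA3-1}), the multiplication, unit and inverse maps, which are morphisms of finite formal schemes, algebraize uniquely. The group axioms then hold because they hold after completion. The same full faithfulness algebraizes the transition maps $T_{\Z/\ell^{i}\Z}(\mathcal{M})\to T_{\Z/\ell^{i+1}\Z}(\mathcal{M})$ and the maps given by multiplication by $\ell$.

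\textbf{Step 3.} Finally I will verify that the algebraized system $G_i$ is an $\ell$-divisible group over $S$. I need the sequences $0\to G_i\to G_{i+1}\xrightarrow{\ell^i}G_{i+1}$ to be exact and $G_i$ to have rank $\ell^{ih}$. Exactness of a sequence of finite locally free group schemes over a noetherian adic base can be tested after completion: kernels commute with completion, and an isomorphism of finite $S$-schemes is detected by its completion. Alternatively I can invoke de Jong's equivalence \cite[2.4.4]{deJ95} directly once I know $\mathcal{M}[\ell^\infty]$ is an $\ell$-divisible group over $\cS$.

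The main obstacle is this last point together with the hypotheses needed for the existence theorem. Away from the case of an affine complete base, one needs $S$ proper over a complete noetherian affine. I would either restrict to that situation or invoke \cite[2.4.4]{deJ95} as used in Theorem \ref{thm:first description of l-adic realization of rigid 1-motive} (1).
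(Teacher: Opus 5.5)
Your proposal is correct and follows the same route as the paper, whose proof consists only of the two citations: \cite[Prop. 5.4.4]{EGA3-1} for the finite levels $T_{\Z/n\Z}(\mathcal{M})$ and \cite[\S2.4.4]{deJ95} for $\mathcal{M}[\ell^{\infty}]$. Your Steps 1--3 spell out what these references package, and you are right that the statement tacitly needs a noetherian adic base such as $S=\Spec A$ with $A$ $I$-adically complete (e.g.\ $S=\Spec\OK$), a hypothesis the paper leaves implicit.
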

\begin{proof}
    The results follow from \cite[Prop. 5.4.4]{EGA3-1} and \cite[\S2.4.4]{deJ95}.
\end{proof}

From now on, let $\cS=\Spf\cO_K$ throughout this subsection.

\begin{proposition}\label{prop:formal 1-motive ass. to rigid 1-mot with good red}
    Let $M=[Y\xrightarrow{u}G]\in \mathscr{M}_{1,K}$ with good reduction, see Definition \ref{def:good-semi-good} (1), and let $Y_{\cS}$, $X$, $X_{\cS}$, $T$, $\cT$, and $\cG$ be as in there.
    Then we have the following.
    \begin{enumerate}
        \item There exists a formal 1-motive $\mathcal{M}=[Y_{\cS}\xrightarrow{u_{\cS}}\mathcal{G}]$ over $\cS$ such that $u$ is the composition $Y\xrightarrow{u_{\cS}^{\rig}}\cG^{\rig}\hookrightarrow G$. 

        \item For $\mathcal{M}^\rig$ as in Definition \ref{def:formal 1-motive}, we can define $T_{\Z/n\Z}(\mathcal{M}^{\rig})$, $\mathcal{M}^{\rig}[\ell^{\infty}]$, and $T_{\ell}(\mathcal{M}^{\rig})$ in the same way as for $M$. The natural map $\mathcal{M}^{\rig}\to M$ induces  isomorphisms 
        \[T_{\Z/n\Z}(\mathcal{M}^{\rig})\xrightarrow{\cong} T_{\Z/n\Z}(M),\quad \mathcal{M}^{\rig}[\ell^{\infty}]\xrightarrow{\cong} M[\ell^{\infty}],\quad \text{and }\quad T_{\ell}(\mathcal{M}^{\rig})\xrightarrow{\cong} T_{\ell}(M).\]
    \end{enumerate}
\end{proposition}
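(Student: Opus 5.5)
Part (1) is essentially built into Definition \ref{def:good-semi-good} (1). Conditions (i)--(iii) provide $Y_{\cS}$, $X_{\cS}$, the formal abelian scheme $\mathcal{A}$, and the extension $0\to\cT\to\cG\to\mathcal{A}\to 0$. They also say that $G$ is the pushout of $\cG^{\rig}$ along $\cT^{\rig}\hookrightarrow T$, so there is a canonical injection $\cG^{\rig}\hookrightarrow G$. Condition (iv) supplies the extension $u_{\cS}:Y_{\cS}\to\cG$. The only thing to check is that ``$u$ extends to $u_{\cS}$'' means exactly that $u$ equals the composition $Y=Y_{\cS}^{\rig}\xrightarrow{u_{\cS}^{\rig}}\cG^{\rig}\hookrightarrow G$. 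We take this as the meaning of (iv), and then $\mathcal{M}=[Y_{\cS}\xrightarrow{u_{\cS}}\cG]$ is a formal 1-motive in the sense of Definition \ref{def:formal 1-motive}. The situation after Galois descent is handled the same way: $Y_{\cS}$ is étale locally constant, and the construction commutes with the finite unramified base change.

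For (2), I would compare the two short exact sequences
\[0\to \cG^{\rig}[n]\to T_{\Z/n\Z}(\mathcal{M}^{\rig})\to Y\otimes\Z/n\Z\to 0,\qquad 0\to G[n]\to T_{\Z/n\Z}(M)\to Y\otimes\Z/n\Z\to 0.\]
These are obtained exactly as for rigid 1-motives. The first uses Proposition \ref{prop:n-map exact seq} (1): $\cG^{\rig}\xrightarrow{n}\cG^{\rig}$ is an epimorphism, so $H^0(\cG^{\rig}\otimes^L\Z/n\Z)=0$. The map $\mathcal{M}^{\rig}\to M$ induces a morphism between these sequences, and it is the identity on the right-hand terms. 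By the five lemma, it therefore suffices to show that $\cG^{\rig}[n]\to G[n]$ is an isomorphism. Once this holds for all $n$, passing to the colimit over $\ell^i$ gives the claim for $M[\ell^\infty]$, and passing to the limit gives the claim for $T_\ell$.

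To prove $\cG^{\rig}[n]\cong G[n]$, I would use the pushout description. It gives a commutative diagram with exact rows, $0\to\cT^{\rig}\to\cG^{\rig}\to\mathcal{A}^{\rig}\to0$ mapping to $0\to T\to G\to A\to0$, in which the right vertical map is the identity. The snake lemma then identifies $G/\cG^{\rig}\cong T/\cT^{\rig}$. Applying $[n]$ and the snake lemma again (both $[n]$ on $\mathcal{A}^{\rig}$ and on $A$ are the same map), the comparison of $n$-torsion reduces to showing $\cT^{\rig}[n]\cong T[n]$. Since $\cT^{\rig}\hookrightarrow T$ is injective, this is the same as showing that $T/\cT^{\rig}=\cHom(X,\GmK^{\rig}/\Gmfml^{\rig})$ has no $n$-torsion and that $n$ is surjective on $\cT^{\rig}$. Étale locally this becomes the statement for $\Gmfml^{\rig}\subset\GmK^{\rig}$. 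The roots of unity $\mu_n$ lie in the unit disc $\Gmfml^{\rig}$, which handles $n$-torsion. Torsion-freeness of $\GmK^{\rig}/\Gmfml^{\rig}$ holds because the quotient is the valuation group $\Q$, noted earlier in the paper; this requires care with the flat topology, but it can be checked on the level of $|\cdot|$, where $|x^n|=1$ forces $|x|=1$. Surjectivity of $n$ on $\cT^{\rig}$ comes from Proposition \ref{prop:n-map exact seq} (1).

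I expect the main obstacle to be this last sheaf-theoretic point: working on $(\mathrm{Rig}/K)_{\fl}$, one must verify that an fppf-local section $x$ of $\GmK^{\rig}$ with $x^n\in\Gmfml^{\rig}$ already lies in $\Gmfml^{\rig}$. My plan is to test this pointwise using absolute values on rank-one points, since $\Gmfml^{\rig}$ is the admissible open subspace $\{|x|=1\}$.
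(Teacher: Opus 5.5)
Your proposal is correct and follows essentially the paper's argument: part (1) is a restatement of Definition \ref{def:good-semi-good} (1), and for (2) the paper uses the short exact sequence of complexes $0\to\mathcal{M}^{\rig}\to M\to\cHom_{K,\rig}(X,\GmK^{\rig}/\Gmfml^{\rig})\to 0$ (it writes the cokernel as $\cHom_{K,\rig}(X,\Q)$) and the fact that this torsion-free, divisible cokernel is killed by $-\otimes^L_{\Z}\Z/n\Z$, which is exactly what your five-lemma reduction to the torsion-freeness of $T/\cT^{\rig}$ unwinds. Your pointwise check that $|x|^n=1$ forces $|x|=1$ is a more careful justification of that key input than the paper's identification of $\GmK^{\rig}/\Gmfml^{\rig}$ with $\Q$.
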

\begin{proof}
    (1) is just a reformulation of Definition \ref{def:good-semi-good} (1) in terms of formal 1-motives.

    (2) We extend the canonical valuation $v_K$ of $K$ to $\overline{K}$. Let $\GmK^{\rig}\to\Q$ be the map induced by taking valuation $v_K$ on $\overline{K}$, then we get an exact sequence $0\to \Gmfml^{\rig}\to \GmK^{\rig}\to \Q\to 0$ of sheaves on $(\mathrm{Rig}/K)_{\fl}$. Then we have a short exact sequence
    \[0\to \mathcal{M}^{\rig}\to M\to \cHom_{K,\rig}(X,\Q)\to 0\]
    of complexes of sheaves, and the result follows from this sequence.
\end{proof}

Given $\mathcal{M}=[Y_{\cS}\xrightarrow{{u_{\cS}}}\mathcal{G}]\in \sM_{1,\cS}$, let $G$ be the pushout of $\cG^{\rig}$ along $\cT^{\rig}\hookrightarrow T$ and let $u$ be the composition $Y\xrightarrow{u_{\cS}^{\rig}}\cG^{\rig}\hookrightarrow G$. Then we get  
\[M:=[Y\xrightarrow{u}G]\in\mathscr{M}_{1,K}\]
which is clearly of good reduction. By the construction of $G$ out of $\cG^{\rig}$, the association $\cG\mapsto G$ is functorial, and thus $\mathcal{M}\mapsto M$ is functorial. Hence we get a functor 
\begin{equation}\label{eq:rigid generic fiber functor for formal 1-motives}
    \mathrm{GF}:\sM_{1,\cS}\to\sM_{1,K},
\end{equation}
with image landing in the full subcategory 
\[\sM_{1,K}^{\mathrm{good}}\]
of $\sM_{1,K}$ consisting of objects which are of good reduction. Clearly $\sM_{1,K}^{\mathrm{good}}\subset \sM_{1,K}^{\mathrm{str}}$. We call the functor $\mathrm{GF}$ the \emph{rigid generic fiber functor} for formal 1-motives over $\cS=\Spf\mathcal{O}_K$.

\begin{remark}
    Note that the 2-term complex $\mathcal{M}^{\rig}=[Y\xrightarrow{u_{\cS}^{\rig}}\cG^{\rig}]$ deserves the name the generic fiber of $\mathcal{M}$ as a 2-term complex. But $\mathcal{M}^{\rig}$ is not a rigid 1-motive in the sense of Definition \ref{def:semi-abeloid and rigid 1-motive}. 
\end{remark}

\begin{proposition}\label{prop:formal 1-motives and rigid 1-motives with good reduction}
    The functor 
    \[\mathrm{GF}:\sM_{1,\cS}\xrightarrow{\simeq}\sM_{1,K}^{\mathrm{good}}\]
    is an equivalence of categories.
\end{proposition}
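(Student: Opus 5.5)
We prove that $\mathrm{GF}$ is essentially surjective and fully faithful. Essential surjectivity is Proposition \ref{prop:formal 1-motive ass. to rigid 1-mot with good red} (1). Given $M\in\sM_{1,K}^{\mathrm{good}}$, Definition \ref{def:good-semi-good} (1) provides $Y_{\cS}$, $X_{\cS}$, $\mathcal{A}$, $\cG$ and $u_{\cS}$. Here $G$ is the pushout of $\cG^{\rig}$ along $\cT^{\rig}\hookrightarrow T$, and $u$ is the composite $Y\to\cG^{\rig}\hookrightarrow G$. Hence $\mathrm{GF}([Y_{\cS}\xrightarrow{u_{\cS}}\cG])\cong M$.

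For full faithfulness we pass to the displayed descriptions. Both categories are equivalent to tuple categories: $\sM_{1,\cS}\simeq\mathbf{LPPoin}_{\cS}$ by Proposition \ref{prop:equivalence for formal 1-motives}, and $\sM_{1,K}^{\mathrm{str}}\simeq\mathbf{LPPoin}_K^{\mathrm{str}}$ by Proposition \ref{prop:equivalence strict rigid 1-motives}. We first check that $\mathrm{GF}$ is compatible with these equivalences. For the abeloid part, $\mathcal{B}\mapsto\mathcal{B}^{\rig}$. For the lattices, $Y_{\cS}\mapsto Y$ and $X_{\cS}\mapsto X$. For $v$ and $v^\vee$, we pass to generic fibres, using the diagram in the proof of the semi-stable Raynaud proposition comparing $\Ext^1_{\cS}(\mathcal{B},\cT)\cong\Hom_{\cS}(X_{\cS},\mathcal{B}^\vee)$ with its rigid counterpart. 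For the sections, $s\mapsto\iota\circ s^{\rig}$, using Proposition \ref{prop:key prop. for biextensions in both rigid and formal settings} (3) to identify $P$ with the pushout of $\mathcal{P}^{\rig}$.

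It therefore suffices to show that the induced functor on tuples is fully faithful, which we check datum by datum.

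\begin{enumerate}
\item Morphisms $Y_{1,\cS}\to Y_{2,\cS}$ and $X_{2,\cS}\to X_{1,\cS}$ of étale locally constant sheaves over $\Spf\cO_K$ correspond bijectively to Galois-equivariant maps that are unramified. This is automatic, since the sheaves themselves are unramified; so these morphisms agree with $\Hom_K(Y_1,Y_2)$ and $\Hom_K(X_2,X_1)$.
\item Homomorphisms $\mathcal{B}_1\to\mathcal{B}_2$ agree with homomorphisms $\mathcal{B}_1^{\rig}\to\mathcal{B}_2^{\rig}$, because $\mathcal{B}_2$ is the formal Néron model of its generic fibre (Lemma \ref{lem:a formal abelian scheme is the formal Neron model of its generic fiber} (1)). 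The same applies to the duals, and dualization is compatible with generic fibres by Theorem \ref{thm:Weil-Barsotti formula} (3).
\item Conditions (i) and (ii) of Definition \ref{def:rigid LPPoin} can be checked on generic fibres. Indeed, maps from $Y_{\cS}$ or $X_{\cS}$ into formal abelian schemes are determined by their generic fibres (Lemma \ref{lem:a formal abelian scheme is the formal Neron model of its generic fiber} (2) together with étale descent).
\item Condition (iii) is an equality of two sections of the $\Gmfml$-biextension $(v_{\cS},v_{\cS}^\vee)^*\mathcal{P}$ of $(Y_{1,\cS},X_{2,\cS})$. Their difference lies in $\Biext^0_{\cS}(Y_{1,\cS},X_{2,\cS};\Gmfml)$. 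This group injects into $\Biext^0_{K,\rig}(Y_1,X_2;\GmK^{\rig})$ by Lemma \ref{lem:key lemma for constructing Raynaud's monodromy} (2), so the equality also descends from the generic fibre.
\end{enumerate}

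Together these give full faithfulness. One can also argue more directly: given a morphism $(f_{-1},f_0):M_1\to M_2$ of rigid 1-motives with good reduction, first obtain $f_{\mathrm t}$ and $f_{\mathrm{ab}}$ as in the proof of Proposition \ref{prop:equivalence strict rigid 1-motives}. Then lift $f_0$ to $\cG_1\to\cG_2$. For this, note that $\cG_i$ is the extension defined by $v_{i,\cS}^\vee$, and morphisms of such extensions over given $f_{\mathrm t}$ and $f_{\mathrm{ab}}$ form a torsor under $\Hom(\mathcal{B}_1,\cT_2)=0$.

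The main obstacle is this last lifting together with the compatibility of the two tuple descriptions. One must check that the rigid morphism $f_0\colon G_1\to G_2$ restricts to $\cG_1^{\rig}\to\cG_2^{\rig}$ and comes from a formal morphism $\cG_1\to\cG_2$. I would handle it in the biextension language as above: the lift exists by the isomorphisms $\Hom_{\cS}(X_{\cS},\mathcal{B}^\vee)\cong\Hom_{K,\rig}(X,B^\vee)$, and it is unique by the injectivity statements of Lemma \ref{lem:key lemma for constructing Raynaud's monodromy}. The remaining identifications are routine.
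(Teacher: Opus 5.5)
Your proposal is correct, but its main route differs from the paper's.

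The paper proves full faithfulness directly on 1-motives, without passing to the tuple categories. It uses $\Hom_{K,\rig}(T,(\cB')^{\rig})=0$ to split $f_0$ into $f_{\mathrm t}$ and $f_{\mathrm{ab}}$, and extends both over $\cS$. It then notes that $f_0$ carries $\cG^{\rig}$ into $(\cG')^{\rig}$, and extends $f_0|_{\cG^{\rig}}$ to a map $\cG\to\cG'$ by the formal N\'eron mapping property of $\cG'$. Compatibility with $u_{\cS},u'_{\cS}$ comes from the injectivity of $\Hom_{\cS}(Y_{\cS},\cG')\to\Hom_{K,\rig}(Y,G')$, and faithfulness is again the N\'eron property.

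You instead transport the question along $\sM_{1,\cS}\simeq\mathbf{LPPoin}_{\cS}$ and $\sM^{\mathrm{str}}_{1,K}\simeq\mathbf{LPPoin}^{\mathrm{str}}_K$ and check the data one at a time. The only non-formal point is condition (iii), which you settle with the $\Biext^0$-injectivity of Lemma \ref{lem:key lemma for constructing Raynaud's monodromy} (2).

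Your route has two advantages. It needs the N\'eron property only for formal abelian schemes. It also never has to justify $f_0(\cG^{\rig})\subset(\cG')^{\rig}$; in the paper that step hides a short argument, namely the vanishing of the induced map $\cB^{\rig}\to G'/(\cG')^{\rig}\cong T'/(\cT')^{\rig}$.

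The price is that you must know $\mathrm{GF}$ intertwines the two tuple equivalences. In particular, the section attached to $\mathrm{GF}(\mathcal{M})$ must be $\iota\circ s_{\cS}^{\rig}$. This is the compatibility of the section bijections with generic fibre and pushout, as in the diagram of section sets in the proof of Proposition \ref{prop:two constructions of Raynaud's monodromy agree}. It is more bookkeeping than ``routine'' suggests.

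In your sketched direct alternative, two points need correcting.
\begin{itemize}
\item The formal lift built from extension classes is unique because $\Hom_{\cS}(\cB_1,\cT_2)=0$. Its agreement with $f_0$, however, comes from $\Hom_{K,\rig}(\cB_1^{\rig},T_2)=0$ (Proposition \ref{prop:Hom-abeloid-Gm-vanishes}), not from Lemma \ref{lem:key lemma for constructing Raynaud's monodromy}.
\item There you also still need $\Hom_{\cS}(Y_{1,\cS},\cG_2)\hookrightarrow\Hom_{K,\rig}(Y_1,G_2)$ to obtain $u_{2,\cS}\circ f_{-1,\cS}=f_{0,\cS}\circ u_{1,\cS}$.
\end{itemize}
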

\begin{proof}
    The functor is essentially surjective by Proposition \ref{prop:formal 1-motive ass. to rigid 1-mot with good red}. 

    The functor is obviously faithful, as $\cG^{\rig}$ is a rigid  open subgroup of $G$ and $\cG$ is the formal N\'eron model of $\cG^{\rig}$ by \cite[Criterion 1.4]{BS95}.

    We are left with proving that the functor is full. Let $\mathcal{M},\mathcal{M}'\in \sM_{1,\cS}$ with $M:=\mathrm{GF}(\mathcal{M})$ and $M':=\mathrm{GF}(\mathcal{M'})$. Let $f=(f_{-1},f_0):M\to M'$ be a morphism. Apparently $f_{-1}$ extends to a homomorphism $(f_{\cS})_{-1}:Y_{\cS}\to Y'_{\cS}$ over $\cS$. We are left with showing that there exists a homomorphism $(f_{\cS})_0:\cG\to\cG'$ such that $u'_{\cS}\circ (f_{\cS})_{-1}=(f_{\cS})_0\circ u_{\cS}$.
    Since $\Hom_{K,\rig}(T,\mathcal{B}^{'\rig})=0$, $f_0$ induces a homomorphism $f_{\mathrm{ab}}:\cB^{\rig}\to\cB^{'\rig}$ which arises from a homomorphism $f_{\cS,\mathrm{ab}}:\cB\to\cB'$, as well as a homomorphism $f_{\mathrm{t}}:T\to T'$ which automatically arises from a homomorphism $f_{\cS,\mathrm{t}}:\cT\to\cT'$. Since $f_{\mathrm{t}}$ maps $\cT^{\rig}$ to $\cT^{'\rig}$, $f_0$ maps $\cG^{\rig}$ into $\cG^{'\rig}$, and thus $f_0|_{\cG^{\rig}}$ arises from a map $(f_{\cS})_0:\cG\to \cG'$ by the universal property of formal N\'eron model. And the equality $u'_{\cS}\circ (f_{\cS})_{-1}=(f_{\cS})_0\circ u_{\cS}$ follows from $u'\circ f_{-1}=f_0\circ u$ and $\Hom_{\cS}(Y_{\cS},\cG')\hookrightarrow\Hom_{K,\rig}(Y,G')$.
\end{proof}

Given $M\in \mathscr{M}_{1,K}^{\mathrm{good}}$, there exists a unique $\mathcal{M}\in \sM_{1,\cS}$ up to isomorphism such that $\mathrm{GF}(\mathcal{M})\cong M$ by Proposition \ref{prop:formal 1-motives and rigid 1-motives with good reduction}. We call $\mathcal{M}$ the \emph{formal 1-motive associated to $M$}. 

\begin{proposition}\label{prop:the p-adic repn of rigid 1-mot with good red is crystalline}
    Let $M=[Y\xrightarrow{u}G] \in \mathscr{M}_{1,K}^{\mathrm{good}}$ and $\ell\neq\mathrm{char}(K)$ a prime number. Let $k$ be the residue field of $K$. 
    \begin{enumerate}
        \item Assume that $\ell\neq \mathrm{char}(k)$, then the Galois $\Z_{\ell}$-representation $T_{\ell}(M)$ is unramified.

        \item Assume that $\ell=\mathrm{char}(k)=:p$, then the Galois $\Z_p$-representation $T_p(M)$ is crystalline with Hodge-Tate weights in $\{0,1\}$.
    \end{enumerate}
\end{proposition}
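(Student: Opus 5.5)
The plan is to pass to the formal 1-motive and then to an $\ell$-divisible group over $\Spec\OK$. Since $M\in\mathscr{M}_{1,K}^{\mathrm{good}}$, Proposition \ref{prop:formal 1-motive ass. to rigid 1-mot with good red} (1) gives a formal 1-motive $\mathcal{M}=[Y_{\cS}\xrightarrow{u_{\cS}}\cG]$ over $\cS=\Spf\OK$ with $\mathrm{GF}(\mathcal{M})\cong M$. Part (2) of the same proposition gives isomorphisms $T_{\Z/n\Z}(\mathcal{M}^{\rig})\cong T_{\Z/n\Z}(M)$ for all $n$, compatible with the transition maps. Hence $T_\ell(M)\cong T_\ell(\mathcal{M}^{\rig})$ as Galois modules, and it suffices to understand the finite flat formal group schemes $T_{\Z/\ell^i\Z}(\mathcal{M})$ over $\cS$. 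Their rigid generic fibers are $T_{\Z/\ell^i\Z}(\mathcal{M}^{\rig})$; this follows from \cite[Chap. II, Cor. 2.4.2]{FK18}, exactly as in the proof of Proposition \ref{prop:n-map exact seq} (1).

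By Proposition \ref{prop:algebraization of l-div gp of formal 1-mot}, with $S=\Spec\OK$ and $\cS$ its $\pi$-adic completion, each $T_{\Z/\ell^i\Z}(\mathcal{M})$ algebraizes to a finite flat group scheme $F_i$ over $\OK$. Likewise $\mathcal{M}[\ell^\infty]$ algebraizes to an $\ell$-divisible group $\mathcal{H}$ over $\OK$, using \cite[\S2.4.4]{deJ95}. One then identifies $T_\ell(M)=\varprojlim F_i(\overline K)=T_\ell(\mathcal{H}_K)$. To justify this, compare $\overline K$-points of the rigid generic fiber of the completion of a finite $\OK$-scheme with $\overline K$-points of its algebraic generic fiber. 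For finite flat $\OK$-algebras these agree, since $\Hom_{\OK}(B,\overline K)=\Hom_{\OK}(B,\OO_{\overline K})$ and $B$ is already $\pi$-adically complete. This identification is essentially the argument of Theorem \ref{thm:first description of l-adic realization of rigid 1-motive} (1), which I would simply cite.

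For (1), take $\ell\neq \mathrm{char}(k)$. Then each $F_i$ is a finite flat group scheme of $\ell$-power order over $\OK$ with $\ell$ invertible on $\OK$, hence finite étale. So $F_i(\overline K)$ is unramified, and therefore so is $T_\ell(M)$. For (2), take $\ell=p=\mathrm{char}(k)$ with $\mathrm{char}(K)=0$. Then $T_p(M)=T_p(\mathcal{H}_K)$ is the Tate module of the generic fiber of a $p$-divisible group over $\OK$. By Fontaine's theorem (or Faltings), such a representation is crystalline with Hodge--Tate weights in $\{0,1\}$ (up to the sign convention; Tate modules of $p$-divisible groups have weights $0$ and $1$).

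The main obstacle is the careful identification of $T_\ell(M)$ with the Tate module of the algebraized group $\mathcal{H}$. This requires the compatibility of taking $\ell^i$-torsion of the complex $\mathcal{M}$ with the rigid generic fiber, which needs exactness of $n$ on $\cG$ and on $\cG^{\rig}$ (Proposition \ref{prop:n-map exact seq} (1)). It also requires the quasi-isomorphism-type comparison of Proposition \ref{prop:formal 1-motive ass. to rigid 1-mot with good red} (2), which handles the difference between $\cG^{\rig}$ and $G$ through the uniquely divisible quotient $\cHom_{K,\rig}(X,\Q)$. Once these are in place, the rest reduces to standard facts.
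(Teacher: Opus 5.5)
Your proposal is correct and takes essentially the same route as the paper. You pass to the associated formal 1-motive, use Proposition \ref{prop:formal 1-motive ass. to rigid 1-mot with good red} (2) to identify $M[\ell^\infty]$ with $\mathcal{M}^{\rig}[\ell^\infty]$, algebraize $\mathcal{M}[\ell^\infty]$ to an $\ell$-divisible group over $\Spec\OK$ via Proposition \ref{prop:algebraization of l-div gp of formal 1-mot}, and conclude by étaleness for $\ell\neq p$ and by Fontaine's theorem for $\ell=p$. Your extra care with $\overline K$-points and with the compatibility of $T_{\Z/n\Z}$ with rigid generic fibers fills in details the paper leaves implicit. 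For that compatibility you need exactness of the rigid generic fiber on short exact sequences, not only on kernels, since $T_{\Z/n\Z}$ is a quotient of a kernel. The paper handles this in Lemma \ref{lem:taking rigid fibers preserves kernel and s.e.s.} and Proposition \ref{prop:formal completion, taking algebraic and rigid generic fibers, and rigid analytification}.
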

\begin{proof}
    Let $\mathcal{M}=[Y_{\cS}\to\mathcal{G}]$ be the formal 1-motive associated to $M$. By Proposition \ref{prop:formal 1-motive ass. to rigid 1-mot with good red} (2), we have $M[\ell^{\infty}]\cong \mathcal{M}^{\rig}[\ell^{\infty}]$. By Proposition \ref{prop:algebraization of l-div gp of formal 1-mot}, $\mathcal{M}[\ell^{\infty}]$ algebraizes to an $\ell$-divisible group over $\Spec\cO_K$. Then (1) is clear, and (2) follows from well-known results about $p$-divisible groups over $\Spec\cO_K$ by Fontaine, Kisin, Raynaud and Tate (see for example \cite[Thm. 2.2.1]{Liu13}).
\end{proof}

\begin{proposition}\label{prop:rigid 1-mot with crystalline p-adic repn has good red}
    Let $M=[Y\xrightarrow{u}G]\in \mathscr{M}_{1,K}$, $\ell\neq\mathrm{char}(K)$ a prime number. Then $M$ has good reduction, provided that
    \begin{enumerate}
        \item $T_{\ell}(M)$ is unramified if $\ell\neq \mathrm{char}(k)$, or

        \item $0<p:=\mathrm{char}(k)=\ell\neq\mathrm{char}(K)$ and the Galois $\Z_p$-representation $T_p(M)$ is crystalline.
    \end{enumerate}
\end{proposition}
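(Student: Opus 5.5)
Throughout, write $\rho:=T_\ell(M)$. The plan is to show in turn that each piece of data in Definition \ref{def:good-semi-good} (1) extends over $\cS$, reducing as far as possible to abeloid varieties and then to the monodromy pairing.

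\emph{Step 1: replace $M$ by its strictification and check the lattices.} Since $T_\ell(M')\cong\rho$ by Proposition \ref{prop:strictification doesn't change l-adic realizations and l-divisible groups}, I would first show that $M$ itself must already be strict. Before proving this, note what hypothesis (1) or (2) gives. By Theorem \ref{thm:first description of l-adic realization of rigid 1-motive}, $\rho$ is potentially semi-stable. Unramified and crystalline representations are semi-stable, so the inertia acts trivially on the graded pieces $T_\ell(T)$ and $Y\otimes\Z_\ell$, which are subquotients of $\rho$. For (2) we use that crystalline representations are closed under subquotients, so $Y\otimes\Z_p$ and $X^\vee\otimes\Z_p(1)$ are crystalline. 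Since these are Artin twists, they are unramified. Hence $Y$ and $X$ are unramified and extend to $Y_\cS,X_\cS$.

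\emph{Step 2: the core case of abeloid varieties.} Let $A$ be the abeloid quotient of $G$, with Raynaud rigid 1-motive $[L\to H]$, $H$ an extension of $A'$ by $T''$. I would show that $L=0$ and that $A=A'$ has good reduction. Pass to a finite extension $K'$ over which $A$ is semi-stable. By Corollary \ref{cor:geometric monodromy of abeloid gives rise to the monodromy on the repn}, the monodromy of $T_\ell(A)$ (or $N$ on $\bfD_{\st}(T_p(A))$) is induced by the pairing $\mu$ of $[L\to H]$. That pairing is nondegenerate when $L\neq 0$, by part (1) of that corollary. Now $T_\ell(A)$ is a quotient of $\rho$, so it is unramified (resp. crystalline), hence has trivial monodromy. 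Therefore $L=0$ and $A$ has potentially good reduction.

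To descend to $K$, I would argue in the $\ell$-adic case through formal N\'eron models: the N\'eron model of $A$ becomes the formal abelian scheme after base change to $K'$, as in Serre–Tate. The $p$-adic case I would reduce to the $\ell'$-adic case for some $\ell'\neq p$, by comparing the Weil–Deligne representations attached to $T_p$ and to $T_{\ell'}$ via Fontaine's theory. I expect this descent of good reduction for abeloids, without algebraicity, to be the main obstacle. The first thing to try is the argument of \cite{ST68}: use that $A[\ell^n](\overline K)$ is unramified to show that the smooth formal N\'eron model of $A$ has abelian special fibre.

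\emph{Step 3: extensions and the map $u$.} We now know that $M$ is strict and that $Y$, $X$ and $A$ have good reduction. The map $v^\vee$ extends by Lemma \ref{lem:a formal abelian scheme is the formal Neron model of its generic fiber}, giving $\cG$. So $M$ is semi-stable, and Theorem \ref{thm:good reduction amounts to trivial goem. monodromy} (2) reduces the problem to showing $\mu_0=0$. Corollary \ref{cor:descriptions of ell-adic and p-adic repn of rigid 1-motive} (3) identifies the inertia action, resp. the operator $\overline{N}_M$, with $t_\ell^{\mu}$, resp. with $\mu\otimes\id$. Because $\rho$ is unramified, resp. crystalline (so $N=0$), we get $\mu_0=0$, and hence $M$ has good reduction.
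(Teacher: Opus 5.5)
Your proposal follows the paper's proof essentially step by step. You first get unramifiedness of $X$ and $Y$ from the graded pieces of $T_\ell(M)$, then good reduction of the abeloid quotient $A$, and finally vanishing of $\mu_0$ via Corollary~\ref{cor:descriptions of ell-adic and p-adic repn of rigid 1-motive}~(3) and Theorem~\ref{thm:good reduction amounts to trivial goem. monodromy}~(2).

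For the step you flag as the main obstacle, the paper does what you suggest. For $\ell\neq p$ it applies the Serre--Tate count $A[\ell^i](\overline K)^{I}=\mathcal A_0[\ell^i](\overline k)$ directly to the formal N\'eron model over $\OK$, so neither potential good reduction nor base change to $K'$ is needed (and formal N\'eron models do not commute with ramified base change anyway). For $\ell=p$ it first gets potential good reduction from Corollary~\ref{cor:geometric monodromy of abeloid gives rise to the monodromy on the repn}, then uses Fontaine's argument \cite[Prop.~7.5.1]{Fon79} to reduce to the $\ell$-adic criterion.
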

\begin{proof}
    Recall that we have exact sequences $0\to T_{\ell}(G)\to T_{\ell}(M)\to Y\otimes_{\Z}\Z_{\ell}\to 0$ and $0\to T_{\ell}(T)\to T_{\ell}(G)\to T_{\ell}(A)\to 0$.
    
    Case (1): Since $T_{\ell}(M)$ is unramified, so are $T_\ell(T)=X^{\vee}\otimes_{\Z}\Z_\ell(1)$, $T_\ell(A)$, and $Y\otimes_{\Z}\Z_\ell$. Therefore $X$ and $Y$ extend to \'etale locally constant groups $X_{\cS}$ and $Y_{\cS}$ over $\cS$ respectively. We claim that $A$ has good reduction.  Then as explained in Definition \ref{def:good-semi-good} (1) iii), there exists an extention $0\to \cT\to \cG\to \mathcal{A}\to 0$ with $\cT=\cHom_{\cS}(X_{\cS},\Gmfml)$, such that $G$ is the pushout of $\cG^{\rig}$ along $\cT^{\rig}\hookrightarrow T$. Therefore $M$ is strict and has semi-stable reduction. Then by Corollary \ref{cor:descriptions of ell-adic and p-adic repn of rigid 1-motive} (3) (a) the geometric monodromy of $M$ is trivial, and thus $M$ has good reduction by Theorem \ref{thm:good reduction amounts to trivial goem. monodromy}. 

    We are left with proving the claim that $A$ has good reduction. Being proper, hence quasi-compact, $A$ admits a formal N\'eron model $\mathcal{A}$ by \cite[Thm. 1.2]{BS95}. Let $K^{\mathrm{ur}}$ be the maximal unramified field extension of $K$ inside the fixed separable closure $\overline{K}$, $\cO_{K^{\mathrm{ur}}}$ the ring of integers of $K^{\mathrm{ur}}$, $\overline{k}$ the separable closure of the residue field $k$ of $K$, and $I=\Gal(\overline{K}/K^{\mathrm{ur}})$ the inertia group of $K$. Let $\mathcal{A}_m$ be the closed subscheme of $\mathcal{A}$ defined by $(\pi^{m+1})$. We have 
    \begin{align*}
        A[\ell^i](\overline{K})^I=A[\ell^i](K^{\mathrm{ur}})=\mathcal{A}[\ell^i](\Spf\cO_{K^{\mathrm{ur}}})=&\varprojlim_m\mathcal{A}_m[\ell^i](\Spec \cO_{K^{\mathrm{ur}}}/(\pi^{m+1}))  \\
        =&\mathcal{A}_0[\ell^i](\overline{k}),
    \end{align*}
    where the second identification follows from the universal property of formal N\'eron model, and the last identification follows from $\ell\neq p$. Let $\mu$, $\lambda$, and $\alpha$ be the dimension of the torus part, the unipotent part, and the abelian part of the connected component $\mathcal{A}_0^{\circ}$ of $\mathcal{A}_0$, then we have \[\mu+\lambda+\alpha=\dim(\mathcal{A}_0)\quad \text{and} \quad \#(\mathcal{A}_0[\ell^i](\overline{k}))\leq \ell^{i(\mu+2\alpha)}\#(\mathcal{A}_0/\mathcal{A}_0^{\circ})(\overline{k}).\] Since $T_{\ell}(A)$ is unramified, we get $\#(A[\ell^i](\overline{K})^I)=\ell^{i(2\dim(A))}$, and thus \[\ell^{i(2\dim(A))}\leq \ell^{i(\mu+2\alpha)}\#(\mathcal{A}_0/\mathcal{A}_0^{\circ})(\overline{k}).\] By letting $i\to \infty$, we see that $2\dim(A)\leq \mu+2\alpha$. This inequality forces $\mu=\lambda=0$, i.e. $\mathcal{A}_0^{\circ}$ is an abelian variety. Note that $\mathcal{A}_0^{\circ}$ is just the special fiber of the neutral component $\mathcal{A}^{\circ}$ of $\mathcal{A}$. Then $\mathcal{A}^{\circ}$ is a formal abelian scheme, and thus it gives rise to an open immersion $(\mathcal{A}^{\circ})^{\rig}\to A$ of abeloid varieties over $K$. Clearly $(\mathcal{A}^{\circ})^{\rig}\to A$ is an isomorphism. Since $\mathcal{A}^{\circ}$ is the formal N\'eron model of $(\mathcal{A}^{\circ})^{\rig}$, the map $\mathcal{A}^{\circ}\to\mathcal{A}$ has to be an isomorphism by the universal property of formal N\'eron model, and thus $A$ has good reduction.

    Case (2): Since $T_p(M)$ is crystalline, so are $T_p(T)=X^{\vee}\otimes_{\Z}\Z_p(1)$, $T_p(A)$, and $Y\otimes_{\Z}\Z_p$ by \cite[Thm. 5.2.1 (2)]{BC09}. Further $X^{\vee}\otimes_{\Z}\Z_p$ is crystalline by \cite[Thm. 5.2.1 (3)]{BC09}. Therefore $X$ and $Y$ extend to \'etale locally constant groups $X_{\cS}$ and $Y_{\cS}$ over $\cS$ respectively by \cite[Prop. 9.7]{FO22}. 
    
    Since $A$ always has potentially semi-stable reduction by Theorem \ref{thm:split Raynaud uniformization}, we further get that $A$ has potentially good reduction by Corollary \ref{cor:geometric monodromy of abeloid gives rise to the monodromy on the repn}. We claim that $A$ acutally has good reduction. Then by the same argument as in case (1), $M$ is strict and has semi-stable reduction. Applying Corollary \ref{cor:descriptions of ell-adic and p-adic repn of rigid 1-motive} and Theorem \ref{thm:good reduction amounts to trivial goem. monodromy} we get the result.

    We are left with showing the claim that $A$ has good reduction. Note that Case (1) already gives us the $\ell$-adic N\'eron-Ogg-Shafarevich  criterion for good reduction of abeloid varieties. Now by the same argument as in \cite[Prop. 7.5.1 and Rmk. 7.5.3 i)]{Fon79}, we are done.
\end{proof}

Putting Proposition \ref{prop:the p-adic repn of rigid 1-mot with good red is crystalline} and Proposition \ref{prop:rigid 1-mot with crystalline p-adic repn has good red} together, we have the following theorem.

\begin{theorem}[N\'eron-Ogg-Shafarevich criterion for good reduction]\label{thm:Neron-Ogg-Shafarevich for good reduction of rigid 1-motives}
    Let $M=[Y\xrightarrow{u}G]\in \mathscr{M}_{1,K}$, and $\ell\neq\mathrm{char}(K)$ a prime number. Then $M$ has good reduction if and only if the  Galois $\Z_{\ell}$-representation $T_{\ell}(M)$ is unramified (resp. crystalline) in case that $\ell\neq\mathrm{char}(k)$ (resp. $\ell=\mathrm{char}(k)$).
\end{theorem}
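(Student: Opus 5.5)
The theorem is the conjunction of Proposition \ref{prop:the p-adic repn of rigid 1-mot with good red is crystalline} and Proposition \ref{prop:rigid 1-mot with crystalline p-adic repn has good red}, so the plan is to prove the two implications separately by citing these.

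\emph{Good reduction implies unramified/crystalline.} Assume $M$ has good reduction. Then $M\in\mathscr{M}_{1,K}^{\mathrm{good}}$, and Proposition \ref{prop:formal 1-motives and rigid 1-motives with good reduction} gives an associated formal 1-motive $\mathcal{M}=[Y_{\cS}\to\cG]$. Proposition \ref{prop:formal 1-motive ass. to rigid 1-mot with good red} (2) identifies $M[\ell^\infty]$ with $\mathcal{M}^{\rig}[\ell^\infty]$. By Proposition \ref{prop:algebraization of l-div gp of formal 1-mot}, $\mathcal{M}[\ell^\infty]$ algebraizes to an $\ell$-divisible group over $\Spec\OK$. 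The Galois representation $T_\ell(M)$ is therefore the Tate module of the generic fiber of an $\ell$-divisible group over $\OK$. Such a representation is unramified when $\ell\neq\mathrm{char}(k)$. When $\ell=p$ it is crystalline, by the classical theory of $p$-divisible groups over $\OK$. This is exactly Proposition \ref{prop:the p-adic repn of rigid 1-mot with good red is crystalline}.

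\emph{Unramified/crystalline implies good reduction.} This is Proposition \ref{prop:rigid 1-mot with crystalline p-adic repn has good red}. Its proof has four steps.
\begin{enumerate}
\item The weight graded pieces inherit the hypothesis, since subquotients and duals/twists of crystalline representations stay crystalline. This shows that $X$ and $Y$ are unramified, so they extend to $X_{\cS}$ and $Y_{\cS}$.
\item Show that the abeloid quotient $A$ has good reduction.
\item Conclude that $M$ is strict with semi-stable reduction.
\item The explicit monodromy descriptions of Corollary \ref{cor:descriptions of ell-adic and p-adic repn of rigid 1-motive} (3) show that the Raynaud pairing $\mu_0$ vanishes. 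This holds because an unramified $\ell$-adic representation has trivial tame part, and a crystalline one has $N=0$. Theorem \ref{thm:good reduction amounts to trivial goem. monodromy} (2) then gives good reduction.
\end{enumerate}

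The main obstacle is step 2. For $\ell\neq p$ we count inertia invariants via the formal Néron model $\mathcal{A}$ of $A$. The toric and unipotent ranks of $\mathcal{A}_0^\circ$ are forced to vanish, so $\mathcal{A}^\circ$ is a formal abelian scheme whose generic fiber must be all of $A$. For $\ell=p$ we first use potential semi-stability together with Corollary \ref{cor:geometric monodromy of abeloid gives rise to the monodromy on the repn} to get potentially good reduction. We then descend via Fontaine's argument, as in \cite[Prop. 7.5.1]{Fon79}, using the $\ell$-adic criterion just proved.
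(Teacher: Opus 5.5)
Your proposal is correct and follows the paper's route. The paper obtains the theorem as the conjunction of Propositions~\ref{prop:the p-adic repn of rigid 1-mot with good red is crystalline} and~\ref{prop:rigid 1-mot with crystalline p-adic repn has good red}, and your sketches of both match its proofs step by step: algebraization of $\mathcal{M}[\ell^\infty]$ in one direction, and in the other the graded pieces, good reduction of $A$ (via the N\'eron model count for $\ell\neq p$, Fontaine's argument for $\ell=p$), then vanishing of $\mu_0$ via Corollary~\ref{cor:descriptions of ell-adic and p-adic repn of rigid 1-motive} and Theorem~\ref{thm:good reduction amounts to trivial goem. monodromy}.
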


\subsection{Formal completion and analytification I}\label{subsec:formal completion and analytification I}
Let $S:=\Spec\cO_K$ and $\cS=\Spf\cO_K$. Let 
\[\mathscr{M}_{1,S}^{\mathrm{alg}}\]
be the category of (algebraic) 1-motives over $S$. Recall that $\mathscr{M}_{1,\cS}$ denotes the category of formal 1-motives over $\cS$. The formal completion gives rise to a functor 
\begin{equation}\label{eq:completion functor for 1-motives}
    (-)^{\wedge}:\mathscr{M}_{1,S}^{\mathrm{alg}}\to \mathscr{M}_{1,\cS}.
\end{equation}
Let $\mathscr{M}_{1,K}^{\mathrm{alg}}$ be the category of (algebraic) 1-motives over $\Spec\,K$. Recall that we denote the category of rigid 1-motives over $K$ by $\mathscr{M}_{1,K}$. Then we have the natural analytification functor 
\begin{equation}\label{eq:rigid analytification functor for 1-motives}
    (-)^{\rig}:\mathscr{M}_{1,K}^{\mathrm{alg}}\to \mathscr{M}_{1,K}.
\end{equation}
One can describe the essential image of this functor: exactly the rigid analytic 1-motives $[Y\ra G]$ with algebraic abeloid quotient $A$ of $G$ are algebraic themselves. More precisely, we have
\begin{proposition}
\label{prop:essential-image-analytification-1-motives}
Let \( M=[Y\xrightarrow{u}G]\in\mathscr{M}_{1,K}. \)
Then the following conditions are equivalent.
\begin{enumerate}
    \item There exist an algebraic \(1\)-motive
    \( M_0= [Y_0\xrightarrow{u_0}G_0]\in\mathscr{M}_{1,K}^{\mathrm{alg}}\) and an isomorphism
    \[
    M_0^{\rig}\xrightarrow{\sim}M
    \]
    in \(\mathscr{M}_{1,K}\).

    \item There exist a semi-abelian variety \(G_0\) over \(K\) and an
    isomorphism
    \[
    G_0^{\rig}\xrightarrow{\sim}G
    \]
    of rigid analytic groups.

    \item There exist an algebraic \(K\)-torus \(T_0\), an abelian
    variety \(B\) over \(K\), and homomorphisms of rigid analytic groups
    forming a short exact sequence
    \[
    0\longrightarrow T_0^{\rig}
    \longrightarrow G
    \longrightarrow B^{\rig}
    \longrightarrow 0
    \]
    of sheaves of abelian groups on
    \((\operatorname{Rig}/K)_{\fl}\).
\end{enumerate}
\end{proposition}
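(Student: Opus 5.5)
We prove the cycle of implications $(1)\Rightarrow(3)\Rightarrow(2)\Rightarrow(1)$.

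\emph{Step $(1)\Rightarrow(3)$.} Let $M_0=[Y_0\to G_0]$ be given. Then $G_0$ sits in an extension $0\to T_0\to G_0\to B\to 0$ with $T_0$ a $K$-torus and $B$ an abelian variety. Rigid analytification preserves short exact sequences (Lemma \ref{lem:rigid analytification preserves kernel and s.e.s.}), so we get $0\to T_0^{\rig}\to G_0^{\rig}\to B^{\rig}\to 0$. Transporting this sequence along the given isomorphism $G_0^{\rig}\cong G$ yields (3).

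\emph{Step $(3)\Rightarrow(2)$.} This is the core of the argument. The extension class of $G$ lies in $\Ext^1_{K,\rig}(B^{\rig},T_0^{\rig})$. By Proposition \ref{prop:key prop. for biextensions in both rigid and formal settings} (1), this group is isomorphic to $\Hom_{K,\rig}(X_0,(B^{\rig})^\vee)$, where $X_0$ is the character group of $T_0$. Algebraically, $\Ext^1_K(B,T_0)\cong\Hom_K(X_0,B^\vee)$ (Weil--Barsotti). We must show that the analytification map $\Ext^1_K(B,T_0)\to\Ext^1_{K,\rig}(B^{\rig},T_0^{\rig})$ is bijective; then $G\cong G_0^{\rig}$ for the algebraic extension $G_0$ with the same class.

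Two inputs are needed. First, $(B^\vee)^{\rig}=(B^{\rig})^\vee$. Both represent translation-invariant line bundles, and rigid GAGA on the proper varieties $B\times V$ for affinoid $V$ identifies the Picard functors; alternatively one can use Theorem \ref{thm:Weil-Barsotti formula} together with GAGA on $B$. Second, $\Hom_K(X_0,B^\vee)=\Hom_{K,\rig}(X_0,(B^\vee)^{\rig})$. After a finite Galois extension splitting $X_0$, this reduces to $B^\vee(K')=(B^\vee)^{\rig}(K')$, with Galois equivariance handling descent.

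Compatibility of the two Weil--Barsotti isomorphisms with analytification is checked by pushing out along characters $x\in X_0$. This reduces the question to $\GmK$-extensions, i.e. to line bundles, and GAGA for line bundles on $B$ compares these. Finally, the isomorphism $G_0^{\rig}\cong G$ of extensions gives (2).

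\emph{Step $(2)\Rightarrow(1)$.} Fix an isomorphism $G_0^{\rig}\cong G$. First, $Y$ is étale locally constant, so $Y=Y_0^{\rig}$ for an étale lattice $Y_0$ over $K$. Next, a homomorphism $Y\to G$ amounts to a Galois-equivariant map $Y(\overline K)\to G(\overline K)$ landing in points over a finite splitting extension. Since $G_0(K')=G_0^{\rig}(K')$ for finite $K'/K$, the map $u$ descends to $u_0\colon Y_0\to G_0$, and then $M_0^{\rig}\cong M$.

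The main obstacle is Step $(3)\Rightarrow(2)$, specifically the compatibility of the algebraic and rigid Weil--Barsotti isomorphisms under analytification and the identification of dual varieties. If needed, a more hands-on route is available: describe $G$ as a $T_0^{\rig}$-torsor over $B^{\rig}$. Over a splitting field, $G$ becomes a product of $\GmK^{\rig}$-torsors, i.e. line bundles. By rigid GAGA these line bundles are algebraic, and so are the group law morphisms, which are morphisms between analytifications of proper-over-torsor algebraic spaces. One then descends the algebraic structure via Galois descent.
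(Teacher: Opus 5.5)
Your proposal is correct and follows essentially the paper's argument. The key step is that analytification induces a bijection $\Ext^1_{K}(B,T_0)\to\Ext^1_{K,\rig}(B^{\rig},T_0^{\rig})$, which both you and the paper prove by comparing the algebraic and rigid Barsotti--Weil isomorphisms using $(B^\vee)^{\rig}\cong(B^{\rig})^\vee$ and the bijection on homomorphisms out of lattices; that same bijection also lets you descend $u$. The only differences are cosmetic: you arrange the implications as a cycle $(1)\Rightarrow(3)\Rightarrow(2)\Rightarrow(1)$ instead of $(1)\Leftrightarrow(2)$ and $(2)\Leftrightarrow(3)$, and you sketch via (relative) GAGA the comparisons that the paper cites from Conrad.
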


\begin{proof}
Let \(L\) be an algebraic \(K\)-lattice and let \(H\) be an algebraic
\(K\)-group of finite type. Note that rigid analytification induces a bijection (see \cite[\S5.1]{Con99})
\begin{equation}\label{eq:Hom-lattice-analytification}
\Hom_K(L,H)
\xrightarrow{\;\sim\;}
\Hom_{K,\rig}(L^{\rig},H^{\rig}).
\end{equation}

From that, we can prove the equivalence of (1) and (2). Suppose that \textup{(1)} holds, and let \(M_0^{\rig}\xrightarrow{\sim}M\) be an isomorphism. Its degree-zero component is an isomorphism \(G_0^{\rig}\xrightarrow{\sim}G.\)
Thus \textup{(2)} holds. Conversely, suppose that \textup{(2)} holds, and choose an isomorphism \( \alpha:G_0^{\rig}\xrightarrow{\sim}G. \) Let \(Y_0\) be the algebraic \(K\)-lattice corresponding to \(Y\), and fix the corresponding isomorphism \( \iota:Y_0^{\rig}\xrightarrow{\sim}Y. \)
By \eqref{eq:Hom-lattice-analytification}, the homomorphism
\[
\alpha^{-1}\circ u\circ\iota:
Y_0^{\rig}\longrightarrow G_0^{\rig}
\]
is the analytification of a unique algebraic homomorphism \( u_0:Y_0\longrightarrow G_0. \)
Hence \( M_0=[Y_0\xrightarrow{u_0}G_0] \)
is an algebraic \(1\)-motive, and \((\iota,\alpha)\) defines an
isomorphism
\( M_0^{\rig}\xrightarrow{\sim}M. \)
Thus \textup{(1)} and \textup{(2)} are equivalent.

To prove the equivalence of (2) and (3), we next compare extensions of abelian varieties by tori. Write \(\Ext^1_{K,\mathrm{alg}}\), respectively
\(\Ext^1_{K,\rig}\), for \(\Ext^1\) in the category of abelian sheaves on the big algebraic fppf site over \(K\), respectively on \((\operatorname{Rig}/K)_{\fl}\). Let \(B\) be
an abelian variety over \(K\), let \(T_0\) be an algebraic \(K\)-torus,
and put \( X=X^*(T_0).\) By the algebraic Barsotti--Weil theorem, there is a canonical
isomorphism
\begin{equation}\label{eq:BW-algebraic}
\operatorname{BW}_{\mathrm{alg}}:
\Ext^1_{K,\mathrm{alg}}(B,T_0)
\xrightarrow{\;\sim\;}
\Hom_K(X,B^\vee).
\end{equation}
Every class on the left is represented by a commutative algebraic extension
\[
0\longrightarrow T_0\longrightarrow E\longrightarrow B
\longrightarrow 0.
\]
Such an extension analytifies to a short exact sequence
\[
0\longrightarrow T_0^{\rig}\longrightarrow E^{\rig}
\longrightarrow B^{\rig}\longrightarrow 0
\]
on \((\operatorname{Rig}/K)_{\fl}\). 
Consequently, analytification defines a map
\[
a_{B,T_0}:
\Ext^1_{K,\mathrm{alg}}(B,T_0)
\longrightarrow
\Ext^1_{K,\rig}(B^{\rig},T_0^{\rig}).
\]
Applying the comparison of rigidified Picard functors to \(B\), with
rigidifier the zero section, gives a canonical isomorphism
\begin{equation}\label{eq:dual-compatible-with-analytification}
\rho_B:(B^\vee)^{\rig}
\xrightarrow{\;\sim\;}
(B^{\rig})^\vee.
\end{equation}
Under this isomorphism, the class of a zero-rigidified algebraic line
bundle is sent to the class of its analytification; see
\cite[Lem.~4.3.2 and Thm.~4.3.3]{Con06}. Consider the diagram
\[
\begin{tikzcd}[column sep=large]
\Ext^1_{K,\mathrm{alg}}(B,T_0)
\arrow[r,"\operatorname{BW}_{\mathrm{alg}}","\sim"']
\arrow[d,"a_{B,T_0}"']
&
\Hom_K(X,B^\vee)
\arrow[d,"\rho_B\circ(-)^{\rig}","\sim"']
\\
\Ext^1_{K,\rig}(B^{\rig},T_0^{\rig})
\arrow[r,"\operatorname{BW}_{\rig}","\sim"']
&
\Hom_{K,\rig}\bigl(X^{\rig},(B^{\rig})^\vee\bigr).
\end{tikzcd}
\]
Here the lower horizontal arrow is
Proposition~\ref{prop:key prop. for biextensions in both rigid and formal settings}
\textup{(1)}, and the right vertical arrow is an isomorphism by
\eqref{eq:Hom-lattice-analytification} and
\eqref{eq:dual-compatible-with-analytification}. We use for both
Barsotti--Weil maps the convention \( \operatorname{BW}(E)(x)=[x_*E], \) where \(x_*E\) denotes pushout along the character \(x\). The diagram is commutative.  It follows that \(a_{B,T_0}\) is an isomorphism:
\begin{equation}\label{eq:Ext-torus-analytification}
\Ext^1_{K,\mathrm{alg}}(B,T_0)
\xrightarrow{\;\sim\;}
\Ext^1_{K,\rig}(B^{\rig},T_0^{\rig}).
\end{equation}

Suppose that \textup{(2)} holds. Let
\[
0\longrightarrow T_0\longrightarrow G_0\longrightarrow B
\longrightarrow 0
\]
be the torus--abelian extension defining the semi-abelian variety
\(G_0\). Its analytification is a short exact sequence
\[
0\longrightarrow T_0^{\rig}\longrightarrow G_0^{\rig}
\longrightarrow B^{\rig}\longrightarrow 0
\]
on \((\operatorname{Rig}/K)_{\fl}\). Transporting this sequence
through an isomorphism \(G_0^{\rig}\xrightarrow{\sim}G\) gives the
sequence required in \textup{(3)}.

Finally, suppose that \textup{(3)} holds, and let \( \xi_G\in
\Ext^1_{K,\rig}(B^{\rig},T_0^{\rig})\) be the class of the given extension. By \eqref{eq:Ext-torus-analytification}, there is a unique class \(\xi_0\in\Ext^1_{K,\mathrm{alg}}(B,T_0) \)
whose analytification is \(\xi_G\). By descent, the class \(\xi_0\) is represented by a commutative algebraic extension
\[0\longrightarrow T_0\longrightarrow G_0\longrightarrow B \longrightarrow 0.\]
Thus \(G_0\) is a semi-abelian variety, and we get a unique isomorphism of rigid analytic groups $G_0^{\rig}\xrightarrow{\sim}G$ by $\xi_G=\xi_0^{\rig}$. Thus \textup{(2)} holds.
\end{proof}

For the algebraizability of abeloid varieties, we refer to \cite[Prop. 7.1.8, Prop. 7.1.10 and Thm. 6.4.4]{Lut16}. 
Note that in general the analytification functor is not full, due to the
existence of non-algebraic rigid analytic homomorphisms from tori to
abelian varieties with bad reduction, such as the Tate uniformization
\(\GmK^{\rig}\to E_q^{\rig}\).

We have the algebraic generic fiber functor
\begin{equation}\label{eq:generic fiber functor for 1-motives}
    \mathrm{GF}^{\mathrm{alg}}:\mathscr{M}_{1,S}^{\mathrm{alg}}\to \mathscr{M}_{1,K}^{\mathrm{alg}},
\end{equation}
whose image clearly lands in the full subcategory $\mathscr{M}_{1,K}^{\mathrm{alg,good}}$ of $\mathscr{M}_{1,K}^{\mathrm{alg}}$ consisting of objects with good reduction (in the sense of \cite[\S4]{Ray94}).

\begin{proposition}\label{prop:equivalence b.t. 1-motives over DVR and 1-motives with good reduction over DVF}
    The functor $\mathrm{GF}^{\mathrm{alg}}:\mathscr{M}_{1,S}^{\mathrm{alg}}\xrightarrow{\simeq} \mathscr{M}_{1,K}^{\mathrm{alg,good}}$ is an equivalence of categories.
\end{proposition}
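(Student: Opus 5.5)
The plan is to compare both sides through the "displayed" description by tuples $(Y,X,v,v^\vee,B,s)$, which is functorial in the algebraic setting over any base (\cite[\S2.4.1]{Ray94}, \cite[Prop. 10.2.14]{Del74}). Here ``good reduction'' for an algebraic 1-motive $[Y\xrightarrow{u}G]$ over $K$ means, as in \cite[\S4]{Ray94}, that $Y$ and $X=X^*(T)$ are unramified, the abelian part $B$ has good reduction, and $u$ extends over $S$. The approach mirrors Proposition \ref{prop:formal 1-motives and rigid 1-motives with good reduction}, with the formal N\'eron model replaced by the algebraic N\'eron model. The proof then splits into essential surjectivity, faithfulness and fullness.

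Essential surjectivity comes first. Given $M=[Y\xrightarrow{u}G]$ with good reduction, I extend $Y$ and $X$ to \'etale locally constant sheaves $Y_S,X_S$ on $S$ via the equivalence between unramified Galois lattices and lattices over $S$. I take $\mathcal{B}$ to be the abelian scheme over $S$ extending $B$, which is the N\'eron model of $B$. By the N\'eron mapping property, applied after an unramified Galois base change and descended, $v^\vee\colon X\to B^\vee$ extends to $v^\vee_S\colon X_S\to\mathcal{B}^\vee$. The algebraic Barsotti--Weil isomorphism $\Ext^1_S(\mathcal{B},T_S)\cong\Hom_S(X_S,\mathcal{B}^\vee)$ then gives a semi-abelian scheme $\mathcal{G}$ whose generic fiber is $G$. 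This is exactly the algebraic counterpart of Definition \ref{def:good-semi-good}(1)(iii); the extension of $u$ to $u_S\colon Y_S\to\mathcal{G}$ is part of the hypothesis.

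Faithfulness holds because all objects over $S$ are flat and separated (lattices are étale, and $\mathcal{G}$ is smooth), so a homomorphism is determined by its generic fiber; indeed $\mathcal{G}$ is schematically dense in itself over $S$.

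For fullness, let $f=(f_{-1},f_0)\colon M\to M'$ be a morphism of generic fibers. The map $f_{-1}$ extends uniquely because unramified lattices over $S$ are equivalent to unramified Galois modules. In the algebraic case we have $\Hom_K(T,B')=0$, so $f_0$ splits into $f_{\mathrm{t}}\colon T\to T'$ and $f_{\mathrm{ab}}\colon B\to B'$. These extend to $T_S\to T'_S$ through the character lattices, and to $\mathcal{B}\to\mathcal{B}'$ by the N\'eron property. To extend $f_0$ itself, I would use the fact that the connected N\'eron model of a semi-abelian variety with good reduction is $\mathcal{G}$ (i.e. the identity component of the lft N\'eron model, as in \cite[\S10.1]{BLR90}); then $f_0$ extends to $\mathcal{G}\to\mathcal{G}'^{0}=\mathcal{G}'$ by the N\'eron property for smooth sources. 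Alternatively, one can use that $\mathcal{G}$ is determined by the tuple data, and compare via $\Hom$ of extensions with the biextension compatibility. The identity $u'_S\circ f_{-1,S}=f_{0,S}\circ u_S$ then follows from faithfulness.

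I expect the main obstacle to be this extension of $f_0$, and the identification of $\mathcal{G}$ with a N\'eron-type model. The cleanest route is the reduction to the tuple categories already used above, where only abelian schemes and lattices appear and the N\'eron property for abelian schemes suffices.
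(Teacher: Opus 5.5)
Your proposal is correct, and the route you call the cleanest at the end is exactly the paper's proof. There, essential surjectivity and faithfulness are treated as immediate; faithfulness uses that the source is reduced, the generic fiber is dense, and the target is separated. Fullness is obtained by extending $f_{-1}$, letting $f_0$ induce a morphism of the data $(X'\to B^{\prime\vee})\to(X\to B^\vee)$ (implicitly using $\Hom_K(T,B')=0$), extending that morphism over $S$ by the N\'eron property, reassembling $(f_S)_0\colon\mathcal G\to\mathcal G'$, and deducing $u'_S\circ(f_S)_{-1}=(f_S)_0\circ u_S$ from the injectivity of $\Hom_S(Y_S,\mathcal G')\to\Hom_K(Y,G')$.

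Your first suggestion is also valid and more direct: identify $\mathcal G'$ with the identity component of the lft N\'eron model of $G'$ and extend $f_0$ in one stroke. However, it imports a genuinely nontrivial result, namely Bosch--L\"utkebohmert--Raynaud's identification of the identity component of the N\'eron lft-model of a semi-abelian variety with good reduction. The tuple route needs only the N\'eron property of abelian schemes plus the following facts, which you should state explicitly if you take it:
\begin{enumerate}
\item The identity $f_{\mathrm{ab}}^\vee\circ v_S^{\prime\vee}=v_S^\vee\circ f_{\mathrm t}^*$ holds over $S$, because it holds over $K$ and $\Hom_S(X'_S,\mathcal B^\vee)\to\Hom_K(X',B^\vee)$ is injective.
\item Via Barsotti--Weil this identity is the equality $(f_{\mathrm t})_*[\mathcal G]=f_{\mathrm{ab}}^*[\mathcal G']$ in $\Ext^1_S(\mathcal B,\mathcal T')$. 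Hence a morphism of extensions $\mathcal G\to\mathcal G'$ exists, and it is unique since $\Hom_S(\mathcal B,\mathcal T')=0$.
\item Its generic fiber equals $f_0$, because the difference lies in $\Hom_K(B,T')=0$.
\end{enumerate}
One minor slip: in the faithfulness step the relevant fact is that the generic fiber $\mathcal G_K$ is schematically dense in $\mathcal G$ (by flatness), combined with separatedness of $\mathcal G'$. It is not that ``$\mathcal G$ is schematically dense in itself''.
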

\begin{proof}
    The essential surjectivity is clear.

    The faithfulness is also clear by $\Hom_{S}(\cG,\cG')\hookrightarrow \Hom_{K}(\cG\times_{S}\Spec K,\cG'\times_{S}\Spec K)$ (see \cite[Chap. II, Exercise 4.2]{Har77}).

    We are left with proving the fullness. Let 
    \[\mathcal{M}=[Y_S\xrightarrow{u_S} \cG],\mathcal{M}'=[Y'_S\xrightarrow{u'_S} \cG']\in \sM_{1,S}^{\mathrm{alg}},\]
    and let $M=[Y\xrightarrow{u} G]$ be $\mathrm{GF}^{\mathrm{alg}}(\mathcal{M})$ and $M'=[Y'\xrightarrow{u'} G']$ be $\mathrm{GF}^{\mathrm{alg}}(\mathcal{M'})$. Let 
    \[f=(f_{-1},f_0):M=[Y\xrightarrow{u} G]\to [Y'\xrightarrow{u'} G']=M'\]
    be a morphism of 1-motives over $K$. Apparently $f_{-1}$ extends to a homomorphism $(f_{S})_{-1}:Y_{S}\to Y'_{S}$ over $S$. We are left with showing that there exists a homomorphism $(f_{S})_0:\cG\to\cG'$ such that \[u'_{S}\circ (f_{S})_{-1}=(f_{S})_0\circ u_{S}\] and $(f_{S})_0$ extends $f_0$. Let $X$ (resp. $X'$, resp. $X_S$, resp. $X'_S$) be the character group of the torus part $T$ (resp. $T'$, resp. $\cT$, resp. $\cT'$) of $G$ (resp. $G'$, resp. $\cG$, resp. $\cG'$), and let $B$ (resp. $B'$, resp. $\cB$, resp. $\cB'$) be the abelian part of $G$ (resp. $G'$, resp. $\cG$, resp. $\cG'$). The homomorphism $f_0$ corresponds to a morphism $[X'\to B^{'\vee}]\to [X\to B^{\vee}]$ which extends to a morphism $[X_S\to \cB^{\vee}]\to [X_S'\to \cB^{'\vee}]$ over $S$ by the universal property of N\'eron model. Then we get a homomorphism $(f_{S})_0:\cG\to\cG'$ extending $f_0$.
    The equality $u'_{S}\circ (f_{S})_{-1}=(f_{S})_0\circ u_{S}$ follows from $\Hom_{S}(Y_{S},\cG')\hookrightarrow\Hom_{K}(Y,G')$.
\end{proof}

\begin{proposition}\label{prop:algebraic 1-motive has good reduction iff so is its rigid analytification}
    Let $M=[Y\xrightarrow{u} G]\in \mathscr{M}_{1,K}^{\mathrm{alg}}$. Then $M$ has good reduction if and only if its rigid analytification $M^{\rig}$ has good reduction.
\end{proposition}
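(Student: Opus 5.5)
The cleanest route is to go through the $\ell$-adic realizations and the Néron–Ogg–Shafarevich criteria on both sides, rather than comparing integral models directly. Fix a prime $\ell\neq\mathrm{char}(k)$ (and $\ell\neq\mathrm{char}(K)$). By Theorem \ref{thm:Neron-Ogg-Shafarevich for good reduction of rigid 1-motives}, $M^{\rig}$ has good reduction if and only if $T_\ell(M^{\rig})$ is unramified. On the algebraic side, Raynaud's criterion \cite[\S4]{Ray94} (for algebraic 1-motives over a discretely valued field, good reduction is equivalent to $T_\ell$ being unramified, generalizing Serre–Tate) gives that $M$ has good reduction if and only if $T_\ell(M)$ is unramified. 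It therefore suffices to produce a Galois-equivariant identification $T_\ell(M)\cong T_\ell(M^{\rig})$.

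For this comparison, I would use that analytification preserves kernels and short exact sequences (Lemma \ref{lem:rigid analytification preserves kernel and s.e.s.}, referred to in Example \ref{example Picard}). Concretely, $T_{\Z/n\Z}(M)$ is the extension $0\to G[n]\to T_{\Z/n\Z}(M)\to Y/nY\to 0$, which can be built as $\{(g,y): ng=u(y)\}/\{(u(y'),ny')\}$. Analytifying this construction yields $T_{\Z/n\Z}(M^{\rig})$. Since $G[n]^{\rig}=G^{\rig}[n]$ and the $\overline K$-points of a finite group scheme and of its analytification agree, we get $T_{\Z/n\Z}(M)(\overline K)=T_{\Z/n\Z}(M^{\rig})(\overline K)$ compatibly with $\Gamma_K$. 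Passing to the limit over $n=\ell^i$ gives the desired isomorphism of Tate modules.

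Alternatively, one could argue directly with models. If $M$ extends to $\mathcal M_S\in\mathscr M^{\mathrm{alg}}_{1,S}$, then its formal completion $\mathcal M_S^{\wedge}$ is a formal 1-motive. One then checks that $\mathrm{GF}(\mathcal M_S^{\wedge})\cong M^{\rig}$, which amounts to showing that the pushout of $(\cG^\wedge)^{\rig}$ along $(\cT^\wedge)^{\rig}\hookrightarrow T^{\rig}$ is $G^{\rig}$; this follows from Theorem \ref{thm:Weil-Barsotti formula}(3) together with the comparison \eqref{eq:Ext-torus-analytification}. This gives the forward direction without Galois representations. The converse, however, needs algebraizing a formal model of $M^{\rig}$, where the abelian part is a priori only a formal abelian scheme. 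I would handle that by Grothendieck existence, using the ample line bundle coming from $B$ algebraic, but this is more delicate. That is why I prefer the realization argument: its only real input is the algebraic criterion, and the main point to verify carefully is the compatibility of $T_{\Z/n\Z}$ with analytification.
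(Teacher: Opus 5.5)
Your reduction to Tate modules does not close as written. The comparison $T_\ell(M)\cong T_\ell(M^{\rig})$ is fine, and Theorem~\ref{thm:Neron-Ogg-Shafarevich for good reduction of rigid 1-motives} is proved independently of this statement. The problem is the algebraic criterion you use on the other side.

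In this paper, the $\ell$-adic N\'eron--Ogg--Shafarevich criterion for algebraic $1$-motives is Corollary~\ref{thm:Neron-Ogg-Shafarevich for good reduction of algebraic 1-motives}. Its proof combines the rigid criterion with precisely the proposition you are proving, so within the paper your argument is circular. If you want to use that criterion as an external black box, you need a reference valid for the general complete discretely valued $K$ of Section~\ref{section rigid analytic 1-motives}. The paper credits the algebraic $\ell$-adic criterion only to \cite{Mat13}, and only over finite extensions of $\Q_p$, not to \cite[\S4]{Ray94}. Proving it independently would mean redoing Raynaud's monodromy pairing and the inertia computation of Corollary~\ref{cor:descriptions of ell-adic and p-adic repn of rigid 1-motive} algebraically. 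That is far more than the statement needs.

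The paper's proof is a direct comparison of the two definitions, i.e.\ essentially your fallback route, whose forward direction is fine. The only non-formal input is that $B$ has good reduction if and only if $B^{\rig}$ is the rigid fibre of a formal abelian scheme $\mathcal A$. You do not need Grothendieck existence for this. As you sketched it, it has a gap anyway: an ample bundle on $B$ says nothing about the special fibre of $\mathcal A$ until you extend it with ample reduction.

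Use N\'eron models instead. By \cite[Thm.~6.2]{BS95}, the completion of the N\'eron model $\mathcal N$ of $B$ is the formal N\'eron model of $B^{\rig}$. By Lemma~\ref{lem:a formal abelian scheme is the formal Neron model of its generic fiber}, so is $\mathcal A$. Hence $\mathcal N_k=\mathcal A_k$ is an abelian variety and $\mathcal N$ is an abelian scheme. Alternatively, apply the classical Serre--Tate criterion \cite{ST68} to $B$ alone: $T_\ell(B)=T_\ell(\mathcal A^{\rig})$ is unramified because $\mathcal A[\ell^n]$ is finite \'etale. This uses the criterion only for abelian varieties, so it is not circular.

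After that everything is definitional:
\begin{itemize}
\item The conditions on $Y$ and $X$ are literally the same on both sides.
\item $v^\vee$ extends by the N\'eron property, giving a semi-abelian $\mathcal G_S$ whose completion is the formal $\mathcal G$ of Definition~\ref{def:good-semi-good}(1); compare the proof of Proposition~\ref{prop:formal completion, taking algebraic and rigid generic fibers, and rigid analytification}.
\item $u$ extends to $Y_S\to\mathcal G_S$ if and only if $u^{\rig}$ extends to $Y_{\cS}\to\mathcal G_S^{\wedge}$. Indeed, after an unramified extension $K'$ splitting $Y$, both conditions say that the images of a basis lie in $\mathcal G_S(\mathcal O_{K'})=\mathcal G_S^{\wedge}(\Spf\mathcal O_{K'})$.
\end{itemize}
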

\begin{proof}
    This is clear by definition.
\end{proof}

We present a few lemmas concerning the exactnesses of formal completion, taking rigid generic fiber, and rigid analytification.

\begin{lemma}\label{lem:formal completion preserves kernel and s.e.s.}
    Let $U$ be a scheme, $\cI\subset\cO_U$ a finitely generated sheaf of ideals, $U_n$ the closed subscheme of $U$ defined by $\cI^{n+1}$, $\mathcal{U}$ the $\cI$-adic formal completion of $U$. Let $0\to G'\to G\to G''\to 0$ be a short exact sequence of (commutative) flat group schemes over $U$. Then the $\cI$-adic formal completion \[0\to G^{'\wedge}\to G^{\wedge}\to G^{''\wedge}\to 0\] is an exact sequence of flat formal group schemes over $\mathcal{U}$. Here the exactness are with respect to the fppf topology.
\end{lemma}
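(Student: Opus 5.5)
The plan is to reduce everything to statements about the truncations $U_n$ and then pass to the limit. Recall that the $\cI$-adic completion of a $U$-scheme $H$ is the formal scheme $H^\wedge=\varinjlim_n H\times_U U_n$. As a sheaf on $(\mathrm{FSch}/\mathcal{U})_{\fl}$ it sends an adic $\mathcal{U}$-formal scheme $\mathcal{V}=\varinjlim_n V_n$, where $V_n=\mathcal{V}\times_{\mathcal{U}}U_n$, to $\varprojlim_n H(V_n)$. Flatness of $G^\wedge$ over $\mathcal{U}$ is immediate, since each $G\times_U U_n$ is flat over $U_n$. So flatness of the completed groups is automatic, and the content lies in the exactness.

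\emph{Left exactness.} Base change preserves kernels, so $G'\times_U U_n=\ker(G_n\to G''_n)$ for each $n$. Taking $\varprojlim_n$ of the sequences $0\to G'(V_n)\to G(V_n)\to G''(V_n)$, which are exact, gives exactness of $0\to G'^\wedge(\mathcal{V})\to G^\wedge(\mathcal{V})\to G''^\wedge(\mathcal{V})$, because inverse limits are left exact. In particular $G'^\wedge$ is the kernel of $G^\wedge\to G''^\wedge$ as formal group schemes.

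\emph{Surjectivity.} This is the main step. Since $G\to G''$ is an fppf $G'$-torsor, it is faithfully flat and locally of finite presentation. Its completion $G^\wedge\to G''^\wedge$ is adic, and modulo $\cI^{n+1}$ it is the faithfully flat, finitely presented morphism $G_n\to G''_n$. I would invoke the criterion that an adic morphism of formal schemes which is flat and of finite presentation at each level is an fppf covering in $(\mathrm{FSch}/\mathcal{U})_{\fl}$ (cf.\ \cite[Chap.~I]{FK18}). Granting this, $G^\wedge\to G''^\wedge$ is an epimorphism of fppf sheaves: a local section of $G''^\wedge$ over $\mathcal{V}$ lifts after pulling back along the covering $\mathcal{V}\times_{G''^\wedge}G^\wedge\to\mathcal{V}$.

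The expected obstacle is precisely this covering claim. Flatness of an adic morphism of formal schemes must be checked levelwise, and one needs the flat locus to behave well under completion in the possibly non-noetherian setting. Finite generation of $\cI$ is what makes adic flatness well behaved and is used here. If this route gives trouble, I would fall back on a direct lifting argument. Given $x=(x_n)\in\varprojlim G''(V_n)$, the fiber $\mathcal{W}:=\mathcal{V}\times_{G''^\wedge}G^\wedge=\varinjlim_n V_n\times_{G''_n}G_n$ is an adic formal scheme over $\mathcal{V}$ which is a $G'^\wedge$-torsor levelwise. It is faithfully flat and of finite presentation levelwise, and its tautological section lifts $x$. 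This again gives surjectivity fppf locally.
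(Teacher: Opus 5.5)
Your proof is correct, and it is a more careful version of the paper's argument. The paper only notes that $0\to G'_m\to G_m\to G''_m\to 0$ is exact over each $U_m$. It then ``passes to the filtered colimit'' $G^\wedge=\varinjlim_m G_m$, citing \cite[Chap.~I, Cor.~4.8.2]{FK18} for flatness.

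Read as a colimit of fppf sheaves, that description only holds on test objects killed by a power of $\cI$. On a genuine adic formal scheme $\mathcal{V}$ the sections are $\varprojlim_n G(V_n)$, as you say, and $\varprojlim$ is only left exact. So surjectivity is exactly the step the paper leaves implicit.

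You handle it by showing that $G^\wedge\to (G'')^\wedge$ is itself an fppf covering. Your ``expected obstacle'' disappears under the usual levelwise definition of fppf coverings of formal schemes. The morphism is adic because $G^\wedge\times_{(G'')^\wedge}G''_n=G\times_{G''}G''_n=G_n$, and each $G_n\to G''_n$ is a base change of $G\to G''$. Your fallback via $\mathcal{V}\times_{(G'')^\wedge}G^\wedge$ is the same argument written out.

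Two points are tacit in your write-up. First, saying $G\to G''$ is locally of finite presentation requires $G'$ to be locally of finite presentation over $U$. This is not in the statement, but it holds for the tori, abelian schemes and their extensions to which the paper applies the lemma. Second, ``flatness is immediate'' takes levelwise flatness as the definition. Relating it to flatness of the completed formal schemes in a non-noetherian setting is what the paper's citation of \cite{FK18} is for.
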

\begin{proof}
    Clearly we have an exact sequence $0\to G'_m\to G_m\to G''_m\to 0$ of flat group schemes over $U_m$ for each integer $m\geq0$. Note that schemes over $U_m$ are naturally formal schemes over $\mathcal{U}$. Passing to (filtered) colimit, we get an exact sequence
    $0\to G^{'\wedge}\to G^{\wedge}\to G^{''\wedge}\to 0$. The flatnesses of $G^{'\wedge}$, $G^{\wedge}$ and $G^{''\wedge}$ over $\mathcal{U}$ follow from \cite[Chap. I, Cor. 4.8.2]{FK18}.
\end{proof}

\begin{lemma}\label{lem:taking rigid fibers preserves kernel and s.e.s.}
    Let $f_{\cS}:\cG_1\to\cG_2$ be a homomorphism of formal group schemes over $\cS$, and let $0\to\cG'\to\cG\to\cG''\to0$ be a short exact sequence of flat formal group schemes over $\cS$. Then $\ker(f_{\cS})^{\rig}=\ker(f_{\cS}^{\rig})$, and the sequence \[0\to\cG^{'\rig}\to\cG^{\rig}\to\cG^{''\rig}\to0\] of rigid analytic groups over $K$ is exact.
\end{lemma}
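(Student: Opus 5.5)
The plan is to use the generic fiber functor of Raynaud/Berthelot, $\mathcal{X}\mapsto\mathcal{X}^{\rig}$, together with its explicit description in terms of the special fibers. Write $\cS_n:=\Spec\OK/(\pi^{n+1})$. The homomorphism $f_{\cS}$ is then the compatible system of reductions $f_n:\cG_{1,n}\to\cG_{2,n}$.

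\textbf{Kernel.} The formal kernel $\ker(f_{\cS})$ is the fiber product $\cG_1\times_{\cG_2}\cS$ along the unit section $e:\cS\to\cG_2$. Fiber products of formal schemes locally of topologically finite type over $\cS$ are computed termwise by the $\ker(f_n)$. I will use that the rigid generic fiber functor commutes with finite fiber products, as in \cite[Chap.~II]{FK18}, already used in Proposition~\ref{prop:n-map exact seq}. This gives
\[
\ker(f_{\cS})^{\rig}=\cG_1^{\rig}\times_{\cG_2^{\rig}}\Sp K=\ker(f_{\cS}^{\rig}),
\]
since $\cS^{\rig}=\Sp K$ and $e^{\rig}$ is the unit section of $\cG_2^{\rig}$. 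The equality holds as representable functors on $(\mathrm{Rig}/K)$, hence as fppf sheaves.

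\textbf{Exactness of $0\to\cG'^{\rig}\to\cG^{\rig}\to\cG''^{\rig}\to0$.} The first part, applied to $\cG\to\cG''$ with $\cG'=\ker(\cG\to\cG'')$, gives exactness on the left and in the middle. Left exactness also needs $\cG'\to\cG$ to be a monomorphism, which it is as a kernel.

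The remaining and main step is surjectivity of $\cG^{\rig}\to\cG''^{\rig}$ for the fppf topology on $(\mathrm{Rig}/K)$. The map $q:\cG\to\cG''$ is an fppf epimorphism with flat kernel $\cG'$, and $\cG''$ is flat. By fiberwise flatness, $q$ is faithfully flat: each reduction $q_n$ is flat, since $\cG_n$ and $\cG''_n$ are flat over $\cS_n$ and $q_0$ is flat on the special fiber as a surjection of group schemes. Hence $q$ is a faithfully flat adic morphism.

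I then plan to invoke the standard fact, as in the proof of Proposition~\ref{prop:n-map exact seq}(1), that the generic fiber of a faithfully flat morphism of formal schemes of finite type is a flat surjective morphism of rigid spaces \cite[Chap.~II]{FK18}. Such a morphism is an fppf covering in the sense of \cite{BX96}, possibly after taking a quasi-compact admissible refinement locally on $\cG''^{\rig}$. Therefore every section of $\cG''^{\rig}$ over $V\in(\mathrm{Rig}/K)$ lifts fppf-locally on $V$, by pulling back the covering $\cG^{\rig}\to\cG''^{\rig}$.

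The main obstacle I expect is verifying this flatness and covering property. The cleanest route is to reduce to affine formal opens, where $q$ is given by a flat, topologically finite type map $A''\to A$ of admissible $\OK$-algebras. Then $A''[1/\pi]\to A[1/\pi]$ is flat, and surjectivity on points follows from surjectivity on the special fibers together with flatness; this is Raynaud's theory.
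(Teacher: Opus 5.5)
Your proposal is correct and follows essentially the same route as the paper. The kernel statement comes from compatibility of $(-)^{\rig}$ with fiber products. Surjectivity comes from faithful flatness of $\cG\to\cG''$, which yields a flat, set-theoretically surjective (via rig-points), hence covering, map $\cG^{\rig}\to(\cG'')^{\rig}$.

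The only variation is how you get faithful flatness. You use the fiberwise flatness criterion plus flatness of the reduction $q_0$; note that $q_0$ must be fppf-surjective, which it inherits from $q$, not merely surjective on points. The paper gets this in one line by descent, since $\cG\to\cG''$ is a $\cG'$-torsor and $\cG'$ is faithfully flat over $\cS$.
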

\begin{proof}
    By \cite[Chap. II, Cor. 2.4.2]{FK18}, we get $\ker(f_{\cS})^{\rig}=\ker(f_{\cS}^{\rig})$. Then for the exactness of the sequence, it suffices to show that $\cG^{\rig}\to\cG^{''\rig}$ is surjective as a map of sheaves. Since $\cG$ is a $\cG'$-torsor over $\cG''$ and $\cG'$ is faithfully flat over $\cS$, $\cG\to \cG''$ is also faithfully flat by descent. Then $\cG^{\rig}\to \cG^{''\rig}$ is also flat, and further set-theoretically surjective by reducing to the formal case via rig-points (see \cite[\S8.3, Def. 1 and Prop. 7]{Bos14}).
\end{proof}

\begin{lemma}\label{lem:rigid analytification preserves kernel and s.e.s.}
    Let $f:G_1\to G_2$ be a homomorphism of group schemes over $K$, and let $0\to G'\to G\to G''\to0$ be a short exact sequence of group schemes over $K$. Then $\ker(f)^{\rig}=\ker(f^{\rig})$, and the sequence \[0\to G^{'\rig}\to G^{\rig}\to G^{''\rig}\to0\] of rigid analytic groups over $K$ is exact.
\end{lemma}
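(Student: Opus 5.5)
The plan is to split the statement into two parts: the claim about kernels, and the claim that analytification takes short exact sequences to short exact sequences. For kernels, I will use the universal property of rigid analytification. By \cite[\S5.1]{Con99}, for a locally finite type $K$-scheme $Z$, the analytification $Z^{\rig}$ represents the functor on rigid spaces $V\mapsto \Hom_K(V,Z)$, where morphisms of locally ringed spaces $V\to Z$ over $K$ are meant. Analytification therefore commutes with fiber products. Since $\ker(f)=G_1\times_{G_2}\Spec K$, with the second map the unit section, we get $\ker(f)^{\rig}=G_1^{\rig}\times_{G_2^{\rig}}\Sp K=\ker(f^{\rig})$. Applied to $G\to G''$, this gives $G'^{\rig}=\ker(G^{\rig}\to G''^{\rig})$. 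It also shows that $G'^{\rig}\to G^{\rig}$ is a monomorphism, because injectivity on $V$-points can be checked through the same universal property.

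It then remains to show that $G^{\rig}\to G''^{\rig}$ is an epimorphism of sheaves on $(\Rig/K)_{\fl}$. The algebraic quotient map $q:G\to G''$ is faithfully flat and of finite type, since it is a $G'$-torsor and $G'$ is flat over the field $K$. Analytification preserves flatness because of the comparison of completed local rings at rigid points, see \cite[\S5.2]{Con99}. It also preserves surjectivity on points, since the points of $Z^{\rig}$ are the closed points of $Z$ and a surjective finite type map is surjective on closed points. Hence $q^{\rig}$ is flat and surjective, and it is also locally of finite type. By the definition of the fppf topology on rigid spaces in \cite[\S3]{BX96}, such a morphism is a covering, provided quasi-compactness is handled locally; one can check this over affinoid opens of $G''^{\rig}$ by choosing finitely many affinoids upstairs covering the image. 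So $q^{\rig}$ admits local sections fppf-locally on any $V\to G''^{\rig}$, because $V\times_{G''^{\rig}}G^{\rig}\to V$ is an fppf cover with a tautological section after base change. This gives surjectivity as a map of sheaves.

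An alternative route to surjectivity is the torsor description. Since $G\to G''$ is a $G'$-torsor, $G\times_{G''}G\cong G\times G'$, and analytification turns this into a $G'^{\rig}$-torsor structure on $G^{\rig}\to G''^{\rig}$ once we know it is an fppf covering. Either way, I expect the main obstacle to be the step showing that the flat surjective morphism $q^{\rig}$ is an fppf covering in the sense of \cite{BX96}, in particular the quasi-compactness and admissibility condition. I would first try to reduce to the affine, finite type algebraic case via the quasi-compactness of $q$ on affine opens and the compatibility of analytification with open immersions, and then quote the standard fact that flat surjective maps of affinoids are fppf coverings.
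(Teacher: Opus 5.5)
Your proposal is essentially the paper's proof: the kernel statement comes from analytification commuting with fiber products (the paper cites \cite[Lem.~5.1.2.3]{Con99}), and surjectivity holds because $G^{\rig}\to (G'')^{\rig}$ is flat and set-theoretically surjective by \cite[Thm.~5.2.1]{Con99}, hence faithfully flat and so an epimorphism of fppf sheaves. The paper takes that last step without comment, so it also leaves unaddressed the quasi-compactness condition for fppf coverings in \cite{BX96} that you flag; if you pursue it, your sketched reduction needs adjusting, since the analytification of an affine $K$-scheme is not affinoid, and the real content is finding finitely many affinoids in $G^{\rig}$ whose images cover a given affinoid in $(G'')^{\rig}$.
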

\begin{proof}
    By \cite[Lem. 5.1.2.3]{Con99}, we get $\ker(f)^{\rig}=\ker(f^{\rig})$. Then for the exactness of the sequence, it suffices to show that $G^{\rig}\to G^{''\rig}$ is surjective as a map of sheaves. Since $G\to G''$ is flat, so is $G^{\rig}\to G^{''\rig}$ by \cite[Thm.5.2.1.1]{Con99}. Since $G\to G''$ is set-theoretically surjective, so is $G^{\rig}\to G^{''\rig}$ by \cite[Thm.5.2.1.2]{Con99}. Therefore $G^{\rig}\to G^{''\rig}$ is faithfully flat, and thus surjective.
\end{proof}

Now we discuss the compatibility among formal completion, taking (algebraic or rigid) generic fiber, and rigid analytification.

\begin{proposition}\label{prop:formal completion, taking algebraic and rigid generic fibers, and rigid analytification}
    The following diagram 
    \[\xymatrix{    \mathscr{M}_{1,S}^{\mathrm{alg}}\ar[rr]^{(-)^{\wedge}}\ar[d]_{\mathrm{GF}^{\mathrm{alg}}}^{\simeq} &&\mathscr{M}_{1,\cS}\ar[d]^{\mathrm{GF}}_{\simeq} \\
    \mathscr{M}_{1,K}^{\mathrm{alg,good}}\ar[rr]^-{(-)^{\rig}} &&\mathscr{M}_{1,K}^{\mathrm{good}}
    }\]
    is commutative.

    Moreover, for any positive integer $n$, the four functors in the above diagram are compatible with the formation of $T_{\Z/n\Z}(-)$. 
\end{proposition}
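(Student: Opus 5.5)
The plan is to compare, for an algebraic 1-motive $\mathcal{M}=[Y_S\xrightarrow{u_S}\cG]$ over $S$ with $0\to\cT\to\cG\to\cB\to 0$, the two rigid 1-motives $\mathrm{GF}(\mathcal{M}^{\wedge})$ and $(\mathrm{GF}^{\mathrm{alg}}(\mathcal{M}))^{\rig}$ term by term, and to build an isomorphism natural in $\mathcal{M}$. First, by Lemma \ref{lem:formal completion preserves kernel and s.e.s.}, the completion $\mathcal{M}^{\wedge}=[Y_S^{\wedge}\to\cG^{\wedge}]$ has $\cG^{\wedge}$ an extension of the formal abelian scheme $\cB^{\wedge}$ by the formal torus $\cT^{\wedge}=\cHom_{\cS}(X_{\cS},\Gmfml)$, so it is indeed a formal 1-motive; moreover $(Y_S^{\wedge})^{\rig}=Y_S\times_S\Spec K$ as \'etale locally constant sheaves. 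Next, on the other side, Lemma \ref{lem:rigid analytification preserves kernel and s.e.s.} gives $0\to T^{\rig}\to G^{\rig}\to B^{\rig}\to 0$ with $G=\cG_K$.

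The core step is a canonical map of rigid groups $(\cG^{\wedge})^{\rig}\to (\cG_K)^{\rig}$: for any separated finite type $S$-scheme $\mathcal{X}$ there is the standard open immersion (Berthelot/Raynaud comparison) $(\mathcal{X}^{\wedge})^{\rig}\hookrightarrow (\mathcal{X}_K)^{\rig}$, functorial in $\mathcal{X}$ and compatible with products, hence with group structures. Applied to $\cT\to\cG\to\cB$ it yields a morphism of extensions from $0\to\cT^{\wedge\rig}\to\cG^{\wedge\rig}\to\cB^{\wedge\rig}\to0$ (exact by Lemma \ref{lem:taking rigid fibers preserves kernel and s.e.s.}) to the analytified sequence, which is the identity on the abelian part since $\cB$ is proper (so $(\cB^{\wedge})^{\rig}=(\cB_K)^{\rig}$), and is the inclusion $\cT^{\wedge\rig}\hookrightarrow T^{\rig}$ on tori (after \'etale descent, this is $\Gmfml^{\rig}\hookrightarrow\GmK^{\rig}$). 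By the universal property of pushout this induces a homomorphism $\mathrm{GF}(\mathcal{M}^{\wedge})_0\to G^{\rig}$, which is an isomorphism by the five lemma. Compatibility with $u$ follows because $u_S$ on $K$-points of the lattice is sent to the same element by both routes (the open immersion is compatible with taking points), so the degree $-1$ parts agree. Naturality in $\mathcal{M}$ follows from functoriality of the comparison map and of pushouts.

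For the $T_{\Z/n\Z}$ statement, since all four functors are computed by the same formula $H^{-1}(-\otimes^L\Z/n\Z)$, I would show each functor commutes with the short exact sequence $0\to G[n]\to T_{\Z/n\Z}\to Y/n\to0$. For completion this is Lemma \ref{lem:formal completion preserves kernel and s.e.s.} applied to the finite flat group $T_{\Z/n\Z}(\mathcal{M})$ and to multiplication by $n$; for rigid generic fiber and analytification use Lemmas \ref{lem:taking rigid fibers preserves kernel and s.e.s.}, \ref{lem:rigid analytification preserves kernel and s.e.s.} together with Proposition \ref{prop:n-map exact seq}; and for $\mathrm{GF}$ use Proposition \ref{prop:formal 1-motive ass. to rigid 1-mot with good red} (2), identifying $T_{\Z/n\Z}(\mathrm{GF}(\mathcal{N}))$ with $T_{\Z/n\Z}(\mathcal{N}^{\rig})$. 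The finite case also requires noting that for a finite flat $S$-group $F$, $(F^{\wedge})^{\rig}=(F_K)^{\rig}$. The main obstacle I expect is the careful justification that the comparison open immersion $(\mathcal{X}^{\wedge})^{\rig}\hookrightarrow(\mathcal{X}_K)^{\rig}$ is a group homomorphism identifying the correct pushout; I would first check it for $\mathcal{X}=\mathbb{G}_{m,S}$ directly and deduce the rest by descent and the five lemma.
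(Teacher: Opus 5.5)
This is essentially the paper's proof: Conrad's comparison map $(\mathcal{X}^{\wedge})^{\rig}\to(\mathcal{X}_K)^{\rig}$ is applied to $\cT\to\cG\to\cB$; it is an isomorphism on the abelian part by properness, so the torus square is a pushout; the agreement of $u$ is checked on sections; and for $T_{\Z/n\Z}$ you use the identification $T_{\Z/n\Z}(\mathrm{GF}(\mathcal{N}))=T_{\Z/n\Z}(\mathcal{N}^{\rig})$ together with the same exactness lemmas. The one point to sharpen is that agreement of the outer terms of $0\to G[n]\to T_{\Z/n\Z}\to Y/nY\to 0$ does not by itself identify the middle terms; the paper gets the comparison map by writing $T_{\Z/n\Z}(\mathcal{M})$ as the cokernel of $Y_{\cS}\to\ker\bigl((u_{\cS},-n)\colon Y_{\cS}\oplus\cG\to\cG\bigr)$, which is built from kernels and short exact sequences that rigid generic fiber and analytification preserve.
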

\begin{proof}
    Let $\cB$ (resp. $\cT$) be an abelian scheme (resp. torus) over $S$, and let $\cG$ be an extension of $\cB$ by $\cT$. Let $T:=\cT\times_S\Spec K$, $G:=\cG\times_S\Spec K$ and $B:=\cB\times_S\Spec K$. By the functoriality of the canonical map $i_X$ from \cite[Thm. 5.3.1]{Con99} \footnote{Note that the rigid analytification is denoted as $(-)^{\mathrm{an}}$ in \cite{Con99}, while we are using the notation $(-)^{\rig}$ for both rigid analytification and rigid generic fiber.}, Lemma \ref{lem:formal completion preserves kernel and s.e.s.}, Lemma \ref{lem:taking rigid fibers preserves kernel and s.e.s.} and Lemma \ref{lem:rigid analytification preserves kernel and s.e.s.}, we get a commutative diagram
    \[\xymatrix{
    0\ar[r] &(\cT^{\wedge})^{\rig}\ar[r]\ar[d] &(\cG^{\wedge})^{\rig}\ar[r]\ar[d] &(\cB^{\wedge})^{\rig}\ar[r]\ar[d]^{\cong} &0 \\
    0\ar[r] &T^{\rig}\ar[r] &G^{\rig}\ar[r] &B^{\rig}\ar[r] &0
    }\]
    with exact rows. By \cite[Thm. 5.3.1.4]{Con99}, the right vertical map in the above diagram is an isomorphism. Therefore the left square of the above diagram is a push-out diagram. 
    
    Now let $\mathcal{M}=[Y_S\xrightarrow{u_S}\cG]\in\sM_{1,S}^{\mathrm{alg}}$. Let $u:=u_S\times_S\Spec K$, $u_S^{\wedge}$ the formal competion of $u_S$, $u^{\rig}$ the rigid analytification of $u$, and $(u_S^{\wedge})^{\rig}$ the rigid fiber of $u_S^{\wedge}$. To show the commutativity of the diagram in the statement, it suffices to show that $u^{\rig}$ equals \[Y_S\xrightarrow{(u_S^{\wedge})^{\rig}}(\cG^{\wedge})^{\rig}\hookrightarrow G.\] Without loss of generality, we may assume that $Y_S\cong\Z$. Then we are reduced to show the commutativity of the diagram
    \[\xymatrix{
    \Gamma(S,\cG)\ar[r]\ar[d] &\Gamma(\cS,\cG^{\wedge})\ar[d] \\
    \Gamma(\Spec K,G)\ar[r] &\Gamma(\Sp K,G^{\rig}).
    }\]
    But this is clear by the functoriality of the canonical map $i_X$ from \cite[Thm. 5.3.1]{Con99} applied to the morphism $S\to\cG$.

The compatibility of $\mathrm{GF}^{\mathrm{alg}}$ with $T_{\Z/n\Z}(-)$ is trivial. The compatibility of $(-)^\wedge$ with $T_{\Z/n\Z}(-)$ is also clear by passing to the finite levels (over $\Spec\cO_K/(\pi)^{n+1}$).

    We show the compatibility of $\mathrm{GF}$ with $T_{\Z/n\Z}(-)$. Given $\mathcal{M}:=[Y_{\cS}\xrightarrow{u_{\cS}}\cG]\in\sM_{1,\cS}$, we denote $\mathrm{GF}(\mathcal{M})$ as $M=[Y\xrightarrow{u}G]$. Note that $M$ is different from $\mathcal{M}^{rig}=[Y\xrightarrow{u_{\cS}^{\rig}}\cG^{\rig}]$, but $T_{\Z/n\Z}(M)=T_{\Z/n\Z}(\mathcal{M}^{\rig})$ by Proposition \ref{prop:formal 1-motive ass. to rigid 1-mot with good red} (2). By definition, $T_{\Z/n\Z}(\mathcal{M})$ is given by the cohomology of the complex 
    \[Y_{\cS}\xrightarrow{\begin{pmatrix}
    n_{Y_{\cS}} &u_{\cS} \end{pmatrix}}Y_{\cS}\oplus\cG\xrightarrow{\begin{pmatrix} u_{\cS} &-n_{Y_{\cS}} \end{pmatrix}^t}\cG\]
    at degree -1. Thus we have a short exact sequence
    \begin{equation}\label{eq:local equation with no name1}
        0\to Y_{\cS}\xrightarrow{\begin{pmatrix}
    n_{Y_{\cS}} &u_{\cS} \end{pmatrix}} \ker(\begin{pmatrix} u_{\cS} &-n_{Y_{\cS}} \end{pmatrix}^t)\xrightarrow{\alpha} T_{\Z/n\Z}(\mathcal{M})\to 0
    \end{equation}
    of formal group schemes over $\cS$. Similarly we have a short exact sequence
    \begin{equation}\label{eq:local equation with no name2}
        0\to Y\xrightarrow{\begin{pmatrix}
        n_{Y} &u_{\cS}^{\rig} \end{pmatrix}} \ker(\begin{pmatrix} u_{\cS}^{\rig} &-n_{Y} \end{pmatrix}^t)\to T_{\Z/n\Z}(\mathcal{M}^{\rig})\to 0
    \end{equation}
    of rigid analytic groups over $K$. By Lemma \ref{lem:taking rigid fibers preserves kernel and s.e.s.}, the rigid generic fiber of the sequence \eqref{eq:local equation with no name1} is isomorphic to \eqref{eq:local equation with no name2}. So we are done.

    The compatibility of $(-)^{\rig}:\mathscr{M}_{1,K}^{\mathrm{alg,good}}\to \mathscr{M}_{1,K}^{\mathrm{good}}$ with $T_{\Z/n\Z}(-)$ follows by a similar argument as above with the help of Lemma \ref{lem:rigid analytification preserves kernel and s.e.s.}.     
\end{proof}

\begin{remark}
    As the functor $\mathrm{GF}^{\mathrm{alg}}$ is induced by the base change from $S$ to $\Spec K$, and schemes over $K$ form a full subcategory of the category of schemes over $S$, so the compatibility of $\mathrm{GF}^{\mathrm{alg}}$ with $T_{\Z/n\Z}(-)$ is trivial. The compability of $(-)^{\wedge}$ with $T_{\Z/n\Z}(-)$ is also easy by passing to finite levels. The functor $\mathrm{GF}$ is induced by taking rigid generic fiber, and formal schemes over $\cS$ and rigid spaces over $K$ do not live in a suitable common category, therefore the compatibility between $\mathrm{GF}$ and $T_{\Z/n\Z}(-)$ is not straightforward. Similarly, the compatibility between $(-)^{\rig}$ and $T_{\Z/n\Z}(-)$ is also not straightforward. Later we will see that the compatibility of $(-)^{\wedge}$ with $T_{\Z/n\Z}(-)$ will become complicated when we bring log 1-motives and log formal 1-motives into play in subsection \ref{subsec:formal completion and analytification II}.
\end{remark}

\begin{corollary}\label{cor:taking rigid analytification of 1-motives is compatible with taking BT groups}
    Let $\ell$ be a prime number, and let $\BT_{\Spec K}$ (resp. $\BT_{\Sp K}$, resp. $\BT_{S}$, resp. $\BT_{\cS}$) be the category of $\ell$-divisible groups over $\Spec K$ (resp. $\Sp K$, resp. $S$, resp. $\cS$). 
    Then we have a commutative diagram
    \[\xymatrix{
    \mathscr{M}_{1,K}^{\mathrm{alg,good}}\ar[rr]^-{(-)^{\rig}}\ar[d] &&\mathscr{M}_{1,K}^{\mathrm{good}}\ar[d] \\
    \BT_{\Spec K}\ar[rr]^-{\simeq}_-{(-)^{\rig}} &&\BT_{\Sp K}  \\
    \BT_{S}\ar[rr]^-{\simeq}_-{(-)^{\wedge}}\ar@{^(->}[u] &&\BT_{\cS}\ar@{^(->}[u],
    }\]
    where the upper vertical functors take $\ell$-divisible groups of algebraic and rigid analytic 1-motives respectively, the lower vertical functors are induced by taking generic fiber which are fully faithful by \cite[\S4, Thm. 4]{Tat67} and\footnote{In particular, the right lower vertical functor is the naive generic fiber functor which produces usual $p$-divisible groups over $K$. One should not be confused with the rigid analytic generic fiber functor in subsection \ref{sec:F-functor}.} \cite[\S1.2]{deJ98}, the upper and middle horizontal functors are induced by the rigid analytification, and the lower horizontal functor is induced by formal completion.
\end{corollary}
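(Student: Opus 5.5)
The diagram splits into two squares, which I would treat separately, and then glue.

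\emph{Lower square.} Given an $\ell$-divisible group $H$ over $S=\Spec\cO_K$, its formal completion $H^{\wedge}$ is the system of the finite flat group schemes $H[\ell^i]$, viewed over $\cS$. For finite flat group schemes the rigid generic fiber of the formal completion of $H[\ell^i]$ coincides with the rigid analytification of the algebraic generic fiber $H[\ell^i]_K$. This follows from the functoriality of the comparison map $i_X$ of \cite[Thm.~5.3.1]{Con99}, which is an isomorphism for $X$ finite (hence proper) over $S$. Passing to the colimit over $i$ gives commutativity of the lower square up to canonical isomorphism. That the horizontal functors are equivalences is standard: rigid GAGA handles finite group schemes over $K$, and \cite[\S2.4.4]{deJ95} together with \cite[Prop.~5.4.4]{EGA3-1} handles the formal completion. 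The full faithfulness of the lower vertical arrows is the quoted result of Tate and de Jong.

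\emph{Upper square.} Let $M\in\mathscr{M}_{1,K}^{\mathrm{alg,good}}$. Then $M[\ell^\infty]=\varinjlim_i T_{\Z/\ell^i\Z}(M)$ and $M^{\rig}[\ell^\infty]=\varinjlim_i T_{\Z/\ell^i\Z}(M^{\rig})$. So it suffices to produce isomorphisms
\[
T_{\Z/n\Z}(M)^{\rig}\cong T_{\Z/n\Z}(M^{\rig}),
\]
natural in $M$ and compatible with the transition maps. This is exactly the last assertion of Proposition~\ref{prop:formal completion, taking algebraic and rigid generic fibers, and rigid analytification}, namely that $(-)^{\rig}$ is compatible with $T_{\Z/n\Z}(-)$.

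I would make its proof explicit as follows. Write $T_{\Z/n\Z}(M)$ as the cokernel of
\[
Y\to\ker\bigl((u,-n)^t\colon Y\oplus G\to G\bigr),
\]
as in \eqref{eq:local equation with no name1}. Then apply Lemma~\ref{lem:rigid analytification preserves kernel and s.e.s.}, which says that analytification preserves kernels and short exact sequences. Compatibility with the transition maps holds because the maps $T_{\Z/\ell^{i}\Z}\to T_{\Z/\ell^{i+1}\Z}$ are induced by morphisms of these presenting complexes, and analytification is a functor. Naturality in $M$ holds for the same reason.

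\emph{Gluing.} The upper and lower squares are independent; the middle row is shared. The main obstacle is to check that the identifications in both squares are canonical, not merely objectwise isomorphisms, so that the squares commute as functors up to natural isomorphism. I expect this to reduce to the functoriality of $i_X$ in \cite[Thm.~5.3.1]{Con99}, together with the fact that all the kernels involved are formed compatibly.
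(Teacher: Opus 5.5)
Your proposal is correct and follows the paper's argument. The paper deduces the corollary directly from the compatibility of $(-)^{\rig}$ with $T_{\Z/n\Z}(-)$ in Proposition~\ref{prop:formal completion, taking algebraic and rigid generic fibers, and rigid analytification}, which is proved via Lemma~\ref{lem:rigid analytification preserves kernel and s.e.s.} exactly as you sketch, and then passes to the colimit over $\ell^i$. Your separate treatment of the lower square, using the comparison map $i_X$ of \cite[Thm.~5.3.1]{Con99} for finite (hence proper) group schemes, is a detail the paper leaves implicit, and it is sound.
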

\begin{proof}
    This follows from the compatibility of $(-)^{\rig}:\mathscr{M}_{1,K}^{\mathrm{alg,good}}\to \mathscr{M}_{1,K}^{\mathrm{good}}$ with $T_{\Z/n\Z}(-)$ from Lemma \ref{prop:formal completion, taking algebraic and rigid generic fibers, and rigid analytification}.
\end{proof}

\begin{corollary}[N\'eron-Ogg-Shafarevich criterion for good reduction]\label{thm:Neron-Ogg-Shafarevich for good reduction of algebraic 1-motives}
    Let 
    \[M=[Y\xrightarrow{u}G]\in \mathscr{M}_{1,K}^{\mathrm{alg}}\]
    and let $\ell\neq\mathrm{char}(K)$ be a prime number. Then $M$ has good reduction if and only if the Galois $\Z_{\ell}$-representation $T_{\ell}(M)$ is unramified (resp. crystalline) in case that $\ell\neq\mathrm{char}(k)$ (resp. $\ell=\mathrm{char}(k)$).
\end{corollary}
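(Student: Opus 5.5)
The plan is to deduce the algebraic criterion from the rigid analytic one, Theorem \ref{thm:Neron-Ogg-Shafarevich for good reduction of rigid 1-motives}, by passing through rigid analytification. Two comparisons are needed. First, $M$ has good reduction if and only if $M^{\rig}$ has good reduction; this is Proposition \ref{prop:algebraic 1-motive has good reduction iff so is its rigid analytification}. Second, $T_\ell(M)\cong T_\ell(M^{\rig})$ as Galois $\Z_\ell$-representations. Once both are in hand, the algebraic statement follows immediately by applying Theorem \ref{thm:Neron-Ogg-Shafarevich for good reduction of rigid 1-motives} to $M^{\rig}\in\mathscr{M}_{1,K}$.

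For the identification of Tate modules, I will compare finite levels. I claim that for every $n$ invertible in $K$ there is a $\Gamma_K$-equivariant identification
\[T_{\Z/n\Z}(M)(\overline K)\cong T_{\Z/n\Z}(M^{\rig})(\overline K),\]
compatible as $n$ varies. The route is to present $T_{\Z/n\Z}(M)$ as the cokernel of
\[Y\xrightarrow{(n_Y,\,u)} \ker\bigl((u,-n_G)^t:Y\oplus G\to G\bigr),\]
exactly as in the proof of Proposition \ref{prop:formal completion, taking algebraic and rigid generic fibers, and rigid analytification}. I then apply Lemma \ref{lem:rigid analytification preserves kernel and s.e.s.}, which says that analytification preserves kernels and short exact sequences. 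This gives $T_{\Z/n\Z}(M)^{\rig}\cong T_{\Z/n\Z}(M^{\rig})$, and the argument does not need $M$ to have good reduction.

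Since these are finite étale group schemes, an algebraic finite group scheme and its analytification have the same $\overline K$-points. Taking the limit over $n=\ell^i$ then yields $T_\ell(M)\cong T_\ell(M^{\rig})$ compatibly with $\Gamma_K$. The compatibility of the transition maps follows from functoriality of analytification.

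The only point I expect to need care is this finite-level comparison outside the good-reduction locus, because the earlier compatibility statement was phrased only for $\mathscr{M}_{1,K}^{\mathrm{alg,good}}$. However, the argument for the bottom horizontal functor there used only Lemma \ref{lem:rigid analytification preserves kernel and s.e.s.}. It therefore applies verbatim to arbitrary algebraic 1-motives. With both comparisons established, the statement follows.
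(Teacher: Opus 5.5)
Your proposal is correct and follows the paper's proof, which likewise combines Theorem \ref{thm:Neron-Ogg-Shafarevich for good reduction of rigid 1-motives} with Proposition \ref{prop:algebraic 1-motive has good reduction iff so is its rigid analytification} and the compatibility of analytification with $T_{\Z/n\Z}(-)$ (the paper cites this via Corollary \ref{cor:taking rigid analytification of 1-motives is compatible with taking BT groups}). You are right that this compatibility is needed outside the good-reduction locus for the converse direction and holds there by the same argument via Lemma \ref{lem:rigid analytification preserves kernel and s.e.s.}; the paper only makes this explicit later, in the proof of the semi-stable analogue.
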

\begin{proof}
This follows by combining Theorem \ref{thm:Neron-Ogg-Shafarevich for good reduction of rigid 1-motives}, Proposition \ref{prop:algebraic 1-motive has good reduction iff so is its rigid analytification} and Corollary \ref{cor:taking rigid analytification of 1-motives is compatible with taking BT groups}.  
\end{proof}

\section{Log formal 1-motives and log $p$-divisible groups}\label{section log formal 1-motives}
In this section, we discuss the integral theory of rigid analytic 1-motives. More precisely, we first discuss the notion of log $p$-divisible groups over an fs log formal scheme $\cS$, which is shown to be equivalent to Kato's notion of log $p$-divisible groups when $\cS$ comes from the formal completion of an fs log scheme $S$ (Proposition \ref{prop:equivalence b.t. logBT over algebraic base and formal base}). Then we discuss the theory of log formal 1-motives and the construction of log $p$-divisible groups from them. In case $\cS=\Spf\,\Ol_K$, we show that the category of log formal 1-motives over $\cS$ is equivalent to the category of strict semi-stable rigid 1-motives (Theorem \ref{thm:1-1 correspondence b.t. log fml 1-mot and strict sst rigid 1-mot}), and prove the N\'eron-Ogg-Shafarevich criterion for semi-stable reduction of rigid 1-motives (Theorem \ref{thm:Neron-Ogg-Shafarevich for semi-stable reduction of rigid 1-motives}).

\subsection{Kato's log $p$-divisible groups}
Let $S$ be a locally noetherian fs log scheme, and let $(\fs/S)$ be the category of fs log schemes over $S$. We endow $(\fs/S)$ with the Kummer log flat topology (see \cite[\S2]{Kat21}), and denote the resulting site as $(\fs/S)_{\kfl}$. We briefly recall Kato's theory of log $p$-divisible groups \cite[\S2]{Kat23}.

We denote by $\mathrm{Ab}_{\kfl}(S)$ (resp. $\mathrm{Ab}_{\fl}(S)$) the category of sheaves of abelian groups over $(\fs/S)_{\kfl}$ (resp. $(\fs/S)_{\fl}$).

\begin{definition}\label{defn1.3}
The category $(\fin/S)_{\mathrm{c}}$ is the full subcategory of $\mathrm{Ab}_{\kfl}(S)$ consisting of objects which are representable by a classical finite locally free group scheme over $S$. Here classical means that the log structure of the representing log scheme is induced from $S$.

The category $(\mathrm{fin}/S)_{\mathrm{f}}$ is the full subcategory of $\mathrm{Ab}_{\kfl}(S)$ consisting of objects which are representable by a classical finite locally free group scheme over a Kummer log flat cover of $S$. For $F\in (\mathrm{fin}/S)_{\mathrm{f}}$, let $U\rightarrow S$ be a Kummer log flat cover of $S$ such that $F_U:=F\times_S U\in (\mathrm{fin}/U)_{\mathrm{c}}$. Then the rank of $F$ is defined to be  the rank of $F_U$ over $U$.

We define $(\fin/S)_{\mathrm{r}}\subset (\fin/S)_{\mathrm{f}}$ as the full subcategory consisting of objects which are representable by a log scheme over $S$. 

Let $F\in (\fin/S)_{\mathrm{f}}$, the Cartier dual of $F$ is the sheaf $F^*:=\cHom_{S_{\mathrm{kfl}}}(F,\Gm)$. By the definition of $(\fin/S)_{\mathrm{f}}$, it is clear that $F^*\in (\fin/S)_{\mathrm{f}}$.

We define $(\fin/S)_{\mathrm{d}}\subset (\mathrm{fin}/S)_{\mathrm{r}}$ as the full subcategory consisting of objects whose Cartier dual also lies in $(\fin/S)_{\mathrm{r}}$. We call an object of $(\fin/S)_{\mathrm{d}}$ \emph{dual representable}.
\end{definition}

\begin{definition}\label{def log p-div Kato}
Let $p$ be a prime number. A \emph{log $p$-divisible group} over $S$ is a sheaf of abelian groups $G$ on $(\fs/S)_{\kfl}$ satisfying:
\begin{enumerate}
\item $G=\varinjlim_{n}G[p^n]$ with $G[p^n]:=\ker(G\xrightarrow{p^n} G)$;
\item $G\xrightarrow{p} G$ is surjective;
\item $G[p^n]\in (\fin/S)_{\mathrm{r}}$ for any $n> 0$.
\end{enumerate}
We denote the category of log $p$-divisible groups over $S$ by $\BT_{S,\mathrm{r}}^{\log}$. We define 
\[\BT_{S,\mathrm{d}}^{\log}\subset \BT_{S,\mathrm{r}}^{\log}\]
as the full subcategory consisting of objects $G$ with $G[p^n]\in (\fin/S)_{\mathrm{d}}$ for $n>0$. We call an object of $\BT_{S,\mathrm{d}}^{\log}$ a \emph{dual representable} log $p$-divisible group.

A log $p$-divisible group $G$ with $G[p^n]\in (\mathrm{fin}/S)_{\mathrm{c}}$ for $n>0$ is clearly just a classical $p$-divisible group, and we denote the full subcategory of $\BT_{S,\mathrm{r}}^{\log}$ consisting of classical $p$-divisible groups by $\BT_{S,\mathrm{c}}^{\log}$.
\end{definition}

From now on, we make the following assumptions on the base log scheme in this subsection:
\begin{itemize}
    \item[(i)] the underlying scheme of $S$ is $\Spec R$ with $R$ a noetherian henselian local ring;
    \item[(ii)] $S$ admits a global chart $P\rightarrow M_S$ such that $P\xrightarrow{\cong}M_{S,s}/\cO_{S,s}^{\times}$, where $s$ denotes the closed point of $S$;
    \item[(iii)] the residue field of $R$ at the closed point $s$ is of characteristic $p>0$.
\end{itemize}
Note that $P$ is automatically an fs monoid. 

\begin{remark}
    If $S$ is an fs log scheme such that its underlying scheme is $\Spec R$ with $R$ a strict henselian local ring, then a chart as in the above condition (ii) always exists.
\end{remark}

\begin{proposition}
    For $S$ satisfying (i)-(iii), we have the following. 
    \begin{enumerate}
        \item For any $F\in(\fin/S)_{\mathrm{f}}$, there exists a canonical short exact sequence
        \[\mathcal{F}:0\to F^{\circ}\to F\to F^{\et}\to 0\] 
        such that for any Kummer log flat cover $S'\to S$ with $S'$ local and $F\times_SS'$ classical, $\mathcal{F}\times_SS'$ is the classical connected-\'etale decomposition.

        \item The object $F$ in (1) lies in $(\fin/S)_{\mathrm{r}}$ (resp. $(\fin/S)_{\mathrm{d}}$) if and only if $F^{\circ}\in (\fin/S)_{\mathrm{c}}$ (resp. $F^{\circ},F^{\et}\in (\fin/S)_{\mathrm{c}}$). If $F$ is killed by a $p$-power, then $F$ lies in $(\fin/S)_{\mathrm{d}}$ if and only if  $F^{\circ},F^{\et}\in (\fin/S)_{\mathrm{c}}$. 

        \item Any $G\in\BT^{\log}_{S,\mathrm{r}}$ lies in a short exact sequence
        \[0\to G^{\circ}\to G\to G^{\et}\to 0\]
        of sheaves on $(\fs/S)_{\kfl}$ such that $G^{\circ}\in \BT^{\log}_{S,\mathrm{c}}$. Moreover $G\in \BT^{\log}_{S,\mathrm{d}}$ if and only if $G^{\et}\in \BT^{\log}_{S,\mathrm{c}}$.
    \end{enumerate}
\end{proposition}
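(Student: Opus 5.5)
The plan is to work over the strict henselization and descend, exploiting the fact that finite objects trivialized Kummer log flat locally are controlled by the Kummer covers $S_n\to S$ attached to the chart $P\to P^{1/n}$. Concretely, for $F\in(\fin/S)_{\mathrm f}$ I would first choose a Kummer log flat cover. Using (i) and (ii), I expect one can take it of the standard form $S'=S\times_{\Spec\Z[P]}\Spec\Z[P^{1/n}]$ followed by a classical fppf cover, after which $F_{S'}$ is classical. Since $R$ is henselian and $S'$ is finite over $S$, $S'$ is a finite disjoint union of local henselian schemes. On each local piece the classical connected-étale sequence $0\to F_{S'}^\circ\to F_{S'}\to F_{S'}^{\et}\to0$ exists and is functorial. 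Its formation commutes with the two projections $S'\times_SS'\rightrightarrows S'$, because connected and étale parts are characterized by the special fibre and are stable under base change between local henselian bases with compatible closed points. Kummer log flat descent of subsheaves then gives $\mathcal F$ on $(\fs/S)_{\kfl}$. Canonicity and the asserted compatibility with any local $S'$ follow from uniqueness of the connected-étale decomposition, which proves (1).

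For (2), I would use that a classical connected finite group over $S'$ is infinitesimal over the closed point. The key claim is that $F^\circ$ descends classically: its Hopf algebra is local with nilpotent augmentation ideal modulo $\mathfrak m$, so the Kummer descent datum, which is given by the action of $\cHom(P^{1/n}/P,\mu_n)$-type twisting, acts on it through a unipotent, hence trivializable, piece. I expect this to show that $F^\circ$ is representable by a classical object if and only if it lies in $(\fin/S)_{\mathrm r}$. Granting that, the equivalence ``$F\in(\fin/S)_{\mathrm r}\iff F^\circ$ classical'' comes in two directions. If $F$ is representable, then $F^\circ=\ker(F\to F^{\et})$ with $F^{\et}$ étale-locally constant in the Kummer sense, and a representable object which is an infinitesimal extension is classical; I would check this by looking at the underlying log structure over the closed point. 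Conversely, $F$ is an $F^\circ$-torsor over $F^{\et}$, and a Kummer-étale finite sheaf $F^{\et}$ is always representable by a log scheme, namely a finite Kummer étale log scheme. A torsor under a classical finite flat group over a representable base is representable by fppf descent of affine schemes. For the dual statement I would apply the same argument to $F^*$, using $(F^*)^\circ=(F^{\et})^*$ and $(F^*)^{\et}=(F^\circ)^*$ whenever orders are $p$-power. This is where the $p$-power hypothesis enters, because otherwise a Cartier dual of a connected prime-to-$p$ part is not connected.

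For (3), I would apply (1) to $G[p^n]$ for each $n$. Functoriality makes the $G[p^n]^\circ$ form an inductive system, and surjectivity of $p$ together with the rank count over a cover $S'$, where $G$ becomes a classical $p$-divisible group in each finite level, shows that $G^\circ:=\varinjlim G[p^n]^\circ$ and $G^{\et}$ are log $p$-divisible. By (2), $G^\circ$ is classical, and $G\in\BT^{\log}_{S,\mathrm d}$ if and only if each $G[p^n]^{\et}$ is classical, that is, $G^{\et}\in\BT^{\log}_{S,\mathrm c}$. I expect the main obstacle to be the descent statement in (2), namely that connected objects in $(\fin/S)_{\mathrm r}$ are automatically classical. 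I would attack it first by reducing to $S$ a log point over the closed fibre via a nilpotent-thickening argument, and then compute Kummer descent data explicitly there.
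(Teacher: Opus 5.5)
Your treatment of (1) and (3) is in line with Kato's argument, and that argument is all the paper uses: it cites \cite[\S2.6 and Prop.~2.7]{Kat23} and only remarks that a chart plus henselianity suffice. In (1) you should take the cover finite and local, which henselianity allows. The gap is the mechanism you propose for (2).

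Your ``key claim'' is that $F^{\circ}$ always descends to a classical object because the Kummer descent datum acts unipotently. This is false, and it would prove too much: combined with your converse it would give $(\fin/S)_{\mathrm f}=(\fin/S)_{\mathrm r}$. Write $n=p^am$ with $(m,p)=1$. The groupoid $S_n\times_SS_n\cong S_n\times\cHom(P^{1/n,\gp}/P^{\gp},\mu_n)$ has a prime-to-$p$ factor that is finite \'etale, and this factor can act on a connected group by a nontrivial semisimple twist.

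For instance, take $p$ odd, $R=W(\bar k)$ with chart $1\mapsto p$, and $1<m\mid p-1$. Twist $\mu_p$ along the tame Kummer cover $S\otimes_{\Z[\N]}\Z[\tfrac1m\N]$ by a nontrivial character $\chi\colon\mu_m\to\mathbb{F}_p^\times=\mathrm{Aut}(\mu_p)$. The result is a connected $F\in(\fin/S)_{\mathrm f}$ that is not classical: its generic fibre is $\mu_p\otimes\chi$, whereas the only connected finite flat group scheme of order $p$ over $W(\bar k)$ is $\mu_p$.

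So the implication ``$F$ representable $\Rightarrow F^\circ$ classical'' must genuinely use representability; a computation of descent data on a log point will not detect it. One argument that works is the following. Let $X$ represent $F$ and let $X^\circ$ be its connected component through the zero section. Then $X\to S$ is Kummer, since this descends from a cover on which $F$ is classical. At the closed point $x$ of $X^\circ$, the zero section gives a retraction of the Kummer map $\bar M_{S,s}\to\bar M_{X,x}$. Because $\bar M_{X,x}$ is sharp and saturated, $\bar M_{X,x}^{\gp}$ is torsion free, and this forces the map to be bijective. Hence $X^\circ$ is strict, i.e.\ classical, over $S$, and it represents $F^\circ$, as one checks after base change to that cover.

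Three further steps need repair.

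First, the identities $(F^*)^\circ=(F^{\et})^*$ and $(F^*)^{\et}=(F^\circ)^*$ fail as soon as $F^\circ$ has a local-local part, e.g.\ $\alpha_p$ or $E[p]$ for a supersingular elliptic curve $E$. Dualizing $0\to F^\circ\to F\to F^{\et}\to0$ only gives $(F^{\et})^*\subset(F^*)^\circ$. For ``$F\in(\fin/S)_{\mathrm d}\Rightarrow F^{\et}$ classical'', use instead that $(F^{\et})^*=((H^*)^{\et})^*$ with $H:=(F^*)^\circ$, i.e.\ $(F^{\et})^*$ is the multiplicative part of $H$, which is classical once $H$ is. This is where the $p$-power hypothesis really enters: the inclusion $(F^{\et})^*\subset(F^*)^\circ$ needs $F^{\et}$ to have $p$-power order. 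Indeed, for $(n,p)=1$ the object $T_{\Z/n\Z}([\Z\to\Gml])$ with $1\mapsto\pi$ is dual representable, while its \'etale part, which is itself, is not classical. For the converse direction, apply the torsor argument to $F^*$, viewed as an extension of $(F^\circ)^*$ by $(F^{\et})^*$.

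Second, that torsor argument needs effectivity of Kummer log flat descent for torsors under classical finite flat group schemes, which is a representability theorem of Kato. Plain fppf descent does not suffice, and the resulting log scheme need not be classical.

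Third, $F^{\et}$ only becomes classical over $S_n$ where possibly $p\mid n$. To represent it by a finite Kummer \'etale log scheme, you must show that the infinitesimal factor $\cHom(P^{1/p^a,\gp}/P^{\gp},\mu_{p^a})$ acts trivially on \'etale objects. This is the place where your unipotence intuition is actually correct.
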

\begin{proof}
    (1) Since the underlying scheme of $S$ is henselian local and its log structure admits a chart, the argument of \cite[\S2.6]{Kat23} (which is under the strictly henselian local assumption) works also here.

    (2) With (1) at hand, the arguments in the proof of \cite[Prop. 2.7 (2) and (3)]{Kat23} work also here.

    (3) This follows from (2).
\end{proof}

The following theorem is \cite[Thm. 3.1]{Kat23}.

\begin{theorem}[Kato's decomposition theorem]\label{Thm: Kato's decomposition thm for finite kfl gp log sch}
    Assume that $S$ satisfies (i)-(iii). Let $\cC$ be the full subcategory of $(\fin/S)_{\mathrm{d}}$ consisting of objects which are killed by a power of $p$. Let $\cC'$ be the category defined as:
    \begin{itemize}
        \item the objects are pairs $(F,N)$ with $F\in(\fin/S)_{\mathrm{c}}$ and $N:F^{\et}(1)\to F^{\circ}\otimes_{\Z}P^{\gp}$;
        
        \item a morphism $(F_1,N_1)\to (F_2,N_2)$ is a morphism $f:F_1\to F_2$ in $(\fin/S)_{\mathrm{c}}$ such that the diagram
        \[\xymatrix{
        F_1^{\et}(1)\ar[r]^-{N_1}\ar[d]_{f^{\et}(1)} &F_1^{\circ}\otimes_{\Z}P^{\gp}\ar[d]^{f^{\circ}\otimes \id}  \\
        F_2^{\et}(1)\ar[r]^-{N_2} &F_2^{\circ}\otimes_{\Z}P^{\gp}
        }\]
        is commutative, where $f^{\et}$ (resp. $f^{\circ}$) is the \'etale (resp. connected) part of $f$.
    \end{itemize}
    Then there is an equivalence of categories $\cC\xrightarrow{\simeq} \cC'$.
\end{theorem}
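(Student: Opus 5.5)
The plan is to reduce to the strictly henselian case and then build the functor $\cC\to\cC'$ explicitly; Kato proves the strictly henselian case in \cite[Thm. 3.1]{Kat23}. Let $F\in\cC$. By the connected-étale sequence of the preceding proposition, part (2), both $F^{\circ}$ and $F^{\et}$ are classical. We would like the underlying classical object to be the extension class of $F$ as a sheaf. That class, however, lives in $\Ext^1_{\kfl}(F^{\et},F^{\circ})$, not in the flat Ext group.

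We would therefore first compute the difference between the two Ext groups. The Kummer log flat cohomology of $F^{\circ}$ differs from the flat cohomology by the contribution of $\mathbb{G}_{\mathrm{m},\log}/\Gm$. Concretely, Kato's computation gives
\[R^1\varepsilon_*F^{\circ}\cong \varinjlim_n \cHom(\Z/n(1),F^{\circ})\otimes P^{\gp}.\]
Here $\varepsilon$ is the map from the kfl site to the fl site. For $p$-power torsion $F^{\circ}$, this sheaf becomes $F^{\circ}\otimes \varinjlim_n \tfrac1{n}P^{\gp}/P^{\gp}$, twisted appropriately. The Leray spectral sequence then gives an exact sequence
\[0\to\Ext^1_{\fl}(F^{\et},F^{\circ})\to\Ext^1_{\kfl}(F^{\et},F^{\circ})\to\Hom(F^{\et},R^1\varepsilon_*F^{\circ}).\]
For $p$-power torsion, the last term is identified with $\Hom(F^{\et}(1),F^{\circ}\otimes P^{\gp})$. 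The image of the class of $F$ in this term defines $N$.

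Next we would construct a canonical splitting of this sequence, so that $F$ also yields a classical extension $F_{\mathrm c}$. Then $F\mapsto (F_{\mathrm c},N)$ is a functor, and the task is to show it is an equivalence. For the splitting we would use the global chart $P\to M_S$ from (ii): the chart fixes compatible $n$-th roots of the image of $P$ after a Kummer cover. This lets us write down the inverse construction, twisting a classical $F_{\mathrm c}$ by $N$ via pushout along the Kummer torsor $P^{\gp}\otimes\mu_n$.

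The step we expect to be the main obstacle is checking that the log extension has no residual obstruction in $H^2$-type terms. Equivalently, we must check that the map to $\Hom(F^{\et}(1),F^{\circ}\otimes P^{\gp})$ is surjective. Only then is every pair $(F_{\mathrm c},N)$ realized, with the result dual-representable. For this we would use the henselian hypothesis (i) and the characteristic $p$ hypothesis (iii): étale parts become constant after a finite étale cover, and Galois descent then reduces us to the strictly henselian case.

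Full faithfulness follows from the same five-term sequence with Hom in place of Ext. On homs the map $\varepsilon$ induces an isomorphism, and compatibility with $N$ is exactly functoriality of the boundary map.
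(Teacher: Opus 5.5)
Your route matches the paper's. You read off $N$ from the Leray edge map $\beta:\Ext^1_{\kfl}(F^{\et},F^{\circ})\to\Hom(F^{\et},R^1\varepsilon_*F^{\circ})$ and split it with the Kummer torsors of the chart, which is Kato's argument. The paper likewise reruns the proof of \cite[Thm.~3.1]{Kat23} directly over the henselian base, with \cite[Thm.~3.13]{WZ24} as the extra input, rather than descending from a strictly henselian one.

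Where you go astray is the step you call the main obstacle. Take the chart-built map $\sigma:\Hom(F^{\et}(1),F^{\circ}\otimes_{\Z}P^{\gp})\to\Ext^1_{\kfl}(F^{\et},F^{\circ})$, i.e.\ the pushout of the Kummer extension along $N$. Once you have it, all that remains is $\beta\circ\sigma=\mathrm{id}$, and this holds by construction. Kato's isomorphism $R^1\varepsilon_*F^{\circ}\cong\varinjlim_n\cHom(\Z/n(1),F^{\circ})\otimes(\Gml/\Gm)$ is itself defined by pushing out Kummer torsors, and the chart enters only through $P^{\gp}\to\Gml/\Gm$. This one identity gives at once:
\begin{itemize}
\item surjectivity of $\beta$, so the differential into $\Ext^2_{\fl}(F^{\et},F^{\circ})$ vanishes;
\item the retraction $F\mapsto F_{\mathrm c}:=F-\sigma(N_F)$, realized on objects by a Baer difference and hence functorial;
\item essential surjectivity, once you add part (2) of the preceding proposition for dual representability.
\end{itemize}
So no descent is needed. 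The descent you sketch would also not work as stated. A connected finite étale cover of a henselian local scheme is again henselian local with a finite residue extension, not strictly henselian. Reaching $S^{\mathrm{sh}}$ would need a limit argument plus Galois-equivariance of Kato's equivalence, which is exactly the detour the henselian statement avoids.

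A smaller slip: as written, $F^{\circ}\otimes\varinjlim_n\tfrac1nP^{\gp}/P^{\gp}=0$, because $F^{\circ}$ is torsion and $\Q/\Z$ is divisible. The correct term is $\cHom(\Z/p^m(1),F^{\circ})\otimes P^{\gp}\cong F^{\circ}(-1)\otimes P^{\gp}$ for $p^mF^{\circ}=0$. This yields the identification with $\Hom(F^{\et}(1),F^{\circ}\otimes_{\Z}P^{\gp})$ that you then state correctly.
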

\begin{proof}
    This is a slight generalization of \cite[Thm. 3.1]{Kat23} from the strictly henselian local case to the henselian local case. The proof follows from that of \cite[Thm. 3.1]{Kat23} with the help of \cite[Thm. 3.13 ]{WZ24}. See also \cite[Cor. 2.3.2]{Mad09}.
\end{proof}

\begin{corollary}[Kato's decomposition theorem for log $p$-divisible groups]\label{Cor: Kato's decomposition thm for log p-div gp}
    Assume that $S$ satisfies (i)-(iii). Let $\BT_S^N$ be the category defined as:
    \begin{itemize}
        \item the objects are pairs $(G,N)$ with $G\in\BT_{S,\mathrm{c}}$ and $N:G^{\et}(1)\to G^{\circ}\otimes_{\Z}P^{\gp}$;
        \item a morphism $(G_1,N_1)\to (G_2,N_2)$ ia a morphism $f:G_1\to G_2$ in $\BT^{\log}_{S,\mathrm{c}}$ such that the diagram
        \[\xymatrix{
        G_1^{\et}(1)\ar[r]^-{N_1}\ar[d]_{f^{\et}(1)} &G_1^{\circ}\otimes_{\Z}P^{\gp}\ar[d]^{f^{\circ}\times\id}  \\
        G_2^{\et}(1)\ar[r]^-{N_2} &G_2^{\circ}\otimes_{\Z}P^{\gp}
        }\]
        is commutative.
    \end{itemize}
    Then we have an equivalence of categories
    \[\BT^{\log}_{S,\mathrm{d}}\to \BT_S^{N}.\]
\end{corollary}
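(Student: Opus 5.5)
The idea is to pass from the finite-level equivalence of Theorem \ref{Thm: Kato's decomposition thm for finite kfl gp log sch} to the $p$-divisible level by taking limits over $n$. Given $G\in\BT^{\log}_{S,\mathrm{d}}$, part (3) of the preceding proposition gives the connected-\'etale sequence $0\to G^{\circ}\to G\to G^{\et}\to 0$ with both $G^{\circ}$ and $G^{\et}$ classical $p$-divisible groups. Each $G[p^n]$ lies in the category $\cC$ of that theorem, since it is dual representable and killed by $p^n$. Kato's decomposition then yields pairs $(F_n,N_n)$, where $F_n\in(\fin/S)_{\mathrm{c}}$ and
\[N_n:F_n^{\et}(1)\to F_n^{\circ}\otimes_{\Z}P^{\gp}.\]
Functoriality of $\cC\xrightarrow{\simeq}\cC'$, applied to the inclusions $G[p^n]\hookrightarrow G[p^{n+1}]$ and to the maps $p:G[p^{n+1}]\to G[p^n]$, makes the $F_n$ into a compatible system with transition maps respecting the $N_n$. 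It remains to check that $F:=\varinjlim_n F_n$ is a classical $p$-divisible group. The plan is to use that $F_n^{\circ}=G[p^n]^{\circ}=G^{\circ}[p^n]$ and $F_n^{\et}=G^{\et}[p^n]$ (the equivalence preserves the connected and \'etale parts), so $F_n$ is an extension of $G^{\et}[p^n]$ by $G^{\circ}[p^n]$. Exactness of $F_{n+1}\xrightarrow{p}F_{n+1}$ with kernel $F_1$ and image $F_n$ then follows from the same properties of the outer terms via the snake lemma.

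The main obstacle is exactness: we need to know that the equivalence $\cC\simeq\cC'$ is compatible with short exact sequences, at least with those coming from $p$-power multiplication. I would try to prove this by recalling the construction in \cite{Kat23}. There $F$ is obtained from $G[p^n]$ by reversing the kfl extension class of $F^{\et}$ by $F^{\circ}$; explicitly,
\[\Ext^1_{\kfl}(F^{\et},F^{\circ})\cong\Ext^1_{\fl}(F^{\et},F^{\circ})\oplus\Hom(F^{\et}(1),F^{\circ}\otimes P^{\gp}),\]
which is the splitting used in \cite[Thm. 3.13]{WZ24}. Since this splitting is natural and additive in both variables, taking $N=\varprojlim$ of the components $N_n$ gives a morphism of $p$-divisible groups
\[N:G^{\et}(1)\to G^{\circ}\otimes_{\Z}P^{\gp}.\]
Here one must check that the $N_n$ are compatible under both the inclusions and the multiplication maps, and this follows from the same naturality.

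For the inverse functor, start from $(G,N)\in\BT^N_S$ and apply the quasi-inverse $\cC'\to\cC$ to each $(G[p^n],N\bmod p^n)$, then take the colimit. Full faithfulness reduces level by level to the finite theorem, because morphisms of ($\log$) $p$-divisible groups are exactly compatible systems of morphisms on the $p^n$-torsion. The resulting colimit is a log $p$-divisible group: surjectivity of multiplication by $p$ follows again from the extension structure and the kfl surjectivity on $G^{\circ}$ and $G^{\et}$. Moreover, its $p^n$-torsion lies in $(\fin/S)_{\mathrm{d}}$ because the torsion objects lie in $\cC$ by construction, which completes the equivalence.
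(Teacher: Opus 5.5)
Your proposal is correct and takes the same route as the paper. The paper's entire proof is the remark that the corollary follows from Theorem~\ref{Thm: Kato's decomposition thm for finite kfl gp log sch} by passing to colimits over the $p^n$-torsion, and your argument expands exactly that. The exactness obstacle you flag is also milder than you fear: $F_{n+1}$ is killed by $p^{n+1}$ because the equivalence is additive, and $G^{\circ}[p^{n+1}]$ is flat over $\Z/p^{n+1}$, so the connecting map in your snake-lemma step vanishes automatically once the equivalence is known to be compatible with connected and \'etale parts, which is built into Kato's construction.
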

\begin{proof}
    This follows from Theorem \ref{Thm: Kato's decomposition thm for finite kfl gp log sch} by passing to colimits.
\end{proof}

\subsection{Log $p$-divisible groups over a formal base} Let $\cS=(\cS,M_{\cS})$ be a locally noetherian fs log formal scheme admitting a finitely generated ideal of definition $\mathcal{I}$, and let $S_m$ be the closed subscheme of $\cS$ defined by the ideal $\cI^{m+1}$ endowed with the induced log structure. 

\begin{definition}\label{Def: log p-div gp over formal base}
    A log $p$-divisible group $G$ over $\cS$ is a system $(G_m)_{m\geq0}$ of log $p$-divisible groups $G_m$ (i.e. $G_m\in \BT_{S_{m},\mathrm{r}}^{\log}$) over $S_m$ endowed with isomorphisms $G_{m+1}\times_{S_{m+1}}S_m\cong G_m$. We denote the category of log $p$-divisible groups $G$ over $\cS$ by $\BT_{\cS,\mathrm{r}}^{\log}$. We define full subcategories 
    \[\BT_{\cS,\mathrm{c}}^{\log}\subset\BT_{\cS,\mathrm{d}}^{\log}\subset \BT_{\cS,\mathrm{r}}^{\log}\]
    by requiring $G_m\in\BT_{S_{m},\mathrm{c}}^{\log}$ and $G_m\in\BT_{S_{m},\mathrm{d}}^{\log}$ for all $m$ respectively.
\end{definition}
Note that by definition we have an equivalence $\BT_{\cS,\mathrm{c}}^{\log}\simeq\BT_{\cS}$.
\begin{remark}\label{rem:log p-div groups formal base}
    Our definition of log $p$-divisible groups over $\cS$ is a straightforward generalization of the classical one from \cite[\S2.4.2 (a)]{deJ95}. By definition, we have  
    \[\BT_{\cS,\mathrm{?}}^{\log}= \varprojlim_m \BT_{S_{m},\mathrm{?}}^{\log}\]
    with $?$ being ``c'', ``d'', or ``r''. For the projective limits of categories see \cite[Exp. VI, \S1]{sga1}. By \cite[Lem. 3.15]{Ino25b}, our definition of $\BT_{\cS,\mathrm{d}}^{\log}$ agrees with that of Inoue \cite[Def. 3.13]{Ino25b}. More precisely, we can view $G\in \BT_{\cS,\mathrm{d}}^{\log}$ as a sheaf of abelian groups over $(\mathrm{fsFSch}/\cS)_{\kfl}$ (the category of fs log formal schemes that are adic over $\cS$) satisfying similar conditions as in Definition \ref{def log p-div Kato}, but requiring $G[p^n]\in (\fin/\cS)_{\mathrm{d}}$ for each $n>0$. Here $(\mathrm{fin}/\cS)_{\mathrm{d}}$ is the analogue of $(\fin/U)_{\mathrm{d}}$ ($U$ a locally noetherian fs log scheme) in the log formal setting (see \cite[Def. 3.1 (2)]{Ino25b}) and it can be alternatively defined as $\varprojlim_m(\fin/S_m)_{\mathrm{d}}$ (see \cite[Lem. 3.4]{Ino25b}).
\end{remark}

Now let $S$ be an fs log scheme such that
\begin{enumerate}
    \item the underlying scheme is $\Spec R$ with $R$ a noetherian local ring such that $R$ is complete with respect to the maximal ideal $\mathfrak{m}$ of $R$ and the residue field of the closed point $s$ of $S$ is $p>0$,
    \item and $S$ admits a global chart $P\rightarrow M_S$ such that $P\xrightarrow{\cong}M_{S,s}/\cO_{S,s}^{\times}$.
\end{enumerate}
Let $S_m:=\Spec R/\mathfrak{m}^{m+1}$ endowed with the induced log structure from $S$, and let $\cS$ be the log formal scheme $\varinjlim_mS_m$. We compare $\BT^{\log}_{S,d}$ with $\BT^{\log}_{\cS,d}$.

\begin{proposition}\label{prop:equivalence b.t. logBT over algebraic base and formal base}
    The completion functor
    \[\BT^{\log}_{S,\mathrm{d}}\to \BT^{\log}_{\cS,\mathrm{d}}\]
    is an equivalence of categories.
\end{proposition}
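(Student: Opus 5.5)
We reduce to the classical statement of de Jong \cite[\S2.4.4]{deJ95} that completion $\BT_S\to\BT_{\cS}$ is an equivalence for $S=\Spec R$ with $R$ complete noetherian local, combined with Kato's decomposition (Corollary \ref{Cor: Kato's decomposition thm for log p-div gp}). Since $R$ is complete local, hence henselian, $S$ satisfies (i)--(iii). Each $S_m=\Spec R/\mathfrak{m}^{m+1}$ with the induced log structure is again henselian local with the same closed point, and the chart $P\to M_S$ restricts to a chart $P\to M_{S_m}$ with $P\cong M_{S_m,s}/\cO_{S_m,s}^\times$. Hence Corollary \ref{Cor: Kato's decomposition thm for log p-div gp} gives, compatibly in $m$, equivalences $\BT^{\log}_{S,\mathrm{d}}\simeq \BT^N_S$ and $\BT^{\log}_{S_m,\mathrm{d}}\simeq \BT^N_{S_m}$. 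The compatibility with base change $S_{m+1}\to S_m$ and $S\to S_m$ holds because the connected--\'etale sequence is characterized by its behaviour after Kummer log flat covers, which is stable under strict base change, and the monoid $P^{\gp}$ is the same for all $S_m$. By Remark \ref{rem:log p-div groups formal base}, we have $\BT^{\log}_{\cS,\mathrm{d}}=\varprojlim_m\BT^{\log}_{S_m,\mathrm{d}}\simeq\varprojlim_m \BT^N_{S_m}$. It therefore suffices to show that $\BT^N_S\to\varprojlim_m\BT^N_{S_m}$ is an equivalence.

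An object of $\varprojlim_m\BT^N_{S_m}$ consists of a compatible system $(G_m,N_m)$. By de Jong, the system $(G_m)$ algebraizes uniquely to $G\in\BT_S$, and morphisms between classical $p$-divisible groups over $S$ correspond bijectively to compatible systems of morphisms over the $S_m$. The connected--\'etale sequence of $G$ restricts to those of the $G_m$, since $R$ is henselian and $G^{\et}$, $G^\circ$ are determined over the closed fibre and by the formal group, respectively. The monodromy datum $N:G^{\et}(1)\to G^\circ\otimes_{\Z}P^{\gp}$ is a homomorphism of classical $p$-divisible groups over $S$, with the target a finite direct sum of copies of $G^\circ$. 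Applying de Jong's full faithfulness (or Tate's, level by level for the finite flat truncations $G^{\et}[p^n](1)\to G^\circ[p^n]\otimes P^{\gp}$, using \cite[Prop.~5.4.4]{EGA3-1} to algebraize finite flat morphisms over the complete ring $R$), the compatible family $(N_m)$ comes from a unique $N$ over $S$. This gives essential surjectivity. Full faithfulness follows similarly: a morphism $f:G_1\to G_2$ over $S$ commutes with the $N$'s if and only if it does so modulo every $\mathfrak{m}^{m+1}$, by the injectivity of $\Hom_S\to\varprojlim_m\Hom_{S_m}$.

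The main obstacle is ensuring that the equivalences of Corollary \ref{Cor: Kato's decomposition thm for log p-div gp} are genuinely compatible with the completion functor, i.e.\ that pulling back a Kummer log flat dual-representable object from $S$ to $S_m$ corresponds to restricting $(G,N)$. We would check this on finite levels $F\in(\fin/S)_{\mathrm{d}}$ via Theorem \ref{Thm: Kato's decomposition thm for finite kfl gp log sch}. There the extension class defining $F$ from $(F,N)$ is built from the Kummer torsor of $P^{\gp}$ (extracting $p^n$-th roots of chart elements), and this construction visibly commutes with strict base change along $S_m\to S$. Once this naturality is established, the rest is formal.
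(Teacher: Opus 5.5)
Your proposal is correct and follows essentially the same route as the paper. Both arguments use Kato's decomposition to reduce to the functor $\BT^N_S\to\varprojlim_m\BT^N_{S_m}$, then apply de Jong's algebraization \cite[\S2.4.4]{deJ95} to three things: the classical $p$-divisible groups, the monodromy maps $N\colon G^{\et}(1)\to G^\circ\otimes_{\Z}P^{\gp}$, and morphisms. The only difference is that you explicitly justify why Kato's equivalences are compatible with base change along $S_m\to S$, whereas the paper takes this for granted when it writes down its commutative square.
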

\begin{proof}
    We have the following commutative diagram
    \[\xymatrix{
    \BT^{\log}_{S,\mathrm{d}}\ar[r]\ar[d]_{\simeq} &\BT^{\log}_{\cS,\mathrm{d}}=\varprojlim_m \BT_{S_{m},\mathrm{d}}^{\log}\ar[d]^{\simeq}  \\
    \BT_S^N\ar[r]^-{\theta} &\varprojlim_m \BT_{S_{m}}^N
    }\]
    of categories, where the vertical functors are equivalences of categories by Kato's theorem (Corollary \ref{Cor: Kato's decomposition thm for log p-div gp}) and the functor $\theta$ is induced by the completion. It suffices to show that $\theta$ is an equivalence of categories.

    First we show the essential surjectivity. Let $(G_m,N_m)_{m\geq0}\in \varprojlim_m \BT_{S_{m}}^N$, we get a compatible system $(G_m)_{m\geq0}\in \BT_{{S_m},\mathrm{c}}$. By \cite[\S2.4.4]{deJ95}, there exists $G\in \BT_{S,\mathrm{c}}$ whose completion is $(G_m)_{m\geq0}$. Let $G^{\et}$ (resp. $G_m^{\et}$) and $G^{\circ}$ (resp. $G_m^{\circ}$) be the \'etale and connected part of $G$ (resp. $G_m$) respectively. The completions of $G^{\et}$ and $G^{\circ}$ give rise to $(G_m^{\et})_{m\geq0}$ and $(G_m^{\circ})_{m\geq0}$ respectively. Moreover the compatible system \[(G_m^{\et}(1)\xrightarrow{N_m} G_m^{\circ})_{m\geq0}\in \varprojlim_m\Hom_{S_m}(G_m^{\et}(1),G_m^{\circ})=\Hom_{\cS}((G_m^{\et}(1))_{m\geq0},(G_m^{\circ})_{m\geq0})\]
    gives rise to a morphism
    $N:G^{\et}(1)\to G^{\circ}$ by applying the equivalence \cite[\S2.4.4]{deJ95} again. Therefore we get the essential surjectivity.

    Note that a morphism in $\BT_S^N$ is just a morphism in $\BT_{S,\mathrm{c}}^{\log}$ satisfying an extra condition. Therefore we have the faithfulness automatically by the classical equivalence \cite[\S2.4.4]{deJ95}.

    Lastly, we show the fullness. Let $(G,N_G),(H,N_H)\in\BT_S^N$, let $(G_m,N_{G_m})_{m\geq0}$ and $(H_m,N_{H_m})_{m\geq0}$ be their formal completion. Giving a morphism from $(G_m,N_{G_m})_{m\geq0}$ to $(H_m,N_{H_m})_{m\geq0}$ amounts to giving a compatible family $(G_m\xrightarrow{f_m}H_m)_{m\geq0}$ such that $f_m$ is compatible with $N_{G_m}$ and $N_{H_m}$, i.e. $f_{m+1}\times_{S_{m+1}}S_m=f_m$, and 
    \[\xymatrix{
        G_m^{\et}(1)\ar[r]^-{N_{G_m}}\ar[d]_{f_m^{\et}(1)} &G_m^{\circ}\otimes_{\Z}P^{\gp}\ar[d]^{f_m^{\circ}\otimes\id}  \\
        H_m^{\et}(1)\ar[r]^-{N_{H_m}} &H_m^{\circ}\otimes_{\Z}P^{\gp}
    }\]
    for each $m\geq0$. By \cite[\S2.4.4]{deJ95}, $(f_m)_{m\geq0}$ algebraizes to a morphism $G\to H$, and the compatibility between $f$ and $(N_G,N_H)$ follows from those over $S_m$ for all $m$, as the compatibility is nothing but an eqaulity of morphisms from $G^{\et}(1)$ to $H^{\circ}$. We are done.
\end{proof}

\subsection{Log formal 1-motives}\label{subsec:log formal 1-motives}
Recall that the integral model of an algebraic 1-motive with good reduction (resp. semi-stable redution) over $K$ is an algebraic 1-motive (resp. log 1-motive) over $\OK$. For rigid 1-motives with good reduction over $K$, the integral models cannot be the usual algebraic 1-motives over $\OO_K$, but rather its analogue in formal geometry defined in subsection \ref{subsec:formal 1-motives}. In this subsection, we define log formal 1-motives which serve both the analogue of log 1-motive in the formal setting, and the integral model of rigid 1-motives with semi-stable reduction. 

In this subsection, let $\cS$ be an fs log formal scheme such that its underlying formal scheme admits a finitely generated ideal $\cI$ of definition.

\subsubsection{The logarithmic enlargements}
    Let $\cG$ be an extension of a formal abelian scheme $\cB$ by a formal torus $\cT$ over the underlying formal scheme of $\cS$. We will always regard $\cG$, $\cB$ and $\cT$ as fs log formal schemes over $\cS$ by endowing the induced log structrues from $\cS$, and we will not mention this anymore in such kind of situation. Let 
    \[X_{\cS}:=\cHom_{\cS}(\cT,\Gmfml).\] 
    Let $(\mathrm{fsFSch}/\cS)$ be the category of fs log formal schemes that are adic over $\cS$. We endow $(\mathrm{fsFSch}/\cS)$ with the  Kummer log flat topology as defined in \cite[Def. 2.9]{Ino25b}, and denote the resulting site by $(\mathrm{fsFSch}/\cS)_{\kfl}$. Let 
    \[\Gml \]
    be the sheaf $\mathcal{U}\mapsto \Gamma(\mathcal{U},\mathcal{M}_{\mathcal{U}}^{\gp})$ on $(\mathrm{fsFSch}/\cS)_{\kfl}$.

\begin{definition}\label{def:log enlargement}
    The \emph{logarithmic enlargement of $\cT$} is the sheaf \[\cT_{\log}:=\cHom_{\cS}(X_{\cS},\Gml),\] and the \emph{logarithmic enlargement of $\cG$} is the pushout \[\cG_{\log}\] of $\cG$ along $\cT\hookrightarrow \cT_{\log}$.
\end{definition}
By construction, $\cG_{\log}$ fits into a short exact sequence
\[0\to \cG\to \cG_{\log}\to \cHom_{\cS}(X_{\cS},\Gmlb)\to 0,\]
where $\Gmlb:=\Gml/\Gmfml$.

The following proposition is a formal analogue of \cite[\S2, Prop. 2.5]{KKN08}.

\begin{proposition}\label{prop:identification of homo between log formal semi-ab to that of formal semi-ab}
    Let $\cG$ (resp. $\cG'$) be an extension of a formal abelian scheme $\cB$ (resp. $\cB'$) by a formal torus $\cT$ (resp. $\cT'$) over the underlying formal scheme of $\cS$. Then the canonical map
    \[\Hom_{\cS}(\cG,\cG')\xrightarrow{\cong}\Hom_{\cS}(\cG_{\log},\cG'_{\log})\]
    is an isomorphism.
\end{proposition}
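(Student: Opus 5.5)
The plan is to follow the proof of \cite[\S2, Prop. 2.5]{KKN08}: the key input is that there are no nonzero homomorphisms from $\cG$ to the \emph{discrete-type} quotient $\cHom_{\cS}(X'_{\cS},\Gmlb)$. First, the map $\Hom_{\cS}(\cG,\cG')\to\Hom_{\cS}(\cG_{\log},\cG'_{\log})$ is defined by functoriality of the pushout. A homomorphism $f:\cG\to\cG'$ sends $\cT$ into $\cT'$, because the composite $\cT\to\cG'\to\cB'$ vanishes by $\Hom_{\cS}(\Gmfml,\cB')=0$ (\'etale locally on $\cS$). Hence $f$ induces $f_{\mathrm t}:\cT\to\cT'$, i.e.\ a map $X'_{\cS}\to X_{\cS}$. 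This in turn induces $\cT_{\log}\to\cT'_{\log}$ and, via the pushout, a map $f_{\log}:\cG_{\log}\to\cG'_{\log}$.

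\textbf{Injectivity.} The inclusion $\cG\hookrightarrow\cG_{\log}$ is a monomorphism: it is the pushout of the monomorphism $\cT\hookrightarrow\cT_{\log}$. Since $f_{\log}$ restricts to $f$ on $\cG$, the map $f\mapsto f_{\log}$ is injective.

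\textbf{Surjectivity.} Let $h:\cG_{\log}\to\cG'_{\log}$ be a homomorphism. I first show that $h(\cG)\subset\cG'$. Consider the composite
\[
\cG\hookrightarrow\cG_{\log}\xrightarrow{h}\cG'_{\log}\to\cHom_{\cS}(X'_{\cS},\Gmlb).
\]
It suffices to prove
\[
\Hom_{\cS}\bigl(\cG,\cHom(X'_{\cS},\Gmlb)\bigr)=0.
\]
This reduces, \'etale locally with $X'_{\cS}\cong\Z^r$, to showing $\Hom(\cG,\Gmlb)=0$. Locally on the kfl site, $\Gmlb$ is a quotient of $M^{\gp}/\cO^\times$, whose sections over a connected base form a discrete group. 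Since $\cG$ has connected fibers and the zero section, any morphism $\cG\to\Gmlb$ should be trivial.

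To make this precise, I would follow KKN: use that $\Gmlb=\varinjlim$ of $\overline{M}^{\gp}$ pulled back along Kummer covers, and that $\Gamma(\cG\times_\cS\cU,\overline{M}^{\gp})=\Gamma(\cU,\overline{M}^{\gp})$. This holds because $\cG\to\cS$ is strict and smooth with connected geometric fibers, so $\overline M$ on $\cG$ is the pullback of $\overline M$ on $\cS$. Moreover, $\mathrm{H}^1_{\kfl}$-type contributions vanish by the same argument used on each truncation $S_m$. Working level by level over the schemes $S_m$, we can then literally apply \cite[Prop. 2.5]{KKN08} to $\cG\times S_m$ and pass to the limit. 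A homomorphism then sends $0\mapsto0$ and is therefore trivial.

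Granted $h(\cG)\subset\cG'$, put $f:=h|_{\cG}$. Then $f_{\log}$ and $h$ agree on $\cG$. Hence their difference factors through a homomorphism
\[
\cHom(X_{\cS},\Gmlb)\to\cG'_{\log}.
\]
It remains to check that $f_{\log}$ and $h$ also agree on $\cT_{\log}$, which generates $\cG_{\log}$ together with $\cG$. For this I would show
\[
\Hom\bigl(\cHom(X_{\cS},\Gmlb),\cG'_{\log}\bigr)\to\Hom\bigl(\cT_{\log},\cG'_{\log}\bigr)
\]
vanishing on the relevant difference. Concretely, $h|_{\cT_{\log}}-f_{\log}|_{\cT_{\log}}$ kills $\cT$, so it is a map $\cHom(X,\Gmlb)\to\cG'_{\log}$. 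Composing to $\cB'$ gives a map from a sheaf that is kfl-locally ``constant-like'' to a formal abelian scheme; this should be zero because $h$ and $f_{\log}$ both send $\cT_{\log}$ into $\cT'_{\log}$. To see that, note that $\cT_{\log}\to\cG'_{\log}\to\cB'$ is determined by its restriction to $\cT$, using $\Gml$ sections and the $\Hom(\Gmfml,\cB')=0$ argument.

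Thus we reduce to maps $\cHom(X,\Gmlb)\to\cT'_{\log}=\cHom(X',\Gml)$, i.e.\ maps $\Gmlb\to\Gml$. These vanish because $\Gmlb$ is torsion-free and each section of $\Gml$ coming from $\Gmlb$ compatibly... Here I would instead directly compute $\Hom(\Gml,\Gml)=\Z$, following KKN, which pins down $h|_{\cT_{\log}}$ by its restriction to $\cT$.

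The main obstacle is the vanishing $\Hom(\cG,\Gmlb)=0$ and the rigidity of homomorphisms out of $\cT_{\log}$ in the formal kfl setting. I would handle these by reducing to each $S_m$, quoting the algebraic statements of \cite{KKN08}, and assembling the results using $\Hom_{\cS}=\varprojlim_m\Hom_{S_m}$.
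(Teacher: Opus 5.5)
Your skeleton matches the paper's. Injectivity holds because $\cG\hookrightarrow\cG_{\log}$ is a monomorphism. Surjectivity rests on two vanishing statements:
\begin{itemize}
\item $\Hom_{\cS}(\cG,\Gmlb)=0$, so that $h$ restricts to some $f\colon\cG\to\cG'$;
\item the vanishing of maps $\cG_{\log}/\cG\cong\cHom_{\cS}(X_{\cS},\Gmlb)\to\cG'_{\log}$, so that $h=f_{\log}$.
\end{itemize}
For the first one, \cite[Prop.~2.5]{KKN08} is the algebraic analogue of the statement you are proving, not the vanishing you need. The paper uses $\Hom_{S_0}(\cG\times_{\cS}S_0,\Gmlb)=0$ from \cite[Cor.~6.1.2]{KKN08}. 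The passage from $\cS$ to $S_0$ works because $\Gmlb=\Gml/\Gmfml$ does not see the nilpotent thickening: $\Gamma(\cG,\Gmlb)\xrightarrow{\sim}\Gamma(\cG\times_{\cS}S_0,\Gmlb)$, so a homomorphism $\cG\to\Gmlb$ is detected over $S_0$. Your heuristic about $\overline{M}^{\gp}$ and connected fibers does not by itself control the kfl sections of $\Gmlb$.

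The genuine gap is the second vanishing, $\Hom_{\cS}(\Gmlb,\cG'_{\log})=0$. This is the heart of surjectivity, and you never establish it.
\begin{itemize}
\item For the $\cB'$-component, you argue the composite vanishes because $h$ sends $\cT_{\log}$ into $\cT'_{\log}$, which is exactly what is to be proved.
\item Your supporting claim, that a map $\cT_{\log}\to\cB'$ is determined by its restriction to $\cT$, is equivalent to $\Hom(\Gmlb,\cB')=0$. This does not follow from $\Hom(\Gmfml,\cB')=0$: $\Gmlb$ is the quotient $\Gml/\Gmfml$, and vanishing on $\Gmfml$ says nothing about maps out of that quotient.
\item For the $\cT'_{\log}$-component, the argument breaks off. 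Invoking $\Hom(\Gml,\Gml)=\Z$ merely repackages the needed vanishing $\Hom(\Gmlb,\Gml)=0$.
\end{itemize}

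The paper avoids splitting into components. Take any $\alpha\colon\Gmlb\to\cG'_{\log}$ and any $\mathcal U$. Then $\Gamma(\mathcal U,\Gmlb)\cong\Gamma(\mathcal U\times_{\cS}S_m,\Gmlb)$. The induced map $\Gamma(\mathcal U\times_{\cS}S_m,\Gmlb)\to\Gamma(\mathcal U\times_{\cS}S_m,\cG'_{\log})$ is zero by \cite[Lem.~6.1.5]{KKN08}. Finally $\Gamma(\mathcal U,\cG'_{\log})\cong\varprojlim_m\Gamma(\mathcal U\times_{\cS}S_m,\cG'_{\log})$, so $\alpha=0$.

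Two facts carry the reduction to \cite{KKN08}: the insensitivity of $\Gmlb$ to the thickenings $S_m\hookrightarrow\cS$, and the limit property of sections of $\cG'_{\log}$. Your slogan $\Hom_{\cS}=\varprojlim_m\Hom_{S_m}$ leaves both unjustified. Alternatively, you could apply \cite[Prop.~2.5]{KKN08} levelwise to the whole statement and pass to the limit. That route needs the limit property together with the identification of the restriction of $\cG_{\log}$ to $S_m$ with $\cG_{m,\log}$.
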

\begin{proof}
    Since $\cG\subset\cG_{\log}$ and $\cG'\subset\cG'_{\log}$, the canonical map is injective.

    Let $\varphi:\cG_{\log}\to \cG'_{\log}$. We show that it arises from some homomorphism $\cG\to\cG'$. We claim \[\Hom_{\cS}(\cG,\cHom_{\cS}(X'_{\cS},\Gmlb))=0.\] It suffices to show that $\Hom_{\cS}(\cG,\Gmlb)=0$. Let $S_0:=\Spec(\cO_{\cS}/\cI)$ endowed with the induced log structure from $\cS$. Note that $S_0$ is naturally an object of $(\mathrm{fsFSch}/\cS)$. In the following canonical commutative diagram
    \[\xymatrix{
    \Hom_{\cS}(\cG,\Gmlb)\ar[r]\ar@{^(->}[d] &\Hom_{S_0}(\cG\times_{\cS}S_0,\Gmlb)\ar@{^(->}[d] \\
    \Gamma(\cG,\Gmlb)\ar[r]^-{\cong} &\Gamma(\cG\times_{\cS}S_0,\Gmlb),
    }\]
    we have $\Hom_{S_0}(\cG\times_{\cS}S_0,\Gmlb)=0$ by \cite[Cor. 6.1.2]{KKN08}, thus the claim follows. By the claim, $\varphi$ restricts to a homomorphism $\psi\in\Hom_{\cS}(\cG,\cG')$. Let $\psi_{\log}$ be the image of $\psi$ along the canonical map $\Hom_{\cS}(\cG,\cG')\to\Hom_{\cS}(\cG_{\log},\cG'_{\log})$. It suffices to show that $\psi_{\log}-\varphi=0$. This follows from the claim \[\Hom_{\cS}(\cG_{\log}/\cG,\cG'_{\log})=0.\] For proving the claim, it suffices to show $\Hom_{\cS}(\Gmlb,\cG'_{\log})=0$. For any $\mathcal{U}\in(\mathrm{fsFSch}/\cS)$ and any $\alpha\in \Hom_{\cS}(\Gmlb,\cG'_{\log})$, we have a commutative diagram
    \[\xymatrix{    
    \Gamma(\mathcal{U},\Gmlb)\ar[r]^{\alpha}\ar[d]_{\cong} &\Gamma(\mathcal{U},\cG_{\log}')\ar[d] \\    \Gamma(\mathcal{U}\times_{\cS}S_m,\Gmlb)\ar[r]^{\alpha}
    &\Gamma(\mathcal{U}\times_{\cS}S_m,\cG_{\log}'), \\
    }\]
    for each $m\geq 0$. The lower horizontal map in the diagram is zero by \cite[Lem. 6.1.5]{KKN08}, thus the composition 
    \[\Gamma(\mathcal{U},\Gmlb)\xrightarrow{\alpha} \Gamma(\mathcal{U},\cG_{\log}')\to \Gamma(\mathcal{U}\times_{\cS}S_m,\cG_{\log}')\]
    is zero for each $m\geq0$. Since $\Gamma(\mathcal{U},\cG_{\log}')\xrightarrow{\cong} \varprojlim_m \Gamma(\mathcal{U}\times_{\cS}S_m,\cG_{\log}')$, we have that the map $\Gamma(\mathcal{U},\Gmlb)\xrightarrow{\alpha} \Gamma(\mathcal{U},\cG_{\log}')$ is itself zero. This finishes the proof.
\end{proof}

\subsubsection{Log formal 1-motives and their description via biextensions}
\begin{definition}
    A \emph{log formal 1-motive} over $\cS$ is a two-term complex \[\mathcal{N}=[Y_{\cS}\xrightarrow{u}\cG_{\log}]\] where
    \begin{enumerate}
        \item $Y_{\cS}$ is \'etale locally a free abelian group of finite rank and sits in degree -1,
        \item $\cG$ is an extension of a formal abelian scheme $\mathcal{B}$ by a formal torus $\cT$ over $\cS$, and $\cG_{\log}$ is the logarithmic enlargement of $\cG$.
    \end{enumerate}
    A morphism between two log formal 1-motives over $\cS$ is defined to be a morphism of complexes. We denote the resulting category by $\mathscr{M}_{1,\cS}^{\log}$.
\end{definition}

Given $[Y_{\cS}\xrightarrow{u}\cG_{\log}]\in \mathscr{M}_{1,\cS}^{\log}$, let 
\[v:Y_{\cS}\to \cB\]
be the composition $Y_{\cS}\xrightarrow{u}\cG_{\log}\to \cB$. Let $X_{\cS}$ be the character group of $\cT$. By the isomorphism $\Ext^1_{\cS}(\cB,\cT)\cong\Hom_{\cS}(X_{\cS},\cB^\vee)$ (see Proposition \ref{prop:key prop. for biextensions in both rigid and formal settings} (2)), $\cG$ corresponds to a homomorphism 
\[v^{\vee}:X_{\cS}\to \cB^{\vee}.\]

Let $\mathcal{P}$ be the Poincar\'e biextension of $(\cB,\cB^\vee)$ by $\Gmfml$, and let $\mathcal{P}^{\log}$ be the pushout of $\mathcal{P}$ along $\Gmfml\hookrightarrow\Gml$, and we call it the \emph{Poincar\'e biextension of $(\cB,\cB^\vee)$ by $\Gml$}. Consider the following commutative diagram
\begin{equation}\label{eq:key biext diagram for symmetric description of log formal 1-mot}
\xymatrix{
&\Biext^1_{\cS}(\cB,\cB^\vee;\Gml)\ar[d]^{(1_{\cB},v^{\vee})^*}   \\
\Ext^1_{\cS}(\cB,\cT_{\log})\ar[r]^-\cong\ar[d]^{v^*} &\Biext^1_{\cS}(\cB,X_{\cS};\Gml)\ar[d]^{(v,1_{X_{\cS}})^*}  \\
\Ext^1_{\cS}(Y_{\cS},\cT_{\log})\ar[r]^-\cong &\Biext^1_{\cS}(Y_{\cS},X_{\cS};\Gml),
}
\end{equation}
where the horizontal isomorphisms comes from $\cExt_{\cS}^1(X_{\cS},\Gml)=0$ with the help of \cite[Exp. VIII, \S1.1.4]{sga7-1}. The pullback of $\cG_{\log}$ as a class in $\Ext^1_{\cS}(\cB,\cT_{\log})$ along $v$ has a section $u$, so $v^*\cG_{\log}=0$. Since $\cG_{\log}$ as a class in $\Ext^1_{\cS}(\cB,\cT_{\log})$ corresponds to $(1_{\cB},v^{\vee})^*\mathcal{P}^{\log}$ in the above diagram, we get \[(v,v^\vee)^*\mathcal{P}^{\log}=(v,1_{X_{\cS}})^*(1_{\cB},v^{\vee})^*\mathcal{P}^{\log}=0.\] The section $u$ of the extension $v^*\cG_{\log}$ corresponds to a section 
\[s:Y_{\cS}\times X_{\cS}\to (v,v^\vee)^*\mathcal{P}^{\log}\] 
of $(v,v^\vee)^*\mathcal{P}^{\log}$. Such a section corresponds to a morphism $Y_{\cS}\times X_{\cS}\to \mathcal{P}^{\log}$ which we still denote by $s$ by abuse of notation. So we get the following diagram 
\begin{equation}\label{eq:symmetric description of log formal 1-mot}
\xymatrix{
    &\mathcal{P}^{\log}\ar[d] \\
    Y_{\cS}\times X_{\cS}\ar[r]_{(v,v^\vee)}\ar[ru]^s &\cB\times_{\cS} \cB^\vee.}
\end{equation}
Similar to the case of formal 1-motives in subsection \ref{subsec:formal 1-motives}, the procedure for obtaining the diagram \eqref{eq:symmetric description of log formal 1-mot} from a log formal 1-motive $[Y_{\cS}\xrightarrow{u}\cG_{\log}]$ over $\cS$ is reversible. Thus a log formal 1-motive over $\cS$ is the same as a diagram of the form \eqref{eq:symmetric description of log formal 1-mot}. Switching the position of the factors of the two couples $(Y_{\cS},X_{\cS})$ and $(\cB,\cB^\vee)$, we get another log formal 1-motive $[X_{\cS}\xrightarrow{u^\vee}\cG_{\log}^\vee]$, which we call the \emph{dual log formal 1-motive} of $[Y_{\cS}\xrightarrow{u}\cG_{\log}]$.  

\begin{definition}\label{def:log formal LPPoin}
    We denote a diagram of the form \eqref{eq:symmetric description of log formal 1-mot} as a tuple $(Y_{\cS},X_{\cS},v,v^{\vee},\cB,s)$, and define morphisms between such tuples similar to Definition \ref{def:formal LPPoin}, and denote the resulting category of such tuples as $\mathop{\mathbf{LPPoin}}_{\cS}^{\log}$.
\end{definition}

\begin{proposition}\label{prop:equivalence for log formal 1-motives}
    The association of $(Y_{\cS},X_{\cS},v,v^{\vee},\cB,s)$ to a log formal 1-motive $\mathcal{M}=[Y_{\cS}\xrightarrow{u}\cG_{\log}]$ gives rise to an equivalence 
    \[\mathscr{M}_{1,\cS}^{\log}\xrightarrow{\simeq} {\mathop{\mathbf{LPPoin}}}_{\cS}^{\log}\]
    of categories.
\end{proposition}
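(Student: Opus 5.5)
The plan is to follow the proofs of Propositions \ref{prop:equivalence for formal 1-motives} and \ref{prop:equivalence strict rigid 1-motives}, with the one new input being Proposition \ref{prop:identification of homo between log formal semi-ab to that of formal semi-ab}.

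\emph{Bijection on objects.} The discussion preceding the statement already gives mutually inverse constructions between log formal 1-motives and diagrams \eqref{eq:symmetric description of log formal 1-mot}. In one direction, $\cG$ is recovered from $v^\vee$ via $\Ext^1_{\cS}(\cB,\cT)\cong\Hom_{\cS}(X_{\cS},\cB^\vee)$, and $\cG_{\log}$ is then its pushout along $\cT\hookrightarrow\cT_{\log}$. In the other direction, sections of $v^*\cG_{\log}$ correspond to sections of $(v,v^\vee)^*\mathcal{P}^{\log}$ through the horizontal isomorphisms of \eqref{eq:key biext diagram for symmetric description of log formal 1-mot}. One point needs checking: the class of $\cG_{\log}$ in $\Ext^1_{\cS}(\cB,\cT_{\log})$ corresponds to $(1_{\cB},v^\vee)^*\mathcal{P}^{\log}$. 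This holds because pushout along $\Gmfml\hookrightarrow\Gml$ commutes with the identifications of Proposition \ref{prop:key prop. for biextensions in both rigid and formal settings}~(2), by functoriality of \cite[Exp.~VII/VIII]{sga7-1}.

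\emph{Functoriality.} Let $f=(f_{-1},f_0)$ be a morphism $[Y_{1}\to\cG_{1,\log}]\to[Y_{2}\to\cG_{2,\log}]$, where $Y_i$ stands for $Y_{i,\cS}$. By Proposition \ref{prop:identification of homo between log formal semi-ab to that of formal semi-ab}, $f_0$ is the logarithmic enlargement of a unique homomorphism $g:\cG_1\to\cG_2$. Since $\Hom_{\cS}(\Gmfml,\cB_2)=0$, the homomorphism $g$ induces $f_{\mathrm t}:\cT_1\to\cT_2$, hence $f_{-1}^\vee:X_{2}\to X_{1}$, together with $f_{\mathrm{ab}}:\cB_1\to\cB_2$. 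We then verify conditions (i)--(iii) of Definition \ref{def:rigid LPPoin}. Condition (i) follows from $f_0\circ u_1=u_2\circ f_{-1}$ after composing with $\cG_{2,\log}\to\cB_2$; this uses that $\cG_{\log}\to\cB$ factors through $\cG_{\log}/\cT_{\log}=\cB$. Condition (ii) is the functoriality of the Barsotti--Weil isomorphism. Condition (iii) is the biextension computation of the algebraic case (\cite[end of \S2.5]{Zha21}), carried out with $\Gml$ coefficients: both sides are sections of the same pulled-back biextension, and they correspond to $f_0\circ u_1$ and $u_2\circ f_{-1}$ respectively.

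\emph{Full faithfulness.} Conversely, suppose we are given a morphism of tuples $(f_{-1},f_{-1}^\vee,f_{\mathrm{ab}})$. The pair $(f_{\mathrm{ab}},f_{-1}^\vee)$ satisfying (ii) determines a homomorphism of extensions $\cG_1\to\cG_2$; this is unique because $\Hom_{\cS}(\cB_1,\cT_2)=0$, by Proposition \ref{prop:Hom-abeloid-Gm-vanishes}'s formal analogue, which follows fibrewise. Taking its logarithmic enlargement gives $f_0$, and condition (iii) ensures that $f_0\circ u_1=u_2\circ f_{-1}$. The two constructions are inverse to each other. Faithfulness uses the injectivity part of Proposition \ref{prop:identification of homo between log formal semi-ab to that of formal semi-ab}.

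\emph{Main obstacle.} The delicate step is condition (iii), and in particular making sure that the identifications of sections of $v^*\cG_{\log}$ with sections of $(v,v^\vee)^*\mathcal{P}^{\log}$ are natural in morphisms of $\cG_{\log}$. This naturality is what uses $\cExt^1_{\cS}(X_{\cS},\Gml)=0$ together with $\cHom_{\cS}(\cB,\Gml)=0$. The latter should follow from $\cHom_{\cS}(\cB,\Gmfml)=0$ combined with $\Hom_{\cS}(\cB,\Gmlb)=0$, which was shown in the proof of Proposition \ref{prop:identification of homo between log formal semi-ab to that of formal semi-ab}.
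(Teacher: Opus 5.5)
Your proposal is correct and follows the paper's proof. The paper likewise reduces everything to the vanishing $\Hom_{\cS}(\Gmfml,\mathcal{A})=0$ and the identification $\Hom_{\cS}(\cG_{\log},\cG'_{\log})=\Hom_{\cS}(\cG,\cG')$ of Proposition \ref{prop:identification of homo between log formal semi-ab to that of formal semi-ab}, then declares the rest ``similar'' to Proposition \ref{prop:equivalence strict rigid 1-motives}; you spell out that remaining argument (bijection on objects, construction of $(f_{-1},f_{-1}^\vee,f_{\mathrm{ab}})$, and full faithfulness via $\Hom_{\cS}(\cB_1,\cT_2)=0$).
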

\begin{proof}
    We have $\Hom_{\cS}(\Gmfml,\mathcal{A})=0$ for any formal abelian scheme $\mathcal{A}$ over $\cS$. We also have that $\Hom_{\cS}(\cG_{\log},\cG'_{\log})=\Hom_{\cS}(\cG,\cG')$ by Proposition \ref{prop:identification of homo between log formal semi-ab to that of formal semi-ab}, where $\cG$ and $\cG'$ are as in there. Then the proof is similar to that of Proposition \ref{prop:equivalence strict rigid 1-motives}.
\end{proof}

\subsubsection{The monodromy pairing for a log formal 1-motive}
\begin{definition}\label{def:formal monodromy pairing}
    Given $\mathcal{N}=[Y_{\cS}\xrightarrow{u}\cG_{\log}]\in\mathscr{M}_{1,\cS}^{\log}$, the composition
    \[Y_{\cS}\xrightarrow{u}\cG_{\log}\to \cG_{\log}/\cG\cong \cT_{\log}/\cT\cong\cHom_{\cS}(X_{\cS},\Gml/\Gmfml)\]
    gives rise to a pairing
    \begin{equation}\label{eq:monodromy pairing for log formal 1-motive}
        \langle-,-\rangle_{\log}:Y_{\cS}\times X_{\cS}\to \Gml/\Gmfml,
    \end{equation}
    and we call it the \emph{monodromy pairing of $\mathcal{N}$}. 
\end{definition}

Now we assume that the underlying formal scheme of $\cS$ is $\Spf R$ with $R$ a complete local ring (for the $\mathfrak{m}_R$-adic topology with $\mathfrak{m}_R$ the maximal ideal), and $\cS$ admits a chart $c:P\to\Gamma(\cS,M_{\cS})$ such that the induced map $P\xrightarrow{\cong} M_{\cS,s}/\cO_{\cS,s}^\times$ is an isomorphism ($s=\mathfrak{m}_R$). Then the monodromy pairing \eqref{eq:monodromy pairing for log formal 1-motive} induces a pairing
\begin{equation}\label{eq:monodromy pairing for log formal 1-motive w.r.p. to a given chart}
    \langle-,-\rangle_{\log}:Y_{\cS}\times X_{\cS}\to P^{\gp}
\end{equation}
by \cite[\href{https://stacks.math.columbia.edu/tag/09ZH}{Tag 09ZH}]{stacks}, and it is equivariant under the action of the \'etale fundamental group of $\Spf R$ on $Y_{\cS}$ and $X_{\cS}$. We also call this pairing the monodromy pairing of $\mathcal{N}$ in this case by abuse of terminology.

\begin{proposition}\label{prop:decomposition of log formal 1-motives w.r.t. a chosen chart}
    Let the setting be as above.
    \begin{enumerate}
        \item The monodromy pairing \eqref{eq:monodromy pairing for log formal 1-motive w.r.p. to a given chart} and the chart $P\to\Gamma(\cS,M_{\cS})$ induce a homomorphism 
        \[u_c^2:Y_{\cS}\to \cHom_{\cS}(X_{\cS},\Gml)=\cT_{\log}\]
        with $\cT:=\cHom_{\cS}(X_{\cS},\Gmfml)$, whence a log formal 1-motive $[Y_{\cS}\xrightarrow{u_c^2}\cT_{\log}]$.

        \item We regard both $\cG$ and $\cT_{\log}$ as subsheaves of $\cG_{\log}$, and we also use $u_c^2$ to denote the composition $Y_{\cS}\xrightarrow{u_c^2} \cT_{\log}\hookrightarrow \cG_{\log}$. Let $u_c^1:=u-u_c^2$. Then $u_c^1$ factors through $\cG\hookrightarrow\cG_{\log}$, whence a formal 1-motive $[Y_{\cS}\xrightarrow{u_c^1}\cG]$.
    \end{enumerate}
\end{proposition}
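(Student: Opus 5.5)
The plan is to make the chart explicit and then check the factorization locally. Since $P\to M_{\cS,s}/\cO_{\cS,s}^\times$ is an isomorphism and $R$ is complete local, the chart $c$ induces a homomorphism $P^{\gp}\to\Gamma(\cS,M_{\cS}^{\gp})$. Composing with the canonical map, it also gives a map of sheaves $P^{\gp}\to\Gml$ on $(\mathrm{fsFSch}/\cS)_{\kfl}$. Composed further with $\Gml\to\Gml/\Gmfml$, this yields exactly the identification $P^{\gp}\cong\Gamma(\cS,\Gml/\Gmfml)$ that was used, via \cite[\href{https://stacks.math.columbia.edu/tag/09ZH}{Tag 09ZH}]{stacks}, to define the pairing \eqref{eq:monodromy pairing for log formal 1-motive w.r.p. to a given chart}. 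In other words, $c$ provides a splitting on global sections of the projection $\Gml\to\Gml/\Gmfml$ over $\cS$.

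For part (1), define $u_c^2(y)$ as the homomorphism $x\mapsto c(\langle y,x\rangle_{\log})$, where $\langle-,-\rangle_{\log}$ is the $P^{\gp}$-valued pairing. This must be a morphism of sheaves $Y_{\cS}\to\cHom_{\cS}(X_{\cS},\Gml)$, and I would handle that by descent. I pass to a finite étale cover $\cS'=\Spf R'$ of $\cS$ that splits $Y_{\cS}$ and $X_{\cS}$; since $R$ is henselian, $R'$ is again local, so the same chart works there. On $\cS'$ the map is defined on bases and extended bilinearly. It descends to $\cS$ because the pairing is equivariant for the étale fundamental group and the chart map is defined over $\cS$. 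Bilinearity of the pairing gives additivity, so $[Y_{\cS}\xrightarrow{u_c^2}\cT_{\log}]$ is a log formal 1-motive.

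For part (2), it suffices to show that the composition $Y_{\cS}\xrightarrow{u_c^1}\cG_{\log}\to\cG_{\log}/\cG\cong\cHom_{\cS}(X_{\cS},\Gml/\Gmfml)$ vanishes. Since $\cG$ is the kernel of this quotient map, $u_c^1$ then factors through $\cG$. The map $u\mapsto(\cG_{\log}\to\cG_{\log}/\cG)\circ u$ is additive in $u$, so the composition equals $\langle-,-\rangle_{\log}$ for $u$ minus the image of $u_c^2$. Since $\cT_{\log}/\cT\cong\cG_{\log}/\cG$ compatibly with the inclusion $\cT_{\log}\subset\cG_{\log}$, the image of $u_c^2$ is the pairing $(y,x)\mapsto\overline{c(\langle y,x\rangle_{\log})}$. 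By the definition of the $P^{\gp}$-valued pairing via the chart, this is precisely $\langle y,x\rangle_{\log}$ in $\Gml/\Gmfml$, so the difference is zero.

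The main obstacle is the bookkeeping in this last identification. One must check that the isomorphism $P^{\gp}\cong\Gamma(\cS,\Gml/\Gmfml)$ from Tag 09ZH is indeed induced by the chart, compatibly with the topology. One must also check that the global sections of the quotient sheaf $\Gml/\Gmfml$ over $\cS$, and over its étale covers, are computed by $M^{\gp}_{\cS,s}/\cO_{\cS,s}^{\times}$. The latter uses that $\cS$ is local, which is why the statement assumes $R$ complete local.
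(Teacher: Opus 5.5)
Your proposal is correct and unpacks the paper's one-line proof (``clear by construction''). You obtain $u_c^2$ by pushing the $P^{\gp}$-valued pairing through the chart $P^{\gp}\to\Gamma(\cS,M_{\cS}^{\gp})$, and $u_c^1$ lands in $\cG$ because its image in $\cG_{\log}/\cG\cong\cT_{\log}/\cT$ is $\langle-,-\rangle_{\log}-\langle-,-\rangle_{\log}=0$.

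One small correction to your last paragraph. For the Kummer log flat quotient sheaf, $\Gamma(\cS,\Gml/\Gmfml)$ is in general bigger than $P^{\gp}$: the class of an $n$-th root of $c(a)$ on a Kummer cover descends to $\cS$. So the chart identifies $P^{\gp}$ only with the integral sections rather than splitting all global sections. Your argument does not need more than this, since it only uses that the $P^{\gp}$-valued pairing lifts $\langle-,-\rangle_{\log}$, which is part of the given setting.
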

\begin{proof}
    The results are clear by construction.
\end{proof}

\subsubsection{Realizations of log formal 1-motives}
As we will discuss realizations of log formal 1-motives, we assume that the base log formal scheme is further locally noetherian in this subsubsection. 

\begin{proposition}\label{prop:realizations ass. to log fml 1-mot}
    Let $\mathcal{N}=[Y_{\cS}\xrightarrow{u} \mathcal{G}_{\log}]\in\mathscr{M}_{1,\cS}^{\log}$. Let $n$ (resp. $\ell$) be a positive integer (resp. prime number). Similar to the situation of rigid 1-motives or formal 1-motives, one can define $T_{\Z/n\Z}(\mathcal{N})$, $T_{\ell}(\mathcal{N})$ and $\mathcal{N}[\ell^{\infty}]$. 
    \begin{enumerate}
        \item We have $T_{\Z/n\Z}(\mathcal{N})\in (\fin/\cS)_{\mathrm{d}}=\varprojlim_m(\fin/S_m)_{\mathrm{d}}$, and short exact sequences
        \[0\to \cG[n]\to T_{\Z/n\Z}(\mathcal{N})\to Y_{\cS}\otimes_{\Z}\Z/n\Z\to 0\]
        and
        \[0\to \cT[n]\to \cG[n]\to \cB[n]\to 0.\]

        \item If $\ell$ is invertible on $\cS$, then $T_{\ell}(\mathcal{N})$ is an $\ell$-adic local system for the Kummer log \'etale topology of $\cS$, and we have exact sequences
        \[0\to T_{\ell}(\cG)\to T_{\ell}(\mathcal{N})\to Y_{\cS}\otimes_{\Z}\Z_{\ell}\to 0\]
        and
        \[0\to T_{\ell}(\cT)\to T_{\ell}(\cG)\to T_{\ell}(\cB)\to 0.\]

        \item We have $\mathcal{N}[\ell^{\infty}]\in \BT_{\cS,\mathrm{d}}^{\log}$, and two short exact sequences
        \[0\to \cG[\ell^{\infty}]\to \mathcal{N}[\ell^{\infty}]\to Y_{\cS}\otimes_{\Z}\Q_{\ell}/\Z_{\ell}\to 0\]
        and
        \[0\to \cT[\ell^{\infty}]\to \cG[\ell^{\infty}]\to \cB[\ell^{\infty}]\to 0.\]
    \end{enumerate}
\end{proposition}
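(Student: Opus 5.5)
The key observation is that $T_{\Z/n\Z}(\mathcal{N})=H^{-1}(\mathcal{N}\otimes^L_{\Z}\Z/n\Z)$, so the triangle $Y_{\cS}\xrightarrow{u}\cG_{\log}\to\mathcal{N}\xrightarrow{+1}$ gives
\[0\to \cG_{\log}[n]\to T_{\Z/n\Z}(\mathcal{N})\to Y_{\cS}/n\to \cG_{\log}/n\cG_{\log}.\]
Everything then reduces to understanding $n$-torsion and $n$-divisibility of $\cG_{\log}$ on the Kummer log flat site. I would proceed in three steps.

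\emph{Step 1: torsion and divisibility of $\cG_{\log}$.} Use the exact sequence $0\to\cG\to\cG_{\log}\to\cHom_{\cS}(X_{\cS},\Gmlb)\to 0$. The sheaf $\Gmlb=\Gml/\Gmfml$ is torsion free, and on the Kummer log flat site it is uniquely $n$-divisible, since Kummer covers extract $n$-th roots of monoid elements. Together with the surjectivity of $n$ on $\cG$, which comes from the flat topology and the fact that $[n]$ is faithfully flat on a formal semi-abelian scheme, this gives $\cG_{\log}[n]=\cG[n]$ and $n:\cG_{\log}\to\cG_{\log}$ surjective. The short exact sequence $0\to\cG[n]\to T_{\Z/n\Z}(\mathcal{N})\to Y_{\cS}/n\to0$ follows. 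The sequence $0\to\cT[n]\to\cG[n]\to\cB[n]\to0$ follows from the snake lemma and surjectivity of $n$ on $\cT$.

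\emph{Step 2: membership in $(\fin/\cS)_{\mathrm d}$.} I would work levelwise over each $S_m$, using $(\fin/\cS)_{\mathrm d}=\varprojlim_m(\fin/S_m)_{\mathrm d}$ from Remark~\ref{rem:log p-div groups formal base}. Reduction mod $\cI^{m+1}$ turns $\mathcal{N}$ into a log 1-motive over the fs log scheme $S_m$ in the sense of \cite{KKN08}, compatibly in $m$. For log 1-motives over fs log schemes, $T_{\Z/n\Z}$ lies in $(\fin/S_m)_{\mathrm d}$; this is known from Kato--Kajiwara--Nakayama and \cite[\S4]{WZ24}. To argue it directly, do it étale locally where $Y,X$ are constant and the monodromy pairing decomposes with respect to a chart as in Proposition~\ref{prop:decomposition of log formal 1-motives w.r.t. a chosen chart}. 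Then $T_{\Z/n\Z}(\mathcal{N})$ is a Baer sum (Lemma~\ref{lem:realization of sum of 1-motives is the Baer sum of the realizations} adapted) of a classical finite flat group $T_{\Z/n\Z}([Y\xrightarrow{u^1_c}\cG])$ and a Kummer-type extension from $[Y\to\cT_{\log}]$. The Kummer part is representable after adjoining $n$-th roots of the chart, and its Cartier dual is of the same shape for the dual log formal 1-motive. This gives representability and dual representability. Compatibility with base change in $m$ is formal.

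\emph{Step 3: $\ell$-adic and $\ell$-divisible statements.} For (2), take the inverse limit of the sequences from Step 1. Mittag--Leffler holds because the $\cG[\ell^i]$ have surjective transition maps, so exactness is preserved. Kummer log étale local constancy follows because, for $\ell$ invertible, the finite Kummer flat groups above are Kummer log étale. For (3), take the colimit. Condition (1) of Definition~\ref{def log p-div Kato} holds by construction. $\ell$-divisibility of $\mathcal{N}[\ell^\infty]$ follows from the surjectivity of $\ell$ on $\cG[\ell^\infty]$ and on $Y\otimes\Q_\ell/\Z_\ell$. The finiteness conditions come from Step 2. The sequences are the colimits of those in Step 1.

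The main obstacle is Step 2, specifically dual representability over each $S_m$: the log structure on $S_m$ need not come from a nice chart, and the Kummer log flat topology on formal schemes requires care. I would first try to quote the algebraic result for log 1-motives levelwise, reducing to strictly henselian local charts by étale descent.
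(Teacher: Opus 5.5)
Your proposal is correct and follows the paper's route. The paper's proof simply restricts $\mathcal{N}$ to the algebraic log 1-motives over each $S_m$ and cites \cite[Prop.~4.5]{WZ24}, which is exactly your Step~2 fallback; your Steps~1 and~3 (getting $\cG_{\log}[n]=\cG[n]$ and $\cG_{\log}/n=0$ from the unique divisibility of $\Gmlb$, then passing to limits and colimits) just make explicit what that citation provides levelwise.
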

\begin{proof}
    This is clear by considering the algebraic situation over $S_m$ for all $m$. For the algebraic situation, we refer to \cite[Prop. 4.5]{WZ24}.
\end{proof}
The constructions in the above proposition are clearly functorial.
In particular, we get a functor
\[\mathscr{M}_{1,\cS}^{\log}\ra \BT_{\cS,\mathrm{d}}^{\log}.\]

\subsubsection{Log formal 1-motives over log formal traits}
Throughout the rest of this subsection, let $\cS:=\Spf\cO_K$ endowed with the canonical log structure.

\begin{proposition}\label{prop:hom b.t. lattices and log enlargement of fml semi-ab sch equal hom b.t. lattices and semi-abeloid var}
    Let $\cG$ be an extension of a formal abelian scheme $\mathcal{B}$ by a formal torus $\cT$ over $\cS$, $X_{\cS}$ the character group of $\cT$, $T=\cHom_{K,\rig}(X_{\cS}^{\rig},\GmK^{\rig})$, $\cG_{\log}$ the logarithmic enlargement of $\cG$, and $G$ the pushout of $\cG^{\rig}$ along $\cT^{\rig}\to T$ over $(\mathrm{Rig}/K)_{\fl}$. Let $Y_{\cS}$ be a sheaf over $\cS$ which is \'etale locally a free abelian group of finite rank. Then we have a canonical isomorphism
    \[\theta:\Hom_{\cS}(Y_{\cS},\cG_{\log})\cong\Hom_{K,\rig}(Y_{\cS},G)\]
    as described in the proof below.
\end{proposition}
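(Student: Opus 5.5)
We compare both sides through the two short exact sequences
\[0\to \cG\to \cG_{\log}\to \cHom_{\cS}(X_{\cS},\Gmlb)\to 0,\qquad 0\to \cG^{\rig}\to G\to \cHom_{K,\rig}(X,\GmK^{\rig}/\Gmfml^{\rig})\to 0,\]
where the second one comes from $G/\cG^{\rig}\cong T/\cT^{\rig}$ (used in \eqref{eq:alternative construction of monodromy pairing from u via extensions}). We construct $\theta$ and check it is bijective by applying $\Hom(Y_{\cS},-)$ (resp. $\Hom_{K,\rig}(Y,-)$) to each sequence. By Galois descent along a finite unramified extension $K'/K$ splitting $Y_{\cS}$ and $X_{\cS}$, it suffices to treat $Y_{\cS}\cong\Z$ and $X_{\cS}\cong\Z^r$, provided all maps are compatible with the $\Gal(K'/K)$-action. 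Then $\Hom(Y_{\cS},-)$ is just the group of global sections over $\cS$ (resp. $\Sp K$).

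\textbf{Construction of $\theta$.} We first define $\theta$ on the pieces. On $\cT_{\log}=\cHom(X_{\cS},\Gml)$ we have $\Gamma(\cS,\Gml)=M_{\cS}^{\gp}(\cS)=K^\times=\Gamma(\Sp K,\GmK^{\rig})$, so $\Gamma(\cS,\cT_{\log})=T(K)$ canonically. This identification extends $\Gamma(\cS,\cT)=\cT^{\rig}(K)$. On $\cG$ we use $\Gamma(\cS,\cG)=\cG^{\rig}(K)$, which holds because $\cG$ is the formal N\'eron model of $\cG^{\rig}$ (\cite[Criterion 1.4]{BS95}). Since $\cG_{\log}=\cG\sqcup_{\cT}\cT_{\log}$ and $G=\cG^{\rig}\sqcup_{\cT^{\rig}}T$, a global section of $\cG_{\log}$ should be represented as a sum $g+t$ with $g\in\Gamma(\cS,\cG)$ and $t\in\Gamma(\cS,\cT_{\log})$, and we then set $\theta(g+t)=g^{\rig}+t$. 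The two parts glue to a well-defined homomorphism on pushouts because they agree on $\cT$, and this gives $\theta$.

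\textbf{Bijectivity.} Taking global sections of the two sequences gives a map of long exact sequences. Injectivity: on the subgroup $\Gamma(\cG)\to \cG^{\rig}(K)$ the map is an isomorphism. On the quotients the map is $\Gamma(\cS,\Gmlb)^r=(K^\times/\cO_K^\times)^r\to(\GmK^{\rig}/\Gmfml^{\rig})(K)^r$, which is injective. Surjectivity: we lift a section $\bar x$ of $\cHom(X,\Gmlb)$ to $\cT_{\log}(\cS)$ by choosing a lift in $K^\times$. Hence every element of $\Gamma(\cS,\cHom(X_{\cS},\Gmlb))$ lifts, and in particular the boundary maps to $H^1(\cS,\cG)$ and $H^1(K,\cG^{\rig})$ vanish on the relevant images. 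On the rigid side, $G(K)\to (T/\cT^{\rig})(K)$ has image equal to the image of $T(K)=(K^\times)^r$, i.e. $\Z^r$. The same lift through $T(K)$ shows $\theta$ hits it. A five-lemma argument then shows $\theta$ is an isomorphism.

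\textbf{Main obstacle.} The hard step is the well-definedness of $\theta$, i.e. the claim that $\Gamma(\cS,\cG_{\log})$ really is the pushout of global sections. It is not a priori clear that every global section of $\cG_{\log}$ (a kfl sheaf) decomposes as $g+t$. We would prove this using $H^1_{\kfl}(\cS,\cT)=0$ for a split formal torus, which holds because $H^1_{\kfl}(\cS,\Gmfml)$ vanishes for strictly local $\cO_{K'}$ up to descent. From this, $\Gamma(\cS,\cG_{\log})\to\Gamma(\cS,\cG_{\log}/\cG)$ is surjective with kernel $\Gamma(\cS,\cG)$, which is exactly the sequence used above.
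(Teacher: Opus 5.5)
Your overall strategy (reduce to the split case, identify both groups of points with pushouts of the same data, then use Galois descent) is the paper's. However, the step you correctly isolate as the crux rests on a false lemma.

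It is not true that $H^1_{\kfl}(\cS,\Gmfml)=0$, even over a strictly henselian $\cO_{K'}$. Take the Kummer log flat cover $\cS'=\Spf\cO_K[\pi^{1/n}]\to\cS$. The two pullbacks of $\pi^{1/n}\in\Gml(\cS')$ to $\cS'\times_{\cS}\cS'$ differ by a unit (an $n$-th root of unity). So the class of $\pi^{1/n}$ descends to a global section $\tfrac1n\bar\pi$ of the kfl quotient $\Gmlb=\Gml/\Gmfml$. The fibre of $\Gml\to\Gmlb$ over this section is a $\Gmfml$-torsor with no section over $\cS$ when $n>1$, since no element of $\Gml(\cS)=K^\times$ has valuation $1/n$. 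This is the formal analogue of Kato's $R^1\varepsilon_*\Gm\cong(\Gml/\Gm)\otimes\Q/\Z$.

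The same computation shows $\Gamma(\cS,\Gmlb)=\Q$, not $K^\times/\cO_K^\times$. Hence two of your assertions are false: that every section of $\cHom(X_{\cS},\Gmlb)$ lifts to $\cT_{\log}(\cS)$, and that $\Gamma(\cS,\cG_{\log})\to\Gamma(\cS,\cG_{\log}/\cG)$ is surjective. The image is only $\Z^r\subset\Q^r$.

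Two further gaps remain. On the rigid side you use $G(K)=\cG^{\rig}(K)+T(K)$ without justification, and this is not automatic, because $G$ is a pushout of fppf sheaves, not of groups of $K$-points. Also, there is no ready-made morphism of long exact sequences between kfl cohomology on $\cS$ and fppf cohomology on $\Sp K$, so every comparison map must be constructed by hand.

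The repair is to project onto the abelian part instead of quotienting by $\cG$, which is what the paper does. The pushouts give exact sequences $0\to\cT_{\log}(\cS)\to\cG_{\log}(\cS)\to\cB(\cS)$ and $0\to T(K)\to G(K)\to\cB^{\rig}(K)$. The map $\cG(\cS)\to\cB(\cS)$ is surjective, because $\cG\to\cB$ is a torsor under the split formal torus $\cT$ for the \emph{classical} flat topology, where $H^1_{\fl}(\cS,\Gmfml)=0$ does hold.

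A diagram chase then gives $\cG_{\log}(\cS)=\cG(\cS)+\cT_{\log}(\cS)$ with $\cG(\cS)\cap\cT_{\log}(\cS)=\cT(\cS)$. Via the N\'eron identifications $\cT(\cS)=\cT^{\rig}(K)$, $\cG(\cS)=\cG^{\rig}(K)$, $\cB(\cS)=\cB^{\rig}(K)$, you get the analogous statement $G(K)=\cG^{\rig}(K)+T(K)$ with intersection $\cT^{\rig}(K)$.

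Hence both groups are the abstract pushouts of the same diagram: the inclusions $\cT(\cS)\hookrightarrow\cT_{\log}(\cS)$ and $\cT^{\rig}(K)\hookrightarrow T(K)$ are both $\Hom(\Z^r,\cO_K^\times)\hookrightarrow\Hom(\Z^r,K^\times)$. This gives well-definedness and bijectivity of $\theta$ at once, so the five-lemma comparison through $\Gmlb$ can be dropped entirely.
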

\begin{proof}
    First we deal with the case that $Y_{\cS}\cong\Z^r$ and $X_{\cS}\cong\Z^s$ (so $\cT\cong\Gmfml^s$). It amounts to showing that $\Gamma(\cS,\cG_{\log})\cong\Gamma(\Sp K,G)$. By flat descent \cite[Thm. 3.1]{BG98} and \cite[Prop. 8.2.3 (1)]{FvdP04}, we get $H^1_{\fl}(\Sp K,\GmK^{\rig})=0$. By flat descent \cite[Chap. I, Prop. 6.1.2]{FK18}, we also have $H^1_{\fl}(\cS,\Gmfml)=0$. Therefore we get the following two commutative diagrams
    \begin{equation}\label{eq:diagram for G(K)}
    \xymatrix{
    0\ar[r] &\cT^{\rig}(K)\ar[r]\ar[d] &\cG^{\rig}(K)\ar[r]\ar[d] &\cB^{\rig}(K)\ar[r]\ar@{=}[d] &0 \\
    0\ar[r] &T(K)\ar[r] &G(K)\ar[r] &\cB^{\rig}(K)\ar[r] &0
    }
    \end{equation}
    and
    \begin{equation}\label{eq:diagram for cGlog(S)}
    \xymatrix{
    0\ar[r] &\cT(\cS)\ar[r]\ar[d] &\cG(\cS)\ar[r]\ar[d] &\cB(\cS)\ar[r]\ar@{=}[d] &0 \\
    0\ar[r] &\cT_{\log}(\cS)\ar[r] &\cG_{\log}(\cS)\ar[r] &\cB(\cS)\ar[r] &0
    }
    \end{equation}
    with exact rows. We also have $\cT^{\rig}(K)\cong\cT(\cS)$, $\cG^{\rig}(K)\cong\cG(\cS)$ and $\cB^{\rig}(K)\cong\cB(\cS)$ by \cite[Criterion 1.4]{BS95}. Hence in order to show $\Gamma(\cS,\cG_{\log})\cong\Gamma(\Sp K,G)$, we are reduced to show that the two maps $\cT^{\rig}(K)\to T(K)$ and $\cT(\cS)\to \cT_{\log}(\cS)$ agree. But this is clear, as both maps are $\Hom(\Z^s,\mathcal{O}_K^\times)\hookrightarrow\Hom(\Z^s,K^\times)$.

    In general, let $K'$ be a finite unramified extension of $K$ such that $Y_{\cS}$ (resp. $X_{\cS}$) is isomorphic to $\Z^r$ (resp. $\Z^s$) over $\cS'$ with $\cS':=\Spf\cO_{K'}$ endowed with the canonical log structure. Note that $G(K')$ and $\cG_{\log}(\cS')$ are isomorphic as $\Gal(K'/K)$-modules by the above description. It follows that 
    \[\Hom_{\cS}(Y_{\cS},\cG_{\log})=\Hom_{\cS'}(Y_{\cS},\cG_{\log})^{\Gamma}\cong\Hom_{K',\rig}(Y_{\cS},G)^{\Gamma}=\Hom_{K,\rig}(Y_{\cS},G)\]
    with $\Gamma:=\Gal(K'/K)$.
\end{proof}

Given $\mathcal{N}=[Y_{\cS}\xrightarrow{u_{\cS}}\cG_{\log}] \in \mathscr{M}_{1,\cS}^{\log}$, let $G$ be the pushout of $\cG^{\rig}$ along $\cT^{\rig}\to T$ over $(\mathrm{Rig}/K)_{\fl}$. By Proposition \ref{prop:hom b.t. lattices and log enlargement of fml semi-ab sch equal hom b.t. lattices and semi-abeloid var}, $u_\cS$ induces a rigid 1-motive \[[Y_{\cS}^\rig\xrightarrow{u}G]\] over $K$. It is obvious that $[Y_{\cS}^\rig\xrightarrow{u}G]$ is strict and has semi-stable reduction.

Part (1) of the following theorem is the log version of Proposition \ref{prop:formal 1-motives and rigid 1-motives with good reduction}.

\begin{theorem}\label{thm:1-1 correspondence b.t. log fml 1-mot and strict sst rigid 1-mot}
    Let $\mathscr{M}_{1,K}^{\mathrm{str,\st}}$ be the full subcategory of $\mathscr{M}_{1,K}$ consisting of strict semi-stable objects. Then we have the following.
    \begin{enumerate}
        \item The association $[Y_{\cS}\xrightarrow{u_{\cS}}\cG_{\log}]\mapsto [Y_{\cS}^{\rig}\xrightarrow{u}G]$
        gives rise to an equivalence
        \begin{equation}\label{eq:rigid generic fiber functor for log formal 1-motives}
        \mathrm{GF}^{\log}:\mathscr{M}_{1,\cS}^{\log}\xrightarrow{\simeq} \mathscr{M}_{1,K}^{\mathrm{str,\st}}
        \end{equation}
        of categories.

        \item The logarithmic monodromy \eqref{eq:monodromy pairing for log formal 1-motive} of $[Y_{\cS}\xrightarrow{u_{\cS}}\cG_{\log}]$ is compatible with Raynaud's geometric monodromy \eqref{eq:the Z-valued monodromy pairing for rigid 1-motive in semi-stable case} for $[Y_{\cS}^\rig\xrightarrow{u}G]$ along the correspondence.  

        \item Let $c$ be the chart of the log structure of $\cS$ corresponding to the uniformizer $\pi\in\cO_K$ chosen in the beginning. Then the decompositions from Theorem \ref{thm:decomposition of rigid 1-motives w.r.t. a chosen uniformizer} and Proposition \ref{prop:decomposition of log formal 1-motives w.r.t. a chosen chart} are compatible with each other long the equivalence in (1).
    \end{enumerate}
\end{theorem}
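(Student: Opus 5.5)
We prove (1) first, following the pattern of Proposition \ref{prop:formal 1-motives and rigid 1-motives with good reduction}, and then deduce (2) and (3) by comparing the two constructions directly.

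For (1), the functor $\mathrm{GF}^{\log}$ is defined on objects via Proposition \ref{prop:hom b.t. lattices and log enlargement of fml semi-ab sch equal hom b.t. lattices and semi-abeloid var}. On morphisms we use two facts. First, by Proposition \ref{prop:identification of homo between log formal semi-ab to that of formal semi-ab}, a morphism $(f_{-1},f_0)$ of log formal 1-motives has $f_0$ coming from a unique $\cG\to\cG'$. Second, this induces $G\to G'$ on pushouts. Compatibility with $u$ then follows from naturality of $\theta$ in $\cG$.

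\emph{Essential surjectivity.} Let $M=[Y\xrightarrow{u}G]$ be strict and semi-stable. Then $Y$ and $X$ extend to $Y_{\cS}$ and $X_{\cS}$. Since $A$ has potentially good reduction and is semi-stable, its Raynaud data has trivial torus part, so $A=\mathcal{B}^{\rig}$ for a formal abelian scheme $\mathcal{B}$. As in Definition \ref{def:good-semi-good}(1)(iii), $v^\vee$ extends, which produces $\cG$ with $G$ equal to the pushout of $\cG^{\rig}$. Finally $u\in\Hom_{K,\rig}(Y,G)$ equals $\theta(u_{\cS})$ for a unique $u_{\cS}\colon Y_{\cS}\to\cG_{\log}$.

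\emph{Full faithfulness.} Take $f=(f_{-1},f_0)\colon M\to M'$. The map $f_{-1}$ extends uniquely over $\cS$. Because $\Hom_{K,\rig}(T,\mathcal{B}'^{\rig})=0$ by strictness, $f_0$ splits into $f_{\mathrm t}\colon T\to T'$ and $f_{\mathrm{ab}}$. These extend to $\cT\to\cT'$ and $\cB\to\cB'$, by character groups and formal N\'eron models (Lemma \ref{lem:a formal abelian scheme is the formal Neron model of its generic fiber}). Then $f_0$ maps $\cG^{\rig}$ into $\cG'^{\rig}$, because $f_{\mathrm t}$ preserves $\cT^{\rig}$ and $\cG^{\rig}$ is the preimage of $\cT^{\rig}$-compatible pieces. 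By the N\'eron property of $\cG'$ (\cite[Criterion 1.4]{BS95}), $f_0|_{\cG^{\rig}}$ comes from a unique $\cG\to\cG'$. Uniqueness of extensions gives faithfulness, and that step is routine.

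\textbf{Main obstacle.} The genuinely delicate point is the compatibility with $u$. We must check
\[
u'_{\cS}\circ f_{-1}=f_{0,\log}\circ u_{\cS},
\]
which requires that $\theta$ is natural in $\cG$ and injective. Injectivity follows from Proposition \ref{prop:hom b.t. lattices and log enlargement of fml semi-ab sch equal hom b.t. lattices and semi-abeloid var}. For naturality, we reduce to split $X$ and $Y$ and use diagrams \eqref{eq:diagram for G(K)} and \eqref{eq:diagram for cGlog(S)}. The key identification is $\cT_{\log}(\cS)=\Hom(X,K^\times)=T(K)$, which is compatible with homomorphisms because both sides are functorial in the character group.

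For (2), we use the alternative description $\langle-,-\rangle'_{\rig}$ together with Proposition \ref{prop:two constructions of Raynaud's monodromy agree}. The log pairing is the composite
\[
Y\to\cG_{\log}/\cG\cong\cHom(X,\Gml/\Gmfml),
\]
while the rigid pairing is the composite
\[
Y\to G/\cG^{\rig}\cong\cHom(X,\GmK^{\rig}/\Gmfml^{\rig}).
\]
On $\cS$-points, and on $K$-points after an unramified extension, both quotients become $K^\times/\cO_K^\times\cong\Z$ via $v_K$. The identification $\theta$ respects the subgroups $\cG(\cS)=\cG^{\rig}(K)$, so the two pairings agree.

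For (3), under $\theta$ the map $u_c^2$, namely $y\mapsto(x\mapsto c(\langle y,x\rangle_{\log}))=\pi^{\langle y,x\rangle}$, corresponds to $u_\pi^2$ by (2), since the chart sends $1\mapsto\pi$. Additivity of $\theta$ then gives $\theta(u_c^1)=u_\pi^1$. Moreover $u_c^1$ lands in $\cG$, which matches the good reduction of $M_\pi^1$ asserted in Theorem \ref{thm:decomposition of rigid 1-motives w.r.t. a chosen uniformizer}.
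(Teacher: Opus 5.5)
Your proposal is correct and follows the same route as the paper's proof. Essential surjectivity and the compatibility with $u$ go through the isomorphism $\theta$ of Proposition~\ref{prop:hom b.t. lattices and log enlargement of fml semi-ab sch equal hom b.t. lattices and semi-abeloid var}. Fullness splits $f_0$ into torus and abeloid parts using $\Hom_{K,\rig}(T,\cB'^{\rig})=0$, and extends $f_0|_{\cG^{\rig}}$ by the formal N\'eron property of $\cG'$. Parts (2)--(3) compare Raynaud's pairing in the form $\langle-,-\rangle'_{\rig}$ with the log pairing on points in the split case.

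Two remarks. The naturality of $\theta$ in $\cG$ that you single out is used only implicitly in the paper's fullness step. Your justification of $f_0(\cG^{\rig})\subset\cG'^{\rig}$ is as terse as the paper's own, so write it out if you want the argument to be self-contained.
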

\begin{proof}
    (1) The functor $\mathrm{GF}^{\log}$ is essentially surjective by Proposition \ref{prop:hom b.t. lattices and log enlargement of fml semi-ab sch equal hom b.t. lattices and semi-abeloid var}. 

    We show the faithfulness. Let $\mathcal{N}=[Y_{\cS}\xrightarrow{u_{\cS}}\cG_{\log}]$ and $\mathcal{N}'=[Y_{\cS}'\xrightarrow{u'_{\cS}}\cG'_{\log}]$ be two objects of $\mathscr{M}_{1,\cS}^{\log}$, and let $N=[Y\xrightarrow{u} G]$ and $N'=[Y\xrightarrow{u'}G']$ be the corresponding rigid 1-motives, where $Y:=Y_{\cS}^{\rig}$ and $Y':=Y_{\cS}^{'\rig}$. Similarly let $X:=X_{\cS}^{\rig}$ (resp. $X':=X_{\cS}^{'\rig}$) with $X_{\cS}$ (resp. $X'_{\cS}$) the character group of the torus part of $\cG$ (resp. $\cG'$).  Let $f_{\cS}=((f_{\cS})_{-1},(f_{\cS})_0)$ be a morphism from $\mathcal{N}$ to $\mathcal{N}'$ such that \[(f_{-1},f_0):=\mathrm{GF}^{\log}(f_{\cS})=(0,0).\] 
    Then it is clear that $(f_{\cS})_{-1}=0$. By Proposition \ref{prop:identification of homo between log formal semi-ab to that of formal semi-ab}, there exists 
    \[f_{\cS,\mathrm{sab}}:\cG\to\cG'\] 
    such that $(f_{\cS,\mathrm{sab}})_{\log}=(f_{\cS})_0$. To prove the faithfullness, it suffices to prove $f_{\cS,\mathrm{sab}}=0$. Since $\cG^{\rig}\hookrightarrow G$ and $\cG^{'\rig}\hookrightarrow G'$, we get $f_{\cS,\mathrm{sab}}^{\rig}=0$ from $f_0=0$. By \cite[Criterion 1.4]{BS95} $\cG$ (resp. $\cG'$) is the formal N\'eron model of $\cG^{\rig}$ (resp. $\cG^{'\rig}$), so we get $f_{\cS,\mathrm{sab}}=0$ by the univeral mapping property of formal N\'eron models.
    
    We show the fullness. Let $f=(f_{-1},f_0):N\to N'$ be a morphism of rigid 1-motives. Apparently $f_{-1}$ extends to a homomorphism $(f_{\cS})_{-1}:Y_{\cS}\to Y'_{\cS}$ over $\cS$. We are left with showing that there exists a homomorphism $f_{\cS,\mathrm{sab}}:\cG\to\cG'$ such that 
    \[u'_{\cS}\circ (f_{\cS})_{-1}=(f_{\cS,\mathrm{sab}})_{\log}\circ u_{\cS},\]
    and $f_{\cS,\mathrm{sab}}$ induces $f_0$.
    Since $\Hom_{K,\rig}(T,\cB^{'\rig})=0$, $f_0$ induces a homomorphism $f_{\mathrm{ab}}:\cB^{\rig}\to\cB^{'\rig}$ which arises from a homomorphism $f_{\cS,\mathrm{ab}}:\cB\to\cB'$, as well as a homomorphism $f_{\mathrm{t}}:T\to T'$ which automatically arises from a homomorphism $f_{\cS,\mathrm{t}}:\cT\to\cT'$ by considering their character groups. So we have the following commutative diagram
    \[\xymatrix{
    \cT^{\rig}\ar[rr]\ar@{..>}[rd]^{f_{\cS,\mathrm{t}}^{\rig}}\ar@{_(->}[dd] &&\cG^{\rig}\ar[rr]\ar@{..>}[rd]\ar@{_(->}[dd] &&\cB^{\rig}\ar@{=}[dd]\ar[rd]^{f_{\cS,\mathrm{ab}}^{\rig}} \\
    &\cT^{'\rig}\ar@{..>}[rr]\ar@{..>}[dd] &&\cG^{'\rig}\ar@{..>}[rr]\ar@{..>}[dd] &&\cB^{'\rig}\ar@{=}[dd] \\
    T\ar[rr]\ar[rd]^{f_{\mathrm{t}}} &&G\ar[rr]\ar[rd]^{f_0} &&\cB^{\rig}\ar[rd]^{f_{\mathrm{ab}}} \\
    &T'\ar[rr] &&G'\ar[rr] &&\cB^{'\rig}
    }\]
    with all rows short exact sequences (we omit the zeros on the two extreme sides for saving spaces), where the commutativity of the square $\cT^{\rig}\cG^{\rig}\cG^{'\rig}\cT^{'\rig}$ (resp. $\cG^{\rig}\cB^{\rig}\cB^{'\rig}\cG^{'\rig}$) follows from that of the square $TGG'T'$ (resp. $G\cB^{\rig}\cB^{'\rig}G'$). By the univeral mapping property of formal N\'eron models, $f_0|_{\cG^{\rig}}:\cG^{\rig}\to\cG^{'\rig}$ arises from a unique homomorphism $f_{\cS,\mathrm{sab}}$ such that we have a commutative diagram
    \[\xymatrix{
    0\ar[r] &\cT\ar[r]\ar[d]^{f_{\cS,\mathrm{t}}} &\cG\ar[r]\ar[d]^{f_{\cS,\mathrm{sab}}} &\cB\ar[r]\ar[d]^{f_{\cS,\mathrm{ab}}} &0 \\
    0\ar[r] &\cT'\ar[r] &\cG'\ar[r] &\cB'\ar[r] &0
    }\]
    with exact rows. By construction $f_{\cS,\mathrm{sab}}$ induces $f_0$, and the equality $u'_{\cS}\circ (f_{\cS})_{-1}=(f_{\cS,\mathrm{sab}})_{\log}\circ u_{\cS}$ follows from $u'\circ f_{-1}=f_0\circ u$ and $\Hom_{\cS}(Y_{\cS},\cG'_{\log})\xrightarrow{\cong}\Hom_{K,\rig}(Y_{\cS},G')$ (see Proposition \ref{prop:hom b.t. lattices and log enlargement of fml semi-ab sch equal hom b.t. lattices and semi-abeloid var}).
    
    (2) Now we compare the monodromies. 
    
    By Proposition \ref{prop:two constructions of Raynaud's monodromy agree}, the geometric monodromy (see \eqref{eq:the Z-valued monodromy pairing for rigid 1-motive in semi-stable case})
    \[\mu_0:Y\times X\to\Z\]  
    of $[Y\xrightarrow{u}G]$ is induced by 
    $\overline{u}:Y\xrightarrow{u}G\to G/\cG^{\rig}\cong\cHom_{K,\rig}(X,\GmK^{\rig}/\Gmfml^{\rig})$
    by noticing that $\Gamma(\Sp L,\GmK^{\rig}/\Gmfml^{\rig})=\Z$ for any umramified field extension $L$ of $K$ (recall that both $Y_{\cS}$ and $X_{\cS}$ are unramified as Galois modules).
    
    Since we work with a standard log trait, we have $\Gamma(\mathcal{U},\Gml/\Gmfml)=\Z$ for any strict finite \'etale cover $\mathcal{U}$ of $\cS$.
    Then the logarithmic monodromy (see \eqref{eq:monodromy pairing for log formal 1-motive})
    \[\overline{u}_{\cS}:Y_{\cS}\xrightarrow{u_{\cS}}\cG_{\log}\to \cG_{\log}/\cG\cong\cHom_{\cS}(X_{\cS},\Gml/\Gmfml)\]
    of $[Y_{\cS}\xrightarrow{u_{\cS}}\cG_{\log}]$ amounts to the pairing \eqref{eq:monodromy pairing for log formal 1-motive w.r.p. to a given chart} 
    \[\langle-,-\rangle_{\log}:Y_{\cS}\times X_{\cS}\to\Z.\]
    
    Assume that $Y_{\cS}\cong\Z^r$ and $X_{\cS}\cong\Z^s$ (so $\cT\cong\Gmfml^s$). Then both $u$ and $u_{\cS}$ are given by an element of $G(K)^r=\cG_{\log}(\cS)^r$. We identify $Y_{\cS}$ (resp. $X_{\cS}$) with $Y$ (resp. $X$) as Galois modules. By tracing back to the diagrams \eqref{eq:diagram for G(K)} and \eqref{eq:diagram for cGlog(S)}, one can see that the pairings $\mu_0$ and $\langle-,-\rangle_{\log}$ agree. The general case follows from the special case by passing to a finite unramified field extension of $K$.

    (3) This is clear by the constructions of the decompositions.
\end{proof}

\subsection{Formal completion and analytification II}\label{subsec:formal completion and analytification II}
Let $S:=\Spec\cO_K$ and $\cS=\Spf\cO_K$. We endow both $S$ and $\cS$ with the canonical log structure. Let $S_n:=\Spec\cO_K/(\pi^{n+1})$ endowed with the induced log structure from $S$. Let 
\[\mathscr{M}_{1,S}^{\log,\mathrm{alg}}\]
be the category of (algebraic) log 1-motives over $S$, see \cite[\S2]{KKN08}. Recall that $\mathscr{M}_{1,\cS}^{\log}$ denotes the category of log formal 1-motives over $\cS$. 

The following lemma will be used to define the logarithmic analogue of the completion functor \eqref{eq:completion functor for 1-motives}.

\begin{lemma}\label{lem:the log enlargement of semiabelian scheme over S is a colimit of its restrictions to S_m}
    Let $\cG=\varinjlim_m \cG_m$ be an extension of a formal abelian scheme $\cB=\varinjlim \cB_m$ by a formal torus $\cT=\varinjlim_m \cT_m$ over $\cS$. Let $\cG_{m,\log}$ be the pushout of $\cG_m$ along $\cT_m\hookrightarrow \cT_{m,\log}$ with $\cT_{m,\log}$ the logarithmic enlargement of the torus $\cT_m$ over $S_m$. For any sheaf $F$ on $(\fs/S_m)_{\kfl}$, we regard it as a sheaf by the pushforward along the canonical map $\iota_m:(\fs/S_m)_{\kfl}\to (\fs/\cS)_{\kfl}$ of sites. Then we have a canonical isomorphism $\cG_{\log}\cong\varinjlim_m \cG_{m,\log}$.
\end{lemma}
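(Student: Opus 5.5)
The plan is to reduce the comparison to the torus part and then use the pushout description on both sides. By Definition \ref{def:log enlargement}, $\cG_{\log}=\cG\sqcup_{\cT}\cT_{\log}$. By definition each $\cG_{m,\log}=\cG_m\sqcup_{\cT_m}\cT_{m,\log}$. Filtered colimits commute with pushouts in the category of abelian sheaves. Hence
\[
\varinjlim_m\cG_{m,\log}\cong \Bigl(\varinjlim_m\cG_m\Bigr)\sqcup_{\varinjlim_m\cT_m}\Bigl(\varinjlim_m\cT_{m,\log}\Bigr),
\]
where all colimits are taken of the pushforwards $\iota_{m,*}$. We also use that $\cG\cong\varinjlim_m\iota_{m,*}\cG_m$ and $\cT\cong\varinjlim_m\iota_{m,*}\cT_m$ as sheaves on $(\fs/\cS)_{\kfl}$. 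This holds because an adic $\cS$-formal scheme maps to $\cG$ exactly when it is compatibly a colimit of its reductions. Strictly, one should check this sheaf identity, or else interpret it as the convention already implicit in writing $\cG=\varinjlim\cG_m$. With these identifications, the statement reduces to showing
\[
\cT_{\log}\cong\varinjlim_m\iota_{m,*}\cT_{m,\log},
\]
compatibly with the inclusions of $\cT$.

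Since $\cT_{\log}=\cHom_{\cS}(X_{\cS},\Gml)$ and $\cT_{m,\log}=\cHom_{S_m}(X_{S_m},\mathbb{G}_{\mathrm{m},\log,S_m})$, we may work étale locally, where $X_{\cS}\cong\Z^r$. This reduces us to the claim for $\Gml$ itself, namely
\[
\Gml\cong\varinjlim_m\iota_{m,*}\mathbb{G}_{\mathrm{m},\log,S_m}.
\]
We then use the exact sequences $0\to\Gmfml\to\Gml\to\Gmlb\to0$ and $0\to\mathbb{G}_{\mathrm{m},S_m}\to\mathbb{G}_{\mathrm{m},\log,S_m}\to\overline{\mathbb{G}}_{\mathrm{m},\log,S_m}\to0$. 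Two inputs are needed. First, $\Gmfml=\varinjlim_m\iota_{m,*}\mathbb{G}_{\mathrm{m},S_m}$, which follows from the description of $\Gmfml$ via the completed Laurent algebra. Second, the quotient pieces $\overline{\mathbb{G}}_{\mathrm{m},\log}$ are insensitive to nilpotent thickenings: for a Kummer log flat $\mathcal U$ over $\cS$, the group $M^{\gp}_{\mathcal U}/\cO^{\times}_{\mathcal U}$ coincides with that of $\mathcal U\times_{\cS}S_m$ for every $m$, so the transition maps on the $\overline{\mathbb{G}}_{\mathrm{m},\log}$ parts are isomorphisms. Since filtered colimits are exact, the five lemma then gives the claim for $\Gml$.

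The main obstacle is making precise the meaning of $\varinjlim_m\iota_{m,*}$ as sheaves on the Kummer log flat site of $\cS$. In particular, we must justify that sections of $\Gml$ over a quasi-compact adic $\mathcal U$ are determined by compatible systems over the $\mathcal U\times_{\cS}S_m$. This is a limit statement, which in sheaf language appears as the identification of $\Gml$ with the colimit of the pushforwards. I would verify it directly on sections, using $\Gamma(\mathcal U,M^{\gp}_{\mathcal U})=\varprojlim_m\Gamma(\mathcal U_m,M^{\gp}_{\mathcal U_m})$, which holds because $\cO^{\times}$ is $\cI$-adically complete and the $\overline M$ parts are constant in $m$. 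The same argument is already used in the proof of Proposition \ref{prop:identification of homo between log formal semi-ab to that of formal semi-ab}.
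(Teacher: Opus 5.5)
Your proposal is correct and is essentially the paper's proof. Like the paper, you reduce to the claim $\cT_{\log}\cong\varinjlim_m\cT_{m,\log}$; the paper does this by applying exactness of filtered colimits to the extensions of $\cB_m$ by $\cT_m$ and by $\cT_{m,\log}$, and your ``filtered colimits commute with pushouts'' is the same step in other words. You then reduce \'etale locally to $\cT=\Gmfml$ and conclude by the five lemma from $0\to\Gmfml\to\Gml\to\Gml/\Gmfml\to 0$ and its $S_m$-analogues, using $\varinjlim_m(\Gm)_{S_m}\cong\Gmfml$ and the fact that the quotient terms are unchanged along the thickenings.

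One caution concerns your last paragraph. The fact that sections of $\Gml$ over an adic $\mathcal U$ are compatible systems over the $\mathcal U\times_{\cS}S_m$ is an inverse-limit statement: it would identify $\Gml$ with $\varprojlim_m\iota_{m,*}$ of the $S_m$-sheaves, and there exactness is no longer automatic. It is not the colimit identification you actually use, so it should not be offered as its justification. Your argument, like the paper's, really works with the ind-object convention implicit in writing $\cG=\varinjlim_m\cG_m$.
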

\begin{proof}
    For each $m\geq0$, we have the following commutative diagram
    \[\xymatrix{
    0\ar[r] &\cT_m\ar[r]\ar[d] &\cG_m\ar[r]\ar[d] &\cB_m\ar[r]\ar@{=}[d] &0 \\
    0\ar[r] &\cT_{m,\log}\ar[r] &\cG_{m,\log}\ar[r] &\cB_m\ar[r] &0
    }\]
    with exact rows. Since taking filtered colimit is exact, we get the following commutative diagram
    \[\xymatrix{
    0\ar[r] &\cT\ar[r]\ar[d] &\cG\ar[r]\ar[d] &\cB\ar[r]\ar@{=}[d] &0 \\
    0\ar[r] &\varinjlim_m \cT_{m,\log}\ar[r] &\varinjlim_m \cG_{m,\log}\ar[r] &\cB\ar[r] &0
    }\]
    with exact rows. Then the result follows from the claim that $\cT_{\log}\cong \varinjlim_m \cT_{m,\log}$. We prove the claim now. Since torus does not deform infinitesimally, there exists an \'etale locally constant sheaf of finite rank free abelian groups $X_{\cS}$ such that $\cT=\cHom_{\cS}(X_{\cS},\Gmfml)$. Since $\cS$ and $S_m$ (for any $m\geq0$) have the same samll \'etale site, we are reduced to consider the case that $X_{\cS}=\Z$, equivalently $\cT=\Gmfml$. We stress that $\cS$ and $S_m$ have the same topological space. For any non-negative integers $m$ and $r$, we have the following commutative diagram
    \[\xymatrix{
    0\ar[r] &(\Gm)_{S_{m}}\ar[r]\ar[d] &(\Gml)_{S_{m}}\ar[r]\ar[d] &(\Gml/\Gm)_{S_{m}}\ar[r]\ar[d]^{\cong} &0  \\
    0\ar[r] &(\Gm)_{S_{m+r}}\ar[r]\ar[d] &(\Gml)_{S_{m+r}}\ar[r]\ar[d] &(\Gml/\Gm)_{S_{m+r}}\ar[r]\ar[d]^{\cong} &0  \\
    0\ar[r] &(\Gmfml)_{\cS}\ar[r] &(\Gml)_{\cS}\ar[r] &(\Gml/\Gmfml)_{\cS}\ar[r] &0 \\
    }\]
    with exact rows, where $(a)_{b}$ indicates that $a$ is a sheaf on $(\fs/b)_{\kfl}$, and when $b=S_m$ we regard $a$ as a sheaf on $(\fs/\cS)_{\kfl}$ via the pushforward along $\iota_m:(\fs/S_m)_{\kfl}\to (\fs/\cS)_{\kfl}$. Passing to colimits, we get a commutative diagram
    \[\xymatrix{
    0\ar[r] &\varinjlim_m(\Gm)_{S_{m}}\ar[r]\ar[d]^{\cong} &\varinjlim_m(\Gml)_{S_{m}}\ar[r]\ar[d] &\varinjlim_m(\Gml/\Gm)_{S_{m}}\ar[r]\ar[d]^{\cong} &0  \\
    0\ar[r] &(\Gmfml)_{\cS}\ar[r] &(\Gml)_{\cS}\ar[r] &(\Gml/\Gmfml)_{\cS}\ar[r] &0 \\
    }\]
    with exact rows. Since the left vertical map is an isomorphism, we are done.
\end{proof}

Let $\mathcal{M}=[Y_{S}\xrightarrow{u_S}\cG_{\log}]\in\sM_{1,S}^{\log,\mathrm{alg}}$. 
Let $\mathcal{M}_m=[Y_{S_m}\xrightarrow{u_{S_m}}\cG_{m,\log}]\in\sM_{1,S_m}^{\log,\mathrm{alg}}$ be the base change of $\mathcal{M}$ from $S$ to $S_m$. By Lemma \ref{lem:the log enlargement of semiabelian scheme over S is a colimit of its restrictions to S_m}, the family $(\mathcal{M}_m)_m$ gives rise to an object of $\sM_{1,\cS}^{\log}$, and we denote it by $\mathcal{M}^{\wedge}=[Y_{\cS}\xrightarrow{u_S^{\wedge}}\cG^{\wedge}_{\log}]$. Therefore we get a functor
\begin{equation}\label{eq:completion functor for log 1-motives}
    (-)^{\wedge}:\mathscr{M}_{1,S}^{\log,\mathrm{alg}}\to \mathscr{M}_{1,\cS}^{\log}.
\end{equation}
Recall that $\mathscr{M}_{1,K}^{\mathrm{alg}}$ and $\mathscr{M}_{1,K}$ denote the category of (algebraic) 1-motives over $\Spec K$ and rigid 1-motives over $\Sp K$ respectively, and we have the natural analytification functor \eqref{eq:rigid analytification functor for 1-motives}
\[(-)^{\rig}:\mathscr{M}_{1,K}^{\mathrm{alg}}\to \mathscr{M}_{1,K}\]
and the rigid generic fiber functor \eqref{eq:rigid generic fiber functor for log formal 1-motives}
\[\mathrm{GF}^{\log}:\mathscr{M}_{1,\cS}^{\log}\xrightarrow{\simeq} \mathscr{M}_{1,K}^{\mathrm{str,\st}}.\]
Since the log structure of $S$ is supported on its closed point, we have the algebraic generic fiber functor
\begin{equation}\label{eq:generic fiber functor for log 1-motives}
    \mathrm{GF}^{\log,\mathrm{alg}}:\mathscr{M}_{1,S}^{\log,\mathrm{alg}}\to \mathscr{M}_{1,K}^{\mathrm{alg}},
\end{equation}
whose image clearly lands in the full subcategory $\mathscr{M}_{1,K}^{\mathrm{alg,str,st}}$ of $\mathscr{M}_{1,K}^{\mathrm{alg}}$ consisting of strict 1-motives with semi-stable reduction (in the sense of \cite[\S4]{Ray94}).

\begin{proposition}\label{prop:hom b.t. lattices and log enlargement of semi-ab sch equal hom b.t. lattices and semi-ab var}
    Let $\cG$ be an extension of an abelian scheme $\cB$ by a torus $\cT$ over $S$, $X_{S}$ the character group of $\cT$, $\cG_{\log}$ the logarithmic enlargement of $\cG$. Let $Y_{S}$ be a sheaf over $S$ which is \'etale locally a free abelian group of finite rank, and let $T:=\cT\times_S\Spec K$, $G:=\cG\times_S\Spec K$ and $Y:=Y_{S}\times_S\Spec K$. Note that $(Y_S^{\wedge})^{\rig}$ is simply $Y^{\rig}$, and we will only use the notation $Y^{\rig}$. Then we have the following.
    \begin{enumerate}
        \item The canonical map
        \[\theta^{\mathrm{alg}}:\Hom_{S}(Y_{S},\cG_{\log})\xrightarrow{\cong}\Hom_{K}(Y,G).\]
        induced by the restriction to the generic fiber is an isomorphism.
        \item The following diagram
        \[\xymatrix{
        \Hom_{S}(Y_{S},\cG_{\log})\ar[d]_{\theta^{\mathrm{alg}}}^{\cong}\ar[r] &\Hom_{\cS}(Y_{S}^{\wedge},\cG^{\wedge}_{\log})\ar[d]^{\theta}_{\cong}\ar[d] \\
        \Hom_{K}(Y,G)\ar[r] &\Hom_{K}(Y^{\rig},G^{\rig})
        }\]
        is commutative, where $\theta$ is the map as in Proposition \ref{prop:hom b.t. lattices and log enlargement of fml semi-ab sch equal hom b.t. lattices and semi-abeloid var}.
    \end{enumerate}    
\end{proposition}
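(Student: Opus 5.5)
We follow the pattern of the proof of Proposition \ref{prop:hom b.t. lattices and log enlargement of fml semi-ab sch equal hom b.t. lattices and semi-abeloid var}, now in the algebraic setting. For (1), we first pass to a finite unramified extension $K'$ of $K$ splitting $Y_S$ and $X_S$, with $S'=\Spec\cO_{K'}$ carrying its canonical log structure. Both sides are then $\Gal(K'/K)$-invariants of the corresponding groups over $S'$ and $K'$, so it suffices to treat $Y_S\cong\Z^r$ and $X_S\cong\Z^s$. In that case the claim becomes $\Gamma(S,\cG_{\log})\cong G(K)$. Since $H^1_{\fl}(S,\Gm)=\Pic(\cO_K)=0$ and $H^1(\Spec K,\Gm)=0$, taking sections gives short exact sequences
\[0\to\cT(S)\to\cG(S)\to\cB(S)\to0,\qquad 0\to T(K)\to G(K)\to B(K)\to 0.\]
For $\cG_{\log}$ we also need $H^1_{\kfl}(S,\Gml)=0$ to get
\[0\to\cT_{\log}(S)\to\cG_{\log}(S)\to\cB(S)\to0.\]
This vanishing is Kato's log Hilbert 90 for $\Gml$ on the log trait. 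We have $\cT_{\log}(S)=\Hom(\Z^s,M_S^{\gp}(S))=\Hom(\Z^s,K^\times)=T(K)$, and $\cB(S)=B(K)$ by the valuative criterion of properness (N\'eron property of abelian schemes). The five lemma, applied to the map induced by restriction to the generic fiber (where $\cG_{\log}|_{\Spec K}=G$, because the log structure is trivial there), then gives the isomorphism $\theta^{\mathrm{alg}}$.

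For (2), we again reduce to the split case $Y_S=\Z^r$, where every map in the square is a map on global sections. The composite through the upper right sends $x\in\cG_{\log}(S)$ to its image in $\cG_{\log}(S_m)$ for all $m$, and then, through the diagram \eqref{eq:diagram for cGlog(S)}/\eqref{eq:diagram for G(K)} identifications, to a point of $G^{\rig}(K)$; the other composite is restriction followed by $G(K)=G^{\rig}(K)$. Both maps are compatible with the extensions by $\cT_{\log}(S)=T(K)$ and the quotients $\cB(S)=B(K)$. On the torus part, both send $\Hom(\Z^s,K^\times)$ identically to $T^{\rig}(K)$. On $\cG(S)$ (the non-log part), commutativity is exactly the first square treated in the proof of Proposition \ref{prop:formal completion, taking algebraic and rigid generic fibers, and rigid analytification}, via the functoriality of Conrad's comparison map $i_X$. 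Since $\cG_{\log}(S)=\cG(S)+\cT_{\log}(S)$ (both surject onto $\cB(S)$ with compatible kernels), the two composites agree on generators and hence everywhere. Here we use Lemma \ref{lem:the log enlargement of semiabelian scheme over S is a colimit of its restrictions to S_m} to see that $\cG^{\wedge}_{\log}$ is built compatibly from $\cG^\wedge$ and $\cT^\wedge_{\log}$.

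The main obstacle is the vanishing $H^1_{\kfl}(S,\Gml)=0$ together with the identification of sections of the pushout $\cG_{\log}$; if needed, we would instead use Kato's result that $H^1_{\kfl}(S,\Gml)$ injects into $H^1(S,\Gm)\oplus\Hom(\Z,\Q/\Z)$-type groups, and check directly that the relevant torsor obstruction dies in our split, rank-one log trait.
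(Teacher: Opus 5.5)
Your proof is correct and follows essentially the same route as the paper: reduce to the split case, identify $\Gamma(S,\cG_{\log})$ through $\cG(S)$ and $\cT_{\log}(S)=\Hom(\Z^s,K^\times)$, and check (2) separately on the torus part and on $\cG(S)$. The paper phrases this as the pushout $\Gamma(S,\cT_{\log})\coprod_{\Gamma(S,\cT)}\Gamma(S,\cG)$ rather than via the five lemma, and your appeal to log Hilbert 90 is unnecessary, since surjectivity of $\cG_{\log}(S)\to\cB(S)$ already follows from that of $\cG(S)\to\cB(S)$, which only uses $\Pic(\cO_K)=0$.
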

\begin{proof}
    (1) The proof is similar to (acutally slightly easier than) that of Proposition \ref{prop:hom b.t. lattices and log enlargement of fml semi-ab sch equal hom b.t. lattices and semi-abeloid var}. But we give a brief description of the map $\theta^{\mathrm{alg}}$ in case that $Y_{S}=\Z$ and the torus part of $\cG$ is split, as such a description will be used in the proof of (2). In this case, 
    \[\Hom_{S}(Y_{S},\cG_{\log})=\Gamma(S,\cG_{\log})=\Gamma(S,\cT_{\log})\coprod_{\Gamma(S,\cT)}\Gamma(S,\cG)\]
    \[\Hom_{K}(Y,G)=\Gamma(\Spec K,G)=\Gamma(\Spec K,T)\coprod_{\Gamma(S,\cT)}\Gamma(S,\cG),\]
    where $U\coprod_{V}W$ denotes the pushout of the diagram $U\leftarrow V\rightarrow W$. We can naturally identify $\Gamma(S,\cT)\hookrightarrow \Gamma(S,\cT_{\log})$ with $\Gamma(S,\cT)\hookrightarrow \Gamma(\Spec K,T)$ by the composition 
    \[\xymatrix@R+1pc@C+1pc{\Gamma(S,\cT_{\log})\ar[r]_-{\cong}^-{\text{restriction}} &\Gamma(\Spec K,\cT_{\log})=\Gamma(\Spec K,T)}.\]
    Therefore the canonical map $\Gamma(S,\cG_{\log})\to \Gamma(\Spec K,G)$ is an isomorphism. We are done.

    (2) Without loss of generality, we may assume that $Y_{S}=\Z$ and the torus part of $\cG$ is split. In this case, the diagram becomes
    \[\xymatrix{
    \Gamma(S,\cG_{\log})\ar[r]\ar[d] &\Gamma(\cS,\cG^{\wedge}_{\log})\ar[d] \\
    \Gamma(\Spec K,G)\ar[r] &\Gamma(\Sp K,G^{\rig})
    }.\]
    By the construction as in the proof of Proposition \ref{prop:hom b.t. lattices and log enlargement of fml semi-ab sch equal hom b.t. lattices and semi-abeloid var}, the right vertical map is given by the canonical map
    \[\xymatrix{
    \Gamma(\cS,\cG^{\wedge}_{\log})\ar@{=}[r] &\Gamma(\cS,\cT^{\wedge}_{\log})\coprod_{\Gamma(\cS,\cT^{\wedge})}\Gamma(\cS,\cG^{\wedge})\ar[d]\\
    \Gamma(\Sp K,G^{\rig})\ar@{=}[r] &\Gamma(\Sp K,T)\coprod_{\Gamma(\Sp K,(\cT^{\wedge})^{\rig})}\Gamma(\Sp K,(\cG^{\wedge})^{\rig}).
    }\]
    By (1), the left vertical map has a similar description. Therefore the diagram can be identified with the following diagram
    \[\xymatrix{
    \Gamma(S,\cT_{\log})\coprod_{\Gamma(S,\cT)}\Gamma(S,\cG)\ar[r]\ar[d] &\Gamma(\cS,\cT^{\wedge}_{\log})\coprod_{\Gamma(\cS,\cT^{\wedge})}\Gamma(\cS,\cG^{\wedge})\ar[d] \\
    \Gamma(\Spec K,T)\coprod_{\Gamma(S,\cT)}\Gamma(S,\cG)\ar[r] &\Gamma(\Sp K,T)\coprod_{\Gamma(\Sp K,(\cT^{\wedge})^{\rig})}\Gamma(\Sp K,(\cG^{\wedge})^{\rig})
    }\]
    which is commutative.
\end{proof}

\begin{remark}
    The two horizontal maps in Proposition \ref{prop:hom b.t. lattices and log enlargement of semi-ab sch equal hom b.t. lattices and semi-ab var} (2) are also isomorphisms. Indeed the lower horizontal map being an isomorphism follows from \cite[Lem. 5.1.2.1]{Con99}, and thus the upper horizontal map is also an isomorphism.
\end{remark}

The following proposition is the log version of Proposition \ref{prop:equivalence b.t. 1-motives over DVR and 1-motives with good reduction over DVF}.

\begin{proposition}
    The functor $\mathrm{GF}^{\log,\mathrm{alg}}:\mathscr{M}_{1,S}^{\log,\mathrm{alg}}\xrightarrow{\simeq} \mathscr{M}_{1,K}^{\mathrm{alg,str,st}}$ is an equivalence of categories.
\end{proposition}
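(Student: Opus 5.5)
The proof will mirror that of Theorem \ref{thm:1-1 correspondence b.t. log fml 1-mot and strict sst rigid 1-mot} (1) and Proposition \ref{prop:equivalence b.t. 1-motives over DVR and 1-motives with good reduction over DVF}, with Proposition \ref{prop:hom b.t. lattices and log enlargement of semi-ab sch equal hom b.t. lattices and semi-ab var} (1) playing the role that Proposition \ref{prop:hom b.t. lattices and log enlargement of fml semi-ab sch equal hom b.t. lattices and semi-abeloid var} played there. Recall that a log 1-motive over $S$ is $[Y_S\xrightarrow{u_S}\cG_{\log}]$ with $\cG$ an extension of an abelian scheme $\cB$ by a torus $\cT$, and $\mathrm{GF}^{\log,\mathrm{alg}}$ sends it to $[Y\xrightarrow{\theta^{\mathrm{alg}}(u_S)}G]$ with $G=\cG\times_S\Spec K$.

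\emph{Essential surjectivity.} Let $M=[Y\xrightarrow{u}G]$ be strict with semi-stable reduction in the sense of \cite[\S4]{Ray94}. Then $Y$ and $X$ are unramified, hence extend to étale locally constant $Y_S$, $X_S$. Strictness gives that the abelian part $B$ has potentially good reduction, and semi-stability then gives good reduction. So $B$ extends to an abelian scheme $\cB$ (its Néron model), and $v^\vee:X\to B^\vee$ extends to $X_S\to\cB^\vee$ by the Néron mapping property. This yields $\cG$ with $\cG_K=G$. Proposition \ref{prop:hom b.t. lattices and log enlargement of semi-ab sch equal hom b.t. lattices and semi-ab var} (1), applied after an unramified extension and Galois descent, lifts $u$ uniquely to $u_S:Y_S\to\cG_{\log}$.

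\emph{Faithfulness.} Proposition \ref{prop:identification of homo between log formal semi-ab to that of formal semi-ab} is stated formally; its algebraic original is \cite[Prop. 2.5]{KKN08}. By it, a morphism of log 1-motives is a pair $((f_S)_{-1},f_{S,\mathrm{sab}})$ with $f_{S,\mathrm{sab}}:\cG\to\cG'$. If the generic fibre vanishes, then $(f_S)_{-1}=0$ because $Y_S$ is étale. Moreover $f_{S,\mathrm{sab}}=0$ because $\Hom_S(\cG,\cG')\hookrightarrow\Hom_K(G,G')$ (flat separated schemes, \cite[Chap. II, Exercise 4.2]{Har77}).

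\emph{Fullness.} Given $f=(f_{-1},f_0):M\to M'$, the map $f_{-1}$ extends to $Y_S\to Y'_S$. Since $\Hom_K(T,B')=0$ algebraically, $f_0$ splits into $f_{\mathrm t}$ and $f_{\mathrm{ab}}$. The map $f_{\mathrm t}$ extends via character groups, and $f_{\mathrm{ab}}$ extends by the Néron property. Exactly as in the proof of Proposition \ref{prop:equivalence b.t. 1-motives over DVR and 1-motives with good reduction over DVF}, the compatible pair $[X'_S\to\cB'^\vee]\to[X_S\to\cB^\vee]$ then yields $f_{S,\mathrm{sab}}:\cG\to\cG'$ extending $f_0$. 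The remaining identity $u'_S\circ(f_S)_{-1}=(f_{S,\mathrm{sab}})_{\log}\circ u_S$ holds because both sides lie in $\Hom_S(Y_S,\cG'_{\log})$, which injects (indeed is isomorphic) into $\Hom_K(Y,G')$ by $\theta^{\mathrm{alg}}$, and there they agree since $f$ is a morphism.

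The step I expect to need the most care is the essential surjectivity. There one must check that Raynaud's algebraic notion of strict semi-stable 1-motive really forces good reduction of $B$, and that $\theta^{\mathrm{alg}}$ is compatible with Galois descent along unramified extensions when $Y_S$ and $X_S$ are non-split. I would handle the second point as in the last paragraph of the proof of Proposition \ref{prop:hom b.t. lattices and log enlargement of fml semi-ab sch equal hom b.t. lattices and semi-abeloid var}, taking $\Gal(K'/K)$-invariants.
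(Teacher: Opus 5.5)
Your proposal is correct and follows essentially the same route as the paper. Essential surjectivity comes from the isomorphism $\theta^{\mathrm{alg}}$ of Proposition~\ref{prop:hom b.t. lattices and log enlargement of semi-ab sch equal hom b.t. lattices and semi-ab var}~(1). Faithfulness comes from \cite[Prop. 2.5]{KKN08} together with the injection $\Hom_S(\cG,\cG')\hookrightarrow\Hom_K(G,G')$. Fullness comes from extending $[X'\to B^{\prime\vee}]\to[X\to B^{\vee}]$ by the N\'eron property and checking compatibility with $u_S,u'_S$ through $\theta^{\mathrm{alg}}$.

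Your additional argument that strictness plus semi-stability forces good reduction of $B$ just makes explicit what the paper's one-line essential surjectivity leaves implicit.
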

\begin{proof}
    The essential surjectivity follows from Proposition \ref{prop:hom b.t. lattices and log enlargement of semi-ab sch equal hom b.t. lattices and semi-ab var}.

    The faithfulness is also clear by 
    \[\Hom_{S}(\cG_{\log},\cG'_{\log})\cong \Hom_{S}(\cG,\cG')\hookrightarrow \Hom_{K}(\cG\times_{S}\Spec K,\cG'\times_{S}\Spec K),\]
    where the isomorphism (resp. the injection) follows from \cite[Prop. 2.5]{KKN08} (resp. \cite[Chap. II, Exercise 4.2]{Har77}).

    We are left with proving the fullness. Let 
    \[\mathcal{M}=[Y_S\xrightarrow{u_S} \cG_{\log}],\mathcal{M}'=[Y'_S\xrightarrow{u'_S} \cG'_{\log}]\in \sM_{1,S}^{\log,\mathrm{alg}},\]
    and let $M=[Y\xrightarrow{u} G]$ be $\mathrm{GF}^{\log,\mathrm{alg}}(\mathcal{M})$ and $M'=[Y'\xrightarrow{u'} G']$ be $\mathrm{GF}^{\log,\mathrm{alg}}(\mathcal{M'})$. Let 
    \[f=(f_{-1},f_0):M\to M'\]
    be a morphism of 1-motives over $K$. Apparently $f_{-1}$ extends to a homomorphism 
    \[(f_{S})_{-1}:Y_{S}\to Y'_{S}\]
    over $S$. We are left with showing that there exists a homomorphism $(f_{S})_{\mathrm{sab}}:\cG\to\cG'$ such that \[u'_{S}\circ (f_{S})_{-1}=((f_{S})_{\mathrm{sab}})_{\log}\circ u_{S}\] and $(f_{S})_{\mathrm{sab}}$ extends $f_0$. Let $X$ (resp. $X'$, resp. $X_S$, resp. $X'_S$) be the character group of the torus part $T$ (resp. $T'$, resp. $\cT$, resp. $\cT'$) of $G$ (resp. $G'$, resp. $\cG$, resp. $\cG'$), and let $B$ (resp. $B'$, resp. $\cB$, resp. $\cB'$) be the abelian part of $G$ (resp. $G'$, resp. $\cG$, resp. $\cG'$). The homomorphism $f_0:G\to G'$ corresponds to a morphism $[X'\to B^{'\vee}]\to [X\to B^{\vee}]$ which extends to a morphism $[X'_S\to \cB^{'\vee}]\to [X_S\to \cB^{\vee}]$ over $S$ by the property of N\'eron model. Then we get a homomorphism $(f_{S})_{\mathrm{sab}}:\cG\to\cG'$ extending $f_0$.
    The equality $u'_{S}\circ (f_{S})_{-1}=((f_{S})_{\mathrm{sab}})_{\log}\circ u_{S}$ follows from $u'\circ f_{-1}=f_0\circ u$ and $\Hom_{S}(Y_{S},\cG'_{\log})\xrightarrow{\cong}\Hom_{K}(Y,G')$.
\end{proof}

The following two propositions together constitute the log version of Proposition \ref{prop:formal completion, taking algebraic and rigid generic fibers, and rigid analytification}.

\begin{proposition}\label{prop:formal completion, taking algebraic and rigid generic fibers, and rigid analytification in log setting}
    The following diagram 
    \[\xymatrix{
    \mathscr{M}_{1,S}^{\log,\mathrm{alg}}\ar[rr]^{(-)^{\wedge}}
    \ar[d]_{\mathrm{GF}^{\log,\mathrm{alg}}}^{\simeq} &&\mathscr{M}_{1,\cS}^{\log}\ar[d]^{\mathrm{GF}^{\log}}_{\simeq} \\
    \mathscr{M}_{1,K}^{\mathrm{alg,str,st}}\ar[rr]^-{(-)^{\rig}} &&\mathscr{M}_{1,K}^{\mathrm{str,st}}
    }\]
    is commutative.

    Moreover, for any positive integer $n$, the functors $(-)^{\wedge}$, $\mathrm{GF}^{\log,\mathrm{alg}}$ and $(-)^{\rig}$ are compatible with the formation of $T_{\Z/n\Z}(-)$. 
\end{proposition}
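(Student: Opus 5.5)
The plan follows the proof of Proposition \ref{prop:formal completion, taking algebraic and rigid generic fibers, and rigid analytification}, with the log enlargement as the only new ingredient. Let $\mathcal{M}=[Y_S\xrightarrow{u_S}\cG_{\log}]\in\sM_{1,S}^{\log,\mathrm{alg}}$, where $\cG$ is an extension of an abelian scheme $\cB$ by a torus $\cT$ over $S$, and write $G:=\cG\times_S\Spec K$ and $T:=\cT\times_S\Spec K$. Going down and then right gives $[Y^{\rig}\xrightarrow{u^{\rig}}G^{\rig}]$ with $u=\theta^{\mathrm{alg}}(u_S)$. Going right and then down gives $[Y^{\rig}\xrightarrow{\theta(u_S^{\wedge})}G_1]$, where $G_1$ is the pushout of $(\cG^{\wedge})^{\rig}$ along $(\cT^{\wedge})^{\rig}\hookrightarrow T^{\rig}$.

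\textbf{Step 1: degree-zero terms.} The proof of Proposition \ref{prop:formal completion, taking algebraic and rigid generic fibers, and rigid analytification} gives a commutative diagram with exact rows comparing $0\to(\cT^{\wedge})^{\rig}\to(\cG^{\wedge})^{\rig}\to(\cB^{\wedge})^{\rig}\to0$ with the analytified sequence for $G$. The map on the abelian quotients is an isomorphism, so $G_1\cong G^{\rig}$ canonically. This identification is functorial in $\cG$, and hence compatible with morphisms $\cG\to\cG'$. By Proposition \ref{prop:identification of homo between log formal semi-ab to that of formal semi-ab} and \cite[Prop. 2.5]{KKN08}, these are the same as morphisms $\cG_{\log}\to\cG'_{\log}$, so the identification also handles morphisms of log 1-motives.

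\textbf{Step 2: degree $-1$ maps.} It remains to check that $\theta(u_S^{\wedge})=\theta^{\mathrm{alg}}(u_S)^{\rig}$. This is exactly the commutativity in Proposition \ref{prop:hom b.t. lattices and log enlargement of semi-ab sch equal hom b.t. lattices and semi-ab var} (2), once it is combined with the identification $G_1\cong G^{\rig}$ from Step 1. Together with functoriality on morphisms, this gives a natural isomorphism between the two composite functors, which proves commutativity of the square.

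\textbf{Step 3: $T_{\Z/n\Z}$.} For $\mathrm{GF}^{\log,\mathrm{alg}}$, compatibility is essentially formal: $T_{\Z/n\Z}(\mathcal{M})$ restricts over the generic point, where the log structure is trivial, to $T_{\Z/n\Z}$ of the generic fiber. For $(-)^{\rig}$ it was already shown in Proposition \ref{prop:formal completion, taking algebraic and rigid generic fibers, and rigid analytification}. For $(-)^{\wedge}$ I would argue level by level, since $T_{\Z/n\Z}(\mathcal{M}^{\wedge})$ is by definition the system $(T_{\Z/n\Z}(\mathcal{M}_m))_m$ in $\varprojlim_m(\fin/S_m)_{\mathrm{d}}$. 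One then checks that forming $T_{\Z/n\Z}$ of the log 1-motive $\mathcal{M}$ commutes with base change $S_m\to S$. Using the presentation $0\to Y\to\ker(\cdots)\to T_{\Z/n\Z}\to0$ as in the earlier proof, this reduces to the fact that $\cG_{\log}$, $\cT_{\log}$ and $\Gml/\Gm$ on $S$ restrict to the corresponding objects on $S_m$. That restriction property is what Lemma \ref{lem:the log enlargement of semiabelian scheme over S is a colimit of its restrictions to S_m} encodes.

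I expect this last point to be the main obstacle. Kummer log flat sheaves such as $\cG_{\log}[n]$-torsors are not obviously compatible with pullback along the strict closed immersion $S_m\hookrightarrow S$. I would handle it through the extension $0\to\cG[n]\to T_{\Z/n\Z}\to Y/n\to0$ together with \cite[Prop. 4.5]{WZ24}, which expresses the extension class in terms of $\cG[n]$ and the log monodromy $Y\to\cHom(X,\Gml/\Gm)$, and both of these data manifestly commute with restriction to $S_m$.
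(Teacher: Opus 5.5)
Your proposal is correct and follows the paper's proof. The degree-zero terms are identified through the pushout square from Proposition \ref{prop:formal completion, taking algebraic and rigid generic fibers, and rigid analytification}, the equality $\theta(u_S^{\wedge})=\theta^{\mathrm{alg}}(u_S)^{\rig}$ comes from Proposition \ref{prop:hom b.t. lattices and log enlargement of semi-ab sch equal hom b.t. lattices and semi-ab var} (2), and the $T_{\Z/n\Z}$-compatibility of $(-)^{\wedge}$ is reduced to finite levels via Lemma \ref{lem:the log enlargement of semiabelian scheme over S is a colimit of its restrictions to S_m}.

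The obstacle you anticipate at the end does not arise. $(\fs/S_m)_{\kfl}$ is the localization of $(\fs/S)_{\kfl}$ at the monomorphism $S_m\hookrightarrow S$, so restriction is exact and commutes with $H^{-1}$ of the explicit complex. You should therefore drop the workaround, which is also inaccurate as stated: the extension class of $T_{\Z/n\Z}(\mathcal{M})$ depends on all of $u_S$, not just on $\cG[n]$ and the log monodromy.
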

\begin{proof}
    Let $\mathcal{M}=[Y_S\xrightarrow{u_S}\cG_{\log}]\in\sM_{1,S}^{\log,\mathrm{alg}}$. Let $G:=\cG\times_S\Spec K$, $Y:=Y_{S}\times_{S}\Spec K$, $u:=u_S\times_S\Spec K$, $u^{\rig}:Y^{\rig}\to G^{\rig}$ the rigid analytification of $u$, and $i$ the canonical inclusion $(\cG^{\wedge})^{\rig}\hookrightarrow G^{\rig}$. Note again that $(Y_S^{\wedge})^{\rig}$ is simply $Y^{\rig}$, and we will only use the notation $Y^{\rig}$. We have
    \begin{align*}
        \mathcal{M}^{\wedge}=&[Y_S^{\wedge}\xrightarrow{u_S^{\wedge}}\cG^{\wedge}_{\log}] \\
        \mathrm{GF}^{\log}(\mathcal{M}^{\wedge})=&[Y^{\rig}\xrightarrow{i\circ (u_S^{\wedge})^{\rig}}G^{\rig}] \\
        \mathrm{GF}^{\log,\mathrm{alg}}(\mathcal{M})=&[Y\xrightarrow{u}G],\\
        (\mathrm{GF}^{\log,\mathrm{alg}}(\mathcal{M}))^{\rig}=&[Y^{\rig}\xrightarrow{u^{\rig}}G^{\rig}].
    \end{align*}
    To show the commutativity of the diagram in the statement, it suffices to show that $u^{\rig}=i\circ (u_S^{\wedge})^{\rig}$. This follows from Proposition \ref{prop:hom b.t. lattices and log enlargement of semi-ab sch equal hom b.t. lattices and semi-ab var} (2).

    The compatibility between $(-)^{\rig}$ and $T_{\Z/n\Z}(-)$ is as in Proposition \ref{prop:formal completion, taking algebraic and rigid generic fibers, and rigid analytification}.

    The compatibility between $\mathrm{GF}^{\log,\mathrm{alg}}$ and $T_{\Z/n\Z}(-)$ is trivial (similar to the situation of Proposition \ref{prop:formal completion, taking algebraic and rigid generic fibers, and rigid analytification}), as $\mathrm{GF}^{\log,\mathrm{alg}}(\mathcal{M})$ is essentially the base change of $\mathcal{M}$ to $\Spec K$.

    With the help of Lemma \ref{lem:the log enlargement of semiabelian scheme over S is a colimit of its restrictions to S_m}, the compatibility between $(-)^{\wedge}$ and $T_{\Z/n\Z}(-)$ is reduced to the finite levels as in Proposition \ref{prop:formal completion, taking algebraic and rigid generic fibers, and rigid analytification}.
\end{proof}

    Now we show that the functor $\mathrm{GF}^{\log}$ is also compatible with the formation of $T_{\Z/n\Z}(-)$, and Proposition \ref{prop:formal completion, taking algebraic and rigid generic fibers, and rigid analytification in log setting} and the compatibility of \eqref{eq:rigid generic fiber functor for formal 1-motives} with $T_{\Z/n\Z}(-)$ will be used in the proof.

\begin{proposition}\label{prop:taking "rigid generic fiber" of log formal 1-motives is compatible with Z/nZ-realization}
    The functor $\mathrm{GF}^{\log}$ is  compatible with the formation of $T_{\Z/n\Z}(-)$, in the sense that for a given $\mathcal{M}=[Y_{\cS}\xrightarrow{u_{\cS}}\cG]\in \sM_{1,\cS}^{\log}$, the rigid generic fiber of $T_{\Z/n\Z}(\mathcal{M})$ is $T_{\Z/n\Z}(\mathrm{GF}^{\log}(\mathcal{M}))$. 
\end{proposition}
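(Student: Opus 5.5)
The plan is to reduce to the algebraic situation, using the facts that $\mathrm{GF}^{\log}$ commutes with formal completion and analytification (Proposition \ref{prop:formal completion, taking algebraic and rigid generic fibers, and rigid analytification in log setting}) and that the non-log generic fiber functor \eqref{eq:rigid generic fiber functor for formal 1-motives} is compatible with $T_{\Z/n\Z}(-)$ (Proposition \ref{prop:formal completion, taking algebraic and rigid generic fibers, and rigid analytification}). First we give a meaning to the ``rigid generic fiber'' of $T_{\Z/n\Z}(\mathcal{M})$. By Proposition \ref{prop:realizations ass. to log fml 1-mot}(1), $T_{\Z/n\Z}(\mathcal{M})\in(\fin/\cS)_{\mathrm{d}}$ is a compatible system in $(\fin/S_m)_{\mathrm{d}}$. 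Kato's decomposition (Theorem \ref{Thm: Kato's decomposition thm for finite kfl gp log sch}), or rather its analogue for arbitrary $n$ obtained by splitting into the prime-to-$p$ part (which is Kummer log étale, and whose classical model over a tame cover descends) and the $p$-part, shows that $T_{\Z/n\Z}(\mathcal{M})$ algebraizes to an object of $(\fin/S)_{\mathrm{d}}$. We then take its generic fiber over $K$, which is an honest finite group scheme because the log structure of $S$ is trivial on $\Spec K$, and analytify it. We define the rigid generic fiber in this way and check that the definition does not depend on the choices made.

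Next we reduce to the case where $\mathcal{M}$ is algebraizable. Both sides are finite étale group schemes over $K$, or fppf-torsors as in the Proposition on finite rigid groups, so both are determined by Galois modules together with their connected parts, and we may pass to a finite unramified extension. We therefore split $Y_{\cS}$ and $X_{\cS}$. Using Proposition \ref{prop:decomposition of log formal 1-motives w.r.t. a chosen chart} and its compatibility with Theorem \ref{thm:decomposition of rigid 1-motives w.r.t. a chosen uniformizer} (Theorem \ref{thm:1-1 correspondence b.t. log fml 1-mot and strict sst rigid 1-mot}(3)), we write $u_{\cS}=u^1_c+u^2_c$. Here $\mathcal{M}^1=[Y_{\cS}\xrightarrow{u_c^1}\cG]$ is a formal 1-motive and $\mathcal{M}^2=[Y_{\cS}\xrightarrow{u_c^2}\cT_{\log}]$ is a log 1-motive that is visibly the completion of an algebraic log 1-motive over $S$, with torus part only.

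The Baer sum argument of Lemma \ref{lem:realization of sum of 1-motives is the Baer sum of the realizations} works verbatim on the log formal side and on the rigid side. It expresses $T_{\Z/n\Z}(\mathcal{M})$ as the pushforward of $T_{\Z/n\Z}(\mathcal{M}^1)\times T_{\Z/n\Z}(\mathcal{M}^2)$ along the sum map and pullback along the diagonal, and it does the same for $\mathrm{GF}^{\log}(\mathcal{M})$. These operations commute with taking the rigid generic fiber, because that is exact on finite objects (Lemma \ref{lem:taking rigid fibers preserves kernel and s.e.s.}). Each summand is then handled by a known result. For $\mathcal{M}^1$ we use the compatibility of $\mathrm{GF}$ with $T_{\Z/n\Z}$ from Proposition \ref{prop:formal completion, taking algebraic and rigid generic fibers, and rigid analytification}. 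For $\mathcal{M}^2=(\mathcal{M}^2_{\mathrm{alg}})^{\wedge}$ we use Proposition \ref{prop:formal completion, taking algebraic and rigid generic fibers, and rigid analytification in log setting}: it gives $\mathrm{GF}^{\log}(\mathcal{M}^2)=(\mathrm{GF}^{\log,\mathrm{alg}}\mathcal{M}^2_{\mathrm{alg}})^{\rig}$, and $T_{\Z/n\Z}$ commutes with $(-)^{\wedge}$, with $\mathrm{GF}^{\log,\mathrm{alg}}$ and with $(-)^{\rig}$. Finally we descend back along the unramified extension.

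The main obstacle is making the comparison maps canonical, so that the Baer sum identification on both sides matches the sum map. Concretely, we must check that the identifications built from the splitting $u=u^1+u^2$ do not depend on the chosen chart $c$ and are Galois equivariant, so that the descent step is legitimate. We would first try to produce a natural map on the common subobject $\cG[n]\subset T_{\Z/n\Z}(\mathcal{M})$, which is classical and for which the claim is Proposition \ref{prop:n-map exact seq}(1). We would then compare the extension classes in $\Ext^1(Y/nY,G[n])$, where the Baer sum computation above applies.
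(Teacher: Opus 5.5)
This is essentially the paper's proof: fix the chart $c$ attached to $\pi$, decompose $u_{\cS}=u^1_c+u^2_c$ compatibly with $u=u^1_\pi+u^2_\pi$, express both realizations as Baer sums, and then handle the two summands separately. The formal summand is treated by Proposition \ref{prop:formal completion, taking algebraic and rigid generic fibers, and rigid analytification}. The monodromy summand is treated by algebraizing it and applying Proposition \ref{prop:formal completion, taking algebraic and rigid generic fibers, and rigid analytification in log setting}. Your passage to an unramified extension, together with the descent and canonicity issue you call the main obstacle, is unnecessary: both decompositions are already defined over the base, using the fixed $\pi$ and the Galois-equivariant monodromy pairing. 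The paper therefore compares the Baer sums directly over $K$ and never needs chart-independence.
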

\begin{proof}
    Let $\cT$ be the torus part of $\cG$ with character group $X_{\cS}$, $X:=X_{\cS}^{\rig}$, $G$ the pushout of $\cG^{\rig}$ along $\cT^{\rig}\hookrightarrow T:=\cHom_{K,\rig}(X,\GmK^{\rig})$. We  write \[M:=\mathrm{GF}^{\log}(\mathcal{M})=[Y\xrightarrow{u}G]\] with $u$ being the composition $Y:=Y_{\cS}^{\rig}\xrightarrow{u_{\cS}^{\rig}}\cG^{\rig}\hookrightarrow G$. Let $c$ be the chart of $\cS$ corresponding to the chosen uniformizer $\pi$. By Proposition \ref{prop:decomposition of log formal 1-motives w.r.t. a chosen chart}, we have a decomposition \[u_{\cS}=u_{\cS,c}^1+u_{\cS,c}^2\] with $u_{\cS,c}^1$ (resp. $u_{\cS,c}^2$) having its image in $\cG\subset \cG_{\log}$ (resp. $\cT_{\log}\subset\cG_{\log}$). Let $\mathcal{M}_1:=[Y_{\cS}\xrightarrow{u_{\cS,c}^1}\cG_{\log}]$ and $\mathcal{M}_2:=[Y_{\cS}\xrightarrow{u_{\cS,c}^2}\cG_{\log}]$, then we have 
    \[T_{\Z/n\Z}(\mathcal{M})=T_{\Z/n\Z}(\mathcal{M}_1)+_{\mathrm{Baer}}T_{\Z/n\Z}(\mathcal{M}_2)\]
    as extensions of $\cG[n]$ by $Y_{\cS}\otimes_{\Z}\Z/n\Z$ by the analogue of Lemma \ref{lem:realization of sum of 1-motives is the Baer sum of the realizations}. By Theorem \ref{thm:1-1 correspondence b.t. log fml 1-mot and strict sst rigid 1-mot} (3), the decomposition $u_{\cS}=u_{\cS,c}^1+u_{\cS,c}^2$ induces the decomposition \[u=u_{\pi}^1+u_{\pi}^2\] from Theorem \ref{thm:decomposition of rigid 1-motives w.r.t. a chosen uniformizer}, i.e. $\mathrm{GF}^{\log}(\mathcal{M}_i)=M_i:=[Y\xrightarrow{u_{\pi}^i}G]$ for $i=1,2$. By Lemma \ref{lem:realization of sum of 1-motives is the Baer sum of the realizations} we have 
    \[T_{\Z/n\Z}(M)=T_{\Z/n\Z}(M_1)+_{\mathrm{Baer}}T_{\Z/n\Z}(M_2)\]
    as extensions of $G[n]$ by $Y\otimes_{\Z}\Z/n\Z$. By Proposition \ref{prop:formal completion, taking algebraic and rigid generic fibers, and rigid analytification}, the rigid analytic generic fiber of $T_{\Z/n\Z}(\mathcal{M}_1)$ is $T_{\Z/n\Z}(M_1)$. 
    
    We are reduced to show that the rigid analytic generic fiber of $T_{\Z/n\Z}(\mathcal{M}_2)$ is $T_{\Z/n\Z}(M_2)$. Since $\mathcal{M}_2$ is induced by the monodromy pairing $Y_{\cS}\times X_{\cS}\to \pi^{\Z}$ of $\mathcal{M}$, it algebraizes to an algebraic log 1-motive \[\mathcal{M}_2^{\mathrm{alg}}=[Y_{S}\xrightarrow{u_S}\cT^{\mathrm{alg}}_{\log}],\] where $Y_{S}$ is the \'etale locally constant sheaf over $S$ corresponding to $Y_{\cS}$ (over $\cS$) and $\cT^{\mathrm{alg}}$ is the algebraic torus over $S$ whose character group $X_S$ is the \'etale locally constant sheaf over $S$ corresponding to $X_{\cS}$ (over $\cS$). By Proposition \ref{prop:formal completion, taking algebraic and rigid generic fibers, and rigid analytification in log setting} $T_{\Z/n\Z}(\mathcal{M}_2)$ algebraizes to $T_{\Z/n\Z}(\mathcal{M}_2^{\mathrm{alg}})$, note that the algebraization is unique. Also by Proposition \ref{prop:formal completion, taking algebraic and rigid generic fibers, and rigid analytification in log setting} we have $M_2=(\mathrm{GF}^{\log,\mathrm{alg}}(\mathcal{M}_2^{\mathrm{alg}}))^{\rig}$, and thus $T_{\Z/n\Z}(M_2)$ algebraizes uniquely to $T_{\Z/n\Z}(\mathrm{GF}^{\log,\mathrm{alg}}(\mathcal{M}_2^{\mathrm{alg}}))$ which is the generic fiber of $T_{\Z/n\Z}(\mathcal{M}_2^{\mathrm{alg}})$, as pictured below
    \[\xymatrix{
    T_{\Z/n\Z}(\mathcal{M}_2^{\mathrm{alg}})\ar@{~>}[d]_{\text{generic fiber}} &&T_{\Z/n\Z}(\mathcal{M}_2)\ar@{~>}_-{\text{unique}}^-{\text{algebraization}}[ll]\ar@{..>}[d]  \\
    T_{\Z/n\Z}(\mathrm{GF}^{\log,\mathrm{alg}}(\mathcal{M}_2^{\mathrm{alg}}))  &&T_{\Z/n\Z}(M_2)\ar@{~>}_-{\text{unique}}^-{\text{algebraization}}[ll] 
    }\]
    It follows that the rigid generic fiber of $T_{\Z/n\Z}(\mathcal{M}_2)$ has to be $T_{\Z/n\Z}(M_2)$. We are done.    
\end{proof}

\begin{corollary}\label{cor:taking "rigid generic fiber" of log formal 1-motives is compatible with taking ell-divisible group}
    Let $\ell$ be a prime number. Along the equivalence $\mathrm{GF}^{\log}:\mathscr{M}_{1,\cS}^{\log}\xrightarrow{\simeq}
    \mathscr{M}_{1,K}^{\mathrm{str,st}}$, we have that the generic fiber\footnote{Here this is the naive generic fiber functor which produces usual $p$-divisible groups over $K$. One should not be confused with the rigid analytic generic fiber functor introduced in subsection \ref{sec:F-functor}, which produces $p$-divisible rigid analytic groups over $K$ (usually having positive dimension) in the sense of Fargues.} of $\mathcal{M}[\ell^{\infty}]$ is $\mathrm{GF}^{\log}(\mathcal{M})[\ell^{\infty}]$ for any $\mathcal{M}\in \mathscr{M}_{1,\cS}^{\log}$.
\end{corollary}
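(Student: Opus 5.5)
The plan is to deduce the statement from Proposition \ref{prop:taking "rigid generic fiber" of log formal 1-motives is compatible with Z/nZ-realization} by passing to the colimit over $n=\ell^i$. For $\mathcal{M}\in\mathscr{M}_{1,\cS}^{\log}$, the object $\mathcal{M}[\ell^\infty]=\varinjlim_i T_{\Z/\ell^i\Z}(\mathcal{M})$ lies in $\BT^{\log}_{\cS,\mathrm{d}}$ by Proposition \ref{prop:realizations ass. to log fml 1-mot}. Each $T_{\Z/\ell^i\Z}(\mathcal{M})$ lies in $(\fin/\cS)_{\mathrm{d}}$, and over $K$ the log structure becomes trivial. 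Hence the ``generic fiber'' of $T_{\Z/\ell^i\Z}(\mathcal{M})$ is an honest finite group over $K$, namely its rigid generic fiber, which is algebraic by rigid GAGA since it is finite. By Proposition \ref{prop:taking "rigid generic fiber" of log formal 1-motives is compatible with Z/nZ-realization}, this generic fiber is $T_{\Z/\ell^i\Z}(\mathrm{GF}^{\log}(\mathcal{M}))$.

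The next step is to check that these identifications are compatible with the transition maps, i.e.\ with the inclusions $T_{\Z/\ell^i\Z}\hookrightarrow T_{\Z/\ell^{i+1}\Z}$ coming from $\Z/\ell^i\to\Z/\ell^{i+1}$, multiplication by $\ell$. This follows because the isomorphism of Proposition \ref{prop:taking "rigid generic fiber" of log formal 1-motives is compatible with Z/nZ-realization} is built functorially. The decomposition $u_{\cS}=u^1_{\cS,c}+u^2_{\cS,c}$ is independent of $n$. The Baer sum description is natural in the coefficient map $\Z/n\to\Z/m$. The two pieces are identified, for $\mathcal{M}_1$, via the functorial kernel/cokernel description of Proposition \ref{prop:formal completion, taking algebraic and rigid generic fibers, and rigid analytification}, and for $\mathcal{M}_2$, via unique algebraization, which is automatically compatible with morphisms. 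Taking the colimit in $i$ then gives $(\mathcal{M}[\ell^\infty])_K\cong \mathrm{GF}^{\log}(\mathcal{M})[\ell^\infty]$.

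I expect the main obstacle to be the compatibility with transition maps for the $\mathcal{M}_2$ part, because there the identification is obtained indirectly through algebraization rather than by a direct map. To handle it, I will use that both sides of each transition map algebraize uniquely. By the functoriality of $T_{\Z/n\Z}$ on $\mathscr{M}^{\log,\mathrm{alg}}_{1,S}$, and by its compatibility with $(-)^\wedge$ and $(-)^{\rig}$ from Proposition \ref{prop:formal completion, taking algebraic and rigid generic fibers, and rigid analytification in log setting}, the transition maps of the algebraic log 1-motive $\mathcal{M}_2^{\mathrm{alg}}$ restrict to those on both sides. Full faithfulness of generic fiber on finite groups over $K$ then pins the maps down.

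Finally, I will extend the identification to morphisms $\mathcal{M}\to\mathcal{M}'$, so that it becomes an isomorphism of functors. The argument is the same: naturality of each step.
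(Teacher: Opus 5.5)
Your proposal is correct and follows the paper's route. The paper states this corollary without a separate proof, as an immediate consequence of the preceding $\Z/n\Z$-compatibility proposition by passing to the colimit over $n=\ell^i$. Your explicit check that the level-wise identifications commute with the transition maps fills in a detail the paper leaves implicit; you do this via naturality of the Baer sum decomposition, and via unique algebraization for the $\mathcal{M}_2$ part.
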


\begin{theorem}[N\'eron-Ogg-Shafarevich criterion for semi-stable reduction]\label{thm:Neron-Ogg-Shafarevich for semi-stable reduction of rigid 1-motives}
    Let $\cS$ be the log formal scheme $\Spf\cO_K$ endowed with the canonical log structure, let $\ell\neq \mathrm{char}(k)$ be a prime number. For 
    $M\in\mathscr{M}_{1,K}$, consider the following conditions. 
    \begin{enumerate}
        \item $M$ has semi-stable reudction.

        \item  In case that $p:=\mathrm{char}(k)>0$, $M[p^{\infty}]$ extends to an object of $\BT_{\cS,\mathrm{d}}^{\log}$.

        \item In case that $p:=\mathrm{char}(k)>0$, the Galois $\Z_p$-representation $T_p(M)$ is semi-stable (see \cite[Def. 8.9]{FO22}).

        \item The Galois $\Z_{\ell}$-representation $T_{\ell}(M)$ is semi-stable (see \cite[Def. 2.24 (iii)]{FO22}), i.e. its semi-simplification is unramified.
    \end{enumerate}
    Then we have $(1)\Leftrightarrow (4)$. If $\mathrm{char}(K)=0$, then we further have $(1)\Leftrightarrow (2)\Leftrightarrow (3)$.
\end{theorem}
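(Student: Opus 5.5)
The plan is to reduce everything to the strict case and then follow the pattern of Theorem \ref{thm:Neron-Ogg-Shafarevich for good reduction of rigid 1-motives}. By Proposition \ref{prop:rigid 1-motive has sst reduction iff its associated rigid 1-motive has sst reduction} and Proposition \ref{prop:strictification doesn't change l-adic realizations and l-divisible groups}, replacing $M$ by its strictification $M'$ changes neither condition (1) nor the objects $M[p^\infty]$, $T_p(M)$, $T_\ell(M)$. So we may assume throughout that $M$ is strict, with Raynaud monodromy pairing $\mu$.

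\emph{Forward implications.} If $M$ has semi-stable reduction, Theorem \ref{thm:first description of l-adic realization of rigid 1-motive} (3) gives (4) and (3). For (2), note that $M$ is strict and semi-stable, so Theorem \ref{thm:1-1 correspondence b.t. log fml 1-mot and strict sst rigid 1-mot} (1) provides a log formal 1-motive $\mathcal{N}$ with $\mathrm{GF}^{\log}(\mathcal{N})\cong M$. Proposition \ref{prop:realizations ass. to log fml 1-mot} (3) then gives $\mathcal{N}[p^\infty]\in\BT^{\log}_{\cS,\mathrm{d}}$, and by Corollary \ref{cor:taking "rigid generic fiber" of log formal 1-motives is compatible with taking ell-divisible group} its generic fiber is $M[p^\infty]$.

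\emph{(2)$\Rightarrow$(3).} The plan is to algebraize $\mathcal{N}[p^\infty]$ to an object of $\BT^{\log}_{S,\mathrm{d}}$ using Proposition \ref{prop:equivalence b.t. logBT over algebraic base and formal base}. Kato's decomposition (Corollary \ref{Cor: Kato's decomposition thm for log p-div gp}) describes such an object as a classical $p$-divisible group $H$ together with a monodromy $N$. Its generic fiber is then a $p$-divisible group over $K$ that is an extension of the étale part by the connected part of $H$, and both of these extend over $\mathcal{O}_K$. The associated representation is therefore semi-stable, either by \cite[Thm. B]{BWZ23} or by the description of extensions of crystalline representations with monodromy.

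\emph{Converse implications (4)$\Rightarrow$(1) and (3)$\Rightarrow$(1).} Consider the weight filtration $T_\ell(T)\subset T_\ell(G)\subset T_\ell(M)$ with graded pieces $X^\vee\otimes\Z_\ell(1)$, $T_\ell(A)$ and $Y\otimes\Z_\ell$.
\begin{enumerate}
\item The graded pieces are subquotients of $T_\ell(M)$. Under (4) they are semi-stable and potentially unramified; under (3) they are semi-stable and potentially crystalline. A potentially unramified representation that is semi-stable is unramified, because inertia then acts through a finite quotient and unipotently, hence trivially. Similarly, in the $p$-adic case a potentially crystalline semi-stable representation is crystalline, since $N=0$ is detected after a finite extension.
\item It follows that $X^\vee\otimes\Z_\ell(1)$ and $Y\otimes\Z_\ell$ are unramified (resp. crystalline). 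Hence $X$ and $Y$ are unramified, arguing via \cite[Prop. 9.7]{FO22} in the $p$-adic case.
\item Likewise $T_\ell(A)$ is unramified (resp. crystalline). By the $\ell$-adic (resp. $p$-adic) part of Proposition \ref{prop:rigid 1-mot with crystalline p-adic repn has good red}, applied to the 1-motive $[0\to A]$, the abeloid variety $A$ has good reduction.
\end{enumerate}
Together these say that $M$ is strict and satisfies conditions (i)–(iii) of Definition \ref{def:good-semi-good} (2), i.e. $M$ has semi-stable reduction. Since $M$ is strict, $A$ has no toric part, so $A$ having good reduction is exactly what is needed.

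\emph{Main obstacle.} The hardest step is (2)$\Rightarrow$(3): one must check that the generic fiber of an arbitrary object of $\BT^{\log}_{\cS,\mathrm{d}}$ has a semi-stable Tate module. I would first try to quote \cite[Cor. C]{BWZ23} after algebraizing via Proposition \ref{prop:equivalence b.t. logBT over algebraic base and formal base}. If that does not apply directly, the fallback is to argue with the filtration by connected and étale parts: both extend over $\mathcal{O}_K$, which reduces the claim to semi-stability of an extension of a crystalline representation by a crystalline one whose extension class is Kummer type. By contrast, the converse steps only require the semisimplification to be crystalline (resp. unramified) on each graded piece, which is what the argument above uses.
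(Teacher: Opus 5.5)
Your proposal is correct and follows essentially the same route as the paper. You reduce to the strict case via strictification, get (1)$\Rightarrow$(2) from the log formal 1-motive equivalence, and handle the passage between (2) and (3) by algebraizing via $\BT^{\log}_{S,\mathrm{d}}\simeq\BT^{\log}_{\cS,\mathrm{d}}$ and invoking \cite[Thm. B]{BWZ23}. For (4)$\Rightarrow$(1) and (3)$\Rightarrow$(1) you show the graded pieces are unramified (resp. crystalline), using that finite unipotent inertia is trivial (resp. that $N=0$ is detected after a finite extension), and then apply the good-reduction criterion to the abeloid part.

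The differences are cosmetic. You obtain (1)$\Rightarrow$(3) directly from the theorem on $\ell$-adic realizations rather than through (2). In (2)$\Rightarrow$(3), the object to algebraize is the given extension of $M[p^\infty]$, not some $\mathcal{N}[p^\infty]$.
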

\begin{proof}
    By Proposition \ref{prop:rigid 1-motive has sst reduction iff its associated rigid 1-motive has sst reduction} and Proposition \ref{prop:strictification doesn't change l-adic realizations and l-divisible groups}, we may and do assume that $M$ is strict. 
    
    $(1)\Leftrightarrow(4)$ is clear by the definition of semi-stable reduction.

    $(4)\Leftrightarrow(1):$ If $T_{\ell}(M)$ is semi-stable, so are $Y\otimes_{\Z}\Z_\ell$, $T_{\ell}(T)=X^{\vee}\otimes_{\Z}\Z_{\ell}(1)$, and $T_{\ell}(B)$. Then both $Y$ and $X$ are unramified. It suffices to show that $B$ has good reduction. Note that $B$ has potentially good reduction by the strictness assumption. Let $\rho_B$ be the $\Z_{\ell}$-representation $T_{\ell}(B)$. Then $\rho_B(I)$ is finite and consists of unipotent matrices, hence is trivial, i.e. $\rho_B$ is unramified. Then $B$ has good reduction by Theorem \ref{thm:Neron-Ogg-Shafarevich for good reduction of rigid 1-motives}.

    Now we assume that $\mathrm{char}(K)=0$. Let $M=[Y\xrightarrow{u}G]$, let $T$ and $B$ be the torus and abeloid part of $G$ respectively, and let $X$ be the character group of $T$.

    $(1)\Rightarrow(2):$ This is clear by Theorem \ref{thm:1-1 correspondence b.t. log fml 1-mot and strict sst rigid 1-mot} , Proposition \ref{prop:realizations ass. to log fml 1-mot}, and Corollary \ref{cor:taking "rigid generic fiber" of log formal 1-motives is compatible with taking ell-divisible group}.

    $(2)\Leftrightarrow(3):$ This is clear by Proposition \ref{prop:equivalence b.t. logBT over algebraic base and formal base} and \cite[Thm. B]{BWZ23}.

    $(3)\Rightarrow(1):$ Since $T_p(M)$ is semi-stable, so are $Y\otimes_{\Z}\Z_p$, $T_p(T)=X^{\vee}\otimes_{\Z}\Z_p(1)$, and $T_p(B)$. Then both $Y$ and $X$ are unramified by \cite[Rmk. 8.53 and Prop. 9.7]{FO22}. Since $M$ is strict, $B$ has potentially good reduction, and thus $T_p(B)$ is potentially crystalline. Then there exists a finite extension $L$ of $K$ such that $T_p(B)$ as a $\Gamma_L$-representation is crystalline. Let $L_0$ denote the maximal unramified subfield of $L$. By \cite[Rmk. 8.47 and the paragraph right before it]{FO22}, we have 
    \[L_0\otimes_{K_0}\bfD_{\st}(T_p(B))\xrightarrow{\cong}(T_p(B)\otimes_{\Z_p}B_{\st})^{\Gamma_L}.\]
    By \cite[Lem. 8.12 (3)]{FO22} we also have 
    \[(T_p(B)\otimes_{\Z_p}B_{\st})^{\Gamma_L}=(T_p(B)\otimes_{\Z_p}B_{\cris})^{\Gamma_L}.\]
    So we get $L_0\otimes_{K_0}\bfD_{\st}(T_p(B))\cong (T_p(B)\otimes_{\Z_p}B_{\cris})^{\Gamma_L}$, and thus $N=0$ on $\bfD_{\st}(T_p(B))$. It follows that $T_p(B)$ is crystalline. By Theorem \ref{thm:Neron-Ogg-Shafarevich for good reduction of rigid 1-motives}, $B$ has good reduction. It follows that $M$ has semi-stable reduction.
\end{proof}

\begin{corollary}[N\'eron-Ogg-Shafarevich criterion for abeloid varieties]\label{cor:Neron-Ogg-Shafarevich for semi-stable reduction of abeloids}
    Let $\cS$, $p$ and $\ell$ be as in Theorem \ref{thm:Neron-Ogg-Shafarevich for semi-stable reduction of rigid 1-motives}. Let $A$ be an abeloid variety over $K$, and consider the following conditions.
    \begin{enumerate}
        \item $A$ has good (resp. semi-stable) reduction.

        \item In case that $p:=\mathrm{char}(k)>0$, $A[p^{\infty}]$ extends to an object of $\BT_{\cS,\mathrm{c}}^{\log}$ (resp. $\BT_{\cS,\mathrm{d}}^{\log}$).

        \item In case that $p:=\mathrm{char}(k)>0$, the Galois $\Z_{p}$-representation $T_{p}(A)$ is crystalline (resp. semi-stable in the sense of \cite[Def. 8.9]{FO22}).

        \item The Galois $\Z_{\ell}$-representation $T_{\ell}(A)$ is unramified (resp. semi-stable in the sense of \cite[Def. 2.24 (iii)]{FO22}).
    \end{enumerate}
    Then we have $(1)\Leftrightarrow (4)$. If $\mathrm{char}(K)=0$, then we further have $(1)\Leftrightarrow (2)\Leftrightarrow (3)$.
\end{corollary}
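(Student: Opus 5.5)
The plan is to view $A$ as the rigid 1-motive $M=[0\to A]$ and reduce to Theorem \ref{thm:Neron-Ogg-Shafarevich for good reduction of rigid 1-motives} (good case) and Theorem \ref{thm:Neron-Ogg-Shafarevich for semi-stable reduction of rigid 1-motives} (semi-stable case). We have $T_\ell(M)=T_\ell(A)$, $T_p(M)=T_p(A)$ and $M[p^\infty]=A[p^\infty]$. As a 1-motive, $M$ has good reduction if and only if $A$ has good reduction in the abeloid sense, since $Y=0$ and $X=0$ make conditions (i), (ii), (iv) of Definition \ref{def:good-semi-good} (1) vacuous. Likewise, $M$ has semi-stable reduction if and only if $A$ does, by Definition \ref{def:good-semi-good} (2). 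With these identifications, $(1)\Leftrightarrow(4)$ and $(1)\Leftrightarrow(3)$ in both the good and semi-stable versions are exactly the corresponding equivalences in those two theorems.

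For condition (2), the semi-stable version is also immediate: $(1)\Leftrightarrow(2)$ in Theorem \ref{thm:Neron-Ogg-Shafarevich for semi-stable reduction of rigid 1-motives} is stated for $M[p^\infty]=A[p^\infty]$ and $\BT^{\log}_{\cS,\mathrm{d}}$.

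The good version of (2) needs a separate argument. For $(1)\Rightarrow(2)$: if $A=\mathcal{A}^{\rig}$, then $\mathcal{A}[p^\infty]$ is a classical $p$-divisible group over $\cS$, i.e.\ an object of $\BT^{\log}_{\cS,\mathrm{c}}\simeq\BT_{\cS}$, and its generic fiber is $A[p^\infty]$ by Proposition \ref{prop:formal 1-motive ass. to rigid 1-mot with good red} (2) (equivalently, Theorem \ref{thm:first description of l-adic realization of rigid 1-motive} (1)). For $(2)\Rightarrow(3)$: an extension in $\BT^{\log}_{\cS,\mathrm{c}}$ algebraizes by \cite[\S2.4.4]{deJ95} to a $p$-divisible group over $\Spec\cO_K$, so $T_p(A)$ is crystalline by the results of Fontaine, Tate and Raynaud (e.g.\ \cite[Thm. 2.2.1]{Liu13}). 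Then $(3)\Rightarrow(1)$ closes the cycle.

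The only point needing care is matching notions: the paper calls the generic fiber of an object of $\BT^{\log}_{\cS,\mathrm{c}}$ the naive generic fiber, and this must agree with $A[p^\infty]$ as defined through $T_{\Z/p^n\Z}$. This is supplied by Corollary \ref{cor:taking rigid analytification of 1-motives is compatible with taking BT groups} and Corollary \ref{cor:taking "rigid generic fiber" of log formal 1-motives is compatible with taking ell-divisible group}. I expect no real obstacle: the corollary is a direct specialization of the two criteria, and the only additional input is the classical-crystalline step above.
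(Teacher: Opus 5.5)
Your proposal is correct and follows the paper's proof, which simply specializes Theorems \ref{thm:Neron-Ogg-Shafarevich for good reduction of rigid 1-motives} and \ref{thm:Neron-Ogg-Shafarevich for semi-stable reduction of rigid 1-motives} to the rigid 1-motive $[0\to A]$. Your separate argument for the good-reduction version of condition (2) fills in a step the paper leaves implicit, since Theorem \ref{thm:Neron-Ogg-Shafarevich for good reduction of rigid 1-motives} never mentions $\BT^{\log}_{\cS,\mathrm{c}}$. That argument runs through $\mathcal{A}[p^\infty]$, de Jong's algebraization, and the crystallinity of Tate modules of $p$-divisible groups over $\cO_K$.
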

\begin{proof}
    This follows from Theorem \ref{thm:Neron-Ogg-Shafarevich for good reduction of rigid 1-motives} and Theorem \ref{thm:Neron-Ogg-Shafarevich for semi-stable reduction of rigid 1-motives}.
\end{proof}

\begin{corollary}
    Let the setting be as in Corollary \ref{cor:Neron-Ogg-Shafarevich for semi-stable reduction of abeloids}. Assume that $A$ has semi-stable reduction and $p:=\mathrm{char}(k)>0$. Then there exists an object of $\BT^{\log}_{S,\mathrm{d}}$ which is the (unique) algebraization of the object in $\BT^{\log}_{\cS,\mathrm{d}}$ extending $A[p^{\infty}]$ (see Corollary \ref{cor:Neron-Ogg-Shafarevich for semi-stable reduction of abeloids} (1)$\Leftrightarrow$(2)).
\end{corollary}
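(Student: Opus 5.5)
The plan is to reduce everything to the equivalence of Proposition \ref{prop:equivalence b.t. logBT over algebraic base and formal base}, applied with $S=\Spec\OK$ and $\cS=\Spf\OK$. Here $S$ carries the canonical log structure, with chart $\N\to\OK$, $1\mapsto\pi$. The hypotheses of that proposition hold: $\OK$ is a complete noetherian local ring with residue field of characteristic $p>0$, and the chart $\N\to M_S$ induces $\N\cong M_{S,s}/\cO_{S,s}^\times$ at the closed point. Moreover, the completion $\varinjlim_m S_m$ of $S$ along $\mathfrak m=(\pi)$ is exactly $\cS$, with $S_m=\Spec\OK/(\pi^{m+1})$ and the induced log structure. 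Hence the completion functor $\BT^{\log}_{S,\mathrm{d}}\to\BT^{\log}_{\cS,\mathrm{d}}$ is an equivalence of categories.

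Next we produce the formal object. Since $A$ has semi-stable reduction, Corollary \ref{cor:Neron-Ogg-Shafarevich for semi-stable reduction of abeloids}, direction (1)$\Rightarrow$(2), gives an object $\mathcal{H}\in\BT^{\log}_{\cS,\mathrm{d}}$ extending $A[p^\infty]$. Concretely, $\mathcal{H}=\mathcal{N}[p^\infty]$, where $\mathcal{N}$ is the log formal 1-motive corresponding under Theorem \ref{thm:1-1 correspondence b.t. log fml 1-mot and strict sst rigid 1-mot} to the Raynaud rigid 1-motive $M$ of $A$. Essential surjectivity of the completion functor then yields $H\in\BT^{\log}_{S,\mathrm{d}}$ with $H^\wedge\cong\mathcal{H}$. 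This $H$ is the desired algebraization.

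Uniqueness comes from full faithfulness. If $H,H'\in\BT^{\log}_{S,\mathrm{d}}$ both have completion isomorphic to $\mathcal{H}$, then the isomorphism $H^\wedge\cong\mathcal{H}\cong H'^\wedge$ lifts to a unique isomorphism $H\cong H'$. So the algebraization is unique up to unique isomorphism, once $\mathcal{H}$ is fixed.

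We should also check that $H$ genuinely extends $A[p^\infty]$ on the generic fiber. Here we would use that the log structure of $S$ is trivial on $\Spec K$. Then $H_K$ is a classical $p$-divisible group whose truncations agree with the generic fibres of the $T_{\Z/p^n\Z}(\mathcal N)$, via Kato's description in Corollary \ref{Cor: Kato's decomposition thm for log p-div gp} and Corollary \ref{cor:taking "rigid generic fiber" of log formal 1-motives is compatible with taking ell-divisible group}. These identify with $A[p^n]$ through the quasi-isomorphism $M\to A$.

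The main point requiring care is the identification of the completion of $S$ with $\cS$ together with its chart, since everything else is formal. A secondary subtlety is the meaning of ``the object extending $A[p^\infty]$''. Uniqueness should be read relative to the given formal object, or alternatively justified by full faithfulness of the generic fibre functor on $\BT^{\log}_{S,\mathrm{d}}$ via \cite[Thm. B]{BWZ23}. If needed, I would cite the latter to make the uniqueness statement independent of choices.
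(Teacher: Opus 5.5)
Your proposal is correct and is essentially the paper's argument. You take the formal object from direction (1)$\Rightarrow$(2) of Corollary \ref{cor:Neron-Ogg-Shafarevich for semi-stable reduction of abeloids} and algebraize it through the equivalence $\BT^{\log}_{S,\mathrm{d}}\simeq\BT^{\log}_{\cS,\mathrm{d}}$ of Proposition \ref{prop:equivalence b.t. logBT over algebraic base and formal base}, with uniqueness coming from full faithfulness; the extra generic-fibre check and the appeal to \cite[Thm. B]{BWZ23} are not needed for the statement as formulated.
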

\begin{proof}
    This follows from Corollary \ref{cor:Neron-Ogg-Shafarevich for semi-stable reduction of abeloids} (1)$\Leftrightarrow$(2) and Proposition \ref{prop:equivalence b.t. logBT over algebraic base and formal base}.
\end{proof}

Now we make use of the N\'eron-Ogg-Shafarevich criterion for semi-stable reduction of rigid 1-motives to prove the corresponding criterion in the algebraic setting.

\begin{proposition}\label{prop:algebraic 1-motive has sst reduction iff so is its rigid analytification}
    Let $M=[Y\xrightarrow{u} G]\in \mathscr{M}_{1,K}^{\mathrm{alg}}$. Then $M$ has semi-stable reduction (resp. is strict) if and only if its rigid analytification $M^{\rig}$ has semi-stable reduction (resp. is strict). For the definition of strict 1-motives, see \cite[Def. 4.2.3]{Ray94}.
\end{proposition}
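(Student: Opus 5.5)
The proof splits into two parts, strictness first and then semi-stability; the semi-stability part is handled through the Galois-theoretic criteria already established. Both $M$ and $M^{\rig}$ have the same lattice $Y$ (with the same Galois action) and the same character group $X$ of the torus part. By Lemma \ref{lem:rigid analytification preserves kernel and s.e.s.}, the analytification of the algebraic extension $0\to T\to G\to B\to 0$ is the torus–abeloid extension $0\to T^{\rig}\to G^{\rig}\to B^{\rig}\to 0$. So the abeloid part of $G^{\rig}$ is $B^{\rig}$.

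For strictness: by \cite[Def. 4.2.3]{Ray94}, $M$ is strict iff the abelian part $B$ has potentially good reduction. By Definition \ref{def:strict rigid 1-motives}, $M^{\rig}$ is strict iff $B^{\rig}$ has potentially good reduction as an abeloid variety. It therefore suffices to show that, for an abelian variety $B$ over a finite extension $K'$, $B$ has good reduction iff $B^{\rig}$ does. The forward direction is formal completion of the abelian scheme, via Proposition \ref{prop:formal completion, taking algebraic and rigid generic fibers, and rigid analytification}. For the converse I would not algebraize a formal abelian scheme directly. Instead I compare Tate modules: $T_\ell(B)=T_\ell(B^{\rig})$ for $\ell\neq p$, since $B[n]^{\rig}=B^{\rig}[n]$ by Lemma \ref{lem:rigid analytification preserves kernel and s.e.s.}. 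Then I apply the classical Néron–Ogg–Shafarevich criterion together with Corollary \ref{cor:Neron-Ogg-Shafarevich for semi-stable reduction of abeloids}. One could equally cite Corollary \ref{thm:Neron-Ogg-Shafarevich for good reduction of algebraic 1-motives} applied to $[0\to B]$.

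For semi-stability: by Corollary \ref{cor:taking rigid analytification of 1-motives is compatible with taking BT groups}, or more simply by the compatibility of $(-)^{\rig}$ with $T_{\Z/n\Z}(-)$ shown via Lemma \ref{lem:rigid analytification preserves kernel and s.e.s.} as in Proposition \ref{prop:formal completion, taking algebraic and rigid generic fibers, and rigid analytification}, there is a Galois-equivariant isomorphism $T_\ell(M)\cong T_\ell(M^{\rig})$ for $\ell\neq\mathrm{char}(k),\mathrm{char}(K)$. That compatibility argument uses only exactness of analytification, not good reduction, so it holds for arbitrary $M$. Theorem \ref{thm:Neron-Ogg-Shafarevich for semi-stable reduction of rigid 1-motives} $(1)\Leftrightarrow(4)$ says $M^{\rig}$ has semi-stable reduction iff $T_\ell(M^{\rig})$ is semi-stable. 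The classical algebraic criterion (Grothendieck's semi-stable reduction criterion applied to the graded pieces, as in Raynaud \cite[\S4]{Ray94}) says $M$ has semi-stable reduction iff $T_\ell(M)$ is semi-stable. Combining the two gives the equivalence.

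If one prefers to avoid citing the algebraic criterion, there is a direct route. By definition, semi-stability of $M$ means $X$ and $Y$ are unramified and $B$ has semi-stable reduction. The rigid definition (Definition \ref{def:good-semi-good} (2)) requires $X$ and $Y$ unramified and $B^{\rig}$ semi-stable. So one only needs: $B$ is semi-stable iff $B^{\rig}$ is. This again follows from the equality of $\ell$-adic Tate modules and Grothendieck's criterion on one side versus Corollary \ref{cor:Neron-Ogg-Shafarevich for semi-stable reduction of abeloids} on the other.

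The main obstacle I expect is definitional. Raynaud's notion of semi-stable reduction for algebraic 1-motives may be phrased via Néron models rather than via Tate modules, so one must check that it matches the criterion being invoked.
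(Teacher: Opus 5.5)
Your proof is correct, but it takes a different route from the paper. The paper simply says the claim is clear from the definitions together with \cite[Thm. 6.2]{BS95}. The lattice $Y$ and the character group $X$ are literally the same for $M$ and $M^{\rig}$. The formal N\'eron model of $B^{\rig}$ is the formal completion of the N\'eron model of $B$. Hence semi-stable reduction (resp. good reduction after a finite extension, hence strictness) of $B$ and of $B^{\rig}$ are read off from the same special fibre, cf.\ the paragraph after Definition \ref{def:semi-stable reduction for abeloids}.

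You instead identify $T_\ell(B)=T_\ell(B^{\rig})$ and $T_\ell(M)=T_\ell(M^{\rig})$, and match Serre--Tate and Grothendieck on the algebraic side against Theorem \ref{thm:Neron-Ogg-Shafarevich for good reduction of rigid 1-motives} and Corollary \ref{cor:Neron-Ogg-Shafarevich for semi-stable reduction of abeloids} on the rigid side. Grothendieck's criterion is exactly the translation between the N\'eron-model definition and unipotent inertia, so it also settles the definitional worry you raise. The rigid results you use are proved earlier without this proposition, and your graded-piece argument on the algebraic side is sound, so nothing is missing.

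Your route avoids the comparison of algebraic and formal N\'eron models. The cost is that it spends the whole N\'eron--Ogg--Shafarevich machinery on what the paper treats as a direct comparison of integral models. It also turns the $\ell$-adic part of the paper's last corollary of Section \ref{section log formal 1-motives} into an input rather than a consequence, since that corollary is deduced from this proposition. So do not cite that corollary as ``the classical criterion''.

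Likewise, your alternative citation of Corollary \ref{thm:Neron-Ogg-Shafarevich for good reduction of algebraic 1-motives} for $[0\to B]$ runs through Proposition \ref{prop:algebraic 1-motive has good reduction iff so is its rigid analytification}. That proposition already states that $B$ has good reduction iff $B^{\rig}$ does. If you accept it, cite it directly for the strictness part and skip the Tate-module detour.
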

\begin{proof}
    This is clear by definition and \cite[Thm. 6.2]{BS95}.
\end{proof}

\begin{corollary}
Let $S$ be the log scheme $\Spec\cO_K$ endowed with the canonical log structure, and $\ell\neq\mathrm{char}(k)$ a prime number. For $M=[Y\xrightarrow{u}G]\in\mathscr{M}_{1,K}^{\mathrm{alg}}$, consider the following conditions. 
    \begin{enumerate}
        \item $M$ has semi-stable reudction.

        \item In case that $p:=\mathrm{char}(k)>0$, $M[p^{\infty}]$ extends to an object of $\BT_{S,\mathrm{d}}^{\log}$.

        \item In case that $p:=\mathrm{char}(k)>0$, the Galois $\Z_p$-representation $T_p(M)$ is semi-stable (see \cite[Def. 8.9]{FO22}).

        \item The Galois $\Z_{\ell}$-representation $T_{\ell}(M)$ is semi-stable (see \cite[Def. 2.24 (iii)]{FO22}), i.e. its semi-simplification is unramified.
    \end{enumerate}
    Then we have $(1)\Leftrightarrow (4)$. If $\mathrm{char}(K)=0$, then we further have $(1)\Leftrightarrow (2)\Leftrightarrow (3)$. 
\end{corollary}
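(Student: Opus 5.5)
The plan is to reduce everything to the rigid-analytic statement, Theorem \ref{thm:Neron-Ogg-Shafarevich for semi-stable reduction of rigid 1-motives}, applied to $M^{\rig}$. The comparison results of subsections \ref{subsec:formal completion and analytification I} and \ref{subsec:formal completion and analytification II} transport the conditions back and forth. The Galois-side conditions (3) and (4) are insensitive to analytification. Indeed, $T_{\Z/n\Z}(M)$ and $T_{\Z/n\Z}(M^{\rig})$ agree by the compatibility of $(-)^{\rig}$ with $T_{\Z/n\Z}(-)$ (Proposition \ref{prop:formal completion, taking algebraic and rigid generic fibers, and rigid analytification} and its analogue in the proof of Proposition \ref{prop:formal completion, taking algebraic and rigid generic fibers, and rigid analytification in log setting}, via Lemma \ref{lem:rigid analytification preserves kernel and s.e.s.}). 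Since finite rigid groups and their algebraizations have the same $\overline K$-points, $T_\ell(M)\cong T_\ell(M^{\rig})$ as Galois representations for every $\ell$. Condition (1) also transfers: $M$ is semi-stable if and only if $M^{\rig}$ is, by Proposition \ref{prop:algebraic 1-motive has sst reduction iff so is its rigid analytification}. Hence $(1)\Leftrightarrow(4)$ and, in characteristic $0$, $(1)\Leftrightarrow(3)$ follow directly from the rigid theorem.

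For (2), I would first reduce to strict $M$. Raynaud's algebraic strictification \cite[\S4.2]{Ray94} gives a strict 1-motive quasi-isomorphic to $M$, and it has the same $T_{\Z/n\Z}$ and $p$-divisible group. Semi-stability is preserved under this replacement, which is the algebraic analogue of Proposition \ref{prop:rigid 1-motive has sst reduction iff its associated rigid 1-motive has sst reduction}. Alternatively, one can go through $M^{\rig}$ and strictify only there, since condition (2) concerns only $M[p^\infty]\cong M^{\rig}[p^\infty]$ by Corollary \ref{cor:taking rigid analytification of 1-motives is compatible with taking BT groups}.

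\textbf{Direction $(1)\Rightarrow(2)$.} Assuming (1) and strictness, the equivalence $\mathrm{GF}^{\log,\mathrm{alg}}$ (the log version of Proposition \ref{prop:equivalence b.t. 1-motives over DVR and 1-motives with good reduction over DVF}) produces an algebraic log 1-motive $\mathcal M$ over $S$ with generic fiber $M$. By \cite[Prop. 4.5]{WZ24} (as in Proposition \ref{prop:realizations ass. to log fml 1-mot}), $\mathcal M[p^\infty]\in\BT^{\log}_{S,\mathrm d}$. Its generic fiber is $M[p^\infty]$ because $\mathrm{GF}^{\log,\mathrm{alg}}$ commutes with $T_{\Z/n\Z}$.

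\textbf{Direction $(2)\Rightarrow(3)$.} This is \cite[Thm. B]{BWZ23}, exactly as in the rigid case.

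The main subtlety is making sure the strictification step is legitimate in the algebraic setting, together with the identification of the algebraic $M[p^\infty]$ with the naive generic fiber of the log object. If the algebraic strictification is awkward, I would instead prove $(1)\Rightarrow(2)$ by passing through $\mathcal S$. Apply Theorem \ref{thm:Neron-Ogg-Shafarevich for semi-stable reduction of rigid 1-motives} $(1)\Rightarrow(2)$ to $M^{\rig}$ to get an object of $\BT^{\log}_{\cS,\mathrm d}$. Then algebraize it via Proposition \ref{prop:equivalence b.t. logBT over algebraic base and formal base}, and compare generic fibers using Corollary \ref{cor:taking "rigid generic fiber" of log formal 1-motives is compatible with taking ell-divisible group} and Corollary \ref{cor:taking rigid analytification of 1-motives is compatible with taking BT groups}. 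Conversely, $(2)\Rightarrow(1)$ can also be routed through the formal completion and the rigid theorem, which sidesteps strictness entirely.
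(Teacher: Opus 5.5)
Your proposal is correct. For conditions (1), (3), (4) you argue as the paper does: transfer to $M^{\rig}$ via Proposition~\ref{prop:algebraic 1-motive has sst reduction iff so is its rigid analytification} and the compatibility of $(-)^{\rig}$ with $T_{\Z/n\Z}(-)$, then apply Theorem~\ref{thm:Neron-Ogg-Shafarevich for semi-stable reduction of rigid 1-motives}. Your fallback for (2) is exactly the paper's proof. Since the analytification of $M[p^\infty]$ is $M^{\rig}[p^\infty]$, Proposition~\ref{prop:equivalence b.t. logBT over algebraic base and formal base} turns ``$M[p^\infty]$ extends to $\BT^{\log}_{S,\mathrm{d}}$'' into condition (2) of the rigid theorem for $M^{\rig}$.

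Your main route for (2) genuinely differs and stays algebraic. You strictify, realize $M$ as $\mathrm{GF}^{\log,\mathrm{alg}}$ of a log 1-motive over $S$, and take its log $p$-divisible group via \cite{WZ24}. You then close the cycle with \cite[Thm.~B]{BWZ23} for $(2)\Rightarrow(3)$ and the rigid theorem for $(3)\Rightarrow(1)$.

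This buys an explicit extending object, namely the log $p$-divisible group of the algebraic log model. It also never uses the compatibility between the naive generic fibre of an algebraized log $p$-divisible group and the rigid generic fibre of its completion, which the paper's one-line transfer uses tacitly.

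The price is the algebraic strictification. You must know that the strict $M'$ is algebraic: the abelian part of the Raynaud extension of an abelian variety is algebraizable via the induced polarization, and then Proposition~\ref{prop:essential-image-analytification-1-motives} applies. Also, $M'\to M$ is only a rigid-analytic quasi-isomorphism. So the identification $M'[p^\infty]\cong M[p^\infty]$ must pass through $T_{\Z/n\Z}$ of the analytifications, together with the fact that finite étale $K$-groups are determined by their $\overline K$-points. The paper avoids this because strictification happens once, inside the rigid theorem.

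One minor citation point: Corollary~\ref{cor:taking rigid analytification of 1-motives is compatible with taking BT groups} is stated only for good-reduction objects. For general $M$ you should cite the compatibility of $(-)^{\rig}$ with $T_{\Z/n\Z}(-)$ directly, as you already do earlier.
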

\begin{proof}
    Let $\cS$ be as in Theorem \ref{thm:Neron-Ogg-Shafarevich for semi-stable reduction of rigid 1-motives}. 
    
    By Proposition \ref{prop:algebraic 1-motive has sst reduction iff so is its rigid analytification}, $M$ has semi-stable reduction if and only if its rigid analytification $M^{\rig}$ has semi-stable reduction. The functor $(-)^{\rig}:\mathscr{M}_{1,K}^{\mathrm{alg}}\to \mathscr{M}_{1,K}$ is compatible with the formation of $T_{\Z/n\Z}(-)$ by the same argument as in Proposition \ref{prop:formal completion, taking algebraic and rigid generic fibers, and rigid analytification}. Thus the rigid analytification of $M[p^{\infty}]$ is $M^{\rig}[p^{\infty}]$. Then by Proposition \ref{prop:equivalence b.t. logBT over algebraic base and formal base}, $M[p^{\infty}]$ extends to an object of $\BT_{S,\mathrm{d}}^{\log}$ if and only if $M^{\rig}[p^{\infty}]$ extends to an object of $\BT_{\cS,\mathrm{d}}^{\log}$. We have $T_p(M)=T_p(M^{\rig})$ and $T_{\ell}(M)=T_{\ell}(M^{\rig})$. Then the results follow from Theorem \ref{thm:Neron-Ogg-Shafarevich for semi-stable reduction of rigid 1-motives}.
\end{proof}

\section{$p$-divisible rigid analytic groups}\label{section p-div rigid gp}
In this section, we assume that the complete discrete valuation field $K$ is an extension of $\Q_p$ and come back to rigid analytic groups over $K$: we will study the local counterparts of abeloid varieties and rigid analytic 1-motves. We first
review Fargues' theory of $p$-divisible rigid analytic  groups following \cite{Far19,Far23} and \cite{Gert26} (see also \cite{Gert}). After that, we construct $p$-divisible rigid analytic groups attached to rigid analytic $1$-motives and log $p$-divisible groups. It turns out, in this local setting, we can work with a larger category of dualizable $p$-divisible rigid analytic groups, which corresponds to the category of Hodge-Tate $\Z_p$-representations of $\Gamma_K$ with Hodge-Tate weights 0 and 1.

\subsection{Review of $p$-divisible rigid analytic groups}

\begin{definition}(\cite[D\'efinition 4.1]{Far23}, \cite[D\'efinition 2]{Far19})
Let $S$ be a rigid analytic space over $K$. A \emph{ $p$-divisible rigid analytic group} over $S$ is a smooth commutative rigid analytic group $G$ over $S$ such that the morphism $p: G\ra G$ of multiplication by $p$ is 
\begin{enumerate}
    \item 
topologically nilpotent, i.e. for any affinoid neighborhoods $U,V$ of $0$ over $S$, $p^nU\subset V$ for $n\gg0$, 
\item finite surjective as a morphism of rigid analytic spaces.
\end{enumerate}
\end{definition}

One can generalize this definition to more general base of adic spaces, cf. \cite[Definition 2.16]{Heu24b} and \cite{Gert26}. In the following, we will mainly work on $p$-divisible rigid analytic groups over the base $S=\Spa\,K$.
Write \[\BT^{\mathrm{rig}}_K\] for the corresponding category. If $K'|K$ is any extention of non-archemdean fields, then we have the natural base change functor $\BT^{\mathrm{rig}}_K\ra \BT^{\mathrm{rig}}_{K'},\,G\mapsto G_{K'}$.
For any $G\in \BT^{\mathrm{rig}}_K$, let $G[p^\infty]=\varinjlim_n G[p^n]$ be the $p$-powers torsion subgroup (an \'etale $p$-divisible group over $K$) and $T_p(G):=T_p(G[p^\infty])$.

\begin{proposition}\label{prop:Fargues-structure}
\begin{enumerate}
\item Let $G$ be a $p$‑divisible rigid analytic  group over $K$. There is a canonical short exact sequence of sheaves on the big rigid--\'etale site
\[
0\longrightarrow G[p^{\infty}]\longrightarrow G \xrightarrow{\ \log_G\ } \Lie(G)\otimes_K\G_a^{\rig}\longrightarrow 0,
\]
with $\log_G$ an \'etale surjection.
\item  If $f:\mathbb G\to \Lie(\mathbb G)\otimes_K\G_a^{\rig}$ is an \'etale surjection of commutative smooth rigid analytic groups, whose kernel is a classical $p$‑divisible group, then there is a unique $p$‑divisible rigid analytic group structure on $\mathbb G$ with $\ker(f)=\mathbb G[p^{\infty}]$ and $\log_{\mathbb G}=f$.
\item  A sequence $0\to G'\to G\to G''\to 0$ in the category $\BT^{\rig}_K$ is exact if and only if it is exact on Lie and on $p^\infty$‑kernels.
\end{enumerate}
\end{proposition}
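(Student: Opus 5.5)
The plan is to derive all three parts from Fargues' structure theory of $p$-divisible rigid analytic groups \cite{Far19, Far23}, organized around the logarithm.

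For (1), the key input is the rigid analytic logarithm. Since $p$ is topologically nilpotent on $G$, the formal group $\widehat G$ of $G$ at the origin has a logarithm $\log_{\widehat G}:\widehat G\to \Lie(G)\otimes_K\G_a^{\rig}$. This logarithm is an isomorphism on a sufficiently small open subgroup $U\subset G$. Topological nilpotence gives $G=\bigcup_n [p^n]^{-1}(U)$. On $[p^n]^{-1}(U)$ I would define $\log_G(x):=p^{-n}\log(p^nx)$; this is independent of $n$ and is a homomorphism.

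Because $\log_G$ is a local isomorphism near $0$, it is étale, and its kernel is a discrete subgroup. Every point of the kernel is $p$-power torsion, since a suitable $p^n x$ lands in $U\cap\ker=0$. Hence $\ker(\log_G)=G[p^\infty]$. Finiteness of $[p]$ forces each $G[p^n]$ to be finite, so $G[p^\infty]$ is an étale $p$-divisible group, divisibility coming from the surjectivity of $[p]$.

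Surjectivity of $\log_G$ as an étale sheaf map follows because its image contains a neighbourhood of $0$ in the vector group $\Lie(G)\otimes\G_a^{\rig}$. For any $y$, some $p^n y$ lies in this neighbourhood. One then uses surjectivity of $[p^n]$, which is finite flat and hence étale in characteristic $0$, to take $p^n$-th roots étale locally.

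For (2), given $f$, I would check the two axioms directly. Multiplication by $p$ is finite surjective because it fits into the morphism of extensions with $[p]$ on the kernel $D=\ker f$ and multiplication by $p$ on the vector group. Multiplication by $p$ is an isomorphism on the vector group and finite surjective on the classical $p$-divisible group $D$, so by a five-lemma/descent argument $[p]$ on $\mathbb G$ is a $D[p]$-torsor over $\mathbb G$, hence finite étale and surjective.

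Topological nilpotence is the delicate point. Since $f$ is étale at $0$, a neighbourhood $U$ of $0$ maps isomorphically onto a ball. Multiplication by $p$ contracts balls in the vector group, and this should give $p^nU\subset V$. The uniqueness statement, together with $\log_{\mathbb G}=f$, follows from (1) by rigidity: $\log_{\mathbb G}-f$ vanishes to first order and kills a neighbourhood of $0$, so it vanishes identically by $p$-divisibility.

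For (3), I would apply the snake lemma to the morphism of short exact sequences from (1). The real content is the converse direction: a sequence exact on $\Lie$ and on $p^\infty$-kernels is exact as sheaves, meaning that $G\to G''$ is surjective and $G'\to G$ is its kernel. I expect this to be the main obstacle, namely checking that the category $\BT^{\rig}_K$ admits the relevant kernels and quotients. For this I would cite Fargues' results rather than reprove them.
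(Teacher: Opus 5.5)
The paper gives no argument here beyond citing \cite{Far19} (Prop.~16, Prop.~18, Cor.~13), so you are reconstructing Fargues' proofs. Your treatment of (1) is essentially his and is fine, with one small correction. $[p^n]$ is \'etale because it is a homomorphism of smooth groups whose differential $p^n\cdot\mathrm{id}$ is invertible in characteristic $0$. It is not because finite flat maps are \'etale in characteristic $0$; they are not, e.g.\ $z\mapsto z^2$ on a disc.

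The genuine hole is topological nilpotence in (2). Contracting balls only controls the particular neighbourhood $U$ on which $f$ is an isomorphism onto a ball $B$. You also need $U$ to be a subgroup; this follows from connectedness of $U\times U$, since addition identifies $D\times U\xrightarrow{\sim}f^{-1}(B)$ with $D=\ker f$. For an arbitrary affinoid (or quasi-compact) $W\subset\mathbb{G}$, making $p^nf(W)$ small only forces $p^nW$ into a small neighbourhood $D+U'$ of the whole kernel, not of $0$. For instance, an affinoid neighbourhood of $1$ in $\widehat{\mathbb{G}}_m^{\rig}$ may contain $\zeta_p$.

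The missing idea is to use that $D$ is $p$-power torsion. Write $D\times U$ as the disjoint union of the open-and-closed pieces $(D[p^k]\smallsetminus D[p^{k-1}])\times U$. The quasi-compact set $p^nW$ meets only finitely many of them, so $p^nW\subset D[p^k]+U$ for some $k$, hence $p^{n+k}W\subset U$. Only then does your contraction argument finish.

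The same argument shows every point of $\mathbb{G}$ is topologically $p$-torsion, which is what your rigidity step actually uses. $\log_{\mathbb{G}}-f$ kills $x$ because it kills $p^nx$ for $n\gg0$ and the target is uniquely divisible; $p$-divisibility of $\mathbb{G}$ is not the relevant input. Also, $\log_{\mathbb{G}}-f$ vanishes to first order only if $df_0=\mathrm{id}$. In general one gets $\log_{\mathbb{G}}=(df_0)^{-1}\circ f$, so that normalization is implicit in the statement.

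For (3) you have put the difficulty in the wrong place. Nothing about kernels or quotients in $\BT^{\rig}_K$ is needed, since exactness means exactness of sheaves, and given (1) both directions are a diagram chase. Take the three logarithm sequences of $G',G,G''$ as the rows of a $3\times3$ diagram. Its columns are the $p^\infty$-torsion sequence, the given sequence (a complex), and the sequence $\Lie(-)\otimes_K\mathbb{G}_a^{\rig}$.

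If the outer columns are exact, the nine lemma gives exactness of the middle one. For instance, to lift $z\in G''$, lift $\log_{G''}(z)$ to $\Lie(G)\otimes\mathbb{G}_a^{\rig}$, then \'etale locally to $y\in G$ via $\log_G$. Then $z-v(y)$ lies in $G''[p^\infty]$, so you can correct $y$ by a torsion point of $G$.

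Conversely, if the middle column is exact, the snake lemma for $[p^n]$ together with $G'/p^nG'=0$ gives exactness on $p^n$-torsion for every $n$. The nine lemma then gives exactness on $\Lie$. So (3) is a formal consequence of (1), and there is no need to defer its converse to Fargues.
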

\begin{proof}
These are \cite{Far19} Propositions 16, 18, and Corollaire 13 respectively.
\end{proof}
\begin{proposition}\label{prop:connected-components}
Let $G\in\BT^{\rig}_K$. There is a short exact sequence of sheaves of abelian groups
(on the big \'etale site of $\Spa(K)$)
\[
0 \longrightarrow G^0 \longrightarrow G \longrightarrow \pi_0(G) \longrightarrow 0,
\]
where $G^0$ is the neutral connected component and $\pi_0(G)$ is an \'etale sheaf. Moreover, the component group $\pi_0(G)$ is an \'etale $p$-divisible group and $G^0$ is again a $p$-divisible rigid analytic group.
\end{proposition}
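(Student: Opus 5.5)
We take $G^0$ to be the connected component of the identity and set $\pi_0(G):=G/G^0$ as an étale sheaf. The plan has three steps: show $G^0$ is an open subgroup, identify the quotient as an étale $p$-divisible group, and check the two defining conditions for $G^0$.

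\emph{Step 1: $G^0$ is an open subgroup.} Since $G$ is smooth, hence locally connected, the connected component $G^0$ of the identity is an open and closed admissible subspace. It is a subgroup by the usual argument: $G^0\times G^0$ is connected, so $m(G^0\times G^0)$ and $G^0\times G^0$ have connected images containing $0$. Connected components are stable under finite separable base change only after passing to geometric components, so we will need to check that $G^0$ is geometrically connected. For this we use that $G^0$ has the rational point $0$. It then follows that $G^0$ is a smooth rigid subgroup, and that the quotient sheaf $G/G^0$ is étale, since $G\to G/G^0$ has open fibres.

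\emph{Step 2: $\pi_0(G)$ is an étale $p$-divisible group.} We use Proposition~\ref{prop:Fargues-structure}(1). The map $\log_G$ is étale with target $\Lie(G)\otimes\G_a^{\rig}$, which is connected. Hence $\log_G(G^0)$ is an open subgroup of the vector group. An open subgroup of $\Lie(G)\otimes\G_a^{\rig}$ contains a neighbourhood of $0$, and by $\Q$-linearity (scaling by $p^{-n}$) it is everything. So $\log_G|_{G^0}$ is still surjective, and
\[
\pi_0(G)=G/G^0\cong G[p^\infty]/(G[p^\infty]\cap G^0).
\]
This is a quotient of an étale $p$-divisible group by a subgroup, so it is an étale torsion sheaf. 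For divisibility, we argue as follows. The map $p:G\to G$ is finite surjective, and it maps $G^0$ into $G^0$ because $pG^0$ is connected and contains $0$. Therefore $p$ is surjective on $\pi_0(G)$. The finiteness of $\pi_0(G)[p]$ follows from the fact that $\pi_0(G)[p]$ is a quotient of $G[p^2]$, which is finite. So $\pi_0(G)$ is a $p$-divisible étale group.

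\emph{Step 3: $G^0$ is $p$-divisible rigid analytic, and the main obstacle.} We need that $p:G^0\to G^0$ is finite surjective. It is a restriction of a finite map to the open-closed subset $G^0=p^{-1}(G^0)\cap G^0$, and it is topologically nilpotent. The main obstacle is surjectivity of $p$ on $G^0$, equivalently that $\ker(\pi_0(G)\xrightarrow{p}\pi_0(G))$ lifts compatibly, i.e. that $G[p^\infty]\to\pi_0(G)$ is surjective with $G[p^\infty]\cap G^0$ itself $p$-divisible. I would try to deduce this from Proposition~\ref{prop:Fargues-structure}(2), applied to $f=\log_G|_{G^0}$. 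Its kernel $G^0[p^\infty]=G[p^\infty]\cap G^0$ is a subgroup of a $p$-divisible group. Divisibility of this kernel follows from the snake lemma applied to multiplication by $p$ on $0\to G^0\to G\to\pi_0(G)\to0$, provided $\pi_0(G)[p]$ is hit by $G[p]$. That last fact I would check geometrically: a connected component of $G$ lying over a point of $\pi_0(G)[p]$ is mapped by $p$ onto $G^0$, so it contains a point of $G[p]$. Granting this, part (2) gives the structure on $G^0$.
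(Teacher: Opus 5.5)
There is a genuine gap, and it sits exactly at the point everything else depends on: the surjectivity of $[p]$ on $G^0$, equivalently of $\log_G|_{G^0}$.

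\emph{Step 2 is false as stated.} You claim that the open subgroup $\log_G(G^0)\subset\Lie(G)\otimes\Ga^{\rig}$ is everything ``by $\Q$-linearity (scaling by $p^{-n}$)''. An open subgroup is only a $\Z$-submodule, not a $\Z[1/p]$-submodule. Moreover, in rigid geometry an open subgroup of a connected group need not be the whole group. The closed unit disc $\{|x|\le 1\}\subset\Ga^{\rig}$, or $\{|x|=1\}\subset\Gm^{\rig}$, is an admissible open subgroup of a connected group: its complement is admissible open, but the two pieces do not form an admissible covering. So neither the topological-group argument nor scaling is available.

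\emph{The argument is circular.} Scaling $\log_G(G^0)$ by $p^{-1}$ requires $[p]:G^0\to G^0$ to be surjective, which is precisely what you call the main obstacle in Step 3. Step 3 in turn feeds $\log_G|_{G^0}$ into Proposition~\ref{prop:Fargues-structure}(2), which presupposes the surjectivity claimed in Step 2. It also simply grants that $[p]$ maps each component over $\pi_0(G)[p]$ onto $G^0$. Consequently two claims are not established: the identification $\pi_0(G)\cong G[p^\infty]/(G[p^\infty]\cap G^0)$, and the assertion that $\pi_0(G)[p]$ is a quotient of $G[p^2]$, which would need $G[p^\infty]\cap G^0$ to be $p$-divisible.

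\emph{The missing input is short.}
\begin{enumerate}
\item Since $[p](G^0)$ is connected and contains $0$, it lies in $G^0$.
\item The map $[p]^{-1}(G^0)\to G^0$ is a base change of the finite \'etale map $[p]:G\to G$ (Lemma~\ref{lem:Kummer}). Since $G^0$ is open and closed in $[p]^{-1}(G^0)$, the restriction $[p]|_{G^0}:G^0\to G^0$ is finite \'etale, in particular finite flat.
\item Hence $([p]|_{G^0})_*\mathcal{O}_{G^0}$ is locally free of locally constant rank on the connected space $G^0$. Its rank is positive at $0$, because the fibre over $0$ contains $0$. So the rank is positive everywhere and $[p]|_{G^0}$ is surjective; being \'etale, it is surjective as a map of \'etale sheaves.
\item Topological nilpotence passes to the open subgroup $G^0$, so $G^0\in\BT^{\rig}_K$ with $\Lie(G^0)=\Lie(G)$.
\item Proposition~\ref{prop:Fargues-structure}(1) applied to $G^0$ shows that $\log_G|_{G^0}$ is surjective. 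Hence $G=G^0+G[p^\infty]$.
\item Therefore $\pi_0(G)\cong G[p^\infty]/G^0[p^\infty]$ is the quotient of an \'etale $p$-divisible group by an \'etale $p$-divisible subgroup, hence an \'etale $p$-divisible group.
\end{enumerate}
No snake-lemma argument is needed. For comparison, the paper gives no argument of its own here and simply cites \cite[Prop.~21]{Far19}.
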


\begin{proof}
See \cite[Prop.~21]{Far19}.
\end{proof}

Recall that
A morphism \(f:X\to Y\) of rigid analytic \(K\)–spaces is a \emph{\'etale cover} if every
\(y\in |Y|\) admits an open neighbourhood \(U\subset Y\) with
\( f^{-1}(U)=\bigsqcup_{i} V_i \) and each \(V_i\to U\) finite \'etale.
Equivalently, \(f\) is \'etale and Zariski/\'etale locally on \(Y\) is a disjoint union of finite \'etale maps. For morphisms of commutative rigid analytic groups, any \emph{\'etale surjective} homomorphism is an \'etale cover \cite[Prop.\ 3]{Far19}. These are stable
under base change and glue along \'etale covers. In particular, torsors under ind–finite \'etale group sheaves
are locally (for the rigid‑\'etale topology) disjoint unions of finite \'etale covers, hence representable after
gluing along the chosen trivializations. Moreover, a practical base‑change criterion is given in \cite[Lem.\ 7]{Far19}.

\begin{lemma}\label{lem:Kummer}
Let $G/K$ be a smooth commutative rigid analytic group and let $n\geq1$.
Then $[n]:G\to G$ is \'etale and $G[n]$ is \'etale over $K$.
If, moreover, $G$ is a semi-abeloid variety or a $p$-divisible rigid analytic
group, then $[n]$ is finite \'etale and surjective. In these cases $G[n]$
is finite \'etale, and
\[
0\longrightarrow G[n]\longrightarrow G
\xrightarrow{[n]}G\longrightarrow0
\]
is exact on the big rigid--\'etale site.
\end{lemma}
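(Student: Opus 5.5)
The plan is to treat the general statement first and then the two special cases. Since $K$ has characteristic $0$, the integer $n$ is invertible in $K$. For a smooth commutative rigid group $G$, the differential of $[n]$ at the identity is multiplication by $n$ on $\Lie(G)$, which is an isomorphism. Translating by group elements (after base change to points), $[n]$ then has bijective differential everywhere. By the Jacobian criterion for morphisms between smooth rigid spaces, $[n]$ is étale. The kernel $G[n]$ is the fiber of an étale map over the identity section $\Sp K\to G$, so it is étale over $K$.

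For the finite surjective part I split into two cases.

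\emph{$p$-divisible rigid analytic groups.} By definition $[p]$ is finite and surjective. If $n=p^a m$ with $(m,p)=1$, then $[n]=[p]^a\circ[m]$. I would invoke Proposition \ref{prop:Fargues-structure}(1). On $\Lie(G)\otimes_K\G_a^{\rig}$ the map $[m]$ is an isomorphism, and on the étale $p$-divisible group $G[p^\infty]$ it is an automorphism, since $m$ is prime to $p$. The five lemma then shows $[m]$ is an isomorphism of sheaves, hence of rigid groups. So $[n]$ is finite surjective, and being étale it is finite étale.

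\emph{Semi-abeloid varieties.} Proposition \ref{prop:n-map exact seq}(3) already shows that $G[n]$ is finite (the analytification of a finite group scheme, étale since $\mathrm{char}\,K=0$) and that $[n]$ is fppf surjective. For finiteness of $[n]$ itself, I would use the identification $G\times_{G,[n]}G\cong G\times G[n]$ via $(x,y)\mapsto(x,y-x)$. This shows that the base change of $[n]$ along the fppf cover $[n]$ is finite. Finiteness descends along fppf covers (\cite[Thm. 4.2.7]{Con06}, as used in Proposition \ref{prop:n-map exact seq}), so $[n]$ is finite étale. Surjectivity of rigid spaces follows from surjectivity of fppf sheaves.

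Exactness on the big rigid-étale site then follows formally. A finite étale surjection admits sections étale locally, so $[n]$ is an epimorphism of étale sheaves, and the kernel is $G[n]$ by definition.

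The main obstacle I expect is the descent of finiteness in the semi-abeloid case, and making sure the fppf-to-étale passage is justified in the rigid setting. If citing Conrad's descent is inconvenient, a fallback is to reduce via the torus–abeloid extension. For $[n]$ on $T^{\rig}$, finiteness is the analytification of an algebraic finite map. For $[n]$ on the abeloid part, it holds because a proper quasi-finite map is finite. Finiteness of the middle term then follows since $G\to A$ is a $T$-torsor.
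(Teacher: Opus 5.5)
Your proposal is correct and follows essentially the same route as the paper. Both arguments get \'etaleness from $d[n]=n\cdot\mathrm{id}_{\Lie(G)}$ plus translation. In the semi-abeloid case, both use Proposition~\ref{prop:n-map exact seq}(3) and the fact that $[n]$ is a $G[n]$-torsor with $G[n]$ finite; your isomorphism $G\times_{G,[n]}G\cong G\times G[n]$ followed by descent just spells this out. In the $p$-divisible case, both write $n=p^rm$ and use that $[m]$ is an automorphism of both outer terms of Fargues' logarithm sequence.

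The one step to tidy is the descent in the semi-abeloid case. Descend finiteness along an fppf cover that trivializes this torsor, not along $[n]$ itself, since quasi-compactness of $[n]$ is not known a priori. This is exactly the paper's result that fppf torsors under finite rigid groups are finite, via \cite[Thm.~3.1]{BG98}.
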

\begin{proof}
The differential of $[n]$ at the identity is
$n\,\mathrm{id}_{\Lie(G)}$, which is invertible since $K$ has
characteristic zero. Translation and the \'etaleness criterion for
homomorphisms of smooth rigid groups show that $[n]$ is \'etale
\cite[Lem.~1]{Far19}. Its kernel is therefore \'etale over $K$.

If $G$ is a semi-abeloid variety, by Proposition \ref{prop:n-map exact seq} (3) the morphism $[n]:G\to G$ is a torsor
under $G[n]$, and is therefore finite \'etale and surjective.

Finally, suppose that $G$ is a $p$-divisible rigid analytic group and write
$n=p^rm$, with $(m,p)=1$. Multiplication by $p^r$ is finite \'etale
and surjective. Multiplication by $m$ is an automorphism on both
outer terms of the logarithm sequence
\[
0\longrightarrow G[p^\infty]\longrightarrow G
\xrightarrow{\log_G}\Lie(G)\otimes_K\mathbb G_a^{\mathrm{rig}}
\longrightarrow0
\]
\cite[Prop.~16]{Far19}, and hence on $G$. This proves the assertion.
\end{proof}

\begin{example}\label{exa rig analytic groups}
    \begin{enumerate}
        \item The group $\Ga^\rig$ is a $p$-divisible rigid analytic group. More generally, for any finite dimensional $K$-vector space $V$, the group $V\otimes\Ga^\rig$ is a $p$-divisible rigid analytic group. We have a fully faithful functor \[\mathrm{Vect}_K\ra \BT_K^\rig, \quad V\mapsto V\otimes\Ga^\rig.\]
        \item Consider the category $\BT_K$ of $p$-divisible groups over $K$, which is equivalent to the category $\Rep_{\Z_p}(\Gamma_K)$ via the Tate module functor. The rigid analytification functor induces a fully faithful embedding \[\BT_K\ra \BT_K^\rig,\quad H\mapsto H^\rig:=\varinjlim_nH[p^n]^\rig,\] with essential image consisting of $p$-divisible rigid analytic groups of dimension 0.
        \item Consider the multiplicative group $\Gm$ over $\Spec\,\OO_K$. Let $\wh{\G}_m$ be its formal completion along the unit section. Consider the rigid analytic fiber $\wh{\G}_m^\rig$. Then
        the group $\wh{\G}_m^\rig$ is a $p$-divisible rigid analytic group. More generally, for any formal torus $\mathcal{T}$ over $\Spf\,\OO_K$, let $\wh{\mathcal{T}}$ be  its formal completion along the unit section. The rigid analytic generic fiber $\wh{\mathcal{T}}^\rig$ of  is a $p$-divisible rigid analytic group. This is in fact a very special case of the construction in Theorem \ref{thm good reduction p-divisible} below.
    \end{enumerate}
\end{example}

We will see much more interesting examples of $p$-divisible rigid analytic groups in the rest of this section.

\subsection{Classification}
We have a complete classficiation of $p$-divisible rigid analytic groups over $K$. More precisely, 
\begin{theorem}[\cite{Far19} Th\'eor\`eme 0.1]\label{thm Fargues classification}
The category  $\BT^{\mathrm{rig}}_K$ is equivalent to the category of triples
\[
(\Lambda, W,f),
\]
consisting of
\begin{itemize}
\item a finite free $\Zp$-module $\Lambda$ with continuous $\Gamma_K$-action,
\item a finite dimensional $K$-vector space $W$,
\item a $\Gamma_K$-equivariant $C$-linear morphism
\[
f:\ W\otimes_K C \longrightarrow\ \Lambda\otimes_{\Zp} C(-1).
\]
\end{itemize}
For $G\in \BT^{\mathrm{rig}}_K$ with associated triple $(\Lambda, W,f)$, we have $\Lambda=T_p(G)$, $W=\Lie(G)$. On the other hand, for a triple $(\Lambda, W,f)$ as above, the associated $p$-divisible rigid analytic group $G$ is given by Galois descent of $G_C$ constructed by the following pull-back diagram
\[\xymatrix{
G_C\ar[r]\ar[d]& (W\otimes_KC)\otimes\G_a^\rig\ar[d]^f\\
(\Lambda\otimes_{\Z_p}C(-1))\otimes\wh{\G}_m^\rig\ar[r]^-{Id\otimes\log} & (\Lambda\otimes_{\Z_p}C(-1))\otimes\G_a^\rig.
}\]
\end{theorem}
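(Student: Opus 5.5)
This is Fargues' classification theorem (\cite[Th\'eor\`eme 0.1]{Far19}), and the plan is to reconstruct his argument in three stages: build the functor $G\mapsto(T_p(G),\Lie(G),f_G)$, build a quasi-inverse via the displayed pullback and Galois descent, and check that the two are mutually inverse.

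For the functor, I start from $G\in\BT^{\rig}_K$ and take the logarithm sequence of Proposition~\ref{prop:Fargues-structure}(1),
\[0\to G[p^\infty]\to G\xrightarrow{\log_G}\Lie(G)\otimes\Ga^\rig\to0.\]
Base change to $C$ and take $C$-points. Topological nilpotence of $p$ makes the $p$-adic Tate module a good replacement for the period lattice. Over $C$, each $x\in\Lie(G)\otimes C$ determines a compatible system $x/p^n$. Lifting these to $G(C)$ via $\log_G$ and multiplying by $p^n$, topological nilpotence should give a convergent limit; this lets me compare $G_C$ with $T_p(G)\otimes\Gmfml^\rig$. More concretely, I construct a Hodge--Tate type map: for a character-like datum, the universal cover $\varprojlim_p G_C$ maps to $\Lie(G)\otimes C$. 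Its kernel fiber gives $T_p(G)\otimes\Zp(1)$-valued periods. Dualizing, I obtain a $\Gamma_K$-equivariant map $f_G:\Lie(G)\otimes C\to T_p(G)\otimes C(-1)$. Functoriality is clear from the construction, and equivariance holds because everything descends from $K$.

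For the quasi-inverse, given $(\Lambda,W,f)$, I form $G_C$ as the fiber product in the diagram of the statement. The lower map $\mathrm{Id}\otimes\log:\Lambda(-1)\otimes\Gmfml^\rig\to\Lambda(-1)\otimes\Ga^\rig$ is an étale surjection with kernel $\Lambda(-1)\otimes\mu_{p^\infty}=\Lambda\otimes\Qp/\Zp$. Here I use that $\log$ on the open unit disc is étale surjective with kernel $\mu_{p^\infty}$. Pulling back, $G_C\to W\otimes_KC\otimes\Ga^\rig$ is an étale surjection whose kernel is the étale $p$-divisible group attached to $\Lambda$. Proposition~\ref{prop:Fargues-structure}(2) then equips $G_C$ with a unique $p$-divisible rigid analytic group structure with $\log_{G_C}=$ the projection. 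The $\Gamma_K$-equivariance of $f$ gives a semilinear action compatible with the diagram, and I descend to $K$.

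The main obstacle is the descent step, since $C/K$ is not finite. I would try to show the action is continuous in the sense needed for Galois descent of rigid spaces from $C$ (via the Sen/Tate-type argument Fargues uses). Alternatively, I would work with the $\Gamma_K$-fixed locus of the defining data, using that $W$ is already defined over $K$ and that the étale part descends as a classical $p$-divisible group. The descended object is then obtained as the extension of $W\otimes\Ga^\rig$ by $H^{\rig}$, where $H$ is the $p$-divisible group of $\Lambda$ (cf.\ Example~\ref{exa rig analytic groups}(2)), classified by an $\Ext^1$ class. This class should correspond to $f$ through the connecting map $W\otimes K\to H^1(K,\Lambda\otimes\Qp)$, which is Tate's $H^1(K,C(1))$-computation.

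Finally, I check that the two constructions are mutually inverse, and then full faithfulness. For $G$ of this form, Proposition~\ref{prop:Fargues-structure}(3) reduces the comparison of $G$ with the reconstruction to the comparison on Lie algebras and on $p^\infty$-torsion, and both agree by construction. Full faithfulness uses Tate's vanishing $(C(-1))^{\Gamma_K}=0$ and $C^{\Gamma_K}=K$. These imply that morphisms of extensions are determined by their action on $\Lambda$ and $W$ together with compatibility with $f$.
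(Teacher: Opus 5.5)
The paper gives no proof of this statement; it quotes it from \cite{Far19}. Judged on its own, your reconstruction has a genuine gap, and the gap is over $C$, not in the descent.

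\textbf{The missing step over $C$.} What must be proved is that for every $G\in\BT^{\rig}_K$ the logarithm extension $0\to G[p^\infty]\to G_C\to\Lie(G)\otimes\Ga^{\rig}\to0$ is the pullback of $T_p(G)(-1)\otimes\bigl(0\to\mu_{p^\infty}\to\Gmfml^{\rig}\xrightarrow{\log}\Ga^{\rig}\to0\bigr)$ along a $C$-linear map. Equivalently, restriction $\Hom(G_C,\Gmfml^{\rig})\to\Hom_{\Zp}(T_p(G),\Zp(1))$ must be bijective.

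Injectivity is easy. A character killing $G[p^\infty]$ factors through $\Lie(G)\otimes\Ga^{\rig}\cong\mathbb{A}^d_C$, and a map from $\mathbb{A}^d_C$ to the open disc $\Gmfml^{\rig}$ is given by bounded entire functions, hence is constant.

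Surjectivity is the real content of Fargues' theorem. It says every extension of $\Ga^{\rig}$ by $\mu_{p^\infty}$ inside $\BT^{\rig}_C$ comes from $\log$, and proving it needs genuine work, e.g.\ an analysis of characters (Kummer torsors) on the discs $[p]^{-n}(V)$ around the identity.

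Once you have it, the rest is formal:
\begin{itemize}
\item $f_G$ is the Lie algebra map of $(\chi_\psi)_\psi\colon G_C\to T_p(G)(-1)\otimes\Gmfml^{\rig}$, where $\chi_\psi$ is the unique character extending $\psi$.
\item $\Gamma_K$-equivariance of $f_G$ follows from the uniqueness of the $\chi_\psi$.
\item $(\log_G,(\chi_\psi)_\psi)$ maps $G_C$ to the fibre product of the statement by a morphism of extensions that is the identity on both ends, hence an isomorphism.
\end{itemize}

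Your proposal contains none of this. The recipe you give for $f_G$ produces nothing. Dividing by $p$ \emph{enlarges} $p$-adic size, and the elements $p^ny_n$ are all lifts of $x$ differing by arbitrary elements of $G[p^\infty](C)$, so there is no limit and no canonical choice. The kernel of $\widetilde G(C)\to\Lie(G)\otimes C$ is simply $V_p(G)$, and your sketch extracts no map from it.

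Consequently your final step is empty. Proposition~\ref{prop:Fargues-structure}(3), or the five lemma, needs a morphism between $G$ and the reconstructed group, and none has been built. Two extensions of $\Lie(G)\otimes\Ga^{\rig}$ by $T_p(G)\otimes\Qp/\Zp$ with the same Lie algebra and the same torsion need not be isomorphic; the discrepancy is exactly what $f$ records.

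\textbf{The descent.} The descent is only gestured at. The auxiliary groups $\Lambda(-1)\otimes\Gmfml^{\rig}$ and $\Lambda(-1)\otimes\Ga^{\rig}$ in the diagram usually have no $K$-form. Their Lie algebra $\Lambda\otimes C(-1)$ need not be spanned by $\Gamma_K$-invariants; for $\Lambda=\Zp$ the invariants are zero. So you must descend $G_C$ itself. One way is to descend a $\Gamma_K$-stable open subgroup that $\log$ maps isomorphically onto a small ball in $W\otimes_KC$ (that ball is defined over $K$), together with its finite \'etale covers by $[p]^{-n}$.

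Your alternative route is circular. Reading $f$ off a connecting map $W\to H^1(K,\Lambda\otimes\Qp)$ presupposes that extensions of $W\otimes\Ga^{\rig}$ by $H^{\rig}$ correspond to $\Gamma_K$-equivariant maps $f$, which is the statement being proved. Tate's computation of $H^i(K,C(j))$ does not supply this.

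\textbf{Full faithfulness.} Faithfulness is elementary: a morphism killing $T_p$ and $\Lie$ factors through a map from the connected group $\Lie(G)\otimes\Ga^{\rig}$ to an \'etale group, so it vanishes. Fullness is immediate from the fibre-product description once that description is established. Tate's vanishing results are not what drives full faithfulness.
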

Similar classification results have been obtained by Gerth in \cite{Gert26} Theorem 3.13 over more general bases. 
Later, we will often use the modified version $(\Lambda, W,\alpha)$ of the triple, where $\alpha=f(1): \ W\otimes_K C(1) \longrightarrow\ \Lambda\otimes_{\Zp} C$ is given the Tate twist of $f$.

We find that the following definition is convenient, cf. \cite{Gert26} Definition\footnote{In \cite{Gert26} dualizable $p$-divisible rigid analytic groups are defined over more general bases, called good adic spaces, which include smooth rigid analytic spaces. In this paper we mainly restrict to smooth rigid analytic spaces.} 3.23.
\begin{definition}[Dualizable $p$-divisible rigid analytic group]\label{def:dualizable}
Let $S$ be a smooth rigid analytic space over $K$, and let $G$ be an $p$-divisible rigid analytic group over $S$.
Write $T_p(G)$ for the associated $\Zp$-local system on $S_v$ and $\Lie\, G$ for its Lie algebra (a vector bundle on $S$).
Consider the Hodge--Tate map of $v$-vector bundles (cf. \cite{Gert26} Definition 3.15)
\[
f_G:\ \Lie(G)\otimes_{\mathcal O_S}\mathcal O_{S,v}\ \longrightarrow\ T_p(G)(-1)\otimes_{\Zp}\mathcal O_{S,v}.
\]
The group $G$ is called \emph{dualizable} if $f_G$ is injective and fits into a short exact sequence of $v$-vector bundles
\[
0 \to \Lie(G)\otimes\mathcal O_{S,v}\xrightarrow{\,f_G\,}T_p(G)(-1)\otimes\mathcal O_{S,v}
\to \omega\otimes\mathcal O_{S,v}(-1)\to 0,
\]
for some (uniquely determined) \'etale vector bundle $\omega$ on $S$.
\end{definition}

Over $S=\Spa\,K$, an \'etale vector bundle is just a finite dimensional $K$-vector space, so dualizability forces $V_p(G)$ to be Hodge--Tate of weights in $\{0,1\}$. Thus $G\in \BT^{\mathrm{rig}}_K$ is dualizable if the attached $\alpha=f_G(1): \ W\otimes_K C(1)\ \longrightarrow\ \Lambda\otimes_{\Zp} C$ sits into an exact sequence
\[0\ra  \ W\otimes_K C(1)\ \stackrel{\alpha}{\longrightarrow}\ \Lambda\otimes_{\Zp} C\longrightarrow \omega\otimes_K C\ra 0 \]
for some $K$-vector space $\omega$. Then via the dual exact sequence and the classification theorem, we get another $p$-divisible rigid analytic group $G^D$ corresponding to the triple $(\Lambda^\vee(1), \omega^\vee, \alpha^\vee)$, which is still dualizable. Moreover, we have a canonical isomorphism $(G^D)^D\cong G$. 
Denote the full subcategory of dualizable  $p$-divisible rigid analytic groups over $K$ as $\BT^{\mathrm{rig, dual}}_K$. For $G\in \BT^{\mathrm{rig, dual}}_K$, denote the above uniquely associated $K$-vector space $\omega$ as $\omega_{G^D}$.
We have the following very useful observation.
\begin{theorem}\label{thm dual p-div rigid and HT}
The functor $G\mapsto T_p(G)$ induces an equivalence of categories
\[\BT^{\mathrm{rig, dual}}_K\cong \Rep^{\HT,\{0,1\}}_{\Z_p}(\Gamma_K),\]
where $\Rep^{\HT,\{0,1\}}_{\Z_p}(\Gamma_K)$ is the category of Hodge-Tate $\Z_p$-representations of $\Gamma_K$ with Hodge-Tate weights $\{0,1\}$. Given $\Lambda\in \Rep^{\HT,\{0,1\}}_{\Z_p}(\Gamma_K)$, the associated $p$-divisible rigid analytic group $G$ corresponds to the triple \[(\Lambda, \mathrm{gr}^{-1}\bfD_{\HT}(\Lambda\otimes\Q_p), f)\] under Theorem \ref{thm Fargues classification}, where $f: \mathrm{gr}^{-1}\bfD_{\HT}(\Lambda\otimes\Q_p)\otimes C\hookrightarrow \Lambda\otimes C(-1)$ the canonical inclusion.
\end{theorem}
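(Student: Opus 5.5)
The plan is to derive this from Fargues' classification (Theorem \ref{thm Fargues classification}) by showing that, for dualizable groups, the datum $(W,f)$ is uniquely determined by $\Lambda$ and exists exactly when $\Lambda$ is Hodge--Tate with weights in $\{0,1\}$. The argument then reduces to Sen/Hodge--Tate theory for $\Lambda\otimes C$.

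\textbf{Image of the functor.} Let $G\in\BT^{\rig,\dual}_K$ with triple $(\Lambda,W,f)$ and $\alpha=f(1)$. Dualizability gives a $\Gamma_K$-equivariant exact sequence
\[
0\to W\otimes_K C(1)\to \Lambda\otimes_{\Z_p}C\to \omega\otimes_K C\to 0.
\]
Both outer terms are Hodge--Tate with weights $1$ and $0$ respectively, up to the sign convention. Since Hodge--Tate-ness is stable under extensions with distinct weights, because $H^1(\Gamma_K,C(i))=0$ for $i\neq 0$ (Tate), the sequence splits and $\Lambda\otimes\Q_p$ is Hodge--Tate with weights in $\{0,1\}$. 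Taking $\Gamma_K$-invariants of the $(-1)$-twist and using $(C(j))^{\Gamma_K}=0$ for $j\neq0$, we get $W=(\Lambda\otimes C(-1))^{\Gamma_K}=\gr^{-1}\bfD_{\HT}(\Lambda\otimes\Q_p)$. Under this identification $f$ is the canonical inclusion, since $f$ is determined by its restriction to $W$ by $C$-linearity.

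\textbf{Full faithfulness.} By Theorem \ref{thm Fargues classification}, a morphism $G\to G'$ is a pair consisting of a map $\Lambda\to\Lambda'$ and a map $W\to W'$ compatible with $f,f'$. On dualizable objects $W=(\Lambda\otimes C(-1))^{\Gamma_K}$ functorially, and $f$ is injective. Hence the map $W\to W'$ is forced to be the one induced by $\Lambda\to\Lambda'$, and compatibility is automatic. So $\Hom(G,G')=\Hom_{\Gamma_K}(\Lambda,\Lambda')$.

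\textbf{Essential surjectivity.} Given $\Lambda$ Hodge--Tate with weights $\{0,1\}$, put $W=\gr^{-1}\bfD_{\HT}$ and let $f$ be the inclusion $W\otimes C\hookrightarrow\Lambda\otimes C(-1)$. The Hodge--Tate decomposition $\Lambda\otimes C\cong (W\otimes C(1))\oplus(\gr^0\otimes C)$ supplies the required exact sequence with $\omega=\gr^0\bfD_{\HT}(\Lambda\otimes\Q_p)$. Theorem \ref{thm Fargues classification} then produces $G$ with this triple, which is dualizable by construction.

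The only delicate point is the weight/sign bookkeeping in the first step. One has to check that "weights $\{0,1\}$" matches the twist conventions in $\alpha$ and $\omega\otimes C(-1)$, and that the definition allows $\omega$ or $W$ to be zero, so that weights in $\{0,1\}$ includes the pure cases.
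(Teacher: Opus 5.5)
Your proposal is correct and follows essentially the route the paper indicates. The paper only cites \cite[Prop.~3.29]{Gert26}, remarking that the result comes from Fargues' classification (the basic logarithm sequence) plus dualizability. You supply exactly those details over $\Spa K$: Tate's vanishing of $H^0$ and $H^1$ of $C(i)$ for $i\neq 0$ recovers $W=(\Lambda\otimes C(-1))^{\Gamma_K}=\gr^{-1}\bfD_{\HT}(\Lambda\otimes\Q_p)$ through $f$, which makes the Lie-algebra datum, and the $W$-component of morphisms, redundant.
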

\begin{proof}
This is \cite{Gert26} Proposition 3.29. The key point is to apply the basic exact sequence as in Proposition \ref{prop:Fargues-structure}, together with the condition of dualizable. We omit the proof and refer to loc. cit. for the details.
\end{proof}

For $G\in \BT^{\mathrm{rig, dual}}_K$ a dualizable $p$–divisible rigid analytic group over $K$, one has the functorial Hodge–Tate exact sequence
\begin{equation}\label{eq:HT}
0\to \Lie(G)\otimes_{K}C(1)\ \longrightarrow\ \Tp(G)\otimes_{\Zp}C\ \longrightarrow\ \omega_{G^D}\otimes_{K}C\to 0.
\end{equation}
Since we are working with the simple base $S=\Spa\,K$, this exact sequence splits canonically. However, we warn the reader that working over a general base $S$ of smooth rigid analytic space, then the Hodge-Tate filtration does not split in general, cf. \cite{Gert} Remmark 2.7.7.
In particular, we get
\emph{a unique $\Gamma_K$–equivariant splitting}
\begin{equation}\label{eq:conjugate-splitting}
s_G\colon \Tp(G) \longrightarrow\ \Lie(G)\otimes C(1).
\end{equation}
 It depends only on $G[p^\infty]$ and is functorial in $G$. The triple $(\Lambda,W,\alpha)$ encodes \eqref{eq:HT} with injection $W\otimes{C(1)}\xrightarrow{\ \alpha\ }\Lambda\otimes C$ and thereby determines the same canonical $\Gamma_K$–equivariant splitting $s_G$ as in \eqref{eq:conjugate-splitting}.  

If $G$ is the rigid analytic generic fiber of a formal $p$-divisible group over $\OK$, then as a rigid analytic space $G\simeq \mathbb{B}^{\circ\,d}_K$ is an open unit ball. The generic fiber functor gives a fully faithful embedding
\[
\BT_{\OK}^{\mathrm{formal}}\hookrightarrow \BT^{\mathrm{rig}}_K,
\]
with essential image the $p$-divisible rigid analytic groups whose underlying rigid analytic space is an open unit ball; see \cite[Th\'eor\`eme \ 6.1]{Far19} and \cite[Th\'eor\`eme \ 4.3]{Far23}. 
In fact, we have a natural extension of the above functor
\[\BT_{\OK}\ra \BT_K^\rig,\quad H\mapsto H^\rig\]
given by taking genereic fibers (cf. \cite{SW13} Proposition 3.4.2). The Hodge-Tate data attached to $H^\rig$ is given by $\Lambda=T_p(H), W=(\Lie\,H)\otimes K$ and $\alpha$ the natural inclusion given by the Hodge-Tate decomposition.  By \cite{SW13} subsection 2.2, we have
\[H^\rig(K)=H(\OK).\] Let $\BT_K^{\rig,\dual,\mathrm{good}}$ denote the essential image of the above rigid analytic generic fiber functor, whose objects are called dualizable $p$-divisible rigid analytic groups with good reduction. It turn out
this functor is also \emph{fully faithful}. Moreover,
\begin{theorem}\label{thm good reduction p-divisible}
The following categories are equivalent by natural functors:
\begin{enumerate}
    \item The category $\BT_{\OK}$ of $p$-divisible groups over $\OK$,
    \item The category $\BT_K^{\rig,\dual,\mathrm{good}}$ of dualizable $p$-divisible rigid analytic groups with good reduction,
    \item The category $\Rep^{\cris,\{0,1\}}_{\Z_p}(\Gamma_K)$ of crystalline $\Z_p$-representations of $\Gamma_K$ with Hodge-Tate weights in $\{0,1\}$.
\end{enumerate}
\end{theorem}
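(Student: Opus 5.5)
The plan is to compose two equivalences and check that the composite is the natural one. The functor from (1) to (2) is the generic fiber $H\mapsto H^\rig$ of \cite{SW13} Proposition 3.4.2. The functor from (2) to (3) is $G\mapsto T_p(G)$, restricted from the equivalence of Theorem~\ref{thm dual p-div rigid and HT}. The classical functor from (1) to (3) is $H\mapsto T_p(H)$. Since $T_p(H^\rig)=T_p(H^\rig[p^\infty])=T_p(H_K)=T_p(H)$, the triangle formed by these three functors commutes up to natural isomorphism. It therefore suffices to prove two things: that $(1)\to(3)$ is an equivalence, and that $(2)\to(3)$ is fully faithful with essential image exactly $\Rep^{\cris,\{0,1\}}_{\Z_p}(\Gamma_K)$. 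Together with the commutativity, these also show that $(1)\to(2)$ is an equivalence, in particular fully faithful.

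For $(1)\to(3)$ I will quote known results on $p$-divisible groups. Full faithfulness is Tate's theorem \cite[\S4, Thm. 4]{Tat67}. Crystallinity with Hodge--Tate weights in $\{0,1\}$ is due to Fontaine and Raynaud. Essential surjectivity is Kisin's theorem (Breuil--Kisin classification) for $p>2$, extended to $p=2$ by Kim, Lau and Liu; see \cite[Thm. 2.2.1]{Liu13}, as already cited in Proposition~\ref{prop:the p-adic repn of rigid 1-mot with good red is crystalline}.

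For $(2)\to(3)$, Theorem~\ref{thm dual p-div rigid and HT} shows that $T_p$ is an equivalence from $\BT^{\rig,\dual}_K$ onto the Hodge--Tate representations of weights $\{0,1\}$. Restricting it to the full subcategory $\BT_K^{\rig,\dual,\mathrm{good}}$ therefore gives a fully faithful functor. For the image, take $G=H^\rig$. Then $T_p(G)=T_p(H)$ is crystalline, so the image lies in (3). Conversely, given $\Lambda$ crystalline with weights in $\{0,1\}$, the result quoted above produces $H$ with $T_p(H)\cong\Lambda$. Hence $T_p(H^\rig)\cong\Lambda$, and $\Lambda$ lies in the image of $(2)\to(3)$.

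One point needs checking: that $H^\rig$ is actually dualizable, so that the category (2) as defined lies inside $\BT^{\rig,\dual}_K$. Its Hodge--Tate datum is $(T_p(H),\Lie(H)\otimes K,\alpha)$, where $\alpha$ is the Hodge--Tate inclusion $\Lie(H)\otimes_{\OK} C(1)\hookrightarrow T_p(H)\otimes C$. By Fontaine/Tate, the cokernel of $\alpha$ is $\omega_{H^D}\otimes C$, with $\omega_{H^D}$ a $K$-vector space. This exact sequence is exactly the dualizability condition of Definition~\ref{def:dualizable}.

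The main obstacle is compatibility of the Hodge--Tate data. I must make sure that the triple attached to $H^\rig$ by Fargues' classification (Theorem~\ref{thm Fargues classification}) coincides with the triple $(\Lambda,\mathrm{gr}^{-1}\bfD_{\HT},\text{incl})$ used in Theorem~\ref{thm dual p-div rigid and HT}. Only then does $(2)\to(3)$ agree with Gerth's equivalence on these objects. I would verify this by comparing the logarithm sequence of Proposition~\ref{prop:Fargues-structure} for $H^\rig$ with the classical Hodge--Tate map of $H$, as in \cite{SW13} \S3.4 and \cite{Far19} \S6. Everything else is a formal consequence of the cited results.
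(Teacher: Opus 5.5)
Your proposal is correct and follows essentially the same route as the paper: $(1)\Leftrightarrow(3)$ by the classical results (Tate, Fontaine, Kisin/Kim/Lau/Liu, or alternatively prismatic Dieudonn\'e theory), with $\BT_K^{\rig,\dual,\mathrm{good}}$ taken as the essential image of $H\mapsto H^\rig$. The Hodge--Tate datum $(T_p(H),\Lie(H)\otimes K,\alpha)$ of $H^\rig$ gives dualizability, and one then restricts the equivalence $T_p$ of Theorem~\ref{thm dual p-div rigid and HT}; your ``main obstacle'' is really just that dualizability check, since once $H^\rig$ is dualizable Theorem~\ref{thm dual p-div rigid and HT} forces its triple to be the canonical one, and the functor $T_p$ does not depend on the triple anyway.
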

\begin{proof}
The equivalence $(1)\Leftrightarrow (3)$ is classical and well-known; we have seen it in the proof of Proposition \ref{prop:the p-adic repn of rigid 1-mot with good red is crystalline}. Nowadays one can also view it as a consequence of the prismatic Dieudonn\'e theory for $p$-divisible groups over $\OO_K$. With the discussions so far in this subsection, the remaining equivalences are clear.
See also \cite{Gert26} Proposition 4.7 and the proof there for more details.
\end{proof}
\begin{remark}
\begin{enumerate}
    \item 
The above theorem is the local analogue of Proposition \ref{prop:formal 1-motives and rigid 1-motives with good reduction} and Theorem \ref{thm:Neron-Ogg-Shafarevich for good reduction of rigid 1-motives}.
\item For another characterization of the category $\BT_K^{\rig,\dual,\mathrm{good}}$, see Remark \ref{rem:semi-stable p-div rigid groups}.
\end{enumerate}
\end{remark}

Over $C=\wh{\ov{K}}$, we still have the equivalence \[\BT_{\Ol_C}\stackrel{\sim}{\lra}\BT_C^{\rig,\dual,\mathrm{good}}, \quad H\mapsto H^\rig\] given by the generic fiber functor, which, together with the classification of $p$-divisible rigid analytic groups over $C$ (\cite{Far19} Th\'eor\`eme 3.3), recovers Scholze-Weinstein's classification of $p$-divisible groups over $\Ol_C$, cf. \cite{SW13} Theorem B. In fact, over $C$, by loc. cit. dualizable $p$-divisible rigid analytic groups always have good reduction: $\BT_C^{\rig,\dual,\mathrm{good}}=\BT_C^{\rig,\dual}$.

\begin{definition}[Unramified component group]\label{def:unramified-pi0}
Let $G\in \BT^{\rig}_K$. We say that $\pi_0(G)$ is \emph{unramified} if the inertia group $I_K$
acts trivially on the Tate module $T_p(\pi_0(G))$.
Equivalently, $I_K$ acts trivially on the $\Gamma_K$-set $\pi_0(G)(\overline K)$.
\end{definition}

Recall that for a finite-dimensional $p$-adic representation $U$ of $\Gamma_K$, we have
\[
H^1_{\st}(K,U)
:=
\ker\!\left(
H^1(K,U)\longrightarrow
H^1(K,B_{\st}\otimes_{\Q_p}U)
\right).
\]
The following definition is not standard, which we introduce following \cite{Far23}. 

\begin{definition}[Semi-stable reduction for dual $p$-divisible rigid analytic groups]\label{def:ssred-rigid}
Let $G\in \BT^{\rig,\mathrm{dual}}_K$. Recall that $G$ fits into an exact sequence
\[
0\to G^0 \to G \to \pi_0(G)\to 0.
\] 
Taking Tate module yields the corresponding exact sequence of $\Gamma_K$-modules
\begin{equation}\label{eq:Tate-connected-component-extension}
0\longrightarrow V^\circ\longrightarrow V\longrightarrow V^{\et}\longrightarrow0,
\end{equation}
where $V^\circ:=T_p(G^0)\otimes_{\Z_p}\Q_p$, $V:=T_p(G)\otimes_{\Z_p}\Q_p$ and $V^{\et}:=T_p\left(\pi_0(G)\right)\otimes_{\Z_p}\Q_p$. 

We say that $G$ has \emph{semi-stable reduction} if the following conditions hold:
\begin{enumerate}
\item the identity component $G^0$ has good reduction (i.e.\ $G^0\simeq \mathbb B_K^{\circ d}$ as a rigid analytic space by \cite[Th\'eor\`eme 6.1]{Far19}),
\item the component group $\pi_0(G)$ is unramified,
\item If we denote the extension class of \eqref{eq:Tate-connected-component-extension} by \[\epsilon_G\in
\Ext^1_{\Gamma_K}(V^{\et},V^\circ)
\cong
H^1\!\left(K,(V^{\et})^\vee\otimes_{\Q_p}V^\circ\right),\] then $\epsilon_G\in
H^1_{\st}\!\left(K,(V^{\et})^\vee\otimes_{\Q_p}V^\circ\right).$
\end{enumerate}
\end{definition}

Let $\BT_K^{\rig,\dual,\st}$  denote the corresponding full subcategory of dual $p$-divisible rigid analytic groups with semi-stable reduction of $\BT_K^{\rig,\dual}$. 
By Fargues~\cite[Prop. 2.1]{Far23}, for $G\in\BT_K^{\rig,\dual}$, if $V_p(G)$ is semi-stable, then $G$ satisfies~(1) and (2) in Definition~\ref{def:ssred-rigid}. Note that the converse may not be true; the missing condition is exactly (3) of Definition~\ref{def:ssred-rigid}. Thus the following theorem is essentially due to Fargues:
\begin{theorem}\label{thm:Fargues-ss-criterion}
Let $\Lambda\in\Rep^{\mathrm{HT},\{0,1\}}_{\Zp}(\Gamma_K)$ and let $G(\Lambda)$ be $p$-divisible rigid analytic group associated to $\Lambda$ by Theorem \ref{thm dual p-div rigid and HT}.
Then the $\Qp$-representation $V=\Lambda\otimes\Qp$ is semi-stable Galois representation if and only if $G(\Lambda)$ has semi-stable reduction. In particular,
the Tate module functor restricts to an equivalence of categories
\[
T_p:\ \BT^{\rig,\mathrm{dual},\mathrm{st}}_K \xrightarrow{\ \sim\ }\ \Rep^{\mathrm{st},\{0,1\}}_{\Zp}(\Gamma_K),
\]
where $\Rep^{\st,\{0,1\}}_{\Z_p}(\Gamma_K)$ is the category of semi-stable $\Z_p$-representations of $\Gamma_K$ with Hodge-Tate weights $\{0,1\}$.

More explicitly, let $G\in\BT^{\rig,\mathrm{dual},\mathrm{st}}_K$ and set $\Lambda:=T_p(G)$, $V:=\Lambda\otimes\Qp$. Then $V$ is semi-stable with weights $\{0,1\}$ and admits an exact sequence
\[
0\to V^\circ\to V \to V^\et\to 0
\]
with $V^\et \cong T_p(\pi_0(G))\otimes\Qp$ unramified and $V^\circ \cong T_p(G^0)\otimes\Qp $ crystalline of Hodge-Tate weights $\{0,1\}$ whose associated $F$-isocrystal has slopes in $(0,1]$.
\end{theorem}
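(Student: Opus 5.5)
We prove the two implications separately, working throughout with the connected–étale sequence $0\to G^0\to G\to\pi_0(G)\to 0$ of Proposition \ref{prop:connected-components} and its Tate-module version \eqref{eq:Tate-connected-component-extension}.

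\emph{Semi-stable $V$ implies semi-stable reduction.} Suppose $V=\Lambda\otimes\Qp$ is semi-stable. By Fargues' result \cite[Prop. 2.1]{Far23}, already recalled above, $G=G(\Lambda)$ satisfies conditions (1) and (2) of Definition \ref{def:ssred-rigid}. Hence $V^\circ$ is crystalline, by Theorem \ref{thm good reduction p-divisible} applied to $G^0$, and $V^{\et}$ is unramified.

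It remains to check condition (3). Put $U:=(V^{\et})^\vee\otimes V^\circ$; it is crystalline, hence semi-stable, since $(V^{\et})^\vee$ is unramified. The extension class $\epsilon_G$ is realized by $V$ itself, which is semi-stable. We then use the standard fact that an extension of semi-stable representations is semi-stable exactly when its class lies in $H^1_{\st}$. To see this, note that $V\otimes(V^{\et})^\vee$ contains, as the pullback along $\Qp\to (V^{\et})^\vee\otimes V^{\et}$, an extension $E$ of $\Qp$ by $U$ whose class is $\epsilon_G$. This $E$ is a subquotient of a semi-stable representation, so it is semi-stable. Therefore $\dim \bfD_{\st}(E)=\dim\bfD_{\st}(U)+1$. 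This forces the connecting map $\bfD_{\st}(\Qp)\to H^1(K,B_{\st}\otimes U)$ to vanish, so $\epsilon_G\in H^1_{\st}(K,U)$.

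\emph{Semi-stable reduction implies semi-stable $V$.} Suppose $G$ has semi-stable reduction. Then $V^\circ$ is crystalline and $V^{\et}$ is unramified, so both are semi-stable, and $\epsilon_G\in H^1_{\st}(K,U)$. Reverse the argument: the class of $E$ dies in $H^1(K,B_{\st}\otimes U)$. From the long exact sequence of $\Gamma_K$-invariants of $B_{\st}\otimes(0\to U\to E\to\Qp\to0)$, we get $\dim\bfD_{\st}(E)=\dim\bfD_{\st}(U)+1=\dim E$, so $E$ is semi-stable.

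To pass from $E$ back to $V$, write $V$ as the pushout of $E\otimes V^{\et}$ along the evaluation map $U\otimes V^{\et}\to V^\circ$. More concretely, $V$ is a subquotient of $E\otimes V^{\et}\oplus V^\circ$. Since the semi-stable representations form a Tannakian subcategory closed under subquotients, $V$ is semi-stable. Its Hodge–Tate weights lie in $\{0,1\}$ because $\Lambda\in\Rep^{\HT,\{0,1\}}_{\Z_p}(\Gamma_K)$.

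\emph{The equivalence of categories.} The equivalence is obtained by restricting Theorem \ref{thm dual p-div rigid and HT} to full subcategories, using the criterion just proved. For the explicit description, $V^\circ=V_p(G^0)$ comes from a $p$-divisible group over $\OK$ with connected generic fiber $G^0$, which is an open ball. Hence it has no étale part, and its Dieudonné slopes lie in $(0,1]$, by Theorem \ref{thm good reduction p-divisible} together with the usual slope description of connected $p$-divisible groups.

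The main obstacle is the middle step: identifying the kernel of $H^1(K,U)\to H^1(K,B_{\st}\otimes U)$ precisely with the semi-stable extensions. The delicate point is that $\bfD_{\st}$ is not exact in general, only on semi-stable objects, so each application of the dimension count must be made on the twisted extension $E$, where $\Qp$ and $U$ are already known to be semi-stable.
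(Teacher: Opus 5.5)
Your proof is correct and follows the same route as the paper. Fargues's result gives conditions (1) and (2), and the characterization of semi-stable extensions by classes in $H^1_{\mathrm{st}}$ handles condition (3) and the converse. The paper leaves that characterization implicit, or defers it to the last paragraph of Fargues's proof, whereas you prove it directly: via the twisted extension $E$ of $\Qp$ by $(V^{\et})^\vee\otimes V^\circ$, the $\bfD_{\mathrm{st}}$ dimension count, and recovering $V$ as a pushout of $E\otimes V^{\et}$.
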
 
\begin{proof}
If $G(\Lambda)$ has semi-stable reduction, then both $V^\circ$ and $V^\et$ are semi-stable representations, and condition (3) of Definition~\ref{def:ssred-rigid} ensures the representation $V$ is also semi-stable. 

Conversely, if $V=V_p(G(\Lambda))$ is semi-stable with Hodge-Tate $\{0,1\}$, by \cite[Prop. 21]{Far23} conditionas (1) and (2) are satisfied. The condition (3) holds by the last paragraph of the proof of loc. cit. together with the fact $V$ is semi-stable.
\end{proof}

\begin{remark}\label{rem:semi-stable p-div rigid groups}
The subcategory $\BT_K^{\rig,\dual,\mathrm{good}}\subset \BT^{\rig,\mathrm{dual},\mathrm{st}}_K$ can also be described in terms of conditions in Definition~\ref{def:ssred-rigid}: for $G\in \BT^{\rig,\mathrm{dual},\mathrm{st}}_K$, we have \[G\in \BT_K^{\rig,\dual,\mathrm{good}} \quad \Leftrightarrow\quad (1), (2), (3)' \:\text{hold},\]
where $(1), (2)$ are the same as above, $(3)'$ is a similar but stronger condition than (3): we require $\epsilon_G\in H^1_f(K,(V^{\et})^\vee\otimes_{\Q_p}V^\circ)$. Here for a finite dimensional $p$-adic representation $U$ of $\Gamma_K$, $H^1_f(K,U):=\ker\!\left(
H^1(K,U)\longrightarrow
H^1(K,B_{\cris}\otimes_{\Q_p}U)
\right)$. In particular, this characterization generalizes \cite[Th\'eor\`eme 6.1]{Far19} in the case $G=H^\rig$ with $H\in \BT_{\OK}^{\mathrm{formal}}$.
\end{remark}

Later in Theorem \ref{thm semi-stable p-div rigid analytic}, we will see an equivalent definition: $G$ has semi-stable reduction if and only if it is (isomorphic to) the generic fiber of a log $p$-divisible group over $\OO_K$, which will justifiy the name ``semi-stable reduction'' here.

\subsection{More on Hodge-Tate representations}
By Theorem \ref{thm dual p-div rigid and HT}, we can produce a dualizable $p$-divisible rigid analytic group from a Hodge-Tate $\Z_p$-representation of $\Gamma_K$ with Hodge-Tate weights $\{0,1\}$.
We would like to continue the discussions beyond the case of weights $\{0,1\}$. 
In fact,
by Fargues' classification theorem (cf. Theorem \ref{thm Fargues classification}), we can construct $p$-divisible rigid analytic groups from general Hodge-Tate representations. More precisely,
let $V$ be a Hodge-Tate $\Qp$-representation of $\Gamma_K$ (cf. \cite{FO22} Chapter 6), $\Lambda\subset V$ a $\Zp$-lattice which is stable by $\Gamma_K$. We have the decomposition \[V\otimes_{\Qp}C=\bigoplus_i(W_i\otimes_KC(-i)),\quad W_i=(V\otimes_{\Qp}C(i))^{\Gamma_K}=\gr^i\bfD_{\HT}(V).\] The set of Hodge-Tate weights of $V$ is by definition $\{-i\,|\,W_i\neq0\}$.
Then for any $W_i$, we get a $p$-divisible rigid analytic group \[G=G_{\Lambda(i+1),W_i}\] over $K$, such that $T_p(G)=\Lambda(i+1)$, $\Lie(G)=W_i$ and $f_G$ is given by the natural inclusion \[W_i\otimes C\hookrightarrow \Lambda(i)\otimes C=\Lambda(i+1)\otimes C(-1).\] 

More geometrically, we can attach $p$-divisible rigid analytic groups to proper smooth rigid analytic varieties by Scholze's Hodge-Tate decomposition theorem \cite{Sch13}. These give constructions of $p$-divisible rigid analytic groups from de Rham $\Z_p$-representations.
\begin{example}
\begin{enumerate}
    \item 
Let $X$ be a proper smooth rigid analytic variety of dimension $n$ over $K$. Set $C=\widehat{\overline{K}}$. For any $0\leq i\leq 2n$, by \cite{Sch13}, we have the Hodge-Tate decomposition 
\[H^i_{\et}(X_{C},\Qp)\otimes_{\Qp}C=\bigoplus_{j=0}^iH^{i-j}(X,\Omega_X^j)\otimes_KC(-j). \]
Let \[\Lambda=H^i_{\et}(X_{C},\Zp)_{free}:=H^i_{\et}(X_{C},\Zp)/\{torsion\; classes\}.\]
Take an $0\leq j\leq i$ and set $W=H^{i-j}(X,\Omega_X^j)$. Then the pair \[(\Lambda(j+1),W)\] defines a $p$-divisible rigid analytic group $G$ with $T_p(G)=\Lambda(j+1)$ and $\Lie(G)=W$.

\item Let $X$ be an abeloid variety, and consider $i=1$. Then the above attached a $p$-divisible rigid analytic group $G$ is given by the dual $X\langle p^\infty\rangle^D$ of the $p$-divisible rigid analytic group $X\langle p^\infty\rangle$ attached to $X$, i.e. the $p$-topologically torsion subgroup $X\langle p^\infty\rangle\subset X$ (cf. \cite{Gert26} Example 3.2 (3)). 

\item Let $X$ be as in Example \ref{example Picard}. Consider $i=1$. Then the above attached a $p$-divisible rigid analytic group $G$ is given by the $p$-divisible rigid analytic group \[\mathrm{Pic}^0_{X/K}\langle p^\infty\rangle\] attached to the Picard variety of $X$. Here, for an smooth commutative rigid analytic group $G$ over $K$, the subgroup $G\langle p^\infty\rangle$ is defined as the set of $p$-topologically torsion elements, see next subsection.
In case $X$
 is an abeloid variety with semi-stable reduction, this recovers the above example.

 \item More generally, let $X$ be any proper smooth rigid analytic space over $K$. One can consider the Picard functor $\mathrm{Pic}_{X/K}:=\mathrm{Pic}_{X/K,\et}$ and its $p$-topologically torsion subfunctor $\mathrm{Pic}^0_{X/K}\langle p^\infty\rangle\subset \mathrm{Pic}_{X/K}$ (cf. Definition \ref{def p-top torsion subsheaf} or \cite{Heu24b}). Heuer has shown that $\mathrm{Pic}^0_{X/K}\langle p^\infty\rangle$ is alwarys representable by a $p$-divisible rigid analytic group, cf. \cite{Heu24b}. This result has been generalized by Gerth to higher $i$ and relative setting, cf. \cite{Gert26} subsection 4.2
 
 \item 
 One can further specialize to concrete examples of proper smooth rigid analytic varieties to obtain interesting explicit $p$-divisible rigid analytic groups, for example, K3 surfaces, Calabi-Yau varieties and so on. We will discuss the case of rigid analytic 1-motives in the next subsection, which produce $p$-divisible rigid analytic groups corresponding to de Rham representations of $\Gamma_K$.
 
 \end{enumerate}
\end{example}

\subsection{$p$-divisible rigid analytic groups attached to rigid analytic $1$-motives}\label{subsection rig p-div and rig 1 motives}
Recall that by Theorem \ref{thm dual p-div rigid and HT}, we have the equivalence of categories 
$\BT^{\mathrm{rig, dual}}_K\cong \Rep^{\HT,\{0,1\}}_{\Z_p}(\Gamma_K)$. Let \[\BT^{\mathrm{rig, dual, pst}}_K\subset \BT^{\mathrm{rig, dual}}_K\] be the full subcategory which corresponds to the subcategory \[\Rep^{\dR,\{0,1\}}_{\Z_p}(\Gamma_K)\subset \Rep^{\HT,\{0,1\}}_{\Z_p}(\Gamma_K)\] of de Rham $\Z_p$-representations of $\Gamma_K$ with Hodge-Tate weights in $\{0,1\}$. We will call $p$-divisible rigid analytic groups in $\BT^{\mathrm{rig, dual, pst}}_K$ de Rham.
The goal of this subsection is to construct a functor \[\mathscr{M}_{1,K}\rightarrow \BT^{\mathrm{rig, dual, pst}}_K\]
from the category $\mathscr{M}_{1,K}$ of rigid analytic 1-motives over $K$ to $\BT^{\mathrm{rig, dual, pst}}_K$.

Before proceeding further, we first note that by the $p$-adic monodromy theorem and Theorem \ref{thm:Fargues-ss-criterion}, the category $\BT^{\mathrm{rig, dual, pst}}_K $ can be described as follows: for $G\in \BT_K^{\rig,\dual}$,
\[G\in  \BT^{\mathrm{rig, dual, pst}}_K \quad\Leftrightarrow \quad \exists\,K'|K\,\text{finite extension, such that}\, G_{K'}\in  \BT^{\mathrm{rig, dual, st}}_{K'}. \]
In other words, $G$ belongs to $\BT^{\mathrm{rig, dual, pst}}_K$ if and only if it has potentially semi-stable reduction. Combining with Theorem \ref{thm semi-stable p-div rigid analytic} in the next subsection, we get a description of the category $\BT^{\mathrm{rig, dual, pst}}_K $ in terms of log $p$-divisible groups over rings of integers of finite extensions of $K$.

Now we come back to the definition of $p$-divisible rigid analytic groups. It is convenient to introduce the following notion.
\begin{definition}
Let $G$ be a smooth commutative rigid analytic group over $K$.
Define a subset of the underlying topological space $|G|$ (of the adic space underlying $G$) by
\[
|G|\langle p^\infty \rangle \;:=\; \Bigl\{x\in |G| \ \Big|\ [p]^n(x)\to e \text{ in } |G| \Bigr\}.
\]
We call these points \emph{$p$-topologically torsion}.
\end{definition}

 We will see that this defines subgroup of $G$, called the $p$-topologically torsion subgroup.
In fact, there are some equivalent definitions of the $p$-topologically torsion subgroup.
\begin{definition}\label{def p-top torsion subsheaf}
\begin{enumerate}
    \item (Union-of-preimages construction)
Choose an affinoid open subgroup $V\subset G$ containing $e$ on which the logarithm is defined
and is an isomorphism onto a polydisc in $\Lie(G)$ (such a $V$ exists by the formal group
logarithm in characteristic $0$).
Set
\[
G\langle p^\infty\rangle_{\mathrm{un}} \;:=\; \bigcup_{n\ge 0} [p]^{-n}(V)\ \subset\ G.
\]
\item ($v$-sheaf/Hom$(\underline{\Zp},-)$ construction)
Consider the (pre)sheaf Hom$(\underline{\Zp},-)$ on the $v$-site of perfectoid $K$-spaces
\[
Y\ \longmapsto\ \Hom(\Zp,\,G(Y)),
\]
and define $G\langle p^\infty\rangle_{\mathrm{v}}$ to be the image of evaluation at $1\in\Zp$:
\[
\mathrm{ev}_1:\Hom(\underline{\Zp},G)\longrightarrow G,\qquad \varphi\mapsto \varphi(1).
\]
\end{enumerate}
\end{definition}

\begin{theorem}[Fargues {\cite[Thm.~1.2]{Far19}}; Heuer {\cite[Def.~2.6]{Heu24b}}; Heuer--Xu {\cite[Prop.~3.2.4]{HX24}}]
\label{thm:FarguesGb}
Assume $G$ is smooth and commutative and is \emph{locally $p$-divisible} in the sense that it admits an open subgroup $U\subset G$ such that $[p]:U\to U$ is \'etale and surjective.
Then:
\begin{enumerate}
\item The sheaf $\Hom(\Zp,G)$ is representable by an open rigid analytic subgroup $G\langle p^\infty\rangle\subset G$.
Moreover evaluation at $1$ induces an isomorphism of sheaves
\[
\Hom(\Zp,G)\ \xrightarrow{\ \sim\ }\ G\langle p^\infty\rangle.
\]
\item There exists a unique morphism of sheaves 
\[
\log_G:\ G\langle p^\infty\rangle\ \longrightarrow\ \Lie(G)\otimes \Ga^\rig
\]
whose differential at the identity is the identity on $\Lie(G)$.
\item There is a short exact sequence of sheaves on the small \'etale site
\[
0\ \longrightarrow\ G[p^\infty]\ \longrightarrow\ G\langle p^\infty\rangle\ \xrightarrow{\ \log_G\ }\ \Lie(G)\otimes\Ga^\rig\ \longrightarrow\ 0,
\]
and $\log_G$ is surjective and \'etale.
\end{enumerate}
\end{theorem}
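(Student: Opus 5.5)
The plan is to build everything from the chosen open subgroup $U$ and then glue to $G$ by dividing by $p$-powers. First shrink $U$. Because $K$ has characteristic $0$, the formal group logarithm of $G$ converges on a small affinoid open subgroup and identifies it with a polydisc in $\Lie(G)$. So we may take $V\subset U$ affinoid, with $\log:V\xrightarrow{\sim}$ polydisc, such that $[p]$ maps $V$ isomorphically onto a strictly smaller polydisc $V_1\subset V$. In particular $[p]$ is topologically nilpotent on $V$.

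Set $G\langle p^\infty\rangle:=\bigcup_n [p]^{-n}(V)$, as in Definition \ref{def p-top torsion subsheaf}(1). This is an increasing union of open subsets, hence an open subspace, and it is a subgroup because each $[p]^{-n}(V)$ is a subgroup. One checks that its points are exactly the $p$-topologically torsion points $|G|\langle p^\infty\rangle$, and that the union is independent of the choice of $V$, since any two such $V$ contain common $[p]^m$-images.

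For (1), define the map $\Hom(\underline{\Zp},G)\to G\langle p^\infty\rangle$ by $\varphi\mapsto\varphi(1)$. It lands in $G\langle p^\infty\rangle$: by continuity, $\varphi(p^n\Zp)$ lies in $V$ for $n\gg0$, $v$-locally on the test object $Y$. Injectivity: if $\varphi(1)=0$ then $\varphi$ vanishes on $\Z$, hence on $\Zp$, since $\Z$ is dense and $G$ is separated.

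Surjectivity is the step I expect to be the main obstacle. Given $x\in G\langle p^\infty\rangle(Y)$, we may assume $p^nx\in V$. On $V$ the isomorphism $\log$ allows us to define $a\cdot y:=\log^{-1}(a\log y)$ for $a\in\Zp$ and $y\in V$. This is continuous and $\Zp$-linear, because $\Zp$ preserves the polydisc. Then set $\varphi(a)=a_0x+(a'\cdot p^nx)$, where we write $a=a_0+p^na'$ with $a_0\in\{0,\dots,p^n-1\}$. The work is to verify that $\varphi$ is additive and continuous, with the carries handled through the $\Zp$-action on $V$. This is essentially the argument of Fargues and Heuer, and I would follow it, using that $[p]:U\to U$ is étale and surjective to make the construction compatible under $v$-covers.

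For (2), define $\log_G(x):=p^{-n}\log(p^nx)$ for any $n$ with $p^nx\in V$. This is independent of $n$ because $\log$ is a homomorphism on $V$ commuting with $[p]$. Uniqueness: any two such morphisms are homomorphisms of sheaves, since they are determined on $V$ by the formal logarithm, which is characterized by its differential in characteristic $0$. Their difference is then a homomorphism from a $p$-topologically torsion group to a uniquely $p$-divisible vector group that vanishes near $e$, hence vanishes everywhere.

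For (3), the kernel of $\log_G$ consists of those $x$ with $\log(p^nx)=0$ for some $n$. Since $\log$ is injective on $V$, this forces $p^nx=0$, so the kernel is $G[p^\infty]$. Surjectivity and étaleness follow because every vector is $p^{-n}$ times a vector in the polydisc $\log(V)$; the problem therefore reduces to solving $p^nx=y$ for $y\in V$, which is possible étale-locally because $[p]$ on $U$ is finite étale and surjective. Étaleness of $\log_G$ itself holds because it is a local isomorphism on $V$ and is equivariant for translations.
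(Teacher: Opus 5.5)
The uniqueness argument in (2) is wrong as written. You say that any two morphisms with identity differential are homomorphisms ``since they are determined on $V$ by the formal logarithm''. This runs backwards. A morphism of sheaves of sets $G\langle p^\infty\rangle\to\Lie(G)\otimes\Ga^{\rig}$ whose differential at $e$ is the identity is not determined on $V$ at all: if $\dim G=1$, then $\log_G+\log_G^2$ is another such morphism. So uniqueness fails in that generality, and the statement must be read as uniqueness among homomorphisms, i.e.\ morphisms of abelian sheaves.

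With that hypothesis built in, your argument closes. Two such homomorphisms restrict to homomorphisms $V\to\Lie(G)\otimes\Ga^{\rig}$ with the same differential. In characteristic $0$ the formal logarithm is determined by its differential, so they agree to infinite order at $e$, and by the identity theorem on the polydisc $V$ they agree on $V$. Your $p$-divisibility step $f(x)=p^{-n}f(p^nx)$ then propagates this to all of $G\langle p^\infty\rangle$.

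Apart from this, your route is the one the paper relies on. The paper does not reprove the theorem; it cites Fargues and Heuer. It then records exactly your description $G\langle p^\infty\rangle=\bigcup_n[p]^{-n}(V)$ with $\log_G=p^{-n}\log\circ[p]^n$ on $[p]^{-n}(V)$, in Proposition~\ref{prop:equiv-top-p-torsion} and the paragraph after it.

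The surjectivity in (1), which you flag as the main obstacle, is already settled by your explicit formula $\varphi(a)=a_0x+a'\cdot p^nx$:
\begin{itemize}
\item Additivity is a two-case check. When $a_0+b_0\ge p^n$, the extra $-p^nx$ cancels against $1\cdot p^nx=p^nx$.
\item Continuity holds on each of the finitely many open and closed cosets $a_0+p^n\Z_p$.
\item Independence of $n$ and compatibility with restriction follow from the injectivity you proved.
\end{itemize}
So neither $v$-descent nor the hypothesis on $U$ is needed in (1). That hypothesis enters only in the surjectivity of $\log_G$ in (3). There, étale surjectivity of $[p]\colon U\to U$ is enough to extract $p^n$-th roots étale-locally. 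You invoke finite étaleness, but finiteness is not assumed and not needed.
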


For the groups we will consider (abeloid and semi-abeloid varieties), local $p$-divisibility holds because, after choosing an exponential neighborhood, $[p]$ becomes multiplication by $p$ in Lie coordinates and hence is locally surjective.

\begin{proposition}[Equivalence of the three definitions]\label{prop:equiv-top-p-torsion}
Under the hypotheses of Theorem~\ref{thm:FarguesGb}, the three constructions coincide:
\[
G\langle p^\infty\rangle\;=\;G\langle p^\infty\rangle_{\mathrm{v}}\;=\;G\langle p^\infty\rangle_{\mathrm{un}},
\qquad\text{and}\qquad
|G\langle p^\infty\rangle|\;=\;|G|\langle p^\infty \rangle.
\]
In particular, $|G|\langle p^\infty \rangle$ is represented by an open rigid analytic subgroup $G\langle p^\infty\rangle\subset G$.
\end{proposition}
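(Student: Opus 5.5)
We will prove the two equalities $G\langle p^\infty\rangle=G\langle p^\infty\rangle_{\mathrm v}$ and $G\langle p^\infty\rangle_{\mathrm v}=G\langle p^\infty\rangle_{\mathrm{un}}$. After that we identify the underlying point set with $|G|\langle p^\infty\rangle$. We take Theorem~\ref{thm:FarguesGb} as given throughout.

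\emph{First equality.} This is essentially formal. By Theorem~\ref{thm:FarguesGb}(1), evaluation at $1$ gives an isomorphism $\Hom(\underline{\Zp},G)\xrightarrow{\sim}G\langle p^\infty\rangle$ onto an open subgroup of $G$. By definition, $G\langle p^\infty\rangle_{\mathrm v}$ is the image of $\mathrm{ev}_1$ on the $v$-site. Since $G\langle p^\infty\rangle$ is representable, the two $v$-sheaves coincide as subsheaves of $G$.

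\emph{Second equality.} We fix the affinoid subgroup $V$ on which $\log_G$ is an isomorphism onto a polydisc $D\subset\Lie(G)$. Shrinking $V$ if necessary, we may assume that $[p]$ corresponds to multiplication by $p$ on $D$ and that $pD\subset D$.

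For $G\langle p^\infty\rangle_{\mathrm{un}}\subset G\langle p^\infty\rangle$, we argue as follows. On $V$, the map $t\mapsto\exp(t\log_G x)$ is defined for $t\in\Zp$, because $\Zp D\subset D$. This gives a homomorphism $\underline{\Zp}\to V$ sending $1$ to $x$, so $V\subset G\langle p^\infty\rangle$. Next, suppose $p^nx\in V$. We choose a homomorphism $\varphi:\underline{\Zp}\to V$ with $\varphi(1)=p^nx$. We then need a homomorphism $\psi$ with $\psi(1)=x$ and $p^n\psi=\varphi$. Since $\Zp\cong\Z\times_{\Z/p^n}$-pullback data, $\psi$ is determined on $p^n\Zp$ by $\varphi$ and extends through $\Z/p^n$ via $x$. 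Put differently, $\Hom(\underline{\Zp},G)$ is stable under such extensions because the sequence $0\to p^n\Zp\to\Zp\to\Z/p^n\to0$ splits as sets. Concretely, we set $\psi(a+p^nb)=ax+\varphi(b)$ for $a\in\{0,\dots,p^n-1\}$ and check additivity using $\varphi(1)=p^nx$.

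For the reverse inclusion $G\langle p^\infty\rangle\subset G\langle p^\infty\rangle_{\mathrm{un}}$, we take a section $x=\varphi(1)$. Then $p^nx=\varphi(p^n)$. Since $\varphi$ is continuous on $\underline{\Zp}$ and $p^n\to0$ in $\Zp$, we have $\varphi(p^n\Zp)\subset V$ for $n\gg0$, locally on the test object. Here quasi-compactness of affinoid test objects lets us choose $n$ uniformly. Hence $x\in[p]^{-n}(V)$.

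\emph{Points.} The open subgroup $G\langle p^\infty\rangle$ has underlying set $\bigcup_n[p]^{-n}(|V|)$. A point $x$ in this set has $[p]^m x\in|V|$, and iterating multiplication by $p$ on $D$ drives $[p]^m x$ to $e$. This gives the inclusion $\subset|G|\langle p^\infty\rangle$. Conversely, if $[p]^nx\to e$, then $[p]^nx$ lies in the open neighbourhood $|V|$ for large $n$, which gives $\supset$.

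\emph{Main obstacle.} The delicate step is the continuity argument in the reverse inclusion. One must make precise that a morphism from the profinite sheaf $\underline{\Zp}$ into $G$ sends $p^n\Zp$ into the open $V$ for some $n$, uniformly over the base. We plan to reduce this to the case of an affinoid perfectoid test space. There, $\underline{\Zp}\times Y=\varprojlim(\Z/p^n\times Y)$, and the preimage of the open $V$ is an open subset containing $\{0\}\times Y$. Quasi-compactness then yields a neighbourhood of the form $p^n\Zp\times Y$.
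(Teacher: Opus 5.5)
Your argument is correct, but at the key step it follows a different route from the paper's.

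\textbf{The paper's route.} It first proves only the point-set equality $|G\langle p^\infty\rangle_{\mathrm{un}}|=|G|\langle p^\infty\rangle$, using the same two observations as your last paragraph. It then identifies the Fargues subgroup $G\langle p^\infty\rangle=G\langle p^\infty\rangle_{\mathrm v}$ with $\bigcup_n[p]^{-n}(V)$ by asserting that this subgroup is, ``by construction'', the maximal open subgroup on which $\log_G$ exists.

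\textbf{Your route.} You compare $G\langle p^\infty\rangle_{\mathrm v}$ and $G\langle p^\infty\rangle_{\mathrm{un}}$ directly as subsheaves of $G$, in two directions.
\begin{enumerate}
\item The exponential on $V$ (using $\Zp D\subset D$) together with the formula $\psi(a+p^nb)=ax+\varphi(b)$ shows that every section of $[p]^{-n}(V)$ is the value at $1$ of a homomorphism $\underline{\Zp}\to G$.
\item The tube-lemma argument on $|\underline{\Zp}\times Y|=\Zp\times|Y|$, for quasi-compact $Y$, shows that every such homomorphism sends $p^n\Zp$ into $V$ for a single $n$.
\end{enumerate}

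\textbf{What each buys.} Your identification is self-contained and does not rest on the maximality characterization, which the paper states rather than proves. It also shows in passing that $G\langle p^\infty\rangle_{\mathrm{un}}$ does not depend on the choice of $V$. The cost is a somewhat longer argument. The point-set statement then follows from your sheaf equality exactly as in the paper. Both routes use the same input: $V$ is a subgroup and $\log_G$ is a group isomorphism onto a disc centred at $0$.

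\textbf{One presentational fix.} The sentence beginning ``Since $\Zp\cong\Z\times_{\Z/p^n}$-pullback data'' is garbled and should be deleted. The explicit formula for $\psi$, with the carry computation when $a+a'\ge p^n$ (using $\varphi(1)=p^nx$), is what actually proves additivity.
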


\begin{proof}
By Theorem~\ref{thm:FarguesGb}(1), the $v$-sheaf image definition produces a representable open subgroup,
which we denote $G\langle p^\infty\rangle$.

Choose $V$ as in the union-of-preimages definition. Since $V$ is a neighborhood of $e$,
any point $x\in |G|$ with $[p]^n(x)\to e$ must satisfy $[p]^N(x)\in |V|$ for all $N\gg 0$,
hence $x\in |[p]^{-N}(V)|\subset |G\langle p^\infty\rangle_{\mathrm{un}}|$. Conversely, if $x\in [p]^{-N}(V)$,
then $[p]^N(x)\in V$, and in Lie coordinates on $V$ we have
$\log_G([p]^m(y))=p^m\log_G(y)$ for $y\in V$ (because $\log_G$ is a group homomorphism on a small subgroup);
since $|p|<1$ in $K$, it follows that $[p]^m(y)\to e$ for $y\in V$, hence $[p]^{m+N}(x)\to e$.
Thus $|G\langle p^\infty\rangle_{\mathrm{un}}|=|G|\langle p^\infty \rangle$.

Finally, the subgroup $G\langle p^\infty\rangle$ from Theorem~\ref{thm:FarguesGb} is, by construction, the maximal open
subgroup on which $\log_G$ exists as above; by the preceding paragraph it must coincide with
$\bigcup_n [p]^{-n}(V)=G\langle p^\infty\rangle_{\mathrm{un}}$. This proves all claimed equalities.
\end{proof}

The map $\log_{G\langle p^\infty\rangle}$ can be constructed explicitly by patching: choose $V$ with
$\log:V\xrightarrow{\sim}\mathbb{B}^{d}(0,\varepsilon)$ additive; then
$G\langle p^\infty\rangle=\bigcup_{n\ge 1}[p]^{-n}(V)$, and on each $[p]^{-n}(V)$ define
$\log$ as $p^{-n}\cdot(\log\circ [p]^n)$.
These restrictions glue, giving $\log:G\langle p^\infty\rangle\to \Lie(G)\otimes\Ga^\rig$ with
kernel $G[p^\infty]$ and \'etale surjectivity.

\begin{example}
\begin{enumerate}
    \item Consider the rigid analytic torus $G=\Gm^\rig$ over $K$. Then $G\langle p^\infty\rangle$ is a $p$-divisible rigid analytic group, whose underlying rigid space is isomorphic to the open unit ball center at 1. Moreover, we have an isomorphism $\Gm^\rig\langle p^\infty\rangle\cong\wh{\G}_{m,\OO_K}^\rig$ as rigid analytic groups, where $\wh{\G}_{m,\OO_K}$ is the formal completion of the formal torus $\G_{m,\OO_K}$ over $\OO_K$ along its unit section, and $\wh{\G}_{m,\OO_K}^\rig$ is its rigid analytic fiber, cf. Example \ref{exa rig analytic groups} (3). 
    \item More generally, consider $G=T$ for a rigid analytic torus $T$ over $K$ (recall that this means $G=T^{'\rig}$ for an algebraic torus $T'$ over $K$). Then $G\langle p^\infty\rangle$ is a $p$-divisible rigid analytic group. Indeed, one can first passe to a finite extension to split $T$, then apply the above example and Galois descent. See also \cite{Far19} Exemple 5 for more information.
\end{enumerate}
\end{example}

\begin{definition}
    Let $A/K$ be an abeloid variety. Define
\[
A\langle p^\infty\rangle\ \subset\ A
\]
to be the open rigid analytic subgroup represented by the $p$-topologically torsion points: equivalently, $A\langle p^\infty\rangle$ is the subgroup $A\langle p^\infty\rangle=A\langle p^\infty\rangle_{\mathrm{un}}=A\langle p^\infty\rangle_{\mathrm{v}}$.
\end{definition}
The construction is canonical and functorial in $A$:
a homomorphism $f:A\to A'$ satisfies $f(A\langle p^\infty\rangle)\subset (A')\langle  p^\infty \rangle$ because $[p]^n(f(x))=f([p]^n(x))$.

On the other hand, recall that in subsection \ref{subsection l-adic realizations} we constructed $A[p^\infty]=\varinjlim_m A[p^m]$ and the associated Tate module $T_p(A)=T_p(A[p^\infty])$. By construction, we have $A[p^\infty]\subset A\langle p^\infty\rangle$.

\begin{lemma}
\label{lem:uniform-contraction}
Let $G$ be a commutative rigid analytic group over $K$ and let $H\subset G$ be a subgroup
such that for all $x\in |H|$ one has $[p]^n(x)\to e$ in $|G|$.
Then $[p]:H\to H$ is topologically nilpotent.
\end{lemma}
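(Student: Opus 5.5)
The plan is to upgrade the pointwise convergence $[p]^n(x)\to e$ to a uniform statement on quasi-compact sets. Two ingredients do this: a fundamental system of neighbourhoods of $e$ on which $[p]$ is a strict contraction, and the quasi-compactness of affinoid opens. Topological nilpotence means: for every pair $U,V$ of affinoid neighbourhoods of $e$ in $H$, we have $[p]^nU\subset V$ for $n\gg 0$. Since $V\cap H$ is again a neighbourhood of $e$ in $H$, it is enough to show $[p]^n(U)\subset W$ for a suitable small subgroup $W\subset V$.

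First I will construct the contracting neighbourhoods. As in the union-of-preimages construction of Definition \ref{def p-top torsion subsheaf}, I use the formal group logarithm in characteristic $0$. It gives an affinoid open subgroup $V_0\subset G$ containing $e$ such that $\log$ restricts to an isomorphism of rigid groups $V_0\xrightarrow{\sim}\mathbb{B}^d(0,\varepsilon)$ onto an additive polydisc in $\Lie(G)$. For $0<\delta\le\varepsilon$ put $V_\delta:=\log^{-1}(\mathbb{B}^d(0,\delta))$. Since $\log$ is a homomorphism on $V_0$, the map $[p]$ becomes multiplication by $p$ in these coordinates, so
\[
[p](V_\delta)\subset V_{|p|\delta}\subset V_\delta .
\]
The $V_\delta$ form a fundamental system of open neighbourhoods of $e$ in $|G|$, and any affinoid neighbourhood $V$ of $e$ contains some $V_\delta$. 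This step is routine, and it is the same computation already used in the proof of Proposition \ref{prop:equiv-top-p-torsion}.

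Next comes the uniformity argument. Let $U\subset H$ be an affinoid neighbourhood of $e$ and fix $\delta$ with $V_\delta\subset V$. Consider the open subsets
\[
U_N:=U\cap[p]^{-N}(V_\varepsilon)\subset |U| .
\]
They increase with $N$ because $[p](V_\varepsilon)\subset V_\varepsilon$. They cover $|U|$: for $x\in|U|\subset|H|$ the hypothesis $[p]^n(x)\to e$ in $|G|$ means that $[p]^n(x)$ eventually lies in the open neighbourhood $V_\varepsilon$ of $e$. This applies to all points of the adic space, not only classical ones, which is why I phrase it in $|G|$. Since $|U|$ is quasi-compact (spectral), some $U_N$ equals $|U|$, i.e.\ $[p]^N(U)\subset V_\varepsilon$. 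Then choose $m$ with $|p|^m\varepsilon\le\delta$; the contraction gives $[p]^{N+m}(U)\subset V_\delta\subset V$, and the same holds for every larger exponent. Finally, $H$ is a subgroup, so $[p]$ maps $H$ to $H$ and the inclusion holds inside $H$.

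The step I expect to need the most care is the first one: that the formal logarithm really is a group isomorphism on a small enough affinoid subgroup, uniformly in a polydisc. For a smooth commutative rigid group this follows from the convergence of the formal logarithm and exponential of the formal group law on a small polydisc in characteristic $0$. The rest is the quasi-compactness argument above.
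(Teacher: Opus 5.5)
Your proof is correct, and its skeleton is the paper's: cover the quasi-compact spectral space $|U|$ by the opens $U\cap[p]^{-n}(\cdot)$ and extract a single index. The difference is where you apply it.

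The paper runs the covering argument directly with the given affinoid neighbourhood $V$ and asserts that the family $U\cap[p]^{-n}(V)$ is increasing in $n$. That needs $[p](V)\subset V$, which an arbitrary affinoid neighbourhood need not satisfy. For instance, in $\mathbb{G}_a^{\mathrm{rig}}$ let $V$ be the disjoint union of the discs of radius $|p|^3$ about $0$ and $1$: then $1\in V$ but $p\notin V$. Moreover, without such stability a single $n$ with $[p]^n(U)\subset V$ does not give the inclusion for all $n\gg0$ that topological nilpotence requires. With $U=V$ as above, $n=0$ works but $n=1$ fails.

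Your $[p]$-stable neighbourhoods $V_\delta\subset V$, obtained from the logarithm chart of Definition \ref{def p-top torsion subsheaf}(1), close exactly this gap. The paper itself uses the same device later, in the proof of Proposition \ref{prop:representable-Gx-v-final}, where it chooses an affinoid open subgroup precisely so that the preimages increase.

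As for what each route costs: the paper's version uses nothing beyond continuity of $[p]$, but as written it is incomplete. Yours pays with the standard characteristic-$0$ input that small affinoid open subgroups form a neighbourhood basis of $e$. In fact you only use the subgroup property of $V_\delta$, since any subgroup is stable under $[p]$. So you could run the covering argument directly with $V_\delta$ and drop the contraction estimate $[p](V_\delta)\subset V_{|p|\delta}$. Finally, if $H$ is not open in $G$, use $V_\delta\cap H$ in place of $V_\delta$.
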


\begin{proof}
Let $U,V\subset H$ be affinoid neighborhoods of $e$ in $H$.
For each $x\in |U|$, the condition $[p]^n(x)\to e$ implies that $[p]^{n_x}(x)\in |V|$ for some $n_x$.
Thus
\[
|U| \subset \bigcup_{n\ge 0} |U\cap [p]^{-n}(V)|.
\]
Each $U\cap [p]^{-n}(V)$ is an admissible open in the affinoid $U$, and $U$ is quasi-compact, hence
the cover admits a finite subcover. Since the family is increasing in $n$, there exists $n$ such that
$U\subset [p]^{-n}(V)$, i.e.\ $[p]^n(U)\subset V$.
\end{proof}

\begin{proposition}
\label{thm:main}
Let $A/K$ be an abeloid variety. Then:
\begin{enumerate}
\item $A\langle p^\infty\rangle\subset A$ is an open rigid analytic subgroup (hence smooth and commutative).
\item $[p]:A\langle p^\infty\rangle\to A\langle p^\infty\rangle$ is finite and surjective.
\item $[p]$ is topologically nilpotent on $A\langle p^\infty\rangle$.
\end{enumerate}
Consequently, $A\langle p^\infty\rangle$ is a $p$-divisible rigid analytic group.

\end{proposition}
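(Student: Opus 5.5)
The plan is to deduce all three assertions from Theorem~\ref{thm:FarguesGb}, Proposition~\ref{prop:equiv-top-p-torsion} and Lemma~\ref{lem:uniform-contraction}. The only input specific to $A$ is local $p$-divisibility. Since $K$ has characteristic $0$, the formal group logarithm gives an affinoid open subgroup $V\subset A$ and an isomorphism $\log:V\xrightarrow{\sim}\mathbb{B}^d(0,\varepsilon)$ of groups, where $\mathbb{B}^d(0,\varepsilon)$ carries the additive structure. Put $U:=\bigcup_{n}[p]^{-n}(V)$; this is an open subgroup. Lemma~\ref{lem:Kummer}, applied to the abeloid (hence semi-abeloid) variety $A$, shows that $[p]:A\to A$ is finite \'etale surjective. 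Hence $[p]:U\to U$ is \'etale. It is also surjective: given $y\in[p]^{-n}(V)$, pick any $x$ with $[p]x=y$; then $[p]^{n+1}x\in V$. So Theorem~\ref{thm:FarguesGb} applies. Part (1) then follows from Theorem~\ref{thm:FarguesGb}(1) together with Proposition~\ref{prop:equiv-top-p-torsion}, which identify $A\langle p^\infty\rangle=A\langle p^\infty\rangle_{\mathrm{un}}$ as an open rigid subgroup. Being open in the smooth commutative group $A$, it is itself smooth and commutative.

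For (3), every $x\in|A\langle p^\infty\rangle|$ satisfies $[p]^n(x)\to e$ by Proposition~\ref{prop:equiv-top-p-torsion}, so Lemma~\ref{lem:uniform-contraction} applies directly with $H=A\langle p^\infty\rangle$.

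For (2), the main point is to show that $A\langle p^\infty\rangle=[p]^{-1}(A\langle p^\infty\rangle)$ inside $A$. For the inclusion $\supseteq$: if $[p]x$ is topologically $p$-torsion, so is $x$, since $[p]^{n+1}x=[p]^n([p]x)\to e$. The inclusion $\subseteq$ is clear. Therefore $[p]|_{A\langle p^\infty\rangle}$ is the base change of the finite morphism $[p]:A\to A$ along the open immersion $A\langle p^\infty\rangle\hookrightarrow A$, and so it is finite. Surjectivity follows in the same way: any preimage in $A$ of a point of $A\langle p^\infty\rangle$ automatically lies in $A\langle p^\infty\rangle$, and $[p]$ is surjective on $A$ (even as fppf/\'etale sheaves, by Lemma~\ref{lem:Kummer}).

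Combining (1)--(3) with the definition of a $p$-divisible rigid analytic group gives the consequence. I expect the only delicate step to be the existence of the logarithm chart $V$, which is what justifies local $p$-divisibility. I would take this from the standard formal group law of $A$ at $e$ in characteristic zero, together with the fact that on a small enough ball the logarithm converges and is bijective. Everything else is formal.
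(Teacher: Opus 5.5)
Your proposal is correct and follows essentially the same route as the paper: (1) comes from Theorem~\ref{thm:FarguesGb} and Proposition~\ref{prop:equiv-top-p-torsion}, (2) from the identity $[p]^{-1}(A\langle p^\infty\rangle)=A\langle p^\infty\rangle$ together with base change of the finite surjective map $[p]:A\to A$ of Lemma~\ref{lem:Kummer}, and (3) from Lemma~\ref{lem:uniform-contraction}. The one addition is your explicit check of the local $p$-divisibility hypothesis using $U=\bigcup_n[p]^{-n}(V)$; the paper only sketches this in the remark following Theorem~\ref{thm:FarguesGb}.
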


\begin{proof} These results follow from \cite{Gert26} Example 3.2 (3). We give another proof here.

(1) By Theorem~\ref{thm:FarguesGb} applied to $G=A$, the subgroup $A\langle p^\infty\rangle$ is representable by an open
rigid analytic subgroup; in particular it is smooth and commutative as an open subgroup of the smooth
commutative group $A$.

(2) By Lemma~\ref{lem:Kummer}, $[p]:A\to A$ is finite \'etale and surjective.
The restriction $[p]:A\langle p^\infty\rangle\to A\langle p^\infty\rangle$ is the base change of $[p]:A\to A$ along the open immersion
$A\langle p^\infty\rangle\hookrightarrow A$, since $[p]^{-1}(A\langle p^\infty\rangle)=A\langle p^\infty\rangle$ (see below). Hence $[p]:A\langle p^\infty\rangle\to A\langle p^\infty\rangle$ is finite.
For surjectivity: given $y\in A\langle p^\infty\rangle$, pick $x\in A$ with $[p](x)=y$ (possible since $[p]:A\to A$ is surjective).
Then $[p]^{n+1}(x)=[p]^n(y)\to e$, so $x\in A\langle p^\infty\rangle$ by definition. Thus $[p]:A\langle p^\infty\rangle\to A\langle p^\infty\rangle$ is surjective,
and simultaneously we proved $[p]^{-1}(A\langle p^\infty\rangle)=A\langle p^\infty\rangle$.

(3) By construction of $A\langle p^\infty\rangle$, every $x\in |A\langle p^\infty\rangle|$ satisfies $[p]^n(x)\to e$ in $|A|$.
Applying Lemma~\ref{lem:uniform-contraction} with $H=A\langle p^\infty\rangle$ shows that $[p]$ is topologically nilpotent
on $A\langle p^\infty\rangle$.

\end{proof}

For the abeloid variety $A/K$ as above,
by construction we have \[T_p(A\langle p^\infty\rangle)=T_p(A) \quad \text{and}\quad \Lie(A\langle p^\infty\rangle)=\Lie(A).\] Moreover, by \cite{Sch13}, the $\Z_p$-representation $T_p(A)$ of $\Gamma_K$ is de Rham and has Hodge-Tate weights $\{0,1\}$. If $\dim\,A =g$, then $\dim\,\Lie(A)=g$ and the rank of $T_p(A)$ is $2g$, which can be computed by using Galois descent and Raynaud uniformization (cf. subsection \ref{subsection abeloid and tori}).

\begin{proposition}
  Let $A/K$ be an abeloid variety with a split Raynaud uniformization over $K$, cf. Theorem \ref{thm:split Raynaud uniformization}. Let $\mathcal{G}$ be the neutral component of its
  formal N\'eron model. Identify $\mathcal{G}^\rig\subset A$ as an open rigid subgroup. Let $\mathcal{G}_s/k$ be the special fiber of $\mathcal{G}$. Then we have 
\[A\langle p^\infty\rangle=A[p^\infty]+\bigcup_{n\geq 0}\mathrm{sp}^{-1}(\mathcal{G}_s[p^n]).\]
Here $\mathrm{sp}: \mathcal{G}^\rig\ra \mathcal{G}_s$ is the specialization map.
\end{proposition}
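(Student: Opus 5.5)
We prove the two inclusions separately, using the open subgroup $U:=\bigcup_{n\geq0}\mathrm{sp}^{-1}(\mathcal{G}_s[p^n])\subset\mathcal{G}^\rig$. First we check that $U$ is an open subgroup of $\mathcal{G}^\rig$. Each $\mathrm{sp}^{-1}(\mathcal{G}_s[p^n])$ is the tube of a closed subgroup scheme of the special fiber, hence an open rigid subgroup. Since $\mathcal{G}_s[p^n]\subset\mathcal{G}_s[p^{n+1}]$, these tubes form an increasing union, so $U$ is an open subgroup of $\mathcal{G}^\rig$ and hence of $A$. Because $A[p^\infty]$ is a subgroup, $A[p^\infty]+U$ is then an open subgroup of $A$.

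For the inclusion ``$\supseteq$'', note that $A[p^\infty]\subset A\langle p^\infty\rangle$ trivially and that $A\langle p^\infty\rangle$ is a subgroup, so it suffices to show $U\subset A\langle p^\infty\rangle$. Let $x\in\mathrm{sp}^{-1}(\mathcal{G}_s[p^n])$. Then $[p]^n(x)\in\mathrm{sp}^{-1}(e_s)=\widehat{\mathcal{G}}^\rig$, the generic fiber of the formal completion of $\mathcal{G}$ along the unit section, which is an open polydisc. On this formal group the multiplication $[p]$ is topologically nilpotent: in formal coordinates it has the form $[p](t)=pt+(\text{higher order terms})$, and since $\mathcal{G}$ is an extension of a formal abelian scheme by a formal torus, the reduction of $[p]$ is either inseparable or given by higher order terms. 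Hence $[p]^m(y)\to e$ for every $y$ in the open polydisc, and so $x\in A\langle p^\infty\rangle$ by the definition of $|A|\langle p^\infty\rangle$ together with Proposition \ref{prop:equiv-top-p-torsion}.

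For the inclusion ``$\subseteq$'', take $x\in A\langle p^\infty\rangle$. Since $\mathcal{G}^\rig$ is an open neighbourhood of $e$, the convergence $[p]^n(x)\to e$ gives $[p]^N(x)\in\widehat{\mathcal{G}}^\rig\subset U$ for $N\gg0$. It therefore suffices to show that the quotient group $A\langle p^\infty\rangle/(A[p^\infty]+U)$ has no nonzero element killed by a power of $p$ such that the element also lies in $A\langle p^\infty\rangle$. To compute $A/U$ we use the Raynaud uniformization $A=G/Y$ with $G\supset\mathcal{G}^\rig$. The quotient $G/\mathcal{G}^\rig\cong\cHom(X,\GmK^\rig/\Gmfml^\rig)$ is torsion-free, being a $\Q$-vector space valued object as in the Remark after the monodromy definition. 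Moreover $\mathcal{G}^\rig/U\cong\mathcal{G}_s(\bar k)/\mathcal{G}_s[p^\infty](\bar k)$ on geometric points, which has no $p$-torsion.

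Now suppose $[p]^N(x)\in U$. Lift $x$ to $\tilde x\in G$. Then $p^N\tilde x\in U+Y$, say $p^N\tilde x=u+y$. The image of $\tilde x$ in $G/\mathcal{G}^\rig$ (a $\Q$-space) satisfies $p^N\bar{\tilde x}=\bar y$, so $\bar{\tilde x}\in\frac1{p^N}\bar Y$. Choose $y'\in G$ with $p^Ny'=y$; then $y'$ maps into $A[p^\infty]$. Replacing $x$ by $x-\mathrm{image}(y')$ places it in $\mathcal{G}^\rig$ with $[p]^N x\in U$. Since $\mathcal{G}^\rig/U$ has no $p$-torsion, this forces $x\in U$. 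The main obstacle is making this quotient argument rigorous at the level of adic points rather than $\bar K$-points: we need that $\mathrm{sp}$ is compatible with the group law on all points and that the division by $p^N$ in $G$ can be carried out over the residue fields of points. We would handle this by working with $(L,\OO_L)$-points over finite extensions $L$, together with the surjectivity of $[p^N]$ from Lemma \ref{lem:Kummer}.
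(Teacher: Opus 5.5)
Your proof is correct and follows the paper's route. You pass to the Raynaud uniformization $A=G/Y$, use that $G/\mathcal{G}^\rig$ is torsion-free so that $p$-topologically torsion points of $G$ already lie in $\mathcal{G}^\rig$, and let the lattice $Y$ account for the extra $A[p^\infty]$ (the paper's ``difference given by $Y\otimes\Q_p/\Z_p$''). The one difference is that you verify by hand the equality $\mathcal{G}^\rig\langle p^\infty\rangle=\bigcup_{n}\mathrm{sp}^{-1}(\mathcal{G}_s[p^n])$, which the paper imports from Heuer's Proposition 2.14(iv); for this, the appeal to inseparability is unnecessary, since $|[p](t)|\le\max(|p|\,|t|,|t|^2)$ for any formal group law over $\OK$ already gives topological nilpotence on the open polydisc.
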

\begin{proof}
Let $A=G/Y$ and $0\ra T\ra G\ra B\ra 0$ be the Raynaud uniformization of $A$. With the above notation, we have $\mathcal{G}^\rig\subset G$. The formal group $\mathcal{G}$ sits into an extension \[0\ra \mathcal{T}\ra \mathcal{G}\ra \mathcal{B}\ra 0,\] with $\mathcal{T}^\rig\subset T$ and $ \mathcal{B}^\rig=B$. Since $\mathcal{T}^\rig\langle p^\infty\rangle =T\langle p^\infty\rangle$, we get \[\mathcal{G}^\rig\langle p^\infty\rangle = G\langle p^\infty\rangle\subset A\langle p^\infty\rangle.\] On the other hand, by \cite{Heu24b} Proposition 2.14 (iv), we have \[\mathcal{G}^\rig\langle p^\infty\rangle =\bigcup_{n\geq 0}\mathrm{sp}^{-1}(\mathcal{G}_s[p^n]).\]
By the Raynaud uniformization $A=G/Y$, the difference between $\mathcal{G}^\rig\langle p^\infty\rangle $ and $A\langle p^\infty\rangle$ is given by the \'etale $p$-divisible group $Y[p^\infty]:=Y\otimes (\Q_p/\Z_p)$ over $K$. Thus the equality $A\langle p^\infty\rangle=A[p^\infty]+\bigcup_{n\geq 0}\mathrm{sp}^{-1}(\mathcal{G}_s[p^n])$ holds.
\end{proof}

Let $G$ be a semi-abeloid variety over $K$, cf. Definition \ref{def:semi-abeloid and rigid 1-motive}. We have a natural generalization of Proposition \ref{thm:main}: the subgroup $G\langle p^\infty\rangle$ is a $p$-divisible rigid analytic group. See also
 \cite{BCHH} \S4.1. If $G$ sits into the exact sequence $0\ra T\ra G\ra A\ra 0$, then we get an induced exact sequence of $p$-divisible rigid analytic groups:
 \[0\ra T\langle p^\infty\rangle\ra G\langle p^\infty\rangle\ra A\langle p^\infty\rangle\ra 0.\]
In fact, we can go further.
\begin{theorem}\label{thm: p-div rigid analytic gp ass to rigid 1-motive}
    Let $M=[Y\ra G]$ be a rigid analytic 1-motive over $K$, cf. Definition \ref{def:semi-abeloid and rigid 1-motive}. Then there is a natually attached $p$-divisible  rigid analytic group $M\langle p^\infty\rangle$, which is dualizable and de Rham, such that it sits into an exact sequence
    \[0\ra G\langle p^\infty\rangle\ra M\langle p^\infty\rangle\ra Y\langle p^\infty\rangle\ra 0,\]
    where $Y\langle p^\infty\rangle=Y[p^\infty]=Y\otimes(\Q_p/\Z_p)$ is the \'etale $p$-divisible rigid analytic group associated to $Y$.
\end{theorem}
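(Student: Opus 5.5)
Since $Y$ is not a subgroup of $G$, I would construct $M\langle p^\infty\rangle$ as the preimage of $Y\langle p^\infty\rangle$ in a quotient-type sheaf built from $M$, and then identify it through Fargues' structure results. Concretely, consider the sheaf $\widetilde M:=(G\times Y\otimes\Q_p)/Y$, where $Y$ embeds via $y\mapsto(u(y),y)$. Define $M\langle p^\infty\rangle$ as the preimage of $Y\otimes\Q_p/\Z_p$ under the natural map $\widetilde M\to Y\otimes\Q_p/Y\otimes\Z_p$, restricted to the part whose $G$-component is $p$-topologically torsion. A cleaner and equivalent description is the pushout of $0\to G\to M'_{\Z_p}\to Y\otimes\Z_p\to0$. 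I would therefore take the more concrete route: set
\[
M\langle p^\infty\rangle:=\bigl(G\langle p^\infty\rangle\times Y\otimes\Z[1/p]\bigr)'/Y,
\]
where $(\,)'$ denotes the subsheaf of pairs $(g,y)$ with $g-u(p^ny)\in G\langle p^\infty\rangle$ for a suitable $n$. Equivalently, $M\langle p^\infty\rangle=\varinjlim_n \bigl(G\langle p^\infty\rangle+T_{\Z/p^n\Z}(M)\bigr)$, taken as an increasing union inside $\varinjlim_n (G/\text{translates})$. The cleanest formal definition is then:
\[
M\langle p^\infty\rangle:=\varinjlim_n\,\bigl(G\langle p^\infty\rangle\times_{G[p^n]}T_{\Z/p^n\Z}(M)\bigr)
\]
(pushout along $G[p^n]\subset G\langle p^\infty\rangle$), with transition maps induced by $T_{\Z/p^{n+1}\Z}(M)\to T_{\Z/p^n\Z}(M)$ composed with $p$. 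This is an extension of $Y\otimes\Q_p/\Z_p$ by $G\langle p^\infty\rangle$ by construction, since $T_{\Z/p^n\Z}(M)$ is an extension of $Y/p^n$ by $G[p^n]$.

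The key steps, in order, are as follows. (i) Check that $G\langle p^\infty\rangle$ is a $p$-divisible rigid analytic group, by the semi-abeloid analogue of Proposition \ref{thm:main}: use the extension $0\to T\to G\to A\to 0$, Lemma \ref{lem:Kummer}, Lemma \ref{lem:uniform-contraction}, and the exactness on Lie algebras and $p^\infty$-kernels from Proposition \ref{prop:Fargues-structure}(3). (ii) Representability of $M\langle p^\infty\rangle$. Each pushout is a disjoint union of translates of $G\langle p^\infty\rangle$, i.e.\ a $G\langle p^\infty\rangle$-torsor over the finite étale $Y/p^n$, hence representable by étale descent; the colimit is an increasing union of open and closed subspaces. (iii) Verify that $[p]$ is finite surjective and topologically nilpotent. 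Surjectivity and finiteness follow from those on the two outer terms. Topological nilpotence is local near $e$, where $M\langle p^\infty\rangle$ coincides with $G\langle p^\infty\rangle$. Alternatively, one can apply Proposition \ref{prop:Fargues-structure}(2): the composite $\log$ extends to $M\langle p^\infty\rangle$ (it kills the étale quotient, after dividing by $p^n$) and is an étale surjection onto $\Lie(G)\otimes\Ga^\rig$ with kernel $M[p^\infty]$, which is a classical $p$-divisible group.

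(iv) Dualizability and the de Rham property. The Tate module is $T_p(M\langle p^\infty\rangle)=T_p(M)$ and the Lie algebra is $\Lie(G)$, so by Theorem \ref{thm dual p-div rigid and HT} it suffices to show that $T_p(M)$ is de Rham with Hodge--Tate weights in $\{0,1\}$ and that the Hodge--Tate map is the canonical inclusion $\gr^{-1}$. Theorem \ref{thm:first description of l-adic realization of rigid 1-motive}(2) gives potential semistability, hence the de Rham property. The weights come from the graded pieces $X^\vee(1)$, $T_p(A)$ and $Y$, where the $T_p(A)$ case uses \cite{Sch13}. To identify the Lie algebra with $\gr^{-1}\bfD_{\HT}$, compare dimensions: $\dim\Lie(G)=\dim T+\dim A$, which equals the multiplicity of weight $1$. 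Injectivity of $f_G$ follows by reducing along the exact sequence to $G\langle p^\infty\rangle$, and there to $T$ and $A$. Functoriality in $M$ is clear from the construction. Finally, the exact sequence is immediate from the pushout description.

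I expect the main obstacle to be step (iv), namely showing that the Fargues Hodge--Tate map $f$ of $M\langle p^\infty\rangle$ agrees with the canonical inclusion, i.e.\ dualizability. I would first try to reduce, via Proposition \ref{prop:Fargues-structure}(3) and the five lemma on triples, to the graded pieces $T\langle p^\infty\rangle$, $A\langle p^\infty\rangle$ and the étale part. For $A\langle p^\infty\rangle$ I would cite \cite{Gert26} Example 3.2(3).
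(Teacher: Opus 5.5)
Your proposal is correct and follows the paper's proof. Because colimits commute, your $\varinjlim_n G\langle p^\infty\rangle\amalg_{G[p^n]}T_{\Z/p^n\Z}(M)$ is exactly the paper's pushout of $G[p^\infty]\hookrightarrow G\langle p^\infty\rangle$ along $G[p^\infty]\hookrightarrow M[p^\infty]$, and, as in the paper, the de Rham and dualizable properties come from the potential semistability of $T_p(M)=T_p(M')$ together with the torus, abeloid and lattice pieces. Note that the transition maps must go from level $n$ to level $n+1$, induced by $\Z/p^n\Z\xrightarrow{p}\Z/p^{n+1}\Z$, rather than being the projections $T_{\Z/p^{n+1}\Z}(M)\to T_{\Z/p^n\Z}(M)$. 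Your injectivity-plus-dimension check of the Hodge--Tate map is a worthwhile addition to the paper's terse argument. It is the Hodge--Tate property of all of $T_p(M)$ that closes the argument, not a five-lemma reduction on triples: dualizability alone is not stable under extensions, since $H^1(K,C)\neq 0$.
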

\begin{proof}
By  arguments in subsection \ref{subsection l-adic realizations}, we have the \'etale $p$-divisible group $M[p^\infty]$ over $K$ attached to $M$, which sits into an exact sequence of \'etale $p$-divisible groups
\[0\ra G[p^\infty]\ra M[p^\infty]\ra Y[p^\infty]\ra 0.\] Consider the $p$-divisible rigid analytic group $G\langle p^\infty\rangle$ attached to the semi-abloid variety $G$.
Recall the exact sequnce \[0\ra G[p^\infty]\ra G\langle p^\infty\rangle\ra \Lie(G)\otimes \Ga^\rig\ra 0.\]
We define the $p$-divisible rigid analytic group $M\langle p^\infty\rangle$ as the pushout of $G[p^\infty]\hookrightarrow G\langle p^\infty\rangle$ and $G[p^\infty]\hookrightarrow M[p^\infty]$. Then we have an exact sequence
\[0\ra M[p^\infty]\ra M\langle p^\infty\rangle\ra \Lie\,M\otimes\Ga^\rig\ra 0\]
where $\Lie\,M=\Lie(G)$. It is direct to check that the above exact sequence of \'etale $p$-divisible groups over $K$ can be extended as an exact sequence of $p$-divisible rigid analytic groups
\[0\ra G\langle p^\infty\rangle\ra M\langle p^\infty\rangle\ra Y\langle p^\infty\rangle\ra 0.\]
By \cite{Sch13}, the $p$-divisible rigid analytic group $B\langle p^\infty\rangle$ of an abeloid variety is de Rham and dualizable. On the another hand, the $p$-divisible rigid analytic groups $T\langle p^\infty\rangle$ and $Y\langle p^\infty\rangle$ are also  de Rham and dualizable. By Theorem \ref{thm:first description of l-adic realization of rigid 1-motive}, the Tate modules of $G[p^\infty]$ and $M[p^\infty]$ are de Rham. 
We get that $G\langle p^\infty\rangle$ and $M\langle p^\infty\rangle$ are also de Rham and dualizable.
\end{proof}
The above theorem generalizes the classical construction of $p$-divisible groups attached to 1-motives (see \cite{HMW24} section 3 and the reference therein), as well as the log versions (\cite{WZ24} section 4 and our Proposition \ref{prop:realizations ass. to log fml 1-mot}). The construction is funtorial, thus we get a functor \[\mathscr{M}_{1,K}\ra \BT_K^{\rig,\dual,\mathrm{pst}}.\] 
\begin{remark}\label{rem: p-div rigid groups associated to rel rigid 1-motives}
Let $S$ be a smooth rigid analytic space over $K$. Recall that one can define the category of rigid analytic 1-motives over $S$, cf. Remark \ref{rem: relative rigid 1-motives}. Then we can in fact generalize the above construction to get a functor
\[\mathscr{M}_{1,S}\ra \BT_S^{\rig,\dual}.\]
We will not give more details, since this will not be used in the following.
\end{remark}

\subsection{$p$‑divisible rigid analytic groups attached to log $p$‑divisible groups}\label{sec:F-functor}
Recall the subcategory \[\BT^{\rig,\mathrm{dual},\mathrm{st}}_K\subset\BT^{\rig,\mathrm{dual}}_K\] consisting of dualizable $p$-divisible rigid analytic groups has semi-stable reduction (cf. Definition \ref{def:ssred-rigid}). We have a description of it by Fargues (Theorem \ref{thm:Fargues-ss-criterion}). The goal of this subsection is to give another description of $\BT^{\rig,\mathrm{dual},\mathrm{st}}_K$ in terms of log $p$-divisible groups over $\Spf\,\OO_K$.

As before, consider $\Spf\,\mathcal O_K$ with the canonical fine and saturated log structure $M$. Write $\eta=\Spa\,K$ for the generic point, the induced log structure on $\eta$ is trivial. 
Let $\mathcal S = (\mathcal S, M_{\mathcal S})$ be a fs log $p$-adic formal scheme over $\Spf\,\mathcal O_K$ such that $M_{\mathcal S}$ is the pullback of $M$. We work on the Kummer log flat site $(\mathcal S,M_{\mathcal S})_\kfl$. Recall that formal schemes over $\mathcal S$ and fs log formal schemes over $(\mathcal S,M_{\mathcal S})$ can be viewed as sheaves on $(\mathcal S,M_{\mathcal S})_\kfl$ via fully faithful embeddings

\[
\mathrm{FSch}/\mathcal S \hookrightarrow \mathrm{fsFSch}/(\mathcal S,M_\mathcal S)\hookrightarrow \mathrm{Shv}\bigl((\mathcal S,M_\mathcal S)_{\kfl}\bigr),
\]

We first discuss the generic fiber functor for sheaves on log formal schemes, generalizing the construction in the usual setting as in \cite[subsection 2.2]{SW13}. A version for locally noetherian log formal schemes already appeared in \cite[Proposition 2.2.22]{DLLZ}. In particular, we have the associated log adic space $\mathcal S^{\mathrm{ad}}_\eta$ which has trivial log structure, i.e. it is a classical adic space over $\Spa \,K$.
Let $\mathrm{CAff}_{\mathcal S^{\mathrm{ad}}_\eta}$ denote the category of complete affinoid Huber pairs $(R,R^+)$ together with a morphism $\Spa(R,R^+)\ra \mathcal S^{\mathrm{ad}}_\eta$.
Let $\mathcal I(R,R^+)$ be the filtered poset of open, bounded $\mathcal O_K$-subalgebras $R_0\subset R^+$.

\begin{definition}[Log--adic generic fiber sheaf]\label{def:generic-fiber}
Let $F$ be a sheaf of abelian groups on $(\mathcal S,M_{\mathcal S})_\kfl$.
For $(R,R^+)\in \mathrm{CAff}_{\mathcal S^{\mathrm{ad}}_\eta}$ and $R_0\in \mathcal I(R,R^+)$, write
\[
(\Spf(R_0),M_{R_0})
\]
for the fs log formal scheme obtained by pulling back $(\mathcal S,M_{\mathcal S})$ along
$\Spf(R_0)\to \mathcal S$.
Set
\[
F(R_0):=\Hom_{\mathrm{Shv}((\mathcal S,M_{\mathcal S})_\kfl)}\bigl((\Spf(R_0),M_{R_0}),F\bigr),
\qquad
F_\eta^{\mathrm{pre}}(R,R^+) := \varinjlim_{R_0\in \mathcal I(R,R^+)} F(R_0).
\]
Let $(\cdot)^\#$ denote sheafification for the topology on $\mathrm{CAff}_{S}$
generated by rational subsets, and set
\[
F_\eta^{\mathrm{ad}} := (F_\eta^{\mathrm{pre}})^\#.
\]
\end{definition}

If $F=\mathfrak X$ is a fs log $p$-adic formal scheme over $\Spf\,\mathcal O_K$, viewed as a sheaf on
$(\mathcal S,M_{\mathcal S})_\kfl$ via the fully faithful embedding, then $F_\eta^{\mathrm{ad}}$
identifies canonically with the adic generic fiber $\mathfrak X_\eta^{\mathrm{ad}}$ introduced in \cite[Proposition 2.2.22]{DLLZ}, which we also denoted as $\mathfrak X ^\rig$.

For a smooth rigid analytic space $X$ over $K$, recall 
the category \( \BT_X^{\mathrm{rig},\dual}\)  of dualizable $p$-divisible rigid analytic groups over $X$ introduced in Definition \ref{def:dualizable}. In the following we assume that $\cS^{\mathrm{ad}}_\eta$ is a rigid analytic space over $K$. Recall that by Remark \ref{rem:log p-div groups formal base} we can view log $p$-divisible groups over $\cS$ as sheaves on $(\mathcal S,M_{\mathcal S})_\kfl$. The following is proved in \cite[Proposition 4.4]{Gert26}.
\begin{proposition}
\label{prop:logBT-generic-fiber}
Let $H\in\BT^{\log}_{\cS,\mathrm d}$.  Then $H^{\mathrm{ad}}_\eta$ is represented by $p$-divisible rigid analytic group over $\cS^{\mathrm{ad}}_\eta$. If, moreover, $\cS^{\mathrm{ad}}_\eta$ is smooth, then $H^{\mathrm{ad}}_\eta$ is dualizable and the construction defines a functor
\[
(-)^{\mathrm{ad}}_\eta:
\BT^{\log}_{\cS,\mathrm d}
\longrightarrow
\BT^{\mathrm{rig},\dual}_{\cS^{\mathrm{ad}}_\eta}.
\]
\end{proposition}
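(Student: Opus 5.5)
The plan is to reduce to the case where the base is split and $H$ is described by Kato's decomposition, and then to build $H^{\mathrm{ad}}_\eta$ by gluing the generic fibres of the connected and étale parts.

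\textbf{Reduction to a local, split situation.} Representability and dualizability are local on $\cS^{\mathrm{ad}}_\eta$, and the sheaf $H^{\mathrm{ad}}_\eta$ of Definition \ref{def:generic-fiber} is compatible with strict étale localization on $\cS$. So we may work étale locally on $\cS$, where a chart $P\to M_{\cS}$ exists and $H$ admits, over each $S_m$, a connected–étale sequence $0\to H^\circ\to H\to H^{\et}\to 0$ with $H^\circ,H^{\et}$ classical. Dual representability gives this at every level, and the sequences are compatible in $m$, so they define $p$-divisible groups over the formal scheme $\cS$.

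\textbf{Step 1: generic fibres of the classical pieces.} Take the generic fibres of $H^\circ$ and $H^{\et}$ in the sense of \cite[\S2.2]{SW13}. Then $(H^\circ)^{\mathrm{ad}}_\eta$ is a $p$-divisible rigid analytic group, namely a family of open balls, and $(H^{\et})^{\mathrm{ad}}_\eta$ is étale. Since the log structure becomes trivial on $\cS^{\mathrm{ad}}_\eta$, every Kummer log flat cover becomes a finite étale cover on the generic fibre: it is obtained by adjoining roots $\pi^{1/n}$ of chart elements. Hence $\eta$-sheafification converts kfl torsors into étale torsors.

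\textbf{Step 2: $H^{\mathrm{ad}}_\eta$ as an extension.} Apply $(-)^{\mathrm{ad}}_\eta$ to $0\to H^\circ\to H\to H^{\et}\to 0$. Exactness on the right uses the previous remark: kfl-local sections of $H^{\et}$ lift, and after sheafification on $\cS^{\mathrm{ad}}_\eta$ we obtain an exact sequence of sheaves
\[0\to (H^\circ)^{\mathrm{ad}}_\eta\to H^{\mathrm{ad}}_\eta\to (H^{\et})^{\mathrm{ad}}_\eta\to 0.\]
Thus $H^{\mathrm{ad}}_\eta$ is a torsor under a representable group over an ind-finite étale space, and so it is representable, as explained before Lemma \ref{lem:Kummer}. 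This representability is the step I expect to be the main obstacle. The task is to make rigorous the passage from kfl torsors to étale torsors on the generic fibre, and to check that the colimit over $R_0$ commutes with the relevant $\Hom$s. For this I would try first to use Kato's description via the monodromy $N\colon H^{\et}(1)\to H^\circ\otimes P^{\gp}$ (Corollary \ref{Cor: Kato's decomposition thm for log p-div gp}). This presents $H$ as a pushout built from classical objects and $\Gml$-type torsors, whose generic fibres are explicit: $\Gml$ becomes $\GmK^{\rig}$ on $\eta$.

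\textbf{Step 3: the $p$-divisible rigid analytic group axioms.} Multiplication by $p$ is finite surjective on both outer terms, and topologically nilpotent on the open-ball part. Since the étale quotient is discrete, these properties pass to $H^{\mathrm{ad}}_\eta$. Alternatively, one can use Proposition \ref{prop:Fargues-structure} (2): the logarithm of $(H^\circ)^{\mathrm{ad}}_\eta$ extends through the étale quotient, which gives an étale surjection onto $\Lie\otimes\Ga^{\rig}$ with kernel $H^{\mathrm{ad}}_\eta[p^\infty]$.

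\textbf{Step 4: dualizability and functoriality.} Assume $\cS^{\mathrm{ad}}_\eta$ is smooth. The Hodge–Tate map $f_{H^{\mathrm{ad}}_\eta}$ restricts to that of $(H^\circ)^{\mathrm{ad}}_\eta$ on the subobject; this map is injective with cokernel $\omega_{(H^\circ)^D}$ by the classical theory, and the étale quotient contributes a weight-$0$ piece. An extension of the resulting short exact sequences shows that $f$ is injective with cokernel an étale vector bundle. Functoriality is clear from the definition of $(-)^{\mathrm{ad}}_\eta$, and the local constructions glue by uniqueness.
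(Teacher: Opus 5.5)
Your outline fails at the two points where the real content of the statement lies. The paper itself gives no argument here; it only cites \cite[Proposition~4.4]{Gert26}.

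\textbf{The reduction step.} You assume a connected--\'etale sequence with $H^\circ$ and $H^{\et}$ both classical. This, together with Kato's characterization of dual representability through it, is only available over a henselian local base with a chart, as in assumptions (i)--(iii). Passing to an \'etale neighbourhood does not give this. When $\cS$ has positive-dimensional special fibre, the connected--\'etale sequence can fail on every \'etale neighbourhood, because the height of the \'etale part jumps; this already happens for the classical $p$-divisible group of a family of elliptic curves with both ordinary and supersingular fibres. You would need some other classical-by-\'etale filtration, which you do not provide. As written, you therefore treat at best local bases such as $\Spf\cO_K$.

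\textbf{Step~2, the more serious gap.} The exactness you claim does not follow from Definition~\ref{def:generic-fiber}. The issue is not whether Kummer covers become \'etale on $\eta$; they do. The issue is that $H^{\mathrm{ad}}_\eta$ only evaluates $H$ on test objects $(\Spf R_0,M_{R_0})$ whose log structure is pulled back \emph{strictly} from $\cS$, and it sheafifies only for rational covers. On such a test object $M^{\gp}_{R_0}\cong\cO^\times\times\underline{\Z}$ with $\pi\mapsto(1,1)$. Adjoining $\pi^{1/p^n}$ on the generic fibre again produces only strict test objects.

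Concretely, take $H=[\Z\xrightarrow{1\mapsto\pi}\Gml][p^\infty]$ over $\Spf\cO_K$. A section of $H[p^n]$ over a test object lying over $a\in\Z/p^n\Z$ would be a $p^n$-th root of $\pi^a$ in $\Gamma(M^{\gp}_{R_0})$, so $H[p^n](R_0)=\mu_{p^n}(R_0)$. Hence the map $H^{\mathrm{ad}}_\eta\to(H^{\et})^{\mathrm{ad}}_\eta=\underline{\Q_p/\Z_p}$, which you need to be surjective, is zero, even after \'etale sheafification. With these test objects one simply recovers $\widehat{\G}_m^{\rig}$, which has the wrong Tate module.

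To make ``kfl torsors become \'etale torsors on $\eta$'' true, you need either test objects whose log structures contain roots of $\pi$, or a direct construction of the generic fibre, for instance from Kato's pair $(G,N)$ as you suggest. You name this as the main obstacle but leave it open, and it is the heart of the proposition.

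\textbf{Step~4, dualizability.} An extension of $v$-bundles of the form $\omega\otimes\mathcal O_{S,v}$ need not be of that form. Work over $\Spa K$ and write $T_p(H^{\et})\otimes_{\Z_p}C=\omega^{\et}\otimes_K C$. The cokernel of $f$, twisted by $(1)$, is an extension of $\omega^{\et}\otimes_K C$ by $\omega_{(H^\circ)^D}\otimes_K C$. Such extensions are classified by $\Hom_K(\omega^{\et},\omega_{(H^\circ)^D})\otimes_K H^1(\Gamma_K,C)$. By Tate, $H^1(\Gamma_K,C)\cong K\neq0$, and a non-zero class gives a cokernel that is not Hodge--Tate. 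Over a smooth base of positive dimension there are further, Higgs-type, obstructions.

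So you must actually prove that $V_p$ of the generic fibre is Hodge--Tate. This uses the specific monodromy form of the log extension, not merely that it is an extension of an \'etale group by a connected one. Over $\Spf\cO_K$ it follows from the semi-stability of the associated Galois representation \cite[Thm.~B]{BWZ23}.
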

In the following we assume moreover that $\cS^{\mathrm{ad}}_\eta$ is smooth. Then
composed with the functor $\mathscr{M}_{1,\cS}^{\log}\ra \BT^{\log}_{\cS,\mathrm d}$ in subsection \ref{subsec:log formal 1-motives}, we get a functor
\[\mathscr{M}_{1,\cS}^{\log}\ra \BT^{\mathrm{rig},\dual}_{\cS^{\mathrm{ad}}_\eta},\]which should factors through $\mathscr{M}_{1,\cS^{\mathrm{ad}}_\eta}$, cf. Remark \ref{rem: p-div rigid groups associated to rel rigid 1-motives}. In case $\cS=\Spf\,\Ol_K$, this is clearly true by the constructions; see also the final paragraph of this subsection.

We have the following generalization of Theorem \ref{thm good reduction p-divisible}.
\begin{theorem}\label{thm semi-stable p-div rigid analytic}
Let $S=\Spec\,\Ol_K$.
The following categories are equivalent by natural functors:
\begin{enumerate}
    \item The category $\BT_{S,\mathrm{d}}^{\log}$ of (dual-representable) log $p$-divisible groups over $S$,
    \item The category $\BT_K^{\rig,\dual,\st}$ of dualizable $p$-divisible rigid analytic groups with semi-stable reduction,
    \item The category $\Rep_{\Z_p}^{\st,\{0,1\}}(\Gamma_K)$ of semi-stable $\Z_p$-representations of $\Gamma_K$ with Hodge-Tate weights in $\{0,1\}$.
\end{enumerate}
\end{theorem}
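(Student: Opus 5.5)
The plan is to prove the equivalences $(1)\Leftrightarrow(3)$ and $(2)\Leftrightarrow(3)$ separately and check that the natural functors are compatible. The functor $(1)\to(2)$ is built in three steps. First, complete along the special fiber using Proposition \ref{prop:equivalence b.t. logBT over algebraic base and formal base}, which identifies $\BT^{\log}_{S,\mathrm d}\simeq\BT^{\log}_{\cS,\mathrm d}$ for $\cS=\Spf\Ol_K$. Second, take the adic generic fiber via Proposition \ref{prop:logBT-generic-fiber}, which lands in $\BT^{\rig,\dual}_K$. Third, compose with $T_p$ to get the functor $(1)\to(3)$. Note that the Tate module of $H^{\mathrm{ad}}_\eta$ is the Tate module of the generic fiber $H_K$ of $H$, since the log structure is trivial on $\eta$.

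The equivalence $(2)\Leftrightarrow(3)$ is exactly Theorem \ref{thm:Fargues-ss-criterion}, so no further work is needed there. It remains to show that $T_p$ identifies $\BT^{\log}_{S,\mathrm d}$ with $\Rep^{\st,\{0,1\}}_{\Z_p}(\Gamma_K)$, i.e. that $H\mapsto T_p(H_K)$ is fully faithful with the stated essential image. Two inputs are needed:
\begin{itemize}
\item the theorem \cite[Thm.~B]{BWZ23}, already invoked in the proof of Theorem \ref{thm:Neron-Ogg-Shafarevich for semi-stable reduction of rigid 1-motives}, which gives that $T_p(H_K)$ is semi-stable with weights in $\{0,1\}$ and, conversely, that every such lattice arises from a unique $H\in\BT^{\log}_{S,\mathrm d}$;
\item Kato's decomposition (Corollary \ref{Cor: Kato's decomposition thm for log p-div gp}), describing $H$ as a pair $(H_{\mathrm c},N)$ with $H_{\mathrm c}$ classical and $N:H^{\et}(1)\to H^\circ$.
\end{itemize}
Full faithfulness then reduces to Tate's theorem for the classical $p$-divisible groups $H^\circ,H^{\et}$, together with the observation that compatibility with $N$ is detected on the monodromy operator of $\bfD_{\st}$. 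If \cite[Thm.~B]{BWZ23} only yields essential surjectivity, I would prove full faithfulness directly from this dévissage: a morphism of Galois lattices restricts to the connected and étale parts, where Tate/Kisin applies, and the $N$-compatibility follows from comparing monodromy operators.

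Since $(2)\to(3)$ is $T_p$ and the composite $(1)\to(2)\to(3)$ is the functor above, the two-out-of-three property for equivalences shows that the functor $(1)\to(2)$ is an equivalence. This is the natural functor that the statement refers to.

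The main obstacle is checking that the adic generic fiber of $H$ really has the expected Hodge–Tate data: its Lie algebra should be $\Lie(H^\circ)\otimes K$, and its Tate module should be $T_p(H_K)$ with its Hodge–Tate map. Only then do the two routes to $(3)$ agree, and only then does the image land in the semi-stable part in the sense of Definition \ref{def:ssred-rigid}. I would handle this by dévissage along $0\to H^\circ\to H\to H^{\et}\to0$. The adic fiber of $H^\circ$ is a formal group ball, as in \cite{SW13}, and the adic fiber of $H^{\et}$ is étale. Exactness of the generic fiber on this sequence, using Proposition \ref{prop:Fargues-structure} (3), then identifies $(H^{\mathrm{ad}}_\eta)^0=(H^\circ)^{\mathrm{ad}}_\eta$ and $\pi_0=H^{\et}_K$, which is unramified. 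This verifies conditions (1) and (2) of Definition \ref{def:ssred-rigid} directly, and condition (3) comes from the semi-stability of $T_p$.
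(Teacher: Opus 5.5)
Your argument is correct, but the key step takes a different route from the paper. Both proofs complete via Proposition~\ref{prop:equivalence b.t. logBT over algebraic base and formal base}, use the adic generic fiber of Proposition~\ref{prop:logBT-generic-fiber}, and get $(2)\simeq(3)$ from Theorem~\ref{thm:Fargues-ss-criterion}.

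The difference is in $(1)\simeq(2)$. The paper proves $\BT^{\log}_{\cS,\mathrm d}\simeq\BT^{\rig,\dual,\st}_K$ directly by log prismatic Dieudonn\'e theory (Inoue, via the proof of Proposition~4.5 of \cite{Gert26}). You instead pass through the Galois side. You prove $(1)\simeq(3)$ with \cite{BWZ23}, Kato's decomposition and Tate's theorem; this is the same input the paper uses for $(2)\Leftrightarrow(3)$ in Theorem~\ref{thm:Neron-Ogg-Shafarevich for semi-stable reduction of rigid 1-motives}. You then conclude by two-out-of-three.

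Your route needs only \cite{BWZ23}, Tate's theorem, and the single compatibility $T_p(H^{\mathrm{ad}}_\eta)\cong T_p(H_K)$. The paper's route never goes through Galois representations for $(1)\simeq(2)$, and it is the one that extends to relative bases.

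Two small comments.
\begin{enumerate}
\item Your ``main obstacle'' check of the Lie algebra and of Definition~\ref{def:ssred-rigid}~(1)--(3) is unnecessary. Since $T_p$ is fully faithful on all of $\BT^{\rig,\dual}_K$ (Theorem~\ref{thm dual p-div rigid and HT}), the natural isomorphism of Tate modules suffices.
\item In your fallback proof of full faithfulness, matching the graded pieces and the $N$'s is not enough: you must still reassemble the morphism. This works for two reasons. First, the class of $T_p(H_K)$ differs (up to sign) from that of $T_p(H_{\mathrm c,K})$ by the cup product of $N$ with the Kummer class of $\pi$, and the monodromy separates this summand rationally from the crystalline part. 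Second, $\Hom_{\Gamma_K}(T_p(H_1^{\et}),T_p(H_2^{\circ}))=\Hom_{\Ol_K}(H_1^{\et},H_2^{\circ})=0$, which makes the lift from the graded pieces unique.
\end{enumerate}
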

\begin{proof}
Recall that by Proposition \ref{prop:equivalence b.t. logBT over algebraic base and formal base}, we have the equivalence of categories $\BT_{S,\mathrm{d}}^{\log}\cong \BT_{\mathcal{S},\mathrm{d}}^{\log}$, where $\cS=\Spf\,\Ol_K$.
Consider the generic fiber functor of Proposition \ref{prop:logBT-generic-fiber}: $\BT_{\mathcal{S},\mathrm{d}}^{\log}\ra \BT_K^\rig, H\mapsto H_\eta^{\mathrm{ad}}$. Then $H_\eta^{\mathrm{ad}}$ is dualizable. To show that it induces an equivalence \[\BT_{\mathcal{S},\mathrm{d}}^{\log}\simeq \BT_K^{\rig,\dual,\st},\] one may apply the log prismatic Dieudonn\'e theory (cf. \cite{Ino25b}); for more details, see the proof of Proposition 4.5 of \cite{Gert26}. The equivalence $\BT_K^{\rig,\dual,\st} \simeq \Rep_{\Z_p}^{\st,\{0,1\}}(\Gamma_K)$ has been proved in Theorem \ref{thm:Fargues-ss-criterion}.
\end{proof}
\begin{remark}
The above theorem is the local analogue of Theorem \ref{thm:1-1 correspondence b.t. log fml 1-mot and strict sst rigid 1-mot} and Theorem \ref{thm:Neron-Ogg-Shafarevich for semi-stable reduction of rigid 1-motives}.
\end{remark}

Theorem \ref{thm semi-stable p-div rigid analytic} is compatible with the constructions in subsections \ref{subsec:log formal 1-motives} and \ref{subsection rig p-div and rig 1 motives}, in the sense that we have the following commutative diagram
\[\xymatrix{
\mathscr{M}_{1,\mathcal{S}}^{\log}\ar[d]\ar[r]& \BT_{\mathcal{S},\mathrm{d}}^{\log}\ar[d]\\
\mathscr{M}_{1,K}\ar[r]& \BT_K^{\rig,\dual,\mathrm{pst}},
}\]
where the left and right vertical arrows are the rigid analytic generic fiber functors, and both of them are fully faithful with essential images $\mathscr{M}_{1,K}^{\mathrm{str,st}}$ and $\BT_K^{\rig,\dual,\mathrm{st}}$ respecively. We have a similar compatibility for the good reduction case. In fact, we have the following commutative diagram with cartesian squares by Theorems \ref{thm:Neron-Ogg-Shafarevich for good reduction of rigid 1-motives}, \ref{thm good reduction p-divisible} and Theorems \ref{thm:Neron-Ogg-Shafarevich for semi-stable reduction of rigid 1-motives}, \ref{thm semi-stable p-div rigid analytic} respectively:
\[
\xymatrix{
\mathscr{M}_{1,K}^{\mathrm{good}}\ar[r]\ar@{^{(}->}[d] & \BT_{K}^{\rig,\dual,\mathrm{good}}\ar@{^{(}->}[d]\\
\mathscr{M}_{1,K}^{\mathrm{str,st}}\ar[r]\ar@{^{(}->}[d] & \BT_{K}^{\rig,\dual,\mathrm{st}}\ar@{^{(}->}[d]\\
\mathscr{M}_{1,K}\ar[r] & \BT_{K}^{\rig,\dual,\mathrm{pst}}\ar@{^{(}->}[d]\\
 & \BT_{K}^{\rig,\dual},\\
}
\] where all vertical arrows are fully faithful functors.
Moreover, the upper square can be embeded into the following cubic commutative diagram
	\[
	\xymatrix@C=0.5cm{
		\mathscr{M}_{1,\cS} \ar[rr] \ar@{^{(}->}[dd] \ar[dr] && \BT_{\cS} \ar@{^{(}->}[dd] \ar[dr] \\
		& \mathscr{M}_{1,K}^{\mathrm{good}} \ar[rr] \ar@{^{(}->}[dd] && \BT_{K}^{\mathrm{rig,dual},\mathrm{good}} \ar@{^{(}->}[dd] \\
		\mathscr{M}_{1,\cS}^{\log} \ar[rr] \ar[dr] && \BT_{\cS,\mathrm{d}}^{\log} \ar[dr] \\
		& \mathscr{M}_{1,K}^{\mathrm{str,st}} \ar[rr] && \BT_{K}^{\mathrm{rig,dual},\mathrm{st}},
	}
	\]
where all right-down arrows are the rigid analytic generic fiber functors and are equivalences.

\section{Integration maps and the conjugate uniformization}\label{sec:log-int}

In this section, we construct integration maps for dualizable $p$-divisible rigid analytic groups and study the conjugate uniformization of abeloid varieties, extending previous constructions of \cite{IMZ22} and \cite{HMW24} in the algebraic setting and good reduction case. As in the previous section, we assume $K|\Q_p$  throughout.

\subsection{Integration maps for dualizable \(p\)-divisible rigid analytic groups}
\label{subsec:def-ILog}

Let
\[
G\in \BT^{\rig,\dual}_K,\qquad
T:=T_p(G),\qquad
V:=T\otimes_{\Z_p}\Q_p,\qquad
W:=\Lie(G),
\]
and let
\[
s_G:T\longrightarrow W\otimes_K C(1)
\]
be the canonical Hodge--Tate projection of the preceding subsection.

We work on the \(v\)-site \((\Spa\, K)_v\), and write \(\underline A\) for the constant
abelian \(v\)-sheaf attached to an abelian group \(A\). If $G$ is a $p$-divisible rigid analytic group over $K$, we define its universal cover as the $v$-sheaf
\[\widetilde{G}:=\varprojlim_{[p]} \,G.\]
Recall that by Theorem \ref{thm:FarguesGb},
for \(G\in \BT^{\rig}_K\), evaluation at \(1\in\Z_p\) induces an isomorphism of abelian
\(v\)-sheaves
$
\ev_1:\ \Hom(\underline{\Z_p},G)\xrightarrow{\ \sim\ } G$.
In particular, for every \(x\in G(K)\) there is a unique morphism of \(v\)-sheaves
\[
\varphi_x:\ \underline{\Z_p}\longrightarrow G
\]
such that \(\varphi_x(1)=x\).

Fix a point \(x\in G(K)\). Let \(\varphi_x:\underline{\Z_p}\to G\) be the corresponding morphism. Consider the pushout square in abelian \(v\)-sheaves
\begin{equation}\label{eq:pushout-Kummer-v-final}
\begin{tikzcd}
\underline{\Z_p}\arrow[r,"-\varphi_x"]\arrow[d]
&
G\arrow[d]
\\
\underline{\Q_p}\arrow[r]
&
\mathscr G_x.
\end{tikzcd}
\end{equation}
It yields a short exact sequence of abelian \(v\)-sheaves
\begin{equation}\label{eq:sheaf-extension-v-final}
0\longrightarrow G\longrightarrow \mathscr G_x
\longrightarrow \underline{\Q_p/\Z_p}\longrightarrow 0.
\end{equation}

\begin{lemma}\label{lem:finite-level-v-final}
For every \(n\ge 1\), multiplication by \(p^n\) on \(\mathscr G_x\) is surjective, and
its kernel sits in a short exact sequence of abelian \(v\)-sheaves
\[
0\longrightarrow G[p^n]\longrightarrow \mathscr G_x[p^n]
\longrightarrow \underline{\Z/p^n\Z}\longrightarrow 0.
\]
\end{lemma}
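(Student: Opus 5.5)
The plan is to apply the snake lemma for multiplication by \(p^n\) to the short exact sequence \eqref{eq:sheaf-extension-v-final}
\[
0\to G\to \mathscr G_x\to \underline{\Q_p/\Z_p}\to 0
\]
of abelian \(v\)-sheaves. This gives an exact sequence
\[
0\to G[p^n]\to \mathscr G_x[p^n]\to \underline{\Q_p/\Z_p}[p^n]\xrightarrow{\ \delta\ } G/p^nG\to \mathscr G_x/p^n\mathscr G_x\to \underline{\Q_p/\Z_p}/p^n .
\]
We have \(\underline{\Q_p/\Z_p}[p^n]=\underline{\Z/p^n\Z}\), and \(\underline{\Q_p/\Z_p}/p^n=0\) because \(p^n\) is surjective on \(\Q_p/\Z_p\) and hence on the constant sheaf. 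So the lemma reduces to a single input: multiplication by \(p^n\) is surjective on \(G\) as a \(v\)-sheaf, i.e. \(G/p^nG=0\).

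Granting that input, the exact sequence collapses. Since \(G/p^nG=0\), the connecting map \(\delta\) is zero, and the last terms give \(\mathscr G_x/p^n\mathscr G_x=0\), which is the surjectivity of \(p^n\) on \(\mathscr G_x\). The first terms give exactly the claimed short exact sequence \(0\to G[p^n]\to \mathscr G_x[p^n]\to \underline{\Z/p^n\Z}\to 0\).

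The real content is therefore the surjectivity of \(p^n\) on \(G\) in the \(v\)-topology. By the definition of a \(p\)-divisible rigid analytic group, \([p]:G\to G\) is finite surjective, and by Lemma \ref{lem:Kummer} it is finite \'etale and surjective, with
\[
0\to G[p^n]\to G\xrightarrow{[p^n]} G\to 0
\]
exact on the big rigid-\'etale site. Finite \'etale surjections are \'etale covers, and \'etale covers of the associated diamonds are \(v\)-covers. Hence \([p^n]\) is surjective as a map of \(v\)-sheaves, which gives \(G/p^nG=0\). This is the one step I would state carefully, citing the comparison of \'etale and \(v\)-sheaf surjectivity \cite{Sch17}.

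As an optional cross-check, one can describe \(\mathscr G_x[p^n]\) explicitly: a local lift of \(1/p^n\in\underline{\Z/p^n\Z}\) corresponds to a pair \((1/p^n,g)\) with \(p^ng=x\), where \(g\) is a \(p^n\)-th root of \(x\) that exists \(v\)-locally. This identifies \(\mathscr G_x[p^n]\) with the Kummer-type torsor. It is not needed for the proof.
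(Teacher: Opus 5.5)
Your proposal is correct and matches the paper's proof: apply the snake lemma to multiplication by \(p^n\) on \(0\to G\to\mathscr G_x\to\underline{\Q_p/\Z_p}\to 0\), using that \([p^n]\) is surjective on \(G\) as a \(v\)-sheaf and on \(\underline{\Q_p/\Z_p}\). Your extra remark, that the finite \'etale surjection \([p^n]\) is a \(v\)-cover, just spells out the step the paper states in one line.
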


\begin{proof}
Apply the snake lemma to multiplication by \(p^n\) on
\eqref{eq:sheaf-extension-v-final}. Since \(G\) is \(p\)-divisible,
\([p^n]:G\to G\) is surjective as a morphism of \(v\)-sheaves, and multiplication by \(p^n\)
is also surjective on \(\underline{\Q_p/\Z_p}\). Thus
\[
0\longrightarrow G[p^n]\longrightarrow \mathscr G_x[p^n]
\longrightarrow (\underline{\Q_p/\Z_p})[p^n]\longrightarrow 0.
\]
As \((\Q_p/\Z_p)[p^n]=\Z/p^n\Z\), the claim follows.
\end{proof}

Put \( I_p:=\Z[1/p]\cap[0,1).\) The natural map \(I_p\to\Z[1/p]/\Z\simeq\Q_p/\Z_p\) is a bijection.  The
carrying-law description of the pushout \(\mathscr G_x=G\coprod_{\underline{\Z}_p}\underline{\Q}_p\) of
\eqref{eq:pushout-Kummer-v-final} shows that it is represented by a commutative rigid analytic group
\[
G_x\simeq\coprod_{a\in I_p}G_a,
\qquad G_a\simeq G,
\]
with group law \( (g,a)+(h,b)
=
\bigl(g+h-\varphi_x(\lfloor a+b\rfloor),\{a+b\}\bigr)\)
(see also \cite[(3.1.1)--(3.1.2), Example~4.2]{HMW24}).  
On the component indexed by \(a\in I_p\), multiplication by \(p\) is \( (g,a)\longmapsto
\bigl(pg-\varphi_x(\lfloor pa\rfloor),\{pa\}\bigr). \) Let \( T_x:=\varprojlim_n\mathscr G_x[p^n](C),\) and \(V_x:=T_x\otimes_{\Z_p}\Q_p.\)

\begin{proposition}
\label{prop:representable-Gx-v-final}
The group \(G_x\)  is a 
\(p\)-divisible rigid analytic group over \(K\).  The sequence
\eqref{eq:sheaf-extension-v-final} is therefore represented by a short exact
sequence of $p$-divisible rigid analytic groups
\begin{equation}\label{eq:Gx-extension-v-final}
0\longrightarrow G\longrightarrow G_x
\longrightarrow\underline{\Q_p/\Z_p}\longrightarrow0.
\end{equation}
Moreover, \( \Lie(G_x)=\Lie(G)=W,\) the finite-level torsion sequences induce a short exact sequence of
continuous \(\Gamma_K\)-modules
\begin{equation}\label{eq:Tx-sequence-v-final}
0\longrightarrow T\longrightarrow T_p(G_x)=T_x
\longrightarrow\Z_p\longrightarrow0,
\end{equation}
and
the group $G_x$ is dualizable.
Finally, the induced sequence of universal covers
\[
0\longrightarrow\widetilde G\longrightarrow\widetilde{G_x}
\longrightarrow\underline{\Q_p}\longrightarrow0
\]
is split by a canonical section \( \sigma_x:\underline{\Q_p}\longrightarrow\widetilde{G_x}.\)
\end{proposition}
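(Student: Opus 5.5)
We verify the defining properties of a $p$-divisible rigid analytic group directly on the explicit model $G_x\simeq\coprod_{a\in I_p}G_a$, and deduce the remaining assertions from Fargues' structure results (Proposition \ref{prop:Fargues-structure}) and the Hodge--Tate criterion of Theorem \ref{thm dual p-div rigid and HT}.

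First, $G_x$ is a smooth commutative rigid analytic group. It is a disjoint union of copies of $G$, and the carrying-law formula is given by morphisms of rigid spaces, since $\varphi_x(\lfloor a+b\rfloor)$ is a fixed $K$-point $\lfloor a+b\rfloor\cdot x$ and $\lfloor a+b\rfloor\in\{0,1\}$. Its identity component is $G_0=G$, so $\Lie(G_x)=\Lie(G)=W$.

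Next, finiteness and surjectivity of $[p]$. On the component $G_a$, the map $[p]$ is $g\mapsto pg-\lfloor pa\rfloor x$ into $G_{\{pa\}}$. This is a translate of $[p]:G\to G$, hence finite and surjective by the definition of $G$. The induced map on indices $a\mapsto\{pa\}$ is $p$-to-one and onto $I_p$, since multiplication by $p$ on $\Q_p/\Z_p$ is surjective with kernel of order $p$. Therefore $[p]:G_x\to G_x$ is finite surjective.

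Then topological nilpotence. This only concerns affinoid neighbourhoods of $0$. Any such neighbourhood is quasi-compact, so it meets finitely many components; shrinking, we may assume it lies in $G_0=G$. There $[p]$ agrees with that of $G$, so topological nilpotence is inherited.

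This makes $G_x$ a $p$-divisible rigid analytic group, and it represents $\mathscr G_x$ by construction. The sequence \eqref{eq:Gx-extension-v-final} is exact in $\BT^{\rig}_K$ by Proposition \ref{prop:Fargues-structure}(3): it is exact on Lie algebras ($W\to W\to0$) and on $p^\infty$-torsion by Lemma \ref{lem:finite-level-v-final}.

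For the Tate module sequence \eqref{eq:Tx-sequence-v-final}, take $C$-points of the finite-level sequences of Lemma \ref{lem:finite-level-v-final} and pass to the inverse limit. This limit is exact because the groups $G[p^n](C)$ are finite, so the Mittag-Leffler condition holds. Galois equivariance holds because $x$ is $K$-rational.

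For dualizability, use Theorem \ref{thm dual p-div rigid and HT}: it suffices that $V_x$ be Hodge--Tate with weights in $\{0,1\}$ and that $\Lie(G_x)$ be the right graded piece. This is the main obstacle, because an extension of $\Q_p$ by a Hodge--Tate representation need not be Hodge--Tate. We would argue via Fargues' classification (Theorem \ref{thm Fargues classification}). The Hodge--Tate map $f_{G_x}:W\otimes C\to T_x\otimes C(-1)$ factors through $T\otimes C(-1)$ by functoriality of $G\to G_x$, so it is injective. Its cokernel $\mathrm{coker}(f_G)\oplus C(-1)$-extension is an extension of $C(-1)$ by $\omega_{G^D}\otimes C(-1)$. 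Since $H^1(K,C(-1))=0$ by Tate, this extension splits, giving the required exact sequence with $\omega=\omega_{G^D}\oplus K$. Equivalently, $V_x\otimes C$ is an extension of $C$ by $V\otimes C\simeq W\otimes C(1)\oplus\omega\otimes C$. Such an extension splits because $H^1(K,C(1))=0$ and $H^1(K,C)$ is one-dimensional but $\Hom$-compatible via Sen theory. We would instead justify this using the logarithm sequence: $G_x\to W\otimes\G_a^{\rig}$ has kernel $G_x[p^\infty]$, which forces the Sen operator on $V_x$ to be semisimple with eigenvalues $0,1$.

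Finally, the splitting of universal covers. Define $\sigma_x(q)$ for $q\in\Q_p$ as the compatible system $(q p^{-n})_n$ in $\underline{\Q_p}\to\mathscr G_x$, using the pushout map $\underline{\Q_p}\to\mathscr G_x$. This is a homomorphism into $\varprojlim_{[p]}\mathscr G_x=\widetilde{G_x}$. Composing with the projection to $\widetilde{\underline{\Q_p/\Z_p}}=\underline{\Q_p}$ gives the identity. Canonicity is clear, since the construction uses only the pushout square \eqref{eq:pushout-Kummer-v-final}.
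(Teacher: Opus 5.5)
Most of your argument agrees with the paper's: the componentwise finiteness and surjectivity of $[p]$, the equality $\Lie(G_x)=W$, the Tate-module sequence via Mittag--Leffler, and the section $\sigma_x(q)=(u_x(q),u_x(q/p),\dots)$. Two steps, however, fail as written.

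\emph{Topological nilpotence.} You may shrink the target neighbourhood, but not the source. The condition is $[p]^nU\subset V$ for \emph{every} affinoid neighbourhood $U$ of $0$, and such a $U$ can meet finitely many components $G_a$ with $a\neq 0$. Applying $[p]^m$ moves these pieces into $G_0$, but only as quasi-compact subsets that need not lie near $0$. Nilpotence of $[p]$ on neighbourhoods of $0$ in $G$ therefore does not suffice. You need that every point of $G$ is topologically $p$-torsion, combined with quasi-compactness (Proposition \ref{prop:equiv-top-p-torsion}, or Lemma \ref{lem:uniform-contraction} applied to $G_x$); this is what the paper does. The repair is easy.

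\emph{Dualizability.} This is a genuine gap, in two places.
\begin{itemize}
\item The cokernel of $f_{G_x}(1)$ is an extension of $C$ by $\omega_{G^D}\otimes_K C$. It is therefore classified by $H^1(K,\omega_{G^D}\otimes_K C)=\omega_{G^D}\otimes_K H^1(K,C)$, which is nonzero; the vanishing of $H^1(K,C(-1))$ is irrelevant.
\item Your fallback also fails: the logarithm sequence imposes nothing on the Sen operator. Every $\Z_p$-representation, Hodge--Tate or not, is the Tate module of an \'etale $p$-divisible rigid analytic group (Example \ref{exa rig analytic groups}(2)).
\end{itemize}
The missing input is the $K$-rationality of $x$. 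Apply $\mathrm{qlog}\colon\widetilde G(C)\to D(G)_C$ to $\gamma\widetilde x-\widetilde x$. This shows that the image of $[T_x]$ in $H^1(K,\omega_{G^D}\otimes_K C)$ is the coboundary of $\log_G(x)$ for the extension $0\to\omega_{G^D}\otimes_K C\to D(G)_C\to W\otimes_K C\to 0$, and $V_x$ is Hodge--Tate exactly when this coboundary vanishes.

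The paper argues differently. It transports $\sigma_x$ to a $\Gamma_K$-invariant splitting vector of $V_x\otimes B_{\dR}$ over $B_{\dR}$ (compare $e_x$ in Lemma \ref{lem:kummer-dR-admissibility}) and then ``passes to gradings''. That vector, however, lies only in $\Fil^{-1}$, and its $\gr^{-1}$-component is, up to sign, the image of $\log_G(x)$. So the argument closes only when $V$ is de Rham. In that case $D(G)$ is \'etale and the obstruction vanishes; more directly, $[T_x]$ dies in $H^1(K,V\otimes B_{\dR})$, so $V_x$ is de Rham, hence Hodge--Tate with weights $\{0,1\}$.

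Without the de Rham hypothesis the dualizability clause is false. Take $V$ a nonsplit extension of $\Q_p(1)$ by $\Q_p$:
\begin{itemize}
\item $V$ is Hodge--Tate with weights $\{0,1\}$ but not de Rham, since $H^1_g(K,\Q_p(-1))=0$.
\item By \cite[Prop.~5.41]{Gert26}, $D(G)$ is then not \'etale, so the coboundary $W\to\omega_{G^D}\otimes_K H^1(K,C)$ is nonzero, hence injective because $\dim W=1$.
\item Since $\log_G(G(K))$ contains a neighbourhood of $0$ in $W$, some $x\in G(K)$ yield a non-dualizable $G_x$.
\end{itemize}
So you should prove dualizability under the de Rham hypothesis, via the $B_{\dR}$-splitting; this covers the abeloid application. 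No Sen-theoretic argument of the kind you sketch can give it in general.
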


\begin{proof}
We have \([p]:G\to G\) is finite  \'etale and surjective because \(G\) is a
\(p\)-divisible rigid analytic group.  It follows that \([p]:G_x\to G_x\) is finite \'etale and
surjective. Since \(G_x\) is a disjoint union of copies of the smooth rigid group \(G\), it is also smooth. We next verify topological nilpotence property.  Let \(U,V_0\subset G_x\) be quasi-compact open
neighbourhoods of the identity.  The set of indices \(a\in I_p\) for which
\(U\cap G_a\ne\varnothing\) is finite.  Choose \(m\ge0\) such that
\(p^ma\in\Z\) for every such index.  On \(G_a\) one then has
\[
[p]^m(g,a)=\bigl(p^mg-\varphi_x(p^ma),0\bigr).
\]
Thus \(S:=[p]^m(U)\) is a quasi-compact subset of the open-and-closed
summand \(G_0\simeq G\) indexed by \(0\).  Choose an affinoid open subgroup \(V\subset V_0\cap G_0 \) containing the identity, as in Definition~\ref{def p-top torsion subsheaf}(1).
Since \(G\) is a \(p\)-divisible rigid analytic group, all its points are
topologically \(p\)-torsion.  Hence Proposition~\ref{prop:equiv-top-p-torsion}
gives
\[
G_0=\bigcup_{n\ge0}[p]^{-n}(V).
\]
Because \(V\) is a subgroup, these open subsets form an increasing sequence.
Quasi-compactness of \(S\) therefore gives an integer \(r\ge0\) such that
\[
S\subset[p]^{-r}(V).
\]
Consequently \( [p]^{m+r}(U)=[p]^r(S)\subset V\subset V_0.\)

By Theorem~\ref{thm:FarguesGb}, evaluation at \(1\) identifies
\(G\) with \(\underline{\operatorname{Hom}}(\underline{\Z_p},G)\).  In
particular, \(G\) is canonically a sheaf of \(\Z_p\)-modules and
\(\varphi_x\) is \(\Z_p\)-linear, so \(G_x\) inherits a \(\Z_p\)-action.  For
every object \(U\) of the \(v\)-site and every \(y\in G_x(U)\), scalar
multiplication defines a morphism
\[
\underline{\Z_p}_U\longrightarrow(G_x)_U,
\qquad a\longmapsto ay,
\]
whose value at \(1\) is \(y\).  Hence \( G_x=G_x\langle p^\infty\rangle. \) The open-and-closed copy indexed by \(a=0\) is identified with \(G\) and contains the identity, hence \(\Lie(G_x)=\Lie(G)=W. \)

For every \(n\geq1\), Lemma~\ref{lem:finite-level-v-final} now gives an exact
sequence of finite \'etale group spaces
\[
0\longrightarrow G[p^n]\longrightarrow G_x[p^n]
\longrightarrow\underline{\Z/p^n\Z}\longrightarrow0.
\]
Since \(C\) is algebraically closed, evaluation at \(C\) is surjective on the
right.  Moreover, the transition maps under multiplication by \(p\) on the
systems \(G[p^n](C)\) and \(G_x[p^n](C)\) are surjective.  Thus the relevant
inverse systems satisfy the Mittag--Leffler condition, and passage to inverse
limits gives \eqref{eq:Tx-sequence-v-final}.  
Since
\(G_x[p^n]=\mathscr G_x[p^n]\), this also identifies \(T_p(G_x)\) with the
previously defined \(T_x\). 

Let \( u_x:\underline{\Q_p}\longrightarrow G_x \) be the canonical morphism supplied by the pushout; its restriction to
\(\underline{\Z_p}\) is \(-\varphi_x\).  The quotient map in
\eqref{eq:Gx-extension-v-final} induces
\[
\widetilde{G_x}\longrightarrow
\varprojlim_{[p]}\underline{\Q_p/\Z_p}
\xrightarrow{\ \sim\ }\underline{\Q_p}.
\]
Its kernel is \(\widetilde G\): a compatible sequence lies in the kernel
exactly when all of its terms lie in \(G\).  The formula
\[
\sigma_x(q):=
\bigl(u_x(q),u_x(q/p),u_x(q/p^2),\ldots\bigr)
\]
defines a section.  Thus the universal cover sequence is split exact.

Finally, we show that $G_x$ is dualizable. It suffices to show that $V_p(G_x)$ is a Hodge-Tate representation of $\Gamma_K$. Indeed, from the exact sequence $0\ra V_p(G)\ra V_p(G_x)\ra \Q_p \ra 0$ and the fact $V_p(G)$ is Hodge-Tate with Hodge-Tate weights $\{0,1\}$ (by Theorem \ref{thm dual p-div rigid and HT}), we see that $V_p(G_x)$ also has Hodge-Tate weights $\{0,1\}$, thus by Theorem \ref{thm dual p-div rigid and HT} $G_x$ is dualizable. To show that $V_p(G_x)$ is Hodge-Tate, note that the splitting exact sequence $0\ra\widetilde G\ra\widetilde{G_x}
\ra\underline{\Q_p}\ra 0$ implies that the exact sequence of $\Gamma_K$-equivariant filtered vector spaces
\[0\ra V_p(G)\otimes B_{\dR}\ra V_p(G_x)\otimes B_{\dR}\ra B_{\dR}\ra 0\]
admits a $\Gamma_K$-equivariant splitting.
This follows from the discussions at the beginning of subsection \ref{subsection int map universal covers}, where we interpret the universal covers of  $p$-divsible rigid analytic groups as coherent sheaves over the Fargues-Fontaince curve and modifications of $\mathbb{B}_{\dR}^+$-modules; see also \cite{Gert26} section 5. Then passing to gradings, we get a $\Gamma_K$-equivariant split exact sequence of vector spaces
\[0\ra V_p(G)\otimes B_{\HT}\ra V_p(G_x)\otimes B_{\HT}\ra B_{\HT}\ra 0.\]
Therefore, $V_p(G_x)\in \ker\big(\Ext^1_{\Gamma_K}(\Q_p, V_p(G))\cong H^1(K, V_p(G))\ra H^1(K, V_p(G)\otimes B_{\HT})\big)$ which is a Hodge-Tate representation of $\Gamma_K$.
\end{proof}

\begin{remark}\label{rem:kummer-cocycle-equals-Tx}
A lift \(\widetilde x\in\widetilde G(C)\) of \(x\in G(K)\) exists.  Indeed,
\([p]:G_C\to G_C\) is finite \'etale and surjective, and \(C\) is
algebraically closed, so compatible \(p\)-power roots of \(x\) may be chosen
recursively.  For any such lift, put
\[
\tau_x(\widetilde x):=
\widetilde x+\sigma_x(1)\in\widetilde{G_x}(C).
\]
Its zeroth coordinate is
\[
x+u_x(1)=x-\varphi_x(1)=0,
\]
so \(\tau_x(\widetilde x)\in T_p(G_x)=T_x\), and it maps to
\(1\in\Z_p\).  Since \(\sigma_x(1)\) is \(K\)-rational,
\[
\gamma\tau_x(\widetilde x)-\tau_x(\widetilde x)
=
\gamma\widetilde x-\widetilde x.
\]
Consequently the cocycle
\(\gamma\mapsto\gamma\widetilde x-\widetilde x\) represents the extension
class of \eqref{eq:Tx-sequence-v-final}, which (and the extension \ref{eq:Gx-extension-v-final}) will  be called the Kummer extension associated to $x$.
\end{remark}

\begin{lemma}\label{lem:PsiG-additive-final}
There is a canonical homomorphism
\[
\Psi_G:H^1(K,T)\longrightarrow \left( \Lie(G)\otimes_K C(1)/s_G(T)\right)^{\Gamma_K}
\]
characterized as follows.  If \(e\in H^1(K,T)\) is represented by a
continuous cocycle \(c:\Gamma_K\to T\), there is a unique \(a_c\in \Lie(G)\otimes_K C(1)\)
satisfying
\begin{equation}\label{eq:PsiG-cocycle-equation}
\sigma(a_c)-a_c=s_G(c(\sigma))
\qquad(\sigma\in\Gamma_K),
\end{equation}
and
\[
\Psi_G(e)=a_c\bmod s_G(T).
\]
\end{lemma}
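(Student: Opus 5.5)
The plan is to use the canonical splitting of the Hodge--Tate sequence \eqref{eq:HT} together with Tate's vanishing $H^1(K,C(j))=0$ for $j\neq0$ (and $C^{\Gamma_K}=K$). Write $T\otimes_{\Z_p}C\cong (W\otimes_K C(1))\oplus(\omega_{G^D}\otimes_K C)$ via $\Gamma_K$-equivariant projections, the first of which is $s_G$ extended $C$-linearly. Given a continuous cocycle $c:\Gamma_K\to T$, the composite $s_G\circ c$ is a continuous cocycle with values in $W\otimes_K C(1)\cong C(1)^{\dim W}$. Since $H^1_{\mathrm{cont}}(K,C(1))=0$ by Tate, it is a coboundary: there exists $a_c$ with $\sigma(a_c)-a_c=s_G(c(\sigma))$ for all $\sigma$. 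This $a_c$ is unique because $(W\otimes_K C(1))^{\Gamma_K}=W\otimes_K C(1)^{\Gamma_K}=0$, again by Tate. This gives \eqref{eq:PsiG-cocycle-equation} and the characterization of $a_c$.

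Next I would check that the class $a_c\bmod s_G(T)$ depends only on $e$. If $c'=c+(\sigma\mapsto \sigma t-t)$ for some $t\in T$, then $s_G(c'(\sigma))=s_G(c(\sigma))+\sigma(s_G(t))-s_G(t)$ by $\Gamma_K$-equivariance of $s_G$. By uniqueness, $a_{c'}=a_c+s_G(t)$, so the two values agree modulo $s_G(T)$. Additivity follows from uniqueness as well, since $a_{c_1+c_2}=a_{c_1}+a_{c_2}$, and canonicity holds because $s_G$ is canonical.

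It remains to show that the image is $\Gamma_K$-invariant in the quotient. For $\tau\in\Gamma_K$, the cocycle relation gives $\tau(a_c)-a_c=s_G(c(\tau))\in s_G(T)$, so $a_c\bmod s_G(T)$ is $\Gamma_K$-fixed. Here $s_G(T)$ is $\Gamma_K$-stable, so the quotient carries a $\Gamma_K$-action.

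The only nontrivial input is Tate's theorem in the continuous-cohomology form $H^i(\Gamma_K,C(1))=0$ for $i=0,1$. This requires continuity of $s_G\circ c$ for the $p$-adic topology on $C$, which is clear because $s_G$ is $\Z_p$-linear on a finite free module. I do not expect a genuine obstacle beyond this point.
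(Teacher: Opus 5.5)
Your proposal is correct and follows the paper's proof essentially step for step. Both arguments get existence of $a_c$ from $H^1(K,\Lie(G)\otimes_K C(1))=0$ and uniqueness from $(\Lie(G)\otimes_K C(1))^{\Gamma_K}=0$, then check that changing the cocycle by a coboundary shifts $a_c$ by $s_G(t)$, deduce additivity from uniqueness, and note that $\tau(a_c)-a_c=s_G(c(\tau))\in s_G(T)$ gives $\Gamma_K$-invariance of the class.
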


\begin{proof}
Existence in \eqref{eq:PsiG-cocycle-equation} follows from
\(H^1(K,\Lie(G)\otimes_K C(1))=0\), and uniqueness follows from \(\left ( \Lie(G)\otimes_K C(1) \right)^{\Gamma_K}=0\).  The
resulting class modulo \(s_G(T)\) is \(\Gamma_K\)-invariant.  Replacing
\(c\) by \(c(\sigma)+\sigma(t)-t\) replaces \(a_c\) by
\(a_c+s_G(t)\), so the class in \(\Lie(G)\otimes_K C(1)/s_G(T)\) is unchanged.  Additivity follows
from uniqueness.
\end{proof}

\begin{definition}[Integration map]\label{def:integration-v-final}
For $G\in \BT_K^{\rig,\dual}$ and \(x\in G(K)\), let
\([T_x]\in H^1(K,T)\) be the class of
\eqref{eq:Tx-sequence-v-final}.  We define
\[
I_G(x):=\iota_G\bigl(\Psi_G([T_x])\bigr)\in \Lie(G)\otimes_K C(1)/s_G(T),
\]
where \(\iota_G:\left(\Lie(G)\otimes_K C(1)/s_G(T)\right)^{\Gamma_K}\hookrightarrow \Lie(G)\otimes_K C(1)/s_G(T)\) is the natural inclusion. The map
\[I_G:G(K)\to \Lie(G)\otimes_K C(1)/s_G(T)\] is called the integration map of $G$.
\end{definition}

\begin{proposition}[Additivity and functoriality]
\label{prop:IG-additive-functorial-v-final}
\begin{enumerate}
\item The map \(I_G:G(K)\to \Lie(G)\otimes_K C(1)/s_G(T)\) is a group homomorphism.
\item If \(f:G\to G'\) is a morphism of dualizable analytic
\(p\)-divisible groups, then the map induced by
\(\Lie(f)\otimes\id\) on the quotients carries \(I_G(x)\) to
\(I_{G'}(f(x))\).
\end{enumerate}
\end{proposition}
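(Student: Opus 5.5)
The plan is to reduce both statements to properties of the class map $x\mapsto[T_x]\in H^1(K,T)$. Since $\Psi_G$ is already additive by Lemma~\ref{lem:PsiG-additive-final}, additivity of $I_G$ follows once this class map is additive. Functoriality follows once $\Psi$ is compatible with morphisms.

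For additivity I will use the cocycle description of Remark~\ref{rem:kummer-cocycle-equals-Tx}. For $x\in G(K)$, pick a lift $\widetilde x\in\widetilde G(C)$. Then $[T_x]$ is represented by the cocycle $\gamma\mapsto\gamma\widetilde x-\widetilde x$. This difference lies in $T=T_p(G)=\ker(\widetilde G(C)\to G(C))$ because $x$ is $K$-rational. Given $x,y\in G(K)$ with lifts $\widetilde x,\widetilde y$, the sum $\widetilde x+\widetilde y$ is a lift of $x+y$, since $\widetilde G$ is a sheaf of groups and the projection is a homomorphism. Its cocycle is therefore the sum of the two cocycles. Independence of the chosen lift is already built into Remark~\ref{rem:kummer-cocycle-equals-Tx}: two lifts differ by an element $t\in T$, which changes the cocycle by the coboundary $\gamma t-t$. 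So $[T_{x+y}]=[T_x]+[T_y]$. Composing with $\Psi_G$ and the inclusion $\iota_G$ gives (1).

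For (2), let $f:G\to G'$ be a morphism. It induces $\widetilde f:\widetilde G\to\widetilde G'$ and $T_p(f):T\to T'$. If $\widetilde x$ lifts $x$, then $\widetilde f(\widetilde x)$ lifts $f(x)$, and the cocycle of $f(x)$ is $T_p(f)$ applied to the cocycle of $x$. So it remains to check that $\Psi$ commutes with $T_p(f)$ on the source and $\Lie(f)\otimes\id$ on the target. The key input is the compatibility of the Hodge--Tate splittings: $s_{G'}\circ T_p(f)=(\Lie(f)\otimes\id)\circ s_G$. This holds because the Hodge--Tate sequence \eqref{eq:HT} is functorial and $s_G$ is its unique $\Gamma_K$-equivariant splitting; equivalently, it follows from the functoriality of Fargues' triples in Theorem~\ref{thm Fargues classification}.

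Granting this, suppose $a_c$ solves \eqref{eq:PsiG-cocycle-equation} for $c$. Then $(\Lie(f)\otimes\id)(a_c)$ solves the corresponding equation for $T_p(f)\circ c$. By the uniqueness in Lemma~\ref{lem:PsiG-additive-final}, it equals $a_{T_p(f)\circ c}$. Since $\Lie(f)\otimes\id$ maps $s_G(T)$ into $s_{G'}(T')$, it descends to the quotients, which gives (2).

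I expect the main obstacle to be justifying that $s_G$ is natural in $G$. The uniqueness argument behind this is that any $\Gamma_K$-equivariant map from $\omega_{G^D}\otimes C$ to $\Lie(G')\otimes C(1)$ vanishes, since $(C(1))^{\Gamma_K}=0$. Once naturality holds, the rest is the routine cocycle bookkeeping described above.
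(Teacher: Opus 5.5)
Your proof is correct, but your argument for (1) takes a different route from the paper's. The paper works at the level of sheaf extensions. It applies $\underline{\Hom}(-,G)$ to $0\to\underline{\Z_p}\to\underline{\Q_p}\to\underline{\Q_p/\Z_p}\to0$ and observes that the connecting map sends $-\varphi_x$ to the pushout class $[\mathscr G_x]$. Hence $x\mapsto[\mathscr G_x]$ is a homomorphism into $\Ext^1(\underline{\Q_p/\Z_p},G)$, which is the Baer-sum compatibility of \cite[Theorem~4.4]{HMW24}. Additivity of $x\mapsto[T_x]$ is then deduced by passing to Tate modules.

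You instead use the explicit cocycle $\gamma\mapsto\gamma\widetilde x-\widetilde x$ from Remark~\ref{rem:kummer-cocycle-equals-Tx}, for which additivity and independence of the lift are immediate. The paper's route gives the stronger fact that the Kummer extensions themselves add under Baer sum, but it tacitly uses that $T_p$ respects Baer sums. Your route sees only the image in $H^1(K,T)$. That is all the definition of $I_G$ requires, and it is the same cocycle bookkeeping the paper later uses for $\widetilde I_G$ in Proposition~\ref{prop:tildeI-descends-final}.

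For (2) the two proofs agree in substance: both rest on naturality of $s_G$ and the uniqueness in \eqref{eq:PsiG-cocycle-equation}. Two differences remain. You push lifts through $\widetilde f$, whereas the paper invokes functoriality of the pushout. You also justify the naturality of $s_G$, which the paper only asserts. After $C$-linearization, the difference $s_{G'}\circ T_p(f)-(\Lie(f)\otimes\id)\circ s_G$ kills $\Lie(G)\otimes_K C(1)$ by functoriality of \eqref{eq:HT}. It therefore factors through a $\Gamma_K$-equivariant map $\omega_{G^D}\otimes_K C\to\Lie(G')\otimes_K C(1)$, and this map vanishes because $C(1)^{\Gamma_K}=0$.
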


\begin{proof}
Apply the contravariant functor
\(\underline{\operatorname{Hom}}(-,G)\) to
\[
0\longrightarrow\underline{\Z_p}
\longrightarrow\underline{\Q_p}
\longrightarrow\underline{\Q_p/\Z_p}
\longrightarrow0.
\]
Under the evaluation isomorphism
\(\underline{\operatorname{Hom}}(\underline{\Z_p},G)\simeq G\), the
connecting homomorphism sends \(-\varphi_x\) to the pushout
\(\mathscr G_x\).  Since \(x\mapsto-\varphi_x\) is a homomorphism, it
follows that \(x\mapsto[\mathscr G_x]\), and therefore
\(x\mapsto[T_x]\), is additive.  This is the same Baer-sum compatibility as in
\cite[Theorem~4.4]{HMW24}.  The first assertion now follows
from Lemma~\ref{lem:PsiG-additive-final}.  The second follows from
functoriality of the pushout, of \(s_G\), and of the unique of
\eqref{eq:PsiG-cocycle-equation}.
\end{proof}

\subsection{Integration maps for universal covers}\label{subsection int map universal covers}
The goal of this subsection is to give a second construction of the integration map $I_G$ when $G$ is de Rham.

Recall that (cf. \cite{Fon03}, \cite{FF}, \cite{Fon21} and \cite{Gert26} subsection 3.7)
an effective Banach-Colmez space $\mathcal{F}$ over $K$ is a sheaf of abelian groups on $K_v$ such that there exists a short exact sequence of $v$-sheaves \[0\ra \mathbb{V}\ra \mathcal{F}\ra E\ra 0,\]
where $\mathbb{V}$ is a $\Q_p$-local system on $K_v$ and $E$ is an \'etale vector bundle on $K$, which we view as a $v$-sheaf on $K_v$. More concretely, we can view $\mathbb{V}$ as a finite $\Q_p$-representation of $\Gamma_K$ and $E$ a finite $K$-vector space.
If $G$ is a $p$-divisible rigid analytic group over $K$, its universal cover
$\widetilde{G}=\varprojlim_{[p]} G$
 is an effective Banach-Colmez space over $K$: it sits into an exact sequence of $v$-sheaves
\[0\ra \underline{V_p(G)}\ra \widetilde{G}\ra \Lie(G) \ra 0,\]
where by abuse of notation we write $\Lie(G)$ for the $v$-sheaf associated to $\Lie(G) \otimes_K \Ga^\rig$.
On the other hand, we have also the following exact sequence of $v$-sheaves
\[0\ra \underline{T_p(G)}\ra \widetilde{G}\ra G\ra 0.\]
Conversely, if $\mathcal{F}$ is an effective Banach-Colmez space over $K$ with a presentation
$0\ra \mathbb{V}\ra \mathcal{F}\ra E\ra 0$ together with a $\Z_p$-lattice $\mathbb{L}\subset \mathbb{V}$, then there exists a unique $p$-divisible rigid analytic group $G$ such that $\mathcal{F}=\widetilde{G}$ and $\mathbb{L}=T_p(G)$, cf. \cite{Far19} and \cite{Gert26} Lemma 3.38.

Let $C=\widehat{\ov{K}}$ and $X=X_{C^\flat,\Q_p}$ the associated Fargues-Fontaine curve. From a $p$-divisible rigid analytic group $G$ over $K$, we can construct a vector bundle $\mathcal{E}(G)$ over $X$, which sits into an exact sequence of coherent sheaves over $X$
\[0\ra V_p(G)\otimes\Ol_X\ra \mathcal{E}(G)\ra i_\ast \Lie(G)\ra 0,\]
where $i: \infty\hookrightarrow X$ is the inclusion of the canonical point attached to the given untilt $C$ of $C^\flat$. By construction, we have
\[\widetilde{G}(C)=H^0(X, \mathcal{E}(G)).\]
By \cite{Gert26} Theorem 5.11, this functor $\mathcal{E}(-)$ factors through the category of effective Banach-Colmez spaces over $K$, and induces a fully faithful exact functor $\mathcal{F}\mapsto \mathcal{E}(\mathcal{F})$ from the category of effective Banach-Colmez spaces over $K$ to the category of $\Gamma_K$-equivariant vector bundles on $X$. Moreover, by loc. cit. Corollary 5.21, if $G$ is a $p$-divisible rigid analytic group over $K$, then there exists a  $\mathbb{B}^+_{\dR}$-module $\Xi(G)$ over $K_v$ which sits into an exact sequence \[0\ra T_p(G)\otimes\mathbb{B}^+_{\dR}\ra  \Xi(G)\ra \Lie(G)\ra 0,\]
where $\mathbb{B}^+_{\dR}$ is the de Rham period sheaf (cf. \cite{Sch13}), viewed as a $v$-sheaf over $K$, which admits a surjection $\theta: \mathbb{B}^+_{\dR}\ra \Ol_{K_v}$ of $v$-sheaves.
This module $\Xi(G)$ depends only on the universal cover $\widetilde{G}$ of $G$. If $G$ is dualizable, then $\Xi(G)$ is finite free over $\mathbb{B}^+_{\dR}$.
Following \cite{Gert26} Definition 5.28, set \[D(G):=\Xi(G)\otimes_{\mathbb{B}^+_{\dR},\theta}\Ol_{K_v} .\] Then by loc. cit. Definition 5.29, there is a natural map of $v$-sheaves
\[\mathrm{qlog}: \widetilde{G}\ra D(G).\]
If $G$ is dualizable, $D(G)$ is a $v$-vector bundle. Moreover, by \cite{Gert26} Lemma 5.30 there is an exact sequence of $v$-sheaves
\[0\ra \omega_{G^D}\ra D(G)\ra \Lie(G)\ra 0\] and 
a commutative diagram of $v$-sheaves
\[\xymatrix{
0\ar[r]& V_p(G) \ar[r]\ar[d] &\widetilde{G} \ar[r] \ar[d]^{\mathrm{qlog}} &\Lie(G) \ar[r]\ar[d]^= &0\\
0\ar[r] &\omega_{G^D}\ar[r] & D(G) \ar[r] & \Lie(G)\ar[r] & 0.
}\]

Evaluating the exact sequence of $v$-sheaves $0\ra \underline{T_p(G)}\ra \widetilde{G}\ra G\ra 0$ in $C$ and $K$ respectively, we get the following exact sequnces 
$0\ra T_p(G)\ra \widetilde{G}(C)\ra G(C)\ra 0$ and 
\begin{equation}\label{eq:long exact seq evaluating on K}
0\ra T_p(G)\ra \widetilde{G}(K)\ra G(K)\ra H^1(K,T_p(G))\ra \cdots,
\end{equation}
thus we have an inclusion \[\widetilde{G}(K)\subset \widetilde{G}(C)\times_{G(C)}G(K) \] and an exact suquence
\[0\ra T_p(G)\ra \widetilde{G}(C)\times_{G(C)}G(K)\ra G(K)\ra 0. \]
We first observe that,
the integration map  $I_G: G(K)\ra (\Lie(G)\otimes_KC(1))/s_G(T_p(G))$ of last subsection induces a map
\[\widetilde{I}_G: \widetilde{G}(C)\times_{G(C)}G(K)\ra \Lie(G)\otimes_KC(1),\]
which we call the integration map for $\widetilde{G}$. More preciely, let \( \widetilde x\in
\widetilde G(C)\times_{G(C)}G(K), \)
and write \(x\in G(K)\) for its image. By
Proposition~\ref{prop:representable-Gx-v-final}, the universal-cover
sequence
\[
0\longrightarrow\widetilde G\longrightarrow\widetilde{G_x}
\longrightarrow\underline{\Q_p}\longrightarrow0
\]
has a canonical section \(\sigma_x:\underline{\Q_p}\longrightarrow\widetilde{G_x}\).  Its composite with
\(\widetilde{G_x}\to G_x\) is the canonical morphism \(u_x:\underline{\Q_p}\longrightarrow G_x\) coming from the pushout, and \(u_x|_{\underline{\Z_p}}=-\varphi_x.\)
In particular, \(u_x(1)=-\varphi_x(1)=-x.\) Via \(\widetilde G\hookrightarrow\widetilde{G_x}\), regard
\(\widetilde x\) as an element of \(\widetilde{G_x}(C)\), and put
\[
\tau_x(\widetilde x)
:=
\widetilde x+\sigma_x(1).
\]
The zeroth coordinate of this element is \(x+u_x(1)=x-x=0;\)
hence
\[
\tau_x(\widetilde x)
\in
\ker\!\left(\widetilde{G_x}(C)\longrightarrow G_x(C)\right)
=
T_p(G_x)=T_x.
\]
Moreover, under \eqref{eq:Tx-sequence-v-final},
\(\tau_x(\widetilde x)\) maps to \(1\in\Z_p\), since
\(\widetilde x\in\widetilde G(C)\) maps to \(0\in\Q_p\), whereas
\(\sigma_x(1)\) maps to \(1\in\Q_p\). Since \(\sigma_x\) is defined over \(K\), the element \(\sigma_x(1)\) is
\(\Gamma_K\)-fixed.  Consequently,
\[
\gamma\tau_x(\widetilde x)-\tau_x(\widetilde x)
=
\gamma\widetilde x-\widetilde x
\qquad(\gamma\in\Gamma_K).
\]
Thus the continuous cocycle \(
c_{\widetilde x}:\Gamma_K\longrightarrow T\), \(c_{\widetilde x}(\gamma)
:=\gamma\widetilde x-\widetilde x,\) is the cocycle obtained from the lift \(\tau_x(\widetilde x)\in T_x\) of \(1\in\Z_p\); in particular, it
represents the extension class \([T_x]\in H^1(K,T)\)
of \eqref{eq:Tx-sequence-v-final}. Tate--Sen vanishing gives \(\left(\Lie(G)\otimes_KC(1)\right)^{\Gamma_K}=0\), \(H^1(K,\Lie(G)\otimes_KC(1))=0\) see \cite[Theorem~2.2.7]{BC09}.  Hence there exists a unique element \(a_{\widetilde x}\in \Lie(G)\otimes_KC(1)\) such that
\begin{equation}\label{eq:tildeI-cocycle-equation}
\gamma(a_{\widetilde x})-a_{\widetilde x}
=
s_G\!\left(c_{\widetilde x}(\gamma)\right)
\qquad(\gamma\in\Gamma_K).
\end{equation}

\begin{definition}[Lifted integration map]
\label{def:integration-universal-final}
We define
\[
\widetilde I_G:
\widetilde G(C)\times_{G(C)}G(K)
\longrightarrow \Lie(G)\otimes_KC(1),
\qquad
\widetilde I_G(\widetilde x):=a_{\widetilde x}.
\]
\end{definition}

\begin{proposition}
\label{prop:tildeI-descends-final}
The map
\[
\widetilde I_G:
\widetilde G(C)\times_{G(C)}G(K)
\longrightarrow \Lie(G)\otimes_KC(1)
\]
is a \(\Gamma_K\)-equivariant group homomorphism, functorial in \(G\). For every \(t\in T\), one has
\begin{equation}\label{eq:tildeI-translation}
\widetilde I_G(\widetilde x+t)
=
\widetilde I_G(\widetilde x)+s_G(t).
\end{equation}
Consequently, \(\widetilde I_G\) descends modulo \(s_G(T)\) to the
integration map \(I_G\) of
Definition~\ref{def:integration-v-final}.  Equivalently, the diagram
\[
\begin{tikzcd}
\widetilde G(C)\times_{G(C)}G(K)
\arrow[r,"\widetilde I_G"]
\arrow[d,"\pi"']
&
\Lie(G)\otimes_KC(1)
\arrow[d,"q_G"]
\\
G(K)
\arrow[r,"I_G"']
&
\Lie(G)\otimes_KC(1)/s_G(T)
\end{tikzcd}
\]
is commutative, where \(\pi\) is the natural projection and
\(q_G\) is the quotient map.
\end{proposition}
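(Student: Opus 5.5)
The argument is essentially formal once we observe that the cocycle $c_{\widetilde x}(\gamma)=\gamma\widetilde x-\widetilde x$ depends only on $\widetilde x$ itself and is additive in $\widetilde x$. Every claim then reduces to the uniqueness of solutions of \eqref{eq:tildeI-cocycle-equation}. That uniqueness rests on Tate--Sen vanishing: $\left(\Lie(G)\otimes_KC(1)\right)^{\Gamma_K}=0$ and $H^1(K,\Lie(G)\otimes_KC(1))=0$.

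\textbf{Additivity.} The first step is to check that the domain $\widetilde G(C)\times_{G(C)}G(K)$ is a subgroup of $\widetilde G(C)$. For $\widetilde x,\widetilde y$ in it, we have $c_{\widetilde x+\widetilde y}=c_{\widetilde x}+c_{\widetilde y}$, since the action of $\Gamma_K$ is by group automorphisms. Because $s_G$ is additive, $a_{\widetilde x}+a_{\widetilde y}$ solves the equation for $\widetilde x+\widetilde y$, and uniqueness gives $\widetilde I_G(\widetilde x+\widetilde y)=\widetilde I_G(\widetilde x)+\widetilde I_G(\widetilde y)$.

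\textbf{$\Gamma_K$-equivariance.} For $\delta\in\Gamma_K$ we have $\delta\widetilde x$ in the domain, since $x$ is $K$-rational. Moreover $c_{\delta\widetilde x}(\gamma)=\delta\bigl(c_{\widetilde x}(\delta^{-1}\gamma\delta)\bigr)$, and $s_G$ is $\Gamma_K$-equivariant. It follows that $\delta(a_{\widetilde x})$ satisfies the defining equation for $\delta\widetilde x$, so uniqueness yields $\widetilde I_G(\delta\widetilde x)=\delta\widetilde I_G(\widetilde x)$.

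\textbf{Functoriality.} For $f:G\to G'$, the map $\widetilde f$ commutes with $\Gamma_K$, so $c_{\widetilde f(\widetilde x)}=T_p(f)\circ c_{\widetilde x}$. Since $s_{G'}\circ T_p(f)=(\Lie(f)\otimes\id)\circ s_G$, the element $(\Lie(f)\otimes\id)(a_{\widetilde x})$ solves the equation for $\widetilde f(\widetilde x)$, and uniqueness again gives functoriality.

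\textbf{Translation formula.} For $t\in T$ we have $c_{\widetilde x+t}(\gamma)=c_{\widetilde x}(\gamma)+\gamma t-t$, while $\gamma(s_G(t))-s_G(t)=s_G(\gamma t-t)$ by equivariance of $s_G$. Thus $a_{\widetilde x}+s_G(t)$ solves the equation for $\widetilde x+t$, which proves \eqref{eq:tildeI-translation}.

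\textbf{Descent.} The kernel of $\pi$ is $T$, and \eqref{eq:tildeI-translation} shows that $q_G\circ\widetilde I_G$ is constant on the fibres of $\pi$. The map $\pi$ is surjective, since a lift exists (Remark~\ref{rem:kummer-cocycle-equals-Tx}). Hence $q_G\circ\widetilde I_G$ factors through a homomorphism on $G(K)$. To identify this with $I_G$, recall from the discussion before Definition~\ref{def:integration-universal-final} that $c_{\widetilde x}$ is the cocycle attached to the lift $\tau_x(\widetilde x)$ of $1$ in $T_x$, so it represents $[T_x]$. By the characterization in Lemma~\ref{lem:PsiG-additive-final}, $\Psi_G([T_x])=a_{c_{\widetilde x}}\bmod s_G(T)$, which is exactly $q_G(\widetilde I_G(\widetilde x))$. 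This proves that the diagram commutes.

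\textbf{Main obstacle.} The only non-formal point is the identification of $c_{\widetilde x}$ with a cocycle representing $[T_x]$. This requires the sign conventions ($u_x(1)=-x$) and the $K$-rationality of $\sigma_x(1)$ to be consistent, so that $\tau_x(\widetilde x)$ really lies in $T_x$ and maps to $1$. That computation was already carried out in the text, so here I would only re-check it.
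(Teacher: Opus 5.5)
Your proposal is correct and follows essentially the same route as the paper: Tate--Sen uniqueness for the cocycle equation gives additivity and functoriality, the conjugation formula $c_{\delta\widetilde x}(\gamma)=\delta\bigl(c_{\widetilde x}(\delta^{-1}\gamma\delta)\bigr)$ gives equivariance, the identity $c_{\widetilde x+t}=c_{\widetilde x}+(\gamma t-t)$ gives \eqref{eq:tildeI-translation}, and Remark~\ref{rem:kummer-cocycle-equals-Tx} together with Lemma~\ref{lem:PsiG-additive-final} gives the descent. The only difference is that you first show $q_G\circ\widetilde I_G$ factors through $\pi$ before identifying the factor, whereas the paper computes $q_G(\widetilde I_G(\widetilde x))=I_G(x)$ directly; your extra step is redundant but harmless.
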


\begin{proof}
Additivity and functoriality follow from the corresponding properties of the
cocycles \(c_{\widetilde x}\) and uniqueness of the solutions. Write \(a_{\widetilde x}:=\widetilde I_G(\widetilde x).\) Let \(\eta\in\Gamma_K\).  Since the image of \(\widetilde x\) belongs to
\(G(K)\), the element \(\eta\widetilde x\) lies in the same fiber product.
For \(\gamma\in\Gamma_K\), we have \( c_{\eta\widetilde x}(\gamma)
=
\eta\!\left(
c_{\widetilde x}(\eta^{-1}\gamma\eta)
\right).\) Using the \(\Gamma_K\)-equivariance of \(s_G\), we obtain
\[
\gamma(\eta a_{\widetilde x})-\eta a_{\widetilde x}
=
s_G\!\left(c_{\eta\widetilde x}(\gamma)\right).
\]
Thus \(\eta a_{\widetilde x}\) satisfies
\eqref{eq:tildeI-cocycle-equation} for \(\eta\widetilde x\), and uniqueness
gives \(\widetilde I_G(\eta\widetilde x)
=
\eta\widetilde I_G(\widetilde x).\)

For \(t\in T\), we have \(
c_{\widetilde x+t}(\gamma)
=
c_{\widetilde x}(\gamma)+\gamma(t)-t.\) It follows from the \(\Gamma_K\)-equivariance of \(s_G\) that
\(a_{\widetilde x}+s_G(t)\) satisfies
\eqref{eq:tildeI-cocycle-equation} for \(\widetilde x+t\).  Hence
\[
\widetilde I_G(\widetilde x+t)
=
\widetilde I_G(\widetilde x)+s_G(t).
\]

Finally, if \(x\in G(K)\) is the image of \(\widetilde x\), then
Remark~\ref{rem:kummer-cocycle-equals-Tx} gives \( [c_{\widetilde x}]=[T_x]\in H^1(K,T).\) Therefore, by Lemma~\ref{lem:PsiG-additive-final} and
Definition~\ref{def:integration-v-final},
\[
q_G\!\left(\widetilde I_G(\widetilde x)\right)
=
\iota_G\!\left(\Psi_G([c_{\widetilde x}])\right)
=
\iota_G\!\left(\Psi_G([T_x])\right)
=
I_G(x).
\]
\end{proof}

In the following of this subsection, we will give another direct construction of this integration map $\widetilde{I}_G$, generalizing the construction in \cite{HMW24} subsection 6.3. 
Denote the value of the $v$-sheaf $D(G)$ at $C$ as $D(G)_C$. By \cite{Gert26} Proposition 5.41, $D(G)$ is an \'etale vector bundle if and only if $V_p(G)$ is de Rham, and in that case there is a canonical
filtered isomorphism
\[
\bfD_{\dR}(V_p(G))\xrightarrow{\ \sim\ } D(G)_K,
\]
together with a canonical graded isomorphism
\[
\bfD_{\HT}(V_p(G))\xrightarrow{\ \sim\ } \omega_{G^D}\oplus \Lie(G).
\]
where we denote $D(G)_K$ the value of $D(G)$ at $K$. In the rest of this subsection, we assume that $V_p(G)$ is de Rham.
Recall the Fargues-Fontaine curve $X=X_{C^\flat,\Q_p}$ together with the canonical point $i: \infty\hookrightarrow X$ with residue field $C$. Since $\widetilde{G}(C)=H^0(X, \mathcal{E}(G))$, $D(G)_C=i^\ast(\mathcal{E}(G))$ and $B_{\dR}^+=\widehat{\Ol}_{X,\infty}$,
 the map \[\mathrm{qlog}: \widetilde{G}(C)\ra D(G)_C,\] identified with the natural restriction map $H^0(X, \mathcal{E}(G))\ra i^\ast(\mathcal{E}(G))$, factors as 
\[\widetilde{G}(C)\stackrel{\widetilde{\mathrm{qlog}}}{\longrightarrow} D(G)_K\otimes B_{\dR}^+\ra D(G)_C.\]
Since $G$ is de Rham, 
we have the Hodge-de Rham exact sequence of $K$-vector spaces \[0\ra \omega_{G^D}\ra D(G)_K\ra \Lie(G)\ra 0.\] Consider the following composition of $\widetilde{\mathrm{qlog}}: \widetilde{G}(C)\ra D(G)_K\otimes B_{\dR}^+$ and the natural quotient map $D(G)_K\otimes B_{\dR}^+\ra \Lie(G)\otimes B_{\dR}^+$
\[\widetilde{\log}: \widetilde{G}(C)\times_{G(C)}G(K)\ra D(G)_K\otimes B_{\dR}^+\ra \Lie(G)\otimes B_{\dR}^+.\]
On the other hand, we have the logarithm map $\widetilde{G}(C)\times_{G(C)}G(K)\ra G(K)\stackrel{\log}{\longrightarrow} \Lie(G)$. Consider
the constant lift 
\[\log\otimes B_{\dR}^+: \widetilde{G}(C)\times_{G(C)}G(K)\ra \Lie(G)\otimes B_{\dR}^+.\]
The difference $\widetilde{\log}- \log\otimes B_{\dR}^+$ factors through $\Lie(G)\otimes \Fil^1B_{\dR}^+$. We denote the induced map as \[J: \widetilde{G}(C)\times_{G(C)}G(K)\ra \Lie(G)\otimes \Fil^1B_{\dR}^+.\] Consider the quotient map by modulo $\Fil^2B_{\dR}^+$ \[\widetilde{J}_G: \widetilde{G}(C)\times_{G(C)}G(K)\ra \Lie(G)\otimes C(1).\]

Put \(V=V_p(G)\), \(D:=\bfD_{\dR}(V)\), \(F:=\Fil^0D.\)
Let \(\iota_{\dR,V}:
V\otimes_{\Q_p}B_{\dR}
\xrightarrow{\ \sim\ }
D\otimes_KB_{\dR}\) be the inverse de Rham comparison isomorphism. Let \(\Xi_{V(-1)}\) be the de Rham lattice of \(V(-1)\) from
\cite[Lemma~2.9]{Gert26}.  Since the base is \(\operatorname{Spa}(K)\), that
lemma identifies, after transport by the inverse de Rham comparison for
\(V(-1)\),
\[
\Xi_{V(-1)}(C)
=
\bfD_{\dR}(V(-1))\otimes_KB_{\dR}^+.
\]
The proof of \cite[Proposition~5.41]{Gert26} gives the canonical identity
\[
\Xi(G)(C)
=
\Xi_{V(-1)}(C)\otimes_{B_{\dR}^+}
\xi^{-1}\bigl(\Q_p(1)\otimes_{\Q_p}B_{\dR}^+\bigr).
\]

\begin{lemma}
\label{lem:Xi-completed-stalk-dR}
Under \(\iota_{\dR,V}\), we have
\begin{equation}\label{eq:Xi-dR-lattice-identification}
\iota_{\dR,V}\bigl(\Xi(G)(C)\bigr)
=
D\otimes_KB_{\dR}^+
\end{equation}
as \(B_{\dR}^+\)-lattices in \(D\otimes_KB_{\dR}\).  Consider the
restriction of global sections to the completed local ring at infinity, which is
a canonical \(\Gamma_K\)-equivariant homomorphism
\[
\widetilde{\operatorname{qlog}}_G:
\widetilde G(C)=H^0(X,\mathcal E(G_C))
\longrightarrow D\otimes_KB_{\dR}^+.
\]
Its restriction to \(V\subset\widetilde G(C)\) is
\(v\mapsto\iota_{\dR,V}(v\otimes1)\).
\end{lemma}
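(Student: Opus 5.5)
The plan is to deduce \eqref{eq:Xi-dR-lattice-identification} by combining the two identities recalled just before the statement, and then to obtain $\widetilde{\operatorname{qlog}}_G$ from the modification description of $\mathcal E(G_C)$ at $\infty$.

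First, use the identity from the proof of \cite[Proposition~5.41]{Gert26}:
\[
\Xi(G)(C)=\Xi_{V(-1)}(C)\otimes_{B_{\dR}^+}\xi^{-1}\bigl(\Q_p(1)\otimes_{\Q_p}B_{\dR}^+\bigr).
\]
By \cite[Lemma~2.9]{Gert26}, after applying $\iota_{\dR,V(-1)}$, we have $\Xi_{V(-1)}(C)=\bfD_{\dR}(V(-1))\otimes_K B_{\dR}^+$.

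Next, compare $\iota_{\dR,V}$ with $\iota_{\dR,V(-1)}\otimes\iota_{\dR,\Q_p(1)}$ under $V=V(-1)\otimes\Q_p(1)$. The comparison isomorphism is compatible with tensor products, so this factorization holds. For $\Q_p(1)$, write $t$ for a generator of $\Q_p(1)\subset B_{\dR}$. Then $\bfD_{\dR}(\Q_p(1))=K\cdot(\varepsilon\otimes t^{-1})$, and $\iota_{\dR,\Q_p(1)}$ sends $\Q_p(1)\otimes B_{\dR}^+$ onto $\bfD_{\dR}(\Q_p(1))\otimes t B_{\dR}^+$.

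The twist $\xi^{-1}$ (with $\xi$ generating $\Fil^1B_{\dR}^+=tB_{\dR}^+$) exactly cancels this factor $t$. Hence the image is $\bfD_{\dR}(\Q_p(1))\otimes_K B_{\dR}^+$. Tensoring with the $V(-1)$ factor gives $D\otimes_K B_{\dR}^+$, using $\bfD_{\dR}(V(-1))\otimes_K\bfD_{\dR}(\Q_p(1))=D$. This proves \eqref{eq:Xi-dR-lattice-identification}.

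The main care point is the normalization of $\xi$ versus $t$. One must check that the convention in \cite{Gert26} for $\xi^{-1}(\cdots)$ is the one producing $\Xi(G)\supset T_p(G)\otimes B_{\dR}^+$ with cokernel $\Lie(G)$. I would verify this by comparing colengths: both lattices contain $V\otimes B_{\dR}^+$ with quotient of $C$-dimension $\dim\Lie(G)=\dim\gr^{-1}D$, and both are $\Gamma_K$-stable.

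For the second part, recall that $\mathcal E(G_C)$ is the modification of $V\otimes\Ol_X$ at $\infty$ whose completed stalk is $\Xi(G)(C)$. This is the construction of \cite[Theorem~5.11, Corollary~5.21]{Gert26}. So restriction of global sections to $\widehat{\Ol}_{X,\infty}=B_{\dR}^+$ gives a $\Gamma_K$-equivariant map $H^0(X,\mathcal E(G_C))\to\Xi(G)(C)$. Composing with $\iota_{\dR,V}$ and \eqref{eq:Xi-dR-lattice-identification}, its target becomes $D\otimes_KB_{\dR}^+$.

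This map is canonical and $\Gamma_K$-equivariant because every ingredient is: the modification, the completion at the $\Gamma_K$-fixed point $\infty$, and the comparison isomorphism.

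Finally, consider $V\subset\widetilde G(C)$. It corresponds to $H^0(X,V\otimes\Ol_X)\subset H^0(X,\mathcal E(G_C))$, and a section $v\otimes 1$ restricts to $v\otimes1\in V\otimes B_{\dR}^+\subset\Xi(G)(C)$. Applying $\iota_{\dR,V}$ then gives $\iota_{\dR,V}(v\otimes1)$, as claimed.
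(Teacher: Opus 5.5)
Your proposal is correct and follows the paper's proof essentially step for step. The shared steps are the factorization $V=V(-1)\otimes\Q_p(1)$ using Gerth's Lemma~2.9 and the identity from the proof of Proposition~5.41, the cancellation of $\xi^{-1}$ against $\bfD_{\dR}(\Q_p(1))\otimes_K \Fil^1B_{\dR}^+$, tensor compatibility of the comparison, and the modification description of $\mathcal E(G_C)$ at $\infty$ (\cite[Theorem~5.11, Corollary~5.21]{Gert26}) to get the restriction map on $V$.

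Your extra colength check is unnecessary, and it could not by itself prove that the two lattices are equal. The normalization worry is moot anyway: $\xi$ and $t$ both generate $\Fil^1B_{\dR}^+$, so they differ by a unit of $B_{\dR}^+$ and $\xi^{-1}$, $t^{-1}$ produce the same lattice.
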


\begin{proof}
Under the inverse de Rham comparison for \(\Q_p(1)\), one has
\[
\Q_p(1)\otimes_{\Q_p}B_{\dR}^+
\longmapsto
\bfD_{\dR}(\Q_p(1))\otimes_K\Fil^1B_{\dR}^+.
\]
Since \(\Fil^1B_{\dR}^+=\xi B_{\dR}^+\), it follows that
\[
\xi^{-1}\bigl(\Q_p(1)\otimes_{\Q_p}B_{\dR}^+\bigr)
\longmapsto
\bfD_{\dR}(\Q_p(1))\otimes_KB_{\dR}^+.
\]
Tensor compatibility and the canonical isomorphism
\[
\bfD_{\dR}(V(-1))\otimes_K\bfD_{\dR}(\Q_p(1))
\xrightarrow{\ \sim\ }\bfD_{\dR}(V)
\]
therefore yield \eqref{eq:Xi-dR-lattice-identification}.  Corollary~5.21
and \cite[Theorem~5.11(1)--(2), Definition~5.13, and
Proposition~5.17]{Gert26} then give the compatibility on \(V\) of the restriction map.
\end{proof}

Note that \cite[Lemma~5.30 and Proposition~5.41]{Gert26} show that the reduction of
\(\widetilde{\log}(\widetilde x)\) modulo
\(\Fil^1B_{\dR}^+\) is the ordinary logarithm \(\log_G(x)\), where
\(x\in G(K)\) is the image of \(\widetilde x\).

\begin{proposition}\label{prop:Itilde=Jtilde}
Assume that $G\in \BT^{\rig,\dual}_K$ is de Rham. Then, as maps
\[
\widetilde{G}(C)\times_{G(C)}G(K)\longrightarrow \Lie(G)\otimes_K C(1),
\]
one has
\[
\widetilde{I}_G=\widetilde{J}_G.
\]
\end{proposition}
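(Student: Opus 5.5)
Since $\widetilde I_G(\widetilde x)$ is characterized in Definition~\ref{def:integration-universal-final} as the \emph{unique} element $a$ of $\Lie(G)\otimes_KC(1)$ with $\gamma(a)-a=s_G(c_{\widetilde x}(\gamma))$ for all $\gamma\in\Gamma_K$, it suffices to show that $\widetilde J_G(\widetilde x)$ satisfies this same cocycle equation \eqref{eq:tildeI-cocycle-equation}. So the plan is to compute $\gamma\widetilde J_G(\widetilde x)-\widetilde J_G(\widetilde x)$ directly from the definition of $J$ and compare it with $s_G(\gamma\widetilde x-\widetilde x)$.

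First I would record the equivariance properties of the ingredients. By Lemma~\ref{lem:Xi-completed-stalk-dR}, the map $\widetilde{\mathrm{qlog}}_G:\widetilde G(C)\to D\otimes_KB_{\dR}^+$ is $\Gamma_K$-equivariant and additive, so the composite $\widetilde{\log}$ with the projection $D\otimes_KB_{\dR}^+\to\Lie(G)\otimes_KB_{\dR}^+$ is also equivariant ($\Gamma_K$ acting only on $B_{\dR}^+$). The constant lift $\log\otimes B_{\dR}^+$ takes values in $\Lie(G)\otimes_K K$, hence its value at $\widetilde x$ (which depends only on $x\in G(K)$) is $\Gamma_K$-fixed. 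Therefore
\[
\gamma J(\widetilde x)-J(\widetilde x)=\widetilde{\log}(\gamma\widetilde x)-\widetilde{\log}(\widetilde x)=\widetilde{\log}(c_{\widetilde x}(\gamma)),
\]
and $c_{\widetilde x}(\gamma)\in T\subset V$. Reducing modulo $\Fil^2B_{\dR}^+$, one gets $\gamma\widetilde J_G(\widetilde x)-\widetilde J_G(\widetilde x)$ equal to the image of $\widetilde{\log}(c_{\widetilde x}(\gamma))$ in $\Lie(G)\otimes C(1)$; note that for $v\in V$ this lies in $\Lie(G)\otimes\Fil^1B^+_{\dR}$ because $\log_G$ kills $G[p^\infty]$.

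The remaining, and main, step is to identify, for $v\in V$, the class of $\widetilde{\log}(v)$ in $\Lie(G)\otimes\gr^1B_{\dR}=\Lie(G)\otimes C(1)$ with $s_G(v)$. By Lemma~\ref{lem:Xi-completed-stalk-dR}, $\widetilde{\log}(v)$ is the image of $\iota_{\dR,V}(v\otimes1)$ under $D\otimes B_{\dR}^+\to\Lie(G)\otimes B_{\dR}^+$. Since $V$ has Hodge--Tate weights $\{0,1\}$, $\iota_{\dR,V}(V\otimes B^+_{\dR})\subset\Fil^0(D\otimes B_{\dR})$, and taking $\gr^0$ of the filtered comparison gives the Hodge--Tate comparison $V\otimes C\cong \gr^0 D\otimes C\oplus \gr^{1}D\otimes C(-1)$ (up to twist conventions), i.e.\ via the graded isomorphism $\bfD_{\HT}(V)\cong\omega_{G^D}\oplus\Lie(G)$ from \cite[Proposition~5.41]{Gert26}, the component of $v$ landing in $\Lie(G)\otimes C(1)$ is exactly the Hodge--Tate projection of $v$, which is $s_G(v)$ by the definition \eqref{eq:conjugate-splitting} of the canonical splitting. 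I expect the care here to be in tracking twists and signs: checking that the quotient $D\to\Lie(G)$ (with $\Lie(G)$ sitting in filtration degree $-1$ relative to $\omega_{G^D}$) paired with $\Fil^1B^+_{\dR}/\Fil^2$ recovers precisely the sub-piece $\Lie(G)\otimes C(1)\subset V\otimes C$ of \eqref{eq:HT}, and that the sign matches the convention used for $s_G$; I would verify this on the basic example $G=\wh{\G}_m^{\rig}$, where $\widetilde{\log}$ on $T_p=\Z_p(1)$ is $\log[\varepsilon]=t$. Granting this, $\widetilde J_G(\widetilde x)$ solves \eqref{eq:tildeI-cocycle-equation}, and uniqueness (Tate--Sen) gives $\widetilde I_G=\widetilde J_G$.
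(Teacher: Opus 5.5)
Your proposal is correct and follows essentially the paper's proof. The paper likewise identifies $\widetilde J_G(t)=s_G(t)$ for $t\in T$ via Lemma~\ref{lem:Xi-completed-stalk-dR}, the filtered de Rham comparison and the quotient $\Fil^0(D\otimes_KB_{\dR})/(F\otimes_KB_{\dR}^+ + D\otimes_K\Fil^2B_{\dR})\cong (D/F)\otimes_KC(1)$, then uses additivity and $\Gamma_K$-equivariance of $\widetilde J_G$ to obtain the cocycle equation, and concludes by uniqueness since $(\Lie(G)\otimes_KC(1))^{\Gamma_K}=0$; your proposed sign/twist check on $\widehat{\mathbb{G}}_m^{\rig}$ is a sensible extra, since the paper asserts the graded identification at the same level of detail you do.
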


\begin{proof}
For \(t\in T\), the ordinary logarithm term vanishes.  By
Lemma~\ref{lem:Xi-completed-stalk-dR}, one has
\[
\widetilde{\operatorname{qlog}}_G(t)
=
\iota_{\dR,V}(t\otimes1)
\in D\otimes_KB_{\dR}^+.
\]
Because the de Rham comparison is filtered, this element belongs to
\(\Fil^0(D\otimes_KB_{\dR})\).  Since the filtration on \(D\) is
concentrated in degrees \(-1\) and \(0\), one has
\[
\Fil^0(D\otimes_KB_{\dR})
=
D\otimes_K\Fil^1B_{\dR}+F\otimes_KB_{\dR}^+,
\]
and
\[
(D\otimes_K\Fil^1B_{\dR})\cap(F\otimes_KB_{\dR}^+)
=
F\otimes_K\Fil^1B_{\dR}.
\]
Consequently there is a canonical quotient isomorphism
\[
\frac{\Fil^0(D\otimes_KB_{\dR})}
{F\otimes_KB_{\dR}^+ +D\otimes_K\Fil^2B_{\dR}}
\xrightarrow{\ \sim\ }
(D/F)\otimes_KC(1).
\]
Taking the degree-zero associated graded of the filtered comparison
identifies the image of \(\widetilde{\operatorname{qlog}}_G(t)\) under
this quotient with the weight-one Hodge--Tate projection of \(t\).  By
the definition of \(s_G\), this projection is \(s_G(t)\).  Hence \(\widetilde J_G(t)=s_G(t).\) Since \(\widetilde J_G\) is additive and \(\Gamma_K\)-equivariant,
\[
\sigma\bigl(\widetilde J_G(\widetilde x)\bigr)
-\widetilde J_G(\widetilde x)
=
\widetilde J_G\bigl(\sigma\widetilde x-\widetilde x\bigr)
=
s_G\bigl(c_{\widetilde x}(\sigma)\bigr).
\]
Thus \(\widetilde J_G(\widetilde x)\) satisfies the defining cocycle equation
for \(\widetilde I_G(\widetilde x)\).  Since
\((W\otimes_KC(1))^{\Gamma_K}=0\), the solution is unique, and \(\widetilde J_G=\widetilde I_G.\)
\end{proof}

For any de Rham \(\Q_p\)-representation \(U\) of \(\Gamma_K\) with Hodge--Tate weights in
\(\{0,1\}\), define
\[
\Theta_U:\ U\longrightarrow
\bigl(\bfD_{\dR}(U)/\Fil^0\bfD_{\dR}(U)\bigr)\otimes_K C(1)
\]
as the composite
\[
U \hookrightarrow U\otimes_{\Q_p} B_{\dR}
\xrightarrow{\ \sim\ } \bfD_{\dR}(U)\otimes_K B_{\dR}
\longrightarrow
\bigl(\bfD_{\dR}(U)/\Fil^0\bfD_{\dR}(U)\bigr)\otimes_K
\Fil^1B_{\dR}/\Fil^2B_{\dR}.
\]

\begin{lemma}
\label{lem:Theta-HT-final}
Let \(U\) be a de Rham \(\Q_p\)-representation of \(\Gamma_K\) with Hodge--Tate weights in
\(\{0,1\}\). Then the \(C\)-linearization of \(\Theta_U\) is the canonical projection
\[
U\otimes_{\Q_p} C\longrightarrow \gr^{-1}\bfD_{\HT}(U)\otimes_K C(1)
\cong \bigl(\bfD_{\dR}(U)/\Fil^0\bfD_{\dR}(U)\bigr)\otimes_K C(1).
\]
\end{lemma}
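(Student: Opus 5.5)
The plan is to pass to graded pieces of the filtered de Rham comparison and use the fact that the Hodge--Tate decomposition of $U\otimes C$ is the associated graded of the de Rham comparison. Put $D:=\bfD_{\dR}(U)$, $F:=\Fil^0D$ and write $\iota:U\otimes_{\Q_p}B_{\dR}\xrightarrow{\sim}D\otimes_KB_{\dR}$. Since $U$ is de Rham, $\iota$ is a $\Gamma_K$-equivariant filtered isomorphism. Because the Hodge--Tate weights lie in $\{0,1\}$, the filtration on $D$ satisfies $\Fil^{-1}D=D$ and $\Fil^1D=0$. We therefore have
\[
\Fil^0(D\otimes_KB_{\dR})=D\otimes_K\Fil^1B_{\dR}+F\otimes_KB_{\dR}^+ ,
\]
exactly as in the proof of Proposition~\ref{prop:Itilde=Jtilde}. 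The element $u\otimes1$ lies in $\Fil^0(U\otimes B_{\dR})$, so $\iota(u\otimes1)\in\Fil^0(D\otimes B_{\dR})$. Its image in $(D/F)\otimes_K\Fil^1B_{\dR}$ modulo $\Fil^2$ is then well defined, and this is $\Theta_U(u)$.

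Next, take $\gr^0$ of $\iota$. This gives a $C$-linear, $\Gamma_K$-equivariant isomorphism
\[
U\otimes_{\Q_p}C=\gr^0(U\otimes B_{\dR})\xrightarrow{\sim}\gr^0(D\otimes_KB_{\dR})=\bigoplus_i\gr^iD\otimes_KC(-i)=\gr^0D\otimes_KC\ \oplus\ \gr^{-1}D\otimes_KC(1).
\]
This is the standard identification of $\gr D$ with $\bfD_{\HT}(U)$, under which the displayed map becomes the Hodge--Tate decomposition; see \cite{FO22}. Here $\gr^{-1}D=D/F$, and $\gr^{-1}\bfD_{\HT}(U)\cong\gr^{-1}D$ canonically. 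The $C$-linear extension of $u\mapsto\gr^0\iota(u\otimes1)$ is exactly this isomorphism.

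It remains to check that $\Theta_U$ is the projection of $\gr^0\iota(u\otimes1)$ onto the summand $\gr^{-1}D\otimes C(1)$. To do this, compute $\gr^0(D\otimes B_{\dR})$ as $\Fil^0/\Fil^1$, where $\Fil^1=D\otimes\Fil^2B_{\dR}+F\otimes\Fil^1B_{\dR}$. Composing with the quotient by $F\otimes B_{\dR}^+$ kills exactly the summand $\gr^0D\otimes C$ and maps onto $(D/F)\otimes\Fil^1/\Fil^2$. Hence $\Theta_U(u)$ is the $\gr^{-1}$-component, and the claim follows by $C$-linearity.

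The main obstacle is the compatibility of conventions: the identification $\gr^{-1}\bfD_{\HT}(U)\cong D/\Fil^0D$ and the twist $C(1)=\Fil^1B_{\dR}/\Fil^2B_{\dR}$ have to match the normalization used in Theorem~\ref{thm dual p-div rigid and HT} and in defining $s_G$. I would settle this by checking it on $U=\Q_p(1)$ and $U=\Q_p$, and then appealing to functoriality and exactness of $\gr$ for de Rham representations.
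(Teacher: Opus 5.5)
Your proposal is correct and follows the same route as the paper. Both arguments take the degree-zero graded piece of the filtered de Rham comparison, identify it with the Hodge--Tate decomposition, and read off $\Theta_U$ as the projection onto the $\gr^{-1}$-summand. You additionally make explicit, via $\Fil^0=D\otimes\Fil^1B_{\dR}+F\otimes B_{\dR}^+$ and $\Fil^1=D\otimes\Fil^2B_{\dR}+F\otimes\Fil^1B_{\dR}$, why quotienting by $F\otimes B_{\dR}^+$ kills exactly the $\gr^0D\otimes C$ summand, a step the paper leaves implicit.

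Your closing convention check on $\Q_p$ and $\Q_p(1)$ is not needed. The identification $\gr^{-1}\bfD_{\HT}(U)\cong D/\Fil^0D$ in the statement is by definition the one induced by $\gr$ of the comparison, so there is nothing further to verify. That check also could not reduce the general case to those two pieces anyway, since a general $U$ need not split into them.
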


\begin{proof}
The de Rham comparison isomorphism
$
U\otimes_{\Q_p} B_{\dR}\xrightarrow{\ \sim\ } \bfD_{\dR}(U)\otimes_K B_{\dR}$
is filtration preserving. Since the Hodge--Tate weights of \(U\) lie in \(\{0,1\}\), the
filtration on \(\bfD_{\dR}(U)\) has range in \([-1,0]\). Passing to the degree-\(1\) graded piece
on the \(B_{\dR}\)-factor therefore gives a \(\Gamma_K\)-equivariant \(C\)-linear map
\[
U\otimes_{\Q_p} C \longrightarrow
\bigl(\bfD_{\dR}(U)/\Fil^0\bfD_{\dR}(U)\bigr)\otimes_K C(1),
\]
which is exactly the \(C\)-linearization of \(\Theta_U\). On the other hand, the Hodge--Tate decomposition identifies the latter target with
\[
\gr^{-1}\bfD_{\HT}(U)\otimes_K C(1)\subset U\otimes_{\Q_p} C.
\]
Under this identification, the above map is the projection onto the weight-\(1\) summand.
\end{proof}

\begin{proposition}\label{prop:IG-dR-v-final}
Under the canonical identification
\[
(D(G)_K/\omega_{G^D})\otimes_K C(1)\cong \Lie(G)\otimes_K C(1),
\]
the restriction
\[
\Theta_{V_p(G)}|_{T_p(G)}:\ T_p(G)\longrightarrow \Lie(G)\otimes_K C(1)
\]
coincides with \(s_G\). Consequently,
\[
\frac{\Lie(G)\otimes_K C(1)}{s_G(T_p(G))}
\cong
\frac{((D(G)_K/\omega_{G^D})\otimes_K C(1))}{\Theta_{V_p(G)}(T_p(G))}.
\]
\end{proposition}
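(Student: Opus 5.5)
The plan is to reduce the statement to Lemma \ref{lem:Theta-HT-final} applied to $U=V_p(G)$, and then to match the resulting projection with the definition of $s_G$. Recall that $s_G$ is defined as the restriction to $T_p(G)$ of the $\Gamma_K$-equivariant $C$-linear retraction
\[
T_p(G)\otimes_{\Z_p}C\longrightarrow \Lie(G)\otimes_K C(1)
\]
of the split Hodge--Tate sequence \eqref{eq:HT}. By Theorem \ref{thm dual p-div rigid and HT}, this retraction is the projection onto the weight-one summand $\gr^{-1}\bfD_{\HT}(V)\otimes_K C(1)$, where $V=V_p(G)$. So it suffices to show two things: that the $C$-linearization of $\Theta_V$ is this same projection, and that the identification $\gr^{-1}\bfD_{\HT}(V)\cong D(G)_K/\omega_{G^D}\cong \Lie(G)$ used on both sides is the same.

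First, since $G$ is de Rham, Lemma \ref{lem:Theta-HT-final} gives that $\Theta_V\otimes C$ is the canonical projection $V\otimes C\to \gr^{-1}\bfD_{\HT}(V)\otimes_K C(1)$, where the target is identified with $(\bfD_{\dR}(V)/\Fil^0)\otimes_K C(1)$ via the graded pieces of the filtered comparison.

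Second, I would use the canonical filtered isomorphism $\bfD_{\dR}(V)\cong D(G)_K$ from \cite{Gert26} Proposition 5.41. Under it, $\Fil^0$ corresponds to $\omega_{G^D}$. The induced graded isomorphism $\bfD_{\HT}(V)\cong\omega_{G^D}\oplus\Lie(G)$ identifies $\gr^{-1}$ with $\Lie(G)$. This gives $(\bfD_{\dR}(V)/\Fil^0)\otimes C(1)\cong \Lie(G)\otimes C(1)$, and the task becomes checking that this is compatible with the inclusion $\alpha=f_G(1):\Lie(G)\otimes C(1)\hookrightarrow T\otimes C$ in Fargues' triple. For this I would use the commutative diagram relating $\mathrm{qlog}$ and $\widetilde{G}\to\Lie(G)$ (\cite{Gert26} Lemma 5.30), together with Lemma \ref{lem:Xi-completed-stalk-dR}, which identifies $\widetilde{\mathrm{qlog}}$ on $V$ with $v\mapsto\iota_{\dR,V}(v\otimes1)$.

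Alternatively, the computation already made in the proof of Proposition \ref{prop:Itilde=Jtilde} shows that the image of $\widetilde{\mathrm{qlog}}_G(t)$ in $(D/F)\otimes C(1)$ is the weight-one Hodge--Tate projection of $t$. That image is exactly $\Theta_V(t)$ by the definition of $\Theta_V$ and the lemma, so citing this computation settles the equality $\Theta_V|_{T}=s_G$ directly. The quotient isomorphism then follows immediately, since both sides are quotients of the same identified $C$-module by the same subgroup.

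The main obstacle is the compatibility of identifications: that the Hodge--Tate graded isomorphism from \cite{Gert26} Proposition 5.41 agrees with the one defined by Fargues' $f_G$ in Theorem \ref{thm dual p-div rigid and HT}, including sign and twist conventions. I would handle this by reducing, via uniqueness of $\Gamma_K$-equivariant splittings (which rests on $(C(1))^{\Gamma_K}=0$ and $C^{\Gamma_K}=K$), to checking that both inclusions $\Lie(G)\otimes C(1)\hookrightarrow V\otimes C$ have the same image. That image is characterized as the unique $\Gamma_K$-stable weight-one subspace.
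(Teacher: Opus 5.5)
Your main line is the paper's proof: apply Lemma~\ref{lem:Theta-HT-final} to $U=V_p(G)$, transport the target $(\bfD_{\dR}(V)/\Fil^0)\otimes_K C(1)$ to $\Lie(G)\otimes_K C(1)$ through the comparison $\bfD_{\dR}(V)\cong D(G)_K$ of \cite[Prop.~5.41]{Gert26}, and restrict the resulting weight-one projection to $T_p(G)$. The paper simply asserts the compatibility of these identifications ``by the comparison theorem'', so your attention to it is an improvement.

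However, the reduction in your last paragraph does not work. Reducing to ``both inclusions $\Lie(G)\otimes_K C(1)\hookrightarrow V\otimes_{\Q_p}C$ have the same image'' proves nothing:
\begin{itemize}
\item Since $\Hom_{\Gamma_K}(C(1),C)=0$, every $\Gamma_K$-equivariant $C$-linear injection of $\Lie(G)\otimes_K C(1)$ lands in the weight-one summand. So the images always agree, and the ``check'' is automatic.
\item Since $C^{\Gamma_K}=K$, two such injections differ by $\phi\otimes 1$ for some $\phi\in\mathrm{GL}_K(\Lie(G))$. The corresponding retractions then differ by $\phi^{-1}\otimes 1$.
\end{itemize}
In particular, this argument cannot detect a sign $\phi=-1$, which is exactly the convention issue you raise. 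The identification must instead be read off from the actual maps. You can do this either by your first route, via \cite[Lem.~5.30]{Gert26} and Lemma~\ref{lem:Xi-completed-stalk-dR}, or directly from the graded isomorphism in \cite[Prop.~5.41]{Gert26}. That is the input the paper takes from Gerth without further detail.
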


\begin{proof}
By the comparison theorem, with the indexing shift understood, we have
\[
(\bfD_{\dR}(V_p(G))/\Fil^0)\otimes_K C(1)
\cong
(D(G)_K/\omega_{G^D})\otimes_K C(1)
\cong
\Lie(G)\otimes_K C(1).
\]
By Lemma~\ref{lem:Theta-HT-final}, the \(C\)-linearization of \(\Theta_{V_p(G)}\) is the canonical
Hodge--Tate projection
$
\pr_G:\ V_p(G)\otimes_{\Q_p} C\longrightarrow \Lie(G)\otimes_K C(1)$.
Restricting to the Tate lattice \(T_p(G)\subset V_p(G)\) gives
\[
\Theta_{V_p(G)}|_{T_p(G)}=s_G.
\]
The statement about the quotients follows immediately.
\end{proof}

\begin{lemma}
\label{lem:kummer-dR-admissibility}
Assume that \(V=V_p(G)\) is de Rham.  For every \(x\in G(K)\), the
class of the rational extension
\[
0\longrightarrow V\longrightarrow V_x\longrightarrow\Q_p
\longrightarrow0
\]
maps to zero in \(H^1\!\left(K,V\otimes_{\Q_p}B_{\dR}\right).\) In particular, \(V_x\) is de Rham.  Its Hodge--Tate weights lie in \(\{0,1\}\), and the
inclusion \(V\hookrightarrow V_x\) induces a canonical isomorphism
\[
\bfD_{\dR}(V)/\Fil^0
\xrightarrow{\ \sim\ }
\bfD_{\dR}(V_x)/\Fil^0.
\]

\end{lemma}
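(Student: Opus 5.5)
The plan is to exploit the lifted integration map $\widetilde{J}_G$ of the preceding subsection, which is built from the de Rham structure and therefore supplies an explicit trivialization of the Kummer cocycle after tensoring with $B_{\dR}^+$. Fix $x\in G(K)$ and a lift $\widetilde x\in \widetilde G(C)\times_{G(C)}G(K)$. By Remark~\ref{rem:kummer-cocycle-equals-Tx}, the class of $0\to V\to V_x\to\Q_p\to0$ in $H^1(K,V)$ is represented by the cocycle $c_{\widetilde x}(\gamma)=\gamma\widetilde x-\widetilde x$. We must show that its image in $H^1(K,V\otimes_{\Q_p}B_{\dR})$ vanishes, i.e.\ that $c_{\widetilde x}(\gamma)\otimes 1$ is a coboundary in $V\otimes B_{\dR}$.

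\textbf{Key step.} Consider $b:=\widetilde{\operatorname{qlog}}_G(\widetilde x)\in D\otimes_K B_{\dR}^+$, where $D=\bfD_{\dR}(V)$ and $\widetilde{\operatorname{qlog}}_G$ is the map of Lemma~\ref{lem:Xi-completed-stalk-dR}. This map is $\Gamma_K$-equivariant and additive, and on $V\subset\widetilde G(C)$ it equals $v\mapsto\iota_{\dR,V}(v\otimes1)$. Applying it to $\gamma\widetilde x-\widetilde x\in T\subset V$ gives
\[
\gamma(b)-b=\iota_{\dR,V}\bigl(c_{\widetilde x}(\gamma)\otimes 1\bigr).
\]
Transporting back along the isomorphism $\iota_{\dR,V}$ shows that $c_{\widetilde x}\otimes 1$ is the coboundary of $\iota_{\dR,V}^{-1}(b)\in V\otimes B_{\dR}$. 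This proves the vanishing, and in fact gives it already in $H^1(K,V\otimes B_{\dR}^+)$ via the lattice identification \eqref{eq:Xi-dR-lattice-identification}.

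\textbf{De Rham property.} From the vanishing, the long exact sequence obtained by tensoring with $B_{\dR}$ and taking $\Gamma_K$-invariants gives
\[
0\to \bfD_{\dR}(V)\to \bfD_{\dR}(V_x)\to K\to H^1(K,V\otimes B_{\dR}),
\]
and the boundary of $1$ is exactly our class, which is zero. Hence $\dim_K\bfD_{\dR}(V_x)=\dim V+1=\dim V_x$, so $V_x$ is de Rham.

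\textbf{Hodge--Tate weights and the graded isomorphism.} The Hodge--Tate weights of $V_x$ are those of $V$ together with $0$ from $\Q_p$, so they lie in $\{0,1\}$; equivalently, $G_x$ is dualizable by Proposition~\ref{prop:representable-Gx-v-final}. Since the filtered sequence $0\to\bfD_{\dR}(V)\to\bfD_{\dR}(V_x)\to\bfD_{\dR}(\Q_p)\to0$ is strict exact (as for any exact sequence of de Rham representations) and $\bfD_{\dR}(\Q_p)=\Fil^0$, the snake lemma on $\Fil^0\subset\bfD_{\dR}$ yields the isomorphism $\bfD_{\dR}(V)/\Fil^0\xrightarrow{\sim}\bfD_{\dR}(V_x)/\Fil^0$. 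This step is routine.

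\textbf{Main obstacle.} The delicate point is justifying that $\widetilde{\operatorname{qlog}}_G$ is $\Gamma_K$-equivariant on $C$-points of $\widetilde G$ with values in $D\otimes B_{\dR}^+$, compatibly with $\iota_{\dR,V}$. I would take this directly from Lemma~\ref{lem:Xi-completed-stalk-dR}, which gives the identification $\Xi(G)(C)=D\otimes B_{\dR}^+$ via Gerth's results. Once that lemma is accepted, no convergence issues remain.
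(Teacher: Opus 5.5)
Your argument is correct and is essentially the paper's proof. The paper uses the restriction map $\operatorname{res}_\infty:\widetilde G(C)\to\Xi(G)(C)[1/\xi]\cong V\otimes_{\Q_p}B_{\dR}$, which is your $\iota_{\dR,V}^{-1}\circ\widetilde{\operatorname{qlog}}_G$, to exhibit the Kummer cocycle as a coboundary. It then writes down the $\Gamma_K$-invariant vector $\widetilde 1\otimes 1-b\in V_x\otimes_{\Q_p}B_{\dR}$, which splits the extension, where you count dimensions; it leaves the filtration statement implicit, and your strictness argument supplies it.

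One correction: drop your parenthetical claim that the class already vanishes in $H^1(K,V\otimes_{\Q_p}B_{\dR}^+)$, which the lemma does not need. It does not follow from \eqref{eq:Xi-dR-lattice-identification}, which only places $\iota_{\dR,V}^{-1}(b)$ in $\Xi(G)(C)$, a lattice strictly larger than $V\otimes_{\Q_p}B_{\dR}^+$ when $\Lie(G)\neq0$. In fact $\iota_{\dR,V}^{-1}(b)\notin V\otimes_{\Q_p}B_{\dR}^+$ as soon as $\log_G(x)\neq0$; you would first have to subtract a $K$-rational lift of $\log_G(x)$ in $\bfD_{\dR}(V)$.
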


\begin{proof}
There is a \(\Gamma_K\)-equivariant restriction map
\[
\operatorname{res}_\infty:
\widetilde G(C)=H^0(X,\mathcal E(G_C))
\longrightarrow
\Xi(G)(C)[1/\xi]
\xrightarrow{\ \sim\ }
V\otimes_{\Q_p}B_{\dR}.
\]
Its restriction to \(V\subset\widetilde G(C)\) is
\(v\mapsto v\otimes1\) (see
\cite[Theorem~5.11(1)--(2), Definition~5.13,
Proposition~5.17, and Corollary~5.21]{Gert26}). Choose an element \(\widetilde x\in\widetilde G(C)\) lifts \(x\), and put
\[
c_x(\sigma)=\sigma(\widetilde x)-\widetilde x\in T,
\qquad
b=\operatorname{res}_\infty(\widetilde x).
\]
By Remark~\ref{rem:kummer-cocycle-equals-Tx}, \(c_x\) represents the extension \(T_x\), and \( \sigma(b)-b=c_x(\sigma)\otimes1.\) Let \(\widetilde1=\tau_x(\widetilde x)\in T_x\subset V_x\) be the lift of
\(1\) constructed in that remark.  Then \(e_x:=\widetilde1\otimes1-b\in V_x\otimes_{\Q_p}B_{\dR}\) is \(\Gamma_K\)-invariant.  Consequently,
\[
\bigl(V\otimes_{\Q_p}B_{\dR}\bigr)\oplus B_{\dR}
\xrightarrow{\ \sim\ }
V_x\otimes_{\Q_p}B_{\dR},
\qquad
(v,a)\longmapsto v+a e_x,
\]
is a \(\Gamma_K\)-equivariant \(B_{\dR}\)-linear isomorphism.  Thus the
Kummer class becomes a coboundary over \(B_{\dR}\).  Since \(V\) is de
Rham, the displayed splitting also shows directly that \(V_x\) is de
Rham. The rest will follow.

\end{proof}

Consequently, if \(x\in G(K)\) and \(0\longrightarrow T\longrightarrow T_x\longrightarrow\Z_p
\longrightarrow0\) is its Kummer extension, then Lemma~\ref{lem:kummer-dR-admissibility}
gives a canonical isomorphism
\[
\bfD_{\dR}(V)/\Fil^0
\xrightarrow{\ \sim\ }
\bfD_{\dR}(V_x)/\Fil^0.
\]
Under this identification, naturality gives
\(\Theta_{V_x}|_T=\Theta_V|_T=s_G\).  If
\(\widetilde1\in T_x\) lifts \(1\in\Z_p\), then
\[
\sigma\bigl(\Theta_{V_x}(\widetilde1)\bigr)
-
\Theta_{V_x}(\widetilde1)
=
\Theta_V\bigl(\sigma(\widetilde1)-\widetilde1\bigr).
\]
Viewed in \(\Lie(G)\otimes_K C(1)\) through the inverse of the preceding quotient
isomorphism, \(\Theta_{V_x}(\widetilde1)\) satisfies the same cocycle
equation as the unique element used to define \(I_G(x)\).  Hence
\[
I_G(x)=
\Theta_{V_x}(\widetilde1)\bmod\Theta_V(T).
\]

\subsection{Hodge--Tate boundary maps and integration maps}
\label{subsec:ht-boundary-general}
We come back to general dualizable $p$-divisible rigid analytic groups as in \S\ref{subsec:def-ILog}.
Let
$
G\in \BT^{\rig,\dual}_K,\,
T:=T_p(G),\,
V:=T\otimes_{\Z_p}\Q_p,\,
W:=\Lie(G)$,
and let
$
s_G:\ T\longrightarrow W\otimes_K C(1)$
be the canonical Hodge-Tate projection map. Our goal is to describe the integration map $I_G$ (cf. Defintion \ref{def:integration-v-final}) by a boundary map in Galois cohomology.

\begin{lemma}\label{lem:kernel-sG}
There exists a maximal \'etale \(p\)-divisible rigid analytic subgroup
\[
G^{\et,\max}\subset G
\]
such that
\[
T_p(G^{\et,\max})=\ker(s_G).
\]
In particular, \(s_G\) is injective if and only if \(G\) contains no non-trivial \'etale
\(p\)-divisible rigid analytic subgroup.
\end{lemma}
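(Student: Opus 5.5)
We work through the classification of Theorem~\ref{thm Fargues classification} together with the equivalence of Theorem~\ref{thm dual p-div rigid and HT}, so that subobjects of \(G\) are controlled by \(\Gamma_K\)-stable sublattices of \(T\) and subspaces of \(W\) compatible with \(f_G\). First set \(T^{\et}:=\ker(s_G)\subset T\). Since \(s_G\) is \(\Gamma_K\)-equivariant and \(\Z_p\)-linear, \(T^{\et}\) is a \(\Gamma_K\)-stable \(\Z_p\)-submodule. It is also saturated: if \(pt\in\ker(s_G)\), then \(t\in\ker(s_G)\), because the target \(W\otimes_K C(1)\) is torsion-free. Hence \(T^{\et}\) is a finite free direct summand of \(T\). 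Let \(H\) be the étale \(p\)-divisible group over \(K\) with \(T_p(H)=T^{\et}\), viewed via Example~\ref{exa rig analytic groups}(2) as an object of \(\BT_K^{\rig}\) with triple \((T^{\et},0,0)\).

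The map \(H\to G\) is then produced by the classification. A morphism of triples \((T^{\et},0,0)\to(T,W,f_G)\) consists of the inclusion \(T^{\et}\hookrightarrow T\) together with the zero map on Lie algebras, and the only compatibility condition is trivially satisfied since \(0\) maps to \(0\). This gives a morphism \(H\to G\) whose map on Tate modules is injective and whose map on Lie algebras is injective. By Proposition~\ref{prop:Fargues-structure}(3) applied to the cokernel triple \((T/T^{\et},W,\bar f)\), the map \(H\to G\) is a closed immersion, i.e.\ a subobject. The cokernel triple is indeed a valid triple, and since \(T/T^{\et}\) is free, the quotient exists in \(\BT_K^{\rig}\). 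Set \(G^{\et,\max}:=H\).

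Maximality is the main point to check, together with the identification of \(\ker(s_G)\) with the correct map. Let \(H'\subset G\) be any étale \(p\)-divisible rigid analytic subgroup, so \(\Lie(H')=0\) and \(H'\) is dualizable of Hodge–Tate weight \(0\). By functoriality of the Hodge–Tate splitting, \(s_G|_{T_p(H')}=\Lie(\iota)\circ s_{H'}=0\), since \(s_{H'}\) lands in \(0\otimes C(1)\). Hence \(T_p(H')\subset T^{\et}\). Full faithfulness of Theorem~\ref{thm Fargues classification} on the triples \((T_p(H'),0,0)\) and \((T^{\et},0,0)\) then shows that \(H'\to G\) factors through \(H\).

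The remaining subtlety is that \(s_G\) is a map on \(T\), whereas the classification is phrased via \(f_G\). This is handled using Definition~\ref{def:dualizable}: the splitting identifies \(\ker(s_G\otimes C)\) with the complement of \(W\otimes C(1)\) inside \(T\otimes C\), and the étale subgroup construction above uses only the lattice, so no issue arises there.

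The final equivalence follows directly from the construction. If \(s_G\) is injective, then \(T^{\et}=0\), so every étale subgroup has trivial Tate module and is therefore trivial. Conversely, if \(s_G\) is not injective, then \(T^{\et}\neq0\) and \(H\) is a nontrivial étale subgroup.
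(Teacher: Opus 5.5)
Your construction of the subgroup from the saturated lattice $T^{\et}=\ker(s_G)$, and the factorization of subgroups through it by full faithfulness, is essentially the paper's argument. You are also more careful than the paper in checking, via Proposition~\ref{prop:Fargues-structure}(3), that $H\to G$ is a closed immersion.

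The gap is in the maximality step. There you assert that an arbitrary \'etale $p$-divisible rigid analytic subgroup $H'\subset G$ is ``so'' dualizable of Hodge--Tate weight $0$. This does not follow from $\Lie(H')=0$, and it is false in general. Your own remark already shows this: a morphism of triples $(T',0,0)\to(T,W,f_G)$ imposes no compatibility condition. Taking $T'=T$ and the identity gives the \'etale subgroup $G[p^\infty]\subset G$ of Proposition~\ref{prop:Fargues-structure}(1), with $T_p(G[p^\infty])=T$. When $W\neq0$ we have $s_G\neq0$. Then $G[p^\infty]$ is not dualizable, $s_G$ does not vanish on its Tate module, and it does not factor through your $H$. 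Concretely, for $G=\Gmfml^{\rig}$ the map $s_G:\Z_p(1)\to C(1)$ is injective, yet $G[p^\infty]=\mu_{p^\infty}$ is a nontrivial \'etale subgroup. So if the lemma is read over all of $\BT^{\rig}_K$, both the maximality claim and the forward direction of the ``in particular'' fail.

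The lemma holds, and the paper's proof should be read, inside $\BT^{\rig,\dual}_K$: $G^{\et,\max}$ is maximal among \emph{dualizable} \'etale subgroups. This is consistent with the paper building $G^{\et,\max}$ through Theorem~\ref{thm dual p-div rigid and HT} and calling maximality immediate.

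If you take dualizability of $H'$ as a hypothesis, your functoriality argument $s_G|_{T_p(H')}=(\Lie(\iota)\otimes\mathrm{id})\circ s_{H'}=0$ is fine. You then also need to show that your $H$ is itself dualizable, i.e.\ that $U:=T^{\et}\otimes\Q_p$ is Hodge--Tate of weight $0$. Without this, $H$ is not an object of the category in which maximality is asserted, and the converse direction of the ``in particular'' is not established. This is exactly why the paper works with $V^{\et}$, the maximal weight-$0$ subrepresentation. A short argument runs as follows.
\begin{itemize}
\item $U$ is Hodge--Tate, being a subrepresentation of $V$, so $U\otimes C=U_0\oplus U_1$.
\item Since $\Hom_{\Gamma_K}(C(1),C)=0$, the weight-one part $U_1$ lands in the weight-one summand $W\otimes_K C(1)$ of $V\otimes C$.
\item Hence $s_G\otimes C$ is injective on $U_1$.
\item But $s_G$ kills $T^{\et}$, so $U_1=0$.
\end{itemize}
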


\begin{proof}
Let \(V^{\et}\subset V\) be the maximal \(\Gamma_K\)-subrepresentation with Hodge--Tate
weights all equal to \(0\), and put $
T^{\et}:=T\cap V^{\et}$.
Since \(s_G\) is the restriction of the projection onto the weight-\(1\) summand, we have
\[
T^{\et}=\ker(s_G).
\]
By Theorem~\ref{thm dual p-div rigid and HT}, the lattice \(T^{\et}\) corresponds to a
dualizable \(p\)-divisible rigid analytic group \(G^{\et,\max}\). As
\[
\gr^{-1}\bfD_{\HT}(T^{\et}\otimes_{\Z_p}\Q_p)=0,
\]
we have \(\Lie(G^{\et,\max})=0\), so \(G^{\et,\max}\) is \'etale. The inclusion
\(T^{\et}\hookrightarrow T\) corresponds by full faithfulness to a morphism \(G^{\et}\to G\).
Maximality is immediate.
\end{proof}

Put $
T^{\et}:=T_p(G^{\et,\max})=\ker(s_G),
\,
T^{\mathrm c}:=T/T^{\et}$. 
Then \(s_G\) factors through an injective \(\Gamma_K\)-equivariant homomorphism
\[
\bar s_G:\ T^{\mathrm c}\hookrightarrow W\otimes_K C(1).
\]

\begin{lemma}
\label{lem:ht-boundary-general}
The short exact sequence
\[
0\longrightarrow T^{\mathrm c}
\xrightarrow{\ \bar s_G\ }
W\otimes_K C(1)
\longrightarrow
\bigl(W\otimes_K C(1)\bigr)/s_G(T)
\longrightarrow 0
\]
induces a canonical isomorphism
\[
\partial_G:\ 
\bigl((W\otimes_K C(1))/s_G(T)\bigr)^{\Gamma_K}
\xrightarrow{\ \sim\ }
H^1(K,T^{\mathrm c}).
\]
\end{lemma}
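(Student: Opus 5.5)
The plan is to take the long exact sequence in continuous Galois cohomology attached to the displayed short exact sequence of topological $\Gamma_K$-modules, and then kill the two neighbouring terms by Tate--Sen vanishing. Exactness of the sequence is immediate from the construction: $\bar s_G$ is injective by definition of $T^{\mathrm c}=T/\ker(s_G)$, and its image is $s_G(T)$, so the cokernel is the quotient $(W\otimes_K C(1))/s_G(T)$.

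Before applying cohomology, I will check that the sequence is continuous and topologically well behaved, so that a connecting homomorphism exists for continuous cochains. The image $s_G(T)$ is a finitely generated $\Z_p$-submodule of the finite-dimensional $C$-vector space $W\otimes_K C(1)$, hence closed. I will give $T^{\mathrm c}$ its $p$-adic topology; then $\bar s_G$ is a homeomorphism onto its image, by injectivity and because a finite free $\Z_p$-module embeds as a closed subgroup with the induced topology. The quotient map has continuous set-theoretic sections, since $W\otimes_K C(1)$ is a Banach $\Q_p$-space and the image of $T^{\mathrm c}$ is a compact $\Z_p$-lattice in a finite-dimensional $\Q_p$-subspace. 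This gives the usual six-term sequence
\[
(W\otimes_K C(1))^{\Gamma_K}\to \bigl((W\otimes_K C(1))/s_G(T)\bigr)^{\Gamma_K}\xrightarrow{\partial_G} H^1(K,T^{\mathrm c})\to H^1(K,W\otimes_K C(1)).
\]

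Next I will invoke Tate--Sen, exactly as in Lemma \ref{lem:PsiG-additive-final}. Since $W$ is a finite-dimensional $K$-vector space with trivial action, $W\otimes_K C(1)\cong C(1)^{\dim W}$, and $H^0(K,C(1))=H^1(K,C(1))=0$ (\cite[Theorem~2.2.7]{BC09}). Both outer terms vanish, so $\partial_G$ is an isomorphism. It is canonical because it is the connecting map of a canonical sequence.

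Finally, I will record the explicit description of $\partial_G$ for later comparison with $\Psi_G$. Lift a class $\bar a$ to $a\in W\otimes_K C(1)$. Then $\sigma\mapsto \sigma(a)-a$ takes values in $s_G(T)$, and $\partial_G(\bar a)$ is the class of $\sigma\mapsto\bar s_G^{-1}(\sigma(a)-a)$. The only delicate point in the whole argument is the topological one in the previous step: checking that continuous cochains and the connecting map behave well for this non-discrete quotient. I expect the existence of continuous sections and the closedness of $s_G(T)$ to settle it.
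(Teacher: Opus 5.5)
Your proposal is correct and follows the paper's proof: the long exact sequence in continuous Galois cohomology, together with Tate's vanishing $(W\otimes_K C(1))^{\Gamma_K}=H^1(K,W\otimes_K C(1))=0$, makes the connecting map $\partial_G$ an isomorphism. Your topological check does more than is needed. The segment from $H^0$ of the quotient to $H^1(K,T^{\mathrm c})$ only requires $\bar s_G$ to be a homeomorphism onto its compact, hence closed, image, which you already verify, so no continuous section of the quotient map is required.
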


\begin{proof}
By Tate’s theorem on the Galois cohomology of \(C(1)\)-coefficients (\cite{Tat67} Theorem 2),
\[
(W\otimes_K C(1))^{\Gamma_K}=0,
\qquad
H^1(K,W\otimes_K C(1))=0.
\]
The long exact sequence of continuous Galois cohomology attached to the displayed
short exact sequence therefore yields the desired isomorphism.
\end{proof}

\begin{definition}\label{def:deltaHT-general}
We define the Hodge--Tate boundary map
\[
\delta_G^{\HT}:\ H^1(K,T)\longrightarrow
\bigl((W\otimes_K C(1))/s_G(T)\bigr)^{\Gamma_K}
\]
to be the composite
\[
H^1(K,T)\longrightarrow H^1(K,T^{\mathrm c})
\xrightarrow[\sim]{\ \partial_G^{-1}\ }
\bigl((W\otimes_K C(1))/s_G(T)\bigr)^{\Gamma_K}.
\]
\end{definition}
Then from the short exact sequence $
0\longrightarrow T^{\et}\longrightarrow T\longrightarrow T^{\mathrm c}\longrightarrow 0$, we get
 a canonical exact sequence
\[
H^1(K,T^{\et})
\longrightarrow
H^1(K,T)
\xrightarrow{\ \delta_G^{\HT}\ }
\bigl((W\otimes_K C(1))/s_G(T)\bigr)^{\Gamma_K}
\longrightarrow
H^2(K,T^{\et}).
\]
In particular, if \(T^{\et}=0\), equivalently if \(s_G\) is injective, then
\[
\delta_G^{\HT}=\partial_G^{-1}:\ H^1(K,T)\xrightarrow{\ \sim\ }
\bigl((W\otimes_K C(1))/s_G(T)\bigr)^{\Gamma_K}
\]
is an isomorphism. Recall that in Lemma \ref{lem:PsiG-additive-final} we have constructed another canonical  map $\Psi_G:
H^1(K,T)\longrightarrow
\left(
\frac{\Lie(G)\otimes_K C(1)}{s_G(T)}
\right)^{\Gamma_K}$.

\begin{proposition}
\label{prop:I-factorization-general}
The two homomorphisms
\[
\Psi_G,\delta_G^{\HT}:
H^1(K,T)\longrightarrow
\left(
\frac{\Lie(G)\otimes_K C(1)}{s_G(T)}
\right)^{\Gamma_K}
\]
coincide.  
\end{proposition}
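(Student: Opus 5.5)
The plan is a direct cocycle-level computation: unwind the connecting homomorphism $\partial_G$ of Lemma~\ref{lem:ht-boundary-general} and compare it with the defining equation \eqref{eq:PsiG-cocycle-equation} of $\Psi_G$. Since $\partial_G$ is an isomorphism, it suffices to prove
\[
\partial_G\bigl(\Psi_G(e)\bigr)=\bar e\in H^1(K,T^{\mathrm c})
\]
for every $e\in H^1(K,T)$. Here $\bar e$ denotes the image of $e$ under $T\to T^{\mathrm c}$. Once this holds, applying $\partial_G^{-1}$ gives $\Psi_G(e)=\delta_G^{\HT}(e)$ by Definition~\ref{def:deltaHT-general}.

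First I fix a continuous cocycle $c:\Gamma_K\to T$ representing $e$. By Lemma~\ref{lem:PsiG-additive-final} there is a unique $a_c\in W\otimes_K C(1)$ with
\[
\sigma(a_c)-a_c=s_G(c(\sigma)),
\]
and $\Psi_G(e)$ is the class $\bar a_c$ of $a_c$ in $(W\otimes_K C(1))/s_G(T)$. Next I compute $\partial_G(\bar a_c)$ by the standard recipe. The element $a_c$ is a lift of $\bar a_c$ to the middle term $W\otimes_K C(1)$. The coboundary $\sigma\mapsto\sigma(a_c)-a_c$ lies in $\bar s_G(T^{\mathrm c})=s_G(T)$. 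Pulling it back along the injection $\bar s_G$ gives the cocycle $\sigma\mapsto\overline{c(\sigma)}\in T^{\mathrm c}$, because $s_G=\bar s_G\circ(T\to T^{\mathrm c})$. This cocycle represents exactly the image of $e$ in $H^1(K,T^{\mathrm c})$, as required.

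The only delicate points are sign and convention consistency, together with continuity. I would take the connecting map $\partial_G$ with the convention $\sigma\mapsto\sigma(\tilde y)-\tilde y$; this matches \eqref{eq:PsiG-cocycle-equation}. If the paper's convention for $\partial_G$ differed by a sign, the statement would only hold up to that sign, so I would state the convention explicitly. Continuity of the lifted cocycle is automatic, since it is the coboundary of a fixed element of a topological module. Independence of the choice of cocycle and of $a_c$ is already built into Lemma~\ref{lem:PsiG-additive-final}, and $\partial_G$ is well defined by Lemma~\ref{lem:ht-boundary-general}. I therefore expect no real obstacle beyond bookkeeping.

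As a corollary I would record $I_G=\iota_G\circ\delta_G^{\HT}\circ\kappa_G$, where $\kappa_G:x\mapsto[T_x]$ is the Kummer map. In particular, when $T^{\et}=0$ the map $\Psi_G$ is an isomorphism.
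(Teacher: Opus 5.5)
Your proposal is correct and is essentially the paper's own argument. You fix a cocycle $c$ and use $s_G=\bar s_G\circ(T\to T^{\mathrm c})$ to rewrite $\sigma(a_c)-a_c=\bar s_G(\bar c(\sigma))$. The cocycle description of the connecting map then gives $\partial_G(\Psi_G(e))=[\bar c]$, and you conclude by applying $\partial_G^{-1}$, exactly as the paper does.
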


\begin{proof}
Let \(e=[c]\in H^1(K,T)\), where
\(c:\Gamma_K\to T\) is a continuous cocycle, and let \(a_c\in W\otimes_KC(1)\) be the unique element satisfying
\[
\sigma(a_c)-a_c=s_G(c(\sigma))
\qquad(\sigma\in\Gamma_K)
\]
as in Lemma~\ref{lem:PsiG-additive-final}.  Write
\(\overline c:\Gamma_K\longrightarrow T^{\mathrm c}\)
for the image of \(c\).  Since \(s_G=\bar s_G\circ\bigl(T\longrightarrow T^{\mathrm c}\bigr),\)
one has
\[
\sigma(a_c)-a_c
=
\bar s_G(\overline c(\sigma)).
\]
The defining cocycle formula for
\(\partial_G\) in Lemma~\ref{lem:ht-boundary-general} therefore gives
\[
\partial_G\bigl(\Psi_G(e)\bigr)
=
[\overline c]
\in H^1(K,T^{\mathrm c}).
\]
The right-hand side is precisely the image of \(e\) under \(H^1(K,T)\longrightarrow H^1(K,T^{\mathrm c}).\) Hence
\[
\Psi_G(e)
=
\partial_G^{-1}([\overline c])
=
\delta_G^{\HT}(e).
\]
Thus \(\Psi_G=\delta_G^{\HT}\).  The asserted factorization now follows
from Definition~\ref{def:integration-v-final}, and the image statement
is immediate.
\end{proof}

Henceforth, we identify \(\left(
\frac{\Lie(G)\otimes_K C(1)}{s_G(T)}
\right)^{\Gamma_K}\) with its image under \(\iota_G\). With this convention, we simply write \(I_G=\delta_G^{\HT}\circ\kappa_G,\) where \[\kappa_G: G(K)\ra H^1(K,T),\quad x\mapsto [T_x]\] is the Kummer map defined by Proposition \ref{prop:representable-Gx-v-final}.

\begin{lemma}[Splitting versus \(p\)-divisibility]
\label{lem:split-pdiv-general}
For \(x\in G(K)\), let
\[
0\longrightarrow G\longrightarrow G_x\longrightarrow \underline{\Q_p/\Z_p}\longrightarrow 0
\]
be the Kummer extension attached to \(x\). Let $G(K)_{p\text{-}\mathrm{div}}\subset G(K)$ be the subset of elements which are $p$-divisible, i.e. the elements which admit a compatible system of $p$-power roots  in $G(K)$. Then the following are equivalent:
\begin{enumerate}
\item \(x\in G(K)_{p\text{-}\mathrm{div}}\);
\item the above extension admits a section
$
\underline{\Q_p/\Z_p}\longrightarrow G_x$
of \(p\)-divisible rigid analytic groups;
\item \(\kappa_G(x)=0\in H^1(K,T)\).
\end{enumerate}
\end{lemma}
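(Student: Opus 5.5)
I will prove the cycle of implications $(1)\Rightarrow(2)\Rightarrow(3)\Rightarrow(1)$, working with the explicit description $G_x\simeq\coprod_{a\in I_p}G_a$ from Proposition~\ref{prop:representable-Gx-v-final}. Recall that its group law is $(g,a)+(h,b)=(g+h-\varphi_x(\lfloor a+b\rfloor),\{a+b\})$ and that $G_x(K)$ contains the points $u_x(q)$.

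For $(1)\Rightarrow(2)$, I start from a compatible system $x_0=x$, $px_{n+1}=x_n$ in $G(K)$. I then set $\psi(p^{-n}\bmod\Z_p):=u_x(p^{-n})+x_n\in G_x(K)$. The $K$-points $y_n:=u_x(p^{-n})+x_n$ satisfy $py_{n+1}=y_n$, since $u_x$ is a homomorphism, and $y_0=u_x(1)+x=-x+x=0$. Hence $n\mapsto y_n$ defines a homomorphism of constant sheaves $\underline{\Q_p/\Z_p}\to G_x$. Composing with the quotient map sends $p^{-n}$ to $p^{-n}$, so this is a section. It is a morphism of $p$-divisible rigid analytic groups: the source is étale and the map lands componentwise in $K$-points of $G_x$, so it is a morphism of rigid groups.

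For $(2)\Rightarrow(3)$, a section splits the extension of sheaves. Passing to $p^n$-torsion via Lemma~\ref{lem:finite-level-v-final} and taking inverse limits, it splits $0\to T\to T_x\to\Z_p\to0$ $\Gamma_K$-equivariantly. Thus $[T_x]=\kappa_G(x)=0$.

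For $(3)\Rightarrow(1)$, I use Remark~\ref{rem:kummer-cocycle-equals-Tx}. The class $\kappa_G(x)$ is represented by $\gamma\mapsto\gamma\widetilde x-\widetilde x$ for any lift $\widetilde x\in\widetilde G(C)$. If it vanishes, there is $t\in T$ with $\gamma\widetilde x-\widetilde x=\gamma t-t$ for all $\gamma$. Then $\widetilde x-t$ is a $\Gamma_K$-invariant lift, and it still maps to $x$ because $t\in T=\ker(\widetilde G(C)\to G(C))$. Its coordinates $(x_n)$ are compatible $p$-power roots in $G(C)^{\Gamma_K}=G(K)$. This is exactly the exact sequence \eqref{eq:long exact seq evaluating on K}, namely $\widetilde G(K)\to G(K)\to H^1(K,T)$. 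So $x\in G(K)_{p\text{-}\mathrm{div}}$.

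The main subtlety is the identification $G(C)^{\Gamma_K}=G(K)$, together with the Galois-equivariance of $\widetilde G(C)$ with its natural action. I expect to justify this by noting that $G$ is a rigid analytic space over $K$, so Galois-fixed $C$-points are $K$-points (Ax–Sen–Tate applied in affinoid coordinates). A second, milder point is checking in $(1)\Rightarrow(2)$ that a homomorphism of constant étale sheaves into $G_x$ given by $K$-rational points is indeed a morphism in $\BT^{\rig}_K$. This holds because $\BT_K\to\BT_K^{\rig}$ is fully faithful on étale objects and morphisms are just morphisms of rigid groups.
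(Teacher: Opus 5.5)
Your proof is correct and is essentially the paper's argument. Your $(1)\Rightarrow(2)$ is the paper's construction of a section from compatible $p$-power roots, made explicit via $y_n=u_x(p^{-n})+x_n$. Your $(3)\Rightarrow(1)$ is the paper's alternative argument through the long exact sequence \eqref{eq:long exact seq evaluating on K} and Remark~\ref{rem:kummer-cocycle-equals-Tx}.

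The one difference is in the step from $(3)$. The paper's main proof gets $(3)\Rightarrow(2)$ from full faithfulness of $T_p$ on $\BT^{\rig,\dual}_K$, which requires knowing that $G_x$ is dualizable. Your cycle avoids this and uses only the Kummer cocycle together with $G(C)^{\Gamma_K}=G(K)$.
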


\begin{proof}
If \(x\) is \(p\)-divisible, choose a compatible system of \(p\)-power roots of \(-x\); this
defines a section \(\underline{\Q_p/\Z_p}\to G_x\). Conversely, any such section yields a
compatible system of \(p\)-power roots of \(-x\), hence of \(x\).

Finally, the extension of \(p\)-divisible rigid analytic groups splits if and only if its Tate module
splits, by full faithfulness of \(T_p\) on \(\BT^{\rig,\dual}_K\). This is exactly the condition
\(\kappa_G(x)=0\).

Alternatively, the equivalences follow from the fact that the map $\kappa_G$ is the corresponding arrow in the long exact sequence \ref{eq:long exact seq evaluating on K}, see the discussion above Definition \ref{def:integration-universal-final}. (Note that $G(K)_{p\text{-}\mathrm{div}}=\mathrm{Im}(\widetilde{G}(K)\ra G(K)$.)
\end{proof}

\subsection{Kernel and image of the integration map}
\label{subsec:kernel-image-general}
In this subsection, we describe the kernel and image of the integration map \[I_G: G(K)\lra (W\otimes_K C(1))/s_G(T).\]

Let \(G^{\mathrm c}\) be the dualizable \(p\)-divisible rigid analytic group corresponding to the lattice
\(T^{\mathrm c}=T/T^{\et}\). Then \(\Lie(G^{\mathrm c})=\Lie(G)\), one has a natural morphism
$
q:G\longrightarrow G^{\mathrm c}$.

\begin{theorem}[Kernel]
\label{thm:kernel-general}
Under the identification \((\Lie(G^c)\otimes_K C(1))/s_{G^c}(T^c)=(\Lie(G)\otimes_K C(1))/s_G(T)\), we have
\[
I_G=I_{G^{\mathrm c}}\circ q.
\]
Consequently, we have
\[
\ker(I_G)
=
\kappa_G^{-1}\!\Bigl(\mathrm{Im}\bigl(H^1(K,T^{\et})\to H^1(K,T)\bigr)\Bigr)
=
q^{-1}\!\bigl(G^{\mathrm c}(K)_{p\text{-}\mathrm{div}}\bigr).
\]
In particular, if \(T^{\et}=0\), equivalently if \(s_G\) is injective, then
\[
\ker(I_G)=G(K)_{p\text{-}\mathrm{div}}.
\]
\end{theorem}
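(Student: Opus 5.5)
The plan is to reduce everything to the factorization $I_G=\delta_G^{\HT}\circ\kappa_G$ of Proposition~\ref{prop:I-factorization-general}, together with naturality of the Kummer map and of the Hodge--Tate boundary along $q:G\to G^{\mathrm c}$. First I would construct $q$ and check that it induces the identity on Lie algebras and the projection $T\to T^{\mathrm c}$ on Tate modules. The surjection $T\to T^{\mathrm c}$ is $\Gamma_K$-equivariant, so by full faithfulness in Theorem~\ref{thm dual p-div rigid and HT} it comes from a morphism $q$. On Lie algebras, $\gr^{-1}\bfD_{\HT}$ of $T^{\et}\otimes\Q_p$ vanishes, so $\Lie(q)$ is an isomorphism. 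By functoriality of the Hodge--Tate splitting, $s_{G^{\mathrm c}}\circ(T\to T^{\mathrm c})=\Lie(q)\circ s_G$. This means $s_{G^{\mathrm c}}=\bar s_G$ under the identification $\Lie(G^{\mathrm c})=W$, and since $T\to T^{\mathrm c}$ is surjective, $s_{G^{\mathrm c}}(T^{\mathrm c})=s_G(T)$. This justifies the identification of the two quotients in the statement. Proposition~\ref{prop:IG-additive-functorial-v-final}(2), applied to $f=q$, then gives $I_{G^{\mathrm c}}(q(x))=I_G(x)$ directly.

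For the kernel, write $I_G=\delta_G^{\HT}\circ\kappa_G$. By Definition~\ref{def:deltaHT-general}, $\delta_G^{\HT}$ is the map $H^1(K,T)\to H^1(K,T^{\mathrm c})$ followed by the isomorphism $\partial_G^{-1}$ of Lemma~\ref{lem:ht-boundary-general}. Hence $\ker\delta_G^{\HT}=\ker\bigl(H^1(K,T)\to H^1(K,T^{\mathrm c})\bigr)=\mathrm{Im}\bigl(H^1(K,T^{\et})\to H^1(K,T)\bigr)$ by the long exact sequence of $0\to T^{\et}\to T\to T^{\mathrm c}\to0$. This gives the first description of $\ker(I_G)$.

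For the second description, I would use the same reasoning for $G^{\mathrm c}$. Since $T^{\mathrm c,\et}=\ker s_{G^{\mathrm c}}=\ker\bar s_G=0$, the map $\delta_{G^{\mathrm c}}^{\HT}=\partial_{G^{\mathrm c}}^{-1}$ is injective. Therefore $\ker I_{G^{\mathrm c}}=\ker\kappa_{G^{\mathrm c}}=G^{\mathrm c}(K)_{p\text{-}\mathrm{div}}$ by Lemma~\ref{lem:split-pdiv-general}. Combined with $I_G=I_{G^{\mathrm c}}\circ q$, this yields $\ker(I_G)=q^{-1}(G^{\mathrm c}(K)_{p\text{-}\mathrm{div}})$.

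As an alternative consistency check, I would show that $\kappa$ is natural, meaning $\kappa_{G^{\mathrm c}}\circ q=(T\to T^{\mathrm c})_*\circ\kappa_G$. This holds because the pushout of the Kummer extension $G_x$ along $q$ is $G^{\mathrm c}_{q(x)}$, since $q\circ\varphi_x=\varphi_{q(x)}$. The special case $T^{\et}=0$ is then immediate: $q$ is an isomorphism, or one can apply Lemma~\ref{lem:split-pdiv-general} directly.

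The main obstacle is the first step: verifying carefully that $q$ respects the Hodge--Tate splittings and identifies $\Lie(G^{\mathrm c})$ with $\Lie(G)$ compatibly with $s_{G^{\mathrm c}}$ and $\bar s_G$. I would handle this through the triple description in Theorem~\ref{thm dual p-div rigid and HT}. The morphism of triples induced by $T\to T^{\mathrm c}$ is the identity on $\gr^{-1}\bfD_{\HT}$, because $T^{\et}\otimes\Q_p$ has only weight $0$. After this, the remaining steps are formal diagram chases.
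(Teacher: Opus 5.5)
Your proposal is correct and follows essentially the paper's argument. The paper obtains $I_G=I_{G^{\mathrm c}}\circ q$ from the naturality identities $\delta_G^{\HT}=\delta_{G^{\mathrm c}}^{\HT}\circ q_*$ and $\kappa_{G^{\mathrm c}}\circ q=q_*\circ\kappa_G$ together with $I_G=\delta_G^{\HT}\circ\kappa_G$, whereas you invoke Proposition~\ref{prop:IG-additive-functorial-v-final}(2) directly (which packages the same naturality); the two kernel descriptions are then derived exactly as in the paper, via the long exact sequence of $0\to T^{\et}\to T\to T^{\mathrm c}\to 0$ and via injectivity of $\delta_{G^{\mathrm c}}^{\HT}$ combined with Lemma~\ref{lem:split-pdiv-general}.
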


\begin{proof}
By construction, \(q\) induces the quotient map \(T\to T^{\mathrm c}\), and the canonical
Hodge--Tate splitting for \(G^{\mathrm c}\) is the map induced by \(s_G\). Therefore
\[
\delta_G^{\HT}=\delta_{G^{\mathrm c}}^{\HT}\circ q_*,
\qquad
\kappa_{G^{\mathrm c}}\circ q=q_*\circ \kappa_G,
\]
and Proposition~\ref{prop:I-factorization-general} gives
\[
I_G=\delta_G^{\HT}\circ \kappa_G
=\delta_{G^{\mathrm c}}^{\HT}\circ \kappa_{G^{\mathrm c}}\circ q
=I_{G^{\mathrm c}}\circ q.
\]
Since \(T_p(G^{\mathrm c})\) has trivial \'etale-part, Lemma \ref{lem:kernel-sG}
shows that \(\delta_{G^{\mathrm c}}^{\HT}\) is an isomorphism. Hence
$
\ker(I_G)=q^{-1}\bigl(\ker(\kappa_{G^{\mathrm c}})\bigr)$.
By Lemma~\ref{lem:split-pdiv-general},
\[
\ker(\kappa_{G^{\mathrm c}})=G^{\mathrm c}(K)_{p\text{-}\mathrm{div}},
\]
which proves the second formula.  The first formula follows from the exact sequence in the paragaph under Definition \ref{def:deltaHT-general}.
\end{proof}

We now turn to the image. Let
$
e\in \Ext^1_{\HT,\{0,1\}}(\Z_p,T)\subset H^1(K,T)$,
and let
$
0\longrightarrow T\longrightarrow T_e\longrightarrow \Z_p\longrightarrow 0$
be a representative. Then \(T_e\otimes_{\Z_p}\Q_p\) is Hodge--Tate with weights in \(\{0,1\}\),
the inclusion \(T\subset T_e\) induces an isomorphism
\[
\gr^{-1}\bfD_{\HT}(T_e\otimes\Q_p)\xrightarrow{\ \sim\ } W,
\]
and there exists a unique dualizable \(p\)-divisible rigid analytic group \(G_e\) fitting into an exact
sequence
\[
0\longrightarrow G\longrightarrow G_e\longrightarrow \underline{\Q_p/\Z_p}\longrightarrow 0
\]
with \(T_p(G_e)=T_e\).

Recall that for any \(p\)-divisible rigid analytic group \(H\) over \(K\) we write
$
\widetilde H:=\varprojlim_{[p]} H$
for its universal cover on \((\Spa\, K)_v\). If
$
0\longrightarrow G\longrightarrow H\longrightarrow \underline{\Q_p/\Z_p}\longrightarrow 0$
is an extension of \(p\)-divisible rigid analytic groups, then applying \(\varprojlim_{[p]}\) yields a
short exact sequence
\[
0\longrightarrow \widetilde G\longrightarrow \widetilde H\longrightarrow \underline{\Q_p}\longrightarrow 0,
\]
since \([p]\) is surjective on all three terms and
$
\varprojlim_{[p]}\underline{\Q_p/\Z_p}=\underline{\Q_p}$.

\begin{definition}
\label{def:rigidified-extension-general}
A \emph{rigidification} of an extension
$
0\longrightarrow G\longrightarrow H\longrightarrow \underline{\Q_p/\Z_p}\longrightarrow 0
$
is a section
\[
\sigma:\ \underline{\Q_p}\longrightarrow \widetilde H
\]
of the induced universal-cover extension
$
0\longrightarrow \widetilde G\longrightarrow \widetilde H\longrightarrow \underline{\Q_p}\longrightarrow 0$.
We write
\[
\mathrm{RigExt}(\underline{\Q_p/\Z_p},G)
\]
for the \emph{set of isomorphism classes} of rigidified extensions as above.
\end{definition}

Definition~\ref{def:rigidified-extension-general} is the direct analogue, for
\(p\)-divisible rigid analytic groups over \(K\), of the notion of rigidified extension in the sense of
Howe--Morrow--Wear specialized to \(M=\Z_p\); cf.\ \cite[Def.~4.3]{HMW24}. For every \(x\in G(K)\), by Proposition \ref{prop:representable-Gx-v-final} the Kummer extension
$
0\longrightarrow G\longrightarrow G_x\longrightarrow \underline{\Q_p/\Z_p}\longrightarrow 0$
admits a canonical rigidification. Conversely,

\begin{proposition}
\label{prop:rigext-points-general}
 Every rigidified extension of \(\underline{\Q_p/\Z_p}\) by \(G\) is canonically
isomorphic to \(G_x\) for a unique \(x\in G(K)\). Equivalently, there is a canonical bijection
\[
G(K)\xrightarrow{\ \sim\ }\mathrm{RigExt}(\underline{\Q_p/\Z_p},G).
\]
\end{proposition}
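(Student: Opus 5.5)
The plan is to build an inverse map $\mathrm{RigExt}(\underline{\Q_p/\Z_p},G)\to G(K)$ and check that it is inverse to $x\mapsto (G_x,\sigma_x)$ from Proposition~\ref{prop:representable-Gx-v-final}. Let $(H,\sigma)$ be a rigidified extension. Write $u:=\mathrm{pr}_0\circ\sigma:\underline{\Q_p}\to\widetilde H\to H$. Its composite with $H\to\underline{\Q_p/\Z_p}$ is the reduction map $\underline{\Q_p}\to\underline{\Q_p/\Z_p}$, because $\sigma$ is a section and $\mathrm{pr}_0$ is compatible with $\varprojlim_{[p]}\underline{\Q_p/\Z_p}=\underline{\Q_p}\to\underline{\Q_p/\Z_p}$. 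Hence $u|_{\underline{\Z_p}}$ lands in $G$. By Theorem~\ref{thm:FarguesGb} (evaluation at $1$), it equals $-\varphi_x$ for a unique $x\in G(K)$, namely $x:=-u(1)$. Since $\sigma$ is a morphism of sheaves over $K$, $u(1)$ is a $K$-point, so $x\in G(K)$. This defines the candidate inverse $(H,\sigma)\mapsto x$.

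Next we produce the isomorphism $G_x\xrightarrow{\sim}H$. The pair of maps $G\hookrightarrow H$ and $u:\underline{\Q_p}\to H$ agree on $\underline{\Z_p}$ along $-\varphi_x$. So the universal property of the pushout \eqref{eq:pushout-Kummer-v-final} gives a morphism $\mathscr G_x=G_x\to H$. It is compatible with the inclusions of $G$ and the projections to $\underline{\Q_p/\Z_p}$, so it is an isomorphism by the five lemma on $v$-sheaves. Since both sides are $p$-divisible rigid analytic groups (the source is representable by Proposition~\ref{prop:representable-Gx-v-final}), this is an isomorphism of rigid groups.

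It remains to check that this isomorphism carries $\sigma_x$ to $\sigma$. Any two sections of $\widetilde H\to\underline{\Q_p}$ differ by a homomorphism $\underline{\Q_p}\to\widetilde G$. By construction $\sigma_x(q)=(u_x(q/p^n))_n$, and the image of $\sigma_x$ has zeroth coordinate $u=\mathrm{pr}_0\sigma$. Because $\sigma$ is additive, $\sigma(q)=(\mathrm{pr}_0\sigma(q/p^n))_n$. The same formula holds for $\sigma_x$, and therefore the two sections coincide. This gives surjectivity of $G(K)\to\mathrm{RigExt}$.

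For injectivity, suppose $(G_x,\sigma_x)\cong(G_y,\sigma_y)$ by an isomorphism that is the identity on $G$ and on the quotient. Composing with $\mathrm{pr}_0$ shows that $u_x$ and $u_y$ correspond, so $-x=u_x(1)=u_y(1)=-y$. One must also check that "isomorphism class" means isomorphisms restricting to the identity on $G$ and on $\underline{\Q_p/\Z_p}$; I take this as the intended convention. The main subtle point is the identity $\sigma(q)=(\mathrm{pr}_0\sigma(q/p^n))_n$, which uses only additivity and the definition of $\widetilde H$ as $\varprojlim_{[p]}$, so I expect it to be routine.
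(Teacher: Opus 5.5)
Your proposal is correct and follows essentially the same route as the paper. Both arguments extract $x$ from $-\mathrm{pr}_0\circ\sigma|_{\underline{\Z_p}}$, use the pushout universal property to obtain a morphism of extensions $G_x\to H$ carrying $u_x$ to $\mathrm{pr}_0\circ\sigma$, and conclude with the short five lemma; your explicit identity $\sigma(q)=(\mathrm{pr}_0\sigma(q/p^n))_n$, your injectivity check, and your remark on the convention for isomorphisms of extensions spell out details the paper leaves implicit.
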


\begin{proof}

Let \(H\) be an extension equipped with a rigidification
$\sigma:\ \underline{\Q_p}\longrightarrow \widetilde H$. 
Let $
\sigma_0:\ \underline{\Q_p}\longrightarrow H$
be the composition of \(\sigma\) with the projection \(\widetilde H\to H\). Since \(\sigma\) is a
section of \(\widetilde H\to \underline{\Q_p}\), the composite
\[
\underline{\Q_p}\xrightarrow{\ \sigma_0\ } H\longrightarrow \underline{\Q_p/\Z_p}
\]
is the natural quotient map. Hence the restriction of \(\sigma_0\) to
\(\underline{\Z_p}\subset\underline{\Q_p}\) lands in the kernel \(G\subset H\). Set
\[
\varphi:=-\,\sigma_0|_{\underline{\Z_p}}:\ \underline{\Z_p}\longrightarrow G,
\]
and let \(x\in G(K)\) be the corresponding point.

By the universal property of the pushout defining the Kummer extension attached to \(x\), there
is a unique morphism of extensions
\[
f:\ G_x\longrightarrow H
\]
inducing the identity on \(G\) and on \(\underline{\Q_p/\Z_p}\), and carrying the canonical map
\(\underline{\Q_p}\to G_x\) to \(\sigma_0\). It follows that \(f\) is compatible with the
rigidifications \(\sigma_x\) and \(\sigma\).
Since \(f\) is a morphism of short exact sequences inducing the identity on
\(G\) and on \(\underline{\Q_p/\Z_p}\), it is an isomorphism of
\(v\)-sheaves by the short five lemma.  Hence it is an isomorphism of
\(p\)-divisible rigid analytic groups.  This proves the claim.
\end{proof}

\begin{theorem}[Image: general dualizable case]
\label{thm:image-general}
Let
$
\iota_G:\ 
\left(\Lie(G)\otimes_K C(1)/s_G(T)\right)^{\Gamma_K}
\hookrightarrow
\Lie(G)\otimes_K C(1)/s_G(T)$
be the natural inclusion. For
$
y\in \left(\Lie(G)\otimes_K C(1)/s_G(T)\right)^{\Gamma_K}$,
the following are equivalent:
\begin{enumerate}
\item \(\iota_G(y)\in \mathrm{Im}(I_G)\);
\item there exists a class
$
e\in \Ext^1_{\HT,\{0,1\}}(\Z_p,T)\subset H^1(K,T)$
with
$
\delta_G^{\HT}(e)=y$
such that the corresponding dualizable analytic extension
$
0\longrightarrow G\longrightarrow G_e\longrightarrow \underline{\Q_p/\Z_p}\longrightarrow 0$
admits a rigidification in the sense of
Definition~\ref{def:rigidified-extension-general}.
\end{enumerate}
\end{theorem}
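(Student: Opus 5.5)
We prove the two implications directly, using the factorization $I_G=\delta_G^{\HT}\circ\kappa_G$ from Proposition~\ref{prop:I-factorization-general} together with the bijection $G(K)\xrightarrow{\sim}\mathrm{RigExt}(\underline{\Q_p/\Z_p},G)$ of Proposition~\ref{prop:rigext-points-general}.

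For (1)$\Rightarrow$(2), suppose $\iota_G(y)=I_G(x)$ for some $x\in G(K)$. Set $e:=\kappa_G(x)=[T_x]$. By Proposition~\ref{prop:representable-Gx-v-final}, $G_x$ is dualizable, so $V_x$ is Hodge--Tate with weights in $\{0,1\}$. Hence $e\in\Ext^1_{\HT,\{0,1\}}(\Z_p,T)$. Moreover, $\delta_G^{\HT}(e)=\Psi_G(e)$ is an element of the $\Gamma_K$-invariants, and its image under $\iota_G$ is $I_G(x)=\iota_G(y)$. Since $\iota_G$ is injective, $\delta_G^{\HT}(e)=y$. Finally, the uniqueness in Theorem~\ref{thm dual p-div rigid and HT} identifies $G_e$ with $G_x$ as extensions. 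The canonical section $\sigma_x$ of Proposition~\ref{prop:representable-Gx-v-final} is then a rigidification of $G_e$.

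For (2)$\Rightarrow$(1), take $e$ and a rigidification $\sigma$ of $G_e$ as in (2). By Proposition~\ref{prop:rigext-points-general} there is a point $x\in G(K)$ and an isomorphism of extensions $G_x\xrightarrow{\sim}G_e$ that restricts to the identity on $G$ and induces the identity on $\underline{\Q_p/\Z_p}$. Applying $T_p$ gives an isomorphism of extensions $T_x\cong T_e$ of $\Z_p$ by $T$, so $\kappa_G(x)=[T_x]=e$ in $H^1(K,T)$. Therefore
\[
I_G(x)=\iota_G\bigl(\delta_G^{\HT}(\kappa_G(x))\bigr)=\iota_G\bigl(\delta_G^{\HT}(e)\bigr)=\iota_G(y).
\]

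The main point to check is that the isomorphism supplied by Proposition~\ref{prop:rigext-points-general} is an isomorphism of extensions: it must be the identity on the sub $G$ and on the quotient $\underline{\Q_p/\Z_p}$, not just an abstract isomorphism $G_x\cong G_e$. Only then does it induce equality of the classes in $\Ext^1(\Z_p,T)=H^1(K,T)$, and the sign conventions relating $\varphi_x$, $u_x$ and the cocycle in Remark~\ref{rem:kummer-cocycle-equals-Tx} must match. This holds by construction in that proposition, since $f$ is built from the pushout and is the identity on both outer terms. A related subtlety is that $G_e$ is determined by $e$ only up to isomorphism of extensions. Rigidifiability is preserved under such isomorphisms, so condition (2) does not depend on the chosen representative.
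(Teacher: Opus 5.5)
Your proposal is correct and takes the same route as the paper: both directions use the factorization $I_G=\delta_G^{\HT}\circ\kappa_G$ (Proposition~\ref{prop:I-factorization-general}) and the bijection $G(K)\cong\mathrm{RigExt}(\underline{\Q_p/\Z_p},G)$ (Proposition~\ref{prop:rigext-points-general}). Your additional checks only spell out steps the paper leaves implicit: injectivity of $\iota_G$ gives $\delta_G^{\HT}(e)=y$, and $G_x\cong G_e$ is an isomorphism of extensions, so $\kappa_G(x)=e$.
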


\begin{proof}
If \(I_G(x)=\iota_G(y)\), take \(e=\kappa_G(x)\). Then \(e\in \Ext^1_{\HT,\{0,1\}}(\Z_p,T)\),
the corresponding extension is \(G_x\), and
Proposition~\ref{prop:rigext-points-general} gives a rigidification.

Conversely, if \(e\in \Ext^1_{\HT,\{0,1\}}(\Z_p,T)\) maps to \(y\) and \(G_e\) admits a
rigidification, then
Proposition~\ref{prop:rigext-points-general} gives \(x\in G(K)\) such that
\(G_e\cong G_x\). Thus \(e=\kappa_G(x)\), and by Proposition~\ref{prop:I-factorization-general}.
\[
\iota_G(y)=\iota_G(\delta_G^{\HT}(e))=I_G(x)
\]
\end{proof}

\begin{corollary}[Image in the injective case]
\label{cor:image-injective-general}
Assume \(T^{\et}=0\).  For every \(y\in((\Lie(G)\otimes_KC(1))/s_G(T))^{\Gamma_K}\)
let \(e_y=(\delta_G^{\HT})^{-1}(y)\).  Then
\[
y\in\operatorname{Im}(I_G)
\quad\Longleftrightarrow\quad
G_{e_y}\text{ admits a rigidification}.
\]
\end{corollary}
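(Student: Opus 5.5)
The corollary should be a direct specialization of Theorem~\ref{thm:image-general}, using that $\delta_G^{\HT}$ is bijective when $T^{\et}=0$. I would first record that, by Lemma~\ref{lem:kernel-sG}, the hypothesis $T^{\et}=0$ means $s_G$ is injective and $T^{\mathrm c}=T$. The exact sequence following Definition~\ref{def:deltaHT-general} then shows that
\[
\delta_G^{\HT}=\partial_G^{-1}:H^1(K,T)\longrightarrow\bigl((\Lie(G)\otimes_KC(1))/s_G(T)\bigr)^{\Gamma_K}
\]
is an isomorphism. So for every $y$ in the target, $e_y:=(\delta_G^{\HT})^{-1}(y)$ is well defined, and it is the \emph{unique} class in $H^1(K,T)$ with $\delta_G^{\HT}(e)=y$.

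The one point needing care is that $G_{e_y}$ makes sense, which requires $e_y\in\Ext^1_{\HT,\{0,1\}}(\Z_p,T)$. In other words, the extension $T_{e_y}\otimes\Q_p$ of $\Q_p$ by $V$ must be Hodge--Tate with weights in $\{0,1\}$. I would argue that every class in $H^1(K,V)$ dies in $H^1(K,V\otimes_{\Q_p}C)$ (here $H^1(K,V\otimes C(i))$ is computed from the Hodge--Tate decomposition of $V$ and Tate's vanishing $H^1(K,C(j))=0$ for $j\neq0$). That argument needs care with the weight-zero summand, since $H^1(K,C)\neq0$, so it may fail as stated.

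If it fails, I would fall back on the following reading. Since Theorem~\ref{thm:image-general}(2) already quantifies over $e\in\Ext^1_{\HT,\{0,1\}}(\Z_p,T)$ with $\delta_G^{\HT}(e)=y$, uniqueness of preimages forces any such $e$ to equal $e_y$. Implicit in the corollary's phrase ``$G_{e_y}$ admits a rigidification'' is that $e_y$ lies in this subset. In fact, if $y$ lies in the image of $I_G$, then $e_y=\kappa_G(x)$ for some $x$, and $\kappa_G(x)$ is Hodge--Tate by Proposition~\ref{prop:representable-Gx-v-final}.

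With this in hand, both implications are immediate. For ($\Rightarrow$): if $\iota_G(y)=I_G(x)$, then $e_y=\kappa_G(x)$ by injectivity of $\delta_G^{\HT}$ and Proposition~\ref{prop:I-factorization-general}, so $G_{e_y}\cong G_x$, which carries the canonical rigidification $\sigma_x$. For ($\Leftarrow$): if $G_{e_y}$ is rigidified, then Theorem~\ref{thm:image-general} with $e=e_y$ gives $\iota_G(y)\in\operatorname{Im}(I_G)$.

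I expect the main obstacle to be this Hodge--Tate membership of $e_y$, not the equivalence itself, which follows formally.
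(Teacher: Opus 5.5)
This is essentially the paper's proof, which is the one-line specialization of Theorem~\ref{thm:image-general} to the case where $\delta_G^{\HT}$ is bijective. In that case the class $e$ in part (2) of that theorem is forced to be $e_y$, and your two implications spell this out.

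Your caution about Hodge--Tate membership is warranted. The image of $e_y$ in $H^1(K,V\otimes_{\Q_p}C)\cong\omega_{G^D}\otimes_K H^1(K,C)$ can be nonzero even when $T^{\et}=0$ (e.g.\ for $T_p(E)$ of an elliptic curve with good ordinary reduction and non-split Tate module), so your first argument does fail. Your fallback reading, in which Hodge--Tate membership of $e_y$ is part of ``$G_{e_y}$ admits a rigidification'', is exactly what the paper tacitly assumes. This is harmless: by Proposition~\ref{prop:rigext-points-general}, any rigidified extension is a Kummer extension $G_x$, so its class $\kappa_G(x)$ is automatically Hodge--Tate.
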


\begin{proof}
This is the special case of Theorem~\ref{thm:image-general} in which
\(\delta_G^{\HT}\) is bijective.
\end{proof}

\subsubsection{The de Rham refinement}
\label{subsec:kernel-image-dR-general}

Assume now that \(V=V_p(G)\) is de Rham.

\begin{definition}
\label{def:dR-locus-general}
We set
\[
\Ext^1_{\dR,\{0,1\}}(\Z_p,T)
:=
\Ext^1_{\Rep^{\dR,\{0,1\}}_{\Z_p}(\Gamma_K)}(\Z_p,T)
\subset H^1(K,T),
\]
and define the de Rham locus by
\[
\left(\frac{\Lie(G)\otimes_K C(1)}{s_G(T)}\right)^{\Gamma_K,\dR}
:=
\delta_G^{\HT}\!\bigl(\Ext^1_{\dR,\{0,1\}}(\Z_p,T)\bigr)
\subset
\left(\frac{\Lie(G)\otimes_K C(1)}{s_G(T)}\right)^{\Gamma_K}.
\]
\end{definition}

\begin{proposition}
\label{prop:image-in-dR-locus-general}
If \(V_p(G)\) is de Rham, then
\[
I_G(G(K))\subset
\left(\frac{\Lie(G)\otimes_K C(1)}{s_G(T)}\right)^{\Gamma_K,\dR}.
\]
\end{proposition}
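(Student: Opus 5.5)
We want: for $x\in G(K)$, the class $\kappa_G(x)=[T_x]$ lies in $\Ext^1_{\dR,\{0,1\}}(\Z_p,T)$. Once this is shown, the inclusion is immediate. Indeed, Proposition~\ref{prop:I-factorization-general} gives $I_G=\delta_G^{\HT}\circ\kappa_G$ (with the convention identifying the $\Gamma_K$-invariants with their image under $\iota_G$). Then Definition~\ref{def:dR-locus-general} shows $I_G(x)=\delta_G^{\HT}(\kappa_G(x))$ lies in the de Rham locus.

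So the plan has two steps.

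\textbf{Step 1.} Check that $T_x$ is an object of $\Rep^{\dR,\{0,1\}}_{\Z_p}(\Gamma_K)$. By Lemma~\ref{lem:kummer-dR-admissibility}, since $V=V_p(G)$ is de Rham, the rational Kummer extension $0\to V\to V_x\to\Q_p\to0$ has de Rham middle term $V_x=T_x\otimes\Q_p$. Its Hodge--Tate weights lie in $\{0,1\}$, as already noted in that lemma and in Proposition~\ref{prop:representable-Gx-v-final}, which shows $G_x$ is dualizable. Hence $T_x\in\Rep^{\dR,\{0,1\}}_{\Z_p}(\Gamma_K)$. The terms $T$ and $\Z_p$ also lie in this category: $T$ by hypothesis, and $\Z_p$ trivially.

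\textbf{Step 2.} Conclude that the short exact sequence \eqref{eq:Tx-sequence-v-final} is an extension in the full subcategory $\Rep^{\dR,\{0,1\}}_{\Z_p}(\Gamma_K)$ of $\Rep_{\Z_p}(\Gamma_K)$. Its class in $H^1(K,T)=\Ext^1_{\Gamma_K}(\Z_p,T)$ therefore comes from $\Ext^1_{\dR,\{0,1\}}(\Z_p,T)$.

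One should confirm that the map $\Ext^1_{\dR,\{0,1\}}(\Z_p,T)\to H^1(K,T)$ is indeed the inclusion used in Definition~\ref{def:dR-locus-general}. This holds because the subcategory is full and closed under the relevant extensions: Yoneda classes with all three terms de Rham simply form a subset of all Yoneda classes.

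The only substantive input is the de Rham property of $V_x$. This is already supplied by Lemma~\ref{lem:kummer-dR-admissibility} via the restriction map $\widetilde G(C)\to V\otimes B_{\dR}$, which trivializes the Kummer cocycle over $B_{\dR}$. If a gap appears there, I would instead argue directly as follows. Choose $\widetilde x\in\widetilde G(C)$ lifting $x$. By Remark~\ref{rem:kummer-cocycle-equals-Tx}, the cocycle $\sigma\mapsto\sigma\widetilde x-\widetilde x$ represents $[T_x]$. Its image in $V\otimes B_{\dR}$ equals $\sigma b-b$ with $b=\mathrm{res}_\infty(\widetilde x)$. This produces a $\Gamma_K$-invariant vector $e_x\in V_x\otimes B_{\dR}$ lifting $1$. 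Then $\dim_K \bfD_{\dR}(V_x)=\dim V+1$, so $V_x$ is de Rham.
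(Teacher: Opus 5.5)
Your proposal is correct and follows the paper's argument essentially verbatim. Lemma~\ref{lem:kummer-dR-admissibility} makes $V_x$ de Rham with Hodge--Tate weights in $\{0,1\}$, so $\kappa_G(x)\in\Ext^1_{\dR,\{0,1\}}(\Z_p,T)$; then the factorization $I_G=\iota_G\circ\delta_G^{\HT}\circ\kappa_G$ from Proposition~\ref{prop:I-factorization-general}, together with Definition~\ref{def:dR-locus-general}, gives the inclusion, and your side remarks (the Yoneda-class inclusion and the fallback $B_{\dR}$-splitting) only restate what the paper's lemma already provides.
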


\begin{proof}
Let \(x\in G(K)\).  By Lemma~\ref{lem:kummer-dR-admissibility}, the
rational Kummer middle term \(V_x\) is de Rham with Hodge--Tate weights in
\(\{0,1\}\).  Hence \( \kappa_G(x)\in\Ext^1_{\dR,\{0,1\}}(\Z_p,T). \)
Since \(I_G(x)=\iota_G\!\left(\delta_G^{\HT}(\kappa_G(x))\right)\) by Proposition~\ref{prop:I-factorization-general}, the assertion follows from
Definition~\ref{def:dR-locus-general}.
\end{proof}

\begin{corollary}[Image in the de Rham injective case]
\label{cor:image-dR-injective-general}
Assume that \(V_p(G)\) is de Rham and \(T^{\et}=0\). For
$
y\in \left(\frac{\Lie(G)\otimes_K C(1)}{s_G(T)}\right)^{\Gamma_K}$,
let \(e_y=(\delta_G^{\HT})^{-1}(y)\in H^1(K,T)\). Then
\[
y\in \mathrm{Im}(I_G)
\quad\Longleftrightarrow\quad
e_y\in \Ext^1_{\dR,\{0,1\}}(\Z_p,T)
\ \text{and the corresponding }G_y\text{ admits a rigidification}.
\]
\end{corollary}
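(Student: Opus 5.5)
We combine Theorem~\ref{thm:image-general} with Proposition~\ref{prop:image-in-dR-locus-general}. Since \(T^{\et}=0\), \(s_G\) is injective (Lemma~\ref{lem:kernel-sG}). The exact sequence following Definition~\ref{def:deltaHT-general} therefore shows that
\[
\delta_G^{\HT}=\partial_G^{-1}:H^1(K,T)\xrightarrow{\ \sim\ }\left(\frac{\Lie(G)\otimes_KC(1)}{s_G(T)}\right)^{\Gamma_K}
\]
is bijective. Hence \(e_y\) is well defined and is the unique class mapping to \(y\). The first step is to check that \(e_y\) lies in \(\Ext^1_{\HT,\{0,1\}}(\Z_p,T)\) automatically, so that \(G_y:=G_{e_y}\) makes sense. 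The argument from the last paragraph of the proof of Proposition~\ref{prop:representable-Gx-v-final} should give this: after \(\otimes C\), the image of \(e_y\) in \(H^1(K,V\otimes C)\) splits compatibly with the Hodge--Tate decomposition, because \(H^1(K,C)\) and \(H^1(K,W\otimes C(1))\) pair with \(\delta^{\HT}\).

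If this step fails, I would avoid it by restricting to \(\Ext^1_{\dR}\subset\Ext^1_{\HT}\), which is all the statement requires. In the ``\(\Leftarrow\)'' direction the hypothesis puts \(e_y\) in the de Rham, hence Hodge--Tate, locus. In the ``\(\Rightarrow\)'' direction \(e_y=\kappa_G(x)\), and this is Hodge--Tate by Proposition~\ref{prop:representable-Gx-v-final}.

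For ``\(\Rightarrow\)'', suppose \(y=I_G(x)\). Proposition~\ref{prop:I-factorization-general} gives \(\delta_G^{\HT}(\kappa_G(x))=y\), so injectivity yields \(e_y=\kappa_G(x)\). Lemma~\ref{lem:kummer-dR-admissibility} then shows \(V_x\) is de Rham with weights in \(\{0,1\}\), so \(e_y\in\Ext^1_{\dR,\{0,1\}}(\Z_p,T)\). Finally, \(G_y\cong G_x\) carries the canonical rigidification \(\sigma_x\) of Proposition~\ref{prop:representable-Gx-v-final}.

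For ``\(\Leftarrow\)'', the class \(e_y\) is de Rham, hence Hodge--Tate with weights in \(\{0,1\}\), and \(G_y\) admits a rigidification. Theorem~\ref{thm:image-general}(2)\(\Rightarrow\)(1), or directly Proposition~\ref{prop:rigext-points-general}, produces \(x\in G(K)\) with \(G_y\cong G_x\), hence \(e_y=\kappa_G(x)\) and \(I_G(x)=\iota_G(y)\).

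The only delicate point is the bookkeeping in the first step, namely that \(G_{e_y}\) is defined for every \(y\). Beyond that the proof is a formal consequence of the bijectivity of \(\delta_G^{\HT}\).
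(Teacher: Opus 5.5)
Your proof is correct and is essentially the paper's: bijectivity of $\delta_G^{\HT}$ from $T^{\et}=0$, then Lemma~\ref{lem:kummer-dR-admissibility} together with the canonical rigidification of $G_x$ for ``$\Rightarrow$'', and Theorem~\ref{thm:image-general} for ``$\Leftarrow$''. Drop your tentative first step, because it is false in general: for de Rham $V$ with weights in $\{0,1\}$, a class in $H^1(K,V)$ is Hodge--Tate exactly when it lies in $H^1_g(K,V)$, which is usually a proper subspace (e.g.\ for $V_p$ of a supersingular elliptic curve over $\Q_p$); your fallback of forming $G_{e_y}$ only when $e_y$ is de Rham is exactly what the statement needs and what the paper does.
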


\begin{proof}
Since \(T^{\et}=0\), the map \(\delta_G^{\HT}\) is an isomorphism.  If
\(y\in\operatorname{Im}(I_G)\), then \(y=I_G(x)\) for some \(x\in G(K)\),
so \(e_y=\kappa_G(x)\in\Ext^1_{\dR,\{0,1\}}(\Z_p,T)\) by Lemma~\ref{lem:kummer-dR-admissibility}; moreover,
\(G_{e_y}\simeq G_x\) admits a rigidification, transported from the
canonical rigidification of \(G_x\).  Conversely, if
\(e_y\in\Ext^1_{\dR,\{0,1\}}(\Z_p,T)\) and \(G_{e_y}\) admits a
rigidification, Theorem~\ref{thm:image-general} gives
\(y\in\operatorname{Im}(I_G)\).
\end{proof}

\subsubsection{The semi-stable refinement}\label{subsubsec:semi-stable-refinement}
Let
$
H\in \BT^{\log}_{S,d},
\,
G:=H_K^{\rig}\in \BT^{\rig,\dual,\st}_K,
\,
T:=T_p(H)=T_p(G),
\,
V:=T\otimes_{\Z_p}\Q_p$.
We write
\[
\Ext^1_{\st,\{0,1\}}(\Z_p,T)
:=
\Ext^1_{\Rep^{\st,\{0,1\}}_{\Z_p}(\Gamma_K)}(\Z_p,T)
\subset H^1(K,T),
\]
where the inclusion is induced by the forgetful functor.

\begin{definition}
\label{def:semi-stable-boundary-locus}
We define the semi-stable Hodge--Tate locus by
\[
(\Lie(G)\otimes_K C(1)/s_G(T))^{\Gamma_K,\st}
:=
\delta_G^{\HT}\bigl(\Ext^1_{\st,\{0,1\}}(\Z_p,T)\bigr)
\subset
(\Lie(G)\otimes_K C(1)/s_G(T))^{\Gamma_K}.
\]
We also define the semi-stable Kummer locus by\footnote{We do not know whether $G(K)_{\st}=G(K)$ holds. In the good reduction case, this is true, see below. On the other hand, the similar version in the de Rham case also holds, cf. Proposition \ref{prop:image-in-dR-locus-general}.  }
\[
G(K)_{\st}
:=
\kappa_G^{-1}\!\bigl(\Ext^1_{\st,\{0,1\}}(\Z_p,T)\bigr)\subset G(K).
\]
\end{definition}

For \(x\in G(K)\), let
$
0\longrightarrow T\longrightarrow T_x\longrightarrow \Z_p\longrightarrow 0
$
be the Kummer extension, \(V_x:=T_x\otimes_{\Z_p}\Q_p\), and
$
0\longrightarrow G\longrightarrow G_x\longrightarrow \underline{\Q_p/\Z_p}\longrightarrow 0$
be the corresponding analytic extension.

\begin{proposition}
\label{prop:equiv-semikummer}
For \(x\in G(K)\), the following are equivalent:
\begin{enumerate}
\item \(x\in G(K)_{\st}\);
\item \(V_x\) is semi-stable with Hodge--Tate weights in \(\{0,1\}\);
\item \(G_x\in \BT^{\rig,\dual,\st}_K\);
\item there exists a unique \(H_x\in \BT^{\log}_{S,d}\) whose rigid generic fiber is \(G_x\).
\end{enumerate}
\end{proposition}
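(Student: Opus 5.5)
The plan is to prove the chain $(1)\Leftrightarrow(2)\Leftrightarrow(3)\Leftrightarrow(4)$, using only the Tate-module equivalences already established. Nothing new about log $p$-divisible groups should be needed.

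For $(1)\Leftrightarrow(2)$, I unwind Definition~\ref{def:semi-stable-boundary-locus}. By definition $x\in G(K)_{\st}$ means that $\kappa_G(x)=[T_x]$ lies in $\Ext^1_{\st,\{0,1\}}(\Z_p,T)$, where the ambient category is $\Rep^{\st,\{0,1\}}_{\Z_p}(\Gamma_K)$. Since this is a full subcategory of $\Rep_{\Z_p}(\Gamma_K)$, a class of $H^1(K,T)=\Ext^1_{\Gamma_K}(\Z_p,T)$ lies in the image exactly when some (equivalently any) representing middle term lies in the subcategory. Here the middle term is $T_x$. The forgetful map on $\Ext^1$ is injective, because a Yoneda extension in the full subcategory that splits as a $\Gamma_K$-module splits in the subcategory. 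Hence (1) is equivalent to $V_x$ being semi-stable with Hodge--Tate weights in $\{0,1\}$. The weight condition is in fact automatic by Proposition~\ref{prop:representable-Gx-v-final}, which shows that $G_x$ is dualizable, so $V_x$ is Hodge--Tate with weights $\{0,1\}$.

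For $(2)\Leftrightarrow(3)$, I use Proposition~\ref{prop:representable-Gx-v-final} to know that $G_x\in\BT^{\rig,\dual}_K$ and $T_p(G_x)=T_x$. Theorem~\ref{thm:Fargues-ss-criterion} then says that $G_x$ has semi-stable reduction if and only if $V_p(G_x)=V_x$ is semi-stable.

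For $(3)\Leftrightarrow(4)$, I invoke Theorem~\ref{thm semi-stable p-div rigid analytic}. The rigid generic fiber functor $\BT^{\log}_{S,\mathrm d}\simeq\BT^{\log}_{\cS,\mathrm d}\to\BT^{\rig,\dual,\st}_K$ is an equivalence, where the first step is Proposition~\ref{prop:equivalence b.t. logBT over algebraic base and formal base} and the second is Proposition~\ref{prop:logBT-generic-fiber}. Essential surjectivity gives existence of $H_x$ from (3). Full faithfulness gives uniqueness of $H_x$ up to unique isomorphism compatible with the identification of its generic fiber with $G_x$. Conversely, if such an $H_x$ exists, then its generic fiber lies in $\BT^{\rig,\dual,\st}_K$ by that same theorem, which gives (3).

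The step needing most care is the Ext-compatibility in $(1)\Leftrightarrow(2)$. One must check that membership of $[T_x]$ in $\Ext^1_{\st,\{0,1\}}$ does not depend on the chosen representative. This follows from the injectivity of the forgetful map together with the fact that isomorphic extensions have isomorphic middle terms. A second point to state explicitly is that "unique" in (4) means unique up to unique isomorphism. One may optionally note that $H_x$ then sits in an extension $0\to H\to H_x\to\Q_p/\Z_p\to0$ of log $p$-divisible groups; this follows by full faithfulness applied to the morphisms $G\to G_x\to\underline{\Q_p/\Z_p}$.
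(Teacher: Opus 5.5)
Your proposal is correct and follows the same route as the paper: $(1)\Leftrightarrow(2)$ by unwinding the definition of $\Ext^1_{\st,\{0,1\}}(\Z_p,T)$, and $(2)\Leftrightarrow(3)\Leftrightarrow(4)$ via the equivalences $\BT^{\log}_{S,\mathrm{d}}\simeq\BT^{\rig,\dual,\st}_K\simeq\Rep^{\st,\{0,1\}}_{\Z_p}(\Gamma_K)$ of Theorems~\ref{thm semi-stable p-div rigid analytic} and~\ref{thm:Fargues-ss-criterion}. Your additional remarks make explicit what the paper's brief proof leaves implicit: that the forgetful map on $\Ext^1$ is injective for a full subcategory, that the weight condition is automatic because $G_x$ is dualizable, and that $H_x$ is unique up to unique isomorphism.
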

\begin{proof}
The equivalence of \emph{(1)} and \emph{(2)} is tautological from the definition of
\(\Ext^1_{\st}(\Z_p,T)\).
Since the quotient \(\Q_p\) has Hodge--Tate weight \(0\), the representation \(V_x\)
has Hodge--Tate weights in \(\{0,1\}\) whenever it is semi-stable. The equivalence of
\emph{(2)}, \emph{(3)} and \emph{(4)} follows from the equivalences of categories
established earlier:
\[
\BT^{\log}_{S,d}
\;\xrightarrow{\ \sim\ }\;
\BT^{\rig,\dual,\st}_K
\;\xrightarrow[\sim]{\ T_p\ }\;
\Rep^{\st,\{0,1\}}_{\Z_p}(\Gamma_K).
\]
\end{proof}

\begin{definition}
\label{def:semi-stable-integration-refinement}
Let
$
\delta_{G,\st}^{\HT}:\ \Ext^1_{\st,\{0,1\}}(\Z_p,T)\longrightarrow (\Lie(G)\otimes_K C(1)/s_G(T))^{\Gamma_K,\st}
$
denote the restriction of \(\delta_G^{\HT}\). We define
\[
E_H^{\st}:\ G(K)_{\st}\longrightarrow \Ext^1_{\st,\{0,1\}}(\Z_p,T),
\qquad
x\longmapsto \kappa_G(x),
\]
and
\[
I_H^{\log,\st}
:=
\delta_{G,\st}^{\HT}\circ E_H^{\st}:
G(K)_{\st}\longrightarrow (\Lie(G)\otimes_K C(1)/s_G(T))^{\Gamma_K,\st}.
\]
\end{definition}

\begin{proposition}
\label{prop:Ilog-st-is-restriction}
The map \(I_H^{\log,\st}\) is the restriction of \(I_G\) to \(G(K)_{\st}\). More precisely,
for every \(x\in G(K)_{\st}\),
\[
I_H^{\log,\st}(x)=I_G(x).
\]
\end{proposition}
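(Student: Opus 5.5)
This is essentially a matter of unwinding definitions. By Definition~\ref{def:semi-stable-integration-refinement}, for \(x\in G(K)_{\st}\) we have
\[
I_H^{\log,\st}(x)=\delta_{G,\st}^{\HT}\bigl(E_H^{\st}(x)\bigr)=\delta_G^{\HT}\bigl(\kappa_G(x)\bigr),
\]
because \(\delta_{G,\st}^{\HT}\) is by definition the restriction of \(\delta_G^{\HT}\) to the subgroup \(\Ext^1_{\st,\{0,1\}}(\Z_p,T)\subset H^1(K,T)\). That subgroup contains \(\kappa_G(x)\) precisely by the definition of \(G(K)_{\st}\).

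On the other hand, Definition~\ref{def:integration-v-final} gives
\[
I_G(x)=\iota_G\bigl(\Psi_G([T_x])\bigr)=\iota_G\bigl(\Psi_G(\kappa_G(x))\bigr).
\]
Proposition~\ref{prop:I-factorization-general} gives \(\Psi_G=\delta_G^{\HT}\). Under the convention adopted after that proposition, \(\iota_G\) is suppressed. Hence \(I_G(x)=\delta_G^{\HT}(\kappa_G(x))\), and comparing with the first display yields the equality.

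The only point requiring care is that the inclusion \(\Ext^1_{\st,\{0,1\}}(\Z_p,T)\subset H^1(K,T)\) induced by the forgetful functor is compatible with identifying an extension class with its Galois cocycle class. I will check this by noting that an extension \(0\to T\to T_e\to\Z_p\to0\) in \(\Rep^{\st,\{0,1\}}_{\Z_p}(\Gamma_K)\) is in particular an extension of continuous \(\Gamma_K\)-modules. Its class in \(H^1(K,T)\) is the cocycle \(\sigma\mapsto\sigma\tilde1-\tilde1\) for any lift \(\tilde 1\) of \(1\).

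For \(T_e=T_x\), Remark~\ref{rem:kummer-cocycle-equals-Tx} identifies this cocycle with \(\kappa_G(x)=[T_x]\). So the element \(E_H^{\st}(x)\) is literally \(\kappa_G(x)\) viewed in the subgroup, and no further comparison is needed.

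I expect no real obstacle; the proof is formal once Proposition~\ref{prop:I-factorization-general} is invoked.
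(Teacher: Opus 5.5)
Your proof is correct and is the same as the paper's: both combine $I_G=\delta_G^{\HT}\circ\kappa_G$ from Proposition~\ref{prop:I-factorization-general} with the definitions $E_H^{\st}=\kappa_G|_{G(K)_{\st}}$ and $\delta_{G,\st}^{\HT}=\delta_G^{\HT}|_{\Ext^1_{\st,\{0,1\}}(\Z_p,T)}$. Your extra check that the forgetful inclusion is compatible with cocycle classes is harmless but not needed, since $E_H^{\st}(x)$ is defined to be $\kappa_G(x)$.
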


\begin{proof}
By Proposition~\ref{prop:I-factorization-general}, we have
$
I_G=\delta_G^{\HT}\circ \kappa_G$.
Restricting to \(G(K)_{\st}\) gives
\[
I_G|_{G(K)_{\st}}=\delta_{G,\st}^{\HT}\circ E_H^{\st}=I_H^{\log,\st}.
\]
\end{proof}

\begin{proposition}[Semi-stable image and kernel]
\label{prop:semistable-image-kernel}
One has
\[
\operatorname{Im}(I_H^{\log,\st})
=
\left\{
\delta_G^{\HT}(e)
\ \middle|\
 e\in\Ext^1_{\st,\{0,1\}}(\Z_p,T),\quad
 G_e\text{ admits a rigidification}
\right\}.
\]
Moreover, \(\ker(I_H^{\log,\st})
=
G(K)_{\st}\cap\ker(I_G).\) If \(T^{\et}=0\), then \(\ker(I_H^{\log,\st})
=
G(K)_{\st}\cap G(K)_{p\text{-}\mathrm{div}}.\)
In the same injective case, if
\[
y\in\bigl((\Lie(G)\otimes_KC(1))/s_G(T)\bigr)^{\Gamma_K},
\qquad
e_y=(\delta_G^{\HT})^{-1}(y),
\]
then
\[
y\in\operatorname{Im}(I_H^{\log,\st})
\quad\Longleftrightarrow\quad
 e_y\in\Ext^1_{\st,\{0,1\}}(\Z_p,T)
 \text{ and }G_{e_y}\text{ admits a rigidification}.
\]
\end{proposition}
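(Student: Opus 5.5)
The statement is essentially the restriction of the general results of \S\ref{subsec:kernel-image-general} to the semi-stable Kummer locus. I will deduce it from Proposition~\ref{prop:Ilog-st-is-restriction}, Theorem~\ref{thm:image-general}, Theorem~\ref{thm:kernel-general} and Proposition~\ref{prop:rigext-points-general}, in four steps.

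\textbf{Kernel.} By Proposition~\ref{prop:Ilog-st-is-restriction}, $I_H^{\log,\st}=I_G|_{G(K)_{\st}}$. Hence
\[
\ker(I_H^{\log,\st})=G(K)_{\st}\cap\ker(I_G)
\]
formally. When $T^{\et}=0$, Theorem~\ref{thm:kernel-general} gives $\ker(I_G)=G(K)_{p\text{-}\mathrm{div}}$, which yields the second formula.

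\textbf{Image, inclusion ``$\subset$''.} Take $x\in G(K)_{\st}$ and put $e:=\kappa_G(x)$. By the definition of $G(K)_{\st}$, the class $e$ lies in $\Ext^1_{\st,\{0,1\}}(\Z_p,T)$. By Proposition~\ref{prop:I-factorization-general}, $I_H^{\log,\st}(x)=\delta_G^{\HT}(e)$. The extension $G_e$ is isomorphic to $G_x$. The latter carries the canonical rigidification $\sigma_x$ of Proposition~\ref{prop:representable-Gx-v-final}, and transporting $\sigma_x$ along the isomorphism rigidifies $G_e$.

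\textbf{Image, inclusion ``$\supset$''.} Let $e\in\Ext^1_{\st,\{0,1\}}(\Z_p,T)$ be such that $G_e$ admits a rigidification. A semi-stable class is in particular Hodge--Tate with weights in $\{0,1\}$, so $G_e$ is a dualizable extension as in Theorem~\ref{thm:image-general}. Proposition~\ref{prop:rigext-points-general} then gives a point $x\in G(K)$ with $G_e\cong G_x$ as rigidified extensions.

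The step needing care is to conclude $\kappa_G(x)=e$ in $H^1(K,T)$, not merely up to some automorphism. I will argue as follows. The isomorphism $G_x\to G_e$ produced in the proof of Proposition~\ref{prop:rigext-points-general} induces the identity on $G$ and on $\underline{\Q_p/\Z_p}$. Applying $T_p$, it gives an isomorphism of extensions of $\Z_p$ by $T$ that is the identity on both ends. So the two classes agree, $\kappa_G(x)=e$. Since $e$ is semi-stable, $x\in G(K)_{\st}$, and $\delta_G^{\HT}(e)=I_H^{\log,\st}(x)$ lies in the image.

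\textbf{Injective case.} If $T^{\et}=0$, then $\delta_G^{\HT}$ is bijective by the discussion following Definition~\ref{def:deltaHT-general}. Every $y$ therefore has exactly one preimage $e_y$. The image description just proved specializes to the stated equivalence: $y$ is in the image if and only if $e_y$ is semi-stable and $G_{e_y}$ admits a rigidification.

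I expect no serious obstacle. The only delicate point is the identification of extension classes in the third step, which is handled by the fact that the isomorphism from Proposition~\ref{prop:rigext-points-general} is the identity on the outer terms.
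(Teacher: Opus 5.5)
Your proposal is correct and follows the paper's proof. The paper cites Theorem~\ref{thm:image-general} for semi-stable classes together with Proposition~\ref{prop:Ilog-st-is-restriction} and Theorem~\ref{thm:kernel-general}; your argument unwinds the proof of Theorem~\ref{thm:image-general} via Proposition~\ref{prop:rigext-points-general}, and your check that the rigidified isomorphism is the identity on both outer terms, so that $\kappa_G(x)=e$ exactly, is the step the paper asserts as ``Thus $e=\kappa_G(x)$''.
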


\begin{proof}
Apply Theorem~\ref{thm:image-general} to the classes in
\(\Ext^1_{\st,\{0,1\}}(\Z_p,T)\), and use
Proposition~\ref{prop:Ilog-st-is-restriction}.  The kernel statements follow
from Theorem~\ref{thm:kernel-general}.
\end{proof}

\subsubsection{The good reduction refinement}

Let $
H\in \BT_{\mathcal O_K},
\,
G:=H^{\rig}\in \BT_K^{\rig,\dual,\mathrm{good}},
\,
T:=T_p(H)=T_p(G),
\,
V:=T\otimes_{\Z_p}\Q_p$.
Then \(V\) is crystalline with Hodge--Tate weights in \(\{0,1\}\), and
$
G(K)=H(\mathcal O_K)$.

\begin{definition}
\label{def:Ext-cris-good}
We set
\[
\Ext^1_{\cris,\{0,1\}}(\Z_p,T)
:=
\Ext^1_{\Rep^{\cris,\{0,1\}}_{\Z_p}(\Gamma_K)}(\Z_p,T)
\subset H^1(K,T).
\]
We define the crystalline Hodge--Tate locus by
\[
\left(\frac{\Lie(G)\otimes_K C(1)}{s_G(T)}\right)^{\Gamma_K,\cris}
:=
\delta_G^{\HT}\!\bigl(\Ext^1_{\cris,\{0,1\}}(\Z_p,T)\bigr)
\subset
\left(\frac{\Lie(G)\otimes_K C(1)}{s_G(T)}\right)^{\Gamma_K},
\]
and let
\[
\delta_{G,\cris}^{\HT}:\ \Ext^1_{\cris,\{0,1\}}(\Z_p,T)\longrightarrow
\left(\frac{\Lie(G)\otimes_K C(1)}{s_G(T)}\right)^{\Gamma_K,\cris}
\]
be the induced surjection.
\end{definition}

For \(x\in H(\mathcal O_K)=G(K)\), let
\[
0\longrightarrow H\longrightarrow H_x\longrightarrow (\Q_p/\Z_p)_{\mathcal O_K}\longrightarrow 0\]
be the Kummer extension of \(p\)-divisible groups over \(\mathcal O_K\) (cf. \cite{HMW24} section 4), whose rigid generic fiber gives the Kummer extension of $p$-divisible rigid analytic groups $0\ra G\ra G_x\ra \Q_p/\Z_p\ra 0$ attached to $x\in G(K)$. Applying \(T_p\) gives
an extension
$
0\longrightarrow T\longrightarrow T_p(H_x)\longrightarrow \Z_p\longrightarrow 0$
in \(\Rep^{\cris,\{0,1\}}_{\Z_p}(\Gamma_K)\).

\begin{definition}
\label{def:E-cris-good}
We define
\[
E_H^{\cris}:\ H(\mathcal O_K)\longrightarrow \Ext^1_{\cris,\{0,1\}}(\Z_p,T),
\qquad
x\longmapsto [T_p(H_x)].
\]
We then define
\[
I_H^{\cris}:=\delta_{G,\cris}^{\HT}\circ E_H^{\cris}:
H(\mathcal O_K)\longrightarrow
\left(\frac{\Lie(G)\otimes_K C(1)}{s_G(T)}\right)^{\Gamma_K,\cris}.
\]
\end{definition}

\begin{proposition}\label{prop:Icris-is-Ig}
Under the identification \(H(\mathcal O_K)=G(K)\), one has
\[
I_G=\iota_G\circ I_H^{\cris},
\]
where
$
\iota_G:\ 
\left(\frac{\Lie(G)\otimes_K C(1)}{s_G(T)}\right)^{\Gamma_K,\cris}
\hookrightarrow
\frac{\Lie(G)\otimes_K C(1)}{s_G(T)}$
is the natural inclusion.
\end{proposition}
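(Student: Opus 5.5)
The plan is to reduce the identity to the factorization $I_G=\iota_G\circ\delta_G^{\HT}\circ\kappa_G$ of Proposition~\ref{prop:I-factorization-general}. It then suffices to show that, for every $x\in H(\mathcal O_K)=G(K)$, the class $E_H^{\cris}(x)=[T_p(H_x)]$ coincides with $\kappa_G(x)=[T_x]$ in $H^1(K,T)$. Once this is known, $\kappa_G(x)$ lies in $\Ext^1_{\cris,\{0,1\}}(\Z_p,T)$, and so $\delta_G^{\HT}(\kappa_G(x))=\delta_{G,\cris}^{\HT}(E_H^{\cris}(x))$. This gives
\[
I_G(x)=\iota_G\bigl(\delta_G^{\HT}(\kappa_G(x))\bigr)=\iota_G\bigl(I_H^{\cris}(x)\bigr),
\]
which is the claim.

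To compare the two classes, I would use that the rigid generic fiber of the Kummer extension $0\to H\to H_x\to(\Q_p/\Z_p)_{\mathcal O_K}\to0$ is the Kummer extension $0\to G\to G_x\to\underline{\Q_p/\Z_p}\to0$. The text just before Definition~\ref{def:E-cris-good} records this, and it can be checked directly. The construction of \cite[\S4]{HMW24} is the pushout of $-x:\Z_p\to H$ along $\Z_p\hookrightarrow\Q_p$, realized as a disjoint union of copies of $H$ with the carrying law. Taking generic fibers commutes with disjoint unions. It also sends the point $x\in H(\mathcal O_K)$ to $x\in G(K)$, and hence sends the morphism $-x:\underline{\Z_p}\to H$ to $-\varphi_x$.

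Hence $(H_x)^{\rig}\cong G_x$ compatibly with the inclusion of $G$ and the projection to $\underline{\Q_p/\Z_p}$. The inclusion is compatible because $H^{\rig}=G$. The projection is compatible because $((\Q_p/\Z_p)_{\mathcal O_K})^{\rig}=\underline{\Q_p/\Z_p}$. Applying $T_p$, and using $T_p(H')=T_p(H'^{\rig})$ as Galois modules (both are $\varprojlim H'[p^n](\overline K)$), identifies the extension $0\to T\to T_p(H_x)\to\Z_p\to0$ with \eqref{eq:Tx-sequence-v-final}. The two classes in $H^1(K,T)$ therefore agree.

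The main obstacle is the compatibility of the carrying-law description with the generic fiber functor, in particular the identification of the connecting morphisms, so that no sign or twist discrepancy appears. I would handle this using uniqueness in the pushout. There is a canonical map $(H_x)^{\rig}\to\mathscr G_x$ coming from the universal property of \eqref{eq:pushout-Kummer-v-final}, since $(H_x)^{\rig}$ receives $\underline{\Q_p}$ extending $-\varphi_x$. This map is an isomorphism of extensions by the five lemma. As a sanity check, one can also compare cocycles directly: a lift $\widetilde x\in\widetilde G(C)$ gives the cocycle $\gamma\mapsto\gamma\widetilde x-\widetilde x$ in both settings, by Remark~\ref{rem:kummer-cocycle-equals-Tx} and \cite[Thm.~4.4]{HMW24}.
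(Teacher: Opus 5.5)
Your proposal is correct and follows the paper's proof. The paper likewise reduces to the factorization $I_G=\iota_G\circ\delta_G^{\HT}\circ\kappa_G$ of Proposition~\ref{prop:I-factorization-general}, using the stated fact that the rigid generic fiber of the Kummer extension $H_x$ is $G_x$, so that $E_H^{\cris}(x)=\kappa_G(x)$. One small slip: the universal property of the pushout \eqref{eq:pushout-Kummer-v-final} gives a map $\mathscr G_x\to (H_x)^{\rig}$, not $(H_x)^{\rig}\to\mathscr G_x$, but your five-lemma argument is unaffected.
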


\begin{proof}
This is immediate from Proposition~\ref{prop:I-factorization-general}.
\end{proof}

For \(e\in \Ext^1_{\cris,\{0,1\}}(\Z_p,T)\), let
$
0\longrightarrow H\longrightarrow H_e\longrightarrow (\Q_p/\Z_p)_{\mathcal O_K}\longrightarrow 0
$
be the corresponding extension in \(\BT_{\mathcal O_K}\), whose existence and uniqueness
follow from the equivalence between \(\BT_{\mathcal O_K}\) and crystalline lattices of weights
\(\{0,1\}\). We say that \(e\) is \emph{rigidified} if the induced extension of universal covers
\[
0\longrightarrow \widetilde H\longrightarrow \widetilde{H_e}\longrightarrow \underline{\Q_p}\longrightarrow 0
\]
admits a section.

\begin{theorem}[Good reduction: image statement]
\label{thm:good-refinement}
One has
\[
\operatorname{Im}(I_H^{\cris})
=
\left\{
y\in
\left(\frac{\Lie(G)\otimes_K C(1)}{s_G(T)}\right)^{\Gamma_K,\cris}
\;\middle|\;
\begin{array}{c}
\exists\,e\in \Ext^1_{\cris,\{0,1\}}(\Z_p,T)\ \text{with}\ \delta_G^{\HT}(e)=y,\\[2pt]
\text{and } H_e \text{ is rigidified}
\end{array}
\right\}.
\]
Equivalently, the image is the \emph{rigidified crystalline locus}.

Moreover, if the special fiber \(H_k\) is connected, (hence \(T^{\et}=0\)), then every
extension of \((\Q_p/\Z_p)_{\mathcal O_K}\) by \(H\) is uniquely rigidifiable. In that case
\[
\operatorname{Im}(I_H^{\cris})
=
\left(\frac{\Lie(G)\otimes_K C(1)}{s_G(T)}\right)^{\Gamma_K,\cris}.
\]
\end{theorem}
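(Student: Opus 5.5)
The plan is to run the proof of Theorem~\ref{thm:image-general} inside the category $\BT_{\mathcal O_K}$ instead of $\BT^{\rig,\dual}_K$, and then to deduce the connected case from a vanishing statement on universal covers.

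\emph{First inclusion.} Let $y=I_H^{\cris}(x)$ with $x\in H(\mathcal O_K)=G(K)$, and put $e=E_H^{\cris}(x)=[T_p(H_x)]$. By construction $\delta_G^{\HT}(e)=y$. It remains to see that $H_e\simeq H_x$ is rigidified. The canonical section $\sigma_x$ of Proposition~\ref{prop:representable-Gx-v-final} lives on the universal cover of $G_x=H_x^{\rig}$. We take $\widetilde H$ to mean $\varprojlim_{[p]}$ of the generic fiber, following \cite{HMW24} and \cite[\S3]{SW13}. With that convention, $\sigma_x$ is already a section of the universal cover of $H_x$.

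Alternatively, one can build the section integrally. Put $\sigma(q)=(u_x(q),u_x(q/p),\dots)$ using the pushout map $\underline{\Q_p}\to H_x$ coming from the integral Kummer construction of \cite[\S4]{HMW24}.

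\emph{Reverse inclusion.} Let $e\in\Ext^1_{\cris,\{0,1\}}(\Z_p,T)$ with $\delta_G^{\HT}(e)=y$, and suppose $H_e$ is rigidified. Taking generic fibers gives a dualizable extension $0\to G\to H_e^{\rig}\to\Q_p/\Z_p\to0$. Its universal-cover sequence has a section, so it is a rigidified extension in the sense of Definition~\ref{def:rigidified-extension-general}. Proposition~\ref{prop:rigext-points-general} then gives $x\in G(K)=H(\mathcal O_K)$ with $H_e^{\rig}\simeq G_x$. By the equivalence of Theorem~\ref{thm good reduction p-divisible}, this isomorphism comes from an isomorphism $H_e\simeq H_x$ over $\mathcal O_K$. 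Hence $E_H^{\cris}(x)=e$ and $I_H^{\cris}(x)=y$.

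\emph{Connected special fiber.} Suppose $H_k$ is connected, so $H$ is a formal group. Then $G$ is an open ball and $T^{\et}=0$. Indeed, an étale subgroup of $G$ with nonzero Tate module would, by full faithfulness, give an étale quotient of the connected part of $H$, which is impossible.

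For an extension $0\to H\to H_e\to\Q_p/\Z_p\to0$, it suffices to show that the universal-cover sequence $0\to\widetilde H\to\widetilde{H_e}\to\underline{\Q_p}\to0$ splits uniquely.
\begin{itemize}
\item \emph{Uniqueness.} Two sections differ by an element of $\Hom(\underline{\Q_p},\widetilde H)=\widetilde H(K)$. For a formal group, $\widetilde H(\mathcal O_K)=\varprojlim_{[p]}H(\mathcal O_K)$. This vanishes because $H(\mathcal O_K)$ has no nonzero infinitely $p$-divisible elements: $[p]$ is topologically nilpotent and $H(\mathcal O_K)$ is a finitely generated $\Z_p$-module.
\item \emph{Existence.} Over $\mathcal O_K$ the universal cover is insensitive to nilpotent thickenings (Scholze--Weinstein). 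So it is enough to split over $k$ and then lift. Over $k$, the connected–étale sequence of $H_{e,k}$ splits canonically because $k$ is perfect, and this splits the universal-cover sequence.
\end{itemize}

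This identifies the image with the whole crystalline locus.

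The main obstacle is the existence step in the connected case: transporting the splitting of the reduced universal cover back to $\mathcal O_K$ while keeping it compatible with the rigid generic fiber. I would try \cite[Prop.~3.1.3]{SW13} (the quasi-isogeny rigidity of universal covers) first. If that fails, I would instead work with the Kummer description and reduce existence to the surjectivity of $\kappa_G$ onto $\Ext^1_{\cris}$. In the connected case that surjectivity is a Bloch–Kato type statement, $H^1_f(K,V)\cong G(K)\otimes\Q_p$, and it would need to be refined integrally.
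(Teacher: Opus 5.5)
Your proof is correct. The first inclusion is the paper's argument.

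For the reverse inclusion, the paper applies the integral rigidified-extension theorem of \cite{HMW24} directly over $\mathcal O_K$. You instead pass to $H_e^{\rig}$, use Proposition~\ref{prop:rigext-points-general} on the generic fiber, and come back via Theorem~\ref{thm good reduction p-divisible}. This keeps everything inside the paper's own results. In fact $e=[T_p(H_e^{\rig})]=[T_x]=\kappa_G(x)$ already, so the lift to $\mathcal O_K$ is not even needed.

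The genuine difference is the connected case. The paper argues with slopes: $H_k$ has positive slopes, $(\Q_p/\Z_p)_k$ has slope $0$, and the isogeny category over perfect $k$ is semisimple. It then quotes the rigidification criterion \cite[Theorem~4.6]{HMW24}. You instead split $H_{e,k}$ on the nose, using the canonical reduced-subgroup splitting of the connected--\'etale sequence, and transport the resulting splitting of universal covers to $\mathcal O_K$ by nilpotent invariance. In effect this reproves the needed case of that criterion without Dieudonn\'e theory. The paper's route is shorter given the citation, and the criterion it quotes also handles non-connected $H_k$.

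The step you flag as the main obstacle is not one. For $R\in\mathrm{Nilp}_{\mathcal O_K}$ the uniformizer $\pi$ is nilpotent in $R$. So the invariance you cite from \cite{SW13} gives isomorphisms $\widetilde{H_e}(R)\xrightarrow{\sim}\widetilde{H_e}(R/\pi R)=\widetilde{H_{e,k}}(R/\pi R)$, natural in $R$ and compatible with the maps to $\underline{\Q_p}$. The section over $k$ therefore defines one over $\mathcal O_K$, and its generic fiber is a rigidification of $H_e^{\rig}$. You can drop the Bloch--Kato fallback: surjectivity of $\kappa_G$ onto $\Ext^1_{\cris,\{0,1\}}(\Z_p,T)$ is just the statement being proved.

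Two small repairs are needed.
\begin{enumerate}
\item In the uniqueness step, finite generation of $H(\mathcal O_K)$ over $\Z_p$ requires $[K:\Q_p]<\infty$, which this section does not assume. Use the same invariance instead: $\varprojlim_{[p]}H(\mathcal O_K)=\varprojlim_n\widetilde H(\mathcal O_K/p^n)=\widetilde H(k)=\varprojlim_{[p]}H_k(k)=0$, since each $H_k[p^n]$ is infinitesimal.
\item For $T^{\et}=0$ you want an \'etale \emph{sub}group, not a quotient. The lattice $\ker(s_G)$ is crystalline of Hodge--Tate weight $0$, hence unramified. So it comes from an \'etale $H'$ over $\mathcal O_K$ with a map $H'\to H$. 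That map vanishes because $\Hom(H',H)\hookrightarrow\Hom(H'_k,H_k)=0$.
\end{enumerate}
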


\begin{proof}
Let
$
y\in
\left(\frac{\Lie(G)\otimes_K C(1)}{s_G(T)}\right)^{\Gamma_K,\cris}
$.
Suppose first that \(y\in \operatorname{Im}(I_H^{\cris})\). Then there exists
\(x\in H(\mathcal O_K)\) such that
$
y=I_H^{\cris}(x)=\delta_{G,\cris}^{\HT}(E_H^{\cris}(x))$.
By construction, the corresponding extension is precisely the Kummer extension
\[
0\longrightarrow H\longrightarrow H_x\longrightarrow (\Q_p/\Z_p)_{\mathcal O_K}\longrightarrow 0,
\]
and it carries a canonical rigidification. Thus \(y\) lies in the rigidified crystalline locus.

Conversely, suppose \(e\in \Ext^1_{\cris}(\Z_p,T)\) satisfies \(\delta_G^{\HT}(e)=y\), and let
\[
0\longrightarrow H\longrightarrow H_e\longrightarrow (\Q_p/\Z_p)_{\mathcal O_K}\longrightarrow 0
\]
be the corresponding crystalline extension. If \(H_e\) is rigidified, then
Howe--Morrow--Wear's rigidified-extension theorem yields a unique point
\(x\in H(\mathcal O_K)\) such that \(H_e\cong H_x\) as rigidified extensions.
Therefore
\[
e=[T_p(H_x)]=E_H^{\cris}(x),
\]
and hence
$
y=\delta_{G,\cris}^{\HT}(e)=I_H^{\cris}(x)$.
This proves the first statement.

Assume now that the special fiber \(H_k\) is connected. Then the isocrystal of \(H_k\) has only
positive slopes, whereas \((\Q_p/\Z_p)_k\) has slope \(0\). Since the isogeny category of
\(p\)-divisible groups over a perfect field is semisimple, every extension of
\((\Q_p/\Z_p)_k\) by \(H_k\) splits uniquely in the isogeny category. By
Howe--Morrow--Wear's criterion for rigidifications over a perfect residue field (\cite{HMW24} Theorem 4.6), every crystalline
extension \(H_e\) is therefore uniquely rigidified. Consequently the image is the full crystalline
locus:
\[
\operatorname{Im}(I_H^{\cris})
=
\left(\frac{\Lie(G)\otimes_K C(1)}{s_G(T)}\right)^{\Gamma_K,\cris}.
\]
\end{proof}

\begin{corollary}
\label{cor:good-kernel}
Let \(q_H:H\to H^{\mathrm c}:=H/H^{\et}\) be the quotient by the maximal \'etale subgroup.
Then
\[
\ker(I_H^{\cris})
=
q_H^{-1}\!\bigl(H^{\mathrm c}(\mathcal O_K)_{p\text{-}\mathrm{div}}\bigr).
\]
In particular, if \(H_k\) is connected, then
\[
\ker(I_H^{\cris})=H(\mathcal O_K)_{p\text{-}\mathrm{div}}.
\]
\end{corollary}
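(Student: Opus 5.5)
The plan is to deduce this from Theorem~\ref{thm:kernel-general}, applied to $G=H^{\rig}$, and then transport everything back to $\OK$-integral objects via Theorem~\ref{thm good reduction p-divisible}.

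First, Proposition~\ref{prop:Icris-is-Ig} gives $I_G=\iota_G\circ I_H^{\cris}$, and $\iota_G$ is injective. Hence $\ker(I_H^{\cris})=\ker(I_G)$ under the identification $H(\OK)=G(K)$. Theorem~\ref{thm:kernel-general} then yields
\[
\ker(I_G)=q^{-1}\bigl(G^{\mathrm c}(K)_{p\text{-}\mathrm{div}}\bigr),
\]
where $G^{\mathrm c}$ corresponds to $T/T^{\et}$ and $T^{\et}=\ker(s_G)$ is the weight-zero part.

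The main step is to identify $G^{\et,\max}$ with $(H^{\et})^{\rig}$ and $G^{\mathrm c}$ with $(H^{\mathrm c})^{\rig}$. Consider the connected-étale sequence $0\to H^\circ\to H\to H^{\et}\to0$ over $\OK$. Note that $H^{\et}$ here should be read as the maximal étale \emph{subgroup}; as written, with $H^{\mathrm c}=H/H^{\et}$, that is the intended object. The maximal étale subgroup of $H$ over $\OK$ has Tate module equal to the maximal unramified, weight-zero crystalline subrepresentation lattice. On the other hand, $T^{\et}=T\cap V^{\et}$ with $V^{\et}$ the maximal Hodge--Tate weight-$0$ subrepresentation. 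A crystalline representation with all Hodge--Tate weights $0$ is unramified, so $T^{\et}$ is crystalline. By Theorem~\ref{thm good reduction p-divisible}, $T^{\et}$ and $T/T^{\et}$ come from $p$-divisible groups over $\OK$, with $T^{\et}$ corresponding to an étale one. Full faithfulness of that equivalence then gives $(H^{\et})^{\rig}=G^{\et,\max}$, the quotient $H^{\mathrm c}=H/H^{\et}$ with $(H^{\mathrm c})^{\rig}=G^{\mathrm c}$, and $q=q_H^{\rig}$. I expect this identification to be the main obstacle, specifically the care needed with maximality and with saturation of lattices. It reduces to the equivalence $\BT_{\OK}\simeq \Rep^{\cris,\{0,1\}}_{\Z_p}$ together with the weight-zero-implies-unramified fact for crystalline representations.

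Next, $G^{\mathrm c}(K)=H^{\mathrm c}(\OK)$ compatibly with $[p]$, because $H^{\rig}(K)=H(\OK)$ functorially. So $p$-divisible elements correspond, and
\[
G^{\mathrm c}(K)_{p\text{-}\mathrm{div}}=H^{\mathrm c}(\OK)_{p\text{-}\mathrm{div}}.
\]
This proves the formula $\ker(I_H^{\cris})=q_H^{-1}\bigl(H^{\mathrm c}(\OK)_{p\text{-}\mathrm{div}}\bigr)$.

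For the connected case: if $H_k$ is connected, then $H$ has no nontrivial étale subgroup, so $T^{\et}=0$. Equivalently, the slopes are positive, so there is no weight-$0$ unramified sub-lattice. The final clause of Theorem~\ref{thm:kernel-general} then gives $\ker(I_H^{\cris})=G(K)_{p\text{-}\mathrm{div}}=H(\OK)_{p\text{-}\mathrm{div}}$.
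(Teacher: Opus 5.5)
Your route is the paper's. Its proof is the single line ``specialize Theorem~\ref{thm:kernel-general} to $G=H^{\rig}$''. Your first and last steps, namely $\ker(I_H^{\cris})=\ker(I_G)$ via Proposition~\ref{prop:Icris-is-Ig} and injectivity of $\iota_G$, and $H^{\mathrm c}(\OK)=G^{\mathrm c}(K)$ compatibly with $[p]$, are exactly what that line needs.

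The middle step, however, is false as you wrote it. The maximal étale subgroup of $H$ over $\OK$ need not have Tate module $T^{\et}=\ker(s_G)$, and full faithfulness does not fix this. Tate's theorem gives a morphism $f\colon E\to H$ from the étale group with $T_p(E)=T^{\et}$, but not a closed immersion, so it does not exhibit $E$ as a subgroup of $H$.

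For example, let $\zeta_p\in K$, $\zeta_{p^2}\notin K$, and let $H$ be the $p$-divisible group of the $1$-motive $[\Z\to\G_{\mathrm m,\OK}]$, $1\mapsto\zeta_p$.
\begin{itemize}
\item[(a)] Its Tate module is an extension of $\Z_p$ by $\Z_p(1)$ whose class $\kappa(\zeta_p)$ has order exactly $p$. Hence $T^{\et}=T^{\Gamma_K}\cong\Z_p$ maps onto $p\Z_p$.
\item[(b)] So $E\to H\to\Q_p/\Z_p$ is $[p]$ up to a unit. Then $f|_{E[p]}$ is a generically injective map $\Z/p\to\mu_p$ over $\OK$, which is not a closed immersion.
\item[(c)] Consequently $H$ has no non-zero étale closed $p$-divisible subgroup, although $T^{\et}\neq0$.
\end{itemize}
Now replace the quotient $\Q_p/\Z_p$ by the étale group with Tate module $\Z_p(\psi)$, where $\psi$ is unramified with $\psi(\mathrm{Frob})=1+p$, and take a non-zero $p$-torsion extension class. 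Then one gets $\ker(I_H^{\cris})\cong\Z/p$ while $H(\OK)_{p\text{-}\mathrm{div}}=0$. So under the literal scheme-theoretic reading of ``$H/H^{\et}$'' the formula actually fails.

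The fix is to bypass the integral étale subgroup altogether. $T/T^{\et}$ is crystalline, being a quotient of $T$, with Hodge--Tate weights in $\{0,1\}$. By Theorem~\ref{thm good reduction p-divisible} it is $T_p(H^{\mathrm c})$ for a $p$-divisible group $H^{\mathrm c}$ over $\OK$. Tate's full faithfulness then gives a unique $q_H\colon H\to H^{\mathrm c}$ with $q_H^{\rig}=q$. This is the reading under which the corollary is true, and the one the paper's one-line proof tacitly uses; with it, your argument goes through unchanged.

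The same caveat applies in the connected case. ``No non-trivial étale subgroup'' does not by itself force $T^{\et}=0$; the example above has none. Your slope argument does force it, because an unramified subrepresentation of $V$ would give a slope-$0$ piece in the isocrystal of $H_k$, which is impossible when $H_k$ is connected. Then $H^{\mathrm c}=H$, and the last clause of Theorem~\ref{thm:kernel-general} gives $\ker(I_H^{\cris})=H(\OK)_{p\text{-}\mathrm{div}}$.
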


\begin{proof}
This is the good reduction specialization of
Theorem~\ref{thm:kernel-general}.
\end{proof}

\subsection{The conjugate uniformization of abeloid varieties}

Finally, we apply the previous constructions to study the conjugate uniformization of abeloid varieties.

\subsubsection{Fontaine decomposition for abeloid varieties}

\begin{proposition}
\label{prop:base-change-top-p-torsion}
Let $A/K$ be an abeloid variety over $K$.
For every complete extension \(K'/K\), there is a canonical identification
of open rigid analytic subgroups of \(A_{K'}\)
\[
\bigl(A\langle p^\infty\rangle\bigr)_{K'}
=
A_{K'}\langle p^\infty\rangle.
\]
\end{proposition}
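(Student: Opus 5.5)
The plan is to use the union-of-preimages description from Proposition~\ref{prop:equiv-top-p-torsion}, since it is visibly compatible with base change. Fix an affinoid open subgroup \(V\subset A\) containing \(e\) on which \(\log_A\) converges and is an isomorphism onto a polydisc in \(\Lie(A)\), as in Definition~\ref{def p-top torsion subsheaf}(1). Then
\[
A\langle p^\infty\rangle=\bigcup_{n\ge0}[p]^{-n}(V).
\]
Base change along \(\Spa K'\to\Spa K\) commutes with preimages under \([p]^n\) and with unions of open subspaces. It also sends \(V\) to an affinoid open subgroup \(V_{K'}\subset A_{K'}\) on which \(\log_{A_{K'}}=\log_A\otimes K'\) is still an isomorphism onto a polydisc in \(\Lie(A_{K'})=\Lie(A)\otimes_KK'\). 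Here the logarithm is given by the same power series, the formal group of \(A_{K'}\) is the base change of that of \(A\), and the radius of convergence is unchanged because \(|\cdot|_{K'}\) extends \(|\cdot|_K\). Hence
\[
\bigl(A\langle p^\infty\rangle\bigr)_{K'}=\bigcup_n[p]^{-n}(V_{K'}).
\]
Applying Proposition~\ref{prop:equiv-top-p-torsion} over \(K'\) to \(A_{K'}\), which is again abeloid and locally \(p\)-divisible, identifies the right-hand side with \(A_{K'}\langle p^\infty\rangle\). One point needs care: that proposition is stated for rigid spaces over \(K\). When \(K'\) is not discretely valued, I would either work in the adic-space generality of Theorem~\ref{thm:FarguesGb} (Fargues, Heuer), or phrase the statement via \(|\cdot|\langle p^\infty\rangle\).

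As a cross-check, I would also verify the identification pointwise. The projection \(|A_{K'}|\to|A|\) is continuous and \([p]\)-equivariant, and it sends \(e\) to \(e\). So if \([p]^n(x')\to e\) in \(|A_{K'}|\), then the image \(x\) satisfies \([p]^n(x)\to e\). Conversely, if \(x\) is \(p\)-topologically torsion, then \(x\in[p]^{-N}(V)\) for some \(N\). Hence every point over \(x\) lies in \([p]^{-N}(V_{K'})\), and the Lie-coordinate argument in the proof of Proposition~\ref{prop:equiv-top-p-torsion} gives \([p]^m\to e\) there. Both subspaces are open, so equality on points gives the identification of open subgroups. Canonicity holds because neither side depends on the choice of \(V\).

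The main obstacle I expect is the first step in the non-discretely-valued case, such as \(K'=C\). There the rigid-analytic formalism of the earlier sections must be replaced by adic spaces. One must also know that \(A_{K'}\) still satisfies the local \(p\)-divisibility hypothesis of Theorem~\ref{thm:FarguesGb}. This follows because \([p]\) is finite étale and surjective on \(A\) (Lemma~\ref{lem:Kummer}), and both properties are stable under base change. I would handle this obstacle by invoking the adic version of Theorem~\ref{thm:FarguesGb}, together with the \(v\)-sheaf description \(\Hom(\underline{\Z_p},-)\). That description commutes tautologically with restriction from \((\Spa K)_v\) to \((\Spa K')_v\), which gives an independent proof of the equality.
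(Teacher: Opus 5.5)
Your proposal is correct and takes essentially the same route as the paper: write $A\langle p^\infty\rangle=\bigcup_{n}[p]^{-n}(V)$, use that $[p]$ on $A_{K'}$ is still \'etale and surjective (by base change of Lemma~\ref{lem:Kummer}) to apply Proposition~\ref{prop:equiv-top-p-torsion} over $K'$ with $V_{K'}$, and conclude because base change commutes with inverse images and increasing unions of open subspaces. Your additional checks, namely that $V_{K'}$ is still a valid logarithm neighbourhood and that the adic setting is needed when $K'$ is not discretely valued, fill in points the paper leaves implicit.
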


\begin{proof}
Choose an affinoid open subgroup \(V\subset A\) containing \(0\) as in
Definition~\ref{def p-top torsion subsheaf}(1).  By
Proposition~\ref{prop:equiv-top-p-torsion},
\[
A\langle p^\infty\rangle
=
\bigcup_{n\geq0}[p]^{-n}(V).
\]
The map \([p]:A_{K'}\to A_{K'}\) is \'etale and surjective by base change of
Lemma~\ref{lem:Kummer}; hence the same proposition, applied over
\(K'\), gives
\[
A_{K'}\langle p^\infty\rangle
=
\bigcup_{n\geq0}[p]^{-n}(V_{K'}).
\]
Since base change commutes with inverse images and with increasing unions of
open subspaces, the two displayed unions are base changes of one another.
\end{proof}

We henceforth identify
\[
\bigl(A\langle p^\infty\rangle\bigr)_C
=
A_C\langle p^\infty\rangle
\]
and write \(A\langle p^\infty\rangle(C)\) for their common group of
\(C\)-points.

\begin{lemma}
\label{lem:prime-to-p-on-top-p-torsion}
If \(N\geq1\) and \((N,p)=1\), then multiplication by \(N\) is an
automorphism of \(A_C\langle p^\infty\rangle\), and hence of
\(A\langle p^\infty\rangle(C)\).
\end{lemma}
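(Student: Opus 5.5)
The plan is to work with the $p$-divisible rigid analytic group $G:=A_C\langle p^\infty\rangle$ over $C$. By Proposition~\ref{thm:main} and Proposition~\ref{prop:base-change-top-p-torsion}, $G$ is an object of $\BT^{\rig}_C$. We then apply the structural results for such groups, most importantly the logarithm sequence of Proposition~\ref{prop:Fargues-structure}(1) (equivalently Theorem~\ref{thm:FarguesGb}(3)):
\[
0\longrightarrow G[p^\infty]\longrightarrow G\xrightarrow{\ \log_G\ }\Lie(G)\otimes_C\Ga^\rig\longrightarrow 0 .
\]
The argument is essentially the one already used at the end of the proof of Lemma~\ref{lem:Kummer}, but carried out over $C$.

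First, multiplication by $N$ is an isomorphism on each outer term. On $G[p^\infty]$, an ind-finite étale group killed locally by powers of $p$, the map $[N]$ is bijective because $N$ is a unit in $\Z_p$. On $\Lie(G)\otimes\Ga^\rig$ the map $[N]$ is multiplication by the unit $N\in C^\times$, so it is an isomorphism of rigid groups. The five lemma, applied to the endomorphism $[N]$ of the exact sequence of sheaves, then shows that $[N]:G\to G$ is an isomorphism of sheaves on the big rigid-étale site. Since $G$ is representable, $[N]$ is an isomorphism of rigid analytic groups, that is, an automorphism of $A_C\langle p^\infty\rangle$.

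Second, we take $C$-points to get the statement for $A\langle p^\infty\rangle(C)$. This is immediate, because an automorphism of the rigid group induces a bijection on $C$-points. A more hands-on check is also available. For injectivity, if $Nx=0$ with $x\in G(C)$, then $x$ is torsion; it lies in $G[p^\infty]$ by the $p$-topological torsion condition together with $\log_G(x)=N^{-1}\log_G(Nx)=0$, and there $[N]$ is injective. For surjectivity, given $y$, choose $x'$ with $\log_G(x')=N^{-1}\log_G(y)$, using surjectivity of $\log_G$ on $C$-points since $C$ is algebraically closed. Then $y-Nx'\in G[p^\infty](C)$, and we can divide it by $N$ inside $G[p^\infty](C)$.

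The main point requiring care is the identification $[N]=N\cdot$ on $\Lie(G)\otimes\Ga^\rig$, together with the compatibility $\log_G\circ[N]=N\log_G$. Both follow because $\log_G$ is a homomorphism of group sheaves. An alternative is to use the $\Z_p$-module structure on $G=\Hom(\underline{\Z_p},G)$ from Theorem~\ref{thm:FarguesGb}(1), under which $[N]$ is multiplication by the unit $N\in\Z_p^\times$; its inverse is multiplication by $N^{-1}\in\Z_p$, which gives the automorphism directly.
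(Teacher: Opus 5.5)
Your proof is correct and is essentially the paper's. The paper's main argument is the one you give as your alternative: under $\operatorname{ev}_1:\Hom(\underline{\Zp},A_C)\xrightarrow{\sim}A_C\langle p^\infty\rangle$, multiplication by $N$ becomes precomposition with the automorphism $[N]$ of $\Zp$. The paper also notes that the logarithm-sequence argument from the proof of Lemma~\ref{lem:Kummer}, which you make primary, works as well.
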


\begin{proof}
By Theorem~\ref{thm:FarguesGb}(1),  evaluation at \(1\) induces a functorial
isomorphism
\[
\operatorname{ev}_1:\Hom(\underline{\Zp},A_C)
\xrightarrow{\ \sim\ }
A_C\langle p^\infty\rangle.
\]
Under this isomorphism, multiplication by \(N\) corresponds to
\[
f\longmapsto [N]\circ f=f\circ[N].
\]
It is therefore an automorphism, since \([N]:\Zp\to\Zp\) is an
automorphism for \(N\in\Zp^\times\).

Alternatively, this lemma also follows from the proof in Lemma \ref{lem:Kummer}.
\end{proof}
For an abeloid variety \(A/K\), set
\[
A[p'](\overline K):=\bigcup_{(N,p)=1}A[N](\overline K).
\]
The following extends Fontaine's decomposition for abelian varieties
\cite[\S1.1, before Prop.~1.1]{Fon03}.

\begin{theorem}[Fontaine decomposition for abeloid varieties]
\label{thm:Fontaine-decomposition-abeloid}
Assume that $K$ is a finite extension of $\Q_p$.
There is a canonical \(\Gamma_K\)-equivariant homeomorphism of
topological abelian groups
\[
A[p'](\overline K)\bigoplus A\langle p^\infty\rangle(C)
\xrightarrow{\ \sim\ }
A(C),
\]
where \(A[p'](\overline K)\) is endowed with the discrete topology.
Equivalently, the natural map is an isomorphism of discrete topological
groups
\[
A[p'](\overline K)
\xrightarrow{\ \sim\ }
A(C)/A\langle p^\infty\rangle(C).
\]
Consequently, projection onto the first factor is the unique continuous
\(\Gamma_K\)-equivariant homomorphism
\[
\operatorname{pr}_{A,p'}:A(C)\longrightarrow A[p'](\overline K)
\]
which restricts to the identity on \(A[p'](\overline K)\) and has kernel
\(A\langle p^\infty\rangle(C)\).  It is given by
\[
\operatorname{pr}_{A,p'}(x)
=
\lim_{m\to\infty}[p^{m!}]x,
\]
where the limit is taken in \(A(C)\) (see also~\cite[page 1469]{IMZ22}).
\end{theorem}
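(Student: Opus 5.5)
The plan is to reduce the decomposition to a statement about the quotient $Q:=A(C)/A\langle p^\infty\rangle(C)$, and then to use that $K$ is a finite extension of $\Q_p$ to show that $Q$ is a torsion group of order prime to $p$, identified with $A[p'](\overline K)$.

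\textbf{Step 1: the map $A[p'](\overline K)\to Q$ is injective.} Since $A\langle p^\infty\rangle$ is open in $A$, the subgroup $A\langle p^\infty\rangle(C)$ is open in $A(C)$. Hence $Q$ is discrete, and every $\Gamma_K$-equivariant map into it is continuous. By Lemma \ref{lem:prime-to-p-on-top-p-torsion}, $A\langle p^\infty\rangle(C)$ has no prime-to-$p$ torsion, because $[N]$ is an automorphism on it. So $A[p'](\overline K)\cap A\langle p^\infty\rangle(C)=0$, which gives injectivity.

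\textbf{Step 2: $Q$ is torsion.} Take $x\in A(C)$. The neutral connected component $A^\circ$ of the formal N\'eron model (after a finite extension, so that there is a split Raynaud uniformization) yields a formal N\'eron open subgroup $\mathcal N^\rig\subset A$. Its image under reduction lies in $\mathcal N_s(\bar k)$. Because $k$ is finite, $\bar k$ is an algebraic extension of a finite field, so every point of $\mathcal N_s(\bar k)$ is torsion. Thus for $y=[N]x$, with $N$ killing the reduction of $x$ (and also the component group together with the lattice contribution $Y$ from the Raynaud uniformization), $y$ lies in $\mathrm{sp}^{-1}(\mathcal G_s[p^n])$ after further multiplying.

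The difficulty is that a point of $A(C)$ need not come from the formal model over a finite extension. I would instead argue with $G(C)\to A(C)$ from Raynaud uniformization. Points of $G(C)$ reduce to $\mathcal G$ modulo the valuation map $G(C)\to \Hom(X,\Q)$. Then:
\begin{itemize}
\item quotienting by $Y$ and using $Y\otimes\Q=\Hom(X,\Q)$ (Corollary \ref{cor:geometric monodromy of abeloid gives rise to the monodromy on the repn}(1)), some multiple lands in $\mathcal G^\rig(C)$;
\item $\mathcal G_s(\bar k)$ is torsion, so a further multiple lies in $\ker(\mathrm{sp})\subset\mathcal G^\rig\langle p^\infty\rangle$.
\end{itemize}
By the proposition describing $A\langle p^\infty\rangle$ via the specialization map, $Q$ is torsion. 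Its $p$-part vanishes: if $[p^n]x\in A\langle p^\infty\rangle(C)$, then $x$ is $p$-topologically torsion. So $Q$ is prime-to-$p$ torsion.

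\textbf{Step 3: surjectivity.} Given $x$ with $[N]x=z\in A\langle p^\infty\rangle(C)$ and $(N,p)=1$, Lemma \ref{lem:prime-to-p-on-top-p-torsion} gives a unique $z'\in A\langle p^\infty\rangle(C)$ with $[N]z'=z$. Then $x-z'\in A[N](\overline K)$, since torsion points of $A[N]$ are algebraic (Proposition \ref{prop:n-map exact seq}). This produces a canonical splitting. Equivariance follows from canonicity, and the homeomorphism follows from openness.

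\textbf{Step 4: the limit formula.} For $x=t+z$ with $t$ killed by $N$ prime to $p$, we have $p^{m!}\equiv1 \pmod N$ for $m$ large, so $[p^{m!}]t=t$. Meanwhile $[p^{m!}]z\to0$ by topological nilpotence (Proposition \ref{thm:main}). Uniqueness of $\operatorname{pr}_{A,p'}$ follows because the kernel and the restriction to $A[p'](\overline K)$ determine it.

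The main obstacle is Step 2, namely making the torsion-of-reduction argument rigorous for $C$-points, where reduction to $\mathcal G_s(\bar k)$ requires the points to be integral. I would handle this by applying the valuation/Raynaud argument to $G(C)$ first.
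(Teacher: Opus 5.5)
Your argument is correct, but its key step (Step~2) takes a genuinely different route from the paper.

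The paper does not argue with reductions at all. It quotes Heuer's theory of topologically torsion subgroups:
\begin{itemize}
\item \cite[Prop.~2.23]{Heu24b}: over $C=\mathbb{C}_p$, every point of $A(C)$ is topologically torsion;
\item \cite[Prop.~2.14(3)]{Heu24b}: over algebraically closed $C$, a topologically torsion point is a $p$-topologically torsion point plus a prime-to-$p$ torsion point.
\end{itemize}
Together these give $A(C)=A\langle p^\infty\rangle(C)+A[p'](\overline K)$ at once. Directness, the topology and the limit formula are then handled as in your Steps~1, 3 and~4.

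You instead derive the needed input from the paper's own tools.
\begin{itemize}
\item Pass to a finite extension, which is harmless by Proposition~\ref{prop:base-change-top-p-torsion}, and lift $x$ along the split Raynaud uniformization.
\item Kill its class in $G(C)/\mathcal{G}^{\rig}(C)\cong\Hom(X,\Q)$ modulo the full-rank lattice coming from $Y$ (Corollary~\ref{cor:geometric monodromy of abeloid gives rise to the monodromy on the repn}(1)).
\item Use that $\mathcal{G}_s(\bar k)$ is torsion because $k$ is finite, so a further multiple lands in the residue disc, which is $p$-topologically torsion.
\item Split off the prime-to-$p$ torsion part by dividing by $N$ on $A\langle p^\infty\rangle(C)$ (Lemma~\ref{lem:prime-to-p-on-top-p-torsion}).
\end{itemize}
Your route is longer but self-contained, and it shows exactly where $[K:\Q_p]<\infty$ is used: the residue field is finite and the value group of $C$ is $\Q$. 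The paper's route is shorter but hides this inside Heuer's results.

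In a write-up, drop the first (N\'eron model) paragraph of Step~2. As you noticed, it does not reach points of $A(C)$ whose lift has non-integral valuation, since these lie outside the generic fibre of the N\'eron model. Also record the routine facts your second paragraph relies on:
\begin{itemize}
\item $G(C)\to A(C)$ is surjective, because a $Y$-torsor over $\Spa C$ is trivial;
\item $G(C)=T(C)+\mathcal{G}^{\rig}(C)$, because $\mathcal{B}^{\rig}(C)=\mathcal{B}(\mathcal{O}_C)$ and a $\mathcal{T}$-torsor over $\Spf\mathcal{O}_C$ is trivial;
\item $\mathcal{G}^{\rig}(C)=\mathcal{G}(\mathcal{O}_C)$, which is what removes your integrality concern about reduction.
\end{itemize}
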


\begin{proof}
Let \(A_C^{\mathrm{tt}}\subset A_C\) denote Heuer's topological-torsion
subgroup. Since $C=\C_p$, \cite[Prop.~2.23]{Heu24b} gives
\[
A_C^{\mathrm{tt}}(C)=A(C).
\]
Since \(C\) is algebraically closed, \cite[Prop.~2.14(3)]{Heu24b} then gives
\[
A(C)
=
A_C\langle p^\infty\rangle(C)
+
\bigcup_{(N,p)=1}A[N](C).
\]
For every \(N\), Lemma~\ref{lem:Kummer} shows that \(A[N]\) is
finite \'etale over \(K\).  Since both \(\overline K\) and \(C\) are
algebraically closed, the inclusion \(\overline K\hookrightarrow C\) induces a
bijection
$
A[N](\overline K)\xrightarrow{\ \sim\ }A[N](C)$. 
Using Proposition~\ref{prop:base-change-top-p-torsion}, we obtain
\[
A(C)
=
A\langle p^\infty\rangle(C)+A[p'](\overline K).
\]
The sum is direct.  Indeed, if
$
x\in A\langle p^\infty\rangle(C)\cap A[p'](\overline K)$,
then \([N]x=0\) for some \((N,p)=1\), whereas \([N]\) is injective on
\(A\langle p^\infty\rangle(C)\) by
Lemma~\ref{lem:prime-to-p-on-top-p-torsion}.  Thus \(x=0\).

Both summands are \(\Gamma_K\)-stable.  The action on the discrete group
\(A[p'](\overline K)\) is continuous because every torsion point is defined
over a finite extension of \(K\).  Let \(\operatorname{pr}_{A,p'}\) be the
algebraic projection furnished by the direct sum.  Since
\(A\langle p^\infty\rangle(C)\) is open in \(A(C)\), each fiber
\[
\operatorname{pr}_{A,p'}^{-1}(t)
=
t+A\langle p^\infty\rangle(C)
\]
is open.  Hence \(\operatorname{pr}_{A,p'}\) is continuous.  Moreover,
\[
\bigl(t+A\langle p^\infty\rangle(C)\bigr)
\cap A[p'](\overline K)=\{t\},
\]
so the subspace topology on \(A[p'](\overline K)\) is discrete.  The inverse
of the addition map is
\[
x\longmapsto
\bigl(\operatorname{pr}_{A,p'}(x),
      x-\operatorname{pr}_{A,p'}(x)\bigr),
\]
and is continuous.  Thus addition is a \(\Gamma_K\)-equivariant
homeomorphism, and the asserted quotient description and uniqueness follow.

Finally, write
\[
x=t+h,
\qquad
t\in A[p'](\overline K),\quad
h\in A\langle p^\infty\rangle(C).
\]
If \(N\) is the order of \(t\), then \(p^{m!}\equiv1\pmod N\) for all
sufficiently large \(m\), because the order of \(p\) in
\((\mathbb Z/N\mathbb Z)^\times\) eventually divides \(m!\).  Thus
\([p^{m!}]t=t\) eventually, while \([p^{m!}]h\to0\) by the definition of
topological \(p\)-torsion.  Therefore
$
[p^{m!}]x\longrightarrow t
=
\operatorname{pr}_{A,p'}(x)$
in \(A(C)\).
\end{proof}

%
If $K$ is a finite extension of $\Q_p$,
by Theorem \ref{thm:Fontaine-decomposition-abeloid} we get
\[
A(\overline K)
=
A[p'](\overline K)
\bigoplus
A\langle p^\infty\rangle(\overline K).
\]
Restricting the \(\Gamma_K\)-equivariant projection to \(A(K)\) gives
\[
A(K)=A[p'](K)\bigoplus A\langle p^\infty\rangle(K),
\qquad
A[p'](K):=A[p'](\overline K)^{\Gamma_K}.
\]
Here Proposition~\ref{prop:base-change-top-p-torsion}, equivalently descent
for the open immersion \(A\langle p^\infty\rangle\hookrightarrow A\), gives
$
A(K)\cap A\langle p^\infty\rangle(C)
=
A\langle p^\infty\rangle(K)$.

\subsubsection{Conjugate uniformization of abeloid varieties}
Let \(A/K\) be an abeloid variety over a \(p\)-adic field \(K\), and let
\[
A\langle p^\infty\rangle\subset A\]
be its open subgroup of topologically \(p\)-torsion points.
By Theorem~\ref{thm:main}, \(A\langle p^\infty\rangle\) is a \(p\)-divisible rigid analytic group.
Moreover, \(A\langle p^\infty\rangle\) is dualizable. Its Cartier dual is canonically
$
(A\langle p^\infty\rangle)^D \cong \mathrm{Pic}_{A/K,\et}\langle p^\infty\rangle$,
the topologically \(p\)-torsion Picard variety. Over \(S=\Spa\,K\), the covariant
Hodge--Tate sequence of \(A\langle p^\infty\rangle\) is
\[
0\longrightarrow \Lie(A)\otimes_K C(1)
\longrightarrow T_p\bigl(A\langle p^\infty\rangle\bigr)\otimes_{\Z_p} C
\longrightarrow H^1(A,\mathcal O_A)^\vee\otimes_K C
\longrightarrow 0.
\]
Since \(A\) is proper and smooth over \(K\), the representation
$
V_A=T_p\bigl(A\langle p^\infty\rangle\bigr)\otimes_{\Z_p}\Q_p$
is de Rham.

We put
$
T_A:=T_p(A\langle p^\infty\rangle),\,
V_A:=T_A\otimes_{\Z_p}\Q_p,\,
W_A:=\Lie(A\langle p^\infty\rangle)=\Lie(A)$.
Let
\[
s_A:\ T_A\longrightarrow W_A\otimes_K C(1)
\]
be the canonical Hodge--Tate projection on the Tate lattice, and set
\[
\frac{\Lie(A)\otimes_K C(1)}{s_A(T_A)}
:=
\frac{W_A\otimes_K C(1)}{s_A(T_A)}.
\]
The integration map of \S\ref{subsec:def-ILog} applied to \(A\langle p^\infty\rangle\) is
\[
I_A:=I_{A\langle p^\infty\rangle}:\ A\langle p^\infty\rangle(K)\longrightarrow
\frac{\Lie(A)\otimes_K C(1)}{s_A(T_A)}.
\]
By Proposition~\ref{prop:I-factorization-general}, its image is contained in the
\(\Gamma_K\)-invariant subspace. We call \(I_A\) the \emph{conjugate uniformization map}
of the topologically \(p\)-torsion subgroup of \(A\). When $[K:\Q_p]<\infty$, via the decomposition 
\[A(K)=A[p'](K)\bigoplus A\langle p^\infty\rangle(K),\]
we also call the composition of the projection $A(K)\ra A\langle p^\infty\rangle(K)$ with $I_A$ the conjugate uniformization map of $A$, and still denote it by $I_A$.

Let
$
(A\langle p^\infty\rangle)^{\et,\max}\subset A\langle p^\infty\rangle$
be the maximal étale \(p\)-divisible rigid analytic subgroup. Put
\[
T_A^{\et}:=T_p\bigl((A\langle p^\infty\rangle)^{\et,\max}\bigr)=\ker(s_A),
\]
and let
$
q_A:\ A\langle p^\infty\rangle\longrightarrow
A^{\mathrm c}:=
A\langle p^\infty\rangle/(A\langle p^\infty\rangle)^{\et,\max}$
be the quotient map.

\begin{theorem}[Conjugate uniformization: de Rham form]
\label{thm:conj-unif-abeloid-dR}
\begin{enumerate}
    \item 
We have
\[
I_A\bigl(A\langle p^\infty\rangle(K)\bigr)\subset
\left(\frac{\Lie(A)\otimes_K C(1)}{s_A(T_A)}\right)^{\Gamma_K,\dR}.
\]
Moreover, we have
\[
\ker(I_A)=q_A^{-1}\!\bigl(A^{\mathrm c}(K)_{p\text{-}\mathrm{div}}\bigr).
\]
In particular, if \(T_A^{\et}=0\), equivalently if \(s_A\) is injective, then
$
\ker(I_A)=A\langle p^\infty\rangle(K)_{p\text{-}\mathrm{div}}$.
Hence \(I_A\) induces an injective homomorphism
\[
\overline I_A:\ 
A\langle p^\infty\rangle(K)\big/A\langle p^\infty\rangle(K)_{p\text{-}\mathrm{div}}
\hookrightarrow
\left(\frac{\Lie(A)\otimes_K C(1)}{s_A(T_A)}\right)^{\Gamma_K,\dR}.
\]

\item Assume now that \(T_A^{\et}=0\). For
$
y\in
\left(\frac{\Lie(A)\otimes_K C(1)}{s_A(T_A)}\right)^{\Gamma_K,\dR}$,
let
$
e_y:=(\delta_A^{\HT})^{-1}(y)\in H^1(K,T_A)$.
Since \(y\) lies in the de Rham locus, the class \(e_y\) belongs to
\(\Ext^1_{\dR,\{0,1\}}(\Z_p,T_A)\), hence determines a unique extension of dualizable
\(p\)-divisible rigid analytic groups
\[
0\longrightarrow A\langle p^\infty\rangle
\longrightarrow G_y
\longrightarrow \underline{\Q_p/\Z_p}
\longrightarrow 0.
\]
Then
$y\in \mathrm{Im}(I_A)
\quad\Longleftrightarrow\quad
G_y$ admits a rigidification in the sense of
Definition~\ref{def:rigidified-extension-general}.
Equivalently, \(\mathrm{Im}(I_A)\) is the rigidified de Rham locus.
\end{enumerate}
\end{theorem}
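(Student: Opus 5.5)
The plan is to deduce everything from the general theory of \S\ref{subsec:kernel-image-general} applied to $G:=A\langle p^\infty\rangle$. The preliminary point is that $G$ lies in $\BT^{\rig,\dual}_K$ and is de Rham. By Proposition~\ref{thm:main}, $G$ is a $p$-divisible rigid analytic group with $T_p(G)=T_A$ and $\Lie(G)=\Lie(A)$. By \cite{Sch13}, $V_A$ is de Rham with Hodge--Tate weights in $\{0,1\}$. Theorem~\ref{thm dual p-div rigid and HT} then shows that $G$ is dualizable, and the Hodge--Tate sequence recorded above identifies $s_A=s_G$. Equivalently, one may view $G$ as $M\langle p^\infty\rangle$ for the Raynaud rigid $1$-motive of $A$ and invoke Theorem~\ref{thm: p-div rigid analytic gp ass to rigid 1-motive}. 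Hence $G\in\BT^{\rig,\dual,\mathrm{pst}}_K$, and all results of \S\ref{subsec:kernel-image-general} and its de Rham refinement apply with $T=T_A$ and $W=\Lie(A)$.

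For part (1), the inclusion of the image in the de Rham locus is exactly Proposition~\ref{prop:image-in-dR-locus-general} applied to $G$. Its input is Lemma~\ref{lem:kummer-dR-admissibility}: for each $x$, the Kummer class $\kappa_G(x)$ lies in $\Ext^1_{\dR,\{0,1\}}(\Z_p,T_A)$. The kernel formula is Theorem~\ref{thm:kernel-general} with $G^{\mathrm c}=A^{\mathrm c}$ and $q=q_A$. Here $T^{\et}=\ker(s_G)=T_A^{\et}$ by Lemma~\ref{lem:kernel-sG}, and $(A\langle p^\infty\rangle)^{\et,\max}$ is the maximal étale subgroup produced there. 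When $T_A^{\et}=0$, the quotient $q_A$ is the identity, so $\ker(I_A)=A\langle p^\infty\rangle(K)_{p\text{-}\mathrm{div}}$. The induced injection $\overline I_A$ is then formal, since $I_A$ is a homomorphism by Proposition~\ref{prop:IG-additive-functorial-v-final}.

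For part (2), assume $T_A^{\et}=0$. Then $\delta_A^{\HT}=\partial_G^{-1}$ is an isomorphism by Lemma~\ref{lem:ht-boundary-general} and the exact sequence following Definition~\ref{def:deltaHT-general}, so $e_y$ is well defined. Since $\delta^{\HT}$ is injective, the equality $y=\delta^{\HT}(e)$ for some de Rham $e$ (Definition~\ref{def:dR-locus-general}) forces $e=e_y$. Thus $e_y\in\Ext^1_{\dR,\{0,1\}}(\Z_p,T_A)$, and Theorem~\ref{thm dual p-div rigid and HT} produces the unique dualizable extension $G_y$. The claimed equivalence is then Corollary~\ref{cor:image-dR-injective-general}, whose condition $e_y\in\Ext^1_{\dR,\{0,1\}}$ is automatic here. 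Concretely, if $G_y$ is rigidified, Proposition~\ref{prop:rigext-points-general} gives $x$ with $G_y\cong G_x$, hence $e_y=\kappa_G(x)$ and $I_A(x)=y$ by Proposition~\ref{prop:I-factorization-general}. Conversely, if $y=I_A(x)$, injectivity of $\delta^{\HT}$ gives $e_y=\kappa_G(x)$, so $G_y\cong G_x$ carries the canonical rigidification of Proposition~\ref{prop:representable-Gx-v-final}.

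The main obstacle is the preliminary identification: checking that the Hodge--Tate splitting $s_A$ and the de Rham property used in the general section agree with those of $A$ itself. This rests on Scholze's comparison \cite{Sch13} together with the identification $T_p(A\langle p^\infty\rangle)=T_p(A)$. Everything after that is bookkeeping with the earlier results.
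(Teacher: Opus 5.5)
Your proposal is correct and follows the paper's proof, which applies Proposition~\ref{prop:image-in-dR-locus-general}, Theorem~\ref{thm:kernel-general} and Corollary~\ref{cor:image-dR-injective-general} to the dualizable de Rham group $G=A\langle p^\infty\rangle$. The identification $s_A=s_G$ that you single out as the main obstacle is definitional, since the paper defines $s_A$ as the Hodge--Tate projection of $A\langle p^\infty\rangle$ and records its dualizability and de Rham property just before the theorem.
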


\begin{proof}
Apply Proposition~\ref{prop:image-in-dR-locus-general},
Theorem~\ref{thm:kernel-general}, and
Corollary~\ref{cor:image-dR-injective-general}
to the dualizable de Rham \(p\)-divisible rigid analytic group
$
G=A\langle p^\infty\rangle$.
\end{proof}

\begin{corollary}[Semi-stable reduction]
\label{cor:conj-unif-abeloid-semi-stable}
Assume that \(A\langle p^\infty\rangle\in \BT_K^{\rig,\dual,\st}\); for instance, this holds whenever
\(A\) has semi-stable reduction. Define
\[
A\langle p^\infty\rangle(K)_{\st}
:=
\kappa_{A\langle p^\infty\rangle}^{-1}\!\bigl(\Ext^1_{\st,\{0,1\}}(\Z_p,T_A)\bigr)
\subset A\langle p^\infty\rangle(K).
\] Then we have
\begin{enumerate}
    \item 

The restriction
\[
I_A^{\st}:=
I_A|_{A\langle p^\infty\rangle(K)_{\st}}
:\ 
A\langle p^\infty\rangle(K)_{\st}
\longrightarrow
\left(\frac{\Lie(A)\otimes_K C(1)}{s_A(T_A)}\right)^{\Gamma_K,\st}
\]
is well-defined and satisfies
\[
\ker(I_A^{\st})=\ker(I_A)\cap A\langle p^\infty\rangle(K)_{\st}.
\]

\item If \(T_A^{\et}=0\), then
$
\ker(I_A^{\st})=A\langle p^\infty\rangle(K)_{p\text{-}\mathrm{div}}$,
and \(I_A^{\st}\) induces an injective homomorphism
\[
\overline I_A^{\st}:\ 
A\langle p^\infty\rangle(K)_{\st}\big/
A\langle p^\infty\rangle(K)_{p\text{-}\mathrm{div}}
\hookrightarrow
\left(\frac{\Lie(A)\otimes_K C(1)}{s_A(T_A)}\right)^{\Gamma_K,\st}.
\]
\end{enumerate}
\end{corollary}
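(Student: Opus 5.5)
The plan is to deduce the corollary from the general semi-stable results of \S\ref{subsubsec:semi-stable-refinement}, applied to the dualizable group $G=A\langle p^\infty\rangle$. The semi-stable refinement there is phrased with $H\in\BT^{\log}_{S,\mathrm d}$ and $G=H^{\rig}_K$. So the first step is to produce such an $H$. By Theorem \ref{thm semi-stable p-div rigid analytic}, the hypothesis $A\langle p^\infty\rangle\in\BT_K^{\rig,\dual,\st}$ gives a unique $H\in\BT^{\log}_{S,\mathrm d}$ with $H^{\rig}_K\cong A\langle p^\infty\rangle$.

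For the parenthetical claim that semi-stable reduction of $A$ suffices, I would argue as follows. If $A$ has semi-stable reduction, then $T_p(A)$ is semi-stable by Corollary \ref{cor:Neron-Ogg-Shafarevich for semi-stable reduction of abeloids} (or Theorem \ref{thm:first description of l-adic realization of rigid 1-motive}(3)). Since $T_A=T_p(A\langle p^\infty\rangle)=T_p(A)$, Theorem \ref{thm:Fargues-ss-criterion} then places $A\langle p^\infty\rangle$ in $\BT_K^{\rig,\dual,\st}$. Note that $A\langle p^\infty\rangle(K)_{\st}$, as defined in the statement, is exactly $G(K)_{\st}$ in the sense of Definition \ref{def:semi-stable-boundary-locus}.

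For part (1), well-definedness means that $I_A$ maps $A\langle p^\infty\rangle(K)_{\st}$ into the semi-stable locus. I would use Proposition \ref{prop:I-factorization-general}, which gives $I_A=\delta^{\HT}_A\circ\kappa_{A\langle p^\infty\rangle}$. For $x\in A\langle p^\infty\rangle(K)_{\st}$, the class $\kappa(x)$ lies in $\Ext^1_{\st,\{0,1\}}(\Z_p,T_A)$ by definition of the semi-stable Kummer locus. Its image under $\delta^{\HT}_A$ therefore lies in the semi-stable locus by Definition \ref{def:semi-stable-boundary-locus}. This is precisely Proposition \ref{prop:Ilog-st-is-restriction}: $I_A^{\st}=I_H^{\log,\st}$. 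The kernel formula $\ker(I^{\st}_A)=\ker(I_A)\cap A\langle p^\infty\rangle(K)_{\st}$ is then tautological for a restriction, and is also recorded in Proposition \ref{prop:semistable-image-kernel}.

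For part (2), assume $T_A^{\et}=0$. Theorem \ref{thm:kernel-general}, or equivalently Theorem \ref{thm:conj-unif-abeloid-dR}(1), gives $\ker(I_A)=A\langle p^\infty\rangle(K)_{p\text{-}\mathrm{div}}$. Hence $\ker(I_A^{\st})=A\langle p^\infty\rangle(K)_{\st}\cap A\langle p^\infty\rangle(K)_{p\text{-}\mathrm{div}}$, so to reach the stated formula I must show $A\langle p^\infty\rangle(K)_{p\text{-}\mathrm{div}}\subset A\langle p^\infty\rangle(K)_{\st}$. This inclusion is the only genuinely non-formal point, and it is short. A $p$-divisible point $x$ has $\kappa(x)=0$ by Lemma \ref{lem:split-pdiv-general}. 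The split extension $T_A\oplus\Z_p$ is semi-stable with Hodge--Tate weights in $\{0,1\}$, so $0\in\Ext^1_{\st,\{0,1\}}(\Z_p,T_A)$, and hence $x\in A\langle p^\infty\rangle(K)_{\st}$.

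Finally, since $\ker(I_A^{\st})=A\langle p^\infty\rangle(K)_{p\text{-}\mathrm{div}}$, the map $I_A^{\st}$ factors through the quotient $A\langle p^\infty\rangle(K)_{\st}/A\langle p^\infty\rangle(K)_{p\text{-}\mathrm{div}}$. The induced map $\overline I_A^{\st}$ is injective by the first isomorphism theorem. Its image lies in the semi-stable locus by part (1).
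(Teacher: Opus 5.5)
Your proof is correct and follows the paper's route: the restriction statement is Proposition~\ref{prop:Ilog-st-is-restriction} (via $I_A=\delta^{\HT}_A\circ\kappa_{A\langle p^\infty\rangle}$), and the kernel comes from Theorem~\ref{thm:kernel-general}. Your additional check is worth keeping: $\ker\kappa_{A\langle p^\infty\rangle}=A\langle p^\infty\rangle(K)_{p\text{-}\mathrm{div}}$ lies in $A\langle p^\infty\rangle(K)_{\st}$ because the split class belongs to $\Ext^1_{\st,\{0,1\}}(\Z_p,T_A)$, and this is exactly what upgrades the intersection formula of Proposition~\ref{prop:semistable-image-kernel} to the stated equality, a step the paper's one-line proof leaves implicit.
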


\begin{proof}
This is the semi-stable specialization of
Proposition~\ref{prop:Ilog-st-is-restriction} together with
Theorem~\ref{thm:kernel-general}.
\end{proof}

Similarly, applying the refinement of the integration map in the good case we obtain the following.

\begin{corollary}[Good reduction]
\label{cor:conj-unif-abeloid-good}
Assume that \(A\langle p^\infty\rangle\) has good reduction, so that
$
A\langle p^\infty\rangle \cong H^{\rig}$
for some \(p\)-divisible group \(H\in \BT_{\mathcal O_K}\).
Then
$
A\langle p^\infty\rangle(K)=H(\mathcal O_K)$,
and
\[
I_A=\iota_A\circ I_H^{\cris}:
A\langle p^\infty\rangle(K)\longrightarrow
\frac{\Lie(A)\otimes_K C(1)}{s_A(T_A)},
\]
where
$
\iota_A:\ 
\left(\frac{\Lie(A)\otimes_K C(1)}{s_A(T_A)}\right)^{\Gamma_K,\cris}
\hookrightarrow
\frac{\Lie(A)\otimes_K C(1)}{s_A(T_A)}$
is the natural inclusion. The following hold:
\begin{enumerate}
    \item 

The image of $I_A$ is the rigidified crystalline locus:
\[
\operatorname{Im}(I_A)
=
\iota_A\!\left(
\left\{
y\in
\left(\frac{\Lie(A)\otimes_K C(1)}{s_A(T_A)}\right)^{\Gamma_K,\cris}
\;\middle|\;
\begin{array}{c}
\exists\, e\in \Ext^1_{\cris,\{0,1\}}(\Z_p,T_A)\ \text{with}\ \delta_A^{\HT}(e)=y,\\[2pt]
\text{and the corresponding extension }H_e\text{ is rigidified}
\end{array}
\right\}
\right).
\]

\item
If, moreover, the special fiber \(H_k\) is connected, equivalently \(T_A^{\et}=0\), then
\[
\ker(I_A)=A\langle p^\infty\rangle(K)_{p\text{-}\mathrm{div}},
\]
and \(I_A\) induces an injective homomorphism
\[
\overline I_A^{\cris}:\ 
A\langle p^\infty\rangle(K)\big/A\langle p^\infty\rangle(K)_{p\text{-}\mathrm{div}}
\hookrightarrow
\left(\frac{\Lie(A)\otimes_K C(1)}{s_A(T_A)}\right)^{\Gamma_K,\cris}.
\]
In this connected case one has
$
\operatorname{Im}(\overline I_A^{\cris})
=
\left(\frac{\Lie(A)\otimes_K C(1)}{s_A(T_A)}\right)^{\Gamma_K,\cris}$.
Equivalently, for
$
y\in \left(\frac{\Lie(A)\otimes_K C(1)}{s_A(T_A)}\right)^{\Gamma_K}$ and $
e_y:=(\delta_A^{\HT})^{-1}(y)\in H^1(K,T_A)$, 
one has
\[
y\in \operatorname{Im}(I_A)
\quad\Longleftrightarrow\quad
e_y\in \Ext^1_{\cris,\{0,1\}}(\Z_p,T_A),
\]
provided \(T_A^{\et}=0\).
\end{enumerate}
\end{corollary}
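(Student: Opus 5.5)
The plan is to specialize the general results of \S\ref{subsec:kernel-image-general} to $G=A\langle p^\infty\rangle$, which is the rigid generic fiber of $H$. First I will identify the points: by \cite{SW13} subsection 2.2 one has $H^{\rig}(K)=H(\mathcal O_K)$, which gives $A\langle p^\infty\rangle(K)=H(\mathcal O_K)$. The factorization $I_A=\iota_A\circ I_H^{\cris}$ is then exactly Proposition~\ref{prop:Icris-is-Ig} applied to $G=H^{\rig}$. To use it I have to check that the Kummer extension $T_x$ of Proposition~\ref{prop:representable-Gx-v-final} agrees with $T_p(H_x)$ for the Kummer extension $H_x$ of $p$-divisible groups over $\mathcal O_K$. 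I would prove this by showing that the rigid generic fiber of $H_x$ represents the pushout \eqref{eq:pushout-Kummer-v-final}: both are extensions of $\underline{\Q_p/\Z_p}$ by $G$ carrying compatible maps from $\underline{\Q_p}$ that restrict to $-\varphi_x$ on $\underline{\Z_p}$. Proposition~\ref{prop:rigext-points-general} then identifies them.

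For part (1), Theorem~\ref{thm:good-refinement} describes $\operatorname{Im}(I_H^{\cris})$ as the rigidified crystalline locus. Applying $\iota_A$ and using the factorization gives the displayed formula. The one point to check is that the universal-cover rigidification for $H_e$ over $\mathcal O_K$ matches Definition~\ref{def:rigidified-extension-general} for $(H_e)^{\rig}$, since the universal covers of $H$ and $H^{\rig}$ coincide as $v$-sheaves on $K_v$. I will not re-prove this; I regard it as absorbed into Theorem~\ref{thm:good-refinement}, which was proved using \cite{HMW24}.

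For part (2), I first establish that $H_k$ connected is equivalent to $T_A^{\et}=0$. If $H_k$ is connected, then $H$ is formal, so $G$ is an open ball by \cite[Th\'eor\`eme 6.1]{Far19}. An étale $p$-divisible subgroup $G'\subset G$ would then correspond, by the full faithfulness in Theorem~\ref{thm good reduction p-divisible}, to an étale $p$-divisible subgroup of $H$ with nonzero generic fiber, and this is impossible when $H_k$ is connected. For the converse, if $H_k$ is not connected then $H^{\et}\neq0$, and $(H^{\et})^{\rig}$ is an étale subgroup; Lemma~\ref{lem:kernel-sG} then gives $T_A^{\et}\neq0$.

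With $T_A^{\et}=0$ established, the kernel statement follows from Corollary~\ref{cor:good-kernel}, or equivalently Theorem~\ref{thm:kernel-general}, and this yields the injectivity of $\overline I_A^{\cris}$. Surjectivity onto the crystalline locus is the second half of Theorem~\ref{thm:good-refinement}. For the final equivalence, $\delta_A^{\HT}$ is bijective by Lemma~\ref{lem:ht-boundary-general}, so $y$ lies in the crystalline locus precisely when $e_y\in\Ext^1_{\cris,\{0,1\}}(\Z_p,T_A)$; combined with surjectivity this gives the claim.

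I expect the main obstacle to be the compatibility between the $\mathcal O_K$-Kummer extension of \cite{HMW24} and the rigid-analytic Kummer extension, together with the matching of rigidifications. The remaining steps are direct citations of the earlier results.
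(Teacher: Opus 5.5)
Your route is the paper's own. The paper proves the corollary simply by applying Theorem~\ref{thm:good-refinement} and Corollary~\ref{cor:good-kernel} (equivalently Theorem~\ref{thm:kernel-general}) to $G=A\langle p^\infty\rangle=H^{\rig}$, together with Proposition~\ref{prop:Icris-is-Ig}. Your extra checks on points, on the comparison of Kummer extensions and on rigidifications are compatible with that.

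\textbf{The gap.} Your proof of the converse of ``$H_k$ connected $\Leftrightarrow T_A^{\et}=0$'' does not work. In the connected--\'etale sequence $0\to H^\circ\to H\to H^{\et}\to 0$ over $\mathcal O_K$, the \'etale part is a \emph{quotient} of $H$. So $(H^{\et})^{\rig}$ is an \'etale quotient of $G$, and $T_p(H^{\et})$ is a quotient of $T_A$. Lemma~\ref{lem:kernel-sG}, by contrast, identifies $T_A^{\et}=\ker(s_A)$ with the maximal weight-$0$ \emph{sub}representation, so it tells you nothing here. A weight-$0$ subrepresentation exists only if the connected--\'etale extension splits rationally, and it need not split.

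\textbf{The converse is false.} Take an elliptic curve $E/K$ with good ordinary reduction and no CM, and put $A=E^{\rig}$, $H=\mathcal E[p^\infty]$. Then $V_A$ is a non-split extension of an unramified weight-$0$ character by a weight-$1$ character. (A splitting would, by Tate's full faithfulness and Serre--Tate, make $E$ isogenous to a canonical lift, which has CM.) Any weight-$0$ line in $V_A$ would split this extension, so $T_A^{\et}=0$, yet $H_k$ is not connected. So this step cannot be repaired, and the ``equivalently'' is only an implication.

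\textbf{The fix.} Keep ``$H_k$ connected'' as the hypothesis and only deduce $T_A^{\et}=0$ from it. Your forward argument does this once it is phrased correctly:
\begin{enumerate}
\item $T_A^{\et}$ is a subrepresentation of a crystalline representation with weight $0$, hence crystalline of weight $0$, hence unramified.
\item Tate's full faithfulness then gives a morphism $H'\to H$ from an \'etale $p$-divisible group $H'$ over $\mathcal O_K$. This morphism is injective on Tate modules, but it need not be a subgroup.
\item The morphism is $0$: it vanishes on special fibers, since $\Hom_k(H'_k,H_k)=0$ for $H'_k$ \'etale and $H_k$ connected. Homomorphisms of $p$-divisible groups over $\mathcal O_K$ are determined by their reduction, so it vanishes.
\end{enumerate}

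With this, the rest of part (2) goes through as you describe. The two hypotheses play different roles, so keep them separate. The kernel formula needs only $T_A^{\et}=0$, via Theorem~\ref{thm:kernel-general}. The equality of the image with the whole crystalline locus genuinely uses connectedness, through the rigidification criterion in Theorem~\ref{thm:good-refinement}.
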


\begin{proof}
Apply Theorem~\ref{thm:good-refinement} and
Corollary~\ref{cor:good-kernel} to the 
\(p\)-divisible rigid analytic group with good reduction \(G=A\langle p^\infty\rangle\).
\end{proof}

\begin{remark}
\label{rem:conj-unif-abeloid-compare}
When \(A\) is the analytification of an abelian variety over $K$ with good reduction, the
preceding good reduction statement is the \(p\)-divisible rigid analytic group reformulation
of the good reduction conjugate uniformization results of
\cite{IMZ22} and \cite{HMW24}.
More precisely, \cite{IMZ22} gives the crystalline description of the image in the
abelian variety setting, while \cite{HMW24} proves the corresponding kernel/image
statement for \(p\)-divisible groups and \(p\)-adic formal semi-abelian
schemes over $\Ol_K$ under the additional hypothesis \(T_p(G)(K^{\mathrm{ur}})=0\).
\end{remark}

\bibliographystyle{alpha}
\bibliography{References}

\end{document}